%% file: main_stefano_2.tex
\documentclass[reqno]{amsart}
\usepackage[url=false,giveninits=true,maxnames=5,backend=biber]{biblatex}
\makeatletter
\bibliography{sources}
\renewcommand\part{%
	\if@noskipsec \leavevmode \fi
	\par
	\addvspace{4ex}%
	\@afterindentfalse
	\secdef\@part\@spart}

\def\@part[#1]#2{%
	\ifnum \c@secnumdepth >\m@ne
	\refstepcounter{part}%
	\addcontentsline{toc}{part}{\thepart\hspace{1em}#1}%
	\else
	\addcontentsline{toc}{part}{#1}%
	\fi
	{\parindent \z@ \raggedright
		\interlinepenalty \@M
		\normalfont
		\ifnum \c@secnumdepth >\m@ne
		\Large\bfseries \partname\nobreakspace\thepart
		\par\nobreak
		\fi
		\huge \bfseries #2%
		\par}%
	\nobreak
	\vskip 3ex
	\@afterheading}
\def\@spart#1{%
	{\parindent \z@ \raggedright
		\interlinepenalty \@M
		\normalfont
		\huge \bfseries #1\par}%
	\nobreak
	\vskip 3ex
	\@afterheading}
\makeatother
 
\usepackage{color}
\usepackage[dvipsnames]{xcolor}

\usepackage{ifpdf}
\ifpdf 
    \usepackage[pdftex]{graphicx}   
    \usepackage[pdftex,     
            plainpages=false,   
            breaklinks=true,    
            colorlinks=true,
            linkcolor=red,
            citecolor=green,
            pdftitle=My Document
            pdfauthor=My Good Self
           ]{hyperref} 
\else 
    \usepackage{graphicx}       
    \usepackage{hyperref}       
\fi 

\usepackage{graphicx}
\usepackage{aurical}
\usepackage{amsfonts,amsmath}
\usepackage{amssymb}
\usepackage{verbatim}
\usepackage{amsopn}
\usepackage[english]{babel}
\usepackage{amsthm}
\usepackage{enumerate}
\usepackage{mathrsfs}
\usepackage{enumitem}
\usepackage{mathtools}
\usepackage{esint}
\usepackage{bbm}
\usepackage{marginnote}
\usepackage[marginparwidth=2cm]{geometry}
\usepackage[normalem]{ulem}
\usepackage{glossaries}
\usepackage{glossary-mcols}
\usepackage{subcaption}
\usepackage{csquotes}

\date{\today}

\theoremstyle{definition} \newtheorem{definition}{Definition}[section]
\theoremstyle{definition} \newtheorem{remark}[definition]{Remark}
\theoremstyle{plain} \newtheorem{lemma}[definition]{Lemma}
\theoremstyle{plain} \newtheorem{proposition}[definition]{Proposition}
\theoremstyle{plain} \newtheorem{theorem}[definition]{Theorem}
\theoremstyle{plain} \newtheorem{corollary}[definition]{Corollary}
\theoremstyle{definition} 
\theoremstyle{plain} 
\theoremstyle{definition} 
\theoremstyle{definition} \newtheorem{notation}[definition]{Notation}

\DeclareMathOperator{\BV}{BV}

\DeclareMathOperator{\diag}{diag}

\DeclareMathOperator{\clos}{clos}

\DeclareMathOperator{\inter}{int}

\newcommand{\R}{\mathbb{R}}
\newcommand{\Q}{\mathbb{Q}}
\newcommand{\N}{\mathbb{N}}

\newcommand*{\comme}[1]{\textcolor{cyan}{#1}}

\newcommand{\loc}{\text{\rm loc}}

\newcommand{\ind}{1\!\!\mathrm{I}}

\newcommand{\rest}{\llcorner}

\numberwithin{equation}{section} 

\theoremstyle{plain} \newtheorem*{theorem*}{Theorem}
\theoremstyle{plain} 
\theoremstyle{plain} \newtheorem*{mthm*}{Main Theorem}
\theoremstyle{plain} \newtheorem*{conjecture*}{Conjecture}
\theoremstyle{plain} 
\theoremstyle{plain} \newtheorem*{problem*}{Problem}

\makeglossaries

\title{Traveling profiles and control cost for a PDE describing the evolution of invasive species}

\subjclass[2020]{35K57,49K15,92-10}

\author{Stefano Bianchini}
\address{S. Bianchini: S.I.S.S.A., via Bonomea 265, 34136 Trieste, Italy}
\email{bianchin@sissa.it}

\author{Chiara Trifone}
\address{Chiara Trifone: S.I.S.S.A., via Bonomea 265, 34136 Trieste, Italy}
\email{ctrifone@sissa.it}

\begin{document}

\begin{abstract}
We develop a detailed analysis of optimal traveling waves $U(t,x) = U(x - \beta t)$ for a model of invasive-species control proposed in \cite{BressanChiri22}: the relative density $U \in [0,1]$ of the invasive species satisfies the following reaction-diffusion equation with a positive control
\begin{equation}
\label{Equa:PDE_abstract}
U_t = U_{xx} + f(U) - \tilde \alpha(t,x) U, \quad U \in [0,1], \ \tilde \alpha \geq 0.
\end{equation}
The control $\tilde \alpha(t,x)$ represents the fraction of the population removed at $(t,x)$: the minimal control effort $E(\beta,f)$ required to sustain a traveling invasion front with prescribed speed $\beta$ is defined as
\begin{equation}
\label{Equa:effort_abstract}
E(\beta,f) = \inf \bigg\{ \int_{\R} \tilde \alpha(x) dx, U = U(x-\beta t) \ \text{solves (\ref{Equa:PDE_abstract}) with} \ U(-\infty) = 0, U(+\infty) = 1 \bigg\}.
\end{equation}
In order to study large scale dynamics $(t,x) \mapsto (\epsilon t,\epsilon x)$, a fundamental role is played by the structure of traveling waves and the convexity and regularity properties of $E$.

The main results of this paper are the following.
\begin{enumerate}
\item In the phase plane $(U,P=U_x)$, there exists a unique optimal profile $P_\beta(U)$ minimizing \eqref{Equa:effort_abstract}.
\item It satisfies explicit first-order conditions, which are both necessary and sufficient.
\item The associated control is acting on an open subset of the set $\{U : P_\beta(U) = \sqrt{U f(U)}\}$, in particular it is uniformly integrable, and it depends smoothly on $(\beta,f)$ on a dense open set. 
\item The effort function $E(\beta,f)$ is only $C^1$ w.r.t. $\beta$ and Lipschitz w.r.t. $f$ in the $C^2$-topology, and is asymptocally linear for $\beta \to \infty$.
\item $\beta \mapsto E(\beta,f)$ is in general neither convex nor subadditive.
\end{enumerate}
%
\end{abstract}

\keywords{Reaction-diffusion PDE, traveling profiles, optimal control}

\maketitle

\begin{center}
\emph{Dedicated to Alberto Bressan, on the occasion of his 70th birthday}
\end{center}

\begin{center}
Preprint SISSA 09/2026/MATE
\end{center}

\tableofcontents
%


\section{Introduction}
\label{S:intro}

This paper is devoted to a detailed analysis of the effort function $E$ associated with traveling waves for the parabolic reaction diffusion equation with control
\begin{equation}
\label{Equa:parab_contr_1}
U_t = \Delta U + f(U) - \tilde \alpha(t,x) U, \quad U \in [0,1], \ \tilde \alpha \geq 0.
\end{equation}
The \emph{effort function $E(\beta,f)$} is precisely the minimal $L^1$-norm of $\tilde \alpha$ for a traveling wave solution $U(x-\beta t)$ to \eqref{Equa:parab_contr_1}. The main results of this paper concern the existence and characterization of the optimal traveling wave profile for a given speed \newglossaryentry{beta}{name=\ensuremath{\beta},description={speed of the traveling profile}} \gls{beta} and the structure and regularity of the effort \newglossaryentry{Ebeta}{name=\ensuremath{E(\beta)},description={effort function for a traveling wave speed $\gls{beta} \geq \beta^*$}} $\gls{Ebeta} = \inf \|\tilde \alpha\|_{L^1(\R)}$.

%

The PDE with control \eqref{Equa:parab_contr_1} is motivated by a model of pest eradication, which fits into the large subject of optimal harvesting problems and control of parabolic equations \cite{anitacapasso2019,Ania2019GlobalEF,doi:10.1137/16M1061886,coclitegaraspino2018,coron2007,seirin2013,faucris.242204001,Lenhart2001OPTIMALCO}. In order to understand the main results of this paper (as listed in the abstract), we collect here some of the analysis of \cite{bressancetraro,BressanChiri22,BressanChiri23}.

\subsection{A model for the evolution of invasive species}
\label{Ss:model_invasiv}

The starting point is the parabolic reaction-diffusion equation
\begin{equation}
\label{eq_intro}
U_t = U_{xx} + f(U).
\end{equation}
The reaction term \newglossaryentry{fU}{name=\ensuremath{f(U)},description={nonlinear source function}} $\gls{fU}$ may take different forms. Here we consider the following ones:

\begin{enumerate}[label=\textbf{(H\arabic*)}, ref=H\arabic*]
\item\label{H1_intro} The function $f \in C^2([0,1])$ satisfies (see Fig. \ref{Fig:fU_Gammasu})
\begin{equation*}
f(0)=f(1)=0, \ f'(0)<0, \ f'(1)<0,
\end{equation*}
\begin{equation}
\label{Equa:positive_prev_intro}
\int_0^1 f(U) \, dU > 0.
\end{equation}
Moreover, there exists a unique point \newglossaryentry{Ustar}{name=\ensuremath{U^*},description={unique $0$ of the source $f$ in $(0,1)$}} $\gls{Ustar} \in (0,1)$ such that $f(U^*)=0$ and $f'(U^*)>0$. A picture of $f$ is in Fig. \ref{Fig:fU_Gammasu} left. The simplest example is the cubic polynomial $f(U) = U (U^* - U) (1 - U)$, $U^* \in (0,\frac{1}{2})$.
\end{enumerate}

The interpretation of \eqref{eq_intro} is that $\gls{Uvaria} \in [0,1]$ represents the density of an invasive species, $1$ being the saturated value, and $f(U)$ is the rate of reproduction: in the space-independent case $U = U(t)$, Assumption \ref{H1_intro} implies that if the density is $< U^*$, then $U$ converges to $0$ (the species becomes extinct), while if the density is $> U^*$ then $U \nearrow 1$ (it saturates). Adding a diffusion $\Delta U$ allows \eqref{eq_intro} to have traveling profile: in \cite[Chapter 4.4.a]{Fife} it is shown that there exists a unique value \newglossaryentry{betastar}{name=\ensuremath{\beta^*},description={speed for traveling profiles without control $\alpha$}} $\gls{betastar}$ (see Remark \ref{Rem:positiv_perv}) such that \eqref{eq_intro} admits a solution of the form
\begin{equation}
\label{Equa:trav_boundary_intro}
U(t,x) = U(x - \beta t), \quad \lim_{x \to - \infty} U(x) = 0, \quad \lim_{x \to + \infty} U(x) = 1.
\end{equation}
In terms of large scale dynamics (i.e. upon rescaling $(\frac{t}{\epsilon},\frac{x}{\epsilon})$), the value $\beta^*$ is the normal speed of the boundary of the infested region $\{U = 1\}$.

\begin{remark}
\label{Rem:positiv_perv}
The condition \eqref{Equa:positive_prev} implies that there are no traveling profiles for $\beta \geq 0$: indeed for $\beta = 0$ the system \eqref{sys} below is Hamiltonian with \newglossaryentry{Hamilt}{name=\ensuremath{H(P,U)},description={Hamiltonian for $\beta = 0$}} \newglossaryentry{GU}{name=\ensuremath{G(U)},description={primitive of the source $f(U)$}}
\begin{equation*}
\gls{Hamilt} = \frac{P^2}{2} + G(U), \quad \gls{GU} = \int_0^U f(U') \, dU'.
\end{equation*}
Hence the condition \eqref{Equa:positive_prev_intro} gives
\begin{equation*}
H(1,0) = \int_0^1 f(U) \, dU > 0,
\end{equation*}
so that without control the traveling wave solving \eqref{eq_intro}, \eqref{Equa:trav_boundary_intro} must have negative speed. This is essential in order to get non trivial solutions when studying large scale dynamics: indeed we expect that if condition \eqref{Equa:positive_prev_intro} is not satisfied, then an arbitrarily small control makes the solution converge to $0$ \cite{bia_tri:gamma_lim}.
\end{remark}

Adding a control $- \tilde \alpha(t,x) U$ means that we want to minimize some cost function associated with the solution $U$. Natural formulas contain terms of the form (see \cite{BressanChiri23}):

\begin{description}
\item[running cost] e.g. area where the invasive specie is present,
$$
\int_0^T \int U(t,x) dx dt;
$$
\item[end cost] e.g. area of the region which is still invaded,
$$
\int U(T,x) dx;
$$
\item[control cost] e.g. some increasing superlinear function $\phi : [0,+\infty) \to [0,+\infty)$ of the $L^1$-norm of $\alpha(t)$,
\begin{equation*}
\int_0^T \phi \bigg( \int \tilde \alpha(t,x) dx \bigg) dt.
\end{equation*}
\end{description}
The last term is the most tricky one, since upon rescaling one obtains
\begin{equation*}
\epsilon \int_0^{\frac{T}{\epsilon}} \phi \bigg( \epsilon^{d-1} \int \tilde \alpha(t,x) dx \bigg) dt.
\end{equation*}
The inner integral rescales as $\epsilon^{d-1}$, which suggests that the control costs concentrated on codimension-one fronts, which should be the boundaries of the infested region; moreover the reaction term becomes $\frac{f(U)}{\epsilon}$. The presence of the monotone superlinear $\phi$ is to prevent the control to be applied at a single time instant, e.g. $t=0$.

It is thus natural to expect that in the limit $U(t,x)$ becomes the characteristic function of a set $A(t) \subset \R^d$ with finite perimeter (the points $U = 0,1$ are the only stable solutions to $f(U) = 0$) and the control cost converges to
\begin{equation*}
\int_0^T \phi \bigg( \int_{\partial^* A(t)} E(\beta(t,x)) \mathscr H^{d-1} \bigg) dt,
\end{equation*}
where $\partial^* A(t)$ is the reduced boundary of $A(t)$ and $\beta(t, x)$ is the normal speed. By elementary computations, if $\mathbf n(t,x) = (n_t,\mathbf n_x) \in \mathbb S^d$ is the inner normal to $\{(t,x), x \in A(t)\}$, then
\begin{equation*}
\beta(t,x) = - \frac{n_t(t,x)}{\mathbf n_x(t,x)},
\end{equation*}
and then $\int_{\partial^* A(t)} E(\beta(t,x)) \mathscr \ind_{A(t)}(dx)$ corresponds to an anisotropic perimeter: its lower semi-continuity w.r.t. the $L^1$-convergence requires that $E$ satisfies the assumptions:
\begin{enumerate}
\item \label{Point1:intro_E} $E(\beta)$ is convex,
\item \label{Point2:intro_E} $E(\beta) - \beta \partial_\beta E(\beta) \geq 0$.
\end{enumerate}
These assumptions are the starting point of the analysis of \cite{BressanChiri23}.

The analysis of the cost $E(\beta)$ is mainly done in \cite{BressanChiri22}. From the discussion above, one can think that the cost of a normal speed $\beta(t,x)$ corresponds (by blowing up a small region near $(t,x)$) to a traveling wave for the parabolic equation \eqref{eq_intro}, i.e.
\begin{equation*}
U_{xx} + \beta U_x + f(U) - \tilde \alpha U = 0, \quad U(t,x) = U(x - \beta t), \ \lim_{x \to \pm \infty} U(t,x) = \frac{1 \pm 1}{2}. 
\end{equation*}
The corresponding effort function $E(\beta)$ is the optimal choice of $\alpha$ or $U$:
\begin{equation}
\label{Equa:min_cost_intro}
E(\beta) = \inf \bigg\{ \int_\R \tilde \alpha(x) dx, \tilde \alpha = \frac{U_{xx} + \beta U_x + f(U)}{U}, \ U = U(x-\beta t) \ \text{traveling wave with speed $\beta$} \bigg\}.
\end{equation}
In \cite{BressanChiri22} the authors carry out an explicit form of the optimal solution to \eqref{Equa:min_cost_intro} under a \emph{monotonicity assumption}, which assures a simplified structure for $E(\beta)$.

To understand this condition, one first rewrites the ODE for traveling profiles
\begin{equation*}
U'' + \beta U' + f(U) = \tilde \alpha U, \quad U(-\infty) = 0, \  U(+\infty) = 1
\end{equation*}
as a first-order system in the phase plane \newglossaryentry{Uvaria}{name=\ensuremath{U},description={variable $U$ in the system of ODEs \eqref{sys}}} \newglossaryentry{Pvaria}{name=\ensuremath{P},description={variable $P$ in the system of ODEs \eqref{sys}}} $(\gls{Uvaria},\gls{Pvaria})$
\begin{align*}
\begin{cases}
U' = P\\
P' = -\beta P - f(U) + \tilde \alpha U,
\end{cases}
\end{align*}
where a traveling profile corresponds to a trajectory in the $(U,P)$ phase plane connecting the equilibria $(0,0)$ and $(1,0)$ with $P > 0$. Being $P > 0$ for such a etheroclinic connection, one can use $U$ as independent variable and obtain
\begin{equation*}
\frac{dP}{dU} = - \frac{f(U)}{P} - \beta + \frac{\tilde \alpha(U) U}{P}.
\end{equation*}
Setting now 
\begin{equation*}
\alpha(U(x)) = \frac{\tilde \alpha(x)}{P(U)}, \quad \int \alpha(U) dU = \int \tilde \alpha(x) dx,
\end{equation*}
we obtain the autonomous ODE with control
\begin{equation}
\label{Equa:P_ODE_intro}
\frac{dP}{dU} = - \frac{f(U)}{P} - \beta + \alpha(U) U, \quad \alpha \geq 0.
\end{equation}

Integrating by parts the difference of two instantaneous costs $\alpha_1,\alpha_2$ for two different trajectories $P_1,P_2$ (neglecting boundary terms, integrability issues and allowing $\alpha$ to be negative, see more below)
\begin{equation*}
\begin{split}
\int_0^1 \big( \alpha_1(U) - \alpha_2(U) \big) dU &= \int_0^1 \frac{\frac{dP_{1}}{dU} + \frac{f(U)}{P_1} + \beta}{U} dU - \int_\R \frac{\frac{dP_{2}}{dU} + \frac{f(U)}{P_2} + \beta}{U} dU \\
&= \int_0^1 (P_1 - P_2) \bigg( 1 - \frac{Uf(U)}{P_1 P_2} \bigg) \frac{dU}{U^2}. 
\end{split}
\end{equation*}
If in particular \newglossaryentry{Pstar}{name=\ensuremath{P^*(U)},description={curve of minimality}}
\begin{equation*}
P_2 = \sqrt{U f(U)} = \gls{Pstar},
\end{equation*}
then
\begin{equation*}
\int_0^1 \big( \alpha_1(U) - \alpha_2(U) \big) dU = \int \frac{(P_1 - P_2)^2}{P_1} \frac{dU}{U^2} \geq 0.
\end{equation*}
Observing also that $\alpha \geq 0$ requires that \newglossaryentry{Gammau}{name=\ensuremath{\Gamma_u(\beta)},description={unstable manifold of the equilibrium $(U,P) = (0,0)$}} \newglossaryentry{Gammas}{name=\ensuremath{\Gamma_s(\beta)},description={stable manifold of the equilibrium $(U,P) = (1,0)$}}
\begin{itemize}
\item any trajectory $P$ of \eqref{Equa:P_ODE_intro} cannot be below the unstable manifold \gls{Gammau} of $(0,0)$ (otherwise $P < 0$ for $U \searrow 1$);
\item any trajectory $P$ of \eqref{Equa:P_ODE_intro} cannot be above the stable manifold \gls{Gammas} of $(1,0)$ (otherwise $P > 1$ for $U \nearrow 1$);
\end{itemize}
the \emph{monotonicity conditions} is a sufficient condition assuring that the control for $P_2 = \sqrt{U f(U)}$ is positive in the strip $\Gamma_u \leq P_2 \leq \Gamma_s$: \newglossaryentry{Fdef}{name=\ensuremath{F(U)},description={function describing the crossing of the flow $P'$ across $P^*$}}
\begin{equation*}
0 \leq U \alpha_2(U) = \frac{dP^*}{dU} + \frac{f(U)}{P^*(U)} + \beta = \gls{Fdef} + \beta.
\end{equation*}
We will present a more detailed and precise description of the main results of \cite{BressanChiri22} in Section \ref{S:stru_opt}, here it suffices to understand that the optimal solution would like to be as close as possible to \gls{Pstar}, and in this case the structure of the optimal profile is the simplest: the unstable manifold \gls{Gammau} of $(0,0)$, an arc of \gls{Pstar} and finally the stable manifold \gls{Gammas}, see Fig. \ref{Fig:optproof_1}.

\subsection{Main results}
\label{Ss:main_resu_intro}

Aim of this paper is to address the following questions.
\begin{enumerate}
\item Is the \emph{monotonicity assumption} relevant or there are important cases where it has to be dropped?
\item If the monotonicity condition does not hold, can we describe the minima of \eqref{Equa:min_cost_intro}?
\item Does the effort $E(\beta)$ satisfy Points (\ref{Point1:intro_E}), (\ref{Point2:intro_E}) above, i.e. it is convex and $E(\beta) - \beta \partial_\beta E(\beta) \geq 0$, necessary for the $L^1$-l.s.c. in the control problem for set evolution?
\end{enumerate}

The answers to these questions are presented below.

\subsubsection{Monotonicity assumption, and more generally the structure of optimal solutions}
\label{Sss:mono_intro}

While in \cite{BressanChiri22} it is shown that there are functions satisfying the \emph{monotonicity assumption} (in particular the cubic $f(U) = U (U - U^*) (1-U)$ if $U^*$ is close to $1$), we show by direct computation that if $U^* < 0.4$ then the monotonicity condition does not hold. We observe here that for the $\Gamma$-limit problem for large scale dynamics the interesting case is when the control free speed $\beta^*$ is less than $0$, otherwise the $\Gamma$-limit is $0$ \cite{bia_tri:gamma_lim}: for the cubic case this conditions is equivalent to $U^* < \frac{1}{2}$.

\begin{theorem}
\label{Theo:nono_intro}
{\rm [{\bf Example of Remark \ref{Rem:existence_multiple_inter}}.]} For general source functions $f(U)$, the monotonicity assumption does not hold, and there are sources $f$ and speeds $\beta$ for which the function
\begin{equation*}
F(U) + \beta = \frac{dP^*}{dU} + \frac{f(U)}{P^*(U)} + \beta
\end{equation*}
has countably many crossings of $F(U) = 0$.

{\rm [{\bf Definition \ref{Def:trans_calT} and Theorem \ref{Theo:transv_generic}, Points (1,2).}]} There exists a dense open set \newglossaryentry{Tfrak}{name=\ensuremath{\mathfrak T},description={set of functions for which $\R \setminus \mathcal T$ is finite}} \gls{Tfrak} of admissible sources w.r.t. the $C^2$-topology for which the following holds: for every $f \in \mathfrak F$ there exists a $\mathscr L^1$-negligible set \newglossaryentry{Tcalf}{name=\ensuremath{\mathcal T_f},description={set of speeds $\beta$ such that the roots of $F(U) + \beta$, $P^*(U) - \Gamma_u(U)$ are finite and not degenerate}} \gls{Tcalf} of speeds $\beta > \beta^*(f)$ such that $\{F(U) + \beta = 0\}$ is finite and $\frac{d}{dU} F \not= 0$ on that set. Moreover the intersection of the manifolds \gls{Gammau}, \gls{Gammas} with \gls{Pstar} are finite and transversal too.

{\rm [{\bf Point (3) of Theorem \ref{Theo:transv_generic}}.]} If $f$ is a polynomial of order $k$, then the set \gls{Tfrak} is closed $\mathscr L^{k-1}$-negligible and \gls{Tcalf} is finite.
\end{theorem}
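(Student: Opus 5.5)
The statement has three independent parts, and I would prove them separately.

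\textbf{Part 1: countably many crossings.} The idea is to build $f$ locally. Near $U=0$ we have $P^*(U)=\sqrt{Uf(U)}$, and since $f'(0)<0$ the quantity $Uf(U)<0$ there, so $P^*$ is only defined on $[U^*,1]$. I would therefore work on a compact subinterval $[a,b]\subset(U^*,1)$, where $f>0$ and $P^*>0$. On this interval
\begin{equation*}
F(U)=\frac{dP^*}{dU}+\frac{f}{P^*}=\frac{f+Uf'}{2\sqrt{Uf}}+\sqrt{\frac{f}{U}} .
\end{equation*}
The map $f\mapsto F$ is a first-order differential operator, and it can be inverted: given $F$, the quantity $P^*$ solves the ODE $(P^*)'=F-P^*/U$, which gives $f=(P^*)^2/U$. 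So I would prescribe $P^*$ directly. Start from a smooth admissible $f_0$ (for instance the cubic) and its function $F_0$. Choose a speed $\beta$ and a point $\bar U$ with $F_0(\bar U)+\beta=0$. Then perturb $P^*$ near $\bar U$ to
\begin{equation*}
P^*=P^*_0+g ,
\end{equation*}
where $g$ is a $C^\infty$ function, flat at $\bar U$, such that $F+\beta$ oscillates like $(U-\bar U)^3\sin(1/(U-\bar U))$. This has infinitely many sign changes accumulating at $\bar U$. The perturbation is $C^2$-small, so \eqref{H1_intro} and the condition $\int_0^1 f>0$ are preserved, and $f=(P^*)^2/U$ stays $C^2$. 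One still has to check that $\beta>\beta^*$, which is arranged by the choice of $\beta$.

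\textbf{Part 2: genericity.} The plan is a parametric transversality (Sard–Thom) argument. Consider the map
\begin{equation*}
\Phi(f,\beta,U)=F_f(U)+\beta .
\end{equation*}
Since $\partial_\beta\Phi=1$, the zero set $Z=\{\Phi=0\}$ is a $C^1$ graph $\beta=-F_f(U)$. For fixed $f$, the speeds $\beta$ at which $F_f+\beta$ has a degenerate root are exactly the critical values of $-F_f$. The step I expect to be hardest is regularity. Because $f\in C^2$, $F_f$ is only $C^1$, and Sard in one dimension needs just $C^1$; so the critical values form an $\mathscr L^1$-null set, which gives the negligible set $\mathcal T_f$. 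Finiteness of the roots for $\beta\notin\mathcal T_f$ then follows from compactness of $[U^*,1]$, provided nondegeneracy also holds at the endpoints.

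The endpoints are where the difficulty concentrates. Near $U^*$ the function $P^*$ behaves like a square root, so $F\sim c/\sqrt{U-U^*}$. Near $1$ we have $f'(1)<0$, which again gives a square-root singularity. These singular asymptotics have to be expanded explicitly to show that roots cannot accumulate at the endpoints.

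For the manifolds, $\Gamma_u(\beta)$ and $\Gamma_s(\beta)$ depend smoothly on $(\beta,f)$. A nontransversal intersection with $P^*$ is a codimension-one condition, so it can be removed by a local perturbation of $f$ near the tangency point; this perturbation moves $P^*$ but not the manifold upstream of that point. That gives density. Openness follows because transversal intersections on a compact set persist under $C^2$-small perturbations, once the endpoint behaviour is controlled uniformly. The set $\mathfrak T$ is then defined as the set of $f$ for which the exceptional set is finite, and is shown to be open and dense by the same local perturbation argument.

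\textbf{Part 3: polynomials.} Here every object is semialgebraic, or analytic after a $\sqrt{\cdot}$ substitution, on the parameter space of dimension $k+1$, reduced to $k-1$ by the normalisations. Degeneracy corresponds to the vanishing of a discriminant-type analytic function, which is not identically zero because of Part 2. Hence the bad set of coefficients is a closed proper analytic subset, and therefore $\mathscr L^{k-1}$-negligible. For good $f$, the set $\mathcal T_f$ is the set of critical values of a nonconstant analytic function on a compact interval, so it is finite.
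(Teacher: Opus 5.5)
Your Part 1 and the $F$-half of Part 2 are sound. The $F$-half is exactly Proposition \ref{zeros}: Sard's lemma for the $C^1$ function $F$, together with $F\to\pm\infty$ at $U^*$ and at $1$.

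Part 1 is a different, and neat, route from Remark \ref{Rem:existence_multiple_inter}. Since $f/P^*=P^*/U$, you have $F=(UP^*)'/U$, so $F$ can be prescribed and $f=(UP^*)^2/U^3$ recovered. The paper's $f=a/U^3$ is the special case $UP^*\equiv\sqrt a$, i.e. $F\equiv 0$, later perturbed by $\sin^2$ bumps. You must, however, drop the requirement that $g$ be flat at $\bar U$. With a flat $g$, $F+\beta$ keeps the jet of $F_0+\beta$ at $\bar U$, so the linear term $F_0'(\bar U)(U-\bar U)$ dominates and $\bar U$ stays an isolated root. The compactly supported $g$ must instead make $(Ug)'/U$ cancel that jet, up to a flat oscillating remainder. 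This forces $g''(\bar U)\approx -F_0'(\bar U)$, so the change of $f$ is only $C^1$-small, not $C^2$-small. That is harmless: \eqref{H1} and $\int_0^1 f>0$ need only $C^0$ control on a compact subinterval of $(U^*,1)$.

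The genuine gap is the transversality of $\Gamma_u,\Gamma_s$ with $P^*$. The statement concerns a \emph{fixed} $f$ and \emph{all} speeds outside a small set, and tangencies in the one-parameter family $\beta\mapsto\Gamma_s(\beta)$ cannot be removed by perturbing $f$. Whenever the number of points of $C_s(\beta)$ differs at two speeds, a tangency speed in between is forced (cf. Remark \ref{Rem:controll_inter}), and this persists for all nearby $f$. So a local perturbation at a tangency yields at most a statement at one speed, with no control on how large the set of bad speeds is. Your assertions that this set is finite, and that $\{f:\text{exceptional set finite}\}$ is open and dense ``by the same argument'', are left without a mechanism.

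The missing idea is that $\beta$ is itself a transversal parameter. By \eqref{Equa:Gammas_beta} and \eqref{Equa:Gammau_beta}, $\partial_\beta\Gamma_s>0$ and $\partial_\beta\Gamma_u<0$. Set $G_s(U,\beta)=(P^*(U)-\Gamma_s(\beta,U),\,F(U)+\beta)$. At a zero, the $(1,1)$ entry of $DG_s$ equals $F+\beta=0$, so $\det DG_s=\partial_\beta\Gamma_s\,F'(U)\neq 0$ whenever $\beta\notin\mathcal N_0$. Hence tangency speeds are isolated in each component of $\R\setminus\mathcal N_0$, for \emph{every} $f$ and without perturbing anything (Proposition \ref{Prop:transv_Gamma_gen}). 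You also need these speeds to be bounded. For large $\beta$, the subsolution bound $\Gamma_s\ge\beta(1-U)$ forces every crossing into $\{U\ge 1-C/\beta^2\}$, and there $F+\beta<0$ at crossing points. For $\Gamma_u$ there are no crossings in $(U^*,1)$ once $\beta\ge\beta^{**}$.

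Part 3 lacks the same ingredient and adds a non sequitur. $\Gamma_u$ and $\Gamma_s$ are not semialgebraic, and your last sentence covers only the speeds in $\mathcal N_0$, not the tangency speeds. Moreover, ``not identically zero because of Part 2'' does not follow. $C^2$-density of an open set says nothing about a fixed finite-dimensional space of degree-$k$ polynomials, which can lie entirely inside a closed nowhere-dense set. The paper gets non-degeneracy by explicit algebra in Proposition \ref{Prop:F_transv_poly}. There the numerator $Q$ of $F'$ is a quadratic form in $(a_iU^i)$, which after the change of variables $\tilde a=L(a_iU^i)$ involves only $\tilde a_1,\tilde a_2,\tilde a_3$. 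This gives that $\{Q=\partial_UQ=0\}$ has codimension $2$ in $(U,\tilde a)$. On your route, the cheapest repair is to exhibit one non-degenerate cubic (Remark \ref{Rem:cubic_case}) and regard it as a point of the degree-$k$ family.
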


The above results gives a fairly complete picture of the structure of the vector field $- \frac{f(U)}{P} - \beta$ on the control curve \gls{Pstar}: its main use is in the possibility to approximate any admissible source with functions $f$ for which the finiteness of the sets $\{F = 0\} \cup \{\Gamma_u = P^*\} \cup \{\Gamma_s = P^*\}$ yields that the control set \newglossaryentry{OcalPopt}{name=\ensuremath{\mathcal O(\beta)},description={control set, open set where the control $\alpha$ is acting for the optimal profile $P_\beta$, i.e. $\alpha > 0$}} \gls{OcalPopt} is smothly depending on $\beta$ (Proposition \ref{Prop:reguls_Ibeta}). This will be a key ingredient for computing the regularity of the effort \gls{Ebeta}, since it will allow the explicit computations of the derivative.

\subsubsection{Existence, uniqueness and structure of the optimal trajectories.} In \cite{BressanChiri22}, after observing that when $\alpha \in L^1$ then $P$ solving \eqref{Equa:P_ODE_intro} is BV, by a compactness argument the authors conclude that a minimum \newglossaryentry{Pbeta}{name=\ensuremath{P_\beta},description={the unique optimal profile for a given velocity $\beta$}} \gls{Pbeta} for $E(\beta)$ exists (but no uniqueness is stated).

In this paper we prove the following properties of the optimal trajectory \gls{Pbeta}.

\begin{theorem}
\label{Theo:regul_optP_intro}
{\rm [{\bf Theorem \ref{betterprofile} and Definition \ref{candidate}.}]} Any optimal traveling profile \gls{Pbeta} is a \emph{candidate minimizer}: the control $\alpha$ is applied only when $\gls{Pbeta}(U) = \gls{Pstar}$. In particular, \gls{Pbeta} is uniformly a.c. and Lipschitz if $U \not= U^*$.

{\rm [\bf Definition \ref{Def:free_control_set}, Proposition \ref{nec2} and Corollary \ref{unique}.]} There exists an open set (the \emph{control set})
$$
\gls{OcalPopt} \subset \big\{ \Gamma_u < P^* < \Gamma_s \big\} \cap \inter \{P_\beta = P^*\}
$$
where $\alpha > 0$. If $I = (U^-,U^+)$ is a connected component of the \emph{free set} \newglossaryentry{IcalPopt}{name=\ensuremath{\mathcal I(\beta)},description={free set, open set where $\alpha = 0$ for  the optimal profile $P_\beta$}}
$$
\gls{IcalPopt} = \inter [0,1] \setminus \gls{OcalPopt},
$$
then any optimal solution \gls{Pbeta} must satisfy the \emph{first-order conditions}
\begin{subequations}
\label{Equa:nec2_gen_pro_intro}
\begin{equation}
\label{Equa:nec2-_intro}
\text{if $P(U^-) > \Gamma_u(U^-)$} \quad \text{then} \quad \int_{U^-}^{\tilde U} \frac{Uf(U)-P^2(U)}{UP^2(U)} \, dU \ge 0 \quad \forall \tilde U \in I,
\end{equation}
\begin{equation}
\label{Equa:nec2+_intro}
\text{if $P(U^+) < \Gamma_s(U^+)$} \quad \text{then} \quad \int_{\tilde U}^{U^+} \frac{Uf(U)-P^2(U)}{UP^2(U)} \, dU \le 0 \quad \forall \tilde U\in I.
\end{equation}
\end{subequations}
Finally, there exists a unique candidate minimizer \gls{Pbeta} satisfying the above first-order conditions.

{\rm [{\bf Corollary \ref{monotone} and Proposition \ref{Prop:reguls_Ibeta}.}]} The control set \gls{OcalPopt} is monotone increasing w.r.t. $\beta$, and on a open dense set of $\beta,f$ it is consists of finitely many intervals whose boundary points depend smoothly on $\beta,f$.
\end{theorem}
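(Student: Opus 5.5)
The proof has four parts: a local improvement argument, two one-sided variations on each free interval, a comparison argument for uniqueness, and a transversality argument for the structure of $\mathcal O(\beta)$. Throughout I work in the phase-plane formulation \eqref{Equa:P_ODE_intro}. For admissible trajectories $P_1,P_2$ I use the integration-by-parts identity of the introduction, localized to an interval $[a,b]$ and keeping the boundary terms $\big[(P_1-P_2)/U\big]_a^b$. This identity is the basic comparison tool.

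\textbf{Step 1: optimal profiles are candidate minimizers.} Existence of a minimizer $P_\beta$ in BV is known from \cite{BressanChiri22}. Suppose $\alpha$ charges a set where $P_\beta \neq P^*$, say where $P_\beta > P^*$. I replace $P_\beta$ on a small interval $[a,b]$ by the free trajectory ($\alpha=0$) issuing from the same point, glued back with the same endpoints. The localized identity, together with the sign of $1-Uf/(P_1P_2)$ on the relevant side of $P^*$, shows that the cost strictly decreases, so $\alpha=0$ off $\{P_\beta=P^*\}$. The case $P_\beta<P^*$ is symmetric. If $P_\beta$ jumps upward (a singular part of $\alpha$), then $P_\beta$ cannot jump across $P^*$ optimally, and jumps onto $P^*$ are excluded by the same convexity of the cost in $P$. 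Hence $\alpha\,dU$ is absolutely continuous and equals $(F+\beta)/U$ on $\{P_\beta=P^*\}$, which gives the uniform integrability and the Lipschitz bound away from $U^*$. Positivity of $\alpha$ forces the control set to lie in $\{F+\beta\ge 0\}$ and strictly between $\Gamma_u$ and $\Gamma_s$; I define $\mathcal O(\beta)$ as the interior of the set where $\alpha>0$.

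\textbf{Step 2: first-order conditions.} Let $I=(U^-,U^+)$ be a free interval with $P(U^-)>\Gamma_u(U^-)$. In that case one may leave $P^*$ slightly earlier, or stay on $P^*$ until $\tilde U$ and then follow a free arc. I compare $P_\beta$ with the competitor that stays on $P^*$ up to $\tilde U$ and then follows the free trajectory nearby, and linearize in the size of the perturbation. Along free arcs the linearized equation $\delta P' = f\,\delta P/P^2$ is explicitly solvable. The first variation of the cost then reduces to $\int_{U^-}^{\tilde U}\frac{Uf-P^2}{UP^2}\,dU$ times a positive factor, and minimality forces \eqref{Equa:nec2-_intro}. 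Condition \eqref{Equa:nec2+_intro} follows symmetrically, by perturbing near $U^+$ when $P(U^+)<\Gamma_s(U^+)$.

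\textbf{Step 3: uniqueness.} Take two candidates $P_1,P_2$ that both satisfy the first-order conditions. On the set where $P_1>P_2$, a maximal interval has endpoints where the two profiles coincide or sit on $\Gamma_u,\Gamma_s$. On that interval one profile is free, and the comparison identity combined with the integral conditions yields opposite signs, a contradiction. I expect this to be the main obstacle, because the combinatorics of how free arcs of $P_1$ and $P_2$ interlace with $P^*$ is delicate. I would first try a sweeping argument: order the candidates and show that the first-order conditions fix each exit and entry point on $P^*$ uniquely, by the strict monotonicity of $\tilde U\mapsto\int\frac{Uf-P^2}{UP^2}$ along the free flow.

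\textbf{Step 4: monotonicity and smooth dependence of $\mathcal O(\beta)$.} Monotonicity in $\beta$ follows from the same comparison argument. Increasing $\beta$ lowers the free vector field, which enlarges $\{F+\beta\ge 0\}$ and moves the endpoints of free arcs so that each control interval can only grow. For smooth dependence, on the open dense set of Theorem \ref{Theo:nono_intro} the roots of $F+\beta$ and the intersections of $\Gamma_u,\Gamma_s$ with $P^*$ are finite and transversal. The endpoints of $\mathcal O(\beta)$ are then determined either by these roots or by the equality case of the integral conditions, which defines a smooth equation in $(U^\pm,\beta,f)$. Its nondegeneracy gives smooth dependence by the implicit function theorem.
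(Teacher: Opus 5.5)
The genuine gap is uniqueness (Step 3), and the monotonicity part of Step 4 depends on it. Your claim that on a maximal interval of $\{P_1>P_2\}$ one profile is free is false. At each point at most one of the two lies on $P^*$, but which one can alternate inside the interval. The first-order conditions are integral conditions over whole free components, which need not lie inside that interval, so no local sign contradiction is available. Your sweeping plan presupposes an ordering of the two candidates, and producing that ordering is exactly what is missing.

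The paper obtains it by a lattice reduction. Suppose $P_1,P_2$ are candidates satisfying the conditions in the refined form: $P_i=\Gamma_u$ on $[U_u(\beta),\hat U_u(\beta)]$, $P_i=\Gamma_s$ on $[\hat U_s(\beta),U_s(\beta)]$, and two-sided inequalities on every free component in between. Then $\max\{P_1,P_2\}$ and $\min\{P_1,P_2\}$ satisfy the same conditions. Indeed, a free component of the maximum starts at a point where the maximum equals, say, $P_1$. There $P_1$ coincides with the maximum, and $P_1$ must itself start a free component: otherwise $P_1$ would be free just to the left while the maximum carries active control there, which forces the maximum below $P_1$.

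This reduces everything to the ordered case $P_1\le P_2$, where one global identity concludes. For $i=1,2$ one has $\int_{\hat U_u}^{\hat U_s}\big(\frac{f}{P_i^2}-\frac1U\big)\,dU=0$, because the integrand vanishes on $\{P_i=P^*\}$ and $K(U^-,U^+)=0$ on each free component. Since $f>0$ there, the integrand is strictly decreasing in $P$, so $P_1\le P_2$ forces $P_1=P_2$.

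For monotonicity, growth of $\{F+\beta\ge 0\}$ does not by itself give $\mathcal O(\beta_1)\subseteq\mathcal O(\beta_2)$, because the endpoints of free components are fixed by the nonlocal condition $K(U^-,U^+)=0$. The paper localizes the $\beta_2$-problem to each free component $I$ of $\mathcal I(\beta_1)$ with boundary values $P^*$ (Lemma~\ref{Lem:restr_beta_i}). It shows the local minimizer equals $P^*$ near $\partial I$: a free $\beta_2$-arc leaving $P^*$ at $\inf I$ lies strictly below $P_{\beta_1}$, which is a $\beta_2$-supersolution, and the integral conditions of the two profiles then contradict each other. It does the same near $\hat U_u,\hat U_s$, glues with $P^*$ on $\mathcal O(\beta_1)$, and identifies the result with $P_{\beta_2}$ by the uniqueness theorem.

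Step 1 is a local version of Theorem~\ref{betterprofile} and is fine in substance. Removing an upward jump that lands on $P^*$ is not a plain free-arc replacement, though; the paper uses the maximal backward trajectory of Proposition~\ref{Mbt} for that.

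In Step 2, the competitor you actually use, staying on $P^*$ up to $\tilde U$, needs negative control on $\{F+\beta<0\}$, and every interior free component meets that set. Even a short delay of the exit produces a free arc above $P_\beta$, which can only be rejoined to $P_\beta$ by a downward jump. The admissible variation is the other one you mention. Leave $P^*$ at points $U_n^-\nearrow U^-$ where the control is active, follow the free arc (which lies below $P_\beta$ on $I$), and reconnect at $\tilde U$ by an upward jump. The linearized equation $\frac{d}{dU}\frac{\delta P}{U}=\frac{Uf-P^2}{UP^2}\frac{\delta P}{U}$ then makes the first variation proportional to $e^{K(U^-,\tilde U)}-1$.

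In the smoothness part of Step 4, transversal roots of $F+\beta$ are never endpoints of free components (Lemma~\ref{Lem:not_bdr_tr}). The interior endpoints solve the system formed by the free arc joining $P^*(U^-)$ to $P^*(U^+)$ together with $K(U^-,U^+)=0$. That system is nondegenerate because $F(U^\pm)+\beta>0$. More importantly, you must first exclude the speeds at which a free component splits, or at which $\hat U_u,\hat U_s$ jump to another crossing with $P^*$. The paper does this by imposing the strict forms of the integral inequalities, which fail only on a finite set of speeds for $f\in\mathfrak T$. At a splitting speed smoothness genuinely fails; this is where $E$ loses its second derivative.
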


\begin{figure}
\centering
\resizebox{.75\textwidth}{!}{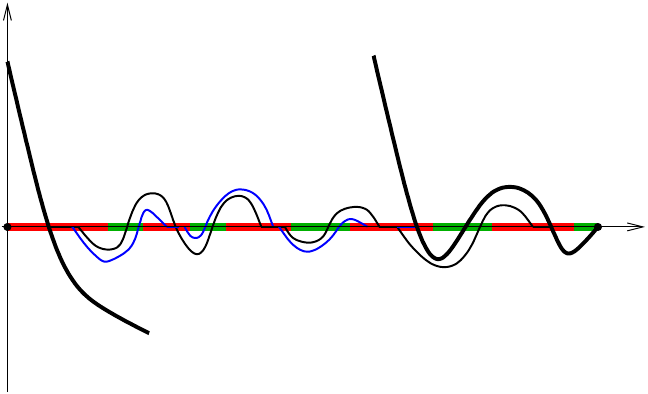}
\caption{A simplified model for the dynamics of \gls{Pbeta}: the control curve \gls{Pstar} is the horizontal axis, the set where $F(U) + \beta > 0$ are in red, and the two bold curves are the manifolds \gls{Gammau}, \gls{Gammas}. The black curve is \gls{Pbeta} and the blue one is another candidate minimizer: considering for example the first interval where $P_\beta \not= P^*$, the black perturbation is obtained by stopping the control $\alpha$ a little bit before and applying it later. Conditions \ref{Equa:nec2_gen_pro_intro} mean exactly that doing this operation the effort function is increasing.}
\label{fig:placeholder}
\end{figure}

The explanation of \eqref{Equa:nec2_gen_pro_intro} is straightforward if we consider Fig. \ref{Rem:existence_multiple_inter}. As explained in the caption, the condition \eqref{Equa:nec2-_intro} is the relative variation in the control $\alpha$ when the control is turned off  before $U^-$ and then applying it later at $\tilde{U}$: the positivity of the integral implies that the effort $E$ is increasing. Similarly, the condition \eqref{Equa:nec2+_intro} is when we apply the control before. Observe that the conditions $P(U^-) > \Gamma_u(U^-)$, $P(U^+) < \Gamma_s(U^+)$ are saying that on $\Gamma_u, \Gamma_s$ only one of \eqref{Equa:nec2_gen_pro_intro} can be imposed, because $\Gamma_u \leq P_\beta < \Gamma_s$ and hence only one perturbation can be considered.

The uniqueness is very useful, because it allows to identify the optimal trajectory \gls{Pbeta} by means of \eqref{Equa:nec2_gen_pro_intro}: the first application is exact in the third part of the statement above: indeed, to prove that \gls{OcalPopt} is increasing, one solves a constrained minimization problem in each connected component of \gls{IcalPopt}, and shows that gluing together these solution the inequalities \eqref{Equa:nec2_gen_pro_intro} are verified.

\subsubsection{Regularity properties of the effort function $E(\beta,f)$}
\label{Ss:regul_E_intro}

Having a precise description of the optimal profile \gls{Pbeta}, we can now answer the questions about the functional form of the effort \gls{Ebeta}. To precisely describe the statements, we need to introduce the first-order perturbations of \gls{Pbeta}: w.r.t. $\beta$
\begin{align*}
\begin{cases}
\frac{d}{dU}\left( \frac{\partial P_u(U)}{U} \right) = \frac{Uf(U)-P(U)^2}{UP(U)^2} \left(\frac{\partial P_u(U)}{U} \right) - \frac{1}{U}, \\ 
\partial P_u(U_u(\beta)) = \partial_\beta \Gamma_u(\beta,U_u(\beta)),
\end{cases}
\end{align*}
\begin{align*}
\begin{cases}
\frac{d}{dU}\left( \frac{\partial P_s(U)}{U} \right) = \frac{Uf(U)-P(U)^2}{UP(U)^2} \left(\frac{\partial P_s(U)}{U} \right) - \frac{1}{U}, \\ 
\partial P_s(U_s(\beta)) = \partial_\beta \Gamma_s(\beta,U_s(\beta)).
\end{cases}
\end{align*}
The function $\partial P_u$ is the perturbation of the solution by varying $\beta$ but keeping the control the same, with initial condition $\partial P_u(U = 0) = 0$; the same holds for $\partial P_s(U)$, with the end condition $\partial P_s(U=1) = 0$. The existence of these functions and their properties are studied in Lemma \ref{Lem:unique}.

In addition of $\beta^*$, there is an additional critical speed \newglossaryentry{betastarstar}{name=\ensuremath{\beta^{**}},description={value of $\beta$ for which $U_{uf}(\beta) = U^*$}} $\gls{betastarstar} > 0 > \beta^*$: this speed corresponds to the first speed such that $\Gamma_u$ ends in $(U^*,0)$. After this value, the optimal profile is obtained by joining (at infinite distance) the traveling wave that connects $0$ to $U^*$ and the one connecting $U^*$ to $1$.

\begin{theorem}
\label{Theo:Ebeta_intro}
[{\rm{\bf Propositions \ref{Prop:depe_E_f}, \ref{Prop:Lipschitz_below} and Corollary \ref{Cor:Lipsc_f}, Proposition \ref{Prop:monotone_f}.}}]
The map $(\beta,f) \mapsto E(\beta,f)$ is Lipschitz in the domain \newglossaryentry{Dcal}{name=\ensuremath{\mathcal D},description={set of admissible sources and admissible speed speeds $\beta$}}
\begin{equation*}
\gls{Dcal} = \Big\{ (\beta,f), f \ \text{satisfies Assumption \eqref{H1}}, \ \beta > \beta^*(f) \Big\} \subset \R \times C^2([0,1]).
\end{equation*}
Moreover
\begin{equation*}
\beta_1 \leq \beta_2, \ f_1 \leq f_2 \quad \Rightarrow \quad E(\beta_1,f_1) \leq E(\beta_2,f_2).
\end{equation*}

[{\rm{\bf Proposition \ref{Prop:E_Lipschitz_beta}, Theorem \ref{Theo:derivative_E}.}}] The function $\beta \mapsto E(\beta)$ is $C^1$, strictly increasing: the explicit expression of the derivative is 
\begin{equation*}
\frac{d}{d\beta} E(\beta) = 
\frac{\partial P_s(U,\beta) - \partial P_u(U, \beta)}{U} \quad \forall U \in \clos(\gls{OcalPopt}). \\
\end{equation*}
Moreover
\begin{equation*}
\lim_{\beta \to \infty} \bigg[ E(\beta) - \ln \bigg( \frac{1}{U^*} \bigg) \beta \bigg] = \int_{U^*}^U \frac{F(U)}{U} dU = \int_{U^*}^1 \frac{2 \sqrt{f(U)}}{U^{3/2}} dU > 0.
\end{equation*}

[{\rm {\bf Theorem \ref{2der}, Corollary \ref{Cor:blowup_E''}}}] The function \gls{Ebeta} is in general only $C^{1,1}_\loc$ in the open set
\begin{equation*}
(\beta^*,\beta^{**}) \cup (\beta^{**},+\infty).
\end{equation*}
Moreover, it is convex for $\beta > \beta^{**}$ , and in general
\begin{equation*}
\lim_{\beta \nearrow \beta^{**}} \partial^2_\beta E(\beta) = - \infty.
\end{equation*}
\end{theorem}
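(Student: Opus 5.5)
The plan is to work in the phase-plane formulation \eqref{Equa:P_ODE_intro} and express $E(\beta,f)=\int_0^1\alpha_\beta(U)\,dU$ through the optimal candidate $P_\beta$ of Theorem \ref{Theo:regul_optP_intro}. On the control set $\mathcal O(\beta)$ we have $P_\beta=P^*$, so there $U\alpha_\beta(U)=F(U)+\beta$. Hence
\begin{equation*}
E(\beta,f)=\int_{\mathcal O(\beta)}\frac{F(U)+\beta}{U}\,dU .
\end{equation*}
The boundary points of $\mathcal O(\beta)$ are the points where free arcs (solutions with $\alpha=0$) glue to $P^*$ or to $\Gamma_u,\Gamma_s$.

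\textbf{Monotonicity and Lipschitz bounds.} These I would obtain by comparison rather than from the formula.
\begin{itemize}
\item If $\beta_1\le\beta_2$ and $f_1\le f_2$, take the optimal trajectory for $(\beta_2,f_2)$. Along it, the control required for $(\beta_1,f_1)$ is pointwise no larger, since $U\alpha=P'+f/P+\beta$. I would also check that this trajectory stays admissible: $\Gamma_u,\Gamma_s$ move monotonically in $(\beta,f)$, and any excess can be cut by switching the control off. This gives $E(\beta_1,f_1)\le E(\beta_2,f_2)$.
\item For the Lipschitz bound, perturb the optimal $P_\beta$ on a compact subset of $(0,1)$ away from $U=0,1$. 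The change of $\int\alpha$ is then controlled by $|\Delta\beta|\int dU/U+\|\Delta f\|_{C^0}\int dU/(UP)$ on the region where control acts.
\item Near the endpoints, I would use the $C^2$ dependence of $\Gamma_u,\Gamma_s$ on $(\beta,f)$ to handle the matching. This requires $f\in C^2$, because it governs the linearization $f'(0),f'(1)$.
\end{itemize}

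\textbf{$C^1$ regularity and the derivative formula.} First restrict to the dense open set of Proposition \ref{Prop:reguls_Ibeta}. There $\mathcal O(\beta)$ is a finite union of intervals with endpoints depending smoothly on $\beta$, so $E$ can be differentiated directly.
\begin{itemize}
\item The contribution from moving endpoints vanishes, or cancels against the variation of the free arcs, by the first-order conditions \eqref{Equa:nec2_gen_pro_intro}.
\item The variation of the free arcs is governed by the linearized equations for $\partial P_u,\partial P_s$. The integrating factor $\exp\int (Uf-P^2)/(UP^2)$ turns $\partial_\beta\int\alpha$ into the jump $(\partial P_s-\partial P_u)/U$.
\item This jump is constant across $\operatorname{clos}\mathcal O(\beta)$, because on each control interval both $\partial P_s/U$ and $\partial P_u/U$ solve the same linear ODE. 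The shared $-1/U$ term cancels in their difference, and the coefficient $(Uf-P^2)/(UP^2)$ vanishes on $P=P^*$.
\end{itemize}
Continuity of this expression in $\beta$, together with density and the Lipschitz bound, gives $C^1$ everywhere. Strict monotonicity follows from positivity of the jump via a comparison argument on the linear ODE.

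\textbf{Asymptotics and second derivative.} For $\beta\to\infty$, the control set tends to $(U^*,1)$ up to boundary layers. Indeed $\Gamma_u$ meets $P^*$ near $U^*$ and $\Gamma_s$ meets it near $1$, so
\begin{equation*}
E(\beta)=\int_{U^*}^1\frac{F+\beta}{U}\,dU+o(1),
\end{equation*}
which gives the stated limit. The $C^{1,1}_\loc$ claim follows by differentiating the derivative formula once more in the smooth regime; the bound stays locally uniform away from $\beta^{**}$. Convexity for $\beta>\beta^{**}$ comes from the two decoupled half-profiles.

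The main obstacle is the blow-up of $E''$ as $\beta\nearrow\beta^{**}$. There the endpoint $U_u(\beta)$ approaches the saddle $(U^*,0)$, and $\partial_\beta\Gamma_u$ at $U^*$ diverges logarithmically, with the sign making $E''\to-\infty$. I would attack this by a local saddle analysis: compute the Dulac-type passage time near $(U^*,0)$ and show that $\frac{d}{d\beta}$ of the boundary term behaves like $c\log(\beta^{**}-\beta)$ with $c>0$.
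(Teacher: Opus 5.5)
Your comparison argument for monotonicity fails as written. The graph of the optimal profile $P_2$ for $(\beta_2,f_2)$ is not admissible for $(\beta_1,f_1)$. On $[0,U_u(\beta_2,f_2)]$ and $[U_s(\beta_2,f_2),1]$ one has $\alpha_2=0$, so the control this graph requires for $(\beta_1,f_1)$ is $U\alpha=(f_1-f_2)/P_2+\beta_1-\beta_2\le 0$, which is strictly negative when $\beta_1<\beta_2$. Moreover, near $U=0$ the graph runs along $\Gamma_u(\beta_2,f_2)$, which lies below $\Gamma_u(\beta_1,f_1)$, where no $(\beta_1,f_1)$-admissible profile can lie. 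Switching the control off cannot repair a negative control. The fix is to transport the \emph{control} rather than the trajectory. Let $Q$ solve $Q'=-f_1/Q-\beta_1+\alpha_2U$, starting on $\Gamma_u(\beta_1,f_1)$. Since $-f_1/P-\beta_1\ge -f_2/P-\beta_2$ and $\Gamma_u(\beta_1,f_1)\ge\Gamma_u(\beta_2,f_2)$, comparison gives $Q\ge P_2$. Hence $Q\ge \Gamma_s(\beta_2,f_2)\ge\Gamma_s(\beta_1,f_1)$ near $U=1$, while $Q<\Gamma_s(\beta_1,f_1)$ near $U=0$ because $\beta_1>\beta^*(f_1)$. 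Switch $\alpha_2$ off at the first crossing $c$ and follow $\Gamma_s(\beta_1,f_1)$ afterwards. This gives an admissible profile of cost $\int_0^c\alpha_2\,dU\le E(\beta_2,f_2)$. So repaired, your route is more elementary than the paper's. The paper gets monotonicity in $\beta$ from $\mathcal O(\beta_1)\subseteq\mathcal O(\beta_2)$ (Corollary \ref{monotone}), and in $f$ from the sign of \eqref{Equa:deriv_expli_sour} on transversal polynomial sources plus approximation (Proposition \ref{Prop:monotone_f}). Your $C^1$ computation and the large-$\beta$ asymptotics follow the paper.

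Your analysis at $\beta^{**}$ is wrong, and as stated it contradicts your own $C^1$ claim. For $\beta$ near $\beta^{**}\ge 2\sqrt{f'(U^*)}$ the point $(U^*,0)$ is a stable node, not a saddle. Also, $\partial_\beta\Gamma_u(\beta,U_u(\beta))$ does not diverge: it tends to $0$ as $\beta\nearrow\beta^{**}$ (Proposition \ref{Prop:Lipechit_U_u}). This is exactly what continuity of $E'$ at $\beta^{**}$ requires. For $\beta<\beta^{**}$ close to it, $U_u(\beta)\in\clos(\mathcal O(\beta))$ and $E'(\beta)=(\partial P_s(U_u)-\partial_\beta\Gamma_u(\beta,U_u))/U_u$, whereas for $\beta>\beta^{**}$ one has $\partial P_u=0$ at $U^*$. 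A divergent $\partial_\beta\Gamma_u$ would also violate the Lipschitz bound on $E$. What diverges is $\partial^2_\beta\Gamma_u(\beta,U_u(\beta))$, which enters \eqref{Equa:second_der_E_beta} with a minus sign. For a trajectory passing close to the strong stable manifold of a node, the behaviour is a power law in the eigenvalue ratio, not a logarithm: $\partial^2_\beta\Gamma_u(U_u)\sim\Gamma_u(\beta,U^*)^{2\lambda_+/\lambda_- -1}$ with $\lambda_\pm=\lambda_\pm(\beta^{**},U^*)$ and $\Gamma_u(\beta,U^*)\to 0$ (Proposition \ref{Prop:blowupsecond}). Hence $E''\to-\infty$ only under \eqref{Equa:cond_blow}, which is what ``in general'' means; an unconditional $c\log(\beta^{**}-\beta)$ law is not what happens.

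Two further pieces are missing. First, ``decoupled half-profiles'' does not by itself give convexity for $\beta>\beta^{**}$, since the problem on $(U^*,1)$ is still a free-boundary problem. You need that, once the $\Gamma_u$-term is absent, every remaining term of \eqref{Equa:second_der_E_beta} is nonnegative: $\partial^2_\beta\Gamma_s\ge 0$, and the free-interval integrals carry $f>0$ on $(U^*,1)$. You also need Lipschitz continuity of $E'$ to pass from the dense set of regular $\beta$ to all $\beta$. Second, the ``only $C^{1,1}_\loc$'' part needs an example in which a component of $\mathcal I(\beta)$ splits and $E''$ jumps (Section \ref{Ss:exampl_C11}), which your plan does not address.
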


The second part of the statement gives that if $E$ is convex, then $E$ is subadditive: extending \gls{Ebeta} as
\begin{equation*}
E(\beta) = 0 \quad \text{if} \ \beta < \beta^*,
\end{equation*}
then
\begin{equation*}
E(\beta_1 + \beta_2) \leq E(\beta_1) + E(\beta_2). 
\end{equation*}
Moreover, it is fairly easy to see that (again under the assumption that $E$ is convex), then 
\begin{equation*}
E(\beta) - \beta E'(\beta) \geq 0,
\end{equation*}
which implies that the second condition (Point \eqref{Point2:intro_E} of page \pageref{Point2:intro_E}) is not needed and it is a consequence of the convexity.

However, the third part of the above theorem shows that in general the cost $E$ is not convex. This leaves the problem of the correct $\Gamma$-limit of the cost functional open \cite{bia_tri:gamma_lim}.

We conclude by presenting two explicit examples of non-convexity, one showing that also for the cubic source one cannot expect convexity (Examples \ref{Sss:E_not_convex} and \ref{Ss:anisotr_nonconvex}). One last example shows that \gls{Ebeta} is only $C^{1,1}$: the discontinuities of the second derivative are due exactly to the splitting of the connected components of \gls{IcalPopt}.

\subsection{Outline of the paper}
\label{Ss:outline_intro}

The paper is organized as follows.

Section \ref{S:struct_opt} contains definitions and preliminary material. In \eqref{eq} we introduce the PDE modeling the spread of the invasive species in absence of control. According to the definition given in \cite{BressanChiri22}, we assume that the source $f$ is \emph{bistable} (see (\ref{H1})) and, as in \cite{bressancetraro}, \emph{positively predominant} (see \eqref{Equa:positive_prev}).  Classically (\cite{Fife}, \cite{Kanel}), the PDE is solved looking for \emph{traveling waves}, that is solutions of the form $U(t,x) = U(x-\beta t)$ for some velocity $\gls{beta}$. By such a priori assumption, the equation is rewritten in terms of the first-order system \eqref{sys}, whose dynamics can be studied by linearization near the equilibria. The results of this analysis are listed in Points (1-6) of page \pageref{Point_5:eigen_equi}: regardless of the value of $\beta$, the points $(0,0)$ and $(1,0)$ are saddles, so that the portion $(U,P) \in [0,1] \times [0,\infty)$ of the phase plane under consideration contains an unstable manifold $\gls{Gammau}=\Gamma_u(\beta, f)$ through $(0,0)$ and a stable manifold $\gls{Gammas}=\Gamma_s(\beta, f)$ through $(1,0)$. We write $\Gamma_u(\beta)$ as a function $\Gamma_u(\beta,U)$ in the region $[0,\gls{Uufbeta} = U_{uf}(\beta,f)]$ where $\Gamma_u(\beta) \geq 0$, with $\Gamma_u(\beta,0) = \Gamma_u(\beta,U_{uf}(\beta)) = 0$ and $\Gamma_u(\beta)>0$ inside the interval $(0,U_{uf})$. Similarly, we write $\Gamma_s(\beta) = \Gamma_s(\beta,U)$ for all $U \in [0,1]$, as it is easy to see that the stable manifold is always a function for the values of $\beta$ where a traveling profile exists. The functions $\Gamma_u$ and $\Gamma_s$ are respectively decreasing and increasing with respect to $\beta$, so that there exists a unique value $\gls{betastar}$ for which they coincide and correspond to a traveling wave solution for \eqref{eq}. \\
Conversely, the nature of equilibrium $(U^*, 0)$ depends on the value of $\beta$: for $\beta > 2 \sqrt{f'(U^*)}$ it is a stable node with two different eigenvalues. We prove here the existence of a second critical speed $\gls{betastarstar} =\beta^{**}(f)\ge 2\sqrt{f'(U^*)}$, for which there holds $U_{uf}(\beta)=U^*$ for al $\beta \ge \beta^{**}$: this value corresponds to $\Gamma_u$ coinciding with the strongly stable manifolds corresponding to the most negative eigenvalue of $U^*$ \cite[Theorem 6.2.8]{katokhass}. We again remark that from a biological point of view, we are interested in those models corresponding to a negative value of $\beta^*$, meaning that spontaneously the invasive population tends to grow and it makes sense for a controller to intervene. It is straightforward to verify that $\beta^*$ is negative if and only if $f$ is positively predominant. \\
Next, in \eqref{eqwithcontrol}, we consider the PDE with the addition of a control term \gls{alphat}: in the paper, we always assume that the control is non-negative, its effect is proportional to the density itself (as in \cite{BressanChiri22}, Problem (P2)) and its cost is given by the $L^1$ norm of \gls{alphat}. Following the definition given in \cite{BressanChiri22}, we introduce the family of \emph{admissible profiles}  $\gls{Acalbeta}= \mathcal A_{\beta,f}$, generated by all Lipschitz solutions to the controlled system \eqref{syswithcontrol}, for some non-negative control $\tilde{\alpha}$ (see Definition \ref{admcurves}). For each curve in $\gls{Acalbeta}$, we define the associated cost as the $L^1$ norm of $\tilde{\alpha}$, and, finally, in Definition \ref{Def:effort_funvc},
we set the \emph{effort function} $\gls{Ebeta}=E(\beta, f)$ as the infimum among all costs associated to admissible trajectoriesfor $\beta$ and $f$.  \\
For the curves in $\gls{Acalbeta}$, being $P=U'$, if $P > 0$,  we can express $P$ as a function of $U$, solving \newglossaryentry{alphaU}{name=\ensuremath{\alpha(U)},description={control $\alpha$ when parametrized by $U$}}
\begin{equation}
\label{Equa:Pcontrol_structure}
\frac{dP}{dU} = - \beta - \frac{f(U)}{P} + \alpha(U) U, 
\end{equation}
with, in particular,  $\|\alpha\|_1 = \|\tilde \alpha\|_1$, so that the two minimization problems are equivalent.

Section \ref{Ss:stab_sti_Gamma_us} is devoted to the analysis of the stability of $\Gamma_u$ and $\Gamma_s$ both with respect to the velocity $\beta$ and the source $f$. By non-negativity of the control, an admissible profile always lies in the region enclosed by the unstable and stable manifold: it is thus to be expected that the regularity of $\Gamma_u$ and $\Gamma_s$ will have consequences on that of optimal profiles and, thus, of the effort. \\
In \eqref{Equa:Gammau_eq0} and \eqref{Equa:Gammas_eq0}, we recall the equation solved by \gls{Gammau} and \gls{Gammas} and their asymptotics as $U$ tends, respectively, to $0$ and $1$. Moreover, in Lemma \ref{Lem:lim_U_to_0}, we show that admissible profiles (i.e. solutions to the ODE with control \eqref{Equa:Pcontrol_structure}) satisfy the same tangency relation near the origin as \gls{Gammau}: such property will come useful in the following in order to justify integration by parts of the control. \\
Borrowing the notation from \cite{BressanChiri22}, we introduce the critical curve $\gls{Pstar} = \sqrt{U f(U)}$ and define \gls{Uubeta} as the first intersection point of \gls{Gammau} and \gls{Pstar} and, similarly, \gls{Usbeta} as the last intersection point of \gls{Gammas} and \gls{Pstar}. After the technical Lemma \ref{Lem:large_beta_Gammau}, whose proof is based on the comparison principle and states that $\Gamma_u(\beta,U) \sim f(U)$ for large $\beta$'s, in Theorem \ref{manifoldbound}, we provide explicit bounds for the derivative of the manifolds, both with respect to $\beta$ (Estimates \eqref{Equa:Gammau_beta}, \eqref{Equa:Gammas_beta}) and with respect to $f$ (Estimates \eqref{Equa:estimat_f_uGG}, \eqref{Equa:estimat_f_sGG}).
As noted in Remark \ref{Rem:precise_partial_Gamma}, while the smooth dependence of the manifolds with respect to the vector field is well-known near the equilibrium and for $\Gamma_u > 0$, the proof is particularly involved and the result is (to our knowledge) original when dealing with $\beta \sim \gls{betastarstar}$ and $U\sim \gls{Ustar}$: in such case, the equation becomes singular and one cannot refer to classical results. In Corollary \ref{Cor:Frechet_Gamma}, we sum up what we have proved above to show Fr\'echet  differentiability of the maps $(\beta,f,U) \mapsto \Gamma_u(\beta,f,U)$ and $(\beta,f,U) \mapsto \Gamma_s(\beta,f,U)$ outside the critical value $\beta^{**}$. \\
Next, in Theorem \ref{Theo:second_der_Gammaus}, we prove that the second derivative $\partial_\beta^2 \Gamma_u(\beta,U)$ exists in $[0, \gls{Uubeta}]$ when $\beta \not= \beta^{**}$, while $\partial_\beta^2 \Gamma_s(\beta,U)$ exists in $[0,1]$. Moreover, defining \gls{Ubarubeta} as the last intersection of \gls{Gammau} and \gls{Pstar}, in Corollary \ref{Cor:bound_hatu} we extend the estimate for $\partial_\beta^2\Gamma_u$ to the whole interval $[0,\overline U_u(\beta)] $. We remark that in order to bound the second derivative of $\Gamma_u$ with respect to $\beta$, we must make the further assumption that $\beta$ is uniformly far from the critical speed $\beta^{**}$: as shown in Proposition \ref{Prop:blowupsecond},  the result is sharp and, in general, it may happen that $\partial_\beta^2 \Gamma_u(\beta,U_u(\beta))$ blows up to $-\infty$ as $\beta \to \gls{betastarstar}$. Similarly, Proposition \comme{\ref{Prop:Lipechit_U_u}} has the purpose of ensuring the optimality of the results proved in Theorem \ref{Theo:second_der_Gammaus}. We show that the maps $ f\mapsto \partial_\beta \Gamma_u(\beta,f,U_u(\beta,f)), \Gamma_u(\beta,f,U_u(\beta,f))$ are, respectively, continuous and Lipschitz. Regularity is trivial for $\beta\neq \beta^{**}$ and we only need to check what happens as $\beta \nearrow \beta^{**}$. Continuity for the first map holds as $\partial_\beta \Gamma_u(\beta,f,U_u(\beta,f)) \to 0$ as $\beta \to \beta^{**}(f)$ both from the left and the right. Conversely, it is shown that $\partial_f \Gamma_u(\beta, f, U_u(\beta, f)) \not \to 0$ as $\beta \nearrow \beta^{**}$. Finally, in Remark \ref{Rem:const_U*}, we stress that the discontinuity of the derivative with respect to $f$ is due to the variation of the equilibrium point $U^*=U^*(f)$ and cannot occur when we perturb $f$ keeping $U^*$ fixed: in such case, one could prove that $f \mapsto \partial_\beta \Gamma_u$ is smooth.

In Section \ref{S:stru_opt}, we briefly go through the analysis of optimal profiles for \eqref{Equa:funct_eq_UP} presented in \cite{BressanChiri22}. Here the authors provide an explicit construction of the optimal profile $\gamma_\beta$, given by the concatenation of three functions (Equation \eqref{optgraph}): the curve $\gls{Gammau}$ from $0$ until it intersects $\gls{Pstar}$ at $\gls{Uubeta}$, the curve \gls{Gammas} from $1$ until it intersects \gls{Pstar} at \gls{Usbeta} and, finally, the arc on the graph of \gls{Pstar} from $U_u(\beta)$ to $U_s(\beta)$.
In \eqref{Equa:alpha_beta} we define $\alpha_\beta$ as the control associated to $\gamma_\beta$ and, subsequently, in \eqref{F}, we introduce the function
\begin{equation*}
\gls{Fdef}:=(P^*)'(U)+\frac{f(U)}{P^*(U)}, \quad U \in (U^*, 1),  
\end{equation*}
also noting that $\alpha_\beta(U)=\frac{F(U)+\beta}{U}$ for $U\in [\gls{Uubeta}, \gls{Usbeta}]$ and $\alpha_\beta=0$ elsewhere. \\
It is fundamental to remark that admissibility of the curve $\gamma_\beta$ is granted as the authors make a further \emph{monotonicity assumption} on the non-linearity $f$ (see (\ref{mon})). Such additional hypothesis implies, in particular, that \gls{Uubeta} and $\gls{Usbeta}$ are the only intersections of \gls{Pstar} with the manifolds and the positivity region for $\alpha_\beta$, namely the set
\begin{equation*}
\Big\{U\in (U^*,1): \  F(U)+\beta>0\Big\},
\end{equation*}
is an interval for all $\beta$.\\
In Lemma \ref{Lem:first_opt_1} and Proposition \ref{improv}, we recall how one can exploit Stokes' Theorem to compare the costs associated to different traveling waves. Heuristically, the obtained formulas suggest that, in order to reduce the cost associated to a given admissible trajectory $P$, one should always raise the profile in the regions where $P(U)<\gls{Pstar}$ and lower it if $P(U)>\gls{Pstar}$. In light of the above observation, in Corollary \ref{Cor:monotone_1}, we provide a slight generalization of the construction obtained in \cite{BressanChiri22}. \\
The section ends with Remark \ref{Rem:monoton_cond}, where we stress that the monotonicity assumption does not apply in several cases of biological relevance: we check that for a cubic source, that is $f(U) = U (U^* - U) (1 - U)$, Assumption (\ref{mon}) is not satisfied for $U^*<0.4$. Conversely, of course, the function is positively predominant if and only if $U^*< 0.5$. 

Section \ref{S:transver_flux} is still of technical nature, providing preliminary results regarding the transversality properties of $F$ and the manifolds $\Gamma_u$ and $\Gamma_s$, which will be  useful for the analysis of the differentiability of the effort, carried out in Sections \ref{S:regula_E_beta_f} and \ref{S:diffe_E_beta}.\\
First, in Proposition \ref{zeros}, via an almost immediate application of Sard's Lemma, we show  that the set of \emph{critical velocities}, that is those values of $\beta$ that do not satisfy the transversality condition 
\begin{equation*}
\gls{Fdef} + \beta = 0 \quad \Rightarrow \quad F'(U) \not= 0, 
\end{equation*}
is compact with zero Lebesgue measure for every smooth source $f$. Subsequently, with Proposition \ref{Prop:F_transv_poly}, we make a substantial improvement of the above result by restricting ourselves to the family of polynomial sources. It is easy to check that if $f$ is a polynomial, its associated set of critical velocities is at most finite. Moreover, we prove the existence of a sub-family \gls{NTFk} of condimension $1$ in the family of polynomials of order $k\ge 3$ such that if $f \not\in \gls{NTFk}$, all its associated critical velocities are local extrema of $F$, or, more precisely,
\begin{equation*}
F(U) + \beta = F'(U) = 0 \quad \Rightarrow \quad F''(U) \not= 0.
\end{equation*}
Finally, as third part of the statement, we prove that the same property remains true when we restrict ourselves to polynomials taking the value $0$ at a fixed $U^*$: this last result is connected to the differentiability property suggested in Remark \ref{Rem:const_U*}. \\
Next we address the problem of transversality of \gls{Gammau}, \gls{Gammas} and \gls{Pstar}. We define the sets of points where where $\Gamma_u$ and $\Gamma_s$ cross $P^*$, respectively \gls{Cs}$=C_s(\beta, f)$ and \gls{Cu}$=C_u(\beta, f)$.
In the proof of Proposition \ref{Prop:transv_Gamma_gen}, we exploit the Implicit Function Theorem to show that for $\beta$ outside of a compact set of zero Lebesgue measure, \gls{Cu} and \gls{Cs} are finite and the intersections are always \emph{transversal}, that is
\begin{equation*}
U \in C_u\cup C_s \quad \Rightarrow \quad F(U)+\beta \neq 0.
\end{equation*} 
As in the above, the result can be strengthened considering polynomial sources. In Proposition \ref{Prop:Gamma_trans_poly} we prove that if $f$ is a polynomial, the cardinality of $C_u(\beta)$ and $C_s(\beta)$ are uniformly bounded in $\beta$, and, in this case, the family of velocities for which a tangential crossing with $P^*$ happens either for $\Gamma_u$ or for $\Gamma_s$ is at most finite.\\
In Definition \ref{Def:trans_calT}, we introduce the family of \emph{transversal velocities} for a source $f$, that is \gls{Tcal}$=\mathcal{T}_f$. We say that $\beta$ is \emph{transversal} for $f$ if it is non-critical for $f$ and the associated sets $C_u(\beta, f)$ and $C_s(\beta, f)$ are finite and only consist of transversal intersection points. Collecting the results previously obtained throughout this section, we are able to prove Theorem \ref{Theo:transv_generic}. Here, we exhibit a dense open set \gls{Tfrak} in the space of admissible sources satisfying:
\begin{enumerate}
\item $\mathfrak T$  contains a dense set of polynomials,
\item if $f \in \mathfrak T$, outside a finite set the speeds $\beta's$ are transversal for $f$,
\item the set
\begin{equation*}
\bigcup_{f \in \mathfrak T} \mathcal T_f \times \{f\}
\end{equation*}
is open and dense in the family of admissible couples $(\beta, f)\in \gls{Dcal}$.
\end{enumerate} 
As previously mentioned, when $f$ satisfies the \emph{monotonicty condition}, the region $\{F+\beta>0\}$ is an interval for all $\beta$. In Remark \ref{Rem:existence_multiple_inter} we provide the explicit example of a source for which the same set consists of countably many connected components, thus also implying that $\mathfrak T$ is strictly contained the family of all admissible sources.

Motivated by the previous observations, starting from Section \ref{S:exist_regu_one_opt}, we begin our study of the minimization problem defining the effort \gls{Ebeta} when  assumption (\ref{mon}) is removed. We define the set \gls{calUset} where the critical curve \gls{Pstar} lies in the region enclosed by \gls{Gammau} and \gls{Gammas}, but its associated control \gls{Fdef}$+\beta$ is negative. We decompose \gls{calUset} as the union of its connected components, denoted by $\gls{Iomega}(\beta) =(\gls{Vomega}(\beta), \gls{Womega}(\beta))$, where the index \gls{Omegapar} is varying in a mostly countable family. For every $\varpi$, we denote by \gls{Qomega}and \gls{Romega} the solutions to ODE \eqref{functeq} coinciding with $P^*$ respectively at $V_\varpi^-$ and $W_\varpi^+$. In Proposition \ref{mft}, we introduce the \emph{minimal forward trajectory} \gls{Pbar} from the point $U^-$ with $\Gamma_u(U^-) \leq P^*(U^-) \leq \Gamma_s(U^-)$, which satisfies the following properties:
\begin{itemize}
\item $\overline{P}$ coincides with \gls{Pstar} at $U^-$ and is always above both $\Gamma_u$ and $P^*$ and below $\Gamma_s$, 
\item the control associated to $\overline{P}$ is always non-negative (\emph{admissibility}) and it is $0$ where $\overline{P}\neq P^*$,
\item for any other admissible trajectory $P$ such that $P>P^*$, then $P>\overline{P}$ (\emph{minimality}).
\end{itemize}
Similarly, in Proposition \ref{Mbt} we define the \emph{maximal backward trajectory} \gls{Ptilde} starting from $U^+$ with $\Gamma_u(U^+) \leq P^*(U^+) \leq \Gamma_s(U^+)$: the main difference here is that \gls{Ptilde} is defined for $U \leq U^+$ and lies below $P^*$. \\
Next, in Definition \ref{Def:candid_min}, we introduce the family  of \emph{candidate minimizers}, made of admissible trajectories whose associated control only acts in the region $\{P=P^*\}$.  The main result of this section is contained in Theorem \ref{betterprofile}, where we prove that given any admissible trajectory it is always possible to produce a candidate minimizer achieving a lower cost. In simple terms, the proof consists in modifying the given trajectory in each region where it does not coincide with $P^*$ by replacing it with either \gls{Pbar} or \gls{Ptilde}. Such operation decreases the cost because of the formula obtained in Proposition \ref{improv}. As stated in Corollary \ref{Cor:restrict_admissible}, the previous result implies that it is possible to restrict the class of optimizing sequences defining the effort \gls{Ebeta} and only minimize among candidate minimizers. \\
Next, we introduce the family \gls{Acaltildebeta}, made of candidate minimizers defined as functions of $U$, which will facilitate the computations carried out in the following steps: indeed the requirement of applying the control only on the critical curve yields strong regularity properties to the candidate minimizers. With the aid of the preliminary regularity Lemma \ref{Lem:regul_P_cand}, in Proposition \ref{corr} we provide an explicit integral expression for the cost associated to profiles in \gls{Acaltildebeta}. 
In Definition \ref{Def:free_control_set}, for each $P \in \tilde {\mathcal{A}}(\beta)$, we introduce the \emph{critical set} \gls{KcalP}, where $P$ equals $P^*$, the \emph{control set} \gls{OcalP},  where the control associated to $P$ is active, and the \emph{free set} \gls{IcalP}, where $P$ is a solution to \eqref{functeq}. In Lemma \ref{costexpression}, we refine the identity proved in Proposition \ref{corr}, showing that the cost associated to a candidate minimizer  can be computed integrating the control only over the control set. Finally, in Theorem \ref{exist}, we prove that the family of candidate minimizers is compact with respect to uniform convergence and the cost function restricted to such family is continuous, so that a simple application of the direct method of the Calculus of Variations allows us to conclude that a minimum exists.

After proving the existence of an optimal profile, in Section \ref{S:first_cond_opt} we develop our analysis providing first-order necessary conditions for minimality, which is the content of Proposition \ref{nec2}. We fix an optimal profile $P$ and $(U^-, U^+)\subseteq \gls{IcalP}$ connected component such that $\Gamma_u(U^-)<P(U^-)$ and $\Gamma_s(U^+)>P(U^+)$, and we prove that for all $\tilde{U} \in (U^-, U^+)$ the following integral inequalities must hold:
\begin{equation}
\label{Equa:kernel_intro_1}
\int_{U^-}^{\tilde U} \frac{Uf(U)-P^2(U)}{UP^2(U)} \, dU \ge 0 \quad \text{and}\quad  \int_{\tilde U}^{U^+} \frac{Uf(U)-P^2(U)}{UP^2(U)} \, dU \le 0.
\end{equation}
The proof is based on the comparison of the optimal cost associated to $P$ with that of a new admissible profile whose associated control is switched off slightly before $U^-$, and then turned back on at $\tilde{U}$, in order to reconnect with $P(\tilde{U})$. Once again by the Stokes' formula (see Proposition \ref{improv}), the ratio between the control applied at $\tilde{U}$ and the control not applied at $U^-$ can be computed as 
\begin{equation*}
e^{\int_{U^-}^{\tilde {U} }\frac{Uf(U)-P^2(U)}{UP^2(U)} \, dU},
\end{equation*}
so that, by optimality, the integral exponent has to be non-negative. The second inequality is obtained via the same technique, switching off the control slighlty after $U^+$. As noted in Remark \ref{nec2strong}, if $\Gamma_u(U^-)=P^*(U^-)$, we can only consider perturbations from the right so that only the second inequality holds. Similarly, when $\Gamma_s (U^+)=P(U^+)$, the only admissible perturbations are obtained moving backwards from $U^-$, which leads to the first inequality. 
The final part of this subsection contains some results regarding the first and last points where the optimal control is applied. In Definitions \ref{Def:u_intersection_set} and \ref{Def:s_intersection_set}, we recall the notation for the intersection points of \gls{Pstar} with \gls{Gammau} and \gls{Gammas}, respectively \gls{Cu} and $\gls{Cs}$. Also we define \gls{Chatubeta} as the set of intersection points $\overline{U}\in C_u(\beta)$ such that
\begin{equation*}
\gls{IuUbar}(U):=\displaystyle{\int_U^{\overline U} \frac{Wf(W)-\Gamma_u^2(W, \beta)}{W\Gamma_u^2(W, \beta)}\, dW} \le 0 \quad \forall U \in [U_u(\beta), \overline{U}]
\end{equation*}
and set \gls{Uhatubeta} as the maximum of \gls{Chatubeta}. Moreover, we provide analogous definitions for \gls{IsUbar}, \gls{Chatsbeta} and \gls{Uhatsbeta}.\\
Roughly speaking, in Propisition \ref{Prop:optimal_Gamma_us} we prove that if the optimal profile is a solution to \eqref{functeq} in a right neighborhood of the first intersection point \gls{Uubeta}, then it has to coincide with $\Gamma_u$ up to \gls{Uhatubeta}. Similarly, if \gls{Pbeta} is equal to $\Gamma_s$ in a left neighborhood of \gls{Usbeta}, then they coincide starting from \gls{Uhatsbeta}.\\
We end the section proving uniqueness of the optimal profile \gls{Pbeta}. In Theorem \ref{suff}, we show that only one candidate minizer can satisfy the integral conditions in Proposition \ref{nec2}. As an immediate consequence, we deduce that the optimal profile is unique and that the obtained necessary conditions are in fact sufficient (Corollary \ref{unique}).

Section \ref{S:regular_calO} begins with some additional notation. We denote by \gls{Pbeta} the unique optimal profile for \gls{Ebeta}, by \gls{alphabeta} its associated control and by \gls{KcalPopt}, \gls{OcalPopt}, and \gls{IcalPopt}, respectively, its critical, control and free sets. Aim of the first subsection is to show that control sets $\mathcal{O}(\beta)$ are strictly increasing with $\beta$. To such end, we fix $\beta_1 < \beta_2$ and $I$ connected component of $\mathcal{I}(\beta_1)$. In Equation \eqref{local}, we introduce a localized and constrained version of the optimization problem for $\beta_2$, obtained by minimizing the cost restricted to $I$ only among admissible profiles which coincide with \gls{Pstar} at the endpoints of $I$. Using techniques very similar to those developed in Sections \ref{S:exist_regu_one_opt} and \ref{S:first_cond_opt}, in Lemma \ref{Lem:restr_beta_i}, we prove that the problem in \eqref{local} admits a unique minimizer, satisfying necessary conditions analogous to those in Proposition \ref{nec2}. As noted in Remark \ref{Rem:beta12_manifold}, similar arguments can be exploited to deal with the localized problem in the first and last free intervals $(\gls{Uubeta},\gls{Uhatubeta})$ and $ (\gls{Uhatsbeta},\gls{Usbeta})$. The above results are crucial in the proof of Corollary \ref{monotone}, where we show $\mathcal{O}(\beta_1)\subseteq \mathcal{O}(\beta_2)$, which will also imply strict monotonicty of \gls{Ebeta}. For each $I \subseteq \mathcal{I}(\beta_1)$, the optimal profile for the local problem in \eqref{local} satisfies the sufficient conditions to be optimal (see Corollary \ref{unique}). Therefore, we are able to provide an explicit construction of $P_{\beta_2}$, which makes the inclusion in the statement evident. As noted in Remark \ref{Rem:tree_structure}, the optimal profile for $\beta_2$ satisfies the following property: each connected component of $\mathcal{I}(\beta_2)$ is strictly contained in a connected component of $\mathcal I(\beta_1)$. In fact, as $\beta$ increases, it can either happen that the connected components of $\mathcal{I}(\beta)$ simply shrink or they can split: such two different possibilities will have consequences on the regularity of the effort function $E$. \\
The following subsection is devoted to the analysis of the dependence of optimal profiles with respect to \gls{beta}. The main result is contained in Proposition \ref{Prop:reguls_Ibeta}. We fix a transversal source $f \in \gls{Tfrak}$ and we prove the existence of a set \gls{Ncal2} such that for all $\beta \not \in \mathcal{N}_2$ the following hold:
\begin{itemize}
    \item the number of connected components of \gls{IcalPopt} is finite and constant in $\beta$,
    \item  the set\begin{equation*}
\bigcup_{f \in \mathfrak T} \mathcal N_{2}(f)^c \times \{f\}
\end{equation*}
is open and dense in the family of admissible couples $(\beta,f) \in \gls{Dcal}$,
\item the boundary points of each connected component of $\mathcal I(\beta)$ depend smoothly on $\beta$ and $f$.
\end{itemize}
Finally, in Remark \ref{Rem:split_calI}, we stress that the proof of Proposition \ref{Prop:reguls_Ibeta} heavily relies on the assumption that for values $\beta\not\in \mathcal{N}_2$ no splitting happens in the connected components of $\mathcal{I}(\beta)$. The result is sharp: in Section \ref{S:counter_convex_C11}, we will see that when a splitting occurs, one also observes a decrease in regularity. 

The mentioned properties allow us to perform a detailed study of the dependence of the effort on both the velocity \gls{beta} and the source \gls{fU}, carried out in the subsequent sections.

We begin Section \ref{S:regula_E_beta_f} with the analysis of the dependence $\beta\mapsto \gls{Ebeta}$. In Proposition \ref{Prop:E_Lipschitz_beta}, we first recall that $E$ is strictly increasing with respect to $\beta$, as already proved in Corollary \ref{monotone}. Moreover, via a comparison argument, we show that $\beta\mapsto E(\beta)$ is Lipschitz continuous. Finally, noting that for $\beta\to +\infty$, the optimal profile \gls{Pbeta} approaches the critical curve \gls{Pstar}, we also conclude that $E$ is asymptotically linear.\\
Regularity with respect to $f$ is slightly more delicate. Once again, in Propositions \ref{Prop:depe_E_f} and \ref{Prop:Lipschitz_below}, we obtain Lipschitz dependence of $E$ on the source function by comparison with non-optimal profiles, but in this case the estimates are only local.  Conversely, the proof of monotonicity (Proposition \ref{Prop:monotone_f}) relies on the computation of the derivative of the effort with respect to $f$: to such end, we use for the first time an approximation technique which will be extensively exploited also in Section \ref{S:diffe_E_beta}.
Thanks to Theorem \ref{Theo:transv_generic} and Proposition \ref{Prop:reguls_Ibeta}, we are able to compute the derivative of the map
\begin{equation*}
    (a_1,\dots, a_k) \mapsto E(\beta, f) \quad \text{with} \ f(U)=\sum_{i=1}^k a_iU^i
\end{equation*}
for $(a_1,\dots,a_k)$ outside a closed Lebesgue-negligible set and with respect to variations satisfying 
\begin{itemize}
   \item $\displaystyle{\sum_{i=1}^ka_i=0}$, so that admissibilty is preserved,
   \item $\displaystyle{\sum_{i=1}^k a_iU^i \ge0}$, so that $\delta f \ge 0$.
\end{itemize}
Exploiting the first-order necessary conditions from Section \ref{S:first_cond_opt}, in Equation \eqref{Equa:deriv_expli_sour}, we provide an explicit formula for such derivative and note that it must be non-negative: if follows immediately that the effort function restricted to the family of polynomials is increasing. By density and recalling continuity of the effort with respect to $f$ (see Corollary \ref{Cor:Lipsc_f}), we conclude monotonicty for $f\mapsto E(f)$ for all admissible sources. We conclude the section with Remark \ref{Rem:deriva_f}, where we observe that via an approximation argument, the expression for $\partial_fE$ obtained in \eqref{Equa:deriv_expli_sour} can be extended to all admissible sources, provided $\beta$ is far from the critical speed \gls{betastarstar}.

In Section \ref{S:diffe_E_beta}, we investigate the differentiability properties of \gls{Ebeta}. First, we define the functions \gls{Ppartialu} and \gls{Ppartials}, respectively solving systems \eqref{Equa:delta_Pu} and \eqref{Equa:delta_Ps}: these functions are the first-order perturbations of the profile w.r.t. $\beta$, with \gls{Ppartialu} starting from $U=0$ and \gls{Ppartials} starting from $U=1$. In Lemma \ref{Lem:unique}, we provide a representation formula for $\frac{\gls{Ppartialu}}{U}$ and $\frac{\gls{Ppartials}}{U}$ and prove that they depend continuously on \gls{beta} and \gls{fU}.  Also, in Equation \eqref{k}, we analyze the kernel (already appearing in \eqref{Equa:kernel_intro_1})
\begin{equation*}
\gls{Kkernel} := \int_{U_1}^{U_2} \frac{U f(U) - P_\beta(U)^2}{U P_\beta(U)^2} \, dU, \quad U_u(\beta) \leq U_1,U_2 \leq U_s(\beta),
\end{equation*}
and we list its main properties, following from the necessary conditions proved in Section \ref{S:first_cond_opt}. \\
In Theorem \ref{Theo:derivative_E}, we prove that $E\in C^{1}((\beta^*, +\infty)$ and its first derivative can be expressed in terms of \gls{Ppartialu} and \gls{Ppartials} as follows:
\begin{equation*}
E'(\beta)= \frac{\partial P_s(U,\beta) - \partial P_u(U, \beta)}{U} \quad\text{for any} \ U \in \clos(\gls{OcalPopt}).
\end{equation*}
The main idea of the proof is to show that the previous formula holds in the case of a  polynomial source, and then pass to the general case by an approximation argument. We start by fixing $f \in \gls{Tfrak}$: by Proposition \ref{Prop:reguls_Ibeta}, there exists a finite set $\gls{Ncal2}=\mathcal{N}_2(f)$ and $N=N(f)\in \N$ such that the optimal free set \gls{IcalPopt} can be expressed as
\begin{equation*}
\gls{IcalPopt}=(\gls{Uubeta},\gls{Uhatubeta})  \cup \bigcup_{i=1}^N (U_i^-(\beta), U_i^+(\beta)) \cup (\gls{Uhatsbeta},\gls{Usbeta}) \quad \forall \beta \in \mathcal N_{2}(f)^c,
\end{equation*}
with all the extremals depending smoothly on $\beta$. It follows that the cost can be decomposed as
\begin{equation*}
\gls{Ebeta}=\int_{\gls{OcalPopt}}\frac{\gls{Fdef}+\beta}{U}\, dU= \int_{\hat{U}_u(\beta)}^{\hat{U}_s(\beta)}\frac{F(U)+\beta}{U}\, dU -\sum_{i=1}^N \int_{U_i^-(\beta)}^{U_i^+(\beta)} \frac{F(U)+\beta}{U}\, dU
\end{equation*}
and each term can be differentiated separately, which allows us to prove that the formula in the statement holds for $(\beta,f) \in \bigcup_{f \in \mathfrak T} \mathcal N_{2}(f)^c \times \{f\}$. \\
Next, in Theorem \ref{2der} we study second order differentiability of $E$. First, as in Theorem \ref{Theo:derivative_E}, we are able to compute explicitly $\partial_\beta^2 E(\beta,f)$ for $(\beta,f) \in \bigcup_{f \in \mathfrak T} \mathcal N_{2}(f)^c \times \{f\}$: in particular, since $\mathcal N_2(f)$ is finite for $f \in \gls{Tfrak}$ polynomial away from $\beta^{**}$, we deduce that the map $\beta \mapsto \partial_\beta E(\beta, f)$ is Lipschitz in such a case. 
Via approximation, we conclude that the same property holds for a generic source away from $\beta^{**}$. \\ The section ends with  Corollary \ref{Cor:blowup_E''}: exploiting the formula obtained in Theorem \ref{2der} and Proposition \ref{Prop:blowupsecond}, we prove that when Condition \eqref{Equa:cond_blow} holds, the second derivative of $E$ tends to $-\infty$ as $\beta \to \beta^{**}$.

Finally, in Section \ref{S:counter_convex_C11}, we collect three examples showing that the previously obtained results are sharp. The first example considers a perturbation of the Hamiltonian system when $\beta=0$, and in this case its associated manifolds can be explicitly computed, via Equations \eqref{unstable0} and \eqref{stable0}. Similarly, for $\beta=0$, we are able to provide formulas for $\frac{\partial \Gamma_u}{\partial \beta}$, $\frac{\partial \Gamma_s}{\partial \beta}$, $\frac{\partial ^2 \Gamma_u }{\partial \beta^2} $ and $\frac{\partial ^2 \Gamma_s }{\partial \beta^2}$. In Equation \eqref{Eq:nonconvsource}, we introduce a sequence $\{f_K\}_{K \in \mathbb{N}}$ of continuous sources for which we are able to show 
\begin{itemize}
    \item $\frac{\partial ^2 \Gamma_u}{\partial \beta^2}(0,U^*)>0$ and $ \frac{\partial ^2 \Gamma_u}{\partial \beta^2}(0,U_u(0))=\frac{\partial ^2 \Gamma_u}{\partial \beta^2}(0,U^*) + \mathcal O (\sqrt{K})$,
    \item $\frac{\partial^2\Gamma_s}{\partial \beta^2}(0, U_s(0))=\frac{1}{8\sqrt{K(1-2U^*)}} \to 0 \text{ as } K\to +\infty$,
\end{itemize}
so that, in particular, for $K \gg 1$,
\begin{equation*}
\frac{1}{U_s(0)} \frac{\partial^2 \Gamma_s}{\partial \beta^2}(0,U_s(0)) - \frac{1}{U_u(0)}\frac{\partial^2 \Gamma_u}{\partial \beta^2}(0,U_u(0))<0.
\end{equation*}
We remark that 
\begin{equation*}
    \int_0^1 f_K(U)\, dU \to +\infty \quad \text{for} \ K\to+\infty, 
\end{equation*}
meaning that requiring that $K$ is large enough corresponds  to asking that $f$ is highly positively predominant.\\
In the second example, we recall that in Proposition \ref{Prop:E_Lipschitz_beta}, we have proved that the effort is asymptotically linear:
\begin{equation*}
     E(\beta)\sim \ln \bigg( \frac{1}{U^*} \bigg) \beta +\int_{U^*}^1 \frac{2 \sqrt{f(U)}}{U^{3/2}} dU \quad \text{as}\ \beta\to+\infty.
\end{equation*}
Such property obviously has consequences on the convexity of $E$. In \cite{BressanChiri23}, in order to ensure lower semi-continuity of the cost functional, the authors introduce two main assumptions on the effort:
\begin{itemize}
     \item $E$ is convex, 
     \item $E(\beta)\ge \beta E'(\beta), \quad \forall \beta \ge 0.$
\end{itemize}
We note that if one assumes a priori that $E$ is convex, its asymptotic behavior automatically implies the second condition. Conversely, exploiting similar arguments and the Lipschitz continuity of the effort proved in Proposition \ref{Prop:E_Lipschitz_beta}, we are able to exhibit a cubic source whose associated effort is not convex. \\
The last example shows that the result obtained in Theorem \ref{2der} about $C^{1,1}$-regularity is sharp. As previously noted (see Remark \ref{Rem:split_calI}), the regularity of the optimal profile \gls{Pbeta}, and thus of \gls{Ebeta}, is strictly linked to the topological features of the free set \gls{IcalPopt}: such observation is crucial to construct a source whose associated effort is not of class $C^2$.  According to the localization property proved in Lemma \ref{Lem:restr_beta_i}, we can find $f \in \gls{Tfrak}$ and a velocity $\overline{\beta}$ for which a splitting occurs in $\mathcal{I}(\overline{\beta})$, in particular
\begin{equation*}
     \mathcal{I}(\beta)=\begin{cases}
         (U_1^-(\beta),U_1^+(\beta)) &\text{ for }   \beta<\overline{\beta}, \\
         (U_2^-(\beta), U_2^+(\beta)) \cup (U_3^-(\beta), U_3^+(\beta))&\text{ for }   \beta>\overline{\beta}.
     \end{cases}
\end{equation*}
Knowing that the optimal profile solves \eqref{functeq} in each connected component of $\mathcal{I}$, equals \gls{Pstar} at the endpoints and must satisfy the necessary condition \eqref{necint}, we are able to compute the right and left limits of $\partial^2_\beta E$ at $\bar \beta$ and prove that they do not coincide.

\subsection{Notation}
\label{Ss:notation}

A glossary of notations is at end of the paper: we have omitted standard notations for natural, real numbers, etc. Since the functions considered may depend on several variables, in order to avoid formulas too cumbersome we omit the dependence from the variable which do not play a role in the computation: this should not create confusion.

\section{Existence and structure of optimal profiles}
\label{S:struct_opt}

Aim of this section is to recall some basic facts about traveling profiles for parabolic reaction diffusion equations. For all classical results regarding traveling wave solutions for parabolic reaction diffusion equations we refer to \cite[Chapter 4]{Fife}.

\subsection{Traveling waves with no control}
\label{Ss:travelin_no_alpha}

Consider the PDE \eqref{Equa:parab_contr_1} without control
\begin{equation}
\label{eq}
U_t = U_{xx} + f(U).
\end{equation}
As in \cite{BressanChiri22} and \cite{BressanChiri23} we make the following assumptions on the \emph{source function}
$\gls{fU}$:
\begin{enumerate}[label=\textbf{(H\arabic*)}, ref=H\arabic*]
\item\label{H1} The function $f \in C^2([0,1])$ satisfies (see Fig. \ref{Fig:fU_Gammasu})
\begin{equation*}
f(0)=f(1)=0, \ f'(0)<0, \ f'(1)<0.
\end{equation*}
and there exists a unique point
$\gls{Ustar} \in (0,1)$ such that $f(U^*)=0$ and $f'(U^*)>0$.
\end{enumerate}
Moreover we require that $f$ is \emph{positively predominant}, that is
\begin{equation}
\label{Equa:positive_prev}
\int_0^1 f(U) \, dU > 0.
\end{equation}
A \emph{traveling wave solution with velocity
$\gls{beta}$} (or \emph{traveling profile}) for \eqref{eq} is a solution of the form
\begin{equation*}
U(t,x) = U(x-\beta t)
\end{equation*}
connecting $U(-\infty) = 0$ to $U(+\infty) = 1$. 
If we consider the  system in the phase plane
$(\gls{Uvaria},\gls{Pvaria})$
\begin{align}
\label{sys}
\begin{cases}
U' = P, \\
P' = - f(U) -\beta P,
\end{cases}
\end{align}
a traveling profile corresponds to a trajectory in the $(U,P)$-plane connecting the equilibria $(0,0)$ and $(1,0)$ with $P > 0$. 

Linearizing the system at a point $(U,P)$ we obtain the Jacobian matrix
\begin{equation*}
D(U,P)=\begin{pmatrix}
0 & 1 \\
- f'(U) & - \beta
\end{pmatrix},
\end{equation*}
whose eigenvalues are \newglossaryentry{lambdapm}{name=\ensuremath{\lambda_{\pm}(\beta,U)},description={eigenvalues of the linearized system \eqref{sys}}}
\begin{align*}
\gls{lambdapm} := - \frac{\beta}{2} \pm \sqrt{\frac{\beta^2}{4}-f'(U)}.
\end{align*}
We can therefore distinguish the following cases with respect to the stability of the fixed points $(0,0)$, $(U^*,0)$, and $(1,0)$.

\begin{enumerate}
\item When $f'(U)<0$, the eigenvalues are real and have opposite signs for all $\beta \in \R$: in particular the equilibria $(0,0)$ and $(1,0)$ are saddles.

\item If $f'(U)>0$ and $\beta <- 2 \sqrt{f'(U)}<0 $, the eigenvalues are both real and positive: the equilibrium $(U^*,0)$ is an unstable node.

\item If $f'(U)>0$ and $- 2\sqrt{f'(U)}<\beta < 0$, $\lambda_{\pm}(\beta, U)$ are complex conjugate, both with positive real part: the equilibrium $(U^*,0)$ is an unstable focus.

\item For $\beta = 0$ the system is Hamiltonian, so that $(U^*,0)$ is a center.

\item \label{Point_5:eigen_equi} If $f'(U)>0$ and $0 < \beta < 2\sqrt{f'(U)}$, $\lambda_{\pm}(\beta, U)$ are complex conjugate, both with negative real part: the equilibrium $(U^*,0)$ is a stable focus.

\item If $f'(U)>0$ and $\beta > 2\sqrt{f'(U)}>0 $, the eigenvalues are both real and negative: the equilibrium $(U^*,0)$ is a stable node.
\end{enumerate}

Therefore, for $P>0$ the phase portrait for the system \eqref{sys} contains an unstable manifold $\gls{Gammau} = \Gamma_u(\beta,f,U)$ of the point $(0,0)$ and a stable manifold $\gls{Gammas} = \Gamma(\beta,f,U)$ of the point $(1,0)$. 
Moreover (\cite[Chapter 4.4.a]{Fife}) there exists a unique value 
$\gls{betastar} = \beta^*(f)$ (see Remark \ref{Rem:positiv_perv}) such that $\Gamma_u(\beta^*)=\Gamma_s(\beta^*)$: thus, we can find a unique (up to translation) traveling profile $U$ with speed $\beta^*$. 

We write $\Gamma_u(\beta)$ as a function $\Gamma_u(\beta,U)$ in the region \newglossaryentry{Uufbeta}{name=\ensuremath{U_{uf}(\beta)},description={first point where $\Gamma_u(U,\beta) = 0$}} $[0,\gls{Uufbeta} = U_{uf}(\beta,f)]$ where $\Gamma_u(\beta) \geq 0$, with $\Gamma_u(\beta,0) = \Gamma_u(\beta,U_{uf}(\beta)) = 0$ and strictly positive inside the interval; similarly we write $\Gamma_s(\beta) = \Gamma_s(\beta,U)$, which it is easy to see that it is a function for $\beta \geq \beta^*$.

Point \eqref{Point_5:eigen_equi} above the fact that for $\beta \to \infty$ it holds $\Gamma_u(\beta,U) \to 0$ yields the existence of a value $\gls{betastarstar} = \beta^{**}(f) \geq 2\sqrt{f'(U^*)}$ such that for $U_{uf}(\beta^{**}) = U^*$. From this value on $U_{uf}(\beta) = U^*$.

\begin{remark}
\label{Rem:ether_1}
The value of $\beta^{**}$ corresponds to the "etheroclinic connection" where $\Gamma_u$ coincides with the "strongly" stable manifold defined by the eigenvalue $\lambda_-(\beta,U^*)$.
\end{remark}

See Fig. \ref{Fig:fU_Gammasu} for a graphical representation of the manifolds $\Gamma_u(\beta),\Gamma_s(\beta)$ for different values of $\beta$. 

\begin{figure}
\resizebox{\textwidth}{!}{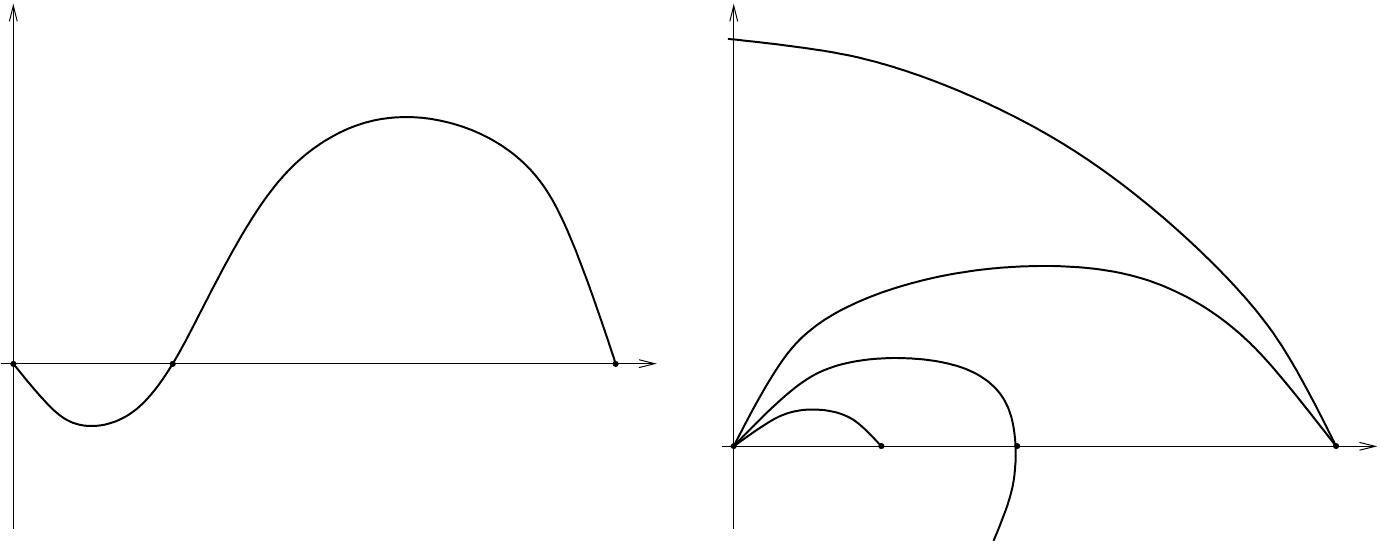}
\caption{The plot of the source $f(U)$ and of the stable and unstable manifolds for different speeds $\beta$.}
\label{Fig:fU_Gammasu}
\end{figure}

\subsection{Traveling waves with control}
\label{Ss:travelin_alpha}

Next, consider the model for PDE with control \newglossaryentry{alphat}{name=\ensuremath{\tilde \alpha(t,x)},description={control for the PDE \eqref{eqwithcontrol}}}
\begin{equation}
\label{eqwithcontrol}
U_t = U_{xx} + f(U) - \gls{alphat} U.
\end{equation}
The control $\tilde \alpha(t,x)$ is assumed to be positive, which means that our goal is to diminish $U$, and it allows to construct traveling profiles with speed $\beta > \beta^*$: for a given $\beta > \beta^*$, we look for a control $\tilde \alpha(s)$ and a function $U = U(s)$ such that $U(t,x) := U(x-\beta t), \ \tilde \alpha(t,x) = \tilde \alpha(x-\beta t)$ solve \eqref{eqwithcontrol}.

\begin{definition}[Admissible profiles]
\label{admcurves}
Given $\beta > \beta^*$, we denote by  $\gls{Acalbeta} = \mathcal A_{\beta,f}$ the set of all Lipschitz curves
$$
s \mapsto \gamma(s)=(U(s), P(s)) \in (0,1) \times (0,+\infty)
$$
which are graphs of solutions $(U(s),P(s))$ for the planar system
\begin{align}
\label{syswithcontrol}
\begin{cases}
U'= P, \\
P'= -\beta P - f(U) + \tilde \alpha U,
\end{cases} \quad (U,P)(-\infty) = (0,0), \ (U,P)(+\infty) = (1,0),
\end{align}
for some admissible control $\tilde \alpha \ge 0$.
\end{definition}

Given an admissible curve $\gamma \in \mathcal{A(\beta)}$, its associated cost \newglossaryentry{Etgamma}{name=\ensuremath{\tilde E(\gamma)},description={effort associated to an admissible curve $\gamma$}} \gls{Etgamma} can be computed as (see \cite{BressanChiri22})
\begin{equation*}
\gls{Etgamma} = \int_\R \alpha(x) dx = \int_{\gamma} \mathbf{v},
\end{equation*}
with
\begin{equation*}
\mathbf{v} := \left( \frac{f(U)}{UP} + \frac{\beta}{U}\right)dU + \left(\frac{1}{U}\right) dP,
\end{equation*}
i.e. a path integral along $\gamma$.

\begin{definition}[Effort function]
\label{Def:effort_funvc}
We define the \emph{effort function} $\gls{Ebeta} = E(\beta,f)$ as
\begin{equation}
\label{effort}
\begin{array}{ccccc}
E &\colon& [\beta^*, +\infty) &\to& \R^+ \\
&& \beta &\mapsto& E(\beta) := \inf \{ \tilde E(\gamma), \gamma \in \mathcal{A}_\beta\}
\end{array}
\end{equation}
\end{definition}

We can assume that $E(\beta) = +\infty$ for $\beta < \beta^*$, in the sense that there are no admissible solutions.

Being $P > 0$ if $\gamma \in \mathcal{A}_\beta$, it follows that $s \mapsto U(s)$ is invertible in $I$ and we can express $P$ as a function of $U$, solving 
\begin{equation*}
\frac{dP}{dU} = - \beta - \frac{f(U)}{P} + \alpha(U) U, \quad \gls{alphaU} = \frac{\tilde \alpha(s(U))}{P(U)},
\end{equation*}
with
\begin{equation*}
\int_I \tilde \alpha(s) \, ds = \int_{U(I)} \frac{\tilde \alpha(s(U))}{P(U)} \, dU=\int_{U(I)} \alpha(U) \, dU.
\end{equation*}
We therefore can use the cost $\alpha(U)$ for traveling profiles when computing the cost: this will remove the degeneracy of the invariance of the problem w.r.t. translations. If $\alpha = 0$ we obtain the ODE
\begin{equation}
\label{functeq}
\frac{dP}{dU} = - \frac{f(U)}{P(U)} - \beta.
\end{equation}

%


\section{Stability estimates for the manifolds $\Gamma_u$, $\Gamma_s$}
\label{Ss:stab_sti_Gamma_us}

In this section, we prove some useful results on the dependence of the unstable manifold $\Gamma_u$ and the stable manifold $\Gamma_s$ with respect to $\beta$ and the source $f$. It is well known that the stable and unstable manifolds depend smoothly on the vector field; our aim is to provide short proofs of some explicit estimates which will be used throughout the rest of the paper. A critical case is the "etheroclinic" connection at $\beta^{**}$, where the dependence may fail to be smooth.

In the variable $(U,P)$, the manifolds $\Gamma_u,\Gamma_s$ solve the ODE
\begin{equation}
\label{Equa:Gammau_eq0}
\frac{d \Gamma_u}{dU} = - \frac{f(U)}{\Gamma_u} - \beta, \quad \lim_{U \to 0} \frac{\Gamma_u(U)}{U} = \lambda_+(\beta,0),
\end{equation}
\begin{equation}
\label{Equa:Gammas_eq0}
\frac{d \Gamma_s}{dU} = - \frac{f(U)}{\Gamma_s(U)} - \beta, \quad \lim_{U \to 1} \frac{\Gamma_s(U)}{U} = \lambda_-(\beta,1).
\end{equation}
The limits as $U \to 0$, $U \to 1$ are required by the property of being tangent to the unstable, stable eigenspace of the equilibrium $U=0$, $U=1$ respectively.

The first result we prove is that the same tangency property holds as $U \to 0$ for solutions to
\begin{equation}
\label{Equa:funct_eq_UP}
\frac{dP}{dU} = - \frac{f(U)}{P} - \beta + \alpha(U) U, \quad \alpha(U) \geq 0,
\end{equation}
with the constraints
\begin{equation}
\label{Equa:functio_eq_0_constr}
\lim_{U \to 0} P(U) = 0, \quad \lim_{U \to 1} P(U) = 1, \quad \int_{(0,1)} \alpha(dU) < +\infty.
\end{equation}

\begin{lemma}
\label{Lem:lim_U_to_0}
If $P$ is positive solution to \eqref{Equa:funct_eq_UP}, \eqref{Equa:functio_eq_0_constr}, then
\begin{equation*}
\lim_{U \to 0} \frac{P(U)}{U} = \lim_{U \to 0} \frac{\Gamma_u(\beta,U)}{U} = \lambda_+(\beta,0).
\end{equation*}
\end{lemma}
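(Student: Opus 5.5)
Work with the ratio $z(U) = P(U)/U$ and study its ODE near $U=0$. From \eqref{Equa:funct_eq_UP},
\begin{equation*}
U \frac{dz}{dU} = \frac{dP}{dU} - z = -\frac{f(U)}{U z} - \beta - z + \alpha(U) U .
\end{equation*}
Since $f(U) = f'(0) U + O(U^2)$, we have $f(U)/U \to f'(0) < 0$. Write $g(z) := -\frac{f'(0)}{z} - \beta - z$. Its only positive zero is $\lambda_+ := \lambda_+(\beta,0)$, because $z^2 + \beta z + f'(0) = 0$ has one positive root. Moreover $g > 0$ on $(0,\lambda_+)$ and $g<0$ on $(\lambda_+,\infty)$. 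Thus, apart from the term $\alpha U$ and the $O(U)$ error, $z$ is driven by an equation of the form $U z' = g(z)$. In the variable $\tau = \ln U \to -\infty$, the point $\lambda_+$ is a \emph{repelling} equilibrium in backward time direction, i.e. attracting as $\tau \to +\infty$. Going backward ($\tau \to -\infty$), every solution other than $z \equiv \lambda_+$ leaves any neighborhood of $\lambda_+$. The plan is therefore to show that $z$ cannot escape to $0$ or to $+\infty$ as $U \to 0$, and cannot oscillate. Hence it must converge to $\lambda_+$.

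\textbf{Handling the control.} The perturbation $\alpha(U)U\,dU$ in the variable $\tau$ is $\alpha(U) U \cdot U\,d\tau$. Its total mass is $\int_0^{\delta} U\,\alpha(dU) \le \delta \int_0^\delta \alpha(dU) \to 0$, which is small by the integrability in \eqref{Equa:functio_eq_0_constr}. Also $\alpha \ge 0$ only pushes $z$ upward in forward $U$. I would integrate $U z' \ge g(z) + O(U)$, and $U z' \le g(z) + O(U) + \alpha U$, between $U_1 < U_2$ small. This gives comparison with the autonomous flow up to an error $\epsilon(\delta) = O(\delta) + \int_0^\delta U\alpha$.

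\textbf{Excluding escape.}
\begin{enumerate}
\item Suppose $z(U_2) \ge \lambda_+ + \eta$ at some small $U_2$. Going backward, the lower inequality $U z' \ge g(z) - C U$ together with $g \le -c(\eta) < 0$ above $\lambda_+ + \eta/2$ shows that $z$ increases as $U$ decreases, at rate $\ge c/U$ in $\tau$. Integrating $d\tau$ over $(-\infty,\ln U_2)$ then forces $z \to +\infty$ at a logarithmic rate or faster. This does not yet contradict $P \to 0$, so I refine: for large $z$, $U z' \le -z/2$, hence $z \ge C U^{-1/2}$. The ratio still fails to be bounded. Indeed $dP/dU \le -\beta - f/P$, and since $-f/P \approx |f'(0)| U/P$ is small when $z$ is large, $dP/dU \approx -\beta - (\text{positive small})$ plus the nonnegative control. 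Going backward, $P$ decreases at most at rate $|\beta| + 1$. The contradiction should come from the $z$-equation: in $\tau$ we get $z' \le -z/2$ going forward, so $z$ grows exponentially backward, $z \gtrsim U^{-1/2}$, i.e. $P \gtrsim U^{1/2}$. Combined with $P' \le -\beta - f/P$, this gives for $\beta>0$ that $P$ is decreasing in $U$ near $0$, contradicting $P(0^+) = 0 < P(U)$. The case $\beta\le 0$ is what I expect to be the main obstacle; there I would use the exact identity $(P^2/2)' = -\beta P - f + \alpha U P$ and the smallness of $\int \alpha U P$ to show $P^2 \lesssim U^2$.
\item Suppose $z(U_2) \le \lambda_+ - \eta$. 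Backward in $U$, the upper inequality with $g \ge c > 0$ pushes $z$ down, except for the control error, which is small. Hence $z$ decreases to values where $-f'(0)/z$ dominates. There $U z' \ge |f'(0)|/(2z)$, so $(z^2)' \ge c/U$ forward, i.e. backward $z^2$ decreases by $c\ln$. This forces $z$ to hit $0$ at some positive $U$, contradicting $P > 0$.
\end{enumerate}

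Having shown $\limsup z \le \lambda_+$ and $\liminf z \ge \lambda_+$ for every $\eta$ (with $\delta$ small), conclude $\lim_{U\to0} P/U = \lambda_+$. The same argument with $\alpha = 0$ recovers the known statement for $\Gamma_u$ in \eqref{Equa:Gammau_eq0}.
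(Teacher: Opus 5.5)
Your phase-line analysis of $z=P/U$ is a genuinely different route from the paper's, and it can be completed, but Case 1 is not correct as written.

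The first problem is that your two one-sided inequalities are swapped. To show that a large $z$ keeps growing as $U$ decreases, you need the \emph{upper} bound $Uz'\le g(z)+O(U)+\alpha U$. The control raises $z$ in forward $U$, so it is exactly the term resisting backward escape. Its total effect on $z$ over $(U,U_2)$ is $\alpha((U,U_2))$, because $dz$ contains $\alpha(dU)$ itself; it is not $\int U\,\alpha(dU)$. This is still small by \eqref{Equa:functio_eq_0_constr}, so $z(U)\ge c\ln(U_2/U)-C$ does follow. In Case 2 the lower bound is the relevant one and the control has the favourable sign, so that case is fine. Since $g\ge c(\eta)>0$ on $(0,\lambda_+-\eta]$, you even get $z(U)\le z(U_2)-c\ln(U_2/U)$ directly, which turns negative.

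The second problem is that your attempted contradiction with $P(0^+)=0$ has sign errors. For $0<U<U^*$ one has $-f/P>0$, and the control enters with a plus sign, so $P'=-\beta+|f|/P+\alpha U\ge-\beta-f/P$. Hence your inequality $P'\le-\beta-f/P$ and the claim that $P$ is decreasing near $0$ for $\beta>0$ are false pointwise; only an integrated version survives. For $\beta\le0$ you give only a plan.

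The energy identity you mention closes Case 1 for every $\beta$ at once, with no sign split. Integrate $d(P^2/2)=(-f-\beta P)\,dU+U\bar P\,\alpha(dU)$ from $0$, with $\bar P$ the mean of the one-sided limits at atoms of $\alpha$, and set $M(U)=\sup_{(0,U]}P$. You get $M(U)^2\le CU^2+2\bigl(|\beta|+\alpha((0,U))\bigr)U\,M(U)$, hence $P\le C'U$ near $0$, which is incompatible with $z\to+\infty$. Alternatively, the identity $z(U)=-\beta+\frac1U\int_0^U\frac{-f}{P}\,dV+\frac1U\int_0^U V\,\alpha(dV)$ forces $z\to-\beta$ whenever $z\to+\infty$.

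For comparison, the paper never has to exclude escapes, because it compares $P$ directly with $\Gamma_u$. The quotient $(P-\Gamma_u)/U$ solves a linear equation with coefficient $\kappa=\frac{f}{P\Gamma_u}-\frac1U\le-\frac1U$, since $f\le0$ near $0$. Duhamel's formula on $(\epsilon,U)$, followed by $\epsilon\to0$ using only $P(0^+)=\Gamma_u(0^+)=0$, gives $|P/U-\Gamma_u/U|\le\frac1U\int_0^U V\,\alpha(dV)\le\alpha((0,U))$.

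In your variables, $U\kappa$ is the divided difference between $z_P$ and $z_{\Gamma_u}$ of $h(U,z)=-\frac{f(U)}{Uz}-\beta-z$, and $\partial_z h\le-1$. So the paper uses your backward repulsion uniformly in $z$, applied to the difference of two solutions, rather than only near the equilibrium $\lambda_+$. This yields a rate and also covers signed $\alpha$. Your argument, in exchange, does not need the asymptotics of $\Gamma_u$ as input and identifies the limit intrinsically.
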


The same result can be proved when $\alpha$ is a bounded measure on $(0,1)$, not necessarily positive.

\begin{proof}
Write
\begin{equation}
\label{Equa:ODE_scee}
\begin{split}
\frac{d}{dU} \bigg( \frac{P(U) - \Gamma_u(\beta,U)}{U} \bigg) &= - \frac{P(U) - \Gamma_u(\beta,U)}{U^2} + \frac{f(U)}{U \Gamma_u(\beta,U)} - \frac{f(U)}{U P} + \alpha(U) \\
&= - \frac{1 - \frac{f(U) U}{P \Gamma_u(\beta,U)}}{U} \frac{P(U) - \Gamma_u(\beta,U)}{U} + \alpha(U).
\end{split}
\end{equation}
The kernel $\kappa = - \frac{1 - \frac{f U}{P \Gamma_u}}{U}$ near $U = 0$ is estimated by
\begin{equation*}
\kappa(\beta,U) = - \frac{1 - \frac{f(U) U}{P(U) \Gamma_u(\beta,U)}}{U} \underset{f \leq 0}{\leq} - \frac{1}{U},
\end{equation*}
so that in particular
\begin{equation}
\label{Equa:kernel_estt}
e^{\int_\epsilon^U \kappa(\beta,U') dU'} \leq \frac{\epsilon}{U}, \quad 0 < \epsilon \leq U.
\end{equation}
For the solution of \eqref{Equa:ODE_scee} we obtain
\begin{equation*}
\begin{split}
\bigg| \frac{P(U) - \Gamma_u(\beta,U)}{U} \bigg| &\leq e^{\int_\epsilon^U \kappa(\beta,U') dU'} \bigg| \frac{P(U) - \Gamma_u(\beta,\epsilon)}{\epsilon} \bigg| + \int_\epsilon^U e^{\int_{U''}^U \kappa(\beta,U') dU'} \alpha(dU'') \\
\big[ (\ref{Equa:kernel_estt}) \big] \quad &\leq \frac{\epsilon}{U} \bigg| \frac{\Gamma_u(\beta,\epsilon) - P(\epsilon)}{\epsilon} \bigg| + \int_\epsilon^U \frac{U''}{U} \alpha(dU'') \\
&= \bigg| \frac{\Gamma_u(\beta,\epsilon) - P(\epsilon)}{U} \bigg| + \int_\epsilon^U \frac{U''}{U} \alpha(dU'').
\end{split}
\end{equation*}
Letting $\epsilon \to 0$ we obtain the bound
\begin{equation*}
\bigg| \frac{\Gamma_u(\beta,U) - P(U)}{U} \bigg| \leq \int_0^U \frac{U''}{U} \alpha(dU'') \leq \alpha((0,U)).
\end{equation*}
Hence we conclude that
\begin{equation*}
\lim_{U \to 0} \frac{P(U)}{U} = \lim_{U \to 0} \frac{\Gamma_u(\beta,U)}{U} = \lambda_+(\beta,0). \qedhere
\end{equation*}
\end{proof}

Now we address the dependence w.r.t. $\beta,f$ of the manifolds $\Gamma_u,(\beta,U)$, $\Gamma_s(\beta,U)$. It is elementary from \eqref{Equa:Gammau_eq0}, \eqref{Equa:Gammas_eq0} to see that the functions
$$
\beta,f \mapsto \Gamma_u(\beta,f,U), \ U_{uf}(\beta,f)
$$
are monotonically decreasing, as well as
$$
\beta,f \mapsto \Gamma_s(\beta,f,U)
$$
is monotonically increasing and always defined in $[0,1]$ if $\beta \geq \beta^*(f)$. 

For the control problem we are consider, the intervals of interests of $\Gamma_u,\Gamma_s$ are $[0,\gls{Uubeta}]$, $[\gls{Usbeta},1]$, where $U_u(\beta)$ ($U_s(\beta)$) is the first (last) intersection points of $\Gamma_u(\beta)$ ($\Gamma_s(\beta)$) with the curve
\begin{equation*}
\gls{Pstar} = \sqrt{Uf(U) \ind_{[U^*,1]}(U)}.
\end{equation*}
We recall that for $\beta \geq \beta^{**}$ it holds $U_u(\beta) = U^*$; observe also that for $\beta < \beta^{**}$ $U_u(\beta) > U^*$ and $\Gamma_u(U_u) > 0$. Moreover, since the derivative of $P^*$ in $U=1$ is $-\infty$, the point $U_s(\beta)$ is well defined. When we need to emphasize the dependence on $f$ we will write $U_u(\beta,f)$, $U_s(\beta,f)$, or for shortness $U_u(f)$, $U_s(f)$ when $\beta$ is kept constant.

Theorem \ref{manifoldbound} will be used repeatedly throughout the paper: its main point is that the derivatives are uniform even if $\beta$ is unbounded and $\Gamma_u(\beta,U)$ is close to $U^*$, the latter neighborhood being critical when the connection of Remark \ref{Rem:ether_1} happens.

We will need the following lemma.

\begin{lemma}
\label{Lem:large_beta_Gammau}
Define the constants
\begin{equation}
\label{Equa:def_M0_MU*}
M_0^+ = \max_{[0,U^*]} \bigg\{ \frac{f'(U)}{f'(0)} \bigg\} \geq 1, \quad M_{0}^- = \max_{[0,U^*]} \bigg\{ \frac{- f'(U)}{f'(U^*)} \bigg\}.
\end{equation}
Then it holds for $U \in [0,U^*]$
\begin{equation}
\label{Equa:lower_bdd_a}
\Gamma_u(\beta,U) \geq \bigg( - \frac{\beta}{2M_0^+} + \sqrt{ \frac{\beta^2}{4 (M_0^+)^2} - \frac{f'(0)}{M_0^+}} \bigg) \frac{f(U)}{f'(0)}, \quad \beta \geq \beta^*,
\end{equation}
and
\begin{equation*}
\Gamma_u(\beta,U) \leq \bigg( \frac{\beta}{2M_0^-} - \sqrt{\frac{\beta^2}{4 (M_0^-)^2} + \frac{f'(0)}{M_0^-}} \bigg) \frac{f(U)}{f'(0)},
\end{equation*}
for
\begin{equation*}
\beta \geq 2 \sqrt{- M_{0}^- f'(0)} = 2 \sqrt{\max_{[0,U^*]} \{f'(U)\}}.
\end{equation*}
\end{lemma}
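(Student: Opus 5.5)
The plan is to prove both bounds by a \emph{comparison (barrier) argument} for the scalar ODE \eqref{Equa:Gammau_eq0}, using barriers proportional to the source,
\begin{equation*}
g_c(U) := c \, \frac{f(U)}{f'(0)}, \qquad c > 0 .
\end{equation*}
On $[0,U^*]$ we have $f \leq 0$ and $f'(0) < 0$, so $g_c \geq 0$, $g_c > 0$ on $(0,U^*)$ and $g_c(U) \sim cU$ as $U \to 0$. Substituting into the right-hand side of \eqref{Equa:Gammau_eq0} gives $-f/g_c - \beta = -f'(0)/c - \beta$, which is constant, while $g_c'(U) = c f'(U)/f'(0)$. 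Whether $g_c$ is a sub- or supersolution therefore reduces to a pointwise inequality on $f'(U)/f'(0)$, controlled by the constants in \eqref{Equa:def_M0_MU*}.

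\emph{Lower bound.} We require $g_c' \leq -f/g_c - \beta$ on $(0,U^*)$. Since $f'(U)/f'(0) \leq M_0^+$, it suffices that $c M_0^+ \leq -f'(0)/c - \beta$, i.e.
\begin{equation*}
M_0^+ c^2 + \beta c + f'(0) \leq 0 .
\end{equation*}
The largest admissible $c$ is the positive root, which is exactly the constant in \eqref{Equa:lower_bdd_a}. Near $U=0$ we compare slopes. Using $\lambda_+^2 + \beta \lambda_+ + f'(0) = 0$, the quadratic above evaluated at $\lambda_+(\beta,0)$ equals $(M_0^+ - 1)\lambda_+^2 \geq 0$, so $c \leq \lambda_+$. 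We then run the standard first-touching argument. Suppose $\Gamma_u$ goes below $g_c$; at the first contact point $\bar U$ one would need $g_c'(\bar U) \geq \Gamma_u'(\bar U) = -f/g_c - \beta$, contradicting strict subsolution. To get strictness, and to handle the degenerate case $M_0^+ = 1$ in which the slopes at $0$ coincide, we first work with $c - \epsilon$: this makes the inequality strict and makes $g_{c-\epsilon} < \Gamma_u$ near $0$ by Lemma \ref{Lem:lim_U_to_0}-type tangency. We then let $\epsilon \to 0$. A byproduct is that $\Gamma_u > 0$ on $(0,U^*)$, so it is a graph there and the comparison is legitimate up to $U^*$.

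\emph{Upper bound.} Symmetrically, we need $g_c' \geq -f/g_c - \beta$. Here we use a lower bound $f'(U)/f'(0) \geq -M$, where $M$ must be the constant for which $-M f'(0) = \max_{[0,U^*]} f'$, as the threshold on $\beta$ in the statement indicates. It then suffices that
\begin{equation*}
M c^2 - \beta c - f'(0) \leq 0 .
\end{equation*}
This has two positive real roots exactly when $\beta \geq 2\sqrt{-M f'(0)}$, and we choose $c$ to be the smaller root, which matches the displayed constant. The barrier is now a supersolution, and the first-touching argument from above gives $\Gamma_u \leq g_c$, provided the ordering holds near $0$, i.e. $\lambda_+(\beta,0) \leq c$ (again perturbing to $c + \epsilon$ for strictness).

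\emph{Main obstacle.} I expect the delicate step to be this slope comparison at the origin for the upper barrier. Evaluating the quadratic at $\lambda_+$ gives $(M+1)\lambda_+^2 > 0$, so $\lambda_+$ lies outside the root interval. One still has to show that it lies to the \emph{left} of the smaller root, for instance by checking $\lambda_+ < \beta/(2M)$, the midpoint of the roots. This uses $\lambda_+ \leq -f'(0)/\beta$ together with $\beta^2 \geq -4Mf'(0)$, possibly with a little care about normalizations. A secondary technical point is the behaviour at $U = U^*$, where both $\Gamma_u$ (when $\beta \geq \beta^{**}$) and $g_c$ may vanish and $f/\Gamma_u$ is singular. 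Since contact is only tested on the open interval and the inequalities pass to the limit, this should cause no real difficulty.
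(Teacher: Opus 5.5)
Your proposal is correct and is essentially the paper's proof: the same barriers $a\,f(U)/f'(0)$, the same quadratic conditions on $a$ obtained from $M_0^\pm$ (with $M_0^-$ read, as the paper's proof actually uses it, as $\max_{[0,U^*]}\{-f'(U)/f'(0)\}$), the same comparison of the limiting slope at $U=0$ with $\lambda_+(\beta,0)$, and the same limit in $a$. Your check $\lambda_+(\beta,0) < -f'(0)/\beta \le \beta/(2M_0^-)$ fills in a step the paper leaves implicit, since the paper only shows that $\lambda_+$ lies outside the root interval and not that it lies to its left; note also that at the endpoint $\beta = 2\sqrt{\max_{[0,U^*]} f'}$ the two roots coincide, so the strict $c+\epsilon$ barrier is not available there and you should conclude instead by continuity in $\beta$ (or by a non-strict Lipschitz comparison away from $U=0$).
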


\begin{proof}
Consider the function
\begin{equation*}
P_a(U) = a \frac{f(U)}{f'(0)}, \quad a > 0, \ U \in [0,U^*].
\end{equation*}
If
\begin{equation}
\label{Equa:a_subsol}
a \frac{f'(U)}{f'(0)} < - \frac{f'(0)}{a} - \beta,
\end{equation}
then $P_a$ is a subsolution to \eqref{functeq}. Recalling $M_0^+$ given by \eqref{Equa:def_M0_MU*}, the inequality \eqref{Equa:a_subsol} requires that
\begin{equation*}
M_0^+ a^2 + \beta a + f'(0) < 0, \quad 0 < a < - \frac{\beta}{2M_0^+} + \sqrt{ \frac{\beta^2}{4 (M_0^+)^2} - \frac{f'(0)}{M_0^+}}.
\end{equation*}
We need to check that the inequality holds near $U = 0$. Since $M_0^+ \geq 1$, then
\begin{equation*}
- \frac{\beta}{2M_0^+} + \sqrt{ \frac{\beta^2}{4 (M_0^+)^2} - \frac{f'(0)}{M_0^+}} = \frac{\lambda_+(\frac{\beta}{\sqrt{M_0^+}},0)}{\sqrt{M_0^+}} \leq - \frac{\beta}{2} + \sqrt{ \frac{\beta^2}{4} - f'(0)} = \lambda_+(\beta,0)
\end{equation*}
because
\begin{equation*}
\partial_{M_0^+} \bigg( - \frac{\beta}{2M_0^+} + \sqrt{\frac{\beta^2}{4 (M_0^+)^2} - \frac{f'(0)}{M_0^+}} \bigg) < 0, 
\end{equation*}
and we conclude that $\Gamma_u \geq a \frac{f}{f'(0)}$ for all $a < \frac{\lambda_+(\frac{\beta}{\sqrt{M_0^+}},0)}{\sqrt{M_0^+}}$. Letting $a \nearrow \frac{\lambda_+(\frac{\beta}{\sqrt{M_0^+}},0)}{\sqrt{M_0^+}}$ we obtain \eqref{Equa:lower_bdd_a}.

In a similar way, $P(U) = a \frac{f(U)}{f'(0)}$ is a supersolution if
\begin{equation*}
a \frac{f'(U)}{f'(0)} > - \frac{f'(0)}{a} - \beta,
\end{equation*}
which requires
\begin{equation*}
- a^2 M_0^- + \beta a + f'(0) > 0, \quad M_0^- = \max_{[0,U^*]} \bigg\{ \frac{-f'(u)}{f'(0)} \bigg\}.
\end{equation*}
Hence we obtain
\begin{equation*}
\frac{\beta}{2M^-_0} - \sqrt{ \frac{\beta^2}{4 (M^-_0)^2} + \frac{f'(0)}{M^-_0}} < a < \frac{\beta}{2M^-_0} + \sqrt{ \frac{\beta^2}{4 (M^-_0)^2} + \frac{f'(0)}{M^-_0}}.
\end{equation*}
For $\beta > 2 \sqrt{- M^-_0 f'(0)}$ it holds
\begin{equation*}
\begin{split}
\lambda_+(\beta,0) &\leq \frac{\beta}{2M^-_0} - \sqrt{ \frac{\beta^2}{4 (M^-_0)^2} + \frac{f'(0)}{M^-_0}}
\end{split}
\end{equation*}
because
\begin{equation*}
- \lambda_+(\beta,0)^2 M_0^- + \lambda_+(\beta,0) \beta + f'(0) = - \lambda_+(\beta,0)^2 (M_0^- + 1) < 0.
\end{equation*}
In particular for $\beta > 2 \sqrt{-M^-_0 f'(0)}$ we have
\begin{equation*}
- \frac{\beta}{2M^+_0} + \sqrt{ \frac{\beta^2}{4 (M^+_0)^2} - \frac{f'(0)}{M^+_0}} \leq \lambda_+(\beta,0) \leq \frac{\beta}{2M^-_0} - \sqrt{ \frac{\beta^2}{4 (M^-_0)^2} + \frac{f'(0)}{M^-_0}}. \qedhere
\end{equation*}
\end{proof}

We can now prove the main theorem of this section.

\begin{theorem}
\label{manifoldbound}
Let $\beta \ge \beta^*$ and $f \in C^2([0,1])$ satisfying hypothesis $(\ref{H1})$.

\begin{enumerate}
\item Derivative w.r.t. $\beta$: it holds
\begin{equation}
\label{Equa:Gammau_beta}
- 1 - \bigg[ \ln \bigg(\frac{U}{U^*} \bigg) \bigg]^+ < \frac{\partial_\beta \Gamma_u(\beta,U)}{U} < 0, \quad U \in (0,U_u(\beta)),
\end{equation}
\begin{equation}
\label{Equa:Gammas_beta}
0 < \frac{\partial_\beta \Gamma_s(\beta,U)}{1 - U} < 1, \quad U \in [U^*,1).
\end{equation}

\item 
Derivative w.r.t. $f$: there are two constants \newglossaryentry{rbar}{name=\ensuremath{\underline{r}},description={radius of the ball around the equilibria $(0,0)$, $U^*,0)$, $(1,0)$ where the perturbative analysis of the ODE is performed}} \newglossaryentry{betaunder}{name=\ensuremath{\underline{\beta}},description={speed where $U_u$ is sufficiently close to $U^*$}}
\begin{equation*}
\gls{rbar} = \underline{r}(\|f\|_{C^2}), \quad \gls{betaunder} = \underline{\beta}(\|f\|_{C^2}) < \beta^{**}(f)
\end{equation*}
and a constant \newglossaryentry{Cfbeta}{name=\ensuremath{C(f)},description={Lipschitz constant for the dependence of $\Gamma_u,\Gamma_s$ w.r.t. $f \in C^1([0,1])$}} $\gls{Cfbeta} > 0$ depending only on
\begin{equation*}
\begin{split}
&\|f\|_{C_2}, \quad \max \Big\{ \Gamma_u(U)^{-1}, \beta \in [\beta^*,\underline{\beta}], U \in [\bar r,U_u] \Big\}, \\
&\max \bigg\{ \Gamma_u(U)^{-1}, \beta \in \Big[ \beta^{*},2 \sqrt{\max_{[0,U^*]} f'(U)} \Big], U \in [\underline{r},U_u - \underline{r}] \bigg\},
\end{split}
\end{equation*}
such that if $\Gamma_u(U,f + \epsilon \delta f)$, $\Gamma_s(U,f+\epsilon \delta f)$ are the unstable, stable manifold for the source $f + \epsilon \delta f$ satisfying \eqref{H1} with $\delta f \geq 0$, then 
\begin{equation}
\label{Equa:estimat_f_uGG}
0 \le - \lim_{\epsilon \to 0} \frac{\Gamma_u(U,f+\epsilon \delta f) - \Gamma_u(U,f)}{U} \le C(f) \|\delta f\|_{C^1},
\end{equation}
for all $U \leq U_u(f)$, and
\begin{equation}
\label{Equa:estimat_f_sGG}
0 \le \lim_{\epsilon \to 0} \frac{\Gamma_s(U,f+\epsilon \delta f) - \Gamma_s(U,f)}{1 - U} \le C(f) \|\delta f\|_{C^1},
\end{equation}
for all $U \in (U^*,1)$.
\end{enumerate}
\end{theorem}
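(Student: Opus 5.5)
We differentiate the ODE \eqref{Equa:Gammau_eq0} with respect to the parameter and study the resulting linear equation for the normalized variation. Set $w(U) = \partial_\beta \Gamma_u(\beta,U)/U$. Differentiating $\frac{d\Gamma_u}{dU} = -\frac{f}{\Gamma_u} - \beta$ in $\beta$ gives $\frac{d}{dU}\partial_\beta\Gamma_u = \frac{f}{\Gamma_u^2}\partial_\beta\Gamma_u - 1$. Dividing by $U$ yields
\begin{equation*}
\frac{dw}{dU} = \frac{Uf(U) - \Gamma_u^2}{U\Gamma_u^2}\, w - \frac{1}{U},
\end{equation*}
which is exactly the equation for $\partial P_u$ announced in the introduction. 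The initial condition comes from Lemma \ref{Lem:lim_U_to_0}: $w(0) = \partial_\beta \lambda_+(\beta,0) \in (-1,0)$, since $\partial_\beta\lambda_+ = -\frac12 + \frac{\beta}{4\sqrt{\beta^2/4 - f'(0)}}$ and $f'(0)<0$. The same approach works for $\Gamma_s$ with $v = \partial_\beta\Gamma_s/(1-U)$, integrated backward from $U=1$, where $v(1) = -\partial_\beta\lambda_-(\beta,1)\in(0,1)$ after accounting for the sign of $(1-U)$.

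For \eqref{Equa:Gammau_beta}, the plan is a comparison argument on the linear ODE. The kernel $k = \frac{Uf - \Gamma_u^2}{U\Gamma_u^2}$ satisfies $k \le -1/U$ on $(0,U^*]$, because $f\le 0$ there, and $k\le 0$ on $[U^*,U_u)$, because $\Gamma_u > P^*$ before the first intersection. Negativity of $w$ is immediate: at a point where $w=0$ we have $w' = -1/U < 0$, so $w$ cannot cross zero from below. For the lower bound:
\begin{itemize}
\item On $(0,U^*]$ we show that $w > -1$ is invariant. At $w=-1$ we get $w' = -k - 1/U \ge 1/U - 1/U = 0$, with strictness coming from $f<0$ in the interior.
\item On $[U^*,U_u)$, since $w<0$ and $k\le0$, we have $w' \ge -1/U$. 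Integrating gives $w(U) > -1 - \ln(U/U^*)$.
\end{itemize}
For $\Gamma_s$ on $[U^*,1)$, a similar invariance argument at $v=0$ and $v=1$ should work. One uses that $\Gamma_s\le P^*$ after $U_s$, or more generally the sign structure of $f\ge0$ on $[U^*,1]$, and checks the boundary values from the linearization. The step I would be least sure of is this $\Gamma_s$ bound on the whole of $[U^*,1)$, which may require redoing the invariance computation with the $(1-U)$ weight.

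For the $f$-derivative, we set $z = \partial_f\Gamma_u[\delta f]/U$, which satisfies
\begin{equation*}
z' = k z - \frac{\delta f}{U\Gamma_u}.
\end{equation*}
Its sign follows from $\delta f\ge 0$ by the same invariance argument. The bound is obtained by Duhamel's formula, $|z(U)| \le \int_0^U e^{\int_{U'}^U k}\,\frac{\delta f(U')}{U'\Gamma_u(U')}\,dU'$, treating three regions separately:
\begin{itemize}
\item Near $U=0$, we use $\delta f(U')\le \|\delta f\|_{C^1}U'$ and $\Gamma_u\sim\lambda_+U'$, together with the kernel estimate \eqref{Equa:kernel_estt}, so the integrand is bounded.
\item On the intermediate region $[\underline r, U_u]$, $\Gamma_u$ is bounded below by the constants listed in the statement.
\item Near $(U^*,0)$ when $\beta\ge\underline\beta$, $\Gamma_u$ degenerates. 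Here Lemma \ref{Lem:large_beta_Gammau} gives $\Gamma_u \gtrsim f(U)$ on $[0,U^*]$, and $\delta f/f$ is controlled because $\delta f$ vanishes where $f$ does to first order: admissibility forces $\delta f(U^*)$ to be $O(\epsilon)$-compatible, and $|\delta f|\le \|\delta f\|_{C^1}|U-U^*|$ up to the shift of $U^*$, while $k \le -1/U$ damps the contribution.
\end{itemize}
This last region, where the ODE is singular and standard invariant-manifold smoothness does not apply, is the main obstacle. I expect it to need a careful local analysis in the ball of radius $\underline r$ around $(U^*,0)$. The estimate for $\Gamma_s$ is analogous: one integrates backward from $1$ with weight $1/(1-U)$, and it is easier there because $\Gamma_s$ stays positive on $(U^*,1)$.
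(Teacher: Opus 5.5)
Your Part (1) is correct and is the paper's argument in comparison form. The paper instead writes the solutions explicitly, $\partial_\beta\Gamma_u=-\int_0^U e^{\int_V^U f/\Gamma_u^2}\,dV$ and $\partial_\beta\Gamma_s=\int_U^1 e^{-\int_U^V f/\Gamma_s^2}\,dV$, and reads the bounds off the sign of $f$. The $\Gamma_s$ step you were unsure of goes through. With $v=\partial_\beta\Gamma_s/(1-U)$ one has $v'=\big(\frac{f}{\Gamma_s^2}+\frac{1}{1-U}\big)v-\frac{1}{1-U}$, so $v'<0$ at $v=0$ and $v'=f/\Gamma_s^2>0$ at $v=1$. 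Both bounds then propagate backward from $v(1)\in(0,1)$, using only $f\ge0$ on $[U^*,1]$.

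In Part (2), your Duhamel formula is the paper's \eqref{Equa:deltaGamma_uu} divided by $U$, and your first two regions are fine for bounded $\beta$. The genuine gap is the region near $(U^*,0)$, together with uniformity in $\beta$.
\begin{itemize}
\item Admissibility of $f+\epsilon\delta f$ forces only $\delta f(0)=\delta f(1)=0$. The remaining conditions are open, the interior zero moves, and $\delta f(U^*)>0$ is allowed (take a nonnegative bump at $U^*$). This is exactly the case the paper isolates in Proposition \ref{Prop:Lipechit_U_u} and Remark \ref{Rem:const_U*}.
\item In that case $\delta f/\Gamma_u\approx\delta f(U^*)/(c'(U^*-U))$ near the node. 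The only damping you keep, $k\le-1/U$, gives the factor $U'/U$ of \eqref{Equa:kernel_estt}, which is $\approx1$ there.
\item Your bound therefore behaves like $\ln\frac{1}{U^*-U}$, unbounded as $U\to U_u(\beta)=U^*$ for $\beta\ge\beta^{**}$. For $\beta\nearrow\beta^{**}$ it behaves like $\ln\frac{1}{\Gamma_u(\beta,U^*)}$, again unbounded.
\item $C(f)$ must be independent of $\beta\ge\beta^*$, but your near-zero bound is $\|\delta f\|_{C^1}/\lambda_+(\beta,0)\sim\beta\|\delta f\|_{C^1}$. Using Lemma \ref{Lem:large_beta_Gammau} only as $\Gamma_u\gtrsim f$, whose constant is $\sim1/\beta$, loses the same factor.
\end{itemize}

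The missing idea is to keep the full factor $e^{\int_V^U f/\Gamma_u^2}$ in $z(U)=-\frac1U\int_0^U\frac{\delta f(V)}{\Gamma_u(V)}e^{\int_V^U f/\Gamma_u^2}dV$, because that is where the contraction at the node lives. If $-f/\Gamma_u\ge c>0$ on $[a,U]$, then $e^{\int_V^U f/\Gamma_u^2}\le e^{-c\int_V^U dW/\Gamma_u}$, and
\[
\int_a^U\frac{1}{\Gamma_u(V)}\,e^{-c\int_V^U\frac{dW}{\Gamma_u(W)}}\,dV=\frac1c\Big(1-e^{-c\int_a^U\frac{dW}{\Gamma_u(W)}}\Big)\le\frac1c .
\]
This absorbs the non-integrable $1/\Gamma_u$ for any bounded $\delta f$, with no vanishing condition at $U^*$. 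The constant $c$ is obtained case by case:
\begin{itemize}
\item For $\beta\ge\beta^{**}$, the bound $\Gamma_u\le(|\lambda_-(\beta,U^*)|+O(\underline r))(U^*-U)$ gives $c\ge|\lambda_+(\beta,U^*)|-O(\underline r)$. This is the paper's Step 2.4, written there via $e^{-\int\beta/\Gamma_u}$ and $\beta>-\lambda_-$.
\item For large $\beta$, the upper bound in Lemma \ref{Lem:large_beta_Gammau} gives $c\gtrsim\beta$ on $[0,U^*]$, hence the $\beta$-uniform estimate $O(\|\delta f\|_{C^1}/\beta)$ of Step 2.1.
\item For $\beta\nearrow\beta^{**}$ you additionally need the local analysis of Step 2.6, which your sketch does not supply: the crossing point $\underline U$, the bound $-f/\Gamma_u\ge 1/C$ on $[U^*-\underline r,\underline U]$, and $\Gamma_u^2\ge Wf$ together with \eqref{Equa:rel_barU_U*} on $[\underline U,U_u]$.
\end{itemize}
The $\Gamma_s$ estimate is, as you say, straightforward from $\Gamma_s\ge(1-U)/C$ and $|\delta f(V)|\le\|\delta f\|_{C^1}(1-V)$.
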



\begin{proof}
The proof is done in several steps. Step 1.1 and 1.2 concern the dependence w.r.t. $\beta$ of $\Gamma_u$ and $\Gamma_s$ respectively. Steps 2.1 to 2.6 analyze the dependence of $\Gamma_u$ w.r.t. $f$: the most difficult part is for $\beta$ close to the critical value $\beta^{**}$, where the asymptotic behavior of $\Gamma_u$ changes abruptly. Finally the analysis of $\Gamma_s$ concludes the proof.

\medskip

\noindent {\it Step 1: dependence w.r.t. $\beta$.} We first study the dependence w.r.t. $\beta$ for $\Gamma_u(\beta)$, $\Gamma_s(\beta)$. 

\smallskip

{\it Step 1.1: analysis of $\Gamma_u(\beta)$.} Recall that the ODE for $\Gamma_u$ is
\begin{equation*}
\dot \Gamma_u = - \frac{f(U)}{\Gamma_u} - \beta, \quad \lim_{U \to 0} \frac{\Gamma_u}{U} = \lambda_+(\beta,0) = - \frac{\beta}{2} + \sqrt{ \frac{\beta^2}{4} - f'(0)}.
\end{equation*}
%
Thus the perturbation $\delta \Gamma_u = \partial_\beta\Gamma_u(\beta,U)$ solves
\begin{equation*}
\dot{\delta \Gamma_u} = \frac{f(U)}{\Gamma_u^2} \delta \Gamma_u - 1, \quad \lim_{U \to 0} \frac{\delta \Gamma_u}{U} = \partial_U \lambda_+(\beta,0) = \frac{\beta}{4 \sqrt{ \frac{\beta^2}{4} - f'(0)}} - \frac{1}{2}.
\end{equation*}
The solution is given by the formula
\begin{equation}
\label{Equa:formula_Gammabeta}
\delta \Gamma_u(\beta,U) = \delta \Gamma_u(\beta,\epsilon) e^{\int_\epsilon^U \frac{f(W)}{\Gamma_u(\beta,W)^2} dW} - \int_\epsilon^U e^{\int_V^U \frac{f(W)}{\Gamma_u(\beta,W)^2} dW} dV,
\end{equation}
and since 
\begin{equation*}
\frac{f(W)}{\Gamma_u(\beta,W)^2} \sim\frac{f'(0)}{\lambda_+(\beta,0)^2 W} \quad \text{as $W \to 0$},
\end{equation*}
we deduce that
\begin{equation}
\label{Equa:sol_der_beta_u}
\delta \Gamma_u(\beta,U) = - \int_0^U e^{\int_V^U \frac{f(W)}{\Gamma_u(\beta,W)^2} dW} dV = - \int_0^U \frac{\Gamma_u(V)}{\Gamma_u(U)} e^{- \int_V^U \frac{\beta}{\Gamma_u(W)} dW} dV.
\end{equation}
Since $f(U) < 0$ in $(0,U^*)$, for $U \in (0,U^*)$ we obtain the estimate
\begin{equation}
\label{Equa:delta_Gamma_u}
0 > \delta \Gamma_u(\beta,U) > - \int_0^{U} 1\, dV = - U. 
\end{equation}
Using
\begin{equation*}
\frac{d}{dU} \frac{\delta \Gamma_u}{U} = \frac{U f(U) - \Gamma_u^2}{U \Gamma_u^2} \frac{\delta \Gamma_u}{U} - \frac{1}{U},
\end{equation*}
we obtain also for $U \in [U^*,U_u]$
\begin{equation*}
\begin{split}
\frac{\delta \Gamma_u(U)}{U} &= \frac{\delta \Gamma_u(U^*)}{U^*} e^{\int_{U^*}^U \frac{W f(W) - \Gamma_u(W)^2}{W \Gamma_u(W)^2} dW} - \int_{U^*}^U \frac{1}{V} e^{\int_V^U \frac{W f(W) - \Gamma_u(W)^2}{W \Gamma_u(W)^2} dW} dV \\
&\geq - 1 - \ln \bigg( \frac{U}{U^*} \bigg). 
\end{split}
\end{equation*}
being $\Gamma_u \geq P^* = \sqrt{Uf(U)}$ in the interval under consideration. Hence \eqref{Equa:Gammau_beta} is proved.

\smallskip

{\it Step 1.2: analysis of $\Gamma_s(\beta)$.} The analysis for $\Gamma_s(\beta,U)$ is entirely similar. The ODEs are the same, with limits
\begin{equation*}
\lim_{U \to 1} \frac{\Gamma_s}{1 - U} = - \lambda_-(\beta,1), \quad \lim_{U \to 1} \frac{\delta \Gamma_s}{1 - U} = - \partial_\beta \lambda_-(\beta,1).
\end{equation*}
The formula for the solution is
\begin{equation*}
\delta \Gamma_s(\beta,U) = - \int_1^U e^{\int_V^U \frac{f(W)}{\Gamma_u(\beta,w)^2} dW} dV = \int_U^1 e^{- \int_U^V \frac{f(W)}{\Gamma_u(\beta,w)^2} dW} dV,
\end{equation*}
and since $f(U) > 0$ in $(U^*,1)$ we obtain
\begin{equation*}
0 < \delta \Gamma_s(\beta,U) \leq 1 - U, 
\end{equation*}
which is \eqref{Equa:Gammas_beta}.

\medskip

\noindent{\it Step 2: dependence w.r.t. $f$.} 
First we study $\Gamma_u(\beta,f,U)$. The equation for the perturbation $\delta \Gamma_u(\beta,U)$ of $\Gamma_u(\beta,u)$ when $f \to f + \delta f$ satisfies
\begin{equation*}
\frac{d \delta \Gamma_u}{dU} = \frac{f(U)}{\Gamma_u^2} \delta \Gamma_u - \frac{\delta f(U)}{\Gamma_u}, \quad \lim_{U \to 0} \frac{\delta \Gamma_u(\beta,U)}{U} = \partial_{f'(0)} \lambda_+(\beta,0).
\end{equation*}
As in the previous proof, noticing that
\begin{equation*}
\frac{f(U)}{\Gamma_u^2} \underset{U \to 0}{\sim} \frac{f'(0)}{\lambda_+(\beta,0)^2 U},
\end{equation*}
we obtain the representation
\begin{equation}
\label{Equa:deltaGamma_uu}
\delta \Gamma_u(U) = - \int_0^U \frac{\delta f(V)}{\Gamma_u(V)} e^{\int_V^U \frac{f(W)}{\Gamma_u(W)^2} dW} dV.
\end{equation}
Using the ODE \eqref{Equa:Gammau_eq0} for $\Gamma_u(\beta,U)$ we can write
\begin{equation*}
\int_V^U \frac{f(W)}{\Gamma_u(W)^2} dW = \int_V^U \bigg( - \frac{\frac{d\Gamma_u}{dW}}{\Gamma_u(W)} - \frac{\beta}{\Gamma_u(W)} \bigg) dW = \ln \bigg( \frac{\Gamma_u(V)}{\Gamma_u(U)} \bigg) - \int_V^U \frac{\beta}{\Gamma_u(W)} dW,
\end{equation*}
so that we also obtain
\begin{equation}
\label{Equa:deltaGamma_u}
\frac{\delta \Gamma_u(U)}{U} = - \int_0^U \frac{\delta f(V)}{U \Gamma_u(U)} e^{- \int_V^U \frac{\beta}{\Gamma_u(W)} dW} dV.
\end{equation}

In both cases $\delta \Gamma_u \leq 0$, so that the sign in \eqref{Equa:estimat_f_uGG} is correct.

\smallskip

{\it Step 2.1: estimates for $\beta \gg 2\sqrt{\max_{[0,U^*]} f'(U)}$.} 
Using Lemma \ref{Lem:large_beta_Gammau}, for $\beta \geq 2 \sqrt{\max_{[0,U^*]} f'(U)}$ we estimate \eqref{Equa:deltaGamma_uu} as 
\begin{equation*}
\begin{split}
\bigg| \frac{\delta \Gamma_u(U)}{U} \bigg| &\leq \frac{\|\delta f\|_{C^0([0,U])}}{U} \int_0^U \frac{f'(0)}{\Big( - \frac{\beta}{2M_0^+} + \sqrt{\frac{\beta^2}{4 (M_0^+)^2} - \frac{f'(0)}{M_0^+}} \Big) f(V)} e^{\int_V^U \frac{f'(0)^2}{\Big( \frac{\beta}{2M_0^-} - \sqrt{\frac{\beta^2}{4 (M_0^-)^2} + \frac{f'(0)}{M_0^-}} \Big)^2 f(W)} dW} dV \\
&= \frac{\|\delta f\|_{C^0([0,U])}}{U} \frac{\Big( \frac{\beta}{2M_0^-} - \sqrt{\frac{\beta^2}{4 (M_0^-)^2} + \frac{f'(0)}{M_0^-}} \Big)^2}{\Big( - \frac{\beta}{2M_0^+} + \sqrt{\frac{\beta^2}{4 (M_0^+)^2} - \frac{f'(0)}{M_0^+}} \Big) f'(0)} \\
& \quad \quad \cdot \int_0^U \frac{f'(0)^2}{\Big( \frac{\beta}{2M_0^-} - \sqrt{\frac{\beta^2}{4 (M_0^-)^2} + \frac{f'(0)}{M_0^-}} \Big)^2 f(V)} e^{\int_V^U \frac{f'(0)^2}{\Big( \frac{\beta}{2M_{U^*}} - \sqrt{\frac{\beta^2}{4 M_{U^*}^2} - \frac{f'(U^*)}{M_{U^*}}} \Big)^2 f(W)} dW} dV \\
&= \frac{\|\delta f\|_{C^0([0,U])}}{U} \frac{\Big( \frac{\beta}{2M_0^-} - \sqrt{\frac{\beta^2}{4 (M_0^-)^2} + \frac{f'(0)}{M_0^-}} \Big)^2}{\Big( - \frac{\beta}{2M_0^+} + \sqrt{\frac{\beta^2}{4 (M_0^+)^2} - \frac{f'(0)}{M_0^+}} \Big) f'(0)} \cdot \left. - e^{\int_V^U \frac{f'(U^*)^2}{\Big( \frac{\beta}{2M_0^-} - \sqrt{\frac{\beta^2}{4 (M_0^-)^2} + \frac{f'(0)}{M_0^-}} \Big)^2 f(W)} dW} \right|_0^U \\
&\leq C \frac{\|\delta f\|_{C^1}}{\beta},
\end{split}
\end{equation*}
where the constant $C$ depends only on $f'(0),M_0^-,M_0^+$, and we observed that as $\beta \to \infty$
\begin{equation*}
- \frac{\beta}{2M_0^+} + \sqrt{\frac{\beta^2}{4 (M_0^+)^2} - \frac{f'(0)}{M_0^+}}, \frac{\beta}{2M_0^-} - \sqrt{\frac{\beta^2}{4 (M_0^-)^2} + \frac{f'(0)}{M_0^-}} \sim \frac{1}{\beta}.
\end{equation*}

In subsequent points we need only to consider the case for $\beta$ bounded, i.e. $\beta^* \leq \beta \leq 2 \sqrt{\displaystyle{\max_{[0,U^*]} f'(U)}}$. 

\smallskip

{\it Step 2.2: estimates for $U$ close to $0$.} By the stability of the unstable manifold \cite[Theorem 6.2.3]{katokhass}, there is a neighborhood $U = 0$ such that
\begin{equation}
\label{Equa:expans_U0}
\big| \Gamma_u(\beta,U) - \lambda_+(\beta,0) U \big| \leq \mathcal O(\|f\|_{C^2}) U^2,
\end{equation}
and the radius $\gls{rbar} = r(\|f\|_{C^2})$ depends only on the $C^2$-norm of $f$ for $\beta \in [\beta^*,2 \sqrt{\displaystyle{\max_{[0,U^*]} f'(U)}}]$. 
By \eqref{Equa:expans_U0}
\begin{equation*}
    \Gamma_u(\beta, U) \ge \lambda_+(\beta, 0)U-\mathcal{O}(\|f\|_{C^2}) U^2, \quad \forall U\in [0, \underline{r}],
\end{equation*}
so that for all $0\le V\le U \le \underline{r}$
\begin{align*}
\int_{V}^U \frac{1}{\Gamma_u(W)} \, dW &\le \int_{V}^U \frac{1}{\lambda_+(\beta,0) W - \mathcal O(\|f\|_{C^2}) W^2} \\
&= \frac{1}{\lambda_+(\beta,0)} \ln \bigg( \frac{U}{V} \bigg) + \frac{1}{\lambda_+(\beta,0)} \ln \bigg( \frac{1 - \mathcal O(\|f\|_{C^2}) V}{1 - \mathcal O(\|f\|_{C^2}) U} \bigg),
\end{align*}
which implies, for $\beta <0$,
\begin{align*}
e^{-\int_V^U \frac{\beta}{\Gamma_u(W)} \, dW} \le C(\|f\|_{C^2}) \bigg( \frac{V}{U} \bigg)^{\frac{\beta}{\lambda_+(\beta,0)}}.
\end{align*}
Substituting into \eqref{Equa:deltaGamma_u} and observing that
\begin{equation}
\label{Equa:lambda_+0_beta}
- \lambda_+(\beta,0) < \beta,
\end{equation}
we obtain 
\begin{equation*}
\begin{split}
\bigg| \frac{\delta \Gamma_u(U)}{U} \bigg| &\le \frac{C(\|f\|_{C^2}) \|\delta f\|_{C^0([0,U])}}{\Gamma_u(U) U} \int_0^U \bigg( \frac{V}{U} \bigg)^{\frac{\beta}{\lambda_+(\beta,0)}} \, dV \\
&= \frac{C(\|f\|_{C^2}) \|\delta f\|_{C^0([0,U])}}{\beta + \lambda_+(\beta,0)} \frac{\lambda_+(\beta,0)}{\Gamma_u(U)} \\
&\leq \frac{2 C(\|f\|_{C^2})}{\frac{\beta}{2} + \sqrt{\frac{\beta^2}{4} - f'(0)}} \frac{\|\delta f\|_{C^0([0,U])}}{U} \leq \frac{2 C(\|f\|_{C^2})}{\frac{\beta^*}{2} + \sqrt{\frac{\beta^*}{4} - f'(0)}} \|\delta f\|_{C^{0,1}}. 
\end{split}
\end{equation*}

The case $\beta\geq0$ is simpler as there holds
\begin{align*}
e^{-\int_V^U \frac{\beta}{\Gamma_u(W)} \, dW} \le 1 \quad \forall 0\leq V\leq U \leq \underline{r},
\end{align*}
and, thus, 
\begin{equation*}
\begin{split}
\bigg| \frac{\delta \Gamma_u(U)}{U} \bigg| &\le \frac{\|\delta f\|_{C^0([0,U])}}{\Gamma_u(U)}\\
    &\le \frac{1}{\lambda_+(\beta, 0)-\mathcal{O}(\|f\|_{C^2} )}\frac{\|\delta f\|_{C^0([0,U])}}{U}\\
    &\le C(\|f\|_{C^2}) \|\delta f\|_{C^{1}}.
\end{split}
\end{equation*}

In both cases
\begin{equation*}
\bigg| \frac{\delta \Gamma_u(U)}{U} \bigg| \le C(\|f\|_{C^2}) \|\delta f\|_{C^{1}}.
\end{equation*}

%

\smallskip

{\it Step 2.3: estimates for $\beta^* \leq \beta < \beta^{**}$ and $U \geq \underline{r}$.} In this case, by \eqref{Equa:expans_U0} we can choose $a > 0$ such that
\begin{equation*}
\Gamma_u(\beta,U) \geq 
\begin{cases}
(\lambda_+(\beta,0) - a) U & 0 \leq U < \underline{r}, \\
\displaystyle{\min_{[\underline{r},U_u(\beta)]}} \{\Gamma_u\} & \underline{r} \leq U \leq U_u(\beta),
\end{cases} \quad \text{with} \ \frac{-\beta}{\lambda_+(\beta,0) - a} < 1.
\end{equation*}
Indeed, for $U < \underline{r}$ we can use the previous point and \eqref{Equa:lambda_+0_beta}, while $\Gamma_u$ is uniformly positive for $U \geq \underline{r}$ and $\beta$ bounded. 
%

Hence, for $\beta < 0$ we can estimate the integral in the exponential in \eqref{Equa:deltaGamma_uu} as
\begin{equation*}
\begin{split}
\int_V^U \frac{-\beta}{\Gamma_u(W)} dW \leq \frac{-\beta}{\displaystyle{\min_{[\underline{r},U_u(\beta)]} \{\Gamma_u\}}} + \frac{-\beta}{\lambda_+(\beta,0) - a} \ln \bigg( \frac{\underline{r}}{V} \bigg) \ind_{[0,\underline{r})}(V).
\end{split}
\end{equation*}
Using the above estimate we obtain that
\begin{equation*}
\begin{split}
\bigg| \frac{\delta \Gamma_u(U)}{U} \bigg| &\leq \frac{\|\delta f\|_{C^0([0,U])}}{U \Gamma_u(U)} \int_0^U e^{- \int_V^U \frac{\beta}{\Gamma_u(W)} dW} dV \\
&\leq \frac{\|\delta f\|_{C^0([0,U])}}{U \displaystyle{\min_{[\underline{r},U_u(\beta)]} \{\Gamma_u\}}} e^{\frac{-\beta}{\displaystyle{\min_{[\underline{r},U_u(\beta)]} \{\Gamma_u\}}}} \bigg[ (U - \underline{r}) +  \int_0^{\underline{r}} \bigg( \frac{\underline{r}}{V} \bigg)^{\frac{-\beta}{\lambda_+(\beta,0) - a}} dV \bigg] \\
&= \frac{\|\delta f\|_{C^0([0,U])}}{U \displaystyle{\min_{[\underline{r},U_u(\beta)]} \{\Gamma_u\}}} e^{\frac{-\beta}{\displaystyle{\min_{[\underline{r},U_u(\beta)]} \{\Gamma_u\}}}} \bigg[ (U - \underline{r}) + \frac{\underline{r}}{1 + \frac{\beta}{\lambda_+(\beta,0) - a}} \bigg].
\end{split}
\end{equation*}

For $0 \leq \beta < \beta^{**}$ we can simply observe that
\begin{equation*}
e^{\int_V^U \frac{-\beta}{\Gamma_u(W)} dW} \leq 1,
\end{equation*}
and then
\begin{equation*}
\begin{split}
\bigg| \frac{\delta \Gamma_u(U)}{U} \bigg| &\leq \frac{\|\delta f\|_{C^0([0,U])}}{U \Gamma_u(U)} \int_0^U e^{- \int_V^U \frac{\beta}{\Gamma_u(W)} dW} dV \leq \frac{\|\delta f\|_{C^0([0,U])}}{\displaystyle{\min_{[\underline{r},U_u(\beta)]} \{\Gamma_u\}}}. \\ 
\end{split}
\end{equation*}

Hence we conclude that 
\begin{equation*}
\bigg| \frac{\delta \Gamma_u(U)}{U} \bigg| \leq C \Big( \max_{[\underline{r},U_u(\beta)]} \{\Gamma_u^{-1}\} \Big) \|\delta f\|_{C_0}.
\end{equation*}
The quantity $\displaystyle{\min_{[r(\|f\|_{C^2}),U_u(\beta)]} \Gamma_u}$ is uniformly bounded when we will fix the maximal speed $\underline{\beta} < \beta^{**}$, see Step 2.6 below.

\smallskip

{\it Step 2.4: estimates for $\beta^{**}  < \beta < 2 \sqrt{\max_{[0,U^*]} f'(U)}$ and $U$ close to $U^*$.} 
%
%
For $\beta > \beta^{**}$ we use the fact that at $\beta^{**}$ we are on the stable-stable manifold corresponding to $\lambda_-(\beta,U^*)$ and for $\beta > \beta^{**}$ we approach along the eigenvector corresponding to $\lambda_+(\beta,U^*)$, so that by \cite[Theorem 6.2.8]{katokhass} 
\begin{equation*}
- \lambda_+(\beta,U^*) (U^* - U) - \mathcal O(\|f\|_{C^2}) (U^* - U)^2 \leq \Gamma_u(U) \leq - \lambda_-(\beta,U^*) (U^* - U) + \mathcal O(\|f\|_{C^2}) (U^* - U)^2,
\end{equation*}
i.e. $\Gamma_u$ lies in between the invariant manifold corresponding to $\lambda_-(\beta,U^*)$ and the one corresponding to $\lambda_+(\beta,U^*)$. Then estimate the exponential in \eqref{Equa:deltaGamma_u} as
\begin{equation*}
\begin{split}
- \int_V^U \frac{\beta}{\Gamma_u(W)} dW &\leq - \int_{\max\{V,U^* - \underline{r}\}}^{U} \frac{\beta}{\Gamma_u(W)} dV \\
&\leq \mathcal O(\|f\|_{C^2}) + \frac{\beta}{\lambda_-(\beta,U^*)} \ln \bigg( \frac{\min\{U^* - V,\underline{r}\}}{U^* - U} \bigg).
\end{split}
\end{equation*}
Hence
\begin{equation*}
\begin{split}
e^{- \int_V^U \frac{\beta}{\Gamma_u(W)} dW} 
= \mathcal O(\|f\|_{C^2}) \bigg( \frac{\min\{U^*-V,\underline{r}\}}{U^*-U} \bigg)^\frac{\beta}{\lambda_-(\beta,U^*)}.
\end{split}
\end{equation*}
Substituting into \eqref{Equa:deltaGamma_u} we get for $0 < U^* - U < r(\|f\|_{C^2})$
\begin{equation*}
\begin{split}
\bigg| \frac{\delta \Gamma_u}{U} \bigg| &= \frac{C(\|f\|_{C^2}) \|\delta f\|_{C^0([0,U^*]}}{U \Gamma_u(U)} \int_0^U \bigg( \frac{\min\{U^* - V,\underline{r}\}}{U^* - U} \bigg)^\frac{\beta}{\lambda_-(\beta,U^*)} dV \\
&= \frac{C(\|f\|_{C^2}) \|\delta f\|_{C^0([0,U^*]}}{U (U^* - U)} \int_0^{U^* - \underline{r}} \bigg( \frac{U^*-U}{\underline{r}} \bigg)^\frac{\beta}{-\lambda_-(\beta,U^*)} dV \\
&\quad + \frac{C(\|f\|_{C^2}) \|\delta f\|_{C^0([0,U^*]}}{U (U^* - U)} \int_{U^* - \underline{r}}^U \bigg( \frac{U^*-V}{U^*-U} \bigg)^\frac{\beta}{\lambda_-(\beta,U^*)} dV \\
&\leq \frac{C(\|f\|_{C^2}) \|\delta f\|_{C^0([0,U^*]}}{U} \frac{U^* - \underline{r}}{\underline{r}^\frac{\beta}{- \lambda_-(\beta,U^*)}} (U^* - U)^{- 1 + \frac{\beta}{- \lambda_-(\beta,U^*)}} \\
& \quad + \frac{C(\|f\|_{C^2}) \|\delta f\|_{C^0([0,U^*]}}{U} \frac{(U^* - U)^{- 1 + \frac{\beta}{-\lambda_-(\beta,U^)}}}{- 1 + \frac{\beta}{-\lambda_-(\beta,U^*)}} \Big( (U^* - U)^{1 + \frac{\beta}{\lambda_-(\beta,U^*)}} - \underline{r}^{1 + \frac{\beta}{\lambda_-(\beta,U^*)}} \Big).
\end{split}
\end{equation*}
We have used the fact that $\beta > - \lambda_-(\beta,U^*)$. Since
$$
-1 - \frac{\beta}{\lambda_-(\beta,U^*)} > 0,
$$
it follows that the above term is uniformly bounded for $0 < U^* - U < \underline{r}$.

\smallskip

{\it Step 2.5: estimates for $0 \leq \beta \leq 2 \sqrt{\max_{[0,U^*]} f'(U)}$ and $\underline{r} \leq U \leq U^* - \underline{r}$.} In this case it holds simply
\begin{equation*}
\bigg| \frac{\delta \Gamma_u}{U} \bigg| \leq \frac{\|\delta f\|_{C^0([0,U^*]}}{\Gamma_u(\beta,U)} \leq C \Big( \max_{[\underline{r},U^* - \underline{r}]} \{\Gamma_u^{-1}\} \Big) \|\delta f\|_{C^0([0,U^*])}.
\end{equation*}
\smallskip

{\it Step 2.6: estimates for $\beta$ close to $\beta^{**}$ and $U > U^* - \underline{r}(\|f\|_{C^2})$.} Choose $0 < \underline{\beta} < \beta^{**}$ sufficiently close to $\beta^{**}$ such that for $\underline{\beta} < \beta < \beta^{**}$ the point $U_u(\beta)$ where $\Gamma_u$ intersects $P^*$ satisfies
$$
|(U_u(\beta)-U^*,\Gamma_u(U^*))| \leq \gls{rbar}. 
$$
Using \eqref{Equa:delta_Gamma_u} for $U  = U^* - \underline{r}$, it is sufficient to require that 
\begin{equation*} 
(- \lambda_-(\beta^{**},U^*) + C(\|f\|_{C^2}) \underline{r}) \underline{r} + \beta^{**} - \underline{\beta} < \underline{r}, \quad \beta^{**} - \underline{\beta} = \mathcal O(\underline{r}) = \mathcal O(\|f\|_{C^2}).
\end{equation*}
Taking a smaller $\underline{r}$ if necessary, from the ODE \eqref{Equa:Gammau_eq0} it follows that $\Gamma_u$ is decreasing because $\Gamma_u$ is above the manifold corresponding to the eigenvalues $\lambda_-(\beta,U^*)$,
\begin{equation*}
\Gamma_u(U) \geq \big( - \lambda_-(\beta,U^*) - C(\|f\|_{C^2})(U^* - U) \big) (U^* - U),
\end{equation*}
and for $U - \underline{r} \leq U < U^*$
\begin{equation*}
\begin{split}
- \frac{f(U)}{\Gamma_u} - \beta &\leq \frac{-f(U)}{\big( - \lambda_-(\beta,U^*) - C(\|f\|_{C^2})(U^* - U) \big) (U^* - U)} - \beta \\
&\leq - \frac{f'(U^*)}{\lambda_-(\beta,U^*)} - \beta + C(\|f\|_{C^2}) \underline{r} \\
&= \lambda_-(\beta,U^*) + C(\|f\|_{C^2}) \underline{r} < 0.
\end{split}
\end{equation*}
Moreover
\begin{equation}
\label{Equa:local_Lip_U**}
\frac{d\Gamma_u}{dU} \geq - \frac{f(U_u)}{\Gamma_u(U_u)} - \beta = - \sqrt{\frac{f(U_u)}{U_u}} - \beta > - \beta - \mathcal O(\underline{r}^{\frac{1}{2}}),
\end{equation}
which means that it is uniformly Lipschitz in the region $U < U_u(\beta)$. 
We also observe that by the bound \eqref{Equa:local_Lip_U**} 
the trajectories are crossing the curve $- (\beta + \frac{1}{C(\|f\|_{C^2})}) (U - U^*)$ in a point \newglossaryentry{Uunder}{name=\ensuremath{\underline{U}},description={intersection of $\Gamma_u$ with the line $(\beta + \frac{1}{C(\|f\|_{C^2})}) (U^* - U)$}} $(\gls{Uunder},\Gamma_u(\gls{Uunder}))$ such that
\begin{equation}
\label{Equa:rel_barU_U*}
\lim_{U_u \to U^*} \frac{U_u - \underline{U}}{\Gamma_u(U_u)} = C(\|f\|_{C^2}). 
\end{equation}
%
We can thus estimate $\Gamma_u(V)$ from above as
\begin{equation*}
\Gamma_u(V) \leq 
\Gamma_u(U) - (\beta + \mathcal O(\underline{r}^{\frac{1}{2}})) (V-U). 
\end{equation*}

Using \eqref{Equa:deltaGamma_uu}, the quantity we have to estimate is
\begin{equation*}
\begin{split}
\bigg| \int_{0}^{U} \frac{\delta f(V)}{U \Gamma_u(V)} e^{\int_V^U \frac{f(W)}{\Gamma_u(W)^2} dW} dV \bigg| &\leq \bigg| \int_{0}^{U^* - \underline{r}} \frac{\delta f(V)}{U \Gamma_u(V)} e^{\int_V^U \frac{f(W)}{\Gamma_u(W)^2} dW} dV \bigg| + \frac{\|\delta f\|_{C_0}}{U} \int_{U^* - \underline{r}}^U \frac{e^{\int_V^U \frac{f(W)}{\Gamma_u(W)^2} dW}}{\Gamma_u(V)} dV \\
&= \bigg| \frac{\delta \Gamma_u(U^* - \underline{r})}{U} \bigg| e^{\int_{U^*-\underline{r}}^U \frac{f(W)}{\Gamma_u(W)^2} dW} + \frac{\|\delta f\|_{C_0}}{U} \int_{U^* - \underline{r}}^U \frac{e^{\int_V^U \frac{f(W)}{\Gamma_u(W)^2} dW}}{\Gamma_u(V)} dV.
\end{split}
\end{equation*}
For $W\le U\le U_u(\beta)$
\begin{equation}
\label{Equa:min_ration_22}
\Gamma_u(\beta, W)^2 \ge f(W)W, 
\end{equation}
so that
\begin{align*}
\bigg| \frac{\delta \Gamma_u(U^* - \underline{r})}{U} \bigg| e^{\int_{U^*-\underline{r}}^U \frac{f(W)}{\Gamma_u(W)^2} dW} &\leq \bigg| \frac{\delta \Gamma_u(U^* - \underline{r})}{U} \bigg| e^{\int_{U^*}^U \frac{f(W)}{\Gamma_u(W)^2} dW} \\
&\leq \bigg| \frac{\delta \Gamma_u(U^* - \underline{r})}{U} \bigg| e^{\int_{U^*}^U \frac{1}{W} dW} \leq \bigg| \frac{\delta \Gamma_u(U^* - \underline{r})}{U^* - \underline{r}} \bigg|.
\end{align*}
The quantity $\frac{\delta \Gamma_u(U^* - \underline{r})}{U^*-\underline{r}}$ is already estimated in Step 2.5.

We are left with
\begin{equation}
\label{Equa:estim_last_stab}
\int_{U^* - \underline{r}}^U \frac{e^{\int_V^U \frac{f(W)}{\Gamma_u(W)^2} dW}}{\Gamma_u(V)} dV.
\end{equation}
Assume first that $U^* - \underline{r} < U \leq \underline{U}$: then, also $U^* - \underline{r} \leq W \leq \underline{U}$ and 
\begin{equation*}
\begin{split}
\frac{- f(W)}{\Gamma_u(W)} &\geq \frac{(f'(U^*) - C(\|f\|_{C^2}) \underline{r}) (U^* - W)}{\Gamma_u(\underline{U}) + (\beta + \frac{\underline{r}}{C(\|f\|_{C^2})}) (\underline{U} - W)} \\
&= \frac{(f'(U^*) - C(\|f\|_{C^2}) \underline{r}) (U^* - W)}{(\beta + \frac{\underline{r}}{C(\|f\|_{C^2})}) (U^* - \underline{U}) + (\beta + \frac{\underline{r}}{C(\|f\|_{C^2})}) (\underline{U} - W)} \\
&= \frac{(f'(U^*) - C(\|f\|_{C^2}) \underline{r}) (U^* - W)}{(\beta + \frac{\underline{r}}{C(\|f\|_{C^2})})(U^*-W)} \\
&\geq \frac{f'(U^*)}{\beta} - C(\|f\|_{C^2}) \underline{r} \geq \frac{1}{C(\|f\|_{C^2})}.
\end{split}
\end{equation*}
Then, observing that $f < 0$ in this region, \eqref{Equa:estim_last_stab} becomes for $U^* - \underline{r} \leq U \leq \underline{U}$
\begin{equation*}
\begin{split}
\int_{U^* - \underline{r}}^{U} \frac{e^{\int_V^U \frac{f(W)}{\Gamma_u(W)^2} dW}}{\Gamma_u(V)} dV &\leq \int_{U^* - \underline{r}}^U \frac{e^{- \int_V^U \frac{1}{C(\|f\|_{C^2})) \Gamma_u(W)} dW}}{\Gamma_u(V)} dV \leq C(\|f\|_{C^2}). 
\end{split}
\end{equation*}

If $\underline{U} \leq U \leq U_u$, then using \eqref{Equa:min_ration_22}
we obtain
\begin{equation*}
\begin{split}
\int_{\underline{U}}^U \frac{1}{\Gamma_u(V)} e^{\int_V^U \frac{f(W)}{\Gamma_u(W)^2} dW} dV \leq \frac{\underline{U} - U_u}{\Gamma_u(U_u)} \frac{U}{U^*} \underset{(\ref{Equa:rel_barU_U*})}{\leq} C(\|f\|_{C^2}).
\end{split}
\end{equation*}

We thus conclude that there exists $\underline{\beta} = \underline{\beta}(\|f\|_{C^2})$ such that for $\underline{\beta} < \beta < \beta^{**}$ and $U^* - \underline{r} < U < U_u$ it holds
\begin{equation*}
\bigg| \frac{\delta \Gamma}{U} \bigg| \leq C \Big( \|f\|_{C^2},\max_{[\underline{r},U^* - \underline{r}], \beta \leq \beta^{**}} \{\Gamma_u^{-1}\} \Big) \|\delta f\|_{C^1([0,U^*])}.
\end{equation*}

\smallskip

{\it Final estimate for $\Gamma_u$.} There exists $\underline{r} = \underline{r}(\|f\|_{C^2})$, $\underline{\beta}(\|f\|_{C^2})$ such that for $U \in (0,U_u)$
\begin{equation*}
\bigg| \frac{\delta \Gamma_u}{U} \bigg| \leq C(f) \|\delta f\|_{C^{1}}, 
\end{equation*}
with the constant $C(f)$ depending on
\begin{equation*}
\|f\|_{C_2}, \ \max \Big\{ \Gamma_u(U)^{-1}, \beta \in [\beta^*,\underline{\beta}], U \in [\underline{r},U_u] \Big\}, \ \max \Big\{ \Gamma_u(U)^{-1}, \beta \in \Big[ \beta^{*},2 \sqrt{\max_{[0,U^*]} f'(U)} \Big], U \in [\underline{r},U_u - \underline{r}] \Big\}.
\end{equation*}
Thus, recalling that $\delta \Gamma_u \leq 0$, we have proved \eqref{Equa:estimat_f_uGG}.

\smallskip

\noindent{\it Study of the perturbation of $\Gamma_s$.} The analysis of the dependence of $\Gamma_s$ w.r.t. the source $f$ follows the same lines, with the simplification that $\Gamma_s(U)$ is positive away from $1$ and decreasing as $\lambda_-(\beta,1) (U-1)$ for $U \nearrow 1$.

The linearized ODE is
\begin{equation*}
\frac{d \delta \Gamma_s}{dU} = \frac{f(U)}{\Gamma_s^2} \delta \Gamma_s - \frac{\delta f(U)}{\Gamma_s},
\end{equation*}
so that, noticing that
$$
- \frac{f(U)}{\Gamma_s(U)^2} \sim - \frac{f'(1)}{\lambda_-(\beta,1)^2} \frac{1}{1 - U} \quad \text{as $U \to 1$}
$$
and following the analysis leading to \eqref{Equa:deltaGamma_uu}, the solution can be written as
\begin{equation*}
\delta \Gamma(U) = \int_U^1 \frac{\delta f(V)}{\Gamma(V)} e^{- \int_U^V \frac{f(W)}{\Gamma_s(W)^2} dW} dV = \int_U^1 \frac{\delta f(V)}{\Gamma_s(U)} e^{\int_U^V \frac{\beta}{\Gamma_u(W)} dW} dV.
\end{equation*}
In particular $\delta \Gamma_s \geq 0$.

Observing that
$$
\Gamma_s(U) \geq \frac{(1- U)}{C(\|f\|_{C^2})}
$$
for some constant $C = C(\|f\|_{C^2})$ and $f \geq 0$ in $[U^*,1]$, we can estimate
\begin{equation*}
\begin{split}
|\delta \Gamma(U)| &= \bigg| \int_U^1 \frac{\delta f(V)}{1-V} \frac{1-V}{\Gamma_s(V)} e^{- \int_U^V \frac{f(W)}{\Gamma(W)^2} dW} dV \bigg| \\
&\leq \|\delta f\|_{C^1([U,1])} \int_U^1 \frac{1 - V}{\Gamma_s(V)} dV \leq C(\|f\|_{C^2}) \|\delta f\|_{C^1([U,1])} (1-U).
\end{split}
\end{equation*}
This concludes the proof of \eqref{Equa:estimat_f_sGG}.
\end{proof}

\begin{remark}
\label{Rem:precise_partial_Gamma}
The analysis of  $\Gamma_s$ is much easier since $\Gamma_s$ is uniformly positive and we know the asymptotic behavior near $U = 1$.

The main difficulty in studying $\Gamma_u$ arises when $\beta \sim \beta^{**}$: in this case the trajectory $\Gamma_u$ switches from the stable-stable manifold corresponding to $\lambda_-(\beta,U^*)$ to the stable invariant manifold corresponding to $\lambda_+(\beta,U^*)$. We will see that this has consequences for the analysis of $\partial^2_\beta \Gamma_u$ and the $C^1$-regularity of of $f \mapsto \Gamma_u(\beta,f,U)$.
\end{remark}

Using the regularity of the unstable/stable manifold w.r.t. the paramenter of the ODE, we can prove Fr\'echet differentiability of the maps
\begin{equation*}
(\beta,f,U) \mapsto \Gamma_u(\beta,f,U), \Gamma_s(\beta,f,U)
\end{equation*}
w.r.t. the product topology of $\R \times C^2([0,1]) \times \R$.

\begin{corollary}
\label{Cor:Frechet_Gamma}
Let $f$ satisfy Assumption \eqref{H1} and $U \in [0,U_{uf}(\beta,U))$: then the map
\begin{equation*}
(\beta,f,U) \mapsto \Gamma_u(\beta,f,U)
\end{equation*}
is Fr\'echet differentiable w.r.t. the product topology of $\R \times C^2([0,1]) \times \R$, with derivative
\begin{equation*}
\begin{split}
\langle \delta \Gamma_u(\beta,f,U),(\delta \beta,\delta f,\delta U) \rangle &= - \delta \beta \int_0^U e^{\int_V^U \frac{f(W)}{\Gamma_u(\beta,f,W)^2} dW} dV \\
& \quad - \int_0^U \frac{\delta f(V)}{\Gamma_u(\beta,f,V)} e^{\int_V^U \frac{f(W)}{\Gamma_u(\beta,f,W)^2} dW} dV - \delta U \bigg( \frac{f(U)}{\Gamma_u(\beta,f,U)} + \beta \bigg).
\end{split}
\end{equation*}

Similarly, for $f$ satisfying Assumption \eqref{H1} and $U \in (U^*,1]$, the map
\begin{equation*}
(\beta,f,U) \mapsto \Gamma_s(\beta,f,U)
\end{equation*}
is Fr\'echet differentiable w.r.t. the product topology of $\R \times C^2([0,1]) \times \R$, with derivative
\begin{equation*}
\begin{split}
\langle \delta \Gamma_s(\beta,f,U),(\delta \beta,\delta f,\delta U) \rangle &= \delta \beta \int_0^U e^{-\int_U^V \frac{f(W)}{\Gamma_s(\beta,f,W)^2} dW} dV \\
& \quad + \int_0^U \frac{\delta f(V)}{\Gamma_s(\beta,f,V)} e^{-\int_U^V \frac{f(W)}{\Gamma_u(\beta,f,W)^2} dW} dV - \delta U \bigg( \frac{f(U)}{\Gamma_s(\beta,f,U)} + \beta \bigg).
\end{split}
\end{equation*}
\end{corollary}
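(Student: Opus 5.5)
The plan is to assemble Fréchet differentiability from three ingredients: Gateaux derivatives in each variable separately, continuity of these partial derivatives in the product topology, and the standard fact that continuous partial derivatives imply Fréchet differentiability.

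\emph{Partial derivatives.} The derivative in $U$ is immediate from the ODE \eqref{Equa:Gammau_eq0}: $\partial_U\Gamma_u = -f(U)/\Gamma_u - \beta$, which is continuous wherever $\Gamma_u>0$, i.e. for $U\in(0,U_{uf})$. At $U=0$ the limit in \eqref{Equa:Gammau_eq0} gives continuity. The derivatives in $\beta$ and $f$ are the linearized solutions computed in Theorem \ref{manifoldbound}, namely \eqref{Equa:sol_der_beta_u} and \eqref{Equa:deltaGamma_uu}. For these I must check two things. First, the linearization is really the Gateaux derivative. I would show this by writing the difference quotient $(\Gamma_u(\beta+\epsilon\delta\beta,f+\epsilon\delta f,\cdot)-\Gamma_u)/\epsilon$ as the solution of a linear ODE whose coefficient is $f/(\Gamma_u\Gamma_u^\epsilon)$. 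Its source converges to the linearized source, and the integrable kernel bounds from Steps 2.2--2.5 pass to the limit by dominated convergence. Second, near $U=0$ the correct boundary behaviour has to be selected. I would take this from the local smooth dependence of unstable manifolds on parameters \cite[Theorem 6.2.3]{katokhass}: inside the ball of radius $\underline r$ this already gives $C^1$ dependence on $(\beta,f)$ in the $C^2$ topology, and outside the ball the dependence follows by smooth dependence of ODE solutions on data, since $\Gamma_u$ stays bounded away from $0$ on compact subsets of $(0,U_{uf})$.

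\emph{Continuity of the partial derivatives.} I would fix $(\beta,f,U)$ with $U<U_{uf}(\beta,f)$ and choose a compact interval $[0,U']$ with $U<U'<U_{uf}$. On that interval $\Gamma_u\ge c>0$ away from $0$ for all nearby $(\beta',f')$, because $U_{uf}$ is lower semicontinuous, which follows from the continuous dependence of the trajectory. The integrands in the two formulas then depend continuously on $(\beta,f)$. Near $V=0$ they are dominated uniformly by the power bounds $(V/U)^{\beta/\lambda_+}$ of Step 2.2, so dominated convergence gives joint continuity of the partial derivatives in $(\beta,f,U)$. This yields Fréchet differentiability with the stated linear map, and the $\delta U$ term is simply $\partial_U\Gamma_u$.

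\emph{The case $\Gamma_s$.} This is identical, using the representation from the last part of the proof of Theorem \ref{manifoldbound}, with integration from $U$ to $1$, and the local stable manifold theorem at $(1,0)$. Since $\Gamma_s$ is uniformly positive on $(U^*,1)$, no degeneracy occurs there. The endpoint $U=1$ is handled by the tangency $\Gamma_s\sim -\lambda_-(\beta,1)(1-U)$ and the $C^1$ dependence of $\lambda_-$ on $(\beta,f'(1))$.

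\emph{Main obstacle.} I expect the hardest part to be the uniformity of the kernel estimates near the singular endpoint $U=0$ as $(\beta,f)$ vary. This is exactly where the local invariant manifold theorem must replace the explicit integral formula. Because the statement restricts to $U<U_{uf}$, the $\beta^{**}$ degeneracy at $U^*$ never has to be confronted.
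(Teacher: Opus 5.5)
Your proposal is correct and follows essentially the same route as the paper, which also gets differentiability near $(0,0)$ from the smooth dependence on $(\beta,f)$ of the local unstable manifold (as the fixed point of a contraction), gets it away from $0$ from smooth dependence of the ODE on $(\beta,f)$, and reads off the formula from \eqref{Equa:sol_der_beta_u} and \eqref{Equa:deltaGamma_uu}; your extra step (Gateaux derivatives by dominated convergence, then continuity of the partials) just makes explicit the gluing that the paper leaves implicit. Note also that for $\Gamma_s$ your integration over $(U,1)$ is the correct one: the $\int_0^U$ in the displayed $\Gamma_s$ formula is a typo, cf.\ Step 1.2 of the proof of Theorem \ref{manifoldbound}.
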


\begin{proof}
We present the proof only for $\Gamma_u$, and only for the derivative w.r.t. $\beta,f$, being the differentiability w.r.t. $U$ obtained directly from the ODE.

Recall that:
\begin{itemize}
\item near $(U,P) = (0,0)$ there is a neighborhood whose size $(0,\underline{r})$ is depending only on the $C^2$-norm of $f$, such that $\Gamma_u$ is the unique fixed point a a contraction map, depending smoothly on $\beta,f$ (\cite[Theorem 8.2]{coudeneerne_ergodic}),
\item the ODE depends smoothly on $\beta,f$.
\end{itemize}
The formula for the derivative is obtained by using \eqref{Equa:sol_der_beta_u} and \eqref{Equa:deltaGamma_uu}.
\end{proof}

In the next statement we will prove the second derivatives
\begin{equation*}
\partial_\beta^2 \Gamma_u(\beta,U) = \lim_{\delta \beta \searrow 0} \frac{\partial_\beta \Gamma_u(\beta + \delta \beta,U) - \partial_\beta \Gamma_u(\beta,U)}{\delta \beta}, \quad U \in [0,U_u(\beta)]
\end{equation*}
\begin{equation*}
\partial_\beta^2 \Gamma_s(\beta,U) = \lim_{\delta \beta \searrow 0} \frac{\partial_\beta \Gamma_s(\beta + \delta \beta,U) - \partial_\beta \Gamma_s(\beta,U)}{\delta \beta}, \quad U \in [0,1],
\end{equation*}
exist and are bounded when $\beta$ is away from the critical speed $\beta^{**}$. 
%

\begin{theorem}
\label{Theo:second_der_Gammaus}
The following holds.
\begin{enumerate}
\item For all $\beta \leq \underline{\beta} < \beta^{**}$ there is a constant $C(f,\underline{\beta})$ depending only on
\begin{equation*}
\begin{split}
&\|f\|_{C_2}, \quad \max \Big\{ \Gamma_u(U)^{-1}, \beta \in [\beta^*,\underline{\beta}], U \in [\bar r,U_u(\beta)] \Big\} 
\end{split}
\end{equation*}
such that
\begin{equation}
\label{Equa:beta2_Gamma_u}
\big| \partial^2_\beta \Gamma_u(\beta,U) \big| \leq C(f,\underline{\beta}), \quad U \in [0,U_u(\beta)].
\end{equation}

\item There is a constant depending only on $\|f\|_{C^2}$ such that
\begin{equation}
\label{Equa:beta2_Gamma_s}
0 \leq \partial^2_\beta \Gamma_s(U) \leq C(\|f\|_{C^2}), \quad U \in [0,1].
\end{equation}
\end{enumerate}
\end{theorem}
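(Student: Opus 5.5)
We differentiate once more in $\beta$ the linearized equation of Step 1 of Theorem \ref{manifoldbound}. Writing $\delta\Gamma = \partial_\beta \Gamma_u$ and $\delta^2\Gamma = \partial^2_\beta \Gamma_u$, formal differentiation of $\dot{\delta\Gamma} = \frac{f}{\Gamma_u^2}\delta\Gamma - 1$ gives
\begin{equation*}
\frac{d\, \delta^2 \Gamma}{dU} = \frac{f(U)}{\Gamma_u^2}\, \delta^2\Gamma - \frac{2 f(U)}{\Gamma_u^3} (\delta \Gamma)^2 ,
\end{equation*}
a linear ODE with the same kernel as before. Its source term is $-2f(\delta\Gamma)^2/\Gamma_u^3$. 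The initial condition near $U=0$ is $\lim_{U\to 0}\delta^2\Gamma/U = \partial^2_\beta \lambda_+(\beta,0)$. As in \eqref{Equa:sol_der_beta_u}, the asymptotics $f/\Gamma_u^2 \sim f'(0)/(\lambda_+^2 U)$ kill the homogeneous part, and we get the representation
\begin{equation*}
\partial^2_\beta\Gamma_u(\beta,U) = -2\int_0^U \frac{f(V)\,(\partial_\beta\Gamma_u(V))^2}{\Gamma_u(V)^3}\, e^{\int_V^U \frac{f(W)}{\Gamma_u(W)^2}dW}\, dV .
\end{equation*}
Rigorous justification (existence of the second derivative as the limit of difference quotients) follows from the smooth dependence of the local unstable manifold on parameters near $(0,0)$, as in Corollary \ref{Cor:Frechet_Gamma}, together with smooth dependence of the ODE on $\beta$ where $\Gamma_u>0$.

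To bound this for $U\in[0,U_u(\beta)]$, we split the interval as in Steps 2.2--2.3.

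\emph{Near $0$.} For $V\le \underline r$, \eqref{Equa:Gammau_beta} gives $|\partial_\beta\Gamma_u|\le CV$, $|f(V)|\le CV$ and $\Gamma_u(V)\ge cV$ by \eqref{Equa:expans_U0}. The integrand is therefore bounded by a constant times the same kernel $e^{\int_V^U f/\Gamma_u^2}$ already controlled in Step 1.1 (the kernel is $\le 1$ where $f\le 0$).

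\emph{Away from $0$.} For $V\in[\underline r,U_u(\beta)]$ and $\beta\le\underline\beta$, the quantity $\Gamma_u^{-1}$ is bounded by the constant $\max\{\Gamma_u^{-1}\}$ in the statement. On $[U^*,U_u]$ we use $\Gamma_u^2\ge Uf$, exactly as in Step 1.1, to get $e^{\int_V^U f/\Gamma_u^2}\le U/V$. Combining these yields \eqref{Equa:beta2_Gamma_u} with the announced dependence.

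For large $\beta$ no bound is claimed, since $\beta\le\underline\beta<\beta^{**}$ is bounded.

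For $\Gamma_s$ we use the same equation with terminal condition at $U=1$:
\begin{equation*}
\partial^2_\beta\Gamma_s(U) = 2\int_U^1 \frac{f(V)(\partial_\beta\Gamma_s(V))^2}{\Gamma_s(V)^3}\, e^{-\int_U^V \frac{f}{\Gamma_s^2}}\,dV .
\end{equation*}
On $[U^*,1]$ we have $f\ge 0$, so the integrand is nonnegative and the exponential is $\le 1$; this gives $\partial^2_\beta\Gamma_s\ge0$. Using $0<\partial_\beta\Gamma_s\le 1-V$ from \eqref{Equa:Gammas_beta}, $f(V)\le C(1-V)$ and $\Gamma_s(V)\ge (1-V)/C(\|f\|_{C^2})$, the integrand is bounded by $C(\|f\|_{C^2})$, giving \eqref{Equa:beta2_Gamma_s} there. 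For $U<U^*$, where $f<0$, the sign is not automatic. Here I would use that $\Gamma_s$ is uniformly positive for $\beta\ge\beta^*$ and that $\partial_\beta\Gamma_s$ stays bounded there by the representation of Step 1.2. Positivity may require the identity $e^{\int f/\Gamma_s^2}=\frac{\Gamma_s(\cdot)}{\Gamma_s(\cdot)}e^{-\int\beta/\Gamma_s}$ and a comparison argument, and I would check whether the statement is only really needed on $[U^*,1]$.

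The main obstacle is the first part near $U_u(\beta)$ as $\beta\nearrow\underline\beta$. Close to $U^*$ the factor $\Gamma_u^{-3}$ can be large, which is exactly why the constant must depend on $\min\Gamma_u$ over $[\underline r,U_u]$ for $\beta\le\underline\beta$. Indeed Proposition \ref{Prop:blowupsecond} shows the bound fails at $\beta^{**}$. Keeping the estimate uniform therefore relies on choosing $\underline\beta$ so that $\Gamma_u(U_u(\beta))$ stays bounded below, as in Step 2.6.
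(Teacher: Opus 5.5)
Your proof of part (1), and of part (2) on $[U^*,1]$, is essentially the paper's proof. It uses the same second-order linearized equation with source $-2f(\partial_\beta\Gamma)^2/\Gamma^3$ and the same representation through the kernel $e^{\int_V^U f/\Gamma^2}$. The source is bounded via \eqref{Equa:Gammau_beta}, \eqref{Equa:Gammas_beta} and the lower bounds on $\Gamma_u,\Gamma_s$, and the kernel is controlled by the sign of $f$, plus $\Gamma_u^2\ge Uf$ beyond $U^*$. Your closing remark about having to choose $\underline\beta$ is unnecessary: the constant may depend on the maximum of $\Gamma_u^{-1}$ appearing in the statement, which is finite for any fixed $\underline\beta<\beta^{**}$, and nothing more is used.

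The piece you leave open, part (2) for $U<U^*$, is not proved in the paper either. Its Step 2 establishes the needed bounds only for $U\in(U^*,1)$ and evaluates the formula at $U_s(\beta)$. That is all that is used later, at $\hat U_s(\beta)\in(U^*,1)$ in Theorem \ref{2der}.

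The patch you sketch would not work, because $\Gamma_s$ is not uniformly positive on $[0,U^*]$ for $\beta\ge\beta^*$: $\Gamma_s(\beta^*,\cdot)=\Gamma_u(\beta^*,\cdot)$ vanishes at $U=0$. Near $U=0$ one has $-f/\Gamma_s^2\approx\kappa/U$ with $\kappa=-f'(0)/\lambda_+(\beta^*,0)^2>0$. Hence $\partial_\beta\Gamma_s(U)=\int_U^1 e^{-\int_U^V f/\Gamma_s^2}\,dV$ grows like $U^{-\kappa}$. Inserting this into your representation of $\partial^2_\beta\Gamma_s$, whose source is negative for $V<U^*$, gives $\partial^2_\beta\Gamma_s(\beta^*,U)\to-\infty$ as $U\to 0$. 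By continuity in $\beta$ at fixed $U$, the same failure persists for $\beta$ slightly above $\beta^*$.

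So \eqref{Equa:beta2_Gamma_s} cannot hold on all of $[0,1]$ uniformly in $\beta\ge\beta^*$. It should be read on $[U^*,1]$, where your argument is complete.
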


The fact that $\partial_\beta^2 \Gamma_u$ may be positive will have implications in the analysis of the convexity of the cost functional $E(\beta)$ also in the regions where $\beta \not= \beta^{**}$, see Example \ref{Sss:E_not_convex}.

\begin{proof}
We study separately $\Gamma_u$ and $\Gamma_s$, and we will do the estimate also for the endpoints $U_u(\beta,U_s(\beta)$.

\smallskip

\noindent{\it Step 1: analysis of $\Gamma_u$.} The equation for $\partial^2_\beta \Gamma_u$ is
\begin{equation*}
\frac{d}{dU} \partial_\beta^2 \Gamma_u = \frac{f(U)}{\Gamma_u^2} \partial_\beta^2 \Gamma_u - \frac{2 f(U)}{\Gamma_u^3} (\partial_\beta \Gamma_u)^2, \quad \lim_{U \to 0} \frac{\partial_\beta^2 \Gamma_u}{U} = \partial^2_\beta \lambda_+(\beta,0).
\end{equation*}
As in the case of $\partial_\beta \Gamma_u$ (Step 1.1 of the proof of Theorem \ref{manifoldbound}), we obtain the representation
\begin{equation}
\label{Equa:formula_beta2u}
\partial_\beta^2 \Gamma_u(U) = - \int_0^{U} \frac{2 f(V)}{\Gamma_u(V)^3} \big( \partial_\beta \Gamma_u(V) \big)^2 e^{\int_V^U \frac{f(W)}{\Gamma_u(W)^2} dW} dV.
\end{equation}

Using \eqref{Equa:Gammau_beta} we have
\begin{equation*}
\frac{f(V) \big( \partial_\beta \Gamma_u(V) \big)^2 }{\Gamma_u(V)^3} \leq C(f),
\end{equation*}
where $C(f)$ depends only on
$$
\|f\|_{C^2} \quad  \text{and} \quad \max \Big\{ \Gamma_u(U)^{-1}, \beta \in [\beta^*,\underline{\beta}], U \in [\bar r,U_u(\beta)] \Big\}.
$$
Since $f \leq 0$ for $U \leq U^*$, we then obtain
\begin{equation*}
\begin{split}
|\partial^2_\beta \Gamma_u(U_u(\beta)| &\leq C(f) \int_0^{U_u(\beta)} e^{\int_{U^*}^{U_u(\beta)} \frac{f(W)}{\Gamma_u(W)^2} dW} dV \leq C(f). 
\end{split}
\end{equation*}
which gives \eqref{Equa:beta2_Gamma_u} because of \eqref{Equa:rel_barU_U*}.

\smallskip

\noindent{\it Step 2: analysis of $\Gamma_s$.} The computations are similar, with the simplification that from \eqref{Equa:Gammas_beta} and $\Gamma_s(U) \geq \frac{1 - U}{C(\|f\|_{C^2})}$ we have
\begin{equation*}
0 \leq \frac{\partial_\beta \Gamma_s(U)}{\Gamma_s(U)}, \frac{f(U)}{\Gamma_s(U)} \leq C(\|f\|_{C^2}), \quad U \in (U^*,1).
\end{equation*}
The solution formula for $\partial_\beta^2 \Gamma_s$ is
\begin{equation*}
\partial_\beta^2 \Gamma_s(U_s) = \int_{U_s}^1 \frac{2 f(V)}{\Gamma_u(V)^3} \big( \partial_\beta \Gamma_u(V) \big)^2 e^{- \int_{U_s}^V \frac{f(W)}{\Gamma_u(W)^2} dW} dV,
\end{equation*}
and using the previous bounds one immediately deduces \eqref{Equa:beta2_Gamma_s}.
\end{proof}

Let \newglossaryentry{Ubarubeta}{name=\ensuremath{\overline{U}_u(\beta)},description={last point of intersection of $\gamma_u(\beta)$ with $P^*$}} \gls{Ubarubeta} be the last intersection of $\Gamma_u$ with $P^*$, and \newglossaryentry{Ubarsbeta}{name=\ensuremath{\overline{U}_s(\beta)},description={first point of intersection of $\gamma_u(\beta)$ with $P^*$}} \gls{Ubarsbeta} be the first intersection of $\Gamma_s$ with $P^*$. Then we have also the following

\begin{corollary}
\label{Cor:bound_hatu}
The estimate \eqref{Equa:beta2_Gamma_u} holds in the interval $[0,\overline {U}_u(\beta)]$ and $\beta^* \leq \beta \leq \underline{\beta} < \beta^{**}$.
\end{corollary}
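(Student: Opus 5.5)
The plan is to reuse the representation formula \eqref{Equa:formula_beta2u},
\begin{equation*}
\partial_\beta^2 \Gamma_u(U) = - \int_0^{U} \frac{2 f(V)}{\Gamma_u(V)^3} \big( \partial_\beta \Gamma_u(V) \big)^2 e^{\int_V^U \frac{f(W)}{\Gamma_u(W)^2} dW} dV,
\end{equation*}
which remains valid on the whole interval where $\Gamma_u > 0$, hence on $[0,\overline U_u(\beta)]$. Indeed, for $\beta \leq \underline{\beta} < \beta^{**}$ we have $U_{uf}(\beta) > U^*$ and $\overline U_u(\beta) < U_{uf}(\beta)$, since $P^* > 0$ on $(U^*,1)$. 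By Theorem \ref{Theo:second_der_Gammaus}, the value at $U_u(\beta)$ is already bounded by $C(f,\underline\beta)$. So I would write, for $U \in [U_u(\beta),\overline U_u(\beta)]$,
\begin{equation*}
\partial_\beta^2\Gamma_u(U) = \partial_\beta^2\Gamma_u(U_u) e^{\int_{U_u}^U \frac{f}{\Gamma_u^2}} - \int_{U_u}^U \frac{2f(V)}{\Gamma_u(V)^3}(\partial_\beta\Gamma_u(V))^2 e^{\int_V^U \frac{f}{\Gamma_u^2}} dV,
\end{equation*}
and then estimate the two terms separately.

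The first step is to control the exponential weights. On $[U^*,\overline U_u(\beta)]$ we have $f \geq 0$, so the kernel $f/\Gamma_u^2$ is nonnegative and the exponential is $\geq 1$. This is the opposite of the easy case $U \leq U^*$. However, since $\overline U_u(\beta) \leq U_{uf}(\beta)$ and $\beta$ ranges over the compact set $[\beta^*,\underline\beta]$, the function $\Gamma_u$ is bounded below by a positive constant on $[U^*,\overline U_u(\beta)]$. More precisely, $\min_{[\underline r,\overline U_u(\beta)]} \Gamma_u \geq c(f,\underline\beta) > 0$, by continuity in $\beta$ and the fact that $\Gamma_u(\beta,U_{uf}(\beta)) = 0$ only at points beyond $\overline U_u$. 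Hence $\int_{U_u}^{\overline U_u} f/\Gamma_u^2 \leq \|f\|_\infty / c^2$, and the exponential is bounded by $e^{C}$.

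The next step is to extend the first-derivative bound to this interval. I would use $\partial_\beta \Gamma_u$ via \eqref{Equa:sol_der_beta_u} on $[U_u,\overline U_u]$, where \eqref{Equa:Gammau_beta} was only stated on $(0,U_u)$. Running the linear ODE $\frac{d}{dU}\delta\Gamma_u = \frac{f}{\Gamma_u^2}\delta\Gamma_u - 1$ from $U_u$ with the same exponential bound gives $|\partial_\beta\Gamma_u| \leq C(f,\underline\beta)$ there. Inserting these bounds into the integral term gives $|\partial^2_\beta \Gamma_u(U)| \leq C(f,\underline\beta)$ for $U \in [U_u,\overline U_u]$. The constant depends on the same quantities, once the lower bound on $\Gamma_u$ is included, i.e. $\max\{\Gamma_u^{-1}\}$ over the extended interval.

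The main obstacle is the uniform lower bound on $\Gamma_u$ over $[U_u(\beta),\overline U_u(\beta)]$ uniformly in $\beta \in [\beta^*,\underline\beta]$. Between intersections, $\Gamma_u$ may dip below $P^*$. Near $U_{uf}$ it goes to zero, but $\overline U_u$ stays away from $U_{uf}$ because $P^*(\overline U_u) = \Gamma_u(\overline U_u)$ and $P^*$ is bounded below on compact subsets of $(U^*,1)$. So I would argue by compactness and continuity of $(\beta,U) \mapsto \Gamma_u$ (Corollary \ref{Cor:Frechet_Gamma}), together with the upper semicontinuity of $\beta \mapsto \overline U_u(\beta)$. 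If needed, I would include the quantity $\max\{\Gamma_u^{-1}\}$ on $[\underline r,\overline U_u(\beta)]$ in the constant, as the statement of Theorem \ref{Theo:second_der_Gammaus} implicitly allows.
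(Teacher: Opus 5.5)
Your proof is correct, and it makes the same reduction as the paper: everything comes down to a lower bound $\Gamma_u\ge c>0$ on $[U_u(\beta),\overline U_u(\beta)]$, uniform in $\beta\in[\beta^*,\underline\beta]$. The two proofs obtain that bound differently.

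\textbf{The paper's route.} It argues by monotonicity. It takes $\Gamma_u$ to be decreasing for $U>U^*$, so that $\Gamma_u(U)\ge\Gamma_u(\overline U_u)=P^*(\overline U_u)$ on the whole extended interval. It notes that extra intersections can only occur at a fixed distance from $U^*$, so this value is bounded below by the minimum of $P^*$ on a compact subinterval of $(U^*,1)$. It then simply appeals to the dependence of the constant in Theorem~\ref{Theo:second_der_Gammaus} on $\max\Gamma_u^{-1}$.

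\textbf{Your route.} You get the lower bound softly, by compactness and continuity in $(\beta,U)$. You also explicitly propagate $\partial_\beta\Gamma_u$ and $\partial^2_\beta\Gamma_u$ from $U_u$ to $\overline U_u$, where the weight $e^{\int f/\Gamma_u^2}$ is now $\ge 1$. The paper leaves this step implicit, and it is genuinely needed, since \eqref{Equa:Gammau_beta} used $\Gamma_u\ge P^*$.

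\textbf{What each buys.} The paper's route gives an explicit constant. However, its monotonicity follows from $\frac{d\Gamma_u}{dU}=-\frac{f}{\Gamma_u}-\beta$ only when $\beta\ge 0$; for $\beta<0$ one has $\frac{d\Gamma_u}{dU}(U^*)=-\beta>0$. Your argument is insensitive to the sign of $\beta$. The price is a non-explicit constant, which, as you say, must include $\max\Gamma_u^{-1}$ over the extended interval.

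Two points in your sketch should be made precise.

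First, all intersection points lie in a fixed compact set $K\subset(U^*,1)$. On one side, $\Gamma_u(\beta)\ge\Gamma_u(\underline\beta)$ gives $U_u(\beta)\ge U_u(\underline\beta)>U^*$. On the other, $\Gamma_u(\beta)\le\Gamma_s(\beta)\le\Gamma_s(\underline\beta)<P^*$ near $U=1$. This localization is what makes $P^*(\overline U_u(\beta))$ uniformly positive.

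Second, Corollary~\ref{Cor:Frechet_Gamma} only gives continuity for $U<U_{uf}(\beta)$. So the upper semicontinuity of $\overline U_u$ near $U_{uf}$ needs the time-parametrized flow, not that corollary.

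With the localization in hand, you can bypass the semicontinuity step entirely. The minimum of $\Gamma_u$ on $[U_u,\overline U_u]\subset K$ is attained either at an endpoint or at an interior critical point.
\begin{itemize}
\item At an endpoint, $\Gamma_u=P^*\ge\min_K P^*$.
\item At an interior critical point, $\beta\Gamma_u=-f<0$. This is impossible for $\beta\ge0$, and for $\beta<0$ it gives $\Gamma_u=f/|\beta|\ge\min_K f/|\beta^*|$.
\end{itemize}
This is an explicit bound valid for all $\beta\in[\beta^*,\underline\beta]$.
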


\begin{proof}
It suffices to prove that if $\overline U_u(\beta) > U_u(\beta)$, then $\Gamma_u(\beta,U)$ is uniformly positive in $[U_u(\beta),\overline U_u(\beta)]$. We observe first that being $\Gamma_u$ decreasing for $U > U^*$ and $P^*(U)$ increasing in $(U^*,U^* + \underline{r})$, then if $U_u(\beta) < U^* + \underline{r}$ then there are no further intersections. Hence if $\overline U_u(\beta) > U_u(\beta)$ we must have
\begin{equation*}
\Gamma_u(U) > \min_{[U^* + \bar r,1-\bar r]} f(U) > 0. \qedhere
\end{equation*}
\end{proof}

We conclude this section with the following two results, which show the the regularity results obtained above are optimal. The result below anticipates the loss of regularity of $E(\beta)$ at $\beta^{**}$ of Section \ref{S:diffe_E_beta}. 

\begin{proposition}
\label{Prop:blowupsecond}
If
\begin{equation}
\label{Equa:cond_blow}
\frac{\lambda_+(\beta^{**},U^*)}{\lambda_-(\beta^{**},U^*)} < \frac{1}{2},
\end{equation}
then
\begin{equation*}
\lim_{\beta \nearrow \beta^{**}} \partial^2_\beta \Gamma_u(\beta,U_u(\beta)) = +\infty.
\end{equation*}
\end{proposition}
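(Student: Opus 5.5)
The plan is to read off the blow-up from the representation formula \eqref{Equa:formula_beta2u},
\begin{equation*}
\partial_\beta^2 \Gamma_u(\beta,U_u(\beta)) = - \int_0^{U_u(\beta)} \frac{2 f(V)}{\Gamma_u(V)^3} \big( \partial_\beta \Gamma_u(V) \big)^2 e^{\int_V^{U_u(\beta)} \frac{f(W)}{\Gamma_u(W)^2} dW} dV.
\end{equation*}
On $(0,U^*)$ we have $f<0$, so this part of the integrand is nonnegative. The whole proof then reduces to two points: the contribution of a one-sided neighbourhood $(U^*-\underline r, U^*-\epsilon)$ diverges as $\beta \nearrow \beta^{**}$, and the remaining pieces stay bounded from below. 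Here
\begin{equation*}
\epsilon = \epsilon(\beta) \sim \Gamma_u(\beta,U^*) \to 0
\end{equation*}
is the scale at which $\Gamma_u(\beta)$, still shadowing the strong stable manifold of $(U^*,0)$, leaves it and passes over $U^*$.

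\emph{Step 1: asymptotics near $U^*$.} Set $a = \lambda_+(\beta^{**},U^*)/\lambda_-(\beta^{**},U^*) \in (0,\tfrac12)$ and write $s = U^*-V$. By Remark \ref{Rem:ether_1} and the estimates of Step 2.4 and Step 2.6 in the proof of Theorem \ref{manifoldbound}, for $\epsilon \lesssim s \le \underline r$ we have
\begin{equation*}
\Gamma_u(V) = -\lambda_-(\beta,U^*)\, s\,\big(1+o(1)\big).
\end{equation*}
Using $\lambda_+\lambda_- = f'(U^*)$, this gives $f(V)/\Gamma_u(V)^2 = -\frac{a+o(1)}{s}$. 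Hence the kernel from $V$ to $U^*-\epsilon$ behaves like $(\epsilon/s)^{a}$. Through the inner region $|U-U^*| \lesssim \epsilon$, and up to $U_u(\beta)$, the kernel factor stays comparable to a positive constant. This follows from \eqref{Equa:min_ration_22} and \eqref{Equa:rel_barU_U*}, since the inner region has length $\mathcal O(\epsilon)$ and $\Gamma_u \gtrsim \epsilon$ there.

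\emph{Step 2: $\partial_\beta\Gamma_u$ in the intermediate region.} I would use the linear ODE $\dot\delta = \frac{f}{\Gamma_u^2}\delta - 1$. Its homogeneous solutions behave like $s^{a}$, and the particular solution is $\mathcal O(s)$. So
\begin{equation*}
\partial_\beta \Gamma_u(V) = c(\beta)\, s^{a} + \mathcal O(s).
\end{equation*}
The key claim is that $c(\beta) \to c_{**} \neq 0$. This is a Melnikov-type statement: varying $\beta$ moves $\Gamma_u$ off the strong stable manifold at first order. Since $\partial_\beta\Gamma_u<0$ is given by the explicit negative integral \eqref{Equa:sol_der_beta_u} computed at $U^*-\underline r$, I expect the nonvanishing to follow from matching at $s=\underline r$. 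This is the step I expect to be the main obstacle, as one must check the $\mathcal O(s)$ part does not cancel the $s^{a}$ part uniformly in $\beta$. I would try first to bound $c(\beta)$ below directly from \eqref{Equa:formula_Gammabeta}, with $\epsilon=U^*-\underline r$.

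\emph{Step 3: the divergent integral.} Inserting Steps 1 and 2, with $-f(V) \approx f'(U^*)s$ and $\Gamma_u^3 \approx |\lambda_-|^3 s^3$, the contribution of $s\in(\epsilon,\underline r)$ is bounded below by
\begin{equation*}
C\, c_{**}^2 \int_{\epsilon}^{\underline r} s^{1+2a-3}\Big(\frac{\epsilon}{s}\Big)^{a} ds = C\,\epsilon^{a}\int_\epsilon^{\underline r} s^{a-2}\,ds \sim C\,\epsilon^{2a-1}.
\end{equation*}
This tends to $+\infty$ precisely because $2a<1$, i.e.\ condition \eqref{Equa:cond_blow}. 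Finally, the contributions of $V\in(0,U^*-\underline r)$ and of the inner region $V \in (U^*-\epsilon, U_u(\beta))$ are bounded from below uniformly in $\beta$. For the first, nonnegativity together with Step 2.5 of Theorem \ref{manifoldbound} suffices. For the second, on the part $V \ge U^*$ the integrand has $|f|\lesssim\epsilon$, $\Gamma_u\gtrsim\epsilon$ and $(\partial_\beta\Gamma_u)^2 \lesssim \epsilon^{2a}$, over a length $\mathcal O(\epsilon)$. This yields $\mathcal O(\epsilon^{2a-1})$ but with a smaller constant, or of positive sign where $V<U^*$. To control it I would use \eqref{Equa:rel_barU_U*} and verify that the negative part carries a strictly smaller coefficient than the main term. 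Combining the three pieces gives $\partial^2_\beta\Gamma_u(\beta,U_u(\beta)) \to +\infty$.
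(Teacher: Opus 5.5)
Your strategy is the paper's. You start from \eqref{Equa:formula_beta2u}, localize near $U^*$, show $|\partial_\beta\Gamma_u|\gtrsim\epsilon^{a}$ there (with $\epsilon\sim\Gamma_u(\beta,U^*)$ and $a=\lambda_+/\lambda_-$), and extract a positive contribution of order $\epsilon^{2a-1}$. The paper differs only cosmetically: it bounds $(\partial_\beta\Gamma_u)^2\gtrsim\Gamma_u(U^*)^{2a}$ uniformly on $(U^*-\underline r,U^*)$ and evaluates $\int\frac{-2f}{\Gamma_u^3}e^{\cdots}$ by an integration by parts, instead of using your pointwise $s^{a}$ asymptotics.

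What you call the main obstacle in Step 2 is not one. In \eqref{Equa:formula_Gammabeta} started at $U^*-\underline r$ both terms are nonpositive, so
\[
|\partial_\beta\Gamma_u(\beta,V)| \ge |\partial_\beta\Gamma_u(\beta,U^*-\underline r)|\, e^{\int_{U^*-\underline r}^{V}\frac{f(W)}{\Gamma_u(\beta,W)^2}dW},
\]
and $|\partial_\beta\Gamma_u(\beta,U^*-\underline r)|$ is bounded below uniformly by \eqref{Equa:sol_der_beta_u}. You need no limit $c_{**}$ and no cancellation analysis: the positive particular part $s/(1-a)$ in your splitting $cs^a+\mathcal O(s)$ can never beat this bound. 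This is exactly \eqref{Equa:more_perc_der_beta}.

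The genuine gap is the negative contribution from $V\in(U^*,U_u(\beta))$, where $f>0$. Your bounds ($|f|\lesssim\epsilon$, length $\mathcal O(\epsilon)$) make this piece of the same order $\epsilon^{2a-1}$ as the main term. The proof then rests on an unverified comparison of coefficients, which \eqref{Equa:rel_barU_U*} does not supply. The missing observation is that this interval is much shorter:
\begin{itemize}
\item $\Gamma_u$ is decreasing on $(U^*,U_u)$, and $\Gamma_u(U_u)=P^*(U_u)\gtrsim\sqrt{U_u-U^*}$;
\item hence $U_u-U^*\lesssim\Gamma_u(U^*)^2\sim\epsilon^2$.
\end{itemize}
Now use $|\partial_\beta\Gamma_u|$ bounded by \eqref{Equa:Gammau_beta}, the kernel bounded by $U_u/V$ via \eqref{Equa:min_ration_22}, $f\lesssim U_u-U^*$, and $\Gamma_u\ge\Gamma_u(U_u)$. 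The piece is then $\lesssim (U_u-U^*)^2/\Gamma_u(U_u)^3\sim\sqrt{U_u-U^*}=\mathcal O(\epsilon)\to 0$, which is how the paper disposes of it.

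One small correction to Step 1: the factor $1+o(1)$ should read $1+\mathcal O(\underline r)$ for $s\gg\epsilon$. This perturbs the exponent to $a+\mathcal O(\underline r)$, which is harmless because $2a<1$ is strict. With the negative piece handled this way, your argument closes.
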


\begin{figure}
\resizebox{.75\textwidth}{!}{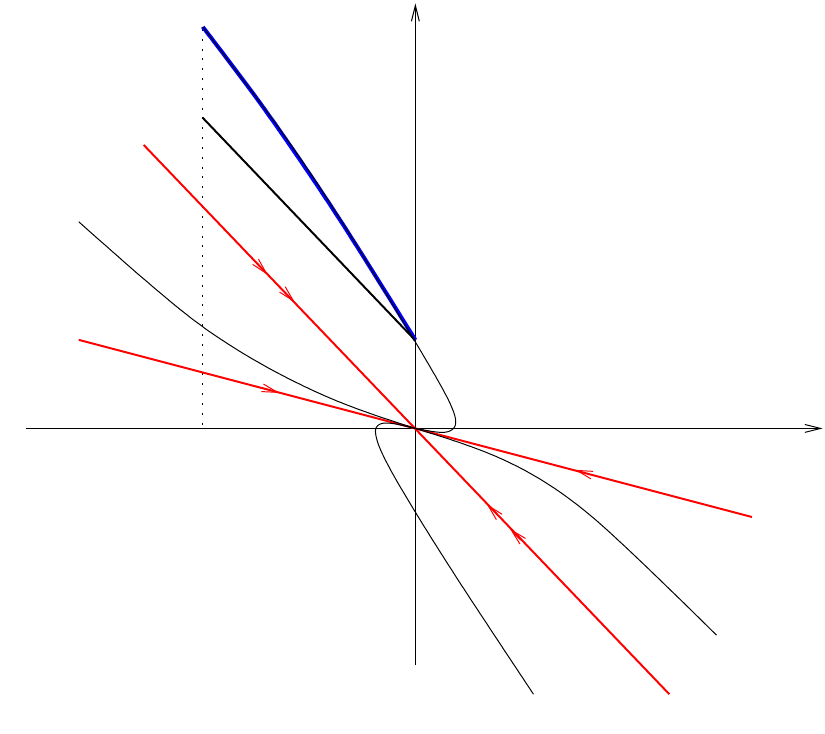}
\caption{The phase-plane picture near $(U^*,0)$ for $\beta \sim \beta^{**}$.}
\label{fig:nearUstar}
\end{figure}

\begin{proof}
If
$$
M^*_-(\beta,U) = \lambda_-(\beta^{**},U^*) (U - U^*) + C(\|f\|_{C^2}) (U - U^*)^2
$$
is the invariant manifold corresponding to $\lambda_-(\beta,U^*)$ (see \cite[Theorem 6.2.8]{katokhass}), then writing
$$
P(U) = M^*_-(\beta,U) + g(U), \quad g(U) > 0,
$$
we obtain
\begin{equation*}
\frac{dg}{dU} = - \frac{f(U)}{M^*_-(\beta,U) + g(U)} - \beta + \frac{d}{dU} M^*_-(\beta,U) = \frac{f(U) g(U)}{M^*_-(\beta,U) (M^*_-(\beta,U) + g(U))} < 0
\end{equation*}
for $U \in (U^* - \underline{r},U^*)$, which gives
\begin{equation}
\label{Equa:Gammau_bound}
\Gamma_u(U^*) + M^*_-(\beta,U) \leq \Gamma_u(U) \leq \Gamma_u(U^* - \underline{r}) + \big( M^*_-(\beta,U) - M^*_-(\beta,U^* - \underline{r}) \big).
\end{equation}
A picture of trajectories near the point $(U^*,0)$ for $\beta$ close to $\beta^{**}$ is given in Fig. \ref{fig:nearUstar}.

Using the assumption that $f(U) < 0$ in $(0,U^*)$, we can estimate the integral for $U^* - \underline{r} \leq V' \leq V \leq U^*$ as
\begin{equation*}
\begin{split}
\int_{V'}^{V} \frac{f(W)}{\Gamma_u(\beta,W)^2} dW &\geq \int_{U^*-\underline{r}}^{V} \frac{f(W)}{\Gamma_u(\beta,W)^2} dW \\
&\geq \int_{U^* - \underline{r}}^{V} \frac{f'(U^*) (W - U^*)}{(\Gamma_u(U^*) + (\lambda_-(\beta,U^*) - C(\|f\|_{C^2}) \underline{r}) (W - U^*))^2} dW \\
&\geq - \frac{f'(U^*)}{(\lambda_-(\beta,U^*) - C(\|f\|_{C^2}) \underline{r})^2} \ln \bigg( 1 + \frac{- \lambda_-(\beta,U^*) + C(\|f\|_{C^2}) \underline{r}}{\Gamma_u(U^*)} (V - U^* + \underline{r}) \bigg).
\end{split}
\end{equation*}
Hence by \eqref{Equa:formula_Gammabeta}
\begin{equation}
\label{Equa:more_perc_der_beta}
\begin{split}
\partial_\beta \Gamma_u(V) &= \partial_\beta \Gamma_u(U^* - \underline{r}) e^{\int_{U^* - \underline{r}}^{V} \frac{f(W)}{\Gamma_u(\beta,W)^2} dW} - \int_{U^* - \underline{r}}^{V} e^{\int_{V'}^{V} \frac{f(W)}{\Gamma_u(\beta,W)^2} dW} dV' \\
&\leq - \mathcal O(1) \bigg( 1 + \frac{- \lambda_-(\beta,U^*) + C(\|f\|_{C^2}) \underline{r}}{\Gamma_u(U^*)} (V - U^* + \underline{r}) \bigg)^{- \frac{f'(U^*)}{(\lambda_-(\beta,U^*) - C(\|f\|_{C^2}) \underline{r})^2}} \\
&\sim - \Gamma_u(U^*)^{\frac{f'(U^*)}{(\lambda_-(\beta,U^*) - C(\|f\|_{C^2}) \underline{r})^2}}.
\end{split}
\end{equation}
and \eqref{Equa:formula_beta2u} gives (being the second derivative positive)
\begin{equation*}
\begin{split}
\partial^2_\beta \Gamma_u(U^*) &= - \int_0^{U^*} \frac{2 f(V)}{\Gamma_u(V)^3} \big( \partial_\beta \Gamma_u(V) \big)^2 e^{\int_V^{U^*} \frac{f(W)}{\Gamma_u(W)^2} dW} dV \\
&= \partial^2_\beta \Gamma_u(U^* - \underline{r}) e^{\int_{U^* - \underline{r}}^{U^*} \frac{f(W)}{\Gamma_u(W)^2} dW} - \int_{U^* - \underline{r}}^{U^*} \frac{2 f(V)}{\Gamma_u(V)^3} \big( \partial_\beta \Gamma_u(V) \big)^2 e^{\int_V^{U^*} \frac{f(W)}{\Gamma_u(W)^2} dW} dV \\
&\gtrsim \partial^2_\beta \Gamma_u(U^* - \underline{r}) \Gamma_u(U^*)^{\frac{f'(U^*)}{(\lambda_-(\beta,U^*) + C(\|f\|_{C^2}) \underline{r})^2}} \\
& \quad + \Gamma_u(U^*)^{\frac{2 f'(U^*)}{(\lambda_-(\beta,U^*) + C(\|f\|_{C^2}) \underline{r})^2}} \int_{U^* - \underline{r}}^{U^*} \frac{- 2 f(V)}{\Gamma_u(V)^3} e^{\int_V^{U^*} \frac{f(W)}{\Gamma_u(W)^2} dW} dV.
\end{split}
\end{equation*}
The latter integral can be estimated by integrating by parts
\begin{equation*}
\begin{split}
\int_{U^* - \underline{r}}^{U^*} \frac{- 2 f(V)}{\Gamma_u(V)^3} e^{\int_V^{U^*} \frac{f(W)}{\Gamma_u(W)^2} dW} dV &= \int_{U^* - \underline{r}}^{U^*} \frac{- f(V)}{\Gamma_u(V)^3} e^{\int_V^{U^*} \frac{f(W)}{\Gamma_u(W)^2} dW} dV \\
& \quad + \frac{e^{\int_V^{U^*} \frac{f(W)}{\Gamma_u(W)^2} dW}}{\Gamma_u(V)} \bigg|_{U^*-\underline{r}}^{U^*} - \int_{U^* - \underline{r}}^{U^*} \frac{\frac{d\Gamma(V)}{dV}}{\Gamma_u(V)^2} e^{\int_V^{U^*} \frac{f(W)}{\Gamma_u(W)^2} dW} dV \\
&= \frac{1}{\Gamma_u(U^*)} - \frac{e^{\int_{U^* - \underline{r}}^{U^*} \frac{f(W)}{\Gamma_u(W)^2} dW}}{\Gamma_u(U^* - \underline{r})} + \int_{U^* - \underline{r}}^{U^*} \frac{\beta}{\Gamma_u(V)^2} e^{\int_V^{U^*} \frac{f(W)}{\Gamma_u(W)^2} dW} dV \\
&= \frac{1 - e^{- \int_{U^* - \underline{r}}^{U^*} \frac{\beta}{\Gamma_u(W)} dW}}{\Gamma_u(U^*)} + \frac{1}{\Gamma_u(U^*)} \int_{U^* - \underline{r}}^{U^*} \frac{\beta}{\Gamma_u(V)} e^{- \int_V^{U^*} \frac{\beta}{\Gamma_u(W)} dW} dV \\
&= 2 \frac{1 - e^{- \int_{U^* - \underline{r}}^{U^*} \frac{\beta}{\Gamma_u(W)} dW}}{\Gamma_u(U^*)}. 
\end{split}
\end{equation*}
Using that the term
\begin{equation*}
\int_{U^*}^{U_u} \frac{f(V)}{\Gamma_u(V)^3} (\partial_\beta \Gamma_u(V))^2 e^{\int_{V}^{U_u} \frac{f(W)}{\Gamma_u(W)^2} dW} dV \leq C(f) \frac{(U_u - U^*)^2}{\Gamma_u(U_u)^3} \sim \sqrt{U_u - U^*},
\end{equation*}
we obtain that as $\beta \to \beta^{**}$ 
\begin{equation*}
\partial_\beta^2 \Gamma_u(U_u) \sim \Gamma_u(U^*)^{\frac{2 f'(U^*)}{\lambda_-(\beta,U^*)^2} - 1}.
\end{equation*}
In particular, if
\begin{equation*}
\frac{f'(U^*)}{\lambda_-(\beta^{**},U^*)^2} = \frac{\lambda_+(\beta^{**},U^*)}{\lambda_-(\beta^{**},U^*)} < \frac{1}{2}
\end{equation*}
then the second derivative approaches $+\infty$ as $\beta \nearrow \beta^{**}$.
\end{proof}

This second result addresses the dependence of the derivatives w.r.t. $\beta$ and the dependence of $f \mapsto \Gamma_u(f)$ when $\beta \sim \beta^*$.

\begin{proposition}
\label{Prop:Lipechit_U_u}
The map
\begin{equation*}
f \mapsto \partial_\beta \Gamma_u(\beta,f,U_u(\beta,f))
\end{equation*}
is continuous in the $C^2$-topology. The map
\begin{equation*}
f \mapsto \Gamma_u(\beta,f,U_u(\beta,f))
\end{equation*}
is only Lipschitz.
\end{proposition}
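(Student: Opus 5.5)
The plan is to split according to the position of $\beta$ relative to $\beta^{**}(f)$. For $\beta$ far from $\beta^{**}$, the point $U_u(\beta,f)$ is a transversal (or at least isolated) intersection of $\Gamma_u$ with $P^*$. Corollary \ref{Cor:Frechet_Gamma} gives smooth dependence of $(\beta,f,U)\mapsto\Gamma_u$, and the implicit function theorem applied to $\Gamma_u(\beta,f,U)-P^*(f,U)=0$ (with $P^*$ also depending on $f$) gives continuity of $f\mapsto U_u(\beta,f)$. Both maps are then $C^1$, so in particular continuous and Lipschitz. The only issue is the behaviour of $f\mapsto U_u(\beta,f)$ when $\beta^{**}(f)$ crosses $\beta$. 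Since $\beta^{**}$ depends continuously on $f$, it suffices to analyse sequences $f_n\to f$ with $\beta$ close to $\beta^{**}(f)$. Here one case has $U_u=U^*(f)$ (for $\beta\ge\beta^{**}$), and the other has $U_u$ slightly larger than $U^*$ with $\Gamma_u(U_u)$ small.

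For the first map, I would show that $\partial_\beta\Gamma_u(\beta,f,U_u(\beta,f))\to 0$ as $\beta\to\beta^{**}(f)$ from both sides, uniformly in $f$ near a fixed source. For $\beta\ge\beta^{**}$, $\Gamma_u(\beta,U^*)=0$ for all such $\beta$, so the derivative at $U^*$ vanishes. For $\beta\nearrow\beta^{**}$, I would use the representation \eqref{Equa:formula_Gammabeta} starting from $U^*-\underline{r}$, together with the lower bound on $\int f/\Gamma_u^2$ obtained in the proof of Proposition \ref{Prop:blowupsecond}. These give the estimate \eqref{Equa:more_perc_der_beta}, of size $\Gamma_u(U^*)^{f'(U^*)/\lambda_-^2}$, which tends to $0$. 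The contribution of the short interval $[U^*,U_u]$ is bounded by the local Lipschitz bound \eqref{Equa:local_Lip_U**} and \eqref{Equa:rel_barU_U*}, and is $\mathcal O(U_u-U^*)$. Since all constants depend only on $\|f\|_{C^2}$ and the quantities in Theorem \ref{manifoldbound}, the convergence is uniform in a $C^2$-neighbourhood. Continuity then follows by combining this with the smooth dependence away from $\beta^{**}$.

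For the second map, Lipschitz continuity comes from writing
\[
\Gamma_u(f_1,U_u(f_1))-\Gamma_u(f_2,U_u(f_2)) = \big[\Gamma_u(f_1,\cdot)-\Gamma_u(f_2,\cdot)\big] + \big[P^*(f_2,U_u(f_1))-P^*(f_2,U_u(f_2))\big]\text{-type terms}.
\]
The first bracket is controlled by \eqref{Equa:estimat_f_uGG}, which is uniform up to $U_u$. For the motion of the intersection point, I would compare the slopes: $\Gamma_u$ has slope $\approx-\beta$ while $P^*$ has slope $+\infty$ near $U^*$. Hence a vertical displacement $\delta\Gamma_u=\mathcal O(\|\delta f\|)$ moves the value at the crossing by $\mathcal O(\|\delta f\|)$. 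The case where $U^*(f)$ itself moves is handled by the implicit function theorem for $f(U^*)=0$, $f'(U^*)>0$.

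To show the map is \emph{only} Lipschitz, I would exhibit a perturbation, e.g. a translation of $U^*$ such as $f_\epsilon(U)=f(U-\epsilon g(U))$ with $g$ localized near $U^*$. For $\beta$ slightly above $\beta^{**}$ the value stays $0$, while for $\beta$ slightly below it is positive with a nonzero first-order rate. The one-sided derivatives then differ, so $\partial_f\Gamma_u(U_u)\not\to0$ as $\beta\nearrow\beta^{**}$. The main obstacle I expect is this last computation: one has to show that the linearization of $\Gamma_u(U_u)$ along $\delta f$ moving $U^*$ stays bounded away from zero as $\beta\nearrow\beta^{**}$. I would first try the representation \eqref{Equa:deltaGamma_uu} near $U^*$, following the singular-integral estimates of Step 2.6 but keeping track of the sign and leading term rather than only an upper bound.
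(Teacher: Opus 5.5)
Your argument for the continuity of $f\mapsto\partial_\beta\Gamma_u(\beta,f,U_u(\beta,f))$ follows the paper. Away from $\beta^{**}(f)$ you use Corollary \ref{Cor:Frechet_Gamma}; for $\beta\geq\beta^{**}$ the quantity vanishes; for $\beta\nearrow\beta^{**}$ the representation \eqref{Equa:formula_Gammabeta} from $U^*-\underline{r}$ with \eqref{Equa:Gammau_bound} gives the decay \eqref{Equa:more_perc_der_beta}, plus an $\mathcal O(U_u-U^*)$ piece on $[U^*,U_u]$.

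The gap is in the ``only Lipschitz'' part. You reason with the \emph{value} of the composite map: it is $0$ for $\beta>\beta^{**}$, and for $\beta$ slightly below $\beta^{**}$ you claim it is ``positive with a nonzero first-order rate'' along $f_\epsilon(U)=f(U-\epsilon g(U))$. That rate is not bounded away from zero.
\begin{itemize}
\item Where $g\equiv1$ the perturbation is a rigid translation, and \eqref{functeq} is translation invariant. In the variable $V=U-\epsilon$ the perturbed $\Gamma_u$ solves the unperturbed ODE, with data at $V=U^*-\underline{r}$ differing from $\Gamma_u(U^*-\underline{r})$ by $\mathcal O(\epsilon)$.
\item This discrepancy reaches $U^*$ multiplied by $e^{\int_{U^*-\underline{r}}^{U^*}f/\Gamma_u^2\,dW}$. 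This is exactly the factor that forces $\partial_\beta\Gamma_u(U_u)\to0$ in your first step.
\item $P^*$ and $U_u$ are translated together with $U^*$ up to $\mathcal O(\epsilon\,\Gamma_u(U^*)^2)$. Hence the height of the crossing changes only by $o(\epsilon)$ as $\beta\nearrow\beta^{**}$.
\end{itemize}
Equivalently, in $\frac{d}{d\epsilon}\Gamma_u(U_u)=\delta\Gamma_u(U_u)+\partial_U\Gamma_u(U_u)\,\delta U_u$ one has $\delta\Gamma_u(U_u)\approx\beta\,\delta U^*$ and $\partial_U\Gamma_u(U_u)\,\delta U_u\approx-\beta\,\delta U^*$, and these cancel. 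So both one-sided derivatives of the composite tend to $0$, and your comparison of them cannot detect any loss of regularity.

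What the paper actually exhibits is the discontinuity of the fixed-$U$ variation $\delta\Gamma_u(U_u)$ given by \eqref{Equa:deltaGamma_uu}. This is the partial derivative $\partial_f\Gamma_u(\beta,f,U)$ evaluated at $U=U_u(\beta,f)$, the object that enters \eqref{Equa:deriv_expli_sour} through $\partial_\varpi\Gamma_u(\hat U_u)$ and hence Remark \ref{Rem:deriva_f}. The paper splits \eqref{Equa:deltaGamma_uu} at $U^*-\underline{r}$ and $U^*$:
\begin{itemize}
\item The first piece is damped by the same vanishing exponential.
\item The piece on $[U^*,U_u]$ is $\mathcal O(\sqrt{U_u-U^*})\,\|\delta f\|_{C^0}$.
\item On $[U^*-\underline{r},U^*]$, using $\Gamma_u\sim\lambda_-(\beta,U^*)(U-U^*)$, the weight satisfies $\int_{U^*-\underline{r}}^{U^*}\Gamma_u(V)^{-1}e^{\int_V^{U^*}f/\Gamma_u^2\,dW}\,dV\gtrsim 1/|\lambda_+(\beta,U^*)|$.
\end{itemize}
So the middle piece stays bounded below by a multiple of $|\delta f(U^*)|/|\lambda_+(\beta,U^*)|$, and the partial derivative has no limit across $\beta^{**}$ unless $\delta f(U^*)=0$.

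Your closing plan, extracting the leading term of \eqref{Equa:deltaGamma_uu} near $U^*$, is exactly this computation. It only works if you drop the heuristic about the value of $\Gamma_u(U_u)$ and do not add the correction $\partial_U\Gamma_u\,\partial_f U_u$, since that correction removes precisely the non-vanishing term.
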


\begin{proof}
For $\beta < \beta^{**}(f)$, we can apply the previous Corollary \ref{Cor:Frechet_Gamma} in both cases, observing that the derivatives are bounded and continuous w.r.t. $\beta,f$; for $\beta > \beta^{**}(f)$, we have $\Gamma_u(U_u = U^*) = 0$. We thus need only to the case $\beta \nearrow \beta^{**}$.

The derivative w.r.t. $\beta$ is estimated similarly to \eqref{Equa:more_perc_der_beta}: from \eqref{Equa:Gammau_bound} we obtain
\begin{equation*}
\begin{split}
\int_{V}^{U^*} \frac{f(W)}{\Gamma_u(W)^2} dW &\gtrsim \int_{V}^{U^*} \frac{f'(U^*) (W - U^*)}{(\Gamma_u(\beta,U^* - \underline{r}) - M^*_-(\beta,U^*- \underline{r})) + \lambda_-(\beta,U^*) (W - U^*))^2} dW \\
&\sim - \frac{f'(U^*)}{\lambda_-(\beta,U^*)^2} \ln \bigg( 1 + \frac{-\lambda_-(\beta,U^*) (U^* - V)}{\Gamma_u(\beta,U^* - \underline{r}) - M^*_i(\beta,U^*- \underline{r})} \bigg),
\end{split}
\end{equation*}
and then for $\underline{\beta} < \beta < \beta^{**}$
\begin{equation*}
\begin{split}
\partial_\beta \Gamma(\beta,f, U_u(\beta)) &= \partial_\beta \Gamma_u(\beta,f,U^*-\underline{r}) e^{\int_{U^*-r}^{U^*} \frac{f(W)}{\Gamma_u(\beta,W)^2} dW} - \int_{U^*-\underline{r}}^{U^*} e^{\int_{V}^{U^*} \frac{f(W)}{\Gamma_u(\beta,W)^2} dW} dV - \int_{U^*}^{U_u(\beta)} \bigg( \frac{f(V)}{\Gamma_u(V)} + \beta \bigg) dV \\
&\sim \big( \Gamma_u(\beta,U^* - \underline{r}) - M^*_-(\beta,U^*- \underline{r}) \big)^{- \frac{f'(U^*)}{\lambda-(\beta,U^*)^2}} + \mathcal O(U_u - U^*).
\end{split}
\end{equation*}
Since $\Gamma_u(\beta,U^* - \underline{r}) \to M^*_-(\beta,U^* - \underline{r})$ as $\beta \nearrow \beta^{**}$, then $\partial_\beta \Gamma_u(\beta,f,U_u(\beta,f)) \to 0$ as $\beta \to \beta^{**}(f)$. 

The analysis of $f \mapsto \Gamma_u(\beta,f,U^*(\beta,U))$ follows the same line: write with the notation of Step 2 of Theorem \ref{manifoldbound}
\begin{equation*}
\begin{split}
\delta \Gamma(U_u) &= - \int_0^{U_u} \frac{\delta f}{\Gamma_u(V)} e^{\int_V^{U_u} \frac{f(W)}{\Gamma_u(W)^2} dW} dV \\
&= \delta \Gamma(U^* - \underline{r}) e^{\int_{U^* - \underline{r}}^{U_u} \frac{f(W)}{\Gamma_u(W)^2} dW} - \int_{U^* - \underline{r}}^{U^*} \frac{\delta f}{\Gamma_u(V)} e^{\int_V^{U_u} \frac{f(W)}{\Gamma_u(W)^2} dW} dV - \int_{U^*}^{U_u} \frac{\delta f}{\Gamma_u(V)}e^{\int_V^{U_u} \frac{f(W)}{\Gamma_u(W)^2} dW} dV \\
&\sim \delta \Gamma(U^* - \underline{r}) e^{\int_{U^* - \underline{r}}^{U_u} \frac{f(W)}{\Gamma_u(W)^2} dW} - \int_{U^* - \underline{r}}^{U^*} \frac{\delta f}{\Gamma_u(V)} e^{\int_V^{U_u} \frac{f(W)}{\Gamma_u(W)^2} dW} dV - \mathcal O(\sqrt{U_u - U^*}) \|\delta f\|_{C^0}.
\end{split}
\end{equation*}
The difference is in the second integral: using that
\begin{equation*}
\Gamma_u(\beta,U) \sim \lambda_-(\beta,U^*(f)) (U - U^*(f)) 
\end{equation*}
by the linear analysis of Proposition \ref{Prop:blowupsecond}, 
\begin{equation*}
\begin{split}
\int_{U^*- \underline{r}}^{U^*} \frac{e^{\int_V^U \frac{f(W)}{\Gamma_u(W)^2} dW}}{\Gamma_u(V)} dV &\geq \int_{U^* - \underline{r}}^{U^*} e^{\frac{f'(U^*)}{\lambda_-(U^*)} \int_V^{U^*} \frac{dW}{\Gamma)u(W)}} = \frac{- \lambda_-(U^*)}{f'(U^*)} = \frac{1}{- \lambda_+(\beta,U^*)}. 
\end{split}
\end{equation*}
Hence the derivative w.r.t. the perturbation $\delta f$ is not continuous, unless $\delta f(U^*) = 0$, i.e. $U^*$ remains constant.
\end{proof}

\begin{remark}
\label{Rem:const_U*}
In the case $U^*(f)$ is constant, then it is possible to extend the previous analysis to show that the dependence is in fact smooth, not only Lipschitz, in a similar way as the dependence of $\Gamma_u(f)$ near $U=0$. We will not pursue this line in this work.
\end{remark}

\section{Some initial estimates and the monotone case}
\label{S:stru_opt}

In this section we begin the analysis of optimal profiles for \eqref{Equa:funct_eq_UP}, and recall the results developed in \cite{BressanChiri22}, which we briefly summarize. 
%

In \cite{BressanChiri22} the authors make the further \emph{monotonicity assumption} 
\begin{enumerate}[label=\textbf{(H\arabic*)}, ref=H\arabic*]
\setcounter{enumi}{1}
\item\label{mon}
for all $U \in [U^*,1]$, one has $(4-2\sqrt{3})f'(U) + 2U f''(U) \le 0$.
\end{enumerate}
Under this condition, the infimum in \eqref{effort} is achieved by the smooth curve $\gamma_\beta$ obtained via the concatenation of three parts:
\begin{itemize}
\item the unstable manifold $\Gamma_u=\Gamma_u(\beta)$ until it reaches a point \newglossaryentry{Uubeta}{name=\ensuremath{U_u(\beta)},description={(first) intersection of $\Gamma_u(\beta)$ with $P^*$}} $(\gls{Uubeta}, P^*(\gls{Uubeta}))$ on the critical curve $\gls{Pstar} = \sqrt{Uf(U)}$;
\item the stable manifold $\Gamma_s=\Gamma_s(\beta)$ until the point \newglossaryentry{Usbeta}{name=\ensuremath{U_s(\beta)},description={(last) intersection of $\Gamma_s(\beta)$ with $P^*$}} $(\gls{Usbeta}, P^*(U_s(\beta)))$ of intersection with the graph of $P^*$;
\item the segment of the graph of $P^*$ connecting the previous two curves.
\end{itemize}
Recall that \gls{Gammau} and \gls{Gammas} are the unstable manifold of $(0,0)$ and the stable manifold of $(1,0)$. Due to the monotonicity assumption, there are no other intersections of $\Gamma_u$ or $\Gamma_s$ with $P^*$ beyond $U_u,U_s$.

\begin{figure}
\centering
\resizebox{.75\textwidth}{!}{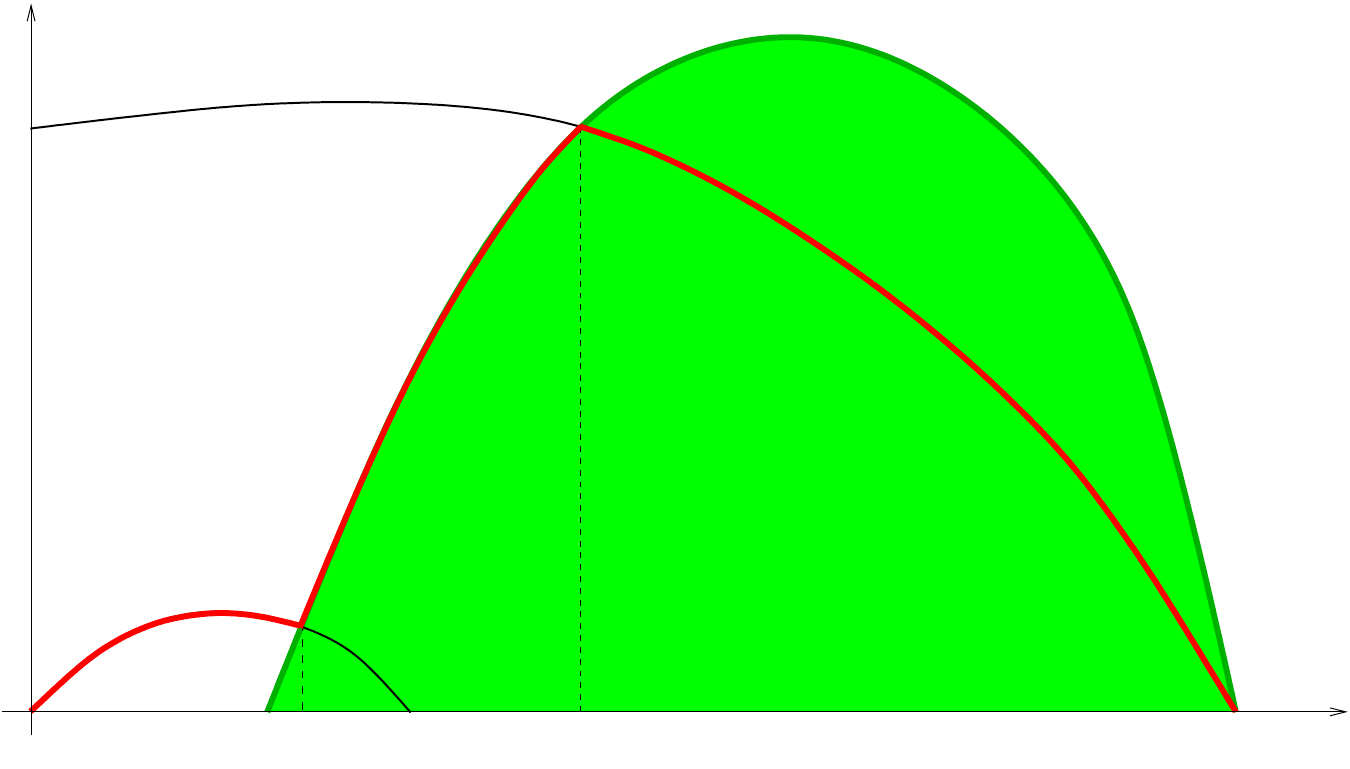} 
\caption{The structure of the optimal solution under Condition (\ref{mon}).}
\label{Fig:optproof_1}
\end{figure}

Moreover, when $\beta^* \le \beta \le \beta^{**}$, the optimal profile $\gamma_\beta$ is the graph of the function (see Fig. \ref{Fig:optproof_1})
\begin{equation}
\label{optgraph}
\gls{Pbeta}(U):=\begin{cases}
\Gamma_u(\beta, U) & U \in [0,U_u(\beta)], \\
P^*(U) & U \in [U_u(\beta), U_s(\beta)], \\
\Gamma_s(\beta, U) & U \in [U_s(\beta),1].
\end{cases}
\end{equation}
In particular, there is an explicit expression for the optimal control, obtained by Equation \eqref{Equa:funct_eq_UP} for $P(U)$:
\begin{equation}
\label{Equa:alpha_beta}
\alpha_\beta(U)= \begin{cases}
\frac{F(U)+\beta}{U} & U \in [U_u(\beta),U_s(\beta)],\\
0 & U \in [0,U_u(\beta)]\cup[U_s(\beta), 1],
\end{cases}
\end{equation}
with
\begin{equation}
\label{F}
\gls{Fdef} := (P^*)'(U) + \frac{f(U)}{P^*(U)} = \frac{3f(U)+Uf'(U)}{2\sqrt{Uf(U)}},  \quad \forall U \in (U^*,1).
\end{equation}
This function will also play a fundamental role in the following sections: indeed the level sets $\{F > - \beta\}$ describe the regions where the trajectories of the ODE \eqref{functeq} are entering $\{P < P^*\}$, and thus a control keeping $P(U) = P^*(U)$ can be applied.

Finally, even when $\beta>\beta^{**}$, a simple approximation argument \cite{bressancetraro} shows that the \emph{effort \gls{Ebeta}} 
\begin{align}
\label{minfunct}
\begin{split}
E(\beta)=\inf \bigg\{ \int_0^1 \frac{1}{U} \left(P' +\frac{f(U)}{P} + \beta \right) \, dU&, P \text{ Lipschitz such that } \\
&P(0)=P(1)=0 \text{ and } P' +\frac{f(U)}{P} + \beta \ge 0 \bigg\},
\end{split}
\end{align}
is equal to
\begin{equation*}
E(\beta) = \int_{U_u(\beta)}^{U_s(\beta)} \frac{F(U) + \beta}{U} dU.
\end{equation*}
We remark here that
\begin{equation}
\label{Equa:bound_integr}
F(U) \sim \begin{cases}
\frac{\sqrt{f'(U^*)}}{(U - U^*)^{\frac{1}{2}}} & U \searrow U^*, \\
- \frac{\sqrt{-f'(1)}}{(1 - U)^{\frac{1}{2}}} & U \nearrow 1,
\end{cases}
\end{equation}
so that integrability is assured. In particular
\begin{equation}
\label{Equa:limi_F}
\lim_{U \to U^*} F(U) = + \infty, \quad \lim_{U \to 1} F(U) = - \infty.
\end{equation}

The key role of Assumption (\ref{mon}) is that it implies a strict monotonicity condition for $F$ (i.e., it is strictly decreasing), which, in turn, guarantees that the upper level sets
$$
\{U\in [U^*,1]: \ F(U)+\beta>0\}
$$
are a single interval $(U^*,\check U)$. By observing Fig. \ref{Fig:optproof_1}, one sees that the intersection points of $\Gamma_u(\beta),\Gamma_s(\beta)$ with $P^*$ are unique, and that $\alpha_\beta$ as defined in \eqref{Equa:alpha_beta} is positive so that $\gamma_\beta$ is admissible.

Finally, optimality follows by the following result which we will use also in the next sections: consider two Lipschitz curves $\gamma_i : [0,1] \mapsto [0,1] \times [0,\infty)$, with
$$
\gamma_i(0) = (0,0), \ \gamma_i(1) = (1,0), \ i=1,2, 
$$
and let
\begin{equation*}
\gamma(s) = \begin{cases}
\gamma_1(s) & s \in [0,1], \\
\gamma_2(2-s) & s \in (1,2),
\end{cases}
\end{equation*}
be the closed curve obtained by joining them. Write also the bounded open region determined by $\gamma$ as the countable union of simply connected open sets $\{A_i\}_i$, to which we associate the rotation number \newglossaryentry{rhoi}{name=\ensuremath{\rho_i},description={orientation of a Jordan curve}}
\begin{equation*}
\gls{rhoi} = \frac{1}{2\pi} \oint \frac{\gamma - (U_i,P_i)}{|\gamma - (U_i,P_i)|^2} \wedge d\gamma, \quad (U_i,P_i) \in A_i. 
\end{equation*}

\begin{lemma}
\label{Lem:first_opt_1}
It holds
\begin{equation*}
\begin{split}
\int \bigg( \frac{1}{U}, \frac{f(U) + \beta}{UP} \bigg) \cdot (d\gamma_1 - d\gamma_2) &= \sum_i \rho_i \int_{A_i} \mathrm{curl} \bigg( \frac{1}{U}, \frac{f(U) + \beta}{UP} \bigg) \mathscr L^2 \\
&= \sum_i \rho_i \int_{A_i} \frac{P^2 - U f(U)}{P^2 U^2} \mathscr L^2,
\end{split}
\end{equation*}
whenever the series converges.  
\end{lemma}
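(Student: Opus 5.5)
The plan is to read the left-hand side as the circulation of the $1$-form
\[
\omega = a\,dU + b\,dP, \qquad a = \frac{f(U)}{UP}+\frac{\beta}{U}, \qquad b=\frac1U
\]
(the form $\mathbf v$ defining $\tilde E$) along the closed curve $\gamma$. The equality with the area integral will then come from the winding-number form of Green's theorem. First compute
\[
d\omega = \big(\partial_U b-\partial_P a\big)\,dU\wedge dP = \Big(-\frac{1}{U^2}+\frac{f(U)}{UP^2}\Big)dU\wedge dP = \frac{Uf(U)-P^2}{U^2P^2}\,dU\wedge dP .
\]
With the orientation convention in the definition of $\rho_i$ (the curve $\gamma_1$ is run forward and $\gamma_2$ backward), this gives the integrand $\frac{P^2-Uf}{P^2U^2}$ up to the global sign absorbed in $\rho_i$. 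I will fix this sign once by checking it on a simple configuration, $\gamma_2$ below $\gamma_1$.

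The obstacle is that $\omega$ is singular on $\{U=0\}\cup\{P=0\}$, and both curves touch these lines at the endpoints $(0,0)$ and $(1,0)$. Away from the endpoints $P>0$, since the curves are admissible. So I truncate. For small $\epsilon>0$, let $\gamma^\epsilon$ be the closed Lipschitz curve made of $\gamma_1$ restricted to $U\in[\epsilon,1-\epsilon]$, the vertical segment at $U=1-\epsilon$, $\gamma_2$ backwards, and the vertical segment at $U=\epsilon$. This assumes $U$ is monotone along the curves near the endpoints, which holds because $P>0$ there.

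The closed curve $\gamma^\epsilon$ lies in a compact subset of $\{U>0,P>0\}$, where $\omega$ is smooth. For Lipschitz closed curves the identity $\oint_{\gamma^\epsilon}\omega=\int_{\R^2} w_{\gamma^\epsilon}\,d\omega$ holds, where $w_{\gamma^\epsilon}$ is the winding number. I would prove it by approximating $\gamma^\epsilon$ uniformly, with $W^{1,1}$ convergence of derivatives, by smooth or polygonal curves, and applying Green's theorem on each. Inside $[\epsilon,1-\epsilon]\times[0,\infty)$ the winding number of $\gamma^\epsilon$ coincides with that of $\gamma$, and it is constant equal to $\rho_i$ on each component $A_i$. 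So the right-hand side equals $\sum_i\rho_i\int_{A_i\cap\{\epsilon<U<1-\epsilon\}}d\omega$.

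It remains to let $\epsilon\to0$. On the vertical segments $dU=0$, so only $b\,dP$ contributes. At $U=\epsilon$ the contribution is
\[
\frac{P_1(\epsilon)-P_2(\epsilon)}{\epsilon} \to 0
\]
by Lemma \ref{Lem:lim_U_to_0}, since both curves have slope $\lambda_+(\beta,0)$ at the origin. (For general Lipschitz curves not solving the ODE I would instead assume they are tangent at $0$, or simply use that the bracket is bounded and invoke the convergence hypothesis.) At $U=1-\epsilon$ the contribution is bounded by $|P_1-P_2|(1-\epsilon)/(1-\epsilon)\to0$. The line integrals along the truncated $\gamma_i$ converge to the full ones whenever these exist. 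The area integrals converge to $\sum_i\rho_i\int_{A_i}$ under the stated hypothesis that the series converges, by dominated or monotone convergence componentwise.

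The main obstacle is justifying Green's theorem with multiplicities for merely Lipschitz curves, which may cross each other infinitely often and so produce countably many components $A_i$. I will handle it with the approximation argument above, using that $d\omega$ is bounded on the truncated region and that winding numbers converge in $L^1$ under uniform convergence of the curves with bounded length.
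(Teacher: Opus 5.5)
Your argument is correct for the case the paper actually uses: admissible curves, i.e.\ positive solutions of \eqref{Equa:funct_eq_UP} with integrable control. It does, however, take a heavier route than the paper. The paper gives no separate proof of Lemma~\ref{Lem:first_opt_1}; it states it as a formal consequence of Stokes' theorem. The version used later, Proposition~\ref{improv}, is proved by one-dimensional integration by parts of $\frac1U\,d(P_1-P_2)$, and there the boundary term at $U=0$ is removed by Lemma~\ref{Lem:lim_U_to_0}, exactly as you remove your segment at $U=\epsilon$.

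Admissibility, which you invoke to get $P>0$ and monotone $U$, already makes each $\gamma_i$ the graph of a BV function $P_i(U)$, completed by upward vertical segments at its jumps. So off a null set the winding number of $\gamma$ is $+1$ where $P_1(U)<P<P_2(U)$, $-1$ where $P_2(U)<P<P_1(U)$, and $0$ elsewhere. Hence the step you call the main obstacle can be dropped entirely. That step is Green's theorem with multiplicities for Lipschitz curves plus $L^1$ convergence of winding numbers, and as stated it would also need a uniform bound such as Banchoff--Pohl's $\int w^2\le L^2/(4\pi)$. For fixed $\epsilon$ your identity is just integration by parts on $[\epsilon,1-\epsilon]$ plus Fubini. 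Countably many crossings are handled automatically by writing the area term as $\int_\epsilon^{1-\epsilon}\int_{P_1(U)}^{P_2(U)}\cdots\,dP\,dU$. The winding-number route would only pay off for genuinely non-graph curves, and for those your vertical-segment truncation, the positivity of $P$ and the tangency at the origin are not available anyway.

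Three corrections.

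First, the sign cannot be ``absorbed in $\rho_i$''. The quantity $\rho_i$ is fixed by its definition as the counterclockwise winding number. With $d\omega=\frac{Uf-P^2}{U^2P^2}\,dU\wedge dP$ you get $\oint_\gamma\omega=\sum_i\rho_i\int_{A_i}\frac{Uf(U)-P^2}{U^2P^2}\,\mathscr L^2$, which is the sign found at the end of the proof of Proposition~\ref{improv}. The printed lemma has the opposite sign, and also a garbled vector field, which you were right to replace by $\mathbf v$. You should say so rather than adjust $\rho_i$.

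Second, in your parenthetical on general curves, boundedness of $\frac{P_1(\epsilon)-P_2(\epsilon)}{\epsilon}$ together with convergence of the area series does not make this boundary term vanish. You need equal slopes at the origin, which is exactly what Lemma~\ref{Lem:lim_U_to_0} supplies.

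Third, convergence ``componentwise'' does not let you exchange $\lim_{\epsilon\to0}$ with a sum over infinitely many components accumulating at the endpoints. Read the hypothesis as $w_\gamma\frac{Uf-P^2}{U^2P^2}\in L^1$ and apply dominated convergence to the single function $w_\gamma\frac{Uf-P^2}{U^2P^2}\ind_{\{\epsilon<U<1-\epsilon\}}$. Near $U=0$ this integrability is in fact automatic for admissible curves. Between the two curves $|Uf-P^2|/(U^2P^2)\lesssim U^{-2}$, since both have slope $\lambda_+(\beta,0)>0$ there. Lemma~\ref{Lem:lim_U_to_0} then gives $\int_0^\delta|P_1-P_2|\,U^{-2}\,dU\le(\alpha_1+\alpha_2)((0,\delta))$.
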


We can observe that the solutions of the ODE with positive control are functions $P = P(U) \in \BV([0,1])$, so that the above lemma becomes:

\begin{proposition}
\label{improv}
Let $P_1, P_2: [0,1] \to [0,+\infty)$ be BV functions such that
$$
\lim_{U \to 0} P(U) = \lim_{U \to 1} P(U) = 0.
$$
Assume that the associated measure controls $\alpha_1,\alpha_2$,
\begin{equation*}
\alpha_i(dU) := \frac{1}{U}\left(DP_i(dU) + \bigg( \frac{f(U)}{P_i(U)} + \beta \bigg) dU \right), \quad i=1,2,
\end{equation*}
are positive and bounded. Then
\begin{equation}
\label{Equa:cost_expl}
\tilde \alpha_2([0,1]) - \tilde \alpha_1([0,1]) = \int_0^1 \int_{P_1(U)}^{P_2(U)} \frac{P^2 - U f(U)}{U^2 P^2} dU.
\end{equation}
\end{proposition}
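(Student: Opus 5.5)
The identity \eqref{Equa:cost_expl} is an integration by parts: it is the one-dimensional, graph version of Lemma \ref{Lem:first_opt_1}. I would prove it on a truncated interval $[\epsilon,1-\delta]$ and then let $\epsilon,\delta\to 0$.

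\textbf{Algebraic reduction.} Fix $0<\epsilon<1-\delta<1$. On $[\epsilon,1-\delta]$ the weight $1/U$ is smooth and bounded, so by the definition of $\alpha_i$,
\begin{equation*}
\alpha_2([\epsilon,1-\delta])-\alpha_1([\epsilon,1-\delta]) = \int_{[\epsilon,1-\delta]} \frac{1}{U}\, D(P_2-P_1) + \int_\epsilon^{1-\delta} \frac{f(U)}{U}\Big(\frac{1}{P_2}-\frac{1}{P_1}\Big)\, dU .
\end{equation*}
Both terms on the right are finite. The first is finite because $P_i\in\BV$. The second is finite because $(f/P_i+\beta)\,dU = U\alpha_i - DP_i$ is a difference of bounded measures, so $f/P_i$ is integrable.

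The BV integration by parts formula with the smooth test function $1/U$ turns the first term into
\begin{equation*}
\Big[\frac{P_2-P_1}{U}\Big]_{\epsilon}^{1-\delta} + \int_\epsilon^{1-\delta}\frac{P_2-P_1}{U^2}\,dU ,
\end{equation*}
with traces taken at the endpoints. This step needs care only at jump points, and it is harmless since only the endpoint traces enter. Next, observe the pointwise identity
\begin{equation*}
\int_{P_1(U)}^{P_2(U)} \Big(\frac{1}{U^2}-\frac{f(U)}{UP^2}\Big)dP = \frac{P_2-P_1}{U^2}+\frac{f(U)}{U}\Big(\frac{1}{P_2}-\frac{1}{P_1}\Big).
\end{equation*}
Combining the two displays gives the truncated identity
\begin{equation*}
\alpha_2([\epsilon,1-\delta])-\alpha_1([\epsilon,1-\delta]) = \Big[\frac{P_2-P_1}{U}\Big]_{\epsilon}^{1-\delta} + \int_\epsilon^{1-\delta}\int_{P_1(U)}^{P_2(U)}\frac{P^2-Uf(U)}{U^2P^2}\,dP\,dU .
\end{equation*}

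\textbf{Boundary terms.} At $U=1-\delta$ the boundary term is bounded by $|P_2-P_1|(1-\delta)/(1-\delta)$, which tends to $0$ because $P_i(U)\to0$ as $U\to1$.

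At $U=\epsilon$ the term $(P_2-P_1)(\epsilon)/\epsilon$ is the delicate one. Both curves start at the origin, so I would invoke Lemma \ref{Lem:lim_U_to_0}, in the version noted right after its statement for bounded (signed) measures $\alpha$. It gives $P_i(U)/U\to\lambda_+(\beta,0)$ for $i=1,2$. The proof there even yields the quantitative bound
\begin{equation*}
\frac{|P_i(U)-\Gamma_u(\beta,U)|}{U} \le \alpha_i((0,U)),
\end{equation*}
so the boundary term at $\epsilon$ tends to $0$.

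\textbf{Passage to the limit.} Since $\alpha_i$ are positive bounded measures, the left-hand side of the truncated identity converges to $\tilde\alpha_2([0,1])-\tilde\alpha_1([0,1])$ as $\epsilon,\delta\to0$. Both boundary terms vanish in the limit. Hence the double integral converges as an improper integral and equals the difference of the costs, which is \eqref{Equa:cost_expl}.

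I expect the main obstacle to be justifying the limits. The integrand $(P^2-Uf)/(U^2P^2)$ need not be absolutely integrable near $U=0$ or $U=1$. So I would state the right-hand side of \eqref{Equa:cost_expl} as the limit of the truncated integrals, consistent with the clause \enquote{whenever the series converges} in Lemma \ref{Lem:first_opt_1}. The essential input for this is the control of $(P_2-P_1)/U$ at $U=0$ supplied by Lemma \ref{Lem:lim_U_to_0}.
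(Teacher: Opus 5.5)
Your proof is correct and follows the paper's route. You integrate the difference of the controls by parts against $1/U$, use the pointwise identity for $\int_{P_1}^{P_2}$, and kill the boundary terms with $P_i(1)=0$ at $U=1$ and with Lemma~\ref{Lem:lim_U_to_0} at $U=0$; your truncation to $[\epsilon,1-\delta]$ only makes explicit the limits the paper takes implicitly.

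Your worry that the double integral might converge only conditionally is unnecessary. The sharper bound $|P_i-\Gamma_u(\beta,U)|\le\int_0^U V\,\alpha_i(dV)$ from that lemma's proof gives $\int_0^c|P_2-P_1|\,U^{-2}\,dU\le(\alpha_1+\alpha_2)((0,c))$. This makes both $\frac{P_2-P_1}{U^2}$ and $\frac{f}{U}\bigl(\frac{1}{P_2}-\frac{1}{P_1}\bigr)$ integrable near $0$, the latter because $|f|\le CU$ and $P_i\ge cU$ there. Near $U=1$, integrability follows from $f/P_i\in L^1(0,1)$, which your own argument (boundedness of $U\alpha_i$ and $DP_i$) gives on the whole interval.
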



\begin{proof}
We would like to integrate by parts
\begin{equation*}
\tilde \alpha_2([0,1]) - \tilde \alpha_1([0,1]) = \int_0^1 \frac{1}{U}\left(DP_i(dU) + \bigg( \frac{f(U)}{P_i(U)} + \beta \bigg) dU \right),
\end{equation*}
which is justified by Lemma \ref{Lem:lim_U_to_0}, which also yields
\begin{equation*}
\int_0^U \frac{U'}{P(U')} dU' \sim \frac{U}{\lambda_+(\beta,0)}< + \infty.
\end{equation*}
%
We can thus integrate by parts and write
\begin{align*}
\int_0^1 \left[\alpha_1(dU) -\alpha_2(dU) \right] &= \int_0^1 \frac{1}{U}\left(DP_1(dU) -DP_2(dU)\right) + f(U) \left(\frac{1}{P_1(U)}-\frac{1}{P_2(U)} \right) \,dU  \\
&= \frac{1}{U}\left(P_1(U)-P_2(U)\right)\Big|_0^1 \\
& \quad + \int_0^1 \left[\frac{1}{U^2}\left(P_1(U)-P_2(U)\right) +f(U)\left(\frac{1}{P_1(U)}-\frac{1}{P_2(U)}\right)\right] \, dU \\
&= \int_0^1 \int_{P_1(U)}^{P_2(U)} \left(-\frac{1}{U^2} +\frac{f(U)}{P^2}\right) \, dP \, dU \\
&= \int_0^1 \int_{P_1(U)}^{P_2(U)}  \frac{f(U)U-P^2}{U^2P^2}\, dP \, dU,
\end{align*}
which is the statement.
\end{proof}

Roughly speaking, the above proposition says that in order to minimize $E(\beta)$ we want the curve $\gamma$ to be as close as possible to $P^*$.

\begin{corollary}
\label{Cor:monotone_1}
Let $\beta > \beta^*$ be fixed and assume that
\begin{equation}
\label{adm}
F(U) + \beta \ge 0 \quad \forall U \in (U,1) \text{ s.t. } \Gamma_u(\beta, U)< P^*(U)<\Gamma_s(\beta, U).
\end{equation}
Then the minimization problem \eqref{effort} admits a unique optimal profile given by
\begin{equation}
\label{optgraphgen}
P_\beta(U) := \begin{cases}
\Gamma_u(\beta, U) & \Gamma_u(\beta, U)\ge P^*(U), \\
P^*(U) & \Gamma_u(\beta,U) < P^*(U) < \Gamma_s (\beta, U), \\
\Gamma_s(\beta, U) & \Gamma_s(\beta, U)\le P^*(U). \\
\end{cases}
\end{equation}
Moreover the sets
\begin{equation*}
I_1 = \big\{ \Gamma_u(\beta, U)\ge P^*(U) \big\}, \quad I_2 = \big\{ \Gamma_s(\beta, U)\le P^*(U) \big\}, \quad I_3 = \big\{ \Gamma_u(\beta, U) < P^*(U) < \Gamma_s(\beta,U) \big\},
\end{equation*}
are disjoint intervals.
\end{corollary}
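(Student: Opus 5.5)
Two things need to be checked: that the curve \eqref{optgraphgen} is an admissible profile, and that any other admissible profile costs at least as much, with equality only for this curve. Uniqueness will then follow from the strict positivity of the integrand in Proposition \ref{improv}.

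The first step is the structural claim about $I_1,I_2,I_3$. Since $\Gamma_u$ and $\Gamma_s$ solve \eqref{functeq}, their crossing direction through $P^*$ is determined by the sign of $F+\beta$. At a point where $\Gamma_u=P^*$ we have $\frac{d}{dU}(\Gamma_u-P^*) = -\frac{f}{P^*}-\beta-(P^*)' = -(F+\beta)$. Suppose $\Gamma_u$ re-entered $\{P\ge P^*\}$ after leaving it at $U_u(\beta)$, i.e.\ $\Gamma_u-P^*$ goes from negative to nonnegative. At the first such return point $\bar U$ we need $\Gamma_u\le\Gamma_s$, and then at the left endpoint the derivative is $-(F+\beta)\ge 0$. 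Combined with \eqref{adm} at points just to the left of $\bar U$, this forces $F+\beta=0$ at $\bar U$ and $\ge0$ nearby.

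To rule this out I would use a comparison argument rather than pointwise derivatives. On the region where \eqref{adm} holds, $P^*$ is a supersolution of \eqref{functeq}, since $(P^*)'\ge -f/P^*-\beta$. A solution of \eqref{functeq} lying strictly below $P^*$ on an interval therefore cannot reach $P^*$ from below, by the standard comparison for the ODE $P'=-f/P-\beta$, whose right-hand side is Lipschitz in $P$ away from $P=0$. The same argument applied backwards shows that $\Gamma_s$ cannot leave and then return to $\{\Gamma_s\le P^*\}$. Hence $I_1=[0,U_u]$, $I_2=[U_s,1]$, and $I_3$ is the interval in between.

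Admissibility of $P_\beta$ follows directly. The control vanishes on $I_1$ and $I_2$, and on $I_3$ it equals $(F+\beta)/U\ge 0$ by \eqref{adm}. It is integrable because of \eqref{Equa:bound_integr}. The junctions are continuous, so no singular part of the control appears.

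For optimality, I take any admissible $P$ with control $\alpha\ge0$. Comparison with the manifolds gives $\Gamma_u\le P\le\Gamma_s$: a nonnegative control only pushes $P$ upward relative to solutions of \eqref{functeq}, and the boundary behavior comes from Lemma \ref{Lem:lim_U_to_0}. I then apply Proposition \ref{improv} with $P_1=P_\beta$ and $P_2=P$. The integrand $\frac{P'^2-Uf}{U^2P'^2}$, taken between $P_\beta(U)$ and $P(U)$, has the following sign in each region:
\begin{itemize}
\item On $I_3$ we have $P_\beta=P^*$, and the integrand has the sign of $P'-P^*$, so the contribution is nonnegative whichever side $P$ lies on.
\item On $I_1$ we have $P\ge\Gamma_u=P_\beta\ge P^*$, so we integrate upward over values $P'\ge P^*$, where the integrand is nonnegative.
\item On $I_2$ we have $P\le\Gamma_s=P_\beta\le P^*$, so we integrate downward over values where the integrand is nonpositive, giving again a nonnegative contribution.
\end{itemize}
Hence $\tilde E(P)\ge\tilde E(P_\beta)$. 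Equality forces $P=P_\beta$ almost everywhere, because the integrand is strictly signed off the curve $P^*$; by continuity $P=P_\beta$ everywhere, which gives uniqueness.

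The main obstacle is the first step when $F+\beta$ vanishes tangentially inside $I_3$, where the sign test degenerates. There the supersolution comparison is the robust tool, and I would try it first. A secondary technical point is $\beta>\beta^{**}$, where $U_u=U^*$ and $P^*$ has an infinite slope at $U^*$. In that case I would appeal to the approximation argument of \cite{bressancetraro} already quoted above.
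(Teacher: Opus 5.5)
Your proposal is correct and follows the paper's proof essentially step by step: the interval structure of $I_1,I_2,I_3$ comes from the direction in which trajectories of \eqref{functeq} cross $P^*$, admissibility of \eqref{optgraphgen} comes from \eqref{adm}, competitors satisfy $\Gamma_u\le P\le\Gamma_s$, the sign analysis of the integrand in Proposition~\ref{improv} is done region by region on $I_1,I_2,I_3$, and uniqueness follows from the strict sign of that integrand off $P^*$. The only difference is in the interval step. The paper argues that trajectories starting between $\Gamma_u$ and $P^*$ ``must cross $P^*$ from below, contradicting \eqref{adm}'', which leaves the tangential case $F+\beta=0$ implicit, whereas your strict (Gronwall-type) comparison against the supersolution $P^*$ excludes that case, once you localize it to a one-sided neighbourhood of the return point where $\Gamma_u<P^*<\Gamma_s$ and \eqref{adm} therefore applies (in that degenerate situation one should write $I_1=[0,\inf I_3]$, which may be larger than $[0,U_u]$).
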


\begin{proof}
Consider the three regions of the statement
\begin{equation*}
I_1 := \Big\{ U\in[0,1]: \Gamma_u(\beta, U) \ge P^*(U) \Big\}, \quad I_2 := \Big\{ U \in[0,1]: \Gamma_s(\beta, U)\le P^*(U) \Big\},
\end{equation*}
and
\begin{equation*}
I_3 := [0,1] \setminus (I_1\cup I_2) = \Big\{ U\in [0,1]: \Gamma_u(\beta, U)< P^*(U)<\Gamma_s(\beta, U) \Big\}.
\end{equation*}
Observe that $I_3$ is an interval: indeed, assume by contradiction that
\begin{equation*}
\Gamma_u(\beta,U) < P^*(U) < \Gamma_s(\beta,U), \quad \Gamma_u(\beta,\tilde U') = P^*(U') < \Gamma_s(\beta,U'),
\end{equation*}
for some $U^* < U < U' < 1$. Being the flow of the ODE smooth, then the trajectories with initial point $(U,P)$, $\Gamma_u(\beta,U) < P < P^*(U)$ must cross the curve $P^*(U)$ from below, thus contradicting \eqref{adm}. The argument $\Gamma_s(\beta)$ is completely analogous.

Since $\Gamma_u(\beta) < \Gamma_s(\beta)$, we thus deduce that $I_1$, $I_2$ are also intervals, more precisely
\begin{equation*}
I_1 = [0,\inf I_3], \quad I_2 = [\sup I_3,1].
\end{equation*}

Condition \eqref{adm} guarantees that the control \gls{alphabeta} associated to $P_\beta$, namely
\begin{equation*}
\alpha_\beta (U)= \begin{cases}
0 & U\in I_1\cup I_2, \\
\frac{F(U)+\beta}{U} \ & U \in I_3,
\end{cases}
\end{equation*}
is non-negative and, thus, $P_\beta$ is admissible.

Let $P$ any competitor in \ref{minfunct} and denote by $\alpha$ its associated control: by Lemma \ref{improv}
\begin{align*}
\int_0^1 \left[\alpha(U)-\alpha_\beta(U)\right] \, dU
&=\int_0^1 \int _{P(U)}^{P^\beta(U)} \frac{f(U)U-P^2}{U^2P^2} \, dU\\
&=\int_{I_1} \int _{P(U)}^{\Gamma_u(\beta, U)} \frac{f(U)U-P^2}{U^2P^2} \, dU\\
& \quad + \int_{I_2}\int _{P(U)}^{\Gamma_s(\beta, U)} \frac{f(U)U-P^2}{U^2P^2} \, dU \\
& \quad + \int_{I_3} \int _{P(U)}^{P^*(U)} \frac{f(U)U-P^2}{U^2P^2} \, dU.
\end{align*}
Recall that, by the Maximum Principle, the admissibility condition for $P$ implies that
\begin{equation*}
\Gamma_u(\beta, U)\le P(U)\le \Gamma_s(\beta, U) \ \forall U\in [0,1].
\end{equation*}
It follows that $\forall U \in I_1$ and $\Gamma_u(\beta,U)\le P\le P(U)$
\begin{equation*}
f(U)U-P^2 \begin{cases}
\leq 0 & U \in [0,U^*] \text{ being $f \leq 0$}, \\
= (P^*(U))^2 - P^2 \le 0 & \text{being $P^* \leq \Gamma_u(\beta)$}.
\end{cases}
\end{equation*}
Hence
\begin{equation*}
\int_{I_1} \int _{P(U)}^{\Gamma_u(\beta, U)} \frac{f(U)U-P^2}{U^2P^2} \, dP\, dU= -\int_{I_1} \int _{\Gamma_u(\beta, U)}^{P(U)} \frac{f(U)U-P^2}{U^2P^2} \, dP \, dU \ge 0.
\end{equation*}
Similarly we deduce
\begin{equation*}
\int_{I_2}\int _{P(U)}^{\Gamma_s(\beta, U)} \frac{f(U)U-P^2}{U^2P^2} \, dP\, dU\ge 0,
\end{equation*}
in this case because the integrand is positive. Finally,
\begin{equation*}
\int_{I_3} \int _{P(U)}^{P^*(U)} \frac{f(U)U-P^2}{U^2P^2} \, dU = \int_{I_3} \int _{P(U)}^{P^*(U)} \frac{(P^*(U))^2-P^2}{U^2P^2} \, dU\ge 0.
\end{equation*}
We thus conclude that the solution $P_\beta$ of \eqref{optgraphgen} is optimal.

The same argument clearly implies uniqueness: assume that $P$ is another optimal profile; then
\begin{equation*}
\begin{split}
\int_{I_1} \int _{P(U)}^{\Gamma_u(\beta, U)} \frac{f(U)U-P^2}{U^2P^2} \, dP\, dU &= \int_{I_2}\int _{P(U)}^{\Gamma_s(\beta, U)} \frac{f(U)U-P^2}{U^2P^2} \, dP\, dU \\
&= \int_{I_3} \int _{P(U)}^{P^*(U)} \frac{f(U)U-P^2}{U^2P^2} \, dU = 0
\end{split}
\end{equation*}
and thus, since all integrands have a sign, necessarily the integration intervals are empty, i.e.
\begin{equation*}
P(U):=\begin{cases}
\Gamma_u(\beta, U) & U \in I_1 \\
\Gamma_s(\beta, U)  & U \in I_2 \\
P^*(U) & U \in I_3. \\
\end{cases} \qedhere
\end{equation*}
\end{proof}

The above results suggest that if we wish to achieve a structural characterization of optimal profiles without condition (\ref{mon}), or even the more general (\ref{adm}), it might be useful to study the level sets of the function $F$. In general the regions where $F + \beta > 0$ may consist of countably many open intervals, which make the structure of the optimal solution difficult to study, in particular when we will allow $f$ and $\beta$ to vary.

In the next remark we show that the monotonicity condition \eqref{adm} is not valid even for the simplest function satisfying \eqref{H1}, i.e. a cubic polynomial.

\begin{remark}
\label{Rem:monoton_cond}
Consider the simple cubic source
\begin{equation}
\label{Equa:f_cubic}
f(U) = U (U - U^*) (1 - U),
\end{equation}
The function $F$ is represented in Fig. \ref{Fig:fcubicF} left as a function of $U^*,U$. Note that the function is not monotone for $U^* < 0.4$: for $U^* \ll 1$ there are two critical points, a local minimum and a local maximum. In particular, for $0 < -\beta \ll 1$ there are $3$ solution of $F(U) + \beta = 0$:
\begin{equation*}
\{F(U) + \beta > 0\} = (U^*,U_1) \cup (U_2,U_3), \quad U_1 < U_2,\ U_3 < 1.
\end{equation*}
We thus conclude that the Condition (\ref{mon}) fails even for the cubic source. For a deeper analysis of this case see also Remark \ref{Rem:cubic_case} in the next section.

\begin{figure}
\begin{subfigure}{.475\textwidth}
\resizebox{\textwidth}{!}{\includegraphics{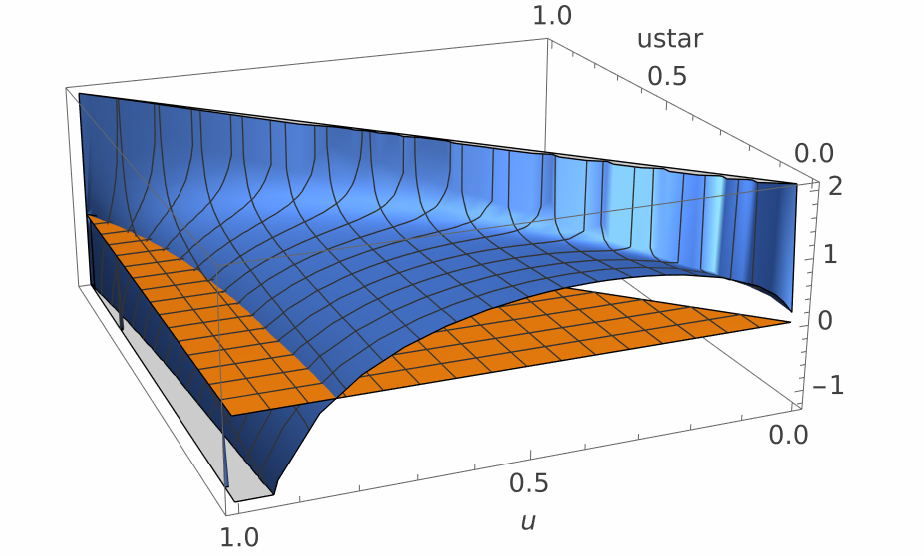}}
\end{subfigure} \hfill
\begin{subfigure}{.475\textwidth}
\resizebox{\textwidth}{!}{\includegraphics{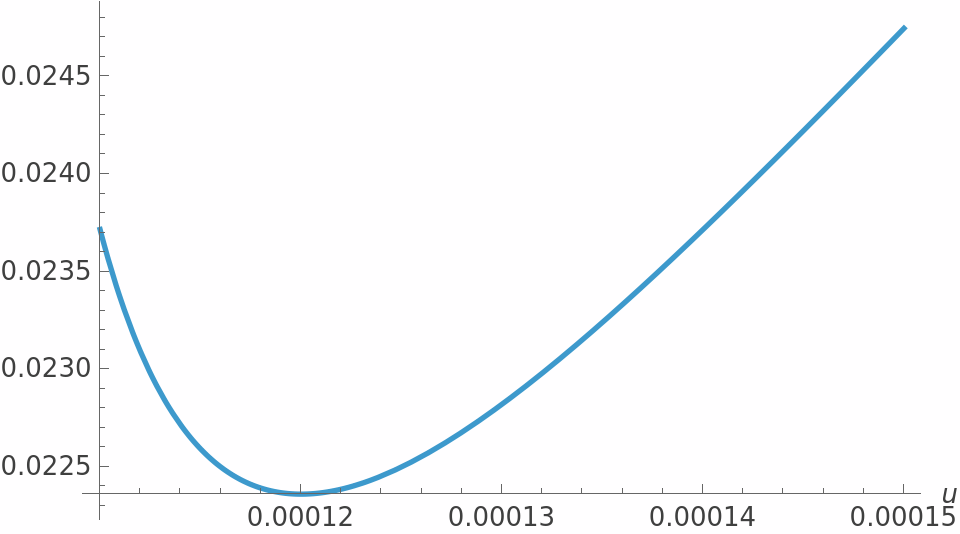}}
\end{subfigure}
\caption{The function $F$ for the cubic \eqref{Equa:f_cubic} in the region $U^* < U < 1$. 
On the right the plot of $F(U)$ for $U^* = 10^{-4}$: the minimum can be shown to be at $\sim \frac{6 U^*}{5}$. 
}
\label{Fig:fcubicF}
\end{figure}
\end{remark}

\section{Genericity of transversality properties}
\label{S:transver_flux}

Our aim is to show that there exists a dense family of polynomial sources $f$ satisfying Assumptions \eqref{H1} and enjoying the following additional transversality properties. 

\begin{enumerate}

\item \label{Point1:transv} The function \eqref{F}
\begin{equation*}
(U^*,1) \ni U \mapsto F(U) = \frac{3f(U) + U f'(U)}{2\sqrt{Uf(U)}}
\end{equation*}
has finitely many critical points and the second derivative does not vanish at these points (i.e. they are local maxima/minima).

\item \label{Point2:transv} For all $\beta$ the function \newglossaryentry{Fudef}{name=\ensuremath{F_u(\beta,U)},description={difference $\Gamma_u(\beta,U) - P^*(U)$}}
\begin{equation*}
[U^*,U_u(\beta)] \ni U \mapsto \gls{Fudef} = \Gamma_u(\beta,U) - P^*(U)
\end{equation*}
has finitely many critical points, these points are  contained in $(U^*,\gls{Uhatubeta})$, and the second derivative does not vanish at these points (i.e. they are local maxima/minima).

\item \label{Point3:transv} Similarly, for all $\beta$ the function \newglossaryentry{Fsdef}{name=\ensuremath{F_s(\beta,U)},description={difference $\Gamma_s(\beta,U) - P^*(U)$}}
\begin{equation*}
[U^*,1] \ni U \mapsto \gls{Fsdef} = \Gamma_s(\beta,U) - P^*(U)
\end{equation*}
has finitely many critical points, these points are  contained in $(\gls{Uhatsbeta},1)$, and the second derivative does not vanish at these points (i.e. they are local maxima/minima).

\end{enumerate}

We will also show that the above transversality conditions are generic, in the sense that the set of functions satisfying them is open and dense. Moreover when restricted to polynomials, this transversality has full Lebesgue measure.

This section will play a fundamental role when studying the differentiability properties of the effort \gls{Ebeta}: indeed in Section \ref{S:diffe_E_beta} we will show that $\beta \mapsto E(\beta,f)$ is differentiable by computing its derivative when $f$ belongs to this transversality set, and then verifying that the expression for the derivative $\frac{d}{d\beta} E(\beta)$ is continuous w.r.t. $\beta,f$.

\subsection{Transversality properties of $F$}
\label{Ss:transv_F}

We start with the function $F$, which does not depend on $\beta$.

A straightforward application of Sard's Lemma \cite{sard_origi} would let us conclude that the set of $\beta$'s not satisfying the transversality condition
\begin{equation*}
F'(U) \not= 0 \quad \text{when} \quad F(U) + \beta = 0
\end{equation*}
is compact with zero Lebesgue measure for every smooth source $f$, because of \eqref{Equa:bound_integr}, \eqref{Equa:limi_F}. 

%
%
%
%
%
%

The statement is presented here below.

\begin{proposition}
\label{zeros}
Let $F$ as defined in (\ref{F}). There exists  \newglossaryentry{Ncal0}{name=\ensuremath{\mathcal N_0},description={set of critical values for $F$}} $\gls{Ncal0} \subseteq \R$ compact set of negligible Lebesgue measure such that for all $\beta \in \R\setminus \mathcal{N}_0$ there is \newglossaryentry{Nbeta}{name=\ensuremath{N(\beta)},description={number of roots of $F(U) + \beta = 0$}} $N = \gls{Nbeta}$ and there are finitely many $U_0=U^*<U_1<...<U_{2N-1}<U_{2N}=1$ satisfying \newglossaryentry{D+i}{name=\ensuremath{D^+_i},description={intervals where the flow is entering the region $\{P \leq P^*\}$}}\newglossaryentry{D-i}{name=\ensuremath{D^-_i},description={intervals where the flow is exiting the region $\{P \leq P^*\}$}}
\begin{equation*}
F(U)+\beta \begin{cases} =0 & \text{ for }  U=U_i, \ i=1,...,2N-1, \\
>0  & \text{ for } U \in \gls{D+i} := (U_{2i},U_{2i+1}), i=0,...,N-1, \\
<0  & \text{ for } U \in \gls{D-i} := (U_{2i+1},U_{2i+2}), i=0,...,N-1,
\end{cases}
\end{equation*}
and
\begin{equation*}
(-1)^i F'(U_i) > 0 \quad \forall i=1,..., 2N-1.
\end{equation*}
\end{proposition}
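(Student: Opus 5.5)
Take $\mathcal N_0 := \{-F(U) : U \in (U^*,1),\ F'(U) = 0\}$, the set of negatives of the critical values of $F$ on $(U^*,1)$, and show three things: it is Lebesgue negligible, it is closed and bounded, and every $\beta\notin\mathcal N_0$ gives the stated alternating structure.

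\emph{Regularity and negligibility.} Since $f\in C^2$ and $Uf(U)>0$ on $(U^*,1)$, the function $F=\frac{3f+Uf'}{2\sqrt{Uf}}$ is only $C^1$ there. Sard's Lemma in dimension one already holds for $C^1$ maps $\R\to\R$: the critical set is covered by countably many small intervals on which $|F'|<\epsilon$, so its image has measure zero. Hence $\mathcal N_0$ is $\mathscr L^1$-negligible.

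\emph{Compactness.} By \eqref{Equa:limi_F}, $F\to+\infty$ as $U\searrow U^*$ and $F\to-\infty$ as $U\nearrow 1$. More precisely, \eqref{Equa:bound_integr} gives $F'(U)\sim -\tfrac12\sqrt{f'(U^*)}(U-U^*)^{-3/2}<0$ near $U^*$ and $F'(U)\sim -\tfrac12\sqrt{-f'(1)}(1-U)^{-3/2}<0$ near $1$. This asymptotic for $F'$ needs justification: I would expand $3f+Uf'$ and $Uf$ at the endpoints using $f\in C^2$, and I expect this to be the only genuinely technical point. Granting it, there exists $\delta>0$ such that $F'<0$ on $(U^*,U^*+\delta]\cup[1-\delta,1)$. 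So the critical set $\{F'=0\}$ is a closed subset of the compact interval $[U^*+\delta,1-\delta]$, hence compact, and its image under the continuous function $-F$ is compact.

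\emph{Structure for $\beta\notin\mathcal N_0$.} Fix such a $\beta$. By the limits above, $F+\beta>0$ on $(U^*,U^*+\delta']$ and $F+\beta<0$ on $[1-\delta',1)$ for some $\delta'>0$, so the zero set $Z=\{F+\beta=0\}$ is a compact subset of $[U^*+\delta',1-\delta']$. At each $U\in Z$ we have $F'(U)\neq0$ because $\beta\notin\mathcal N_0$, so each zero is isolated. A compact set of isolated points is finite; write $Z=\{U_1<\dots<U_M\}$. Between consecutive zeros $F+\beta$ has constant sign, and it changes sign at each $U_i$ since $F'(U_i)\neq0$. Starting positive near $U^*$ and ending negative near $1$, the number of sign changes $M$ must be odd, so $M=2N-1$.

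Setting $U_0=U^*$ and $U_{2N}=1$, the sign is $>0$ on $(U_{2i},U_{2i+1})$ and $<0$ on $(U_{2i+1},U_{2i+2})$. At odd $i$, $F+\beta$ passes from positive to negative, so $F'(U_i)<0$; at even $i$ it passes from negative to positive, so $F'(U_i)>0$. This is exactly $(-1)^iF'(U_i)>0$.
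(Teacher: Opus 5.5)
Your proposal is correct and follows the paper's proof essentially step by step. You use the same set $\mathcal N_0=-F(\{F'=0\})$, Sard's lemma for its negligibility, $F'\to-\infty$ at $U^*$ and $1$ for compactness, and isolatedness of the simple zeros of $F+\beta$ for finiteness; you additionally make explicit the odd count $2N-1$ and the sign pattern $(-1)^iF'(U_i)>0$, which the paper leaves implicit.

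The step you flag as technical is immediate from the explicit formula $F'=\frac{2Uff'+U^2ff''-\frac32f^2-\frac12U^2(f')^2}{2(Uf)^{3/2}}$ used in the paper. Since $f$ vanishes at the endpoints and $f''$ is bounded, the numerator tends to $-\tfrac12(U^*)^2f'(U^*)^2<0$ and $-\tfrac12f'(1)^2<0$ there, while the denominator tends to $0^+$. Only this sign matters, so the exact constants in your asymptotic for $F'$ are immaterial.
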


\begin{proof}
%
First note that, by (\ref{H1}), $F \in C^1((U^*, 1))$ and there holds
\begin{equation*}
F'(U)=\frac{2U f(U) f'(U) + U^2 f(U) f(U)''-\frac{3}{2}f^2(U)-\frac{1}{2}U^2(f'(U))^2}{2(Uf(U))^\frac{3}{2}},
\end{equation*}
which implies in accordance with \eqref{Equa:limi_F}
\begin{equation*}
\lim_{U \to U^*, 1} F'(U)=-\infty.
\end{equation*}
In particular, we can find $I\subseteq(U^*,1)$ compact such that $F'(U)<0 \ \forall U \notin I$.

A direct application of Sard's Lemma \cite{sard_origi} then implies that the compact set
$$
\gls{Ncal0} := - F(\{U\in (U^*,1): \ F'(U)=0\})
$$
is Lebesgue negligible, i.e.
\begin{align*}
\mathcal L^1(\mathcal N_0) &= \mathcal{L}^1 \big( F \big( \big\{ U \in (U^*,1): F'(U) = 0 \big\} \big) \big) = 0 .
\end{align*}

Fix $\beta\in \R\setminus \mathcal{N}_0$. From the coercivity conditions \eqref{Equa:limi_F}
we can infer that the level set \newglossaryentry{Lbeta}{name=\ensuremath{L_\beta},description={level set $\{F(U) + \beta = 0\}$}}
$$
\gls{Lbeta}:= \big\{ U\in(U^*,1): \ F(U)+\beta=0 \big\}
$$
is compact.
Assume by contradiction that there is a sequence $\{U_n\}_{n\in\mathbf{N}}\subseteq L_\beta$ such that $U_n\neq U_k$ for $n\neq k$. By the previous observation we can assume that, up to subsequences, $U_n\to U \in L_\beta$. Since we can also assume that definitely $U_n\neq U$, the derivative of $F$ at $U$ can be computed as follows
\begin{equation*}
F'(U)=\lim_{n\to+\infty} \frac{F(U_n)-F(U)}{U_n-U}=0,
\end{equation*}
against the hypothesis $\beta \notin \mathcal{N}_0$. 

We conclude that the cardinality of $L_\beta$ is finite and, once again by \eqref{Equa:limi_F}, we can find $U_0 = U^* < U_1 < ... < U_{2N-1} < U_{2N} = 1$ as in the statement.
\end{proof}

\begin{remark}
\label{zerosp}
Clearly, if $K \subseteq\R\setminus \mathcal{N}_0$ is compact, we can find $N \ge 1$ such that $N(\beta)\le N \ \forall \beta \in K$. Indeed, first of all, by \eqref{Equa:limi_F}, we find $I \Subset (U^*,1)$ such that $-F(U) \notin K \ \forall U \notin I$, which implies
\begin{equation*}
\big\{ U\in(U^*,1): F(U)+\beta=0 \big\} = \big\{ U\in I: F(U)+\beta=0 \big\} \ \forall \beta \in K.
\end{equation*}
By contradiction, assume that for all $n \in \N$ there exists $\beta_n \in K$ such that
$$
\sharp \{U\in I: F(U)+\beta_n=0\} \geq n.
$$
By compactness, up to subsequences, we may assume $\beta_n \to \beta \in K$. 
By induction, construct a sequence $\{U_n\}_{n\in\N}\subseteq I$ such that
$$
F(U_n)+\beta_n=0 \ \forall n\in \N \quad \text{and} \quad U_n \neq U_k \ \forall n\neq k.
$$
With the same argument as the one employed in the proof of Lemma \ref{zeros}, we deduce that
$$
U_n \to U \in I \quad \text{such that} \quad F(U)+\beta=0 \ \text{and} \ F'(U)=0,
$$
which contradicts the hypothesis that $-\beta$ is not singular. The same proof shows that $N(\beta)$ is constant in any connected component of $\R \setminus \mathcal N_0$.
\end{remark}

In the case of polynomial $f(U)$, we can further improve the above proposition by showing that for all $\beta$ the number of critical points is finite and they are local minimizers or maximizers outside a codimension $1$ set in the space of polynomial with fixed degree.

\begin{proposition}
\label{Prop:F_transv_poly}
Assume that
$$
f(U) = \sum_{i=1}^k a_i U^i
$$
is a polynomial function satisfying condition (\ref{H1}) and define, as in \eqref{F},
\begin{equation*}
F(U) = \frac{3f(U)+Uf'(U)}{2\sqrt{Uf(U)}}, \quad U \in (U^*,1).
\end{equation*}
Then the following holds.
\begin{enumerate}
\item 
The set
\begin{equation*}
\big\{ U \in (U^*,1): \ F'(U)=0 \big\}  
\end{equation*}
is at most finite for all $\beta \geq \beta^*$.

\item There exists a closed set \newglossaryentry{NTFk}{name=\ensuremath{\mathcal{NT}_k},description={non-transversality set of polynomials of degree $k$}} \gls{NTFk} of codimension $1$ in the space of polynomials with degree $k$ satisfying Assumption \eqref{H1}, such that outside this set the second derivative of $F$ is not $0$ in the critical points, i.e. they are maxima or minima: this set \gls{NTFk} is an algebraic surface.

\item For all fixed $U^* \in (0,1)$, the set of non-transversality
\begin{equation*}
\begin{split}
&\bigg\{ (U,a_1,\dots,a_k), f(U) = \sum_{i=1}^k a_i U^i \ \text{satisfies (\ref{H1}), $f(U^*) = 0$ and} \ F'(U) = F''(U) = 0 \bigg\} \\
&\subset \bigg\{ (U,a_1,\dots,a_k), f(U) = \sum_{i=1}^k a_i U^i \ \text{satisfies (\ref{H1}), $f(U^*) = 0$} \bigg\} 
\end{split}
\end{equation*}
has codimensions $1$, and it is an algebraic surface. 
\end{enumerate}
\end{proposition}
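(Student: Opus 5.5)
The whole argument rests on one algebraic reduction. Write $g(U) = U f(U)$, a polynomial of degree $k+1$ that is positive on $(U^*,1)$. Then $P^* = \sqrt{g}$ and
\[
3f + U f' = 2f + g', \qquad 2f = \frac{2g}{U},
\]
so $F = \frac{2g/U + g'}{2\sqrt{g}}$. Differentiating gives
\[
F'(U) = \frac{N(U)}{4 U^2 g(U)^{3/2}},
\]
where $N$ is a polynomial in $U$ whose coefficients are polynomial in $(a_1,\dots,a_k)$. Using the formula for $F'$ from the proof of Proposition \ref{zeros}, we have
\[
N = 4U^2 f f' + 2U^3 f f'' - 3U f^2 - U^3 (f')^2 ,
\]
up to a harmless factor. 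Hence the zeros of $F'$ in $(U^*,1)$ are exactly the zeros there of the polynomial $N$.

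\textbf{Point (1).} It suffices to show $N \not\equiv 0$, since then $N$ has at most $\deg N \le 2k$ roots. Suppose $N \equiv 0$. Then $F'\equiv 0$ on $(U^*,1)$, so $F$ is constant there. This contradicts \eqref{Equa:limi_F}, which gives $F \to +\infty$ at $U^*$ and $F \to -\infty$ at $1$. The statement does not involve $\beta$, so the bound is uniform in $\beta$.

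\textbf{Point (2).} On the zero set of $N$ we have $F'' = N'/(4U^2 g^{3/2})$. So a degenerate critical point is a common root of $N$ and $N' = \partial_U N$ in $(U^*,1)$. Its existence forces the discriminant (resultant in $U$)
\[
\mathrm{Disc}_U N(\,\cdot\,;a) = \mathrm{Res}_U(N,\partial_U N)
\]
to vanish. This is a polynomial in $a=(a_1,\dots,a_k)$, so its zero set is an algebraic hypersurface; we take $\mathcal{NT}_k$ to be its intersection with the (open) admissible set. Closedness is clear.

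The main obstacle is showing this resultant is not identically zero on the admissible set, which is what makes the set genuinely of codimension $1$. My plan is a perturbation argument. Pick any admissible $f$. By point (1), $N$ has finitely many roots in $(U^*,1)$. Perturb $f \to f + \epsilon h$ with $h$ a polynomial of degree $\le k$ chosen so that $h(0)=0$, $h(U^*)=0$, $h(1)=0$, preserving \eqref{H1} for small $\epsilon$. The first variation of $N$ at a double root $U_0$ is
\[
\delta N(U_0) = 4U_0^2 f h' + \dots ,
\]
a linear functional of $(h(U_0),h'(U_0),h''(U_0))$. Its coefficient on $h''(U_0)$ is $2U_0^3 f(U_0) \ne 0$. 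Choosing $h$ with $h''(U_0)\ne 0$ and $h(U_0)=h'(U_0)=0$ splits the double root into simple roots or removes it. This is possible for $k\ge 5$ by interpolation; for small $k$ I would check the cubic directly using Remark \ref{Rem:monoton_cond}, where the critical points are transversal for small $U^*$. Doing this at each of the finitely many double roots shows that generic $a$ near any point give $\mathrm{Disc}\ne 0$. Alternatively, and more simply, it is enough to exhibit one explicit admissible polynomial with $\mathrm{Disc}\ne 0$, since an irreducible-component argument on the connected admissible set then finishes. Degenerate points at the endpoints need no separate treatment, because $F'\to-\infty$ there.

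\textbf{Point (3).} Repeat the argument on the linear subspace $\{a : \sum_i a_i (U^*)^i = 0\}$. The perturbations $h$ above already satisfy $h(U^*)=0$, so the same non-vanishing argument applies. Equivalently, the set $\{(U,a): N = \partial_U N = 0\}$ is cut out by two polynomial equations that are independent: the Jacobian in the $a$-directions is nonsingular by the computation above. It therefore has codimension $1$ after projection, and is algebraic.
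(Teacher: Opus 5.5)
Your Point (1) is the paper's own argument: your $N$ equals $2UQ$ with $Q$ as in \eqref{Equa:Q_def}. Also $\deg N=2k+1$, not $2k$, which is harmless.

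The genuine gap is in Points (2) and (3). The resultant $\mathrm{Res}_U(N,\partial_U N)$ on which everything rests vanishes \emph{identically} in $a$. Every $f=\sum_{i\ge1}a_iU^i$ has $f(0)=0$, and expanding gives
\[
Q=2Uff'+U^2ff''-\tfrac32 f^2-\tfrac12 U^2(f')^2=\bigl(2-\tfrac32-\tfrac12\bigr)a_1^2U^2+3a_1a_2U^3+O(U^4)=3a_1a_2U^3+O(U^4).
\]
So $N=2UQ$ has a root of multiplicity at least $4$ at $U=0$ for every $a$. Hence $N$ and $\partial_U N$ always share the root $0$, and your candidate hypersurface is the whole parameter space. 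No ``explicit admissible polynomial with $\mathrm{Disc}\neq0$'' exists, and the perturbation step aims at a false target.

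Even after discarding $U=0$, the perturbation argument would not suffice. It only removes real double roots lying in $(U^*,1)$. A discriminant also vanishes at complex multiple roots and at real ones outside $(U^*,1)$, so this argument cannot certify $\mathrm{Disc}\not\equiv0$.

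The same defect breaks Point (3). The incidence set $\{(U,a):N=\partial_U N=0\}$ contains the whole hyperplane $\{U=0\}$, so it is not cut out by two independent equations. You also computed only the variation of $N(U_0)$, not that of $\partial_U N(U_0)$, so rank $2$ of the Jacobian in the $a$-directions is not shown. Finally, your interpolation needs $k\ge5$, which leaves $k=4$ unaddressed.

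To close the gap you have two options. You can divide out the forced factor: work with $Q/U^3$, then check that its discriminant is a nonzero polynomial on $\{\sum_i a_i=0\}$ (respectively on $\{\sum_i a_i=0,\ \sum_i a_i(U^*)^i=0\}$), for example by one explicit point of the right degree with simple roots. Alternatively, you can localize to $U\in(U^*,1)$ from the start and prove the incidence set has codimension $2$ there.

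The paper takes the second route. It writes $2Q=\sum_{i,j}\gamma_{ij}a_ia_jU^{i+j}$ with $\gamma_{ij}=i^2+j^2+i+j-ij-3$. It then changes variables to $\tilde a=L(a_iU^i)$, a diffeomorphism for $U\in(U^*,1)$. In these variables $Q$ is a quadratic form in $(\tilde a_1,\tilde a_2,\tilde a_3)$ alone, while $\partial_U Q$ acquires a term linear in $(\tilde a_4,\dots,\tilde a_k)$. When that linear term is nonzero the two equations are independent. The degenerate case is handled by solving explicitly the residual system in $(\tilde a_1,\tilde a_2,\tilde a_3)$. This gives codimension $2$ in $(U,\tilde a)$, hence codimension $1$ after projecting to $a$. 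Point (3) then follows by intersecting with the linear constraint $f(U^*)=0$.
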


Note that since $f(U)$ has $3$ zeros, clearly for $k \leq 2$ the sets are empty. Moreover, the non tranversality set has zero Lebesgue measure.


\begin{proof}
{\it Point (1).} Clearly by \eqref{Equa:limi_F} we have that $F'(U)$ cannot be identically $0$ 
: then, since
\begin{equation*}
F'(U)=\frac{2Uf(U) f'(U) + U^2 f(U) f''(U) - \frac{3}{2}f^2(U) - \frac{1}{2}U^2(f'(U))^2}{2(Uf(U))^\frac{3}{2}},
\end{equation*}
zeros of $F'$ are also zeros for the non-identically null polynomial
\begin{equation}
\label{Equa:Q_def}
U \mapsto Q(U) = 2U f(U) f'(U)+U^2 f(U) f''(U)-\frac{3}{2}f^2(U)-\frac{1}{2}U^2(f'(U))^2.
\end{equation}
We deduce that they are finite.
%

\medskip

\noindent{\it Point (2).} Consider the function
\begin{equation*}
(U,a_1,\dots,a_k) \mapsto Q(U,a_1,\dots,a_k) = 2 U f(U) f'(U) + U^2 f(U) f''(U)-\frac{3}{2}f^2(U)-\frac{1}{2}U^2(f'(U))^2,
\end{equation*}
where by Assunmption \eqref{H1}
\begin{equation*}
f(U) = \sum_{i=1}^k a_i U^i, \quad f(U^*) = \sum_{i=1}^k a_i (U^*)^i = 0, \quad f(1) = \sum_{i=1}^k a_i = 0.
\end{equation*}
In this part the second condition is not used.

The function $Q$ can be written explicitly as the quadratic form
\begin{equation*}
\begin{split}
Q(U,a_1,\dots,a_k) &= \frac{1}{2} \sum_{ij} a_i a_j U^{i+j} \big( i^2 + j^2 + i + j - ij - 3 \big) \\
&= \frac{1}{2} (a_i U^i)^T [\gamma_{ij}] (a_i U^i), \quad \gamma_{ij} = i^2 + j^2 + i + j - ij - 3.
\end{split}
\end{equation*}
If we define the $k \times k$-matrices
\begin{equation*}
L_1 = \left[ \begin{array}{ccccc}
1 & 1 & 1 & \dots & 1 \\
0 & 1 & 1 & \dots & 1 \\
0 & 0 & 1 & \dots & 1 \\
\vdots & \vdots & \vdots & \ddots & \vdots \\
0 & 0 & \dots & 0 & 1
\end{array} \right], \quad L^{-1}_1 = \left[ \begin{array}{ccccc}
1 & - 1 & 0 & \dots & 0 \\
0 & 1 & -1 & \dots & 0 \\
0 & 0 & 1 & \dots & 0 \\
\vdots & \vdots & \vdots & \ddots & \vdots \\
0 & 0 & \dots & 0 & 1
\end{array} \right],
\end{equation*}
\begin{equation*}
L_2 = \left[ \begin{array}{cccccc}
1 & 0 & 0 & 0 & \dots & 0 \\
0 & 1 & 1 & 1 & \dots & 1 \\
0 & 0 & 1 & 1 & \dots & 1 \\
0 & 0 & 0 & 1 & \dots & 1 \\
\vdots & \vdots & \vdots & \vdots &\ddots & \vdots \\
0 & 0 & 0 & 0 & \dots & 1
\end{array} \right], \quad L^{-1}_2 = \left[ \begin{array}{cccccc}
1 & 0 & 0 & 0 & \dots & 0 \\
0 & 1 & -1 & 0 & \dots & 0 \\
0 & 0 & 1 & -1 & \dots & 0 \\
0 & 0 & 0 & 1 & \dots & 0 \\
\vdots & \vdots & \vdots  & \vdots & \ddots & \vdots \\
0 & 0 & 0 & 0 & \dots & 1
\end{array} \right],
\end{equation*}
\begin{equation*}
L_3 = \left[ \begin{array}{ccccccc}
1 & 0 & 0 & 0 & 0 & \dots & 0 \\
0 & 1 & 0 & 0 & 0 &\dots & 0 \\
0 & 0 & 1 & 1 & 1 & \dots & 1 \\
0 & 0 & 0 & 1 & 1 & \dots & 1 \\
0 & 0 & 0 & 0 & 1 & \dots & 1 \\
\vdots & \vdots & \vdots & \vdots & \vdots & \ddots & \vdots \\
0 & 0 & 0 & 0 & 0 & \dots & 1
\end{array} \right], \quad L^{-1}_3 = \left[ \begin{array}{ccccccc}
1 & 0 & 0 & 0 & 0 & \dots & 0 \\
0 & 1 & 0 & 0 & 0 & \dots & 0 \\
0 & 0 & 1 & -1 & 0 & \dots & 0 \\
0 & 0 & 0 & 1 & -1 &\dots & 0 \\
0 & 0 & 0 & 0 & 1 &\dots & 0 \\
\vdots & \vdots & \vdots & \vdots & \vdots & \ddots & \vdots \\
0 & 0 & 0 & 0 & 0 & \dots & 1
\end{array} \right],
\end{equation*}
and
\begin{equation}
\label{Equa:L_matrix}
L = L_3 L_2 L_1 = \left[
\begin{array}{cccccc}
1 & 1 & 1 & 1 & \dots & 1 \\
0 & 1 & 2 & 3 & \dots & k-1 \\
0 & 0 & 1 & 3 & \dots & \frac{(k-1)(k-2)}{2} \\
0 & 0 & 0 & 1 & \dots & \frac{(k-2)(k-3)}{2} \\
\vdots & \vdots & \vdots & \vdots & \ddots & \vdots \\
0 & 0 & 0 & 0 & \dots & 1
\end{array} \right],
\end{equation}
then
\begin{equation}
\label{Equa:diag_gamma}
L^{-T} [\gamma_{ij}] L^{-1} = \left[ \begin{array}{cccccc}
0 & 3 & 2 & 0 & \dots & 0 \\
3 & -1 & 0 & 0 & \dots & 0 \\
2 & 0 & 0 & 0 & \dots & 0 \\
0 & 0 & 0 & 0 & \dots & 0 \\
\vdots & \vdots & \vdots & \vdots & \ddots & \vdots \\
0 & 0 & 0 & 0 & \dots & 0
\end{array} \right].
\end{equation}

Define the map
\begin{equation}
\label{Equa:map_tildea}
(U,a_2,\dots,a_k) \mapsto \tilde a(U,a_2,\dots,a_k) = L (a_i U^i), \quad a_1 = - \sum_i a_i.
\end{equation}
This map is a diffeoemorphism because $U \in (U^*,1)$ and the Jacobian matrix
\begin{equation*}
\left[ \begin{array}{ccccc}
- \sum_{i=2}^k a_i & - U & -U & \dots & - U \\
2 a_2 U & U^2 & 0 & \dots & 0 \\
3 a_3 U^2 & 0 & U^3 & \dots & 0 \\
\vdots & \vdots & \vdots & \ddots & \vdots \\
k a_k U^{k-1} & 0 & 0 & \dots & U^k
\end{array} \right],
\end{equation*}
has determinant
\begin{equation*}
U^{\frac{k(k+1)}{2}-1} \sum_{i=2}^k (i-1) a_i = U^{\frac{k(k+1)}{2}-1} f'(1) \not= 0,    
\end{equation*}

Using \eqref{Equa:map_tildea} we obtain
\begin{equation*}
Q(U,a_1,\dots,a_k) = \frac{1}{2} \tilde a^T L^{-T} [\gamma_{ij}] L^{-1} \tilde a.
\end{equation*}
Performing the same computations for the derivative we obtain
\begin{equation*}
\begin{split}
\partial_U Q(U,a_1,\dots,a_k) &= \frac{1}{2} \sum_{ij} a_i a_j U^{i+j-1} (i+j) \gamma_{ij} \\
&= \frac{1}{2U} \tilde a^T \Big( L^{-T} \diag(1,\dots,k) L^T L^{-T} [\gamma_{ij}] L^{-1} + L^{-T} [\gamma_{ij}] L^{-1} L \diag(1,\dots,k) L^{-1} \Big) \tilde a.
\end{split}
\end{equation*}

We will show that the conditions 
\begin{equation}
\label{Equa:QpartialQ_0}
Q(U,a_1,\dots,a_k) = 0, \quad \partial_U Q(U,a_1,\dots,a_k) = 0
\end{equation}
define a set of codimention at least $2$ in $(U,\tilde a) \in \R^{k+1}$.
%
%

For this purpose, it is enough to consider the first $3$ components of $\tilde a$: indeed because of \eqref{Equa:diag_gamma} we have
\begin{equation*}
L^{-T} \diag(1,\dots,k) L^T L^{-T} [\gamma_{ij}] L^{-1} + L^{-T} [\gamma_{ij}] L^{-1} L \diag(1,\dots,k) L^{-1} = \left[ \begin{array}{cc} \left[ \begin{array}{ccc}
0 & 9 & 14 \\
9 & 2 & 0 \\
14 & 0 & 0
\end{array} \right] & A_{12} \\
A_{12}^T & 0 \end{array} \right],
\end{equation*}
with $A_{12} \in \R^{3 \times (k-3)}$. The second condition in \eqref{Equa:QpartialQ_0} gives then
\begin{equation}
\label{Equa:der_tilde_a}
\left( \begin{array}{ccc}
\tilde a_1 & \tilde a_2 & \tilde a_3
\end{array} \right) \left[ \begin{array}{ccc}
0 & 9 & 14 \\
9 & 2 & 0 \\
14 & 0 & 0
\end{array} \right] \left( \begin{array}{c}
\tilde a_1 \\ \tilde a_2 \\ \tilde a_3
\end{array} \right) + 2 \left( \begin{array}{ccc}
\tilde a_4 & \dots & \tilde a_k
\end{array} \right) A_{12}^T\left( \begin{array}{c}
\tilde a_1 \\ \tilde a_2 \\ \tilde a_3
\end{array} \right) = 0.
\end{equation}
%

Assume that the linear term in $(\tilde a_4,\dots,\tilde a_k)$ of \eqref{Equa:der_tilde_a} is $0$: then by \eqref{Equa:QpartialQ_0} the first $3$ components of $\tilde a$ are required to satisfy 
\begin{equation*}
\begin{cases}
6 \tilde a_1 \tilde a_2 - \tilde a_2^2 + 4 \tilde a_1 \tilde a_3 = 0, \\
18 \tilde a_1 \tilde a_2 + 2 \tilde a_2^2 + 28 \tilde a_1 \tilde a_3 = 0,
\end{cases}
\end{equation*}
a system whose solutions are the three lines
\begin{equation}
\label{Equa:thrre_tildea}
\big\{ \tilde a_1 = \tilde a_2 = 0 \big\}, \quad \big\{ \tilde a_2 = \tilde a_3 = 0 \big\}, \quad \bigg\{ \tilde a_2 = \frac{8}{3} \tilde a_1, \tilde a_3 = - \frac{20}{9} \tilde a_1 \bigg\}.
\end{equation}
%
The first condition reads as
\begin{equation*}
\sum_i a_i U^i = f(U) = 0, \quad \sum_i a_i (i-1) U^i = U \frac{d}{dU} f(U) - f(U) = 0,
\end{equation*}
which is not possible under Assumption \eqref{H1}. The second condition reads as
\begin{equation*}
U \frac{df}{dU} - f = 0, \quad \sum_i a_i \frac{(i-1)(i-2)}{2} U^i = \frac{U^2}{2} \frac{d^2 f}{dU^2} - \bigg( U \frac{df}{dU} - f \bigg) = \frac{U^2}{2} \frac{d^2 f}{dU^2} = 0,
\end{equation*}
and substituting these conditions into \eqref{Equa:QpartialQ_0} we obtain
\begin{equation*}
\partial_U Q(U,a_1,\dots,a_k) = U^2 f \frac{d^3f}{dU^3} = 0.
\end{equation*}
Thus this case corresponds to
\begin{equation*}
U \frac{df}{dU} = f, \quad \frac{d^2 f}{dU^2} = 0, \quad \frac{d^3 f}{dU^3} = 0,
\end{equation*}
which has higher codimension (3).
%

If the second term of \eqref{Equa:der_tilde_a} is not $0$, then it defines a linear function over a codimension $1$ algebraic surface given by the first condition of \eqref{Equa:QpartialQ_0}, i.e. a codimention $2$ algebraic surface. This set corresponds to an algebraic surface in $\R^{k+1}$ by the change of variable \eqref{Equa:map_tildea}: its projection on $(a_1,\dots,a_k)$ is then an algebraic set \gls{NTFk}  of codimension $1$.

\medskip

\noindent{\it Point (3).} The final part is whether the codimension of the set
\begin{equation}
\label{Equa:critical1}
\begin{split}
&\bigg\{ (U,a_1,\dots,a_k), f(U) = \sum_{i=1}^k a_i U^i \ \text{satisfies (\ref{H1}), $f(U^*) = 0$ and} \ F'(U) = F''(U) = 0 \bigg\} \\
\end{split}
\end{equation}
is $1$ in the set
\begin{equation}
\label{Equa:critical2}
\begin{split}
\bigg\{ (U,a_1,\dots,a_k), f(U) = \sum_{i=1}^k a_i U^i \ \text{satisfies (\ref{H1}), $f(U^*) = 0$} \bigg\} 
\end{split}
\end{equation}
with $U^*$ fixed: this condition is equivalent to
\begin{equation*}
a_1 + a_2 U^* = - \sum_{i=3}^k a_i (U^*)^{i-1}, \quad a_1 + a_2 = - \sum_{i=3}^k a_i.
\end{equation*}
The space \eqref{Equa:critical1} is thus the section of the codimension 2 surface given by \eqref{Equa:QpartialQ_0} into the affine codimension $1$ sub-space \eqref{Equa:critical2}, and then it is of codimension $1$.
%
\end{proof}

\begin{remark}
\label{Rem:cubic_case}
For the cubic source
\begin{equation*}
f(U,U^*) = U (U - U^*) (1 - U)
\end{equation*}
one gets using \eqref{Equa:L_matrix}
\begin{equation*}
\tilde a_1 = f(U,U^*) = - U^*U + (1 + U^*) U^2 - U^3, \quad \tilde a_2 = U f'(U,U^*) - f(U,U^*) = (1 + U^*) U^2 - 2 U^3, \quad \tilde a_3 = - U^3,
\end{equation*}
so that the three solutions \eqref{Equa:thrre_tildea} yield
\begin{equation*}
\tilde a_1 = 0, \ \tilde a_2 = 0 \quad \Rightarrow \quad f(U,U^*) = 0, \ U \frac{d}{dU} f(U,U^*) = 0
\end{equation*}
\begin{equation*}
\tilde a_2 = 0, \ \tilde a_3 = 0 \quad \Rightarrow \quad U \frac{d}{dU} f(U,U^*) = f(U,U^*), \ U^3 = 0,
\end{equation*}
\begin{equation*}
\tilde a_2 = \frac{8}{3} \tilde a_1, \ \tilde a_3 = - \frac{20}{9} \tilde a_1 \quad \Rightarrow \quad U \frac{d}{dU} f(U) - f(U) = \frac{8}{3} f(U), \ U^3 = \frac{20}{9} f(U).
\end{equation*}
The only case with is not a $0$ of $f(U,U^*)$ is the third one, whose unique solution is
\begin{equation*}
U = \frac{2}{5}, \quad U^* = \frac{7}{25}.
\end{equation*}
%
%

\begin{figure}
\begin{subfigure}{.475\textwidth}
\resizebox{\textwidth}{!}{\includegraphics{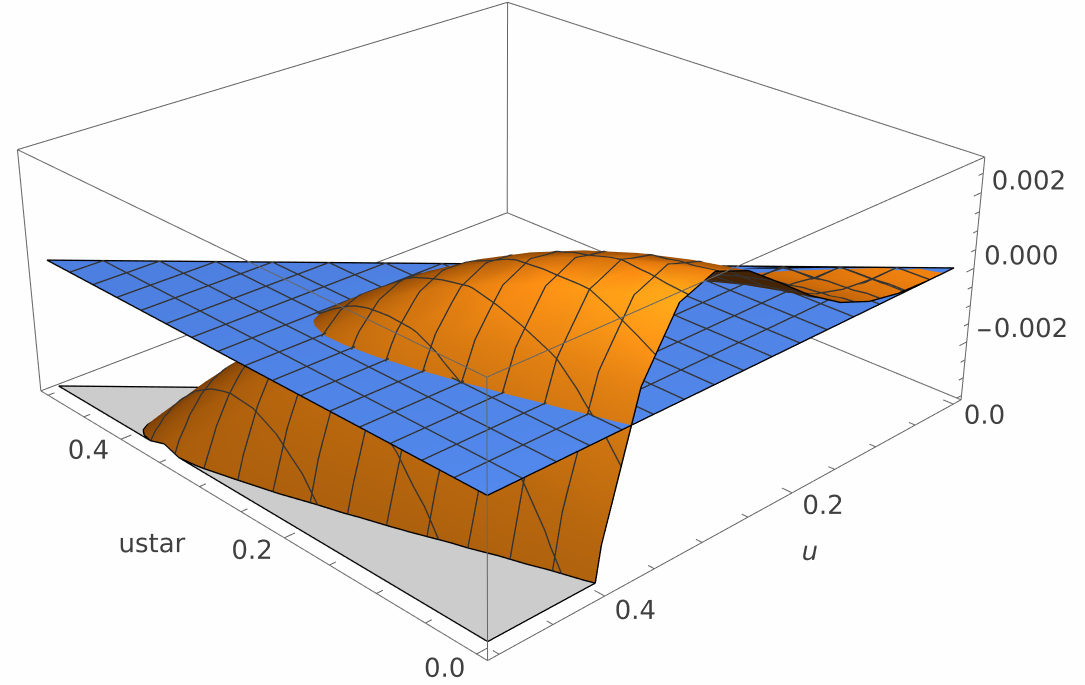}}
\end{subfigure} \hfill
\begin{subfigure}{.475\textwidth}
\resizebox{\textwidth}{!}{\includegraphics{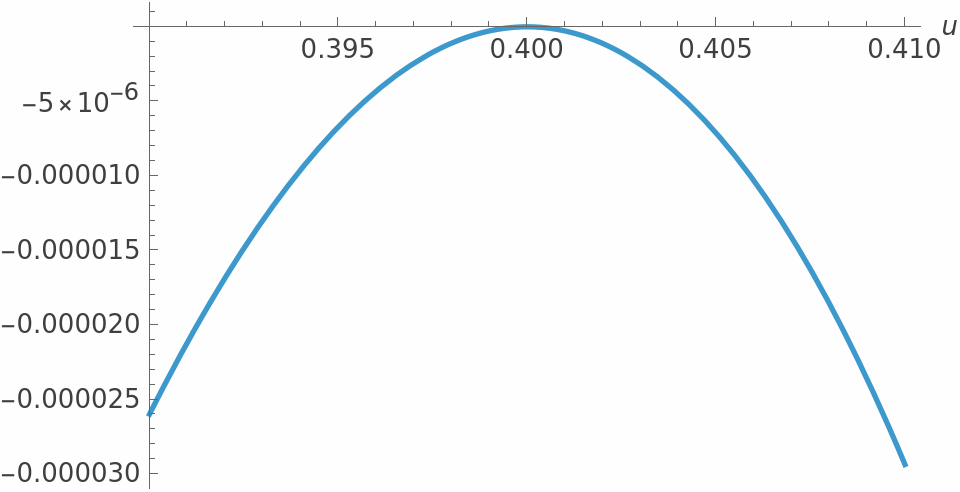}}
\end{subfigure}
\caption{The numerator $Q(U)$ of the function $\partial_U F(U,U^*)$ for the cubic case of Remark \ref{Rem:cubic_case} and its graph when $U^* = \frac{7}{25}$.}
\label{Fig:Fcubic_der}
\end{figure}
\end{remark}

\subsection{Transversality of $\Gamma_u(\beta,U)$, $\Gamma_s(\beta,U)$ and $P^*(U)$}
\label{Ss:transvers_Gammaus}

We turn now the attention to the transversality properties of $\Gamma_u,\Gamma_s$ and $P^*$.

\begin{proposition}
\label{Prop:transv_Gamma_gen}
There exists a compact set \newglossaryentry{Ncal1}{name=\ensuremath{\mathcal N_1},description={set of non-transversality of $\Gamma_s,\Gamma_u$ and the curve $P^*$}} $\gls{Ncal1} \subseteq \R$ of negligible Lebesgue measure such that for all $\beta \in [\beta^*,+\infty)\setminus \mathcal{N}_1$ the unstable and stable manifolds $\Gamma_u(\beta,U),\Gamma_s(\beta,U)$ given by \eqref{Equa:Gammau_eq0}, \eqref{Equa:Gammas_eq0} respectively, cross transversally the critical curve $P^*$ only at a finite number of points: in other words, the sets \newglossaryentry{Cs}{name=\ensuremath{C_s},description={intersection points of $\Gamma_s$ with $P^*$}} \newglossaryentry{Cu}{name=\ensuremath{C_u},description={intersection points of $\Gamma_u$ with $P^*$}}
\begin{equation*}
\gls{Cs} := \big\{ U\in (U^*, 1): \Gamma_s(U,\beta)=P^*(U) \big\}, \quad \gls{Cu} := \big\{ U\in (U^*, 1): \Gamma_u(U,\beta)=P^*(U) \big\}
\end{equation*}
are at most finite and 
\begin{equation}
\label{tang_10}
F(U) + \beta \neq 0 \quad \forall U \in C_s \cup C_u,
\end{equation}
if $\beta \notin \mathcal N_1$.
\end{proposition}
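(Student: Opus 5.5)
The plan is to apply the implicit function theorem and the one–dimensional Sard lemma to
\begin{equation*}
\Phi_u(\beta,U) := \Gamma_u(\beta,U) - P^*(U), \quad \Phi_s(\beta,U) := \Gamma_s(\beta,U) - P^*(U), \quad U \in (U^*,1).
\end{equation*}
Both functions are $C^1$ on their domains, with $\Gamma_u$ restricted to $U < U_{uf}(\beta)$, by Corollary \ref{Cor:Frechet_Gamma}. The key algebraic observation concerns a point where $\Gamma_u(\beta,U) = P^*(U)$. The ODE \eqref{Equa:Gammau_eq0} gives $\partial_U \Gamma_u = -\frac{f(U)}{P^*(U)} - \beta$ there, so by \eqref{F}
\begin{equation*}
\partial_U \Phi_u(\beta,U) = -\frac{f(U)}{P^*(U)} - \beta - (P^*)'(U) = -\big(F(U) + \beta\big),
\end{equation*}
and the same identity holds for $\Phi_s$. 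Hence a crossing is non-transversal exactly when $\partial_U \Phi = 0$ on $\{\Phi = 0\}$, which is condition \eqref{tang_10}.

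By Theorem \ref{manifoldbound}, $\partial_\beta \Gamma_u < 0$ on $(0,U_u(\beta))$, and the representation \eqref{Equa:sol_der_beta_u} gives strict negativity on all of $(0,U_{uf}(\beta))$. Likewise $\partial_\beta \Gamma_s > 0$ on $[U^*,1)$. So $\partial_\beta \Phi_{u,s} \neq 0$ everywhere, and the implicit function theorem shows that $\{\Phi_u = 0\}$ is, locally around each of its points, the graph of a $C^1$ function $U \mapsto \beta_u(U)$ (similarly $\beta_s$). Differentiating $\Phi_u(\beta_u(U),U) = 0$ gives
\begin{equation*}
\beta_u'(U) = \frac{F(U) + \beta_u(U)}{\partial_\beta \Gamma_u(\beta_u(U),U)} .
\end{equation*}
Thus the tangency speeds are exactly the critical values of the $C^1$ functions $\beta_u$, $\beta_s$. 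The zero set is covered by countably many such local graphs, and on each the one–dimensional Sard lemma makes the critical values Lebesgue negligible. Defining $\mathcal N_1$ as the union of these critical values gives a null set. Closedness of $\mathcal N_1$ follows from continuity of $(\beta,U) \mapsto \Phi, F$ once we know the relevant $U$ stay in a compact subset of $(U^*,1)$, which is the next step.

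For $\beta \notin \mathcal N_1$, every zero of $\Phi(\beta,\cdot)$ is nondegenerate and hence isolated, so finiteness of $C_u$, $C_s$ reduces to excluding accumulation at $U^*$ and at $1$.
\begin{itemize}
\item Near $U^*$: $F \to +\infty$ by \eqref{Equa:limi_F}, so $F + \beta > 0$ and every crossing there is transversal; the zeros cannot accumulate because $\partial_U \Phi < 0$ on a whole neighbourhood of such crossings.
\item The critical speed $\beta^{**}$, where $\Gamma_u$ reaches $(U^*,0)$: this happens at $U^*$ itself, which is excluded from $C_u$. Near it the previous point and the Lipschitz bound \eqref{Equa:local_Lip_U**} keep the crossing transversal.
\item Near $1$: $\Gamma_s$ is tangent to the stable direction with slope $\lambda_-(\beta,1)$, while $P^* \sim \sqrt{-f'(1)(1-U)}$ is vertical. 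So $\Gamma_s < P^*$ on a left neighbourhood of $1$ (uniform for $\beta$ bounded), and there are no crossings there. For $\Gamma_u$, crossings near $1$ occur only for $\beta$ near $\beta^*$, where $\Gamma_u$ is close to $\Gamma_s$; the same comparison applies.
\end{itemize}
This gives compactness in $U$ for $\beta$ in any bounded interval, and hence closedness of $\mathcal N_1$ there.

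The main obstacle I expect is compactness of $\mathcal N_1$, i.e.\ boundedness in $\beta$. One must show that for $\beta$ large all crossings are transversal.
\begin{itemize}
\item For $\Gamma_u$: Lemma \ref{Lem:large_beta_Gammau} gives $\Gamma_u = \mathcal O(1/\beta)$, so crossings occur only with $U - U^* = \mathcal O(\beta^{-2})$. There \eqref{Equa:bound_integr} gives $F \gtrsim \beta\sqrt{f'(U^*)}/C$, and I would compare constants to get $F + \beta > 0$.
\item For $\Gamma_s$: $|\lambda_-(\beta,1)| \approx \beta$, so $\Gamma_s \approx \beta(1-U)$ meets $P^*$ at $1 - U \approx -f'(1)/\beta^2$, where $F \approx -\sqrt{-f'(1)/(1-U)} \approx -\beta$. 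This case is borderline.
\end{itemize}
For $\Gamma_s$ I would first try a second-order expansion of $\Gamma_s$ near $U = 1$, via the stable manifold expansion \cite[Theorem 6.2.8]{katokhass} uniformly in $\beta$ after rescaling $1 - U = \beta^{-2} s$, to decide the sign of $F + \beta$ at the crossing. If that sign cannot be fixed, I would settle for $\mathcal N_1$ being closed, negligible and locally bounded, possibly with compactness only on bounded $\beta$-intervals.
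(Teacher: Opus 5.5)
Your measure-zero argument is correct, and it takes a different route from the paper. The paper first discards the critical values $\mathcal N_0$ of $-F$. On each interval of $\R\setminus\mathcal N_0$, where the roots $U_i(\beta)$ of $F+\beta$ depend smoothly on $\beta$, it applies the inverse function theorem to $(U,\beta)\mapsto(P^*(U)-\Gamma_s(\beta,U),F(U)+\beta)$. At a tangency the Jacobian determinant is $\partial_\beta\Gamma_s\,F'(U_i)\neq0$, so tangency speeds are isolated there, and the paper sets $\mathcal N_1=\mathcal N_0\cup\mathcal N_{1,s}\cup\mathcal N_{1,u}$. Your graphs $\beta=\beta_{u,s}(U)$ with $\beta'=(F+\beta)/\partial_\beta\Gamma$ turn tangency speeds into critical values. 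One-dimensional Sard then finishes the job, without $\mathcal N_0$ and with only $C^1$ regularity. The paper's route gives, in addition, local finiteness of tangency speeds off $\mathcal N_0$.

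The gap is compactness of $\mathcal N_1$, i.e.\ boundedness of $\mathcal N_{1,s}$, which you leave open. Your fallback (compactness only on bounded $\beta$-intervals) would not prove the statement. The case is not borderline: you lost a factor $\tfrac12$, and \eqref{Equa:bound_integr} is only correct up to this constant. Since $3f+Uf'\to f'(1)$ and $2\sqrt{Uf}\approx2\sqrt{-f'(1)(1-U)}$, one has $F(U)\sim-\tfrac12\sqrt{-f'(1)/(1-U)}$ as $U\nearrow1$.

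No rescaled stable-manifold expansion is needed. For $\beta>0$, $\beta(1-U)$ is a subsolution on $[U^*,1]$ (this is \eqref{lbound}), and $\Gamma_s-\beta(1-U)=\int_U^1 f/\Gamma_s$. With $L=\max_{[U^*,1]}f(V)/(1-V)$ this gives
\begin{equation*}
\beta(1-U)\le\Gamma_s(\beta,U)\le\Big(\beta+\frac{L}{\beta}\Big)(1-U),\qquad U\in[U^*,1].
\end{equation*}
A crossing $U_c$ therefore satisfies $1-U_c\le L/\beta^2$ and $P^*(U_c)=U_cf(U_c)/\Gamma_s(\beta,U_c)\ge -f'(1)(1-o(1))/\beta$. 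Hence
\begin{equation*}
F(U_c)+\beta=\frac{3f(U_c)+U_cf'(U_c)}{2P^*(U_c)}+\beta\ge\frac{\beta}{2}\big(1-o(1)\big)>0
\end{equation*}
for $\beta$ large. Geometrically, at the crossing $P^*$ has slope about $-\beta/2$ and $\Gamma_s$ about $-\beta$.

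This is also the paper's route (subsolution plus the localization \eqref{lb}). Its \eqref{ge1} claims $-F/\beta>1$ at such points, whereas the true limit is $\tfrac12$. That slip is harmless, since the contradiction only needs $-F(U_c)/\beta$ to stay away from $1$.

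For $\Gamma_u$ nothing is needed at large speed. For $\beta\ge\beta^{**}$ one has $U_{uf}(\beta)=U^*$, hence $C_u(\beta)=\emptyset$ and $\mathcal N_{1,u}\subset[\beta^*,\beta^{**}]$. Your appeal to Lemma \ref{Lem:large_beta_Gammau} would not work anyway, since that lemma only controls $\Gamma_u$ on $[0,U^*]$.
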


\begin{proof}{\it Study of $\Gamma_s$ and $P^*$.}
First of all, we wish to show that, for $\beta$ sufficiently large, the stable manifold associated to equation \eqref{functeq} and the critical curve $P^*$ cannot cross tangentially, i.e. the set \newglossaryentry{Ncal1s}{name=\ensuremath{\mathcal N_{1,s}},description={set of non-transversality of $\Gamma_s$ and the curve $P^*$}}
\begin{equation*}
\gls{Ncal1s} := \Big\{ \beta \in [\beta^*, +\infty): \Gamma_s(\beta, U)=P^*(U) \text{ and } F(U)+\beta=0 \text{ for some } U\in (U^*, 1) \Big\}
\end{equation*}
is bounded.

Note that, for $\beta>0$, the function $U \mapsto \beta(1-U)$ is a subsolution to \eqref{functeq} in  $[U^*,1]$, which implies
\begin{equation}
\label{lbound}
\Gamma_s(U,\beta)\ge \beta(1 - U) \quad \forall U \in [U^*,1].
\end{equation}
We can thus estimate for $U^* \le U\le 1$
\begin{align*}
\Gamma _s(\beta, U) - \beta(1-U) =\int_U^1 \frac{f(U')}{\Gamma_s(\beta, U')}\, dU' \le \frac{1}{\beta}\int_U^1 \frac{f(U')}{1-U'} \, dU'\le -\frac{f'(1)}{\beta}(1+C_1)(1-U),
\end{align*}
where $C_1>0$ is a constant only depending on $\|f\|_{C^2}$. When $U$ is sufficiently close to $1$, there holds
$$
3f(U)+Uf'(U) \underset{U \to 1}{\sim} 4 f'(U) U < 0,
$$
and thus, assuming also
$$
\beta^2 \ge 1+f'(1)(1+C_1),
$$
we conclude that
\begin{align*}
-\frac{3f(U)+Uf'(U)}{2\beta\Gamma_s(\beta, U)}\ge -\frac{3f(U)+Uf'(U)}{[\beta^2-f'(1)(1+C_1)](1-U)} \sim -\frac{f'(1)}{1-U} \to +\infty \quad \text{as} \ U \to 1.
\end{align*}
In particular, we can find $\overline U \in (U^*, 1)$ only depending on $f$ such that
\begin{equation}
\label{ge1}
- \frac{3f(U)+Uf'(U)}{2\beta\Gamma_s(\beta, U)} > 1 \quad \forall U \in [\overline U, 1] \text{ and } \forall \beta > \sqrt{1 + f'(1) (1 + C_1)}.
\end{equation}

Furthermore, note that, once again by \eqref{lbound}, the crossing condition
\begin{equation*}
\Gamma_s(\beta,U) = P^*(U)
\end{equation*}
implies, in particular,
\begin{equation*}
\beta (1-U) \le \Gamma_s(\beta,U)=\sqrt{Uf(U)}\le\sqrt{C_2(1-U)},
\end{equation*}
i.e.
\begin{equation}
\label{lb}
U \ge 1 - \frac{C_2}{\beta^2}, \quad \text{with } C_2 = C_2(\|f\|_{C^2}).
\end{equation}
We set
\begin{equation}
\label{betabar}
\overline {\beta}:=\max \bigg\{ \sqrt{1+f'(1)(1+C_1)}, \sqrt{\frac{C_2}{1-\overline{U}}} \bigg\}.
\end{equation}
By contradiction, let $\beta > \overline{\beta}$ and $U_T \in (U^*,1)$ such that
\begin{equation}
\label{tangu}
\Gamma_s(\beta, U_T)=\sqrt{U_Tf(U_T)} \quad \text{and} \quad F(U_T)+\beta=0.
\end{equation}
By \eqref{lb} and the choice of $\overline {\beta}$ in \eqref{betabar}, we have $U_T\ge \overline{U}$ and, thus, by \eqref{tangu} and \eqref{ge1},
\begin{align*}
1=-\frac{F(U_T)}{\beta}=-\frac{3f(U_T)+U_Tf'(U_T)}{2\beta\Gamma_s(\beta, U_T)}> 1,
\end{align*}
which clearly contradicts our assumption. We can thus conclude $\mathcal{N}_{1,s} \subseteq [\beta^*, \overline{\beta}]$.

By \eqref{Equa:limi_F}, there exists $I \subseteq (U^*,1)$ compact such that $F(U)+\beta\neq 0 \ \forall \beta \in [\beta^*, \overline {\beta}]$ and $\forall U \notin I$, which implies
\begin{equation*}
\mathcal{N}_{1,s} = \Big\{ \beta \in[\beta^*, \overline {\beta}]: \Gamma_s(U, \beta)=P^*(U) \text{ and } F(U)+\beta=0 \text{ for some } U\in I \Big\},
\end{equation*}
i.e. $\mathcal{N}_{1,s}$ is compact. By Proposition \ref{zeros}, it clearly holds
\begin{equation*}
\mathcal{L}^1(\mathcal{N}_{1,s}) = \mathcal{L}^1 \big( \mathcal{N}_{1,s} \setminus \mathcal{N}_0 \big).
\end{equation*}
Since the set $\R \setminus\mathcal{N}_0$ is open, it can be expressed as the countable union of compact intervals: by Remark \ref{zerosp} and the Implicit Function Theorem, if $I$ is one of these intervals, we can exhibit $N \ge 1$ and smooth functions \newglossaryentry{Uibeta}{name=\ensuremath{U_i(\beta)},description={roots of $F(U) + \beta = 0$}} $U_i = \gls{Uibeta}$, $i=1,...,N$, such that $\forall \beta \in I$ it holds
\begin{equation}
\label{Equa:intervals_Ui}
\big\{ U \in (U^*,1): \ F(U)+\beta=0 \big\} = \big\{ U_1(\beta) < ... < U_{N}(\beta) \big\}. 
\end{equation}

Fix $\beta_0 < \beta^*$ and define the function \newglossaryentry{Gs}{name=\ensuremath{G_s},description={transversality function for $\Gamma_s$ and $P^*$}}
\begin{equation*}
\begin{array}{ccccc}
\gls{Gs} &:& (U^*,1) \times (\beta_0, + \infty) &\to& \R^2, \\
&& (U,\beta) &\mapsto&  (P^*(U)-\Gamma_s(\beta,U), F(U)+\beta).
\end{array}
\end{equation*}
If $I$ is one of the intervals where \eqref{Equa:intervals_Ui} holds, in order to prove our thesis it will be sufficient to show that
\begin{equation*}
\mathcal{L}^1 \Big( \Big\{ \beta \in I: \ G_s(U, \beta)= 0 \text{ for some } U\in I \Big\} \Big) = 0.
\end{equation*}
This means that
\begin{equation*}
\Big\{ \beta \in K : \ G_s(U, \beta)=0 \text{ for some } U \in (U^*, 1) \Big\} = \bigcup_{i=1}^N \big\{ \beta \in I: \ G_s(U_i(\beta), \beta) = 0 \big\} =: \bigcup_{i=1}^N \mathcal N_{1,s,i}.
\end{equation*}
Computing the Jacobian matrix of $G$ we get
\begin{equation*}
DG_s(U,\beta) = \left[ \begin{array}{cc}
F(U)+\beta  & -\partial_{\beta} \Gamma_s(U,\beta) \\
F'(U) & 1
\end{array} \right],
\end{equation*}
and recalling that by \eqref{Equa:Gammas_beta}
\begin{equation*}
\partial_\beta \Gamma_s(U,\beta) > 0, 
\end{equation*}
we obtain 
$$
\det(DG_s(U_i(\beta), \beta)) = \partial_{\beta} \Gamma_s(U_i(\beta),\beta)F'(U_i(\beta))\neq 0 \quad \forall \beta \in I, i=1,...,N.
$$
We deduce that, given $\beta_0\in I$ such that $G_s(U_i(\beta_0), \beta_0)=0$, necessarily
\begin{equation*}
G_s(U_i(\beta), \beta) = (P^*(U_i(\beta)) - \Gamma_s(U_i(\beta),\beta),0) \neq 0 \quad \forall \beta \in (\beta_0-\delta_0, \beta_0+\delta_0) \setminus \beta_0,
\end{equation*}
i.e. points in $N_i$ are isolated and, thus, at most locally finite in $I$.

It remains to show that the cardinality of $C_s$ is finite. We exploit once again the argument in proof of Proposition \ref{zeros} and Remark \ref{zerosp}: by contradiction consider a sequence $U_n \subseteq C_s$ consists of distinct points and $U\in [U^*,1]$ such that $U_n \to U$. Clearly there holds $\Gamma_s(U, \beta)=P^*(U)$ and
\begin{equation*}
F(U)+\beta=(P^*-\Gamma_s(\cdot, \beta))'(U)=\lim_{n\to\infty}  \frac{(P^*(U_n)-\Gamma_s(U_n, \beta))-(P^*(U)-\Gamma_s(U, \beta))}{U_n-U}=0,
\end{equation*}
which is a contradiction both if $U\in \{U^*,1\}$ (by \eqref{Equa:limi_F}) and if $U\in (U^*,1)$ (by \eqref{tang_10}).

The proof for $C_u$ will follow the same line. Here, $\Gamma_u(U,\beta)=0 $ for all $\beta \ge \beta^{**}$, in which case the unstable manifold and $P^*$ never cross in the open interval $(U^*,1)$: in particular
\begin{align*}
\mathcal{N}_{1,u} :&= \Big\{ \beta \in [\beta^*, +\infty): \Gamma_u(\beta, U)=P^*(U) \text{ and } F(U)+\beta=0 \text{ for some } U \in (U^*, 1) \Big\} \\
&= \Big\{ \beta \in [\beta^*, \beta^{**}]: G_u(\beta, U)=(0,0)  \text{ for some } U \in (U^*, 1) \Big\},
\end{align*}
with \newglossaryentry{Gu}{name=\ensuremath{G_u},description={transversality function for $\Gamma_u$ and $P^*$}}
\begin{equation*}
\begin{array}{ccccc}
\gls{Gu} &:& (U^*,1) \times (\beta_0, +\infty) &\to& \R^2, \\
&& (U,\beta) &\mapsto& (P^*(U)-\Gamma_u(\beta,U), F(U)+\beta).
\end{array}
\end{equation*}

Finally, the thesis follows setting $\mathcal{N}_1 := \mathcal{N}_0\cup \mathcal{N}_{1,s} \cup \mathcal{N}_{1,u}$.
\end{proof}

\begin{remark}
\label{Rem:controll_inter}
The same computation as in Remark \ref{zerosp} gives that the number of intersections of $\Gamma_u(U),\Gamma_s(U)$ and $P^*(U)$ is constant in every connected component of $\R \setminus \mathcal N_1$.
\end{remark}

As in the case of $F(U) + \beta = 0$, by a considering polynomial source $f(U)$ we can improve Proposition \ref{Prop:transv_Gamma_gen} obtaining that \gls{Ncal1} is finite.

\begin{proposition}
\label{Prop:Gamma_trans_poly}
Assume that the source $f(U)$ is a polynomial satisfying Assumption \eqref{H1} with $U^*$ fixed. Then
\begin{enumerate}
\item for all $\beta$ the cardinality of $C_s(\beta)$, $C_u(\beta)$ is uniformly bounded;
\item the set \gls{Ncal1}, defined in Proposition \ref{Prop:transv_Gamma_gen} as the set of speeds such that $\Gamma_u(\beta,U)$, $\Gamma_s(\beta,U)$ do not cross $P^*(U)$ transversally, is finite.
\end{enumerate}
\end{proposition}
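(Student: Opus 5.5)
We will exploit the algebraic, or at least analytic, structure that a polynomial $f$ gives to the curve $P^*(U)=\sqrt{Uf(U)}$ and to the ODE \eqref{functeq}. The key observation is that at a point $U\in C_u\cup C_s$ the slope of $\Gamma=\Gamma_{u,s}$ is $-\frac{f(U)}{P^*(U)}-\beta$. Hence the difference of slopes $(P^*)'(U)-\Gamma'(U)$ equals $F(U)+\beta$. The crossing points therefore alternate between points where $F+\beta>0$ (the flow enters $\{P<P^*\}$) and points where $F+\beta<0$ (the flow exits it). Two consecutive transversal crossings of $\Gamma$ with $P^*$ must have opposite signs of $F+\beta$, so between them there is a zero of $F+\beta$. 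Since $F'$ has numerator the polynomial $Q$ of \eqref{Equa:Q_def}, whose degree is at most $2k$, the equation $F(U)+\beta=0$ has at most $\deg Q+1$ roots for every $\beta$, uniformly in $\beta$. This uses Rolle: between two roots of $F+\beta$ there is a root of $Q$.

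\emph{Point (1).} Fix $\beta$. First treat crossings that are transversal. Consecutive crossings $U_1<U_2$ of $\Gamma_s$ with $P^*$ have opposite orientations, so some $U\in(U_1,U_2)$ satisfies $F(U)+\beta=0$. Therefore $\sharp C_s(\beta)\le N(\beta)+1\le 2k+2$. If some crossing is tangential, we perturb $\beta$ slightly. By Proposition \ref{Prop:transv_Gamma_gen} and Remark \ref{Rem:controll_inter}, nearby $\beta$ outside $\mathcal N_1$ have only transversal crossings. A tangential crossing is a limit of crossings of nearby transversal $\beta$'s, using the continuity of $\Gamma_s$ in $\beta$ from Theorem \ref{manifoldbound}. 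Hence the count of distinct crossings can only drop in the limit, provided crossings stay away from $U^*$ and $1$, which \eqref{Equa:limi_F} and \eqref{lb} guarantee. The same argument applies to $\Gamma_u$ on $(U^*,U_u)$, noting that $C_u=\emptyset$ for $\beta\ge\beta^{**}$.

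\emph{Point (2).} By the proof of Proposition \ref{Prop:transv_Gamma_gen}, $\mathcal N_{1,s}\subset[\beta^*,\overline\beta]$ is compact, and likewise $\mathcal N_{1,u}\subset[\beta^*,\beta^{**}]$. Moreover, $\mathcal N_0=-F(\{Q=0\})$ is finite by Proposition \ref{Prop:F_transv_poly}(1). It therefore suffices to show that $\mathcal N_{1,s}\cap J$ is finite on each of the finitely many intervals $J$ of $\R\setminus\mathcal N_0$, and to check accumulation at the endpoints. In the interior of $J$, the Jacobian computation of Proposition \ref{Prop:transv_Gamma_gen} shows that the points of $\mathcal N_{1,s,i}=\{\beta: P^*(U_i(\beta))=\Gamma_s(\beta,U_i(\beta))\}$ are isolated. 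This uses $\partial_\beta\Gamma_s>0$ from \eqref{Equa:Gammas_beta} and $F'(U_i)\neq 0$.

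\emph{Main obstacle: excluding accumulation at points of $\mathcal N_0$ or at $\beta^{**}$.} We plan to handle it with analyticity. For polynomial $f$ the vector field is real-analytic in $(U,P,\beta)$ wherever $P>0$ and $U\in(U^*,1)$. The function $\beta\mapsto P^*(U_i(\beta))-\Gamma_s(\beta,U_i(\beta))$ is then real-analytic on $J$. Near a degenerate endpoint $\beta_0\in\mathcal N_0$, the branch $U_i(\beta)$ is a Puiseux branch of the algebraic curve $\{2\sqrt{Uf}\,\beta+3f+Uf'=0\}$ (after squaring). So $h(\beta)=P^*(U_i(\beta))-\Gamma_s(\beta,U_i(\beta))$ is analytic in $(\beta-\beta_0)^{1/m}$ and not identically zero, because a nonzero $h$ exists at transversal $\beta$ by Proposition \ref{Prop:transv_Gamma_gen}. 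Such a function has isolated zeros even at $\beta_0$. For $\Gamma_u$ the same argument applies on $[\beta^*,\underline\beta]$. Near $\beta^{**}$, crossings occur near $U^*$, where $F+\beta>0$ by \eqref{Equa:limi_F}, so no tangencies occur there and $\mathcal N_{1,u}$ is bounded away from $\beta^{**}$. Combining these facts with compactness gives that $\mathcal N_1$ is finite.
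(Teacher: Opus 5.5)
\textbf{Point (1).} You use the same mechanism as the paper: at a crossing $(P^*)'-\Gamma'=F+\beta$, so consecutive crossings have opposite orientation and each component of $\{F+\beta\neq0\}$ contains at most one crossing. Your treatment of tangential crossings, however, is not correct as stated. Continuity of $\Gamma_s$ in $\beta$ does not make a tangential crossing a limit of crossings at nearby speeds. Suppose $P^*-\Gamma_s(\beta_0,\cdot)$ has a zero without sign change at $U_0$. Since $\partial_\beta\Gamma_s>0$, this zero splits into two for $\beta$ on one side of $\beta_0$ and disappears on the other side. Two touchings of opposite type at the same $\beta_0$ require opposite sides, so there need not be a single sequence $\beta_n\to\beta_0$ along which ``the count can only drop''. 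Also, \eqref{lb} pushes crossings \emph{towards} $U=1$ for large $\beta$. What keeps them away from $1$ is the comparison $\Gamma_s\sim|\lambda_-(\beta,1)|(1-U)$ versus $P^*\sim\sqrt{-f'(1)(1-U)}$.

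No perturbation is needed. For every $\beta$, the derivative of $P^*-\Gamma$ at its zeros has a fixed sign on each component of $\{F+\beta\neq0\}$, so each such component contains at most one crossing. All remaining crossings are zeros of $F+\beta$. Hence $\sharp C_{u,s}(\beta)\le 2N(\beta)+1$, with $N(\beta)\le 2k+1$ by your Rolle bound.

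\textbf{Point (2).} You use only half of your Jacobian computation, namely that $\det DG_s\neq 0$. As a result you need Puiseux expansions at the points of $\mathcal N_0$ and a separate discussion near $\beta^{**}$. You also need joint real-analyticity of $(\beta,U)\mapsto\Gamma_{u,s}(\beta,U)$, which you assert but which does not follow from analyticity of the vector field alone. The curves $\Gamma_{u,s}$ are selected by a condition at a saddle, so this requires the analytic stable/unstable manifold theorem with parameters.

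The paper instead uses the strict monotonicity $\partial_\beta\Gamma_u<0$, $\partial_\beta\Gamma_s>0$ to obtain uniqueness rather than mere isolation. The paper phrases this at fixed points $\bar U_i$. Along a moving branch $U_i(\beta)$ of $\{F+\beta=0\}$, on a component $J$ of $\R\setminus\mathcal N_0$, the needed computation is the following. Set $h_i(\beta)=P^*(U_i(\beta))-\Gamma(\beta,U_i(\beta))$. At every zero of $h_i$,
\[
h_i'(\beta)=\big(F(U_i(\beta))+\beta\big)U_i'(\beta)-\partial_\beta\Gamma(\beta,U_i(\beta))=-\partial_\beta\Gamma(\beta,U_i(\beta)),
\]
which has a fixed sign. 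A $C^1$ function whose derivative has a fixed sign at all its zeros vanishes at most once on an interval. Hence each branch carries at most one tangency speed in each $J$.

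For $\Gamma_u$, restrict to $\{\beta\in J:\Gamma_u(\beta,U_i(\beta))>0\}$. On a component of this set whose left endpoint lies inside $J$, one has $h_i>0$ near that endpoint, because $\Gamma_u\to0$ while $P^*>0$. Since $h_i$ can only cross zero upwards, it has no zero on that component.

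Since $\mathcal N_0$ is finite and each $J$ carries finitely many branches, $\mathcal N_1$ is finite. This argument needs no analysis at the endpoints of $J$ and none near $\beta^{**}$.
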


\begin{proof}
It is sufficient to prove the proposition for $\Gamma_u$, being the proof for $\Gamma_s$ entirely similar.

\smallskip

\noindent{\it Finiteness of \gls{Cs}, \gls{Cu}.} For every $\beta$, the set where $F(U) + \beta = 0$ is a subset of the zeros of the polynomial
\begin{equation*}
F(U)^2 = \beta^2 U f(U),
\end{equation*}
and then the number of zeros of $F(U) + \beta$ is uniformly bounded w.r.t. $\beta$.
Since there can be only one crossing point where of $P^*(U) - \Gamma_u(U) = 0$ in every connected component of $\{F(U) + \beta \not= 0\}$, it follows that
\begin{equation*}
\sharp \big\{ U : P^*(U) - \Gamma_u(\beta,U) = 0 \big\} \leq N < \infty
\end{equation*}
for some $N$. Hence the cardinality of $C_s(\beta),C_u(\beta)$ is uniformly bounded for all $\beta$.

\smallskip

\noindent{\it Finiteness of $\mathcal N_{1,u}$.} The set $\mathcal N_{1,u}$ is
\begin{equation*}
\mathcal N_{1,u} = \Big\{ \beta: \exists U \Big( P^*(U) - \Gamma_u(\beta,U) = 0 \ \wedge F'(U) = 0 \Big) \Big\}.
\end{equation*}
Observe that the second condition implies that $U$ is a zero of the polynomial $Q(U)$ defined in \eqref{Equa:Q_def}: if $\{\bar U_i\}_i$ are the roots of $Q(U)$, then the set $\mathcal N_{1,u}$ can be written as
\begin{equation*}
\mathcal N_{1,u} = \Big\{ \beta : \Gamma_u(\beta,\bar U_i) = P^*(\bar U_i) \ \text{for some $i$} \Big\}.
\end{equation*}
By \eqref{Equa:Gammau_beta} it follows that $\partial_\beta \Gamma_u < 0$, so that there can be only one $\beta_i$ such that $\Gamma_u(\beta_i,U_i) = P^*(U_i)$.
\end{proof}

\subsection{Transversal set $\mathcal T$ of speeds $\beta$}
\label{Ss:transc_calT}

In order to compute the first derivative of the effort function $E$ we need to select a specific family of velocities such that the ODE for trajectories satisfy some transversality conditions w.r.t. $P^*(U)$.

\begin{definition}
\label{Def:trans_calT}
We say that $\beta \ge \beta^*$ is \emph{transversal}, and write \newglossaryentry{Tcal}{name=\ensuremath{\mathcal T},description={set of speeds $\beta$ such that the roots of $F(U) + \beta$, $P^*(U) - \Gamma_u(U)$ are finite and not degenerate}} $\beta \in \gls{Tcal} = \mathcal T_f$, if
\begin{enumerate}
\item there exist finitely many $U_0=U^*<U_1<...<U_{2N}=1$ such that
\begin{equation*}
F(U)+\beta \begin{cases} =0 & \text{for }  U=U_i, \ i=1,...,2N, \\
>0  & \text{for } U\in P_i:=(U_{2i},U_{2i+1}), \ i=0,...,N-1, \\
<0  & \text{for } U\in N_i:=(U_{2i+1},U_{2i+2}), \ i=0,...,N-1;
\end{cases}
\end{equation*}
\item the tangency points are not degenerate:
\begin{equation*}
(-1)^i F'(U_i) <0 \quad \forall i=1,..., 2N-1;
\end{equation*}
\item the sets
\begin{equation*}
\gls{Cs} := \big\{ U \in (U^*, 1): \Gamma_s(U,\beta)=P^*(U) \big\} \quad \text{and} \quad \gls{Cu} := \big\{ U\in (U^*, 1): \Gamma_u(U,\beta)=P^*(U) \big\}
\end{equation*}
are finite;
\item the points in $C_s \cup C_u$ correspond to transversal crossings:
\begin{equation*}
F(U) + \beta \neq 0 \quad \forall U \in C_s \cup C_u.
\end{equation*}
\end{enumerate}
\end{definition}

Since the properties above are valid in an open set $(\beta,f) \in \R \times C^2([0,1])$ with the product topology because the functions involved are continuous, we can collect the results of this section into the following

\begin{theorem}
\label{Theo:transv_generic}
There is a dense open set \gls{Tfrak} in the space of admissible sources $f$ w.r.t. the norm $C^2([0,1])$, such that:
\begin{enumerate}
\item it contains a dense set of polynomials;
\item for all $f \in \mathfrak T$ there exists a finite set $\mathcal N_0 \cup \mathcal N_1$ given by Propositions \ref{zeros} and Proposition \ref{Prop:transv_Gamma_gen} such that the transversality set \gls{Tcalf} of $f$ is
$$
\gls{Tcalf} = [\beta^*,\infty) \setminus (\mathcal N_0 \cup \mathcal N_1);
$$
\item the set
\begin{equation*}
\bigcup_{f \in \mathfrak T} \mathcal T_f \times \{f\} \subset \gls{Dcal} = \Big\{ (\beta,f) \in \R \times C^2([0,1]), \beta > \beta^*(f), f \ \text{satisfying Assumption \eqref{H1}} \Big\}
\end{equation*}
is open and dense in the product topology, and of full measure in the subset \newglossaryentry{Dcalk}{name=\ensuremath{\mathcal D(k)},description={domain of admissible polynomials $f = \sum_{i=1}^k a_i U^i$ and admissible speeds $\beta$}}
\begin{equation*}
\gls{Dcalk} = \bigg\{ (\beta,a_1,\dots,a_k) \in \R^{k+1}, f = \sum_{i=1}^K a_i U^i,\ \beta \geq \beta^*(f), \ f \ \text{satisfies Assumpion (\ref{H1})} \bigg\}.
\end{equation*}
\end{enumerate}
\end{theorem}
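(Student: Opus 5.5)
I would define $\mathfrak T$ explicitly as the set of admissible sources $f$ for which the exceptional set of non-transversal speeds, $\mathcal N_0(f)\cup\mathcal N_1(f)$, is finite. Concretely, I would require three things: the critical values of $F$ are finitely many and non-degenerate, the crossings of $\Gamma_u,\Gamma_s$ with $P^*$ degenerate only at finitely many speeds, and in addition $\beta^{**}(f)$ and $\beta^*(f)$ are not accumulation points of such degeneracies.

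The proof would then proceed in four steps.

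\textbf{Step 1: genericity for polynomials.} I would take the polynomial case of Proposition \ref{Prop:F_transv_poly} and Proposition \ref{Prop:Gamma_trans_poly} as the model.
\begin{itemize}
\item For polynomials outside the algebraic set $\mathcal{NT}_k$, the zeros of $Q$ are finitely many and non-degenerate. Hence $\mathcal N_0(f)=-F(\{Q=0\})$ is finite.
\item $\mathcal N_1(f)$ is finite by Proposition \ref{Prop:Gamma_trans_poly}.
\end{itemize}
Since $\mathcal{NT}_k$ has codimension one, polynomials in the complement are dense among degree-$k$ polynomials. By Weierstrass approximation in $C^2$, after correcting the constraints $f(0)=f(1)=0$ and the sign conditions of \eqref{H1} with a small affine adjustment of the coefficients, these polynomials are dense in the admissible sources. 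This gives Point (1) and the density of $\mathfrak T$.

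\textbf{Step 2: openness.} This is where I expect the main obstacle. The basic fact is that non-degenerate zeros persist under $C^2$-small perturbations:
\begin{itemize}
\item $F$ depends continuously in $C^1$ on $f\in C^2$ on compact subsets of $(U^*,1)$, with $U^*(f)$ moving smoothly.
\item $\Gamma_u,\Gamma_s$ depend continuously on $(\beta,f)$ by Corollary \ref{Cor:Frechet_Gamma}.
\end{itemize}
Two points need control to make this work.

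First, uniformity as $U\to U^*$ and $U\to1$. Here the coercivity \eqref{Equa:limi_F}, together with $F'\to-\infty$, holds uniformly in a $C^2$-neighbourhood, so all relevant points stay in a fixed compact set.

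Second, uniformity as $\beta\to\infty$, which is the delicate part. I would handle it with the bound $\bar\beta(\|f\|_{C^2})$ from \eqref{betabar} for $\Gamma_s$. For $\Gamma_u$, I would use that $C_u=\emptyset$ for $\beta\ge\beta^{**}$, where $\beta^{**}$ is locally bounded by Lemma \ref{Lem:large_beta_Gammau}.

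With these two controls, the degenerate speeds lie in a compact $\beta$-range. Applying the Implicit Function Theorem to the maps $G_s,G_u$ and to $(U,\beta)\mapsto(F+\beta,F')$ then shows that each degenerate speed moves continuously with $f$, and that no new degeneracies appear for nearby $f$. The delicate spot is $\beta$ near $\beta^{**}$, where $\Gamma_u$ is only Lipschitz in $f$ (Proposition \ref{Prop:Lipechit_U_u}). There I would argue that $U_u(\beta)\to U^*$, where $F+\beta>0$ holds by \eqref{Equa:limi_F}, so crossings near $U^*$ are automatically transversal.

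\textbf{Step 3: Point (2).} This follows directly from the definition of $\mathfrak T$ together with Propositions \ref{zeros} and \ref{Prop:transv_Gamma_gen}.

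\textbf{Step 4: Point (3).} For openness, I would take $(\beta_0,f_0)$ with $\beta_0\in\mathcal T_{f_0}$. Since $\beta_0\notin\mathcal N_0\cup\mathcal N_1$, all the conditions of Definition \ref{Def:trans_calT} are strict inequalities at finitely many non-degenerate points, so by the continuity from Step 2 they persist in a product neighbourhood. Density follows from Step 1 and the finiteness of $\mathcal N_0\cup\mathcal N_1$. For full measure in $\mathcal D(k)$, I would apply Fubini: for a.e. $(a_i)$, namely those outside $\mathcal{NT}_k$, the $\beta$-section of the complement is finite, hence null.
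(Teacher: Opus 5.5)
Your architecture is the paper's, spelled out in more detail: openness of the conditions in Definition~\ref{Def:trans_calT}, genericity for polynomials via Propositions~\ref{Prop:F_transv_poly} and~\ref{Prop:Gamma_trans_poly}, Weierstrass, and Fubini. \textbf{The gap is Step~2, and in the $C^2$ topology it cannot be closed.}

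An Implicit Function Theorem for $(U,\beta)\mapsto(F(U)+\beta,F'(U))$ needs $F''$ and its continuous dependence on $f$, which is $C^3$ information. Indeed, for $f\in C^2$ the phrase ``non-degenerate critical point of $F$'' is not even defined. A $C^2$-small change of $f$ moves $F'$ only in $C^0$, since $F'$ contains the term $Uf''/(2\sqrt{Uf})$. This is fatal, as the following construction shows.
\begin{itemize}
\item Since $f/P^*=P^*/U$, one has $UF=(UP^*)'$. Hence every $\eta\in C^1_c((U^*,1))$ with $\int V\eta(V)\,dV=0$ is realized as $F\mapsto F+\eta$ by the admissible source $\tilde f=\frac{1}{U}\big(P^*(U)+\frac{1}{U}\int_{U^*}^{U}V\eta(V)\,dV\big)^2$ on $(U^*,1)$, with $\tilde f=f$ elsewhere. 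This satisfies $\|\tilde f-f\|_{C^2}\lesssim\|\eta\|_{C^1}$.
\item Suppose $F'(U_0)=0$ at some $U_0\in(U^*,1)$. Choose $\eta$ so that $\tilde F'(U)=(U-U_0)^3\sin\frac{1}{U-U_0}$ for $|U-U_0|<\delta/2$. This costs $\|\eta\|_{C^1}\lesssim\sup_{|U-U_0|<\delta}|F'|+\delta^3\to0$.
\item Yet $\tilde F$ has infinitely many distinct critical values accumulating at $\tilde F(U_0)$, so $\mathcal N_0(\tilde f)$ is infinite.
\end{itemize}
``$F'>0$ somewhere'' is a $C^2$-open condition, and it forces critical points because $F'\to-\infty$ at both ends. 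It holds for the cubic with small $U^*$ (Remark~\ref{Rem:monoton_cond}). At its local minimum $U_0\approx\frac{6}{5}U^*$ one has $-F(U_0)\approx-\sqrt{5U^*}\in(\beta^*,0)$, so the bad speeds are not hidden below $\beta^*$. Consequently no $C^2$-open dense set of sources satisfies Point~(2). The paper's proof has the same weak point: it gets openness of $\mathfrak T$ in one line from continuity of the functions involved. That gives no control of the exceptional speeds uniformly in $\beta$.

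What that continuity argument, and your Step~4, do prove is that the set of \emph{all} pairs $(\beta,f)\in\mathcal D$ with $\beta$ transversal for $f$ is open.
\begin{itemize}
\item At one transversal $\beta_0$ there are finitely many zeros of $F+\beta_0$, all with $F'\neq0$, and finitely many transversal crossings.
\item These persist under $C^1$-small changes of $F$, $\Gamma_u-P^*$ and $\Gamma_s-P^*$, once your coercivity remarks confine everything to a compact set. Your observation that a crossing created near $U^*$ as $\beta$ drops below $\beta^{**}$ is transversal, because $F+\beta>0$ there, covers the remaining case.
\item Neither an IFT on $F'$ nor uniformity in $\beta$ is needed.
\end{itemize}
Combined with your Step~1, this gives Point~(1), openness and density of the transversal pairs, and the full-measure claim in $\mathcal D(k)$ by Fubini. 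The last uses that every admissible polynomial has finite $\mathcal N_0\cup\mathcal N_1$.

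If you want a genuinely open $\mathfrak T$ with Point~(2), work in the $C^3$ topology and require $F''\neq0$ at every critical point of $F$. Then your Step~2 works for $\mathcal N_0$. For $\mathcal N_1$ you do not need the IFT on $G_s,G_u$, which in any case degenerates at tangencies located at critical points of $F$, where $\det DG_s=\partial_\beta\Gamma_s\,F'=0$. Instead, the function $k(U)=P^*(U)-\Gamma_s(-F(U),U)$ satisfies $k'=\partial_\beta\Gamma_s\,F'$ at its zeros. By \eqref{Equa:Gammas_beta} it therefore has at most one zero on each interval of monotonicity of $F$. The analogous argument with $\partial_\beta\Gamma_u<0$ handles $\Gamma_u$.
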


\begin{proof}
Being the functions
\begin{equation*}
(\beta,U) \mapsto F(U) + \beta, \ \Gamma_u(\beta,U), \ \Gamma_s(\beta,U)
\end{equation*}
continuous in $C^2((U^*,1))$, the conditions listed in Definition \ref{Def:trans_calT} define an open set $\mathfrak T$ in $C^2$.

By Propositions \ref{Prop:F_transv_poly} and \ref{Prop:Gamma_trans_poly} this set $\mathfrak T$ contains a set of polynomials which is dense in the space of polynomial sources, and it is of full measure being its complement the union of a codimension $1$ algebraic surface (Proposition \ref{Prop:F_transv_poly}) and the graphs of finitely many continuous functions $\beta(f)$ (Proposition \ref{Prop:Gamma_trans_poly}).

By Weierstrass Theorem \cite[Section 12.3]{royden2010real} we conclude that $\mathfrak T$ must be dense in $C^1(U^*,1)$.
\end{proof}

A generic situation is presented in Fig. \ref{Fig:transv_generic}.

\begin{figure}
\resizebox{\textwidth}{!}{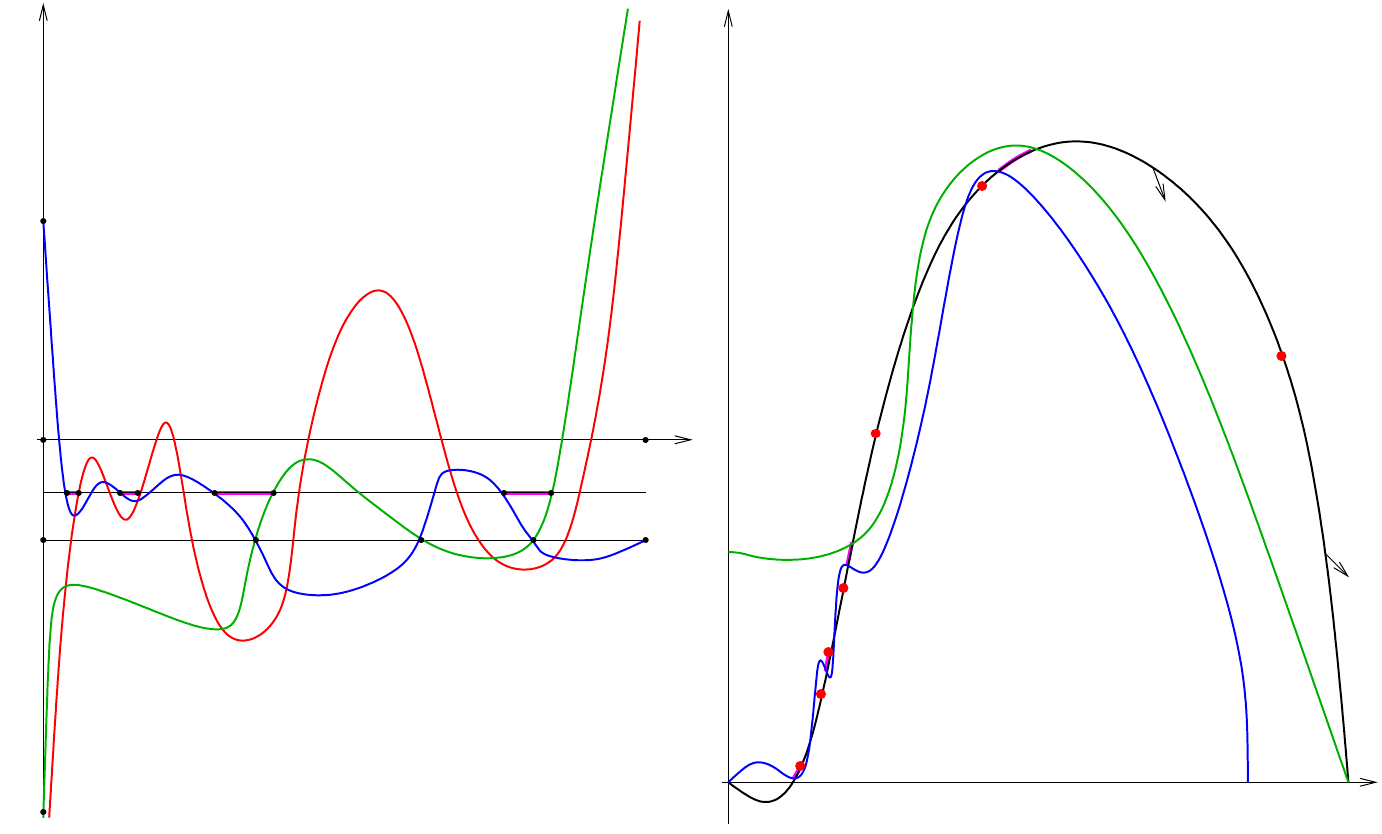}
\caption{A generic situation where Theorem \ref{Theo:transv_generic} holds. On the left the sets $\{F(U) + \beta = 0\}$ (red), $\{\Gamma_u(\beta,U) = P^*(U)\}$ (blue), $\{\Gamma_s(\beta,U) = P^*(U)\}$ (red) are depicted. Fixing $\beta$, the region where $- F(U) > \beta$ is the region where the vector field is exiting the region $\{P \leq P^*\}$, while $- F(U) < \beta$ the vector field is entering, so that the control can be applied here. In order to remain inside $\Gamma_u,\Gamma_s$, the only place where we can apply the control are the magenta segments. On the right the curve $P^*$ and the trajectories $\Gamma_u,\Gamma_s$ are represented in the phase space $(U,P$). The red dots are the zeros of $F(U) + \beta$.}
\label{Fig:transv_generic}
\end{figure}

\begin{remark}
\label{Rem:existence_multiple_inter}
We now show that it is possible to exhibit a source $f$ satisfying condition $(\ref{H1})$ and such that $F(U) + \beta$ has infinitely many oscillation. This proves that the statement of Theorem \ref{Theo:transv_generic} is not empty, i.e. $\mathfrak T$ does not contain all admissible sources $f$.

We start by considering a function $f \in C^{2,1}([0,1])$ defined as
\begin{align*}
f(U) := \begin{cases}
\text{smooth connection} & 0 \leq U < U^*, \\
\frac{a}{U^3}-\frac{a(U_1-U)^3}{(U^*)^3(U_1-U^*)^3}, & U^* \le U < U_1, \\
\frac{a}{U^3} & U_1 \le U < U_2, \\
\frac{a}{U^3}-\frac{a(U-U_2)^3}{(1-U_2)^3} & U_2 \le U \le 1,
\end{cases}
\end{align*}
where $U_1$, $U_2$ are points we will choose later. The main property of this function is that $F(U) = 0$ on the segment $[U_1,U_2]$: we will perturb it in order to achieve countably many oscillations of $F$.

In order to have
\begin{equation*}
f'(U^*) = \frac{3a(2U^* - U_1)}{(U^*)^4(U_1 - U^*)} > 0
\end{equation*}
we must assume $U_1 < 2 U^*$. Moreover, the same condition implies that 
\begin{equation*}
f(U) = \frac{a}{U^3} \bigg( 1 - \frac{U^3 (U_1 - U)^3}{(U^*)^3 (U_1 - U^*)^3} \bigg) > 0 \quad \forall U^* \le U \le U_1.
\end{equation*}
We also note that, for $U^* \le U \le U_1$,
\begin{align*}
F(U) &= \frac{3f(U)+Uf'(U)}{2\sqrt{Uf(U)}} \\
&= \frac{3}{2} \sqrt{\frac{a}{2(U^*)^3(U_1-U^*)^3}} \frac{U(U-U_1)^2(2U-U_1)}{\sqrt{(U^*)^3(U_1-U^*)^3 -U^3(U_1-U)^3}} > 0.
\end{align*}

By the previous choice of $U_1$, the map
\begin{equation*}
[U^*, U_1] \ni U \mapsto \sqrt{(U^*)^3(U_1 - U^*)^3 -U^3(U_1-U)^3}
\end{equation*}
is increasing. Viceversa, if we also assume $U_1<\frac{16}{7+\sqrt{17}} U^*$,
\begin{align*}
\frac{d}{dU} \bigg( U (U - U_1)^2 (2U - U_1) \bigg) = (U - U_1)(8U^2 -7U_1 U + U_1^2) \le 0 \quad \forall U \in [U^*, U_1].
\end{align*}
Both together imply that $F$ is decreasing in $[U^*, U_1]$.

Computing $F(U)$ for $U \in [U_1,U_2]$ we obtain
\begin{equation*}
3 f(U) + U f'(U) = \frac{3 a}{U^3} - \frac{3 a}{U^3} = 0,
\end{equation*}
so that $F(U) = 0$ for all $U \in [U_1, U_2]$.

Finally, in the interval $[U_2,1]$,
\begin{equation*}
F(U) := - \frac{3}{2}\sqrt{\frac{a}{(1-U_2)^3} }\frac{U (U-U_2)^2 (2U - U_2)}{\sqrt{(1 - U_2)^3 - U^3(U - U_2)^3}} \le 0,
\end{equation*}
and it is decreasing. 

We thus conclude that for the unperturbed $f(U)$ the set
\begin{equation*}
J^+ = \big\{ U \in (U^*, 1): \ F(U) > 0 \big\}
\end{equation*}
is an interval, and thus by Corollary \ref{Cor:monotone_1} (or Theorem $3.2$ in \cite{BressanChiri23}) the optimal profile is given by \eqref{optgraphgen}.

\begin{figure}
\resizebox{.65\textwidth}{!}{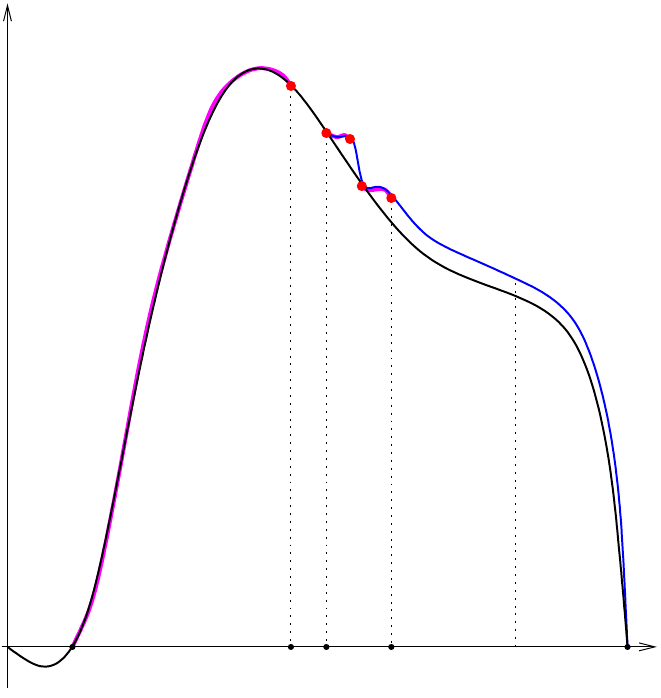}
\caption{The function $f(U)$ and its perturbation $f_\kappa(U)$, Remark \ref{Rem:existence_multiple_inter}. The magenta regions are where $F(U) + \beta > 0$.}
\label{Fig:cubic_hyperbol_perp_1}
\end{figure}

We will now perturb $f(U)$ into countably many disjoint intervals contained in $[U_1,U_2]$ so that $F(U)$ changes sign in each of these intervals. It is clear that we have only to give the perturbation in a single interval $I = (U^-_I,U^+_I) \subset [U_1,U_2]$. Set then
\begin{equation*}
f_\kappa(U) := \begin{cases}
\text{smooth connection} & 0 \leq U < U^*, \\
\frac{a}{U^3} - \frac{a(U_1-U)^3}{(U^*)^3(U_1-U^*)^3}, & \ U^* \le U < U_1, \\
\frac{a}{U^3} & \ U_1 \le U < U^-_I, \\
\frac{a}{U^3} + \kappa \sin \big( \frac{3\pi}{2|I|} (U - U^-) \big)^2 & \ U_I^- \le U < U^+_I, \\
\frac{a + \kappa (U^+_I)^3}{U^3} & \ U^-_I \le U < U^+_I, \\
\frac{a + \kappa (U^+_I)^3}{U^3}-\frac{(a + \kappa (U^+_I)^3)(U-U_2)^3}{(1-U_2)^3} & \ U_I^+ \le U \le 1,
\end{cases}
\end{equation*}
for some $\kappa > 0$ and compute in $I$
\begin{equation*}
\begin{split}
3 f_\kappa(U) + U \frac{d}{dU} f_\kappa(U) = 3 \kappa \sin \bigg( \frac{3\pi}{2|I|} (U - U^-) \bigg)^2 + \frac{3 \pi \kappa}{2|I|} U \sin \bigg( \frac{3\pi}{2I} (U - U^-) \bigg) \cos\bigg( \frac{3\pi}{2|I|} (U - U^-) \bigg).
\end{split}
\end{equation*}
It is fairly easy to see that this function is positive near $U^-_I,U^+_I$, while
\begin{equation*}
\begin{split}
3 f_\kappa \bigg( \frac{U^-_I + U^+_I}{2} \bigg) + \frac{U^-_I + U^+_I}{2} \frac{d}{dU} f_\kappa \bigg( \frac{U^- + U^+}{2} \bigg) &= 3 \kappa \sin \bigg( \frac{3\pi}{4} \bigg)^2 - \frac{3 \pi \kappa}{2 |I|} \frac{U^- + U^+}{2} \sin \bigg( \frac{3\pi}{4} \bigg) \cos \bigg( \frac{3\pi}{4} \bigg) \\
&= \frac{3\kappa}{2} \bigg( 1 - \frac{\pi}{2} \frac{U^+ + U^-}{2(U^+ - U^-)} \bigg) < 0.
\end{split}
\end{equation*}
Hence the function has an oscillation.

We observe that the perturbation is not $C^2$, but a slight variation will not alter the sign of $F(U)$.
%
\end{remark}

\section{Existence and regularity of an optimal profile}
\label{S:exist_regu_one_opt}

In this section we show that the optimal profile $P_\beta(f,U)$ exists and the optimal control $\alpha$ acts only when $P_\beta(f,U) = P^*(f,U)$.

The main tool is the comparison with trajectories which, starting from on e point $(U,P^*(U))$ remain above $P^*$ for $U' > U$ or below for $U' < U$ and are as close as possible to $P^*$: these \emph{minimal forward} and \emph{maximal backward} trajectories are uniquely defined, and every other trajectory for which the control acts when $P \not= P^*$ will be further from $P^*$ that one of those. Hence optimal trajectories must belong to a class of \emph{candidate minimizers $\tilde A(\beta)$} for which the controls acts on $\{P = P^*\}$, and this class of function is compact w.r.t. the $C^0$ convergence. Showing that the effort function $E(f,\beta)$ is continuous w.r.t. the $C^0$ convergence in $\tilde A(\beta)$, we obtain that the minimum exists by the standard method of the calculus of variations.

\subsection{Minimal forward and maximal backward trajectories}
\label{Ss:minim_bac_forw}

Fix $\beta > \beta^*$ and consider the set \newglossaryentry{calUset}{name=\ensuremath{\mathcal{U}(\beta)},description={open set where the control cannot act}}
\begin{equation*}
\gls{calUset} := \Big\{ U : \Gamma_u(\beta, U)<P^*(U)<\Gamma_s(\beta, U) \text{ and } F(U)+\beta<0 \Big\}.
\end{equation*}
The previous set can be decomposed as the union of its connected components, i.e. \newglossaryentry{Iomega}{name=\ensuremath{I_\varpi},description={decomposition of the open set $\mathcal{U}(\beta)$ into connected components}} \newglossaryentry{Vomega}{name=\ensuremath{V_\varpi^-},description={initial point of the interval $I_\varpi$}} \newglossaryentry{Womega}{name=\ensuremath{W_\varpi^+},description={final point of the interval $I_\varpi$}}
\begin{equation*}
\mathcal{U}(\beta) = \bigcup_{\varpi} \gls{Iomega}(\beta) = \bigcup_{\varpi} (\gls{Vomega}(\beta), \gls{Womega}(\beta)),
\end{equation*}
where the parameter set \newglossaryentry{Omegapar}{name=\ensuremath{\varpi},description={countable parameter set}} \gls{Omegapar} is at most countable and ordered so that $W_\varpi^+ < V_{\varpi'}^-$ for all $\varpi < \varpi'$. Recalling that 
\begin{equation*}
\lim_{U\to U^*} F(U)=+\infty,
\end{equation*}
we deduce 
\begin{equation*}
\inf\, \mathcal{U}(\beta)) \geq U_1 = \min F^{-1}(-\beta) > U^*,
\end{equation*}
where we recall that \newglossaryentry{U1}{name=\ensuremath{U_1},description={first root of $F(U) + \beta = 0$}} \gls{U1} is the first root of $F(U) + \beta = 0$.

\begin{notation}
\label{Not:Q_R_omeega_sol}
For every $\varpi$, let \newglossaryentry{Qomega}{name=\ensuremath{Q_\varpi},description={solution of the ODE \eqref{functeq} starting from $(V_\varpi^-,P^*(V_\varpi^-))$}} \newglossaryentry{Romega}{name=\ensuremath{R_\varpi},description={solution of the ODE \eqref{functeq} starting from $(W_\varpi^-,P^*(W_\varpi^-))$}} \gls{Qomega}, \gls{Romega} be the solutions to ODE \eqref{functeq} coinciding with $P^*$ respectively at $V_\varpi^-$ and $W_\varpi^+$, i.e.
\begin{equation*}
\begin{cases}
Q_\varpi '(U) + \frac{f(U)}{Q_\varpi(U)} + \beta = 0, \\
Q_\varpi(V_\varpi^-) = P^*(V_\varpi^-),
\end{cases} \quad \text{and} \quad \begin{cases}
R_\varpi '(U)+\frac{f(U)}{R_\varpi(U)}+\beta=0, \\
R_\varpi (W_\varpi^+)=P^*(W_\varpi^+),
\end{cases}
\end{equation*}
and let \newglossaryentry{Jomega-}{name=\ensuremath{J^-_\varpi},description={interval of existence of $Q_\varpi$}} \newglossaryentry{Jomega+}{name=\ensuremath{J^+_\varpi},description={interval of existence of $R_\varpi$}} \gls{Jomega-}, \gls{Jomega+} be their respective maximal intervals of existence.
\end{notation}
Since, by definition of $\mathcal{U}(\beta)$,
$$
\Gamma_u(\beta, V_\varpi^-)\le Q_\varpi(V_\varpi^-)=P^*(V_\varpi^-)\le \Gamma_s(\beta, V_\varpi^-)
$$
and
$$
\Gamma_u(\beta, W_\varpi^+) \le P^*(W_\varpi^+)=R_\varpi(W_\varpi^*+) \le \Gamma_s(\beta, W_\varpi^+),
$$
the smoothenss of the ODE \eqref{functeq} implies
\begin{equation*}
\Gamma_u(\beta, U) \le Q_\varpi(U) \le \Gamma_s(\beta, U) \quad \forall U \in J_\varpi^-
\end{equation*}
and
\begin{equation*}
\Gamma_u(\beta, U) \le R_\varpi(U) \le \Gamma_s(\beta, U) \quad \forall U \in J_\varpi^+.
\end{equation*}

\begin{proposition}
\label{Prop:QR_bounds}
For each $\varpi$ there exist \newglossaryentry{Vomega+}{name=\ensuremath{V^+_\varpi},description={first future point where $Q_\varpi = P^*$}} \gls{Vomega+} such that
\begin{equation*}
W_\varpi^+\le V_\varpi^+, 
\end{equation*}
\begin{equation*}
Q_\varpi(U) > P^*(U) \ \forall U \in (V_\varpi^-, V_\varpi^+) \quad \text{and} \quad Q_\varpi(V_\varpi^+) = P^*(V_\varpi^+) \leq \Gamma_s(V_\varpi^+).
\end{equation*}
Similarly, for each $\varpi$ there exist \newglossaryentry{Womega-}{name=\ensuremath{W^-_\varpi},description={first past point where $R_\varpi = P^*$}} \gls{Womega-} such that
\begin{equation*}
U^* <  W_\varpi^-\le V_\varpi^-,
\end{equation*}
\begin{equation*}
R_\varpi(U) < P^*(U) \ \forall U \in (W_\varpi^-, W_\varpi^+) \quad \text{and} \quad R_\varpi(W_\varpi^-) = P^*(W_\varpi^-) \geq \Gamma_u(W^-_\varpi).
\end{equation*}
\end{proposition}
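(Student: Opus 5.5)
The plan is a comparison argument in the phase plane, using the sign of $F+\beta$ as the crossing criterion. Set $D(U) = Q_\varpi(U) - P^*(U)$. At $V_\varpi^-$ we have $D=0$ and, by definition of $F$, $D'(V_\varpi^-) = -F(V_\varpi^-) - \beta$. Since $V_\varpi^-$ is the left endpoint of a component of $\mathcal U(\beta)$, either $F(V_\varpi^-)+\beta = 0$ or $P^*(V_\varpi^-)$ equals $\Gamma_u$ or $\Gamma_s$ there. To see that $Q_\varpi$ immediately goes strictly above $P^*$, I will use the general fact that at any point $\bar U$ with $Q_\varpi(\bar U) = P^*(\bar U)$ and $F(\bar U)+\beta<0$ the trajectory crosses $P^*$ from below to above. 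On $(V_\varpi^-,W_\varpi^+)$ one has $F+\beta<0$. So if $D$ vanished at some first point $\bar U \in (V_\varpi^-,W_\varpi^+)$, then either $D<0$ just before $\bar U$, which forces $D'(\bar U)\le 0$, contradicting $D'(\bar U) = -(F+\beta)(\bar U) >0$; or $D>0$ before, which gives $D' \le 0$ at $\bar U$, the same contradiction. Hence $D$ has no zero in the open interval once it is nonzero right after $V_\varpi^-$.

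The step I expect to be the main obstacle is the degenerate start, where $F(V_\varpi^-)+\beta=0$. Here I write $D' = -(F+\beta) + f\,D/(Q_\varpi P^*)$, a linear ODE in $D$ with $D(V_\varpi^-)=0$ and forcing $-(F+\beta)>0$ on a right neighbourhood. The variation of constants formula then gives $D>0$ on $(V_\varpi^-, V_\varpi^-+\delta)$. The same formula also covers the case where $V_\varpi^-$ is an intersection with $\Gamma_u$ or $\Gamma_s$, since there $F+\beta\ge 0$ is not needed: only the sign of the forcing on the open interval matters. This linearization (the factor $D/(Q_\varpi P^*)$ comes from $f/P^* - f/Q_\varpi$) is the key tool, and it also shows that $Q_\varpi$ exists up to $W_\varpi^+$, because $Q_\varpi>P^*>0$ there and it stays below $\Gamma_s$ by the invariance noted before the statement.

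Next I define $V_\varpi^+ := \inf\{U > V_\varpi^-: Q_\varpi(U) = P^*(U)\}$. By the previous step $V_\varpi^+\ge W_\varpi^+$. Finiteness follows because $Q_\varpi \le \Gamma_s$ and $\Gamma_s(1)=0$, while the positive solution above $P^*$ cannot reach $U=1$ without meeting $P^*$. More precisely, $P^*(1)=0$ and $Q_\varpi\le\Gamma_s$, so either $Q_\varpi$ touches $P^*$ before $1$ or it coincides with $\Gamma_s$. The latter is excluded by uniqueness of the stable manifold, since $\Gamma_s \ge P^*$ near $V_\varpi^-$ would then have to cross $P^*$. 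In the borderline case $Q_\varpi\equiv\Gamma_s$ I take $V_\varpi^+ = U_s(\beta)$. Continuity gives $Q_\varpi(V_\varpi^+)=P^*(V_\varpi^+)$, and $Q_\varpi\le\Gamma_s$ gives $P^*(V_\varpi^+)\le\Gamma_s(V_\varpi^+)$.

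The statement for $R_\varpi$ is symmetric, with $U$ reversed. Going backward from $W_\varpi^+$, the crossing rule says that where $F+\beta<0$ the trajectory lies below $P^*$ to the left of any contact point. The linearization with $D=R_\varpi-P^*$ gives $D<0$ on a left neighbourhood and then on all of $(V_\varpi^-,W_\varpi^+)$. Setting $W_\varpi^-$ as the last contact point to the left, existence uses $R_\varpi\ge\Gamma_u$ and the lower bound $\inf \mathcal U(\beta)\ge U_1 > U^*$, together with $P^*(U^*)=0$ and $R_\varpi>0$. Because $\Gamma_u$ meets $P^*$ at $U_u(\beta)\ge U^*$, the curve $R_\varpi$ must meet $P^*$ at some $W_\varpi^- > U^*$ (or $W_\varpi^- = U_u(\beta)$ if $R_\varpi=\Gamma_u$), and $R_\varpi\ge\Gamma_u$ gives the final inequality.
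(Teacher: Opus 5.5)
Your treatment of $Q_\varpi$ follows the paper: at a contact point $D'=-(F+\beta)$, so $F(V_\varpi^+)+\beta\ge 0$ and hence $V_\varpi^+\ge W_\varpi^+$. Your variation-of-constants formula for $D'=-(F+\beta)+fD/(Q_\varpi P^*)$ is actually a cleaner treatment of the degenerate start $F(V_\varpi^-)+\beta=0$ than the paper's one-line claim.

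The genuine gap is the existence of $W_\varpi^->U^*$ when $\beta\ge\beta^{**}$. There $U_u(\beta)=U^*$ and $\Gamma_u(\beta,U^*)=0=P^*(U^*)$. So the facts you cite ($R_\varpi\ge\Gamma_u$, $R_\varpi>0$, and ``$\Gamma_u$ meets $P^*$ at $U_u(\beta)\ge U^*$'') do not rule out the following scenario: $R_\varpi<P^*$ on all of $(U^*,W_\varpi^+)$ and $R_\varpi(U)\to 0$ as $U\searrow U^*$, i.e.\ the trajectory through $(W_\varpi^+,P^*(W_\varpi^+))$ comes out of the equilibrium $(U^*,0)$. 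In that case $R_\varpi-P^*$ has no zero strictly to the right of $U^*$, and your intermediate-value argument gives nothing. This is exactly the third case in the paper's proof. The paper excludes it because for $\beta\ge\beta^{**}$ the point $(U^*,0)$ is a stable node, so no non-constant trajectory through a point with $U\ge\inf\mathcal U(\beta)>U^*$ can converge to it backward in time. A shorter fix: since $\beta\ge\beta^{**}>0$ and $f>0$ on $(U^*,1)$, one has $dR_\varpi/dU=-f/R_\varpi-\beta<0$, so $R_\varpi\ge P^*(W_\varpi^+)>0$ on $(U^*,W_\varpi^+]$, while $P^*(U)\to 0$ as $U\searrow U^*$. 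For $\beta<\beta^{**}$ your argument works, because $R_\varpi(U_u)\ge\Gamma_u(U_u)=P^*(U_u)$ with $U_u(\beta)>U^*$. You still need, however, that $R_\varpi$ can be continued backward down to $U_u(\beta)$. You assert $R_\varpi>0$ without proof, whereas the paper checks that $R_\varpi$ cannot reach $P=0$ at any $U>U^*$, since $dR_\varpi/dU\to-\infty$ as $R_\varpi\to 0^+$ there.

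There are also three smaller slips. In your first-zero argument, $D<0$ just before $\bar U$ forces $D'(\bar U)\ge 0$, not $\le 0$. The case is vacuous once $D>0$ on a right neighbourhood of $V_\varpi^-$, and your explicit formula already gives $D>0$ on all of $(V_\varpi^-,W_\varpi^+)$, so nothing breaks, but the sentence is wrong. More substantively, in the case $Q_\varpi\equiv\Gamma_s$ (which does occur when $P^*(V_\varpi^-)=\Gamma_s(V_\varpi^-)$) you cannot set $V_\varpi^+=U_s(\beta)$. $\Gamma_s$ may meet $P^*$several times after $V_\varpi^-$, and then $Q_\varpi>P^*$ fails on $(V_\varpi^-,U_s(\beta))$. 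Keep $V_\varpi^+$ as the infimum. Its existence follows directly from $Q_\varpi\le\Gamma_s<P^*$ on a left neighbourhood of $U=1$, since $P^*$ vanishes like $\sqrt{1-U}$ and $\Gamma_s$ like $1-U$. That is the paper's argument, and it makes your detour through uniqueness of the stable manifold unnecessary. The same remark applies to ``$W_\varpi^-=U_u(\beta)$ if $R_\varpi=\Gamma_u$''.
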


\begin{proof}
First note that since $F(U)+\beta<0$ in a right neighborhood of $V_\varpi^-$, there also holds
\begin{equation*}
Q_\varpi(U) > P^*(U) \quad \text{for $U > V_{\varpi}^-$ close to $V_\varpi^-$}.
\end{equation*}
Assume that $Q_\varpi > P^*$ for all $U > V_\varpi^-$: in particular $Q_\varpi \geq P^*$ is well-defined up to $1$ and, by the Maximum Principle,
\begin{equation*}
Q_\varpi(U) \le \Gamma_s(\beta, U) \quad \forall U \in (V_\varpi^-, 1).
\end{equation*}
Since
%
\begin{equation*}
\Gamma_s(\beta, U) < P^*(U) \quad \text{for $U<1$ close to $1$},
\end{equation*}
we obtain a contradiction. We can therefore define
\begin{equation*}
V_\varpi^+: = \min \big\{ U>V_\varpi^-: Q_\varpi=P^* \big\}.
\end{equation*}
Note that 
we must have $F(V_\varpi^+)+\beta\ge 0$, and, hence, $V_\varpi^+ \ge W_\varpi^+$.

Similarly $R_\varpi(U) < P^*(U)$ in a left neighborhood of $W_\varpi^+$. We will prove that 
$$
\inf J_\varpi^+ < U^*.
$$
Indeed, assume $\inf J_\varpi^+ > U^*$: then, since $f>0$ in $(U^*, 1)$ and $R_\varpi(\inf J_\varpi^+)=0$,
\begin{equation*}
\lim_{\substack{U\to \inf J_\varpi^+\\
U>\inf J_\varpi}} R_\varpi'(U) = \lim_{\substack{U\to \inf J_\varpi^+\\
U>\inf J_\varpi}} -\frac{f(U)}{R_\varpi(U)}-\beta = -\infty,
\end{equation*}
which would imply $Q_\varpi<0$ in a right neighborhood of $\inf J_\varpi^+$.

Clearly, if $\inf J_\varpi^+ < U^*$, then $R_\varpi(U^*) > 0 = P^*(U^*)$ and thus
\begin{equation*}
W_\varpi^- := \max \big\{ U < W_\varpi^+: \ R_\varpi(U) = P^*(U) \big\}
\end{equation*}
is well-defined and strictly larger than $U^*$.

Finally, consider the case $\inf J_\varpi^+=U^*$. Under such hypothesis, $(U^*, 0)$ is a stable node, otherwise $0 < \Gamma_u(U^*) \leq R_\varpi(U^*)$ a contradiction: in particular it admits a neighborhood such that all trajectories entering this neighborhood cannot exit from it in the future. Hence each point in the semispace $U \ge \inf \mathcal{U}(\beta) > U^*$ cannot belong to a trajectory containing $(U^*,0)$. This last case is thus impossible.
\end{proof}

\begin{notation}
\label{Not:QR_omega*}
In the following, we adopt the following notation: \newglossaryentry{Qomegastar}{name=\ensuremath{Q_\varpi^*},description={prolongation of $Q_\varpi$ as $P^*$ outside $(V^-_\varpi,V^+_\varpi)$}} \newglossaryentry{Romegastar}{name=\ensuremath{R_\varpi^*},description={prolongation of $R_\varpi$ as $P^*$ outside $(W^-_\varpi,W^+_\varpi)$}}
\begin{equation*}
\gls{Qomegastar}(U) := \begin{cases}
Q_\varpi(U) & U \in (V_\varpi^-, V_\varpi^+), \\
\min\{P^*(U),\Gamma_s(\beta,U)\} & U \in (U^*, V_\varpi^-] \cup [V_\varpi^+, 1),
\end{cases}
\end{equation*}
\begin{equation*}
\gls{Romegastar}(U) := \begin{cases}
R_\varpi(U) & U \in (W_\varpi^-, W_\varpi^+), \\
\max\{\Gamma_u(\beta,U),P^*(U)\} & U \in (U^*, W_\varpi^-] \cup [W_\varpi^+, 1).
\end{cases}\end{equation*}
\end{notation}

Note that, by uniqueness of the ODE \eqref{functeq}, necessarily 
\begin{subequations}
\label{disjoint}
\begin{equation}
\label{disjoint_V}
(V_\varpi^-, V_\varpi^+) \cap (V_{\varpi'}^-, V_{\varpi'}^+) \not= \emptyset, \ \varpi \leq \varpi' \quad \Rightarrow \quad (V_{\varpi'}^-, V_{\varpi'}^+) \subset (V_{\varpi}^-, V_{\varpi}^+),
\end{equation}
\begin{equation}
\label{disjoint_W}
(W_\varpi^-, W_\varpi^+) \cap (W_{\varpi'}^-, W_{\varpi'}^+) \not= \emptyset, \ \varpi \leq \varpi' \quad \Rightarrow \quad (W_{\varpi}^-, W_{\varpi}^+) \subset (W_{\varpi'}^-, W_{\varpi'}^+),
\end{equation}
\end{subequations}
Also, we remark that, since $V_\varpi^- \ge U_1$, it also holds 
\begin{equation*}
Q_\varpi^*(U)=P^*(U) \quad \forall U \in (U^*, U_1) \text{ and } \forall \varpi.
\end{equation*}

\begin{proposition}[Minimal forward trajectory]
\label{mft}
Let $\beta \ge \beta^*$ and $U^- \in (U^*,1)$ such that $\Gamma_u(U^-) \leq P^*(U^-) \leq \Gamma_s(U^-)$. There exists \newglossaryentry{Pbar}{name=\ensuremath{\overline{P}},description={minimal forward trajectory}} $\gls{Pbar}:[U^-,1) \to \R$ Lipschitz continuous and such that
\begin{enumerate}
\item \label{1} $\overline P(U^-)=P^*(U^-)$ and 
$$
\min \big\{ P^*(U), \Gamma_s(\beta, U) \big\} \leq \bar P(U) \leq \Gamma_s(\beta,U) \quad \forall U \in [U^-,1);
$$
\item \label{2} $\overline{P}$ is admissible in $[U^-,1)$, i.e.
\begin{equation*}
\overline {\alpha}(U) := \frac{d}{dU} \overline{P}(U)+\frac{f(U)}{\overline P(U)}+\beta\ge 0 \quad \text{for a.e. } U \in (U^-,1),
\end{equation*}
and there holds
\begin{equation*}
\overline {\alpha} (U)= 0 \quad \text{for all } U \in \{\overline{P}>P^*\};
\end{equation*}
\item \label{3} for any other admissible trajectory $P:[U^-,1] \to \R$ such that $P > P^*$ in some interval $I=(U^-, U^+)$ and $P(U^\pm)=P^*(U^\pm)$, there also holds $\overline P \le P$ in $I$.
\end{enumerate}
\end{proposition}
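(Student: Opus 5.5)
We will construct $\overline P$ explicitly by gluing, in the spirit of Notation \ref{Not:QR_omega*}: starting at $U^-$, follow $P^*$ (capped by $\Gamma_s$) as long as the flow allows the control to keep the trajectory on $P^*$. As soon as one enters a component $I_\varpi=(V^-_\varpi,W^+_\varpi)$ of $\mathcal U(\beta)$ (where $F+\beta<0$, so the free flow leaves $\{P\le P^*\}$ upward and no nonnegative control can keep $P=P^*$), switch to the free solution $Q_\varpi$ until it returns to $P^*$ at $V^+_\varpi$, given by Proposition \ref{Prop:QR_bounds}. Concretely, we define
\begin{equation*}
\overline P(U) := \min_{\varpi:\,V^-_\varpi\ge U^-}\ \max\big\{Q^*_\varpi(U),\ \min\{P^*(U),\Gamma_s(\beta,U)\}\big\}
\end{equation*}
for $U\ge U^-$, with the modification that if $U^-$ itself lies inside some $I_\varpi$ we first follow the free solution through $(U^-,P^*(U^-))$ until it hits $P^*$ again (the same argument as in Proposition \ref{Prop:QR_bounds} gives a first return point). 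By the nesting property \eqref{disjoint_V}, the intervals $(V^-_\varpi,V^+_\varpi)$ are either disjoint or nested, so at each $U$ only the outermost active $Q_\varpi$ matters, and $\overline P$ equals $Q_\varpi$ on maximal intervals $(V^-_\varpi,V^+_\varpi)$ and $\min\{P^*,\Gamma_s\}$ elsewhere.

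We will then verify the three points in order. Point \eqref{1} follows from $Q_\varpi>P^*$ on $(V^-_\varpi,V^+_\varpi)$ (Proposition \ref{Prop:QR_bounds}) together with the comparison $Q_\varpi\le\Gamma_s$, which comes from uniqueness for \eqref{functeq}. Lipschitz continuity holds because on the free arcs $Q_\varpi$ is bounded away from $0$ (it stays above $P^*>0$ for $U>U^*$), while $P^*$ and $\Gamma_s$ are Lipschitz on compact subsets of $(U^*,1)$; continuity at the junctions holds since $Q_\varpi=P^*$ at $V^\pm_\varpi$. Near $U=1$ the trajectory agrees with $\Gamma_s$.

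For Point \eqref{2}, on free arcs $\overline\alpha=0$ by construction. On $\{\overline P=P^*<\Gamma_s\}$ outside all the maximal arcs we have $\overline\alpha=F+\beta$ (recall $\overline\alpha U=F+\beta$ there up to the factor convention), and we are outside $\mathcal U(\beta)$, so $F+\beta\ge0$. On $\{\overline P=\Gamma_s\}$, $\overline\alpha=0$. The delicate part is the set $\{\overline P=P^*\}$ when it has empty interior or accumulates countably many arcs. There we will use that $\overline P$ is Lipschitz and a.e. differentiable, so at a.e. such point $\overline P'=(P^*)'$ (or $\Gamma_s'$), and the point is a Lebesgue density point not in $\mathcal U(\beta)$, hence $F+\beta\ge0$.

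Point \eqref{3} is the main obstacle. Let $P$ be admissible with $P>P^*$ on $(U^-,U^+)$. Admissibility means $P'\ge -f/P-\beta$, so $P$ is a supersolution of \eqref{functeq}. On any free arc $(V^-_\varpi,V^+_\varpi)\cap I$ of $\overline P$, first note $P(V^-_\varpi)\ge P^*(V^-_\varpi)=Q_\varpi(V^-_\varpi)$; the comparison principle for the scalar ODE (locally Lipschitz since $P,Q_\varpi$ stay positive) then gives $P\ge Q_\varpi$. On the remaining set $\overline P\le P^*<P$ trivially. The one case needing care is $U^-$ itself: if $U^-$ lies in some $I_\varpi$, the comparison starts from $P(U^-)=P^*(U^-)=\overline P(U^-)$ and is the same argument. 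This shows $\overline P\le P$ on $I$, completing the proof.
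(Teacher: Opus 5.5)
There is a genuine gap, and it sits in the construction of $\overline P$, which is the core of the proof. First, the displayed formula is not the object you describe. Since $Q^*_\varpi\ge\min\{P^*,\Gamma_s\}$ everywhere, it reduces to $\min_\varpi Q^*_\varpi$, and this equals $\min\{P^*,\Gamma_s\}$ at every $U$ lying outside at least one arc. Suppose $\mathcal U(\beta)\cap(U^-,1)$ has two components $I_\varpi,I_{\varpi'}$ with $\varpi<\varpi'$. Then $I_\varpi$ is disjoint from $(V^-_{\varpi'},V^+_{\varpi'})$, because $W^+_\varpi<V^-_{\varpi'}$. So your $\overline P=P^*$ on $I_\varpi$, and $\overline\alpha=F+\beta<0$ there, violating Point (2). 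Your verbal description (``only the outermost active $Q_\varpi$ matters'') corresponds instead to $\sup_\varpi Q^*_\varpi$, which is the paper's definition.

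Second, even with the supremum, your claim that $\overline P$ equals $Q_\varpi$ on \emph{maximal} intervals $(V^-_\varpi,V^+_\varpi)$ assumes every nested family of arcs has a largest element. By \eqref{disjoint_V} the arcs containing a given point are totally ordered by inclusion, but the chain may be infinite with no maximum. The set $\mathcal U(\beta)$ can have countably many components (Remark \ref{Rem:existence_multiple_inter}). Suppose they accumulate from the right at some $V_\infty$, with each $Q_{\varpi_n}$ still above $P^*$ when it reaches $V^-_{\varpi_{n-1}}$. Then the arcs $(V^-_{\varpi_n},V^+_{\varpi_n})$ strictly increase, with $V^-_{\varpi_n}\searrow V_\infty$, and $V_\infty$ is not the left endpoint of any component.

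In that situation your rule puts $\overline P=P^*$ on the whole union, including on the $I_{\varpi_n}\subset\mathcal U(\beta)$. Your a.e.\ argument for Point (2) also fails there, since it relies on points of $\{\overline P=P^*\}$ lying outside $\mathcal U(\beta)$. The missing step is exactly what the paper does with $Q^U$ and $I(U)$. One must show that the supremum over such a chain solves \eqref{functeq} on the union $\bigcup_n(V^-_{\varpi_n},V^+_{\varpi_n})$; by continuous dependence it is the free solution through $(V_\infty,P^*(V_\infty))$. One must also show that it equals $P^*$ at both endpoints, and that two such unions are either equal or disjoint. Then $\{\overline P>P^*\}$ is a countable disjoint union of free arcs covering $\mathcal U(\beta)\cap(U^-,1)$, and your checks of Points (1)--(2) go through.

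Your proof of Point (3) is fine once it is applied to every $Q^*_\varpi$, and hence to their supremum. The forward comparison from $V^-_\varpi$, where $P>P^*=Q_\varpi$, is in fact cleaner than the paper's case analysis.
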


\begin{proof}
First assume $F(U^-)+\beta \ge 0$: in particular, $U^- \notin N$. We consider several cases.

\medskip

\noindent {\it Case 1.} If $\mathcal{U}(\beta) \cap (U^-,1) = \emptyset$, we define as in Corollary \ref{Cor:monotone_1} 
\begin{equation*}
\overline{P}(U):=\begin{cases}
\Gamma_s(\beta, U) & \Gamma_s(\beta, U)<P^*(U), \\
P^*(U) & F(U)+\beta\ge 0,
\end{cases} \quad \text{with} \ U \in [U^-,1]. 
\end{equation*}
This function is Lipschitz and obviously satisfies Conditions \emph{(\ref{1})} and \emph{(\ref{2})} in the statement.

Let $P$ be admissible and such that $P > P^*$ in an interval $I$. By admissibility $\Gamma_s(\beta, U) \ge P(U) \geq \min\{P^*(U),\Gamma_s(\beta,U)\}$ for all $U \in I$: clearly $\overline{P}\le P$ in $I$, verifying Condition \eqref{3}.

\medskip

\noindent {\it Case 2.} Assume that $\mathcal{U}(\beta) \cap (U^-, 1) \neq \emptyset$ and, thus, there exists a non-empty subset of indexes $\{\varpi'\}$ such that
$$
(U^-,1) \cap \mathcal{U}(\beta) = \bigcup_{\varpi'} I_{\varpi'}.
$$
For each $U^-\le U$ with $P^*(U) \le \Gamma_s(\beta,U)$, define
\begin{equation*}
\overline{P}(U) := \sup_{\varpi'} \big\{ Q^*_{\varpi'} (U) \big\}.
\end{equation*}
Since $\min\{P^*,\Gamma_s\} \leq Q^*_\varpi \leq \Gamma_s$ for all $\varpi'$, we deduce that
$$
\min \big\{ P^*(U),\Gamma_s(\beta,U) \big\} \leq \overline{P}(U) \leq \Gamma_s(\beta,U) \quad \forall U \in (U^-,1).
$$
Moreover, for all $\varpi'$ it holds $Q_\varpi^*(U^-)=P^*(U^-)$, and thus $\overline{P}(U^-)=P^*(U^-)$. This proves Condition \ref{1}.

Consider now $U$ such that $\overline{P}(U) > P^*(U)$. By definition of supremum, there exists a maximizing sequence $\varpi_n' = \varpi_n' (U)$ such that $Q^*_{\varpi'_n}(U)\nearrow \overline P(U)$: in particular, definitely $Q^*_{\varpi'_n}(U) > P^*(U)$ and, thus, $U \in (V_{\varpi'_n}^-, V_{\varpi'_n}^+)$ and $Q^*_{\varpi'_n}(U) = Q_{\varpi'_n}(U)$. By \eqref{disjoint_V} we have that 
\begin{equation*}
(V_{\varpi'_n}^-, V_{\varpi'_n}^+) \subseteq (V_{\varpi'_m}^-, V_{\varpi'_n}^+) \quad \text{and} \quad {Q}_{\varpi'_n} \le {Q}_{\varpi'_m} \  \forall n \le m.
\end{equation*}
We deduce that the function $Q^U := \sup_{n \in \N} Q_{\varpi'_n}$ is a well-defined solution to \eqref{functeq} in the open interval
$$
I(U):= \bigcup_{n \in \N} (V_{\varpi'_n}^-, V_{\varpi'_n}^+) \quad \text{satisfying $Q^U>P^*$ and $Q^U=P^*$ on $\partial I(U)$}. 
$$
Moreover $U \in I(U)$ and $\overline{P} = Q^U$ in  $I(U)$. We also remark that $Q^U$ only depends on $U$ and not on the specific maximizing sequence: indeed given another $\varpi_n''$ satisfying the same approximating properties, there holds definitely $U \in (V_{\varpi'_n}^-, V_{\varpi'_n}^+)\cap (V_{\varpi_n''}^-, V_{\varpi_n''}^+) $ and thus, by (\ref{disjoint_V}), the intervals are ordered. The same reasoning gives that the if $I(U) \cap I(U') \not= \emptyset$ for $U \not= U'$, then there some common interval $(V^-_{\bar \varpi},V^+_{\bar \varpi})$, and from $\bar \varpi$ onward the two sequences construct the same intervals: hence $I(U) = I(U')$.

It is therefore possible to find countably many points $\{U_n\}_{n \in \N} \subseteq (U^-, U_s(\beta))$ such that $\overline{P}(U_n) > P^*(U_n)$ and, setting $I_n = I(U_n)$ and $Q_n = Q^{U_n}$,
\begin{equation*}
\big\{ U \in (\bar U,1): \ \overline{Q}(U) > P^*(U)\} = \bigcup_{n \in \N} I_n \quad \text{and} \quad \bar P(U) = Q_n(U) \ \forall U \in I_n.
\end{equation*}
Moreover, we remark that if $U \in \mathcal{U}(\beta)$, $U>U^-$, then in particular $U \in I_\varpi\subseteq (V_\varpi^-, V_\varpi^+)$ for some $\varpi'$, which implies $\overline{P}(U)\ge Q_\varpi^*(U)>P^*(U)$ and, thus $U \in I$.

%
%
By the construction of $\overline{P}$ and by the observation that
$$
(U^-, \gls{Usbeta}) \setminus \bigcup_{n\in\N} I_n \subseteq (U^-, U_s(\beta)) \setminus \mathcal{U}(\beta),
$$
we deduce that $\overline{P}$ is Lipschitz continuous and satisfies Conditions (\ref{1}) and (\ref{2}).

It remains to check Condition (\ref{3}). Let $P$ admissible and such that $P > P^*$ in $I=(U^-, U^+)$ and $P(U^\pm)=P^*(U^\pm)$. As for the case $\mathcal{U}(\beta) \cap(U^-, 1)=\emptyset$, we immediately deduce that $P\ge \overline{P} = P^*$ in $I\setminus \left(\cup_{n\in\N}I_n\right)$. We wish to show that $P(U)\ge Q_\varpi^*(U) \ \forall U \in I$ and $\forall \varpi$. We distinguish several cases.

If $U^+ \le V_\varpi^-$, then by the uniqueness of \eqref{functeq} it follows that $Q_\varpi^*(U) \leq P^*(U) < P(U)$ for all $U \in I$.

Next, assume $U^- < V_\varpi^- < U^+ \le V_\varpi^+ $: since $P(V_\varpi^-)>P^*(V_\varpi^-)=Q_\varpi(V_\varpi^-)$ and $P(U^+)=P^*(U^+)\le Q_\varpi(V_\varpi^-)$, there exists $\bar U \in (V_\varpi^-, U^+]$ such that $Q_\varpi(\bar U)=P(\bar U)$. By the admissibility of $P$, it would follow $P(U)\le Q_\varpi(U)$ for $V_\varpi^-\le U \le \bar U$, and, thus, in particular, $P(V_\varpi^-)\le Q_\varpi(V_\varpi^-)=P^*(V_\varpi^-)$, against our hypothesis.

Finally, let $U^-<V_\varpi^-<V_\varpi^+< U^+$. Clearly, for $U \in (U^-, V_\varpi^-]\cup[V_\varpi^+, U^+)$, $Q_\varpi^*(U) \leq P^*(U)<P(U)$. By contradiction, let $\bar U \in (V_\varpi^-, V_\varpi^+)$ such that $P(\bar U)=Q_\varpi(U)$: once again the positivity of the control associated to $P$ implies $P(U) \ge Q_\varpi(U)$ for all $\bar U\le U \le V_\varpi^+$, which contradicts our hypothesis $P(V_\varpi^+)>P^*(V_\varpi^+)=Q_\varpi(V_\varpi^+).$

We thus conclude conclude that $P(U) \ge \overline{P}(U) = \sup_\varpi Q^*_\varpi(U)$ for all $U \in I$, which is Condition \eqref{3}.

\medskip

{\it Case $F(U^-) + \beta < 0$.} In order to deal with the case $F(U^-)+\beta<0$, start by setting $\overline{P}$ equal to the solution to \eqref{functeq} satisfying $\overline P(U^-) = P^*(U^-)$. At first clearly $\overline P> P^*$, up to a point $W^{-}>U^-$ where $\overline P$ and $P^*$ cross again and $F(W-)+\beta \ge 0$. Starting from such point we repeat the construction of Case 2 above.
\end{proof}

A completely analog statement holds for the maximal backward trajectory $\tilde P$. We give the statement without proof.

\begin{proposition}[Maximal backward trajectory]
\label{Mbt}
Let $\beta \ge \beta^*$ and $\Gamma_u(U^+) \leq P^*(U^+) \leq \Gamma_s(U^+)$. There exists \newglossaryentry{Ptilde}{name=\ensuremath{\tilde P},description={maximal backward trajectory}} $\gls{Ptilde}:(U^*,U^+] \to \R$ Lipschitz continuous and such that
\begin{enumerate}
\item \label{1M} $\tilde P(U^+) = P^*(U^+)$ and
$$
\Gamma_u(\beta, U) \leq \tilde P(U) \leq \max \big\{ P^*(U),\Gamma_u(\beta,U) \big\} \quad \forall U \in (U^*,U^+];
$$
\item \label{2M} $\tilde P$ is admissible, i.e.
\begin{equation*}
\tilde \alpha(U) := \tilde P'(U) + \frac{f(U)}{\tilde P(U)} + \beta \ge 0 \quad \text{for a.e. } U \in (U^*,U^+],
\end{equation*}
and there holds
\begin{equation*}
\tilde \alpha(U) = 0 \quad \text{for all } U \in \{\tilde P < P^*\};
\end{equation*}
\item \label{3M} for any other admissible trajectory $P:[U^*,U^-] \to \R$ such that $P < P^*$ in some interval $I=(U^-, U^+)$ and $P(U^\pm) = P^*(U^\pm)$, there also holds $\tilde P \geq P$ in $I$;
\item \label{Point4:maxback} there exists \newglossaryentry{Ubar}{name=\ensuremath{\bar U},description={minimal value of $U$ before which the control is necessarily applied for admissible solutions}}
$\gls{Ubar} > U^*$ such that $U^- \geq \bar U$.
\end{enumerate}
\end{proposition}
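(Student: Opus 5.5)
The plan is to mirror the proof of Proposition \ref{mft}, reversing the direction of integration and exchanging the roles of $Q_\varpi$, $\Gamma_s$, ``above'' with $R_\varpi$, $\Gamma_u$, ``below''.

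\emph{Construction.} First suppose $F(U^+)+\beta \geq 0$. If $\mathcal U(\beta)\cap(U^*,U^+)=\emptyset$, set $\tilde P = \max\{P^*,\Gamma_u(\beta,\cdot)\}$ on $(U^*,U^+]$, as in Corollary \ref{Cor:monotone_1}. Otherwise, let $\{\varpi'\}$ index the components $I_{\varpi'}$ of $\mathcal U(\beta)$ lying in $(U^*,U^+)$, and define
\begin{equation*}
\tilde P(U) := \inf_{\varpi'} R^*_{\varpi'}(U).
\end{equation*}
Proposition \ref{Prop:QR_bounds} gives $R_\varpi<P^*$ on $(W^-_\varpi,W^+_\varpi)$ with $W^-_\varpi>U^*$, and the $R_\varpi$ stay above $\Gamma_u$ by the comparison principle. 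Together with Notation \ref{Not:QR_omega*} this yields the two-sided bound in Point \eqref{1M} and $\tilde P(U^+)=P^*(U^+)$.

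The nesting property \eqref{disjoint_W} makes minimizing sequences monotone. Hence on each component of $\{\tilde P<P^*\}$ the function $\tilde P$ is the monotone limit $R^U=\inf_n R_{\varpi_n}$, a solution of \eqref{functeq} that equals $P^*$ at the endpoints. These components are countably many intervals $I_n$. Off $\bigcup_n I_n$ we have $\tilde P=\max\{P^*,\Gamma_u\}$, and there $F+\beta\ge0$ because $\mathcal U(\beta)\subset\bigcup I_n$. So the control vanishes on $\{\tilde P<P^*\}$, equals $(F+\beta)/U\ge0$ on $\{\tilde P=P^*\}$, and is zero on $\Gamma_u$. This gives Lipschitz continuity and Point \eqref{2M}.

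If instead $F(U^+)+\beta<0$, first follow the backward solution of \eqref{functeq} from $(U^+,P^*(U^+))$. It lies below $P^*$ until its first backward crossing $W^-$, where $F(W^-)+\beta\ge0$ (this crossing exists by the same argument as in Proposition \ref{Prop:QR_bounds}). From $W^-$ one repeats the construction above.

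\emph{Maximality, Point \eqref{3M}.} Let $P$ be admissible with $P<P^*$ on $I=(U^-,U^+)$ and $P=P^*$ at the endpoints. I would show $P\le R^*_\varpi$ on $I$ for every $\varpi$ by the same three-case analysis as before. The key tool is that a nonnegative control means $P$ can cross a free trajectory $R_\varpi$ only from below to above as $U$ increases. Suppose $W^-_\varpi<U^-<W^+_\varpi$, and $P$ exceeds $R_\varpi$ somewhere in between. Then the crossing direction forces $P\ge R_\varpi$ up to $W^+_\varpi$, contradicting $P(W^+_\varpi)<P^*(W^+_\varpi)=R_\varpi(W^+_\varpi)$. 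The nested case $U^-<W^-_\varpi<W^+_\varpi<U^+$ is handled the same way using the left endpoint. Taking the infimum over $\varpi$ gives $P\le\tilde P$.

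\emph{Point \eqref{Point4:maxback}.} This is the step I expect to require genuine care. I read it as follows: any such interval $I$ where an admissible $P$ dips below $P^*$ must start at $U^-\ge\bar U>U^*$, and I would take $\bar U := U_1 = \min F^{-1}(-\beta)$ (or $U_u(\beta)$ if that is larger). Near $U^*$ we have $F+\beta>0$, since $F\to+\infty$ at $U^*$ by \eqref{Equa:limi_F}. So on $(U^*,U_1)$ the free flow enters $\{P<P^*\}$ moving forward in $U$; equivalently, backward solutions below $P^*$ cannot reach $P^*$ there. Any admissible $P$ with $P(U^-)=P^*(U^-)$ and $P<P^*$ just to the right would need $P'\le (P^*)'$ at $U^-$. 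However, $P' = -\beta - f/P^* + \alpha U \ge (P^*)' - (F+\beta) + \ldots$, and I would derive the contradiction from $F(U^-)+\beta>0$ combined with $\alpha\ge0$, using $\alpha=0$ a.e.\ on $\{P<P^*\}$ after replacing $P$ by $\tilde P$. I would also cross-check this against the obstruction at the stable node $(U^*,0)$ used at the end of the proof of Proposition \ref{Prop:QR_bounds}.
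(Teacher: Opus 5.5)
Your treatment of Points (1)--(3), mirroring Proposition \ref{mft} with $R_\varpi$, $\Gamma_u$ and ``below'', is what the paper intends. One small slip: in the nested case $U^-\le W^-_\varpi<W^+_\varpi\le U^+$ the contradiction comes from the \emph{right} endpoint, where $P(W^+_\varpi)\le P^*(W^+_\varpi)=R_\varpi(W^+_\varpi)$, exactly as in your first case. The left endpoint gives nothing, since upward crossings of $R_\varpi$ are allowed.

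The genuine gap is Point (4): both your choice $\bar U=\max\{U_1,U_u(\beta)\}$ and your mechanism are wrong, because the sign of $F+\beta$ works against you. If a free arc leaves $P^*$ downward at $U^-$, then $P'(U^-)=(P^*)'(U^-)-\big(F(U^-)+\beta\big)$. So $F(U^-)+\beta>0$ is exactly what \emph{permits} the dip, and no contradiction can be drawn from it. Dually, $(U^*,U_1)\subset\{F+\beta\ge0\}$ is precisely where backward solutions below $P^*$ \emph{can} meet $P^*$. You say this yourself for $W^-$ in the case $F(U^+)+\beta<0$, and Lemma \ref{costexpression} records $F(U^-)+\beta\ge0$ at left ends of components of $\{P<P^*\}$.

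Here is a concrete failure. Suppose the first component of $\mathcal U(\beta)$ has $V^-_\varpi=U_1$. Then $D=P^*-R_\varpi$ solves $D'=(F+\beta)+\frac{f}{R_\varpi P^*}D$ with $D(W^+_\varpi)=0$, and $F+\beta<0$ on $(U_1,W^+_\varpi)$. Hence $D(U_1)>0$, so $R_\varpi<P^*$ on a left neighbourhood of $U_1$. Thus $W^-_\varpi<U_1$, consistent with $W^-_\varpi\le V^-_\varpi$ in Proposition \ref{Prop:QR_bounds}. For $U^+\ge W^+_\varpi$ this gives $\tilde P\le R_\varpi<P^*$ on $(W^-_\varpi,U_1]$, so $\bar U=U_1$ fails. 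Note also that Point (4) can only concern $\tilde P$, i.e.\ free arcs. With control acting on $\{P<P^*\}$, an admissible $P$ can dip arbitrarily close to $U^*$ and be pushed back up. The point is used in this form in Step 3 of Theorem \ref{betterprofile}.

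What actually forbids dips near $U^*$ is a global, quantitative property of the free flow near $(U^*,0)$. For $\beta<\beta^{**}$ (in particular $\beta\le0$), admissibility gives $P\ge\Gamma_u>P^*$ on $(U^*,U_u(\beta))$ with $U_u(\beta)>U^*$, so $\bar U=U_u(\beta)$ works. The harder case is $\beta\ge\beta^{**}$, where $U_u(\beta)=U^*$ and $\Gamma_u$ gives nothing. A free arc leaving $P^*$ at $U^-$ starts at height $P^*(U^-)=\sqrt{U^-f(U^-)}$, which tends to $0$ as $U^-\searrow U^*$. Meanwhile $dP/dU=-f/P-\beta\le-\beta$ on $(U^*,1)$, so the arc reaches $\{P=0\}$ within a $U$-distance $P^*(U^-)/\beta$. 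A free arc below $P^*$ can rejoin $P^*$ only where $F+\beta\le0$, i.e.\ beyond $U_1$. So choosing $\bar U$ with $P^*(U)<\beta(U_1-U)$ on $(U^*,\bar U]$ rules out $W^-_\varpi<\bar U$. This is the paper's argument (with its $\epsilon$--$\delta$ choice), and the resulting $\bar U$ is in general strictly smaller than $U_1$. The ``obstruction at $(U^*,0)$'' that you list only as a cross-check is in fact the whole proof.
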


\begin{proof}
The only additional point is Point \eqref{Point4:maxback}. If $\beta \leq 0$ observe that $U_u > U^*$ uniformly, so that any backward direction ends in $U^- > U_u$. If $\beta > 0$, the open set 
\begin{equation*}
\big\{ U \in (U^*,1), P < \min\{ P^*(U), \epsilon \} \big\}
\end{equation*}
contains $(U^*,1)$ and $-\frac{f}{P} - \beta < 0$ there. Hence trajectories there will cross the axis $\{P=0\}$ in the future. The constant $\epsilon > 0$ can be chosen as
\begin{equation*}
\epsilon = \min \big\{ P^*(U), U \in (U^* + \delta,1 - \delta) \big\},
\end{equation*}
for $\delta \ll 1$ because $f'(0) > 0, f'(1) < 0$. The conclusion follows by taking $\bar U = U^* + \delta$ so that $P^*(\bar U) < \epsilon$.
\end{proof}

\subsection{Candidate minimizers}
\label{Ss:candi_min}

Using the minimal forward trajectory \gls{Pbar} and the maximal backward trajectory \gls{Ptilde} we can restrict the class of minimizing sequences.

\begin{definition}
\label{Def:candid_min}
Let $\beta \ge \beta^{*}$. We say that an admissible profile \newglossaryentry{Acalbeta}{name=\ensuremath{\mathcal A(\beta)},description={family of candidate minimizers}} $\gamma \in \gls{Acalbeta}$ is a \emph{candidate minimizer} if it is the concatenation of the following curves:
\begin{enumerate}
\item the unstable manifold $\Gamma_u$ up to the first intersection point $(U_u(\beta),P^*(U_u(\beta)))$ with the graph of the critical curve $P^*$; 
\item if $\beta \geq \beta^{**}$, then $\gamma = P^*$ in the interval $[U^*,\bar U]$, where \gls{Ubar} is given by Point \eqref{Point4:maxback} of Proposition \ref{Mbt}.
\item a Lipschitz curve 
\begin{equation*}
\gamma=(U, P):[0,\overline{s}] \to \R^2
\end{equation*}
such that
$$
\gamma(0)=(U_u(\beta), P^*(U_u(\beta))) \in \Gamma_u(\beta), \quad \gamma(\overline{s})=(U_s(\beta), P^*(U_s(\beta))) \in \Gamma_s(U),
$$
and
\begin{align*}
\begin{cases}
U'(s)=P(s), \\
P'(s)+f(U(s))+\beta=0,
\end{cases} \quad \text{for every $s \in [0,\overline{s}]$, $P(s) \neq P^*(U(s))$};
\end{align*}
\item the stable manifold $\Gamma_s(\beta)$ starting from $(U(\bar s),P(\bar s)) = (U_s(\beta), P^*(U_s(\beta)))$ up to $(1,0)$, where $U_s(\beta)$ is the last intersection point of $\Gamma_s(\beta,U)$ with $P^*(U)$.
\end{enumerate}
\end{definition}

The following theorem is the key result on the regularity class for minimizers, which will be refined later on by giving necessary and sufficient conditions for optimality: these conditions however can be stated only on sufficiently regular solutions to \eqref{functeq}, i.e. the class of candidate minimizers. 

\begin{theorem}
\label{betterprofile}
Let $\beta\ge \beta^{*}$ and $\gamma\in \mathcal{A}(\beta)$ be an admissible profile for the minimization problem (\ref{minfunct}). Then there exists $\tilde \gamma$ candidate minimizer such that
\begin{equation*}
\gls{Etilde}(\tilde{\gamma}) \le \tilde E(\gamma).
\end{equation*}
\end{theorem}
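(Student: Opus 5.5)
Starting from an admissible $\gamma$, I would write it as a graph $P(U)$ with $\Gamma_u\le P\le\Gamma_s$. I then modify $P$ on the open set $\{P\neq P^*\}$, using the minimal forward trajectory of Proposition \ref{mft} and the maximal backward trajectory of Proposition \ref{Mbt}. The cost comparison is done with Proposition \ref{improv}. That formula shows the cost decreases whenever the new profile $\tilde P$ lies between $P$ and $P^*$ pointwise, since the integrand $\frac{UF-P^2}{U^2P^2}$ has the sign of $(P^*)^2-P^2$ for $U>U^*$ and is nonpositive for $U<U^*$.

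\emph{Step 1: the end pieces.} On $[0,U_u(\beta)]$ I replace $P$ by $\Gamma_u$, and on $[U_s(\beta),1]$ by $\Gamma_s$. On the first interval $\Gamma_u\le P$ and $\Gamma_u\ge P^*$ (or $U\le U^*$), so the region between them has integrand $\le 0$. Integrated from $P$ down to $\Gamma_u$, this gives a nonnegative cost decrease; the situation at $U_s$ is symmetric. For $\beta\ge\beta^{**}$ the construction on $[U^*,\bar U]$ uses Point \eqref{Point4:maxback} of Proposition \ref{Mbt}. Any admissible $P$ must touch $P^*$ before $\bar U$; otherwise it stays below $P^*$ and would reach $P=0$. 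I therefore set $\tilde P=P^*$ there, and the cost comparison is again favourable.

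\emph{Step 2: the middle part.} On $(U_u,U_s)$ I decompose $\{P\neq P^*\}$ into countably many connected components $I=(U^-,U^+)$, with $P(U^\pm)=P^*(U^\pm)$ at interior endpoints.
\begin{itemize}
\item On a component where $P>P^*$, I replace $P$ by the minimal forward trajectory $\overline P$ started at $U^-$, restricted to $I$. Point (\ref{3}) of Proposition \ref{mft} gives $P^*\le\overline P\le P$ on $I$.
\item On a component where $P<P^*$, I use $\tilde P$ from $U^+$. Point (\ref{3M}) gives $P\le\tilde P\le P^*$.
\end{itemize}
Glueing these pieces gives a Lipschitz profile that lies pointwise between $P$ and $P^*$. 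Proposition \ref{improv} then yields $\tilde E(\tilde\gamma)\le\tilde E(\gamma)$ as a sum of nonpositive contributions.

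Admissibility is local. Inside each component the control is that of $\overline P$ or $\tilde P$, which is nonnegative and vanishes off $\{\cdot=P^*\}$. At $P^*$ the control is $(F+\beta)/U$, and it is only used on the contact set of $\overline P$ or $\tilde P$, or of $P$ itself. There $F+\beta\ge0$: a.e. on $\{P=P^*\}$ one has $P'=(P^*)'$, so admissibility of $P$ forces $F+\beta\ge 0$.

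\emph{Main obstacle.} I expect the hard step to be checking that the glued curve is a genuine candidate minimizer, in two respects.
\begin{itemize}
\item It must solve \eqref{functeq} wherever $\tilde P\neq P^*$, and the countably many replacements must glue to a Lipschitz BV curve with a finite total control measure. I would obtain uniform Lipschitz bounds from $\Gamma_u\le\tilde P\le\Gamma_s$ away from $U^*$ together with the ODE.
\item Within each component the trajectories $\overline P$ and $\tilde P$ may themselves return to $P^*$ several times. This is harmless, since they are admissible by construction, but it must be checked that the resulting profile still passes through $(U_u,P^*(U_u))$ and $(U_s,P^*(U_s))$.
\end{itemize}
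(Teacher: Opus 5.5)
Your proposal is correct and follows essentially the paper's proof. You replace $P$ by $\Gamma_u$ on $[0,U_u(\beta)]$ and by $\Gamma_s$ on $[U_s(\beta),1]$; the paper writes the resulting upward jumps as vertical segments. You then put $P^*$ on $[U^*,\bar U]$ when $\beta\ge\beta^{**}$, and replace every excursion above or below $P^*$ by the minimal forward or maximal backward trajectory of Propositions \ref{mft} and \ref{Mbt}, with Proposition \ref{improv} giving the cost decrease.

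One justification near $U^*$ should be restated. It is not true that every admissible $P$ must touch $P^*$ before $\bar U$: it may stay above $P^*$, and a control can keep it positive below $P^*$. What Point \eqref{Point4:maxback} gives is that the uncontrolled maximal backward trajectory has no excursion below $P^*$ starting before $\bar U$. Hence on a below-excursion of $P$ that straddles $\bar U$, the replacement already equals $P^*$ on $[U^-,\bar U]$. An excursion above $P^*$ that straddles $\bar U$ is handled as in the paper's Step 2: an upward jump at $\bar U$, followed by the minimal forward trajectory started at $\bar U$.
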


\begin{figure}
\resizebox{.6\textwidth}{!}{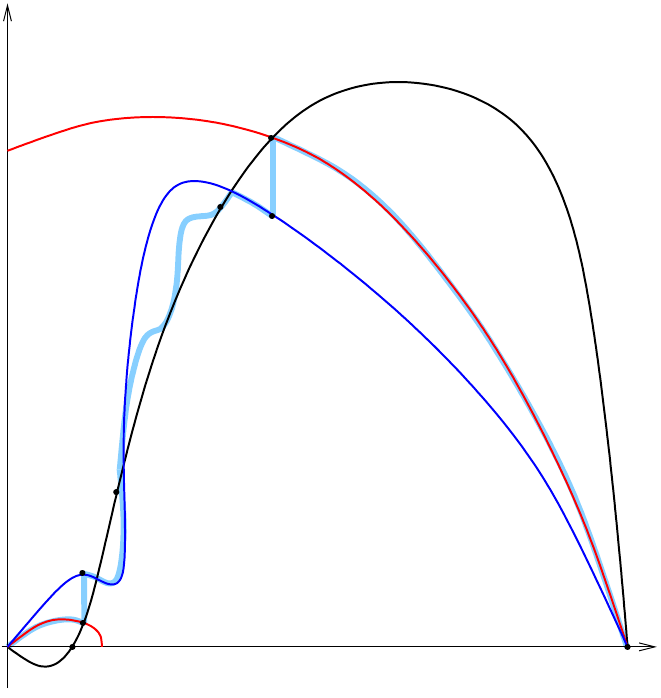}
\caption{The replacement operations in the proof of Theorem \ref{betterprofile}: the blue curve $\gamma$ is replaced with the light blue $\tilde \gamma$ lowering the cost.}
\label{Fig:rreplAcal}
\end{figure}

\begin{proof}
The proof is done by replacing arcs of $\gamma$ in order to lower the cost: at the end of the construction we will have a candidate minimizer. Recall that, by admissibility of the control, $\gamma(s) = (U(s),P(s))$ is always contained in the region enclosed by the unstable manifold $\Gamma_u$ and stable manifolds $\Gamma_s$, and that by \eqref{Equa:funct_eq_UP} it is the graph of a BV function $P(U)$ with the segments filling the jumps (which are only upward).

\medskip

\noindent{\it Step 1.} We first replace $\gamma$ with another admissible curve with lower cost, which we keep calling $\gamma$ with a slight abuse of notation: let \gls{Uubeta} (\gls{Usbeta}) be the first (last) intersection of $\Gamma_u$ ($\Gamma_s$) with $P^*$. The new curve is defined by concatenating (see Fig. \ref{Fig:rreplAcal})
\begin{itemize}
\item the unstable manifold until the point $(U_u(\beta), P^*(U_u(\beta)))$,
\item the upwards segment joining $(U_u(\beta), P^*(U_u(\beta)))$ and a point on $\gamma \cap \{U=U_u(\beta)\}$,
\item the admissible curve $\gamma$ from the previous point up to a point on $\gamma\cap \{U=U_s(\beta)\}$,
\item the upwards segment joining the previous point and $(U_s(\beta), P^*(U_s(\beta)))$,
\item the stable manifold up to $(1,0)$.
\end{itemize}
The fact that the cost $E$ is decreasing is already given in \cite{BressanChiri22}, and it is immediate from Proposition \ref{improv}. Denote by $(U,P):[0,\overline{s}]\to \R^2$ a Lipschitz parametrization for $\gamma$.

\medskip

\noindent{\it Step 2.} Let
$$
J := \big\{ s\in [0, \overline{s}]: \ U(s) \in [U_u(\beta), U_s(\beta)] \big\}
$$
and let $s \in J$ be such that $P(s) > P^*(U(s))$: consider the interval
\begin{equation}
\label{Equa:def_I}
I = (s^-, s^+), \quad s \in I \ \text{and} \ I \subset \big\{ P(s) > P^*(U(s)) \big\}.
\end{equation}
We show that we can assume
\begin{equation*}
U(s^-) > U^*.
\end{equation*}
This is clear for $\beta < \beta^{**}$ because $U(s^-) \ge U_u(\beta) > U^*$. If $\beta \geq \beta^{**}$, then from \eqref{Equa:bound_integr} 
\begin{equation*}
\gls{Ubar} = \inf \big\{ U > U^*: F(U) + \beta^{**} < 0 \big\} > U^*.
\end{equation*}
Hence if $U(s^+) \leq \bar U$, we can replace the whole interval $I$ with $P^*$, lowering the cost by Proposition \ref{improv}. If instead $U(s^-) < \bar U < U(s^+)$, we can consider a new curve equal to $P^*$ up to $\bar U$ and then a vertical segment connecting $P^*(\bar U)$ to $P(\bar U)$: again by Proposition \ref{improv} the cost is decreasing.

At this point we obtain a new curve $\gamma$ which is the concatenation of (see Fig. \ref{Fig:rreplAcal})
\begin{itemize}
\item the unstable manifold until the point $(U_u(\beta), P^*(U_u(\beta)))$,
\item the arc of $P^*$ for $U \in (U_u(\beta),U(0))$,
\item an admissible curve $\gamma(s)$, $s \in [0,\bar s]$, connecting $(U(0),P^*(U(0)))$ to $(U_s(\beta),P^*(U_s(\beta)))$, 
\item the stable manifold from $(U_s(\beta),P^*(U_s(\beta)))$ up to $(1,0)$.
\end{itemize}

\medskip

\noindent{\it Step 3.} Let $I = (s^-,s^+)$ be as in \eqref{Equa:def_I} and $\bar P$ be the minimal forward trajectory constructed in Proposition \ref{mft} starting from $(U(s^-),P^*(U(s^-)))$: by Conditions \eqref{1} and \eqref{2} of the same proposition it is admissible and by Condition \eqref{3} it holds $\bar P \leq P$.

An analog reasoning can be done in the intervals $I = (s^-,s^+)$ where $P(s) < P^*(U(s))$, in this case considering the maximal backward trajectory of Proposition \ref{Mbt}. The only difference is that we have to prove that the maximal backward trajectory coincide with $P^*$ near $U^*$. This is exactly what is proved in Point \eqref{Point4:maxback} of Proposition \ref{Mbt}.

We can thus replace the connected open intervals of
$$
\varpi^+ = \big\{ s : P(s) > P^*(U(s)) \big\} = \bigcup_j I^+_j, \quad \varpi^- = \big\{ s : P(s) < P^*(U(s)) \big\} = \bigcup_k I^-_k
$$
with the function $\bar P$ (if $P > P^*$) or $\tilde P$ (if $P < P^*$): the new curve is now a function $\check P(U)$ such that
\begin{equation*}
\check P(U) = \begin{cases}
\Gamma_u(\beta,U) & \text{for $U \leq U_u(\beta)$ by Step 1}, \\
P^*(U) & U \notin \varpi^+ \cup \varpi^-, \ \text{by Step 2 if $U_u(\beta) = U^*$ this contains an interval $(U^*,\bar U)$}, \\
\text{a solution to (\ref{functeq})} & \text{by Propositions \ref{mft}, \ref{Mbt}}, \\
\Gamma_s(\beta,U) & \text{for $U \geq U_s(\beta)$ by Step 1}.
\end{cases}
\end{equation*}
Hence the corresponding curve $\tilde \gamma$ is a candidate minimizer with lower cost, because the cost $E$ is decreasing at each step of the construction. The statement that it has finite length is elementary from the a.c. properties of $P^*$, the Lipschitz properties of $\Gamma_u,\Gamma_s$ and the fact that it is a solution to the ODE \eqref{functeq} when $P \not= 0$.
\end{proof}

The previous theorem immediately implies the following

\begin{corollary}
\label{Cor:restrict_admissible}
It holds
\begin{equation*}
\gls{Ebeta} = \inf \big\{ \gls{Etilde}(\gamma), \ \gamma \text{ candidate minimizer for } \beta \big\}.
\end{equation*}
Moreover, if the minimizer $\gamma$ exists, it must belong to the class of candidate minimizers.
\end{corollary}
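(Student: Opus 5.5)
The first identity follows from Theorem \ref{betterprofile}. Candidate minimizers are admissible profiles, so every candidate minimizer $\gamma$ lies in $\mathcal A_\beta$. Hence
\begin{equation*}
E(\beta)\le \inf\{\tilde E(\gamma):\gamma \ \text{candidate minimizer}\}.
\end{equation*}
For the reverse inequality, take any admissible $\gamma\in\mathcal A_\beta$. Theorem \ref{betterprofile} gives a candidate minimizer $\tilde\gamma$ with $\tilde E(\tilde\gamma)\le\tilde E(\gamma)$. Taking the infimum over $\gamma$ yields
\begin{equation*}
\inf\{\tilde E(\gamma):\gamma \ \text{candidate}\}\le E(\beta),
\end{equation*}
and the two infima coincide.

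For the second claim, let $\gamma$ be a minimizer and apply the construction of Theorem \ref{betterprofile} to obtain a candidate $\tilde\gamma$. Optimality of $\gamma$ gives $\tilde E(\tilde\gamma)=\tilde E(\gamma)$, so every replacement step in that proof must leave the cost unchanged. We then show that each step either does nothing or strictly lowers the cost. The comparison formula of Proposition \ref{improv} writes the cost difference as
\begin{equation*}
\int\!\!\int \frac{P^2-Uf(U)}{U^2P^2}
\end{equation*}
over the region swept between the two curves, with an integrand of fixed sign on each piece.

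In Step 1 the arcs replaced by $\Gamma_u$ up to $U_u$, together with the vertical segments, enclose regions where $P\ge P^*$ (or $f\le0$ for $U<U^*$). There the integrand is nonnegative and strictly positive away from $\{P=P^*\}$, so equal cost forces zero area, i.e. $\gamma=\Gamma_u$ on $[0,U_u]$. The same argument gives $\gamma=\Gamma_s$ on $[U_s,1]$. The replacements of Step 2 near $U^*$ are handled identically and force $\gamma=P^*$ on $(U^*,\bar U)$ when $\beta\ge\beta^{**}$.

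In Step 3, on each interval where $P>P^*$, the curve is replaced by the minimal forward trajectory $\bar P\le P$. The swept region lies in $\{P\ge\bar P\ge P^*\}$ (using Point (1) of Proposition \ref{mft}), where the integrand is strictly positive off $P^*$. Equality therefore forces $P=\bar P$ a.e., and so everywhere by continuity on the interval. On such an interval the control vanishes wherever $P\ne P^*$. The symmetric argument with the maximal backward trajectory $\tilde P$ of Proposition \ref{Mbt} covers the intervals where $P<P^*$. Hence $\gamma$ coincides with the candidate $\tilde\gamma$.

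The main obstacle is justifying strict decrease in Step 3, where the construction replaces possibly countably many arcs. Proposition \ref{improv} is additive over disjoint intervals, so the total decrease is a sum of nonnegative terms. If the total is zero, each term is zero, and the strict-positivity argument then applies interval by interval.
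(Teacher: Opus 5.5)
Your proposal is correct and follows the paper's argument. The first identity is immediate from Theorem \ref{betterprofile}. For the second part, the paper likewise argues that each replacement step of that theorem strictly lowers the cost unless it leaves $\gamma$ unchanged, because the integrand in \eqref{Equa:cost_expl} vanishes only where $P=P^*$; your step-by-step sign analysis of the swept regions and your remark on additivity over countably many arcs fill in details the paper leaves implicit.
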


\begin{proof}
We need to prove only the last part of the statement. If $\gamma$ is a minimizer, then the cost is not decreasing for all the operations performed in Step 1, 2, 3 of the proof of Theorem \ref{betterprofile}. We observe that since the integrand in \eqref{Equa:cost_expl}
\begin{equation*}
\frac{P^2 - U f(U)}{U^2 P^2} = 0 \quad \Longleftrightarrow \quad P = P^*(U),
\end{equation*}
in every step of the proof of Theorem \ref{betterprofile} the cost $\tilde E(\gamma)$ is decreasing unless $\gamma$ is unchanged. Thus $\gamma$ is already a candidate minimizer.
\end{proof}

Besides providing a necessary condition for minimality, Theorem \ref{betterprofile} can be combined with a classical compactness and continuity argument to guarantee the existence of an optimal profile.

\begin{lemma}
\label{lowerbound}
Let $\gamma: [s_1, s_2] \subseteq [0,\overline{s}] \to \R^2$ be a Lipschitz almost everywhere solution to the uncontrolled system
\begin{align*}
\begin{cases}
U'(s) = P(s), \\
P'(s)+f(U(s))+\beta P(s)=0,
\end{cases}
\end{align*}
and such that $P(s_2)=P^*(U(s_2))$. Assume that there exists $I$ open interval compactly contained in $(U^*,1)$ such that
$$
U(s) \in I \ \text{for all $s \in [s_1, s_2]$}.
$$
Set
\begin{align*}
m_f := \min_{I} f > 0, \quad m_{P^*} := \min_{I} P^* > 0, \quad \text{and} \quad P_m := \begin{cases}
m_{P^*} & \text{if} \ \beta \ge 0, \\
\min\{m_{P^*}, - \frac{m_f}{\beta}\} & \text{if} \ \beta < 0.
\end{cases}
\end{align*}
Then $P(s) \ge P_m > 0$ for all $s \in [s_1, s_2]$.
\end{lemma}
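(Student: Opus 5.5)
The plan is a backward argument from the endpoint $s_2$. At $s_2$ we have $P(s_2)=P^*(U(s_2))\ge m_{P^*}\ge P_m$. I will show that the region $\{P<P_m\}$ cannot be entered while $U\in I$, so $P$ never drops below $P_m$. Concretely, suppose $P(\bar s)<P_m$ for some $\bar s\in[s_1,s_2)$. Set
\begin{equation*}
s_0:=\inf\{s>\bar s:\ P(s)\ge P_m\}\le s_2 ,
\end{equation*}
so that $P<P_m$ on $[\bar s,s_0)$ and $P(s_0)=P_m$. To reach a contradiction it is enough to check that $P'<0$ wherever $0<P\le P_m$ along the trajectory. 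Then $P$ would be decreasing on $[\bar s,s_0]$, giving $P(s_0)\le P(\bar s)<P_m$, which is absurd.

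The sign of $P'$ comes from the equation $P'=-f(U)-\beta P$, using $f(U)\ge m_f>0$ for $U\in I$; I split into two cases. If $\beta\ge0$, then for $P\ge 0$ we get $P'\le -m_f<0$. If $\beta<0$ and $0<P\le P_m\le -m_f/\beta$, then $-\beta P\le m_f$. This gives $P'\le -m_f+m_f=0$, and the inequality is strict unless $P=-m_f/\beta$ exactly.

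That borderline equality is the only delicate point, and I will handle it by working with the strict inequality. At any time where $P<P_m$ we have $-\beta P<-\beta P_m\le m_f$, hence $P'<0$ on the open set $\{P<P_m\}$. This already suffices for the argument on $[\bar s,s_0)$: $P$ is strictly decreasing there, and by continuity $P(s_0)\le P(\bar s)<P_m$, the desired contradiction. Since the system is uncontrolled and $\gamma$ is Lipschitz, $P$ is absolutely continuous, so integrating $P'<0$ is legitimate.

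It remains to make sure $P$ stays positive, so that the bound $-\beta P< m_f$ is valid and the argument has content. This is automatic. If $P$ reached $0$ at some point while $U\in I$, then since $P'\le -m_f<0$ there, $P$ would become negative immediately afterwards and remain negative on the stretch after that point (the same monotonicity applies). Then $U$ would decrease and $P$ could never return to $P^*(U(s_2))>0$ at $s_2$, contradicting the hypothesis. With positivity and the monotonicity on $\{P<P_m\}$ in hand, we conclude $P(s)\ge P_m$ for all $s\in[s_1,s_2]$.
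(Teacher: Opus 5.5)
Your proof is correct and is essentially the paper's argument: assume $P(\bar s)<P_m$, move forward to the first time $P$ returns to $P_m$, and reach a contradiction from $P'=-f(U)-\beta P<0$ on $\{P<P_m\}$. The paper treats $\beta\ge 0$ separately via $P'\le 0$, implicitly using $P\ge 0$, which holds because admissible curves lie in $\{P>0\}$. Your positivity step is needed only when $\beta>0$, and there the right justification is the barrier at $P=0$, where $P'=-f(U)<0$; ``the same monotonicity'' can fail for very negative $P$ when $\beta>0$.
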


\begin{proof}
If $\beta\ge 0$, simply note that $P'(s)\le 0$ for a.e. $s\in[s_1, s_2]$, and thus $P(s)\ge P(s_2)\ge m_{P^*}=P_m$.

Viceversa, let $\beta<0$ and assume by contradiction that $P(\overline{s})<P_m$ for some $\overline{s}\in [s_1, s_2)$. Since $P(s_2) = P^*(U(s_2)) \ge P_m$, there exists $\delta>0$ such that $P(s)<P_m \ \forall s\in [\overline{s}, \overline{s}+\delta] $ and $P(\overline{s}+\delta)=P_m$. It follows that
\begin{align*}
P_m-P(\overline {s})&= \int_{\overline{s}}^{\overline{s}+\delta} P'(s) \, ds \\
& \le \int_{\overline{s}}^{\overline{s}+\delta} \left(-m_f-\beta P_m\right) \, ds \le 0,
\end{align*}
against our hypothesis.
\end{proof}

The next definition is analogous to Definition \ref{Def:candid_min}, in this case for functions $P(U)$.

\begin{definition}
\label{candidate}
Let $\beta \ge \beta^*$. We say that an a.c. profile $P: [0,1] \to [0,+\infty)$ is a \emph{candidate minimizer} for $\beta$, and write \newglossaryentry{Acaltildebeta}{name=\ensuremath{\tilde{\mathcal{A}}(\beta)},description={set of functions $P(U)$ which are candidate minimizers}} $P \in \gls{Acaltildebeta}$, if the following holds:
\begin{enumerate}
\item $P(U) = \Gamma_u(\beta,U)$ in $[0,U_u(\beta)]$ and $P(U) = \Gamma_s(\beta,U)$ in $[U_s(\beta),1]$;
\item if $\beta \geq \beta^{**}$, then $P(U) = P^*(U)$ in $[U^*,\gls{Ubar}]$;
\item $\Gamma_u(\beta,U) \leq P(U) \leq \Gamma_s(\beta,U)$;
\item for a.e. $U \in [U_u(\beta), U_s(\beta)]$
$$
\alpha(U) := \frac{1}{U} \left(P'+\frac{f(U)}{P}+\beta\right) \ge 0;
$$
\item $\alpha(U)=0$ for all $U \in \{U\in[U_u(\beta), U_s(\beta)]: P^*(U)\neq P(U)\}$.
\end{enumerate}
\end{definition}

\begin{lemma}
\label{Lem:regul_P_cand}
It holds
\begin{equation*}
0 \leq \alpha U \leq F(U) + \beta,
\end{equation*}
and if $P \in \tilde{\mathcal A}(\beta)$, then $P$ is Lipschitz outside the point $U^*$ for $\beta \geq \beta^{**}$, where is it $\frac{1}{2}$-H\"older continuous.
\end{lemma}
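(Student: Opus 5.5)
The plan is to compute $\alpha$ explicitly on the two pieces of $[U_u(\beta),U_s(\beta)]$ singled out by Definition \ref{candidate}. These are the coincidence set $K=\{P=P^*\}$ and its complement. Regularity of $P$ then follows from the resulting formula for $P'$.

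\emph{Step 1: the bound on $\alpha U$.} The lower bound $\alpha U\ge 0$ is Point (4) of Definition \ref{candidate}. For the upper bound, split $[U_u(\beta),U_s(\beta)]$ into $K$ and its complement.
\begin{itemize}
\item On the open set $\{P\neq P^*\}$, Point (5) gives $\alpha=0$. So $P$ solves \eqref{functeq} there and $\alpha U=0$.
\item On $K$, both $P$ and $P^*$ are a.c. on compact subintervals of $(U^*,1)$. A.e. point of $K$ is a Lebesgue density point of $K$ at which both functions are differentiable. At such a point $P'(U)=(P^*)'(U)$, since $P-P^*$ vanishes on a set with density one there.
\item Hence, a.e. on $K$,
\begin{equation*}
\alpha U = (P^*)'+\frac{f}{P^*}+\beta = F(U)+\beta
\end{equation*}
by \eqref{F}. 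In particular $F+\beta\ge 0$ a.e. on $K$, and the bound $0\le\alpha U\le F(U)+\beta$ holds there with equality.
\end{itemize}
I read the statement as an inequality for the control wherever it acts, i.e.\ on $\{\alpha>0\}\subset K$. There it holds with equality, and elsewhere $\alpha U=0$.

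\emph{Step 2: Lipschitz bound away from $U^*$.} From Step 1, a.e.\ in $[U_u,U_s]$ either $P'=(P^*)'$ or $P'=-f/P-\beta$.
\begin{itemize}
\item On $[0,U_u]$ and $[U_s,1]$, $P$ coincides with $\Gamma_u$, $\Gamma_s$. These are Lipschitz by \eqref{Equa:Gammau_eq0}, \eqref{Equa:Gammas_eq0}, using their linear behaviour at $0$ and $1$. Near $U^*$ one uses \eqref{Equa:local_Lip_U**}.
\item On any compact $I\Subset(U^*,1)$ containing $[U_u,U_s]\setminus\{U^*\}$ locally, $(P^*)'$ is bounded. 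On free arcs, Lemma \ref{lowerbound} gives $P\ge P_m>0$ on $I$, since each free arc ends on $P^*$ or lies above $P^*$. Hence $|f/P|+|\beta|$ is bounded there.
\item For $\beta<\beta^{**}$ one has $U_u>U^*$, so $P$ is Lipschitz on all of $[0,1]$.
\end{itemize}

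\emph{Step 3: the case $\beta\ge\beta^{**}$.} Here Point (2) forces $P=P^*=\sqrt{Uf(U)}$ on $[U^*,\bar U]$. Since $f(U)\sim f'(U^*)(U-U^*)$, we get $P^*(U)\sim c\sqrt{U-U^*}$, which is exactly $\tfrac12$-H\"older. On the left of $U^*$, $P=\Gamma_u$ is Lipschitz, as it tends linearly to $0$ along the eigendirection of the node.

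The main obstacle is the a.e.\ identity $P'=(P^*)'$ on $K$ when $K$ is a Cantor-like set. I will handle it with the density-point argument above, using only absolute continuity of both functions and not any structure of $K$.
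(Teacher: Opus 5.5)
Your proof is correct and follows the paper's route. The bound on $\alpha U$ comes from $\alpha\neq 0$ only on $\{P=P^*\}$, where $\alpha U=F+\beta$; Lipschitz continuity comes from the positive lower bound of Lemma \ref{lowerbound} together with the bounded right-hand side of the ODE; and $\tfrac12$-H\"older continuity comes from $P=P^*=\sqrt{Uf(U)}$ on $[U^*,\bar U]$ when $\beta\ge\beta^{**}$. Your density-point justification of $P'=(P^*)'$ a.e.\ on $K$ makes explicit what the paper's one-line proof leaves implicit, and so does your reading of the inequality as $0\le\alpha U\le (F+\beta)^+$, which is necessary because the literal statement fails on free arcs where $F+\beta<0$.
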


\begin{proof}
The first estimate on $\alpha$ is elementary since $\alpha \not= 0$ iff $P = P^*$.

The bounds
\begin{equation*}
P_m \underset{\text{Lemma (\ref{lowerbound})}}{\leq} P(U) \leq \Gamma_s(U,\beta) \quad \text{for} \ \min\{\bar U,U_u(\beta)\} \leq U \leq U_s(\beta)
\end{equation*}
gives that $P$ is Lipschitz in that interval, being the solution of a smooth ODE with an $L^\infty$ control. If $\beta \geq \beta^{**}$, then $P = P^*$ in the interval $[U^*,\bar U]$, where we can apply the definition of $P^*(U) = \sqrt{Uf(U)}$ for the $\frac{1}{2}$-H\"older continuity.
\end{proof}

\begin{proposition}
\label{corr}
Each $\gamma \in \mathcal{A}(\beta)$ uniquely corresponds to an admissible minimizer $P \in \tilde{\mathcal{A}}(\beta)$ and there holds
\begin{equation*}
\gls{Etilde}(\gamma) = \tilde{E}(P) = \int_{U_u(\beta)}^{U_s(\beta)}\frac{1}{U} \left(P'+\frac{f(U)}{P}+\beta\right)\, dU.   
\end{equation*}
In particular
\begin{equation*}
E(\beta)=\inf\big\{ \tilde{E}(P), P \in\tilde{\mathcal{A}}(\beta) \big\}.
\end{equation*}
\end{proposition}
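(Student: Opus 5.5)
The plan is to build the correspondence directly from the two definitions and then justify the cost formula by the change of variable $s\mapsto U(s)$.

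\emph{From curves to functions.} Let $\gamma=(U,P)\in\mathcal A(\beta)$ be a candidate minimizer in the sense of Definition \ref{Def:candid_min}. Along $\gamma$ we have $U'(s)=P(s)>0$, with $P\ge \Gamma_u>0$ in the interior, so $s\mapsto U(s)$ is strictly increasing. It can therefore be inverted, and $\gamma$ is the graph of a function $P(U)$. I will check the items of Definition \ref{candidate} one at a time:
\begin{itemize}
\item items (1) and (2) are the pieces $\Gamma_u$, $\Gamma_s$ and the arc of $P^*$ on $[U^*,\bar U]$ from Definition \ref{Def:candid_min};
\item item (3) is admissibility together with the maximum principle, exactly as in Corollary \ref{Cor:monotone_1};
\item items (4) and (5) follow because $\frac{dP}{dU}=P'(s)/U'(s)$, so the reparametrized control $\alpha(U)=\tilde\alpha(s(U))/P(U)$ is nonnegative and vanishes where $P\neq P^*$.
\end{itemize}
For the converse, take $P\in\tilde{\mathcal A}(\beta)$. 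By Lemma \ref{Lem:regul_P_cand}, $P$ is Lipschitz away from $U^*$ and bounded below by $P_m>0$ on compact subintervals (Lemma \ref{lowerbound}). Hence $s(U)=\int^U dW/P(W)$ is well defined and increasing, and $(U(s),P(U(s)))$ solves \eqref{syswithcontrol} with $\tilde\alpha(s)=P\,\alpha(U(s))\ge0$. Near $U=0$ and $U=1$ the profile coincides with $\Gamma_u$ and $\Gamma_s$, which take infinite time. This gives the limits $(0,0)$ and $(1,0)$ as $s\to\mp\infty$. The two maps are mutually inverse, and the reconstructed curve is unique up to translation in $s$, which is the degeneracy the $U$-parametrization removes.

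\emph{Cost formula.} The area formula for the monotone change of variables gives
\begin{equation*}
\int_\R\tilde\alpha\,ds=\int_0^1\alpha(U)\,dU=\int_0^1\frac1U\Big(P'+\frac{f(U)}{P}+\beta\Big)dU .
\end{equation*}
On $[0,U_u(\beta)]$ and $[U_s(\beta),1]$ the function $P$ coincides with $\Gamma_u$ or $\Gamma_s$, which solve \eqref{functeq}, so the integrand is zero there. This leaves the stated integral over $[U_u(\beta),U_s(\beta)]$. Since the integrand is nonnegative, with $\alpha U\le F+\beta$, the integral is well defined even if it is infinite.

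\emph{Main obstacle.} The delicate case is $\beta\ge\beta^{**}$, where $U_u=U^*$, $P=P^*$ vanishes at $U^*$ and is only $\frac12$-H\"older there. In this case $s(U)=\int dW/\sqrt{Wf(W)}$ is still finite near $U^*$, since $1/\sqrt{W-U^*}$ is integrable, so the reconstructed curve reaches $(U^*,0)$ in finite time. This is a genuine point of concern, because the curve would then pass through an equilibrium with $P=0$. I will handle it the way the paper does: the curve is the concatenation in Definition \ref{Def:candid_min}, joined at infinite distance. The integrand $(F+\beta)/U$ is integrable there by \eqref{Equa:bound_integr}, so the cost identity holds by monotone convergence on $[U^*+\epsilon,U_s]$ as $\epsilon\to0$. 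The final identity $E(\beta)=\inf\tilde E(P)$ then follows from Corollary \ref{Cor:restrict_admissible}.
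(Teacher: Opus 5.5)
Your proposal is correct and follows the paper's route. Both compare Definitions \ref{Def:candid_min} and \ref{candidate}, observe that the middle arc is a graph over $U$ because $P(s)$ stays positive, and treat $\beta\ge\beta^{**}$ through $P=P^*$ on $[U^*,\bar U]$. You also spell out the inverse map $P\mapsto\gamma$ via $s(U)=\int dW/P(W)$, which the paper leaves implicit and which is what the inequality $E(\beta)\le\inf_{\tilde{\mathcal A}(\beta)}\tilde E$ actually needs.

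The one inaccurate remark is ``$P\ge\Gamma_u>0$ in the interior''. $\Gamma_u$ is positive only on $(0,U_{uf}(\beta))$, and it gives nothing beyond $U^*$ when $\beta\ge\beta^{**}$. Positivity of $P$ on the middle arc should instead come from Definition \ref{admcurves} together with compactness of $[0,\bar s]$. Alternatively, as in the paper, use Lemma \ref{lowerbound}, which supplies the uniform bound $P_m$ that makes $P(U)$ Lipschitz.
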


\begin{proof}
First, assume that $\beta<\beta^{**}$, and thus $I=(U_u(\beta), U_s(\beta))$ is compactly contained in $(U^*, 1)$. Let $\gamma \in \mathcal{A}(\beta)$: comparing Definition \ref{Def:candid_min} and Definition \ref{candidate} we have only to verify that in the interval $[\min\{U_u(\beta),\bar U\},U_s(\beta)]$ the graph of the curve $\gamma$ corresponds to a function $P(U)$. This happens if the second component $P(s)$ of $\gamma$ remains bounded and above $0$ in $[\min\{U_u(\beta),\bar U\},U_s(\beta)]$, which is the case because
\begin{equation*}
P_m \leq P(s) \leq \Gamma_s(U(s))
\end{equation*}
by Lemma \ref{lowerbound} and the admissibility.

In the case $\beta \geq \beta^{**}$ we just use that $P(s) = P^*(U(s))$ for $U(s) \in [U^*,\bar U]$.
\end{proof}

\begin{remark}
\label{Rem:minimal_U}
By observing that $U_u(\beta) \searrow U^*$ as $\beta \nearrow \beta^{**}$, there exists \newglossaryentry{beta***}{name=\ensuremath{\beta^{***}},description={speed from which the control starts immediately from $U_u(\beta)$}} $0 < \gls{beta***} < \beta^{**}$ and a point \newglossaryentry{Ucheck}{name=\ensuremath{\check U},description={minimal point where the control is not acting}} \gls{Ucheck} such that 
\begin{equation*}
P(U) = P^*(U) \quad \forall U \in [U_u(\beta), \bar U], \beta > \beta^{***}.
\end{equation*}
Indeed, if $P(U) < P^*(U)$, $U > U^*$, for a candidate minimizer $P \in \tilde A(\beta)$, then from the ODE \eqref{functeq}
\begin{equation*}
\forall U' \geq U \Big( P'(U') = - \frac{f(U')}{P} - \beta < - \beta \quad \Rightarrow \quad P(U') \leq P(U) - \beta (U' - U) \Big),
\end{equation*}
which implies that if $P^*(U)$ is increasing in $(U^*,U^* + \underline{r})$ then
\begin{equation*}
P(U) > \beta (U^* + \underline{r} - U).
\end{equation*}
Thus the solution is equal to $P^*(U)$ in the connected component of
\begin{equation*}
\big\{ U : F(U) + \beta < 0, P^*(U) < \beta (U^* + \underline{r} - U) \big\},
\end{equation*}
containing $U_u(\beta)$.
%
\end{remark}

\subsection{Control set and existence of a minimizer}
\label{Ss:control_set_min}

Here we give a detailed description of the open sets $\gls{OcalP} \subset (U^*,1)$ where the control $\alpha$ is acting, and we prove that there exists a minimizing profile, showing that $\tilde E(P)$ is continuous w.r.t. the uniform convergence in the set of candidate minimizers.

\begin{definition}[Critical, control and free sets]
\label{Def:free_control_set}
Let $\beta \ge \beta^{*}$ and $P \in \tilde{\mathcal{A}}(\beta)$. In the following we denote the \emph{critical set} \newglossaryentry{KcalP}{name=\ensuremath{\mathcal K(P,\beta)},description={critical set, closed set where the function $P$ is equal to $P^*$, see also \eqref{Equa:KOIbeta}}} \newglossaryentry{OcalP}{name=\ensuremath{\mathcal O(P,\beta)},description={control set, open set where the control $\alpha$ is acting, i.e. $\alpha > 0$, see also \eqref{Equa:KOIbeta}}}
\begin{equation*}
\gls{KcalP} := \big\{ U \in [U_u(\beta), U_s(\beta)]: P(U)=P^*(U) \big\},
\end{equation*}
the \emph{control set}
\begin{equation*}
\gls{OcalP} := \inter \big( \mathcal{K}(P) \cap \{F(U)+\beta>0\} \big),
\end{equation*}
and the \emph{free set} \newglossaryentry{IcalP}{name=\ensuremath{\mathcal I(P,\beta)},description={free set, open set where $\alpha = 0$, see also \eqref{Equa:KOIbeta}}}
\begin{equation*}
\gls{IcalP} := \inter \big( [0,1] \setminus \mathcal{O}(P)^C \big).
\end{equation*}
See Fig. \ref{Fig:KOIdef}. The connected components of $\mathcal I(P,\beta)$ will be denoted by \newglossaryentry{Aomega}{name=\ensuremath{A_\varpi},description={connected components of $\mathcal I(P,\beta)$}} $\gls{Aomega} = (U^-_\varpi,U^+_\varpi)$, $\varpi$ belonging to a subset of $\Q$, ordered so that
\begin{equation*}
\varpi < \varpi' \quad \Longrightarrow \quad U^+_\varpi < U^-_{\varpi'}.
\end{equation*}
When we do not need the enumeration of $A_\varpi$, we will use the generic notation $I$ for an interval.
\end{definition}

\begin{figure}
\resizebox{.5\textwidth}{!}{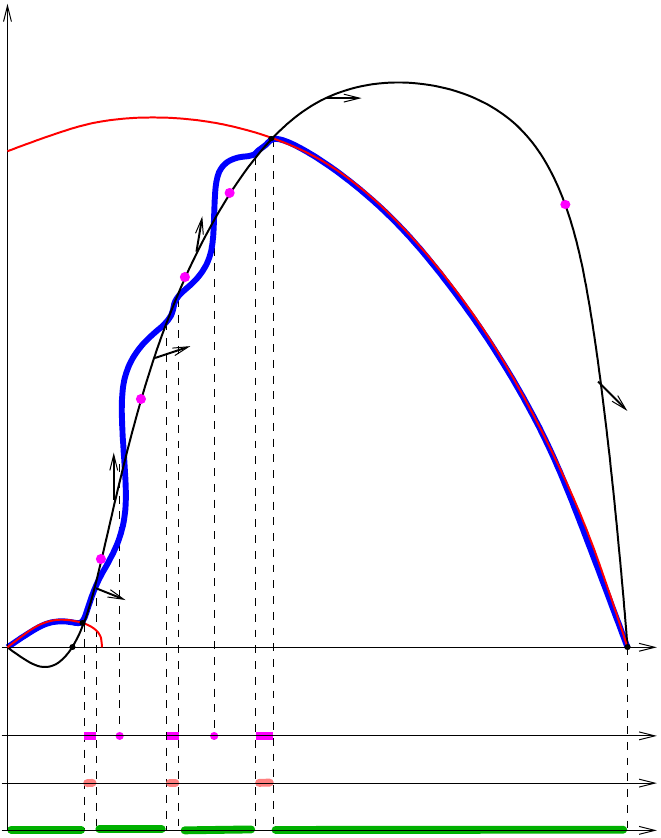}
\caption{The sets $\mathcal K(P,\beta)$, $\mathcal O(P,\beta)$, $\mathcal I(P,\beta)$ of Definition \ref{Def:free_control_set}: the arrow and the magenta dots denote the regions where $F(U) + \beta$ is positive or negative.}
\label{Fig:KOIdef}
\end{figure}

Note that, by the definition of candidate minimizer,
\begin{equation*}
\alpha(U)=\frac{1}{U} \left(P'+\frac{f(U)}{P}+\beta\right)=0 \quad \text{in} \ \mathcal{K}(P)^\mathsf{c},
\end{equation*}
whereas, clearly,
\begin{equation*}
\alpha(U)=\frac{F(U)+\beta}{U} \geq 0 \quad \text{is an admissible control if } U \in \mathcal{K}(P).
\end{equation*}
In particular, we can write
\begin{equation*}
\tilde{E}(P) = \int_{\mathcal{K}(P)} \alpha(U)\, dU= \int_{\inter \mathcal{K}(P)} \frac{F(U)+\beta}{U} \, dU+ \int_{\partial \mathcal{K}(P)}\alpha(U) \, dU.
\end{equation*}
The following lemma refines the identity.

\begin{lemma}
\label{costexpression}
Let $\beta \ge \beta^*$ and $P \in \gls{Acaltildebeta}$. Then
\begin{equation*}
\alpha(U) := \frac{1}{U} \left(P'+\frac{f(U)}{P}+\beta\right)=\frac{F(U)+\beta}{U}=0 \quad \text{for a.e. } U \in \partial \mathcal{K}(P).
\end{equation*}
In particular
\begin{equation*}
\tilde{E}(P)=\int_{\mathcal{O}(P)} \frac{F(U)+\beta}{U} \, dU.
\end{equation*}
\end{lemma}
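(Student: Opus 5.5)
The argument is a standard a.e. statement about sets where two a.c. functions coincide. By Lemma \ref{Lem:regul_P_cand}, $P$ is Lipschitz on $[\min\{\bar U,U_u(\beta)\},U_s(\beta)]$. The curve $P^*(U)=\sqrt{Uf(U)}$ is $C^1$ on $(U^*,1)$. Hence $g:=P-P^*$ is locally absolutely continuous on $(U^*,1)\cap[U_u(\beta),U_s(\beta)]$.

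For an absolutely continuous $g$, the derivative vanishes a.e. on the zero set $\{g=0\}$. Indeed, at any density point of $\{g=0\}$ where $g$ is differentiable, $g'$ can be computed along a sequence of zeros accumulating at that point, which gives $g'=0$. Since a.e. point of a measurable set is a density point (Lebesgue density theorem), we get $P'(U)=(P^*)'(U)$ for a.e. $U\in\mathcal K(P)$. In particular this holds a.e. on $\partial\mathcal K(P)\subset\mathcal K(P)$. (If $\beta\ge\beta^{**}$, the single point $U^*$ is negligible, and on $[U^*,\bar U]$ we have $P=P^*$ identically, which lies in the interior anyway.)

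At such points, substituting into the definition of $\alpha$ and using $P=P^*$ gives
\begin{equation*}
U\alpha(U)=(P^*)'(U)+\frac{f(U)}{P^*(U)}+\beta=F(U)+\beta ,
\end{equation*}
by \eqref{F}. This gives the middle identity of the statement.

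It remains to show that $F(U)+\beta=0$ for a.e. $U\in\partial\mathcal K(P)$. I split $\partial\mathcal K(P)$ into three parts.

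On $\partial\mathcal K(P)\cap\{F+\beta<0\}$, admissibility forces $\alpha\ge0$, while $U\alpha=F+\beta<0$ at every point where the above identity holds. So this part is negligible.

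On $\partial\mathcal K(P)\cap\{F+\beta>0\}$, I would use the density argument again. The set $\{F+\beta>0\}$ is open and $\alpha=0$ a.e. on $\mathcal K(P)^c$. If this part had positive measure, then at its density points $\alpha=(F+\beta)/U>0$. This does not contradict admissibility, so the argument must show the part is negligible directly. I would argue that a boundary point of $\mathcal K$ inside the open set $\{F+\beta>0\}$ is a limit of complementary intervals $(a,b)$ on which $P\ne P^*$ and $P$ solves \eqref{functeq}. On such an interval $P<P^*$ is impossible when the interval is entirely inside $\{F+\beta>0\}$: an uncontrolled trajectory there crosses $P^*$ only from above to below, so it cannot return to $P^*$ at $b$ from below. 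The case $P>P^*$ is likewise impossible: a trajectory leaving $P^*$ upward at $a$ needs $F(a)+\beta\le0$. Hence each component of $\mathcal K^c$ must contain a point where $F+\beta\le0$. Inside a fixed compact subinterval of $\{F+\beta>0\}$ there are therefore no complementary intervals at all, so that subinterval lies in $\inter\mathcal K$. It follows that $\partial\mathcal K\cap\{F+\beta>0\}$ consists only of endpoints of components of $\{F+\beta>0\}$, and is therefore contained in $\{F+\beta=0\}$.

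Combining the three parts, $\alpha=(F+\beta)/U=0$ a.e. on $\partial\mathcal K(P)$.

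For the cost formula I write $\mathcal K=\inter\mathcal K\cup\partial\mathcal K$. Since the integral over $\partial\mathcal K$ vanishes, $\tilde E(P)=\int_{\inter\mathcal K}(F+\beta)/U$. Within $\inter\mathcal K$, the region $\{F+\beta<0\}$ is excluded by admissibility (up to a null set). The region $\{F+\beta=0\}$ contributes nothing to the integrand. What remains equals $\inter(\mathcal K\cap\{F+\beta>0\})=\mathcal O(P)$ up to a null set, since the interior of an intersection with an open set is the intersection of the interiors.

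The main obstacle is the crossing-direction argument in the case $\{F+\beta>0\}$. One must check that the sign of $F+\beta$ at $P=P^*$ dictates on which side of $P^*$ trajectories of \eqref{functeq} go: the derivative of $P-P^*$ along a solution at a contact point is $-(F+\beta)$. The endpoint/tangency cases must also be handled carefully.
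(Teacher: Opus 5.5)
You have a genuine gap in your treatment of $\partial\mathcal K\cap\{F+\beta>0\}$. The rest of the argument is sound: the density-point step giving $U\alpha=F+\beta$ a.e.\ on $\mathcal K(P)$, the disposal of $\partial\mathcal K\cap\{F+\beta<0\}$ via $\alpha\ge0$, and the identification $\inter\mathcal K\cap\{F+\beta>0\}=\mathcal O(P)$.

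Your crossing-direction observation only excludes components $(a,b)$ of $\mathcal K^c$ whose \emph{closure} lies in $\{F+\beta>0\}$. It says nothing about components that straddle the boundary of that set. So two of your claims are false: that a compact subinterval of $\{F+\beta>0\}$ lies in $\inter\mathcal K$, and that $\partial\mathcal K\cap\{F+\beta>0\}\subset\{F+\beta=0\}$ (i.e.\ is empty). A concrete counterexample is the left endpoint $U^-$ of a free interval of the optimal profile, where $F(U^-)+\beta>0$; see the proof of Proposition \ref{Prop:reguls_Ibeta}, Lemma \ref{Lem:not_bdr_tr} and the division by $F(U^\pm)+\beta$ in \eqref{Equa:deltaUpm}. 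The control is switched off at $U^-$. Since $\frac{d}{dU}(P-P^*)=-(F+\beta)<0$ at contact, the trajectory dips below $P^*$. It can only return at some $b$ with $F(b)+\beta\le0$, outside the component of $\{F+\beta>0\}$ containing $U^-$. Thus $(U^-,b)$ is a component of $\mathcal K^c$ and $U^-\in\partial\mathcal K\cap\{F+\beta>0\}$.

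What is true, and what your crossing observation does give once organized correctly, is that this set is countable. Suppose $U\in\partial\mathcal K$ is not an endpoint of a component of $\mathcal K^c$. Then there are distinct components $(U_n^-,U_n^+)$ with $U_n^\pm\to U$. If $F(U)+\beta>0$, these would eventually have closure inside the open set $\{F+\beta>0\}$, which you excluded. Endpoints of components are at most countably many, hence negligible.

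This is essentially the paper's proof, which also makes your three-way split unnecessary. It separates $\partial\mathcal K$ into endpoints of components of $\mathcal K^c$ (countable) and accumulation points $U=\lim U_n^\pm$. On each component, the crossing directions force $F(U_n^-)+\beta$ and $F(U_n^+)+\beta$ to have opposite weak signs. By continuity, $F(U)+\beta=0$ at every accumulation point, so $\alpha=0$ a.e.\ on $\partial\mathcal K$ follows directly.
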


Recall that being $P \in \tilde A(\beta)$ a.c., then $\alpha \in L^1$, and actually from Lemma \ref{Lem:regul_P_cand} we have uniform integral bounds.

\begin{proof}
We can write
\begin{equation*}
\mathcal{K}(P)^\mathsf{c} = \bigcup \big\{I \subseteq \mathcal{K}(P)^\mathsf{c} \text{ connected component} \big\}.
\end{equation*}
so that each $U \in \partial \mathcal{K}(P)$ is either an endpoint of some of the intervals $I$ (which are at most countable) or it is the limit of elements belonging to an accumulating families of such intervals. In the first case the set is $\mathscr L^1$-negligible.

If the second situation occurs, we can construct a sequence of connected components $I_{n}=(U_n^-, U_n^+)$ such that $I_n \neq I_m \ \forall n \neq m$ and $U_n^{\pm} \to U$ as $n \to +\infty$. In particular, since $P(U)=P^*(U)$, assuming that $U$ is a point of differentiability for $P$,
\begin{equation*}
P'(U) = \lim_{n \to +\infty} \frac{P(U_n^+) -P(U) }{U_n^+-U} = \lim_{n \to +\infty} \frac{P^*(U_n^+) -P^*(U) }{U_n^+-U} = (P^*)'(U)
\end{equation*}
and, thus,
\begin{equation*}
\alpha(U) U = P'(U) + \frac{f(U)}{P(U)} + \beta = F(U)+\beta.
\end{equation*}
Finally, note that if $I=(U^-, U^+)$ is a connected component of $\mathcal{K}(P)^ \mathsf{c}$, there holds $P(U^\pm)=P^*(U^\pm)$ and either $P<P^*$ or $P>P^*$ in $I$. In the first case $\pm(F(U^\pm)+\beta) \le 0$, in the latter $\pm\left(F(U^\pm)+\beta\right) \ge 0$.
Therefore, since $F(U_n^+)+\beta$ and $F(U_n^-)+\beta$ have opposite signs, necessarily \
\begin{equation*}
F(U)+\beta=\lim_{n\to +\infty}F(U_n^\pm)+\beta=0, 
\end{equation*}
which concludes the proof.
\end{proof}

We finally prove the existence of an optimal profile.

\begin{theorem}
\label{exist}
Let $\beta \ge \beta ^*$. The family $\tilde{\mathcal{A}}(\beta)$ is compact w.r.t uniform convergence and the map \newglossaryentry{Etilde}{name=\ensuremath{\tilde E},description={cost function of a profile $P \in \tilde A(\beta)$}}
\begin{equation*}
\begin{array}{ccccl}
\gls{Etilde} &:& \tilde{\mathcal{A}}(\beta) &\to& [0, +\infty) \\
&&P &\mapsto& \tilde{E}(P) := \displaystyle{\int_{U_u(\beta)}^{U_s(\beta)} \frac{1}{U} \bigg( P' + \frac{f(U)}{P} + \beta \bigg) \, dU}
\end{array}
\end{equation*}
is continuous. Hence, there exists $P_\beta = P(U)$ optimal profile for $E(\beta)$.
\end{theorem}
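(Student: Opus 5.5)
The argument is the direct method: compactness of $\tilde{\mathcal A}(\beta)$ in $C^0$, plus continuity of $\tilde E$ along uniformly convergent sequences. Existence of $P_\beta$ then follows at once from Proposition \ref{corr}.

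\emph{Compactness.} Every $P\in\tilde{\mathcal A}(\beta)$ coincides with $\Gamma_u$ on $[0,U_u(\beta)]$ and with $\Gamma_s$ on $[U_s(\beta),1]$. If $\beta\ge\beta^{**}$, it also coincides with $P^*$ on $[U^*,\bar U]$. On the remaining compact interval $J=[\max\{U_u(\beta),\bar U\},U_s(\beta)]\Subset(U^*,1)$, Lemma \ref{lowerbound} gives $P\ge P_m>0$, and admissibility gives $P\le \Gamma_s$. Lemma \ref{Lem:regul_P_cand} gives $0\le P'+f/P+\beta\le F+\beta$, so $P'$ is bounded by a constant depending only on $f,\beta,J$. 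Hence the family is uniformly Lipschitz on $J$, and uniformly $\frac12$-H\"older near $U^*$ when $\beta\ge\beta^{**}$. Ascoli--Arzel\`a then yields a uniformly convergent subsequence $P_n\to P$.

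Next I check that the limit is again a candidate minimizer. Points (1)--(3) of Definition \ref{candidate} pass to the limit directly. For (4), note that $P_n'\rightharpoonup P'$ weakly$^*$ in $L^\infty(J)$, while $f/P_n\to f/P$ uniformly because $P_n\ge P_m$. The inequality $P'+f/P+\beta\ge0$ therefore holds in the distributional sense, hence a.e. For (5), take $U$ with $P(U)\ne P^*(U)$. By continuity $P_n\ne P^*$ on a fixed neighborhood of $U$ for $n$ large. On that neighborhood each $P_n$ solves the ODE \eqref{functeq}, and by continuous dependence so does $P$. Thus $\alpha=0$ there.

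\emph{Continuity of $\tilde E$.} I integrate by parts exactly as in Proposition \ref{improv}:
\begin{equation*}
\tilde E(P)=\frac{P(U)}{U}\Big|_{U_u}^{U_s}+\int_{U_u}^{U_s}\Big(\frac{P}{U^2}+\frac{f(U)}{UP}+\frac{\beta}{U}\Big)\,dU .
\end{equation*}
The boundary values are fixed, since they equal $P^*(U_u)$ and $P^*(U_s)$ for all candidates. The integrand depends only on $P$ itself and not on $P'$, so uniform convergence together with $P\ge P_m$ on $J$ gives convergence of the integral. One could also argue directly from Proposition \ref{improv}: $\tilde E(P_n)-\tilde E(P)=\int\int_{P}^{P_n}\frac{P^2-Uf}{U^2P^2}$, which tends to $0$.

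\emph{Main obstacle.} The delicate case is $\beta\ge\beta^{**}$, where $U_u(\beta)=U^*$ and $P$ may vanish near $U^*$, so the lower bound $P_m$ is lost. Here I use that all candidates coincide with $P^*$ on $[U^*,\bar U]$. On that piece the cost is the common quantity $\int_{U^*}^{\bar U}(F+\beta)/U$, which is finite by \eqref{Equa:bound_integr}, so it cancels in differences. The argument above is then applied only on $[\bar U,U_s]$, where $P\ge P_m$ holds. With compactness and continuity in hand, a minimizing sequence has a limit in $\tilde{\mathcal A}(\beta)$ attaining $E(\beta)$.
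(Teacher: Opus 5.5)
Your proof is correct. The compactness half is essentially the paper's: the equi-Lipschitz bound from $0\le \alpha U\le (F+\beta)^+$ and $P\ge P_m$ on a compact subinterval of $(U^*,1)$, then Ascoli--Arzel\`a, with localization past the fixed arc of $P^*$ when $\beta\ge\beta^{**}$.

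You differ from the paper in how you prove continuity of $\tilde E$ and closedness of the class.

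\emph{The paper's route.} It keeps the cost in the form $\int\alpha_n$. Since $0\le\alpha_n\le |F+\beta|/U$, the controls are uniformly integrable, and a subsequence converges weakly in $L^1$ to some $\alpha$. Passing to the limit in $P_n(U_2)-P_n(U_1)=\int_{U_1}^{U_2}(\alpha_nU-f/P_n-\beta)\,dU$ identifies $\alpha$ as the control of $P$. This single object gives both $\tilde E(P_n)\to\tilde E(P)$ and condition (5) for the limit, because $\alpha_n=0$ on compact subsets of $\{P\neq P^*\}$ for large $n$.

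\emph{Your route.} You integrate the $P'/U$ term by parts, exactly as in Proposition \ref{improv}. Then $\tilde E$ becomes a fixed boundary term plus $\int (P/U^2+f/(UP)+\beta/U)\,dU$, which depends on $P$ alone. Continuity under uniform convergence is then immediate from $P\ge P_m$. You need no subsequence and no identification of the limiting control. The argument even works on the larger class of Lipschitz functions with the prescribed endpoint values and $P\ge P_m$. You obtain closedness separately, from weak$^*$ convergence of $P_n'$ and stability of the uncontrolled ODE.

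Both routes need the lower bound $P\ge P_m$ to control $f/P$. That is why both localize away from $U^*$ when $\beta\ge\beta^{**}$: you use $\bar U$ from Definition \ref{candidate}, the paper uses $\check U$ from Remark \ref{Rem:minimal_U}.

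One small correction. For $\beta<\beta^{**}$, the interval on which you need $P\ge P_m$ and the Lipschitz bound is all of $[U_u(\beta),U_s(\beta)]$, not $[\max\{U_u(\beta),\bar U\},U_s(\beta)]$. The two differ when $U_u(\beta)<\bar U$, which happens for $\beta$ close to $\beta^{**}$. Since $U_u(\beta)>U^*$ in this range, Lemma \ref{lowerbound} still applies on the full interval, so you only need to state the interval correctly.
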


\begin{proof}
For simplicity, we distinguish the cases $\beta <\beta^{**}$ and $\beta\ge \beta^{**}$.

\medskip

\noindent{\it Case $\beta < \beta^{**}$.} By the same arguments exploited in the proof of Proposition \ref{corr}, if we assume $\beta<\beta^{**}$, there exists a positive constant $P_m$, only depending on $f$, such that $P(U)\ge P_m$ for all $U \in [U_u(\beta), U_s(\beta)]$ and $P \in\tilde{\mathcal{A}}(\beta)$.

First, we prove continuity w.r.t. uniform convergence. Let $\{P_n\} \subseteq \tilde{\mathcal{A}}(\beta)$ be a sequence of candidate minimizers converging uniformly to $P \ge P_m>0$. 
By definition of candidate minimizer and Lemma \ref{costexpression}, for a.e. $U \in [U_u(\beta), U_s(\beta)]$ and for all $n \in \N$, there holds
\begin{align}
\label{boundcont}
0 \le  \alpha_n(U)U := P_n'(U)+\frac{f(U)}{P_n(U)}+\beta =\begin{cases}
F(U)+\beta & U \in \mathcal{O}(P_n), \\
0 & \text{ otherwise}.
\end{cases}
\end{align}
Since $\mathcal{O}(P_n) \subseteq [U_u(\beta), U_s(\beta)]\subseteq [U^*, 1)$ and
\begin{equation*}
\lim_{U\to U^*} F(U) \sqrt{U-U^*} < +\infty,
\end{equation*}
we obtain that $\alpha_n$ are uniformly integrable and 
\begin{equation*}
\sup_{n\in\N} \int_{0}^1 \alpha_n(U)\, dU=\sup_{n\in\N} \int_{\mathcal{O}(P_n)} \frac{F(U)+\beta}{U}\, dU \le \int_{U^*}^1 \frac{|F(U)+\beta|}{U} \, dU < +\infty.
\end{equation*}
Therefore, up to subsequences, there exists $\alpha \in L^1([0,1])$, $\alpha \ge 0$ such that $\alpha_n \rightharpoonup \alpha$. By definition of a.e. derivative and by uniform convergence, for all $U_1, U_2 \in [U_u(\beta), U_s(\beta)]$
\begin{align*}
\int_{U_1}^{U_2} P'(U) \, dU & = P(U_2) - P(U_1)\\
&=\lim_{n\to \infty} P_n(U_2) - P_n(U_1)\\
&= \lim_{n\to \infty} \int_{U_1}^{U_2} P_n'(U) \, dU \\
&=\lim_{n\to \infty} \int_{U_1}^{U_2} \left(\alpha_n(U)U -\frac{f(U)}{P_n(U)} -\beta \right) \, dU\\
&=\int_{U_1}^{U_2} \left(\alpha(U)U - \frac{f(U)}{P(U)} -\beta \right) \, dU,
\end{align*}
which implies
\begin{align*}
\tilde{E}(P)=&\int_{U_u(\beta)}^{U_s(\beta)} \frac{1}{U} \left(P'(U)+\frac{f(U)}{P} +\beta)\right)\, dU \\
&=\int_{U_u(\beta)}^{U_s(\beta)} \alpha(U) \, dU\\
&=\lim_{n\to+\infty} \int_{U_u(\beta)}^{U_s(\beta)} \alpha_n(U) \, dU\\
&=\lim_{n\to +\infty} \tilde{E}(P_n).
\end{align*}

We now prove that $\tilde A(\beta)$ is a compact set of $C_0([0,1])$. 
Once again, by \eqref{boundcont} and since $\mathcal{O}(P_n) \subseteq [U_u(\beta), U_s(\beta)] \Subset (U^*, 1)$, we deduce that there exists a positive constant C, independent of $n$, such that
\begin{equation*}
0\le \alpha_n(U)U:= P_n'(U)+\frac{f(U)}{P_n(U)}+\beta \le C \quad \text{for a.e. $U \in [U_u(\beta), U_s(\beta)]$},
\end{equation*}
and, in particular, the sequence $P_n$ is equi-Lipschitz. Moreover, $P \leq \Gamma_s(\beta)$ 
i.e. $\{P_n\}$ is also equi-bounded. It is therefore possible to extract a subsequence $\{P_{n_k}\}$ uniformly converging in $[U_u(\beta), U_s(\beta)]$ to a Lipschitz profile $P$: it only remains to prove that $P \in \tilde{\mathcal{A}}(\beta)$. As shown above,
\begin{equation*}
\alpha_n \rightharpoonup \alpha:=\frac{1}{U} \bigg( P'+\frac{f(U)}{P}+\beta \bigg) \quad \text{in $L^1([U_u(\beta), U_s(\beta)])$}.
\end{equation*}
If $I \Subset \{P(U) \neq P^*(U)\}$, then by uniform convergence $I \subset \{P_n(U) \not= P^*(U)\}$ for $n \gg 1$, so that $\alpha_n = 0$ on $I$. In particular, $\alpha = 0$ in $I$.

\medskip

\noindent{\it Case $\beta\ge \beta^{**}$.} This case only requires a slightly more technical argument to deal with the singularity in $U_u(\beta) = U^*$. Recalling Lemma \ref{Lem:regul_P_cand} and Remark \ref{Rem:minimal_U}, the sequence $P_n$ is uniformly $\frac{1}{2}$-H\'older and $\alpha_n$ is uniformly integrable; moreover $P_n = P^*$, $\alpha_n = \frac{F(U) + \beta}{U}$ for $U \in (U^*,\gls{Ucheck})$ by Remark \ref{Rem:minimal_U}.
Hence
\begin{equation*}
\tilde{E}(P) = \int_{U^*}^{\check U} \frac{F(U)+\beta}{U}\, dU + \int_{\check U}^{U_s(\beta)} \frac{1}{U} \left(P'+\frac{f(U)}{P}+\beta\right)\, dU.
\end{equation*}
Therefore, we can repeat exactly the same arguments as before simply replacing the interval $[U_u(\beta), U_s(\beta)]$ with $[\check U, U_s(\beta)] \Subset (U^*, 1)$.
\end{proof}

\section{First-order conditions for optimality and uniqueness of optimal profiles}
\label{S:first_cond_opt}

A candidate minimizer is determined by the intervals where $P \not= P^*$. In this section we will give necessary conditions for minimality, and show that there is only one candidate minimizer satisfying these conditions: hence they are also sufficient in the set of candidate minimizers, and the minimizer is unique.


\subsection{First-order conditions}
\label{Ss:first_order}

Here we present the first-order conditions for optimality. The fundamental idea is to consider admissible infinitesimal perturbations of the optimal profile $P_\beta(U)$: the fact that these perturbations exist is a consequence of the definition of \gls{IcalP}.

\begin{proposition}
\label{nec2}
Let $\beta \ge \beta^*$, and let $P(U)$ be an optimal profile for $E(\beta)$ and $I = (U^-, U^+) \subseteq \gls{IcalP} \cap (U_u(\beta),U_s(\beta))$ be a connected component with $U_u(\beta) \leq U^- < U^+ \leq U_s(\beta)$. Then
\begin{subequations}
\label{Equa:nec2_gen_pro}
\begin{equation}
\label{Equa:nec2-}
\text{if $P(U^-) > \Gamma_u(U^-)$} \quad \text{then} \quad \int_{U^-}^{\tilde U} \frac{Uf(U)-P^2(U)}{UP^2(U)} \, dU \ge 0 \quad \forall \tilde U \in I,
\end{equation}
\begin{equation}
\label{Equa:nec2+}
\text{if $P(U^+) < \Gamma_s(U^+)$} \quad \text{then} \quad \int_{\tilde U}^{U^+} \frac{Uf(U)-P^2(U)}{UP^2(U)} \, dU \le 0 \quad \forall \tilde U\in I.
\end{equation}
\end{subequations}
\end{proposition}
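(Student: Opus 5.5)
The plan is a one-parameter perturbation of the optimal profile near the endpoints of the free interval $I=(U^-,U^+)$. Its cost variation is computed through the linearized equation along the free trajectory, and optimality then forces a sign. I describe \eqref{Equa:nec2-}; \eqref{Equa:nec2+} follows symmetrically by perturbing backwards from $U^+$.

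\emph{Perturbation.} Since $I$ is a connected component of $\mathcal I(P,\beta)$, the point $U^-$ lies in the closure of the control set $\mathcal O(P)$. Hence there are points $U<U^-$, arbitrarily close to $U^-$, where $P=P^*$ and $F+\beta>0$. Fix $\tilde U\in I$. For small $\epsilon>0$, set $U_\epsilon = U^--\epsilon$ (after shifting slightly inside $\mathcal O$ if needed). Define $P_\epsilon$ as the free solution of \eqref{functeq} starting from $(U_\epsilon,P^*(U_\epsilon))$, i.e. switch the control off at $U_\epsilon$. Because $F+\beta>0$ on $(U_\epsilon,U^-)$, this trajectory drops below $P^*$ there, and at $U^-$ it lies below $P(U^-)$ by an amount $\delta = \int_{U_\epsilon}^{U^-}(F+\beta)\,dU + o(\epsilon)$. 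We may keep $P_\epsilon \ge \Gamma_u$ precisely because $P(U^-)>\Gamma_u(U^-)$: this is where the hypothesis enters. On $I$ the new trajectory evolves by the free flow. At $\tilde U$ we apply a concentrated control, i.e. an upward jump (a measure-valued $\alpha$, which is allowed by Proposition \ref{improv}), bringing $P_\epsilon$ back to $P(\tilde U)$, and afterwards we follow $P$. The resulting profile is admissible: it stays between $\Gamma_u$ and $\Gamma_s$ with nonnegative control.

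\emph{Cost comparison.} On $[U_\epsilon,U^-]$ the control saved equals $\int_{U_\epsilon}^{U^-}\frac{F+\beta}{U}\,dU \approx \delta/U^-$. The extra control spent at $\tilde U$ equals the jump size divided by $\tilde U$. Linearizing \eqref{functeq} around $P$ on $I$, the deviation $\eta = P - P_\epsilon$ solves $\eta' = \frac{f}{P^2}\eta$, so the quantity $\eta/U$ evolves with the kernel $\frac{Uf-P^2}{UP^2}$. This is the same computation as in the proof of Theorem \ref{manifoldbound}, Step 1.1. Therefore
\begin{equation*}
\frac{\eta(\tilde U)}{\tilde U} = \frac{\delta}{U^-}\exp\bigg(\int_{U^-}^{\tilde U}\frac{Uf(U)-P^2(U)}{UP^2(U)}\,dU\bigg)+o(\delta).
\end{equation*}
The cost difference is then
\begin{equation*}
\frac{\delta}{U^-}\bigg(e^{\int_{U^-}^{\tilde U}\cdots}-1\bigg)+o(\delta).
\end{equation*}
Alternatively, one can obtain the same expansion directly from Proposition \ref{improv} by integrating the kernel over the thin region between $P$ and $P_\epsilon$. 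Optimality requires this difference to be $\ge 0$, and dividing by $\delta$ and letting $\epsilon\to0$ gives \eqref{Equa:nec2-}.

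\emph{Main obstacle.} The hardest part is making the $o(\delta)$ terms rigorous when $U^-$ is an accumulation point of components of $\mathcal O$, or when $F(U^-)+\beta=0$. In those cases $\delta$ may be of higher order in $\epsilon$, and one has to choose the switch-off points along a sequence in $\mathcal O$ with $\delta>0$. Since $P$ is Lipschitz and bounded below on $I$ (Lemma \ref{lowerbound}), the linearization error is quadratic in $\delta$, uniformly on $[U^-,\tilde U]$, which is enough. For \eqref{Equa:nec2+}, the perturbation instead starts the control later than $U^+$ (a backward free trajectory from a point of $\mathcal O$ right of $U^+$), with a jump placed at $\tilde U$. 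The room to do this is exactly the hypothesis $P(U^+)<\Gamma_s(U^+)$, and the sign flips because the kernel is now integrated from $\tilde U$ to $U^+$.
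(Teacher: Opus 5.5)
Your proposal is correct and follows essentially the paper's argument: switch the control off at points of $\mathcal O(P)$ accumulating at $U^-$ from the left, follow the free flow across $I$, rejoin $P$ by an upward vertical segment at $\tilde U$, and compare the saved control with the cost of the jump through the factor $\exp\bigl(\int_{U^-}^{\tilde U}\frac{Uf-P^2}{UP^2}\,dU\bigr)$ that propagates $\eta/U$ (the paper gets the same factor from the derivative $\partial_{\bar U}P(U,\bar U)$ of the free trajectory with respect to its starting point, after the integration by parts of Proposition~\ref{improv}). One correction: $P_\epsilon\ge\Gamma_u$ holds automatically because free trajectories are ordered, so what the hypothesis $P(U^-)>\Gamma_u(U^-)$ actually gives you is $U^->U_u(\beta)$, i.e.\ that control is genuinely acting just to the left of $U^-$; also, writing the saved control as $\int_{U_\epsilon}^{U^-}\alpha\,dU=\frac{\eta(U^-)}{U^-}(1+O(\epsilon))$ handles your ``main obstacle'' directly, even when $U^-$ is an accumulation point of components.
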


The meaning of the above conditions is the following. Assume that instead of switch off the control at $U^-$ (so that $P$ becomes a solution to \eqref{functeq}), we decide to switch it off a little bit before $U^-$, and the to apply it later at $\tilde U$ in order to become again $P(\tilde U)$, see Fig. \eqref{Fig:linera_pert}. The quantity $e^{\int_{U^-}^{\tilde U} \frac{Uf(U)-P^2(U)}{UP^2(U)} \, dU}$ is the ratio between the control applied in $\tilde U$ and the control not applied in $U^-$. Saying that this ratio is greater than $1$ implies that the cost $\tilde E$ is increasing in this operation.
%

\begin{figure}
\resizebox{.6\textwidth}{!}{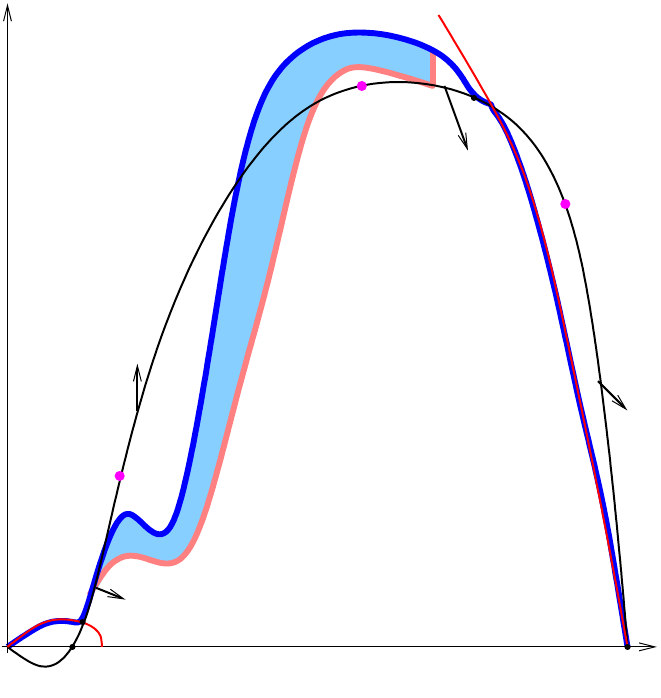}
\caption{The perturbation of Proposition  \ref{nec2}: if we turn off the control before $U^-$, then we need more control later in $\tilde U$}
\label{Fig:linera_pert}
\end{figure}

\begin{proof}
We consider only the case $\Gamma_u(\beta) < P(U^-)$ and we prove \eqref{Equa:nec2-}, the other case is analogous. 

%
Note first that, by definition of $\mathcal{I}(P)$, the infima of connected components of $\mathcal{I}(P)$ are exactly the points $U \in (U^+,1)$ that satisfy
\begin{enumerate}[label=(\roman*)]
\item $P(U)=P^*(U)$,
\item $\exists \delta > 0 \text{ s.t. } (U, U+\delta)\subseteq \mathcal{I}(P)$,
\item $\forall \delta>0$ it holds $(U-\delta, U)\cap\mathcal{O}(P) \neq \emptyset$.
\end{enumerate}
In particular, there exist a sequence $\{U_n^{-}\}_{n \in \N}$ such that
$$
U_n^-\nearrow U^-, 
\quad P(U_n^-) = P^*(U_n^-), \quad F(U_n^-) + \beta > 0.
$$
For the remainder of the proof, given $\bar U \in (U^*,1)$, we denote by $P(U,\overline 	U)$ the solution to Equation \eqref{functeq} with initial data $P(\bar U, \bar U) = P^*(\bar U)$: in particular, $P(U,U^-) = P(U)$ for $U \in [U^-,U^+]$.

Compute the ODE for the perturbation $\partial_{\bar U} P(U,\bar U)$:
\begin{equation*}
\frac{\partial}{\partial U}\left( \frac{\partial P}{\partial \bar U}(U, \bar U)\right) = \frac{f(U)}{P(U, \bar U)^2}\frac{\partial P}{\partial \bar U}(U,\bar U). 
\end{equation*}
obtaining 
\begin{equation*}
\frac{\partial}{\partial U} \bigg( \frac{1}{U}\frac{\partial P}{\partial \bar U}(U, \bar U) \bigg) = \frac{1}{U}\frac{\partial P}{\partial \bar U}(U, \bar U) \left( \frac{f(U)U-P(U, \bar U)^2}{UP(U, \bar U)^2}\right). 
\end{equation*}
The solution of the above ODE can be written as
\begin{equation}
\label{comput}
\frac{1}{U}\frac{\partial P}{\partial \bar U}(U, \bar U) = \frac{1}{\bar U} \frac{\partial P}{\partial \bar U}(\bar U,\bar U) \exp \bigg( {\int_{\bar U}^U \frac{Wf(W) - P^2(W, \bar U)}{P^2(W, \bar U)W}} \, dW \bigg).
\end{equation}

Observe next that the trajectories $P(U,\bar U)$ with initial condition $P_n(U_n^-) = P^*(U_n^-) = P(U_n^-)$ satisfy
$$
P_n(U) \le P(U) \quad \forall U \in (U^-,U^+)
$$
because $P$ is admissible so that the control $\alpha$ is positive with non-zero mass in $(U^-_n,U^-)$.
Therefore, for all $\tilde U \in I$ we can define an admissible curve $\gamma_n$ joining four arcs:
\begin{itemize}
\item the optimal curve $P$ up to $(U_n^-, P(U_n^-))=(U_n^-, P^*(U_n^-))$,
\item the trajectory corresponding to $P_n$ up to the point $(\tilde U,P_n(\tilde U))$,
\item the vertical upwards segment joining $(\tilde U,P_n(\tilde U))$ and $(\tilde U,P(\tilde U))$,
\item the optimal curve from $(\tilde U,P(\tilde U))$.
\end{itemize}

We exploit the Stokes' Theorem to compute the difference of costs and integrate by parts: 
\begin{align*}
\tilde E(\gamma_n) - \tilde E(\beta) &= \int_{U_n^-}^{\tilde U} \int_{P_n(U)}^{P(U)} \frac{Uf(U)-P^2}{U^2P^2} \, dP \, dU \\ 
&= \int_{U_n^-}^{\tilde U} \left[\frac{1}{U} \left( \frac{f(U)}{P_n(U)}- \frac{f(U)}{P(U)}\right)- \frac{1}{U^2} \left(P(U) - P_n(U)\right) \right] \, dU \\ 
\big[ \ \text{integrating by parts} \ \big] \quad &= - \int_{U_n^-}^{\tilde U} \frac{1}{U} \left[\left(P'(U) +\frac{f(U)}{P(U)}\right)- \left( P_n'(U) + \frac{f(U)}{P_n(U)} \right) \right] \, dU + \frac{P(\tilde U) - P_n(\tilde U)}{\tilde U} \\ 
&= \frac{P(\tilde U) - P_n(\tilde U)}{\tilde U} - \int_{U^-_n}^{U^-} \alpha(U) dU \\
&= \frac{P(\tilde U) - P_n(\tilde U)}{\tilde U} - \frac{P(U_n^-, U^-) - P(U_n^-, {U_n}^-)}{U_n^-} \\
& \quad - \int_{U^-_n}^{U^-} \int_{P(U)}^{P(U,U^-)} \frac{Uf(U)-P^2}{U^2P^2} \, dP \, dU \\
\big[ \text{Lipschitz trajectories} \big] \quad &= \frac{P(\tilde U, U^-) - P(\tilde U,U_n^-)}{\tilde U} - \frac{P(U_n^-, U^-) - P(U_n^-, {U_n}^-)}{U_n^-} + \frac{1}{U^-_n} \mathcal O(U^- - U^-_n)^2 \\
&= \frac{P(U_n^-, U^-) - P(U_n^-, {U_n}^-)}{U_n^-} \left[ \frac{\frac{P(\tilde U, U^-) - P(\tilde U,U_n^-)}{\tilde U}}{\frac{P(U_n^-, U^-) - P(U_n^-, {U_n}^-)}{U_n^-}} - 1 \right] + \frac{1}{U^-_n} \mathcal O(U^- - U^-_n)^2.
\end{align*}
Noting that $\frac{P(\tilde U, U^-) - P(\tilde U,U_n^-)}{\tilde U} > 0$, by the optimality of $P$ and Equation \eqref{comput} we conclude
\begin{align*}
0 \le \lim_{n \to \infty} \frac{E(\gamma_n)-E(\beta)}{\frac{P(\tilde U, U^-) - P(\tilde U,U_n^-)}{\tilde U}} &= \lim_{n \to \infty} \left[ \frac{\frac{P(\tilde U, U^-) - P(\tilde U,U_n^-)}{\tilde U}}{\frac{P(U_n^-, U^-) - P(U_n^-, U_n^-)}{U_n^-}} - 1 \right] \\
&= \exp \bigg( \int_{U^-}^{\tilde U} \frac{Uf(U)-P^2(U)}{UP^2(U)} \, dU \bigg) - 1,
\end{align*}
which immediately yields \eqref{Equa:nec2-}.
%
%
%
%
\end{proof}

\begin{remark}
\label{nec2strong}
If $\Gamma_u(U^-) < P(U^-)$, $P(U^+) < \Gamma_s(U^+)$, the above conditions read as
\begin{subequations}
\label{Equa:nec2_gen}
\begin{equation}
\label{nec2-}
\int_{U^-}^{\tilde U} \frac{Uf(U)-P^2(U)}{UP^2(U)} \, dU \ge 0 \quad \forall \tilde U \in I,
\end{equation}
\begin{equation}
\label{nec2+}
\int_{\tilde U}^{U^+} \frac{Uf(U)-P^2(U)}{UP^2(U)} \, dU \le 0 \quad \forall \tilde U\in I.
\end{equation}
\end{subequations}

If $\Gamma_u(U^-) = P(U^-)$, $P(U^+) < \Gamma_s(\beta)$, then we only have
\begin{equation}
\label{nec2-strong}
\int_{\tilde U}^{U^+} \frac{Uf(U)-P^2(U)}{UP^2(U)} \, dU \le 0 \quad \forall \tilde U\in I,
\end{equation}
and similarly for $\Gamma_u(U^+) < P(U^-)$, $P(U^+) = \Gamma_s(\beta)$
\begin{equation}
\label{nec2+strong}
\int_{U^-}^{\tilde U} \frac{Uf(U)-P^2(U)}{UP^2(U)} \, dU \ge 0 \quad \forall \tilde U \in I.
\end{equation}

Finally for $\Gamma_u(U^-) = P(U^-)$, $P(U^+) = \Gamma_s(U^+)$, there exists only one candidate minimizer which is given by Corollary \ref{Cor:monotone_1}, and then it is the optimal one and no other conditions can be applied.

The fact that, depending on the boundaries of the interval $I$ different conditions hold is due to the admissible perturbations: in the first case \eqref{Equa:nec2_gen} we can move the interval in both directions, while in the case \eqref{nec2-strong} we can move only in the positive direction, in the case \eqref{nec2+strong} in the negative and finally for $\beta = \beta^*$ no perturbation is admissible.

\end{remark}

As an immediate consequence of Proposition \ref{nec2} we get the following:

\begin{corollary}
\label{neccond2}
Let $\beta \ge \beta^*$ and $P$ optimal for $E(\beta)$. Then for all $U \in \partial \gls{IcalP} \cap (U_u,U_s)$ it holds
\begin{equation}
\label{Equa:short_nece}
\int_U^V \frac{Wf(W)-P^2(W)}{WP^2(W)} \, dW \geq 0, \quad V \in [0,1].
\end{equation}
In particular
\begin{equation}
\label{necint}
\int_{I} \frac{Uf(U)-P^2(U)}{UP^2(U)} \, dU = 0
\end{equation}
for all $I=(U^-, U^+) \subseteq \mathcal{I}(P)$ connected component s.t. $U_u(\beta) < U^- < U^+ < U_s(\beta)$.
\end{corollary}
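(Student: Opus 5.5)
The corollary follows from Proposition \ref{nec2}, applied to the connected components of $\mathcal I(P)$ adjacent to a boundary point, together with the fact that in the interior of $\mathcal O(P)$ we have $P = P^*$. There the kernel
$$
k(W) := \frac{Wf(W)-P^2(W)}{WP^2(W)}
$$
vanishes identically. Throughout, $\int_U^V$ is read as an oriented integral, so for $V<U$ the claim is $\int_V^U k \le 0$.

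First I prove \eqref{necint}. Let $I=(U^-,U^+)$ be a component with $U_u<U^-<U^+<U_s$. Since $U^\pm$ lie strictly inside $(U_u,U_s)$, we have $P(U^-)=P^*(U^-)>\Gamma_u(U^-)$ and $P(U^+)=P^*(U^+)<\Gamma_s(U^+)$. The strictness comes from $\Gamma_u\le P^*$ after $U_u$ and $P^*\le\Gamma_s$ before $U_s$ on the control region; in the degenerate case of equality at a crossing point of $C_u$ or $C_s$, I would argue separately using the fact that such points are isolated. Hence both \eqref{Equa:nec2-} and \eqref{Equa:nec2+} hold. Now let $\tilde U\to U^+$ in \eqref{Equa:nec2-} to get $\int_I k\ge0$, and let $\tilde U\to U^-$ in \eqref{Equa:nec2+} to get $\int_I k\le 0$. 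Continuity of the integral follows because $k$ is bounded on $I$, since $P\ge P_m>0$ by Lemma \ref{lowerbound}. This gives \eqref{necint}.

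For \eqref{Equa:short_nece}, fix $U\in\partial\mathcal I(P)\cap(U_u,U_s)$ and take $V>U$. I decompose $(U,V)$ as the union of $\mathcal O(P)$, where $k=0$, the components of $\mathcal I(P)$ fully contained in $(U,V)$, which contribute zero by \eqref{necint}, the boundary points, which contribute nothing since $k=0$ on $\mathcal K(P)$, and at most one component $(U^-,U^+)$ containing $V$ with $U^-\ge U$. The last contributes $\int_{U^-}^V k\ge 0$ by \eqref{Equa:nec2-}. The case $V<U$ is symmetric: the partial component gives $\int_V^{U^+}k\le0$ by \eqref{Equa:nec2+}, so the oriented integral is $\ge0$. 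To justify summing countably many components, I use that $k\in L^1$ on $[U_u,U_s]$ and apply dominated convergence.

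The main obstacle is the range $V\in[0,1]$ beyond $(U_u,U_s)$. Components touching $U_u$ or $U_s$ satisfy only one of the two inequalities (Remark \ref{nec2strong}), and outside $[U_u,U_s]$ the profile is $\Gamma_u$ or $\Gamma_s$, not governed by the proposition. For $V>U_s$, I note that on $[U_s,1]$ we have $P=\Gamma_s\le P^*$, so $k\ge 0$, and the last component before $U_s$ still satisfies \eqref{Equa:nec2-}, which suffices going forward. For $V<U_u$, I note that $P=\Gamma_u\ge P^*$ (or $f\le0$), so $k\le0$, and for the first component only \eqref{Equa:nec2+} is needed, which holds since its right endpoint lies below $\Gamma_s$. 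Hence the signs are favourable in both directions. If this fails in a degenerate case, I would restrict the statement to $V\in[U_u,U_s]$, which is all that \eqref{necint} and later uses require.
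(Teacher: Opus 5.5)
Your argument follows the same route as the paper. You obtain \eqref{necint} by letting $\tilde U\to U^+$ in \eqref{Equa:nec2-} and $\tilde U\to U^-$ in \eqref{Equa:nec2+}. You obtain \eqref{Equa:short_nece} by splitting $(U,V)$ into three parts: full components of $\mathcal I(P)$, which contribute zero; the set $\overline{\mathcal O(P)}\subset\mathcal K(P)$, where the kernel vanishes; and one partial component, controlled by \eqref{Equa:nec2-} or \eqref{Equa:nec2+}.

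Your ordering (first \eqref{necint}, then the decomposition) is cleaner than the paper's, which uses the vanishing of full components before deriving it. You also fill in points the paper leaves implicit:
\begin{itemize}
\item the residual set $\partial\mathcal O(P)$, which may have positive measure;
\item the countable sums;
\item the range $V\notin[U_u,U_s]$, where the kernel has a fixed sign. There the integrals diverge to $+\infty$ in the favourable direction at $0$, at $1$, and at $U^*$ when $\beta\ge\beta^{**}$, so the inequality holds in the extended sense.
\end{itemize}

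The one step that fails as you justify it is the strict inequalities $P(U^-)>\Gamma_u(U^-)$ and $P(U^+)<\Gamma_s(U^+)$. You need them to invoke \eqref{Equa:nec2-} and \eqref{Equa:nec2+}, including for the first and last components in your last paragraph.
\begin{itemize}
\item It is not true in general that $\Gamma_u\le P^*$ after $U_u$. The curve $\Gamma_u$ can recross $P^*$, which is why $C_u$ and $\hat U_u$ are introduced. On $\mathcal K(P)$ admissibility gives only the non-strict inequality.
\item Crossing points are known to be isolated only for transversal speeds $\beta\notin\mathcal N_1$, and isolation would not exclude equality anyway.
\end{itemize}

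The correct argument is a comparison. Since $U^-$ is the left endpoint of a component of $\mathcal I(P)$, every left neighbourhood $(U_0,U^-)$ meets $\mathcal O(P)$, where $\alpha=(F+\beta)/U>0$. Suppose $P(U^-)=\Gamma_u(U^-)$ (for $\beta\ge\beta^{**}$ there is nothing to prove, since $\Gamma_u$ ends at $U^*<U^-$). Then $D=P-\Gamma_u\ge 0$ satisfies $D'=\frac{f}{P\Gamma_u}D+\alpha U$ on $(U_0,U^-)$, and integrating gives
\[
-D(U_0)=\int_{U_0}^{U^-}\alpha(U)\,U\,e^{-\int_{U_0}^{U}\frac{f(W)}{P(W)\Gamma_u(W)}\,dW}\,dU>0,
\]
which contradicts $D(U_0)\ge 0$. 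The same argument applied to $\Gamma_s-P$, forward from $U^+$ (where $\mathcal O(P)$ accumulates from the right), gives $P(U^+)<\Gamma_s(U^+)$. With this replacement the proof is complete, and your fallback of restricting to $V\in[U_u,U_s]$ is unnecessary.
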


\begin{proof}
We prove the statement for $V > U$, the other case being entirely similar.

Assume first that $U$ is the lower boundary of a connected component $(U^-,U^+)$: then we have that \eqref{Equa:short_nece} holds for $V \in (U^-,U^+)$, and the integral is $0$ when $V = U^+$. If there exists a sequence of connected components \gls{Aomega} of \gls{IcalP} converging from above to $U$, then if $V \in A_{\bar \varpi}$ write
\begin{equation*}
\begin{split}
\int_U^V \frac{Wf(W)-P^2(W)}{WP^2(W)} \, dW &= \bigg[ \sum_{A_\varpi \in (U,V)} \int_{A_\varpi} + \int_{(U,V) \cap A_{\bar \varpi}} \bigg] \frac{Wf(W)-P^2(W)}{WP^2(W)} \, dW \\
&= \int_{(U,V) \cap A_{\bar \varpi}} \frac{Wf(W)-P^2(W)}{WP^2(W)} \, dW \geq 0.
\end{split}
\end{equation*}
The second condition is obtained by using the two inequalities at $U^-$ and $U^+$.
\end{proof}

%
%

Now we analyze the first and last points where the control is applied, in other words the endpoints of the interval where the optimal $P_\beta(U)$ is strictly between $\Gamma_u,\Gamma_s$.

\begin{definition}
\label{Def:u_intersection_set}
Let $\beta\ge \beta^*$ and define the intersection set 
\begin{equation*}
\gls{Cu}(\beta) := 
\Big\{ U \in [U^*, 1): \ \Gamma_u(U, \beta) = P^*(U) \Big\}. 
\end{equation*}
For $U_u(\beta) > U^*$ and $\forall \bar U\in C_u(\beta)$ define \newglossaryentry{IuUbar}{name=\ensuremath{I_u^{\bar U}},description={stability functional for $C_u(\beta)$}}
\begin{align*}
\begin{array}{ccccl}
\gls{IuUbar} &:& [U_u(\beta),\bar U] &\to& \R \\
&& U &\mapsto& I_u^{\bar U}(U) := {\displaystyle \int_U^{\overline U} \frac{Wf(W)-\Gamma_u^2(W, \beta)}{W\Gamma_u^2(W, \beta)}\, dW}
\end{array}
\end{align*}
and \newglossaryentry{Chatubeta}{name=\ensuremath{\hat C_u(\beta)},description={stable set of $C_u(\beta)$}}
\begin{equation*}
\gls{Chatubeta} := \Big\{ \bar U \in C_u(\beta): \ I_u^{\bar U}(U) \le 0 \ \forall U \in [U_u(\beta), \bar U] \Big\}.
\end{equation*}
Finally, being $\hat{C}_u(\beta)$ compact with $U_u(\beta) \in \hat C_u(\beta)$, we can define \newglossaryentry{Uhatubeta}{name=\ensuremath{\hat U_u(\beta)},description={last stable point of $\hat C_u(\beta)$}}
\begin{equation*}
\gls{Uhatubeta} := 
\max \hat{C}_u(\beta). 
\end{equation*}
See Fig. \ref{Fig:unstab_last}.
\end{definition}

\begin{figure}
\resizebox{.75\textwidth}{!}{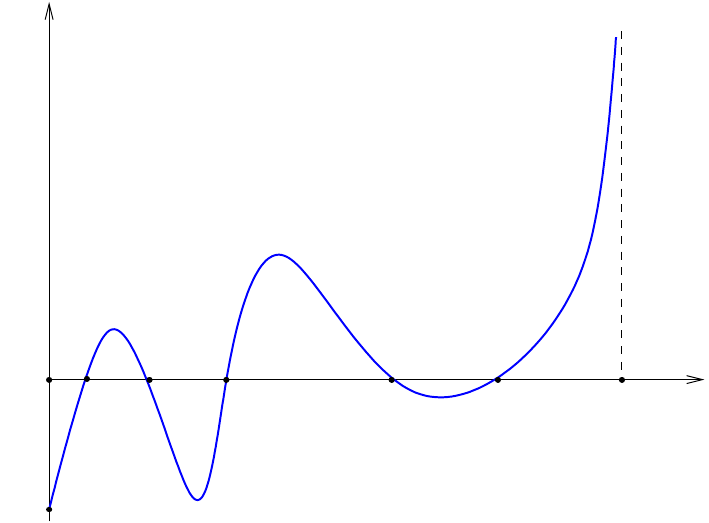}
\caption{The construction of Definition \ref{Def:u_intersection_set}: the points where $\frac{f(U) U - \Gamma_u(U)^2}{U^2 \Gamma_u(U)^2} = 0$ correspond to the set $C_u(\beta)$; the function $I_u^{\bar U}(U)$ is its integral in $(U,\bar U)$; the point $\hat U(\beta)$ is the last point $\bar U$ for which the integral is not positive for all $U \in C_u(\beta) \cap [U^*,\bar U]$.}
\label{Fig:unstab_last}
\end{figure}

A similar definition can be done for $\Gamma_s(\beta)$.

\begin{definition}
\label{Def:s_intersection_set}
Let $\beta\ge \beta^*$ and define the intersection set 
\begin{equation*}
\gls{Cs}(\beta) := \Big\{ U \in (U^*, 1): \Gamma_s(U, \beta)=P^*(U) \Big\}.
\end{equation*}
\newglossaryentry{IsUbar}{name=\ensuremath{I_s^{\bar U}},description={stability functional for $C_s(\beta)$}}
\begin{align*}
\begin{array}{ccccl}
\gls{IsUbar} &:& [\bar U,U_s(\beta)] &\to& \R \\
&& U &\mapsto& I_s^{\bar U}(U) := {\displaystyle \int_{\bar U}^{U} \frac{Wf(W)-\Gamma_u^2(W, \beta)}{W\Gamma_u^2(W, \beta)}\, dW}
\end{array}
\end{align*}
\newglossaryentry{Chatsbeta}{name=\ensuremath{\hat C_s(\beta)},description={stable set of $C_s(\beta)$}}
\begin{equation*}
\gls{Chatsbeta} := \Big\{ \bar U \in C_s(\beta): \ I_s^{\bar U}(U) \le 0 \ \forall U \in [\bar U,U_s(\beta)] \Big\},
\end{equation*}
\newglossaryentry{Uhatsbeta}{name=\ensuremath{\hat U_s(\beta)},description={first stable point of $\hat C_s(\beta)$}}
\begin{equation*}
\gls{Uhatsbeta} := 
\min \hat{C}_s(\beta). 
\end{equation*}
%
%
\end{definition}

\begin{proposition}
\label{Prop:optimal_Gamma_us}
Let $P \in \tilde{\mathcal{A}}(\beta)$ optimal for $E(\beta)$. Then
\begin{enumerate}
\item \label{Point1:optimal_Gamma} if $I=(U_u(\beta), U^+(P))\subseteq \mathcal{I}(P) \cap (U_u(\beta),U_s(\beta))$ connected component, necessarily $U^+(P) = \hat U_u(\beta)$; 
\item \label{Point2:optimal_Gamma} moreover
\begin{equation*}
\int_{\hat U_u(\beta)}^{\tilde U} \frac{Uf(U)-P^2(U)}{UP^2(U)}\, dU\ge 0 \quad \forall \tilde U\ge \hat U_u(\beta).
\end{equation*}
\end{enumerate}

Similarly, for $\Gamma_s$:
\begin{enumerate}\setcounter{enumi}{2}
\item \label{Point3:optimal_Gamma} if $I=(U^-(P), U_s(\beta))\subseteq \mathcal{I}(P) \cap (U_u(\beta),U_s(\beta))$ connected component, necessarily $U^-(P) = \hat U_s(\beta)$;
\item \label{Point4:optimal_Gamma} moreover 
\begin{equation*}
\int_{\tilde U}^{\hat U_s(\beta)} \frac{Uf(U)-P^2(U)}{UP^2(U)}\, dU\le 0 \quad \forall \tilde U \le \hat U_s(\beta).
\end{equation*}
\end{enumerate}
\end{proposition}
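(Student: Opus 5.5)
We prove Point (1); Point (2) follows from it together with Corollary \ref{neccond2}, and Points (3), (4) are symmetric, with $\Gamma_s$ and backward perturbations in place of $\Gamma_u$ and forward ones.

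\emph{Setup.} Suppose $I=(U_u(\beta),U^+)$ is a connected component of $\mathcal I(P)$. Since $P$ solves \eqref{functeq} on $I$ and $P(U_u)=\Gamma_u(U_u)$, uniqueness for \eqref{functeq} gives $P=\Gamma_u$ on $I$. At the right endpoint $P(U^+)=P^*(U^+)$, so $U^+\in C_u(\beta)$. Because $P(U_u)=\Gamma_u(U_u)$, condition \eqref{Equa:nec2-} is not available. If $P(U^+)<\Gamma_s(U^+)$, Remark \ref{nec2strong} yields \eqref{nec2-strong}:
\begin{equation*}
\int_{\tilde U}^{U^+}\frac{Wf(W)-\Gamma_u^2(W)}{W\Gamma_u^2(W)}\,dW\le 0\quad\forall\tilde U\in I,
\end{equation*}
which is $I_u^{U^+}\le 0$ on $[U_u,U^+]$, so $U^+\in\hat C_u(\beta)$ and hence $U^+\le\hat U_u(\beta)$. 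The degenerate case $P(U^+)=\Gamma_s(U^+)$ would force $\Gamma_u=\Gamma_s$ at a point, i.e.\ $\beta=\beta^*$, where everything is explicit.

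\emph{Reverse inequality.} Suppose $U^+<\hat U_u$. We build a competitor by switching off the control on $(U^+,\hat U_u)$, following $\Gamma_u$ up to $\hat U_u$ and then continuing with $P$. This profile is admissible, because $\Gamma_u\le P$ everywhere, so the reconnection at $\hat U_u$ is a nonnegative (upward) jump. By Proposition \ref{improv}, the cost difference is
\begin{equation*}
\int_{U^+}^{\hat U_u}\int_{\Gamma_u}^{P}\frac{P'^2-Wf}{W^2P'^2}\,dP'\,dW.
\end{equation*}
Its sign is not evident pointwise, so instead we run a one-parameter family argument in the spirit of Proposition \ref{nec2}. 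Move the switching point $\bar U$ from $U^+$ toward $\hat U_u$. Using the linearized flow \eqref{comput} along $\Gamma_u$, the marginal cost of advancing from one crossing $\bar U_1\in C_u$ to the next is governed by the sign of $\exp(I_u^{\bar U_2}(\bar U_1))-1$. The defining property $I_u^{\hat U_u}(U)\le0$ for all $U\in[U_u,\hat U_u]$ should then yield $\tilde E(\text{competitor})\le\tilde E(P)$. Strictness comes from the integrand vanishing only on $P=P^*$, combined with Corollary \ref{unique}-type reasoning or, more simply, by comparing with the first-order condition at $U^+$. This would contradict optimality, and hence $U^+=\hat U_u$.

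\emph{Point (2)} is \eqref{Equa:short_nece} of Corollary \ref{neccond2} applied at $U=\hat U_u\in\partial\mathcal I(P)$. We must check that this boundary point lies in $(U_u,U_s)$, which holds whenever the interval $I$ is nonempty. When instead $\mathcal O(P)$ starts at $U_u$ we have $\hat U_u=U_u$, and the forward perturbation of Proposition \ref{nec2} still applies at the left end, giving the claim.

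The main obstacle is the reverse inequality $U^+\ge\hat U_u$. Making the global comparison between $P$ and the longer arc of $\Gamma_u$ rigorous requires integrating the first-order variation across several crossings of $P^*$, where $P\ne\Gamma_u$ in between. I would first attempt it via a continuous deformation $\bar U\mapsto$ (free arc on $\Gamma_u$ up to $\bar U$, then the optimal continuation). The derivative of this deformation is expressed through \eqref{comput} and has sign controlled by $I_u^{\hat U_u}$.
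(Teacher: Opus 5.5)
\textbf{Gap: the reverse inequality $U^+\ge\hat U_u(\beta)$ is not proved.}

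Your bound $U^+\le\hat U_u(\beta)$ is correct and is exactly the paper's argument. On $I$ you have $P=\Gamma_u$, and \eqref{nec2-strong} at the right endpoint gives $I_u^{U^+}\le 0$ on $[U_u,U^+]$, so $U^+\in\hat C_u(\beta)$. The reverse inequality, however, is left at ``should then yield''.

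The global competitor you propose (follow $\Gamma_u$ up to $\hat U_u$, then jump up to $P$) is a finite, non-infinitesimal modification. By Proposition \ref{improv}, its cost minus that of $P$ is
\[
-\int_{U^+}^{\hat U_u}\frac{(P-\Gamma_u)(P\Gamma_u-Wf)}{W^2P\Gamma_u}\,dW .
\]
The integrand favours the competitor where $\Gamma_u>P^*$, but penalizes it where $\Gamma_u<P^*=P$, which is precisely what happens just after a crossing where the control switches on. The first-order quantity $I_u^{\hat U_u}\le 0$ does not control this sign, so there is no reason for this competitor to beat $P$.

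The deformation sketch does not repair this. Formula \eqref{comput} linearizes with respect to the launch point of a trajectory starting on $P^*$, not a departure point on $\Gamma_u$ off $P^*$. Passing from one crossing in $C_u(\beta)$ to the next is not an infinitesimal step whose cost is $e^{I_u}-1$.

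\textbf{Missing idea: a local contradiction at $\hat U_u$, using \eqref{Equa:nec2-} at a different endpoint.} Suppose $P\neq\Gamma_u$ somewhere in $[U_u,\hat U_u]$.

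Since $\alpha\ge0$, comparison gives $P(\hat U_u)>\Gamma_u(\hat U_u)=P^*(\hat U_u)$. Hence $\hat U_u$ lies in a component $(U^-,V^+)$ of $\mathcal I(P)$ with $U^-\ge U_u>U^*$. Moreover $P(U^-)>\Gamma_u(U^-)$, since otherwise uniqueness for \eqref{functeq} forces $P=\Gamma_u$ on that component. So \eqref{Equa:nec2-} applies with $\tilde U=\hat U_u$.

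For $W>U^*$ the integrand $\frac{Wf-P^2}{WP^2}=\frac{f}{P^2}-\frac1W$ is strictly decreasing in $P$, and $P>\Gamma_u$ on $(U^-,\hat U_u)$. Therefore
\[
\int_{U^-}^{\hat U_u}\frac{Wf-P^2}{WP^2}\,dW<\int_{U^-}^{\hat U_u}\frac{Wf-\Gamma_u^2}{W\Gamma_u^2}\,dW=I_u^{\hat U_u}(U^-)\le 0,
\]
where the last inequality is the definition of $\hat U_u\in\hat C_u(\beta)$. This contradicts \eqref{Equa:nec2-}.

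Hence $P=\Gamma_u$ on $[U_u,\hat U_u]$, so $(U_u,\hat U_u)\subset\mathcal I(P)$ and $U^+\ge\hat U_u$. Together with your bound this gives $U^+=\hat U_u$.

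Your Point (2) argument presupposes $\hat U_u\in\partial\mathcal I(P)$, i.e.\ the full Point (1), so it also depends on this missing step.
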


\begin{proof}
We prove the statement only for $\Gamma_u$, the other case being completely analogous.

\medskip

\noindent{\it Proof of Point \eqref{Point1:optimal_Gamma}.} First we show that $P(U) = \Gamma_u(\beta, U)$ for all $U \in [U_u(\beta), \hat U_u(\beta)]$: since $\Gamma_u$ solves \eqref{Equa:Gammau_eq0} with $\alpha = 0$, this is equivalent to say that $P(\hat U_u) = \Gamma_u(\hat U_u,\beta) = P^*(\hat U_u)$. If $\hat U_u(\beta)=U_u(\beta)$, there is nothing to prove: therefore we can assume $\hat U_u(\beta)>U_u(\beta)>U^*$.

If $\Gamma_u(\hat U_u,\beta) = P^*(\hat U_u) < P(\hat U_u)$, then there exists a connected component $I = (U^-,U^+) \subset \mathcal I(P)$ such that $\hat U_u \in I$: then by \eqref{Equa:nec2-} we have that
\begin{equation*}
\int_{U^-}^{\hat U_u} \frac{U f(U) - P^2}{U P^2} dU \geq 0.
\end{equation*}
On the other hand using that $P > \Gamma_u$ in $(U^-,\hat U_u)$ and that $\hat U_u \in \hat C_u(\beta)$ we have
\begin{equation*}
\int_{U^-}^{\hat U_u(\beta)} \frac{Uf(U)-P^2(U)}{UP^2(U)}\, dU < \int_{U^-}^{\hat U_u(\beta)} \frac{Uf(U)-\Gamma_u^2(U,\beta )}{U\Gamma_u^2(U, \beta)}\, dU \le 0,
\end{equation*}
a contradiction.

If $I = (0,U^+)$ is a connected component of $\mathcal I(P)$ with $U^+ > \hat U$, then by \eqref{Equa:nec2+} we have that
\begin{equation*}
\int_{U}^{U^+} \frac{Uf(U)-P^2(U)}{UP^2(U)}\, dU \leq 0 \quad \forall U \leq U^+,
\end{equation*}
and being $\alpha = 0$ in $(0,U^+)$, we also have that $\Gamma_u(U^+,\beta) = P^*(U^+)$: then $U^+ \in \hat C_u(\beta)$, a contradiction. Thus Point \eqref{Point1:optimal_Gamma} holds. 

\medskip

\noindent{\it Proof of Point \eqref{Point2:optimal_Gamma}.} If \gls{Aomega} are the connected components of $\mathcal I(P)$, we can write
\begin{equation*}
\begin{split}
\int_{\hat U_u}^{\tilde U} \frac{Uf(U)-P^2(U)}{UP^2(U)}\, dU &= \sum_{A_\varpi \subset (\hat U_u,\tilde U)} \int_{I_\varpi} \frac{Uf(U)-P^2(U)}{UP^2(U)}\, dU + \int_{U^-_{\bar \varpi}}^{\tilde U} \frac{Uf(U)-P^2(U)}{UP^2(U)}\, dU,
\end{split}
\end{equation*}
if $\tilde U \in A_{\tilde \varpi}$ for some $\tilde \varpi$, otherwise the last integral is not present. Using \eqref{necint} and \eqref{Equa:nec2-} (if $\tilde U < \hat U_s$) or the definition of $\hat U_s$ (if $\tilde U > \hat U_s$) and using Point \eqref{Point3:optimal_Gamma} we obtain Point \eqref{Point2:optimal_Gamma}.
%
%
\end{proof}

\subsection{Uniqueness of the minimizing profile}
\label{Ss:uniqe_min}

Finally, 
we prove that at most one candidate minimizer can satisfy Proposition \ref{nec2}: not only this does imply uniqueness for the optimization problem, but also that such conditions are indeed sufficient in \gls{Acaltildebeta}.

\begin{theorem}
\label{suff}
Let $\beta \ge \beta^*$ and $P_1, P_2 \in \tilde{\mathcal{A}}(\beta)$ both satisfying the following necessary conditions:
\begin{enumerate}
\item \label{Point1:ther_uniq} $P_i(U)=\Gamma_u(U, \beta)$ for all $U\in[U_u(\beta), \hat U_u(\beta)]$;
\item \label{Point2:ther_uniq} $P_i(U)=\Gamma_s(U, \beta)$ for all $U\in[\hat U_s(\beta), U_s(\beta)]$;
\item \label{Point3:ther_uniq} if $I=(U^-, U^+) \subseteq \mathcal{I}(P_i)$ is a connected component such that $(\hat U_u(\beta), \hat U_s(\beta))\cap I \neq \emptyset$, then $\hat U_u(\beta)< U^- < U^+ < \hat U_s(\beta)$;
\item \label{Point4:ther_uniq} if $I=(U^-, U^+)\subseteq \mathcal{I}(P)$ is a connected component with $\hat U_u(\beta) < U^- < U^+ < \hat U_s(\beta)$, then
\begin{equation*}
\int_{U^-}^{\tilde U} \frac{Uf(U)-P^2(U)}{UP^2(U)} \, dU\ge0 \quad \forall \tilde U \in I
\end{equation*}
and
\begin{equation*}
\int_{\tilde U}^{U^+} \frac{Uf(U)-P^2(U)}{UP^2(U)} \, dU\le 0 \quad \forall \tilde U\in I.
\end{equation*}
\end{enumerate}
Then $P_1=P_2$.
\end{theorem}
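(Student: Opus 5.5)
We argue by contradiction: we assume $P_1 \neq P_2$ and use the one-dimensional flow structure in the interval $J := (\hat U_u(\beta), \hat U_s(\beta))$. By Points (1),(2) both profiles coincide with $\Gamma_u$ on $[U_u,\hat U_u]$ and with $\Gamma_s$ on $[\hat U_s,U_s]$. Hence the open set $D := \{U \in J : P_1(U) \neq P_2(U)\}$ is nonempty, and on each connected component $(a,b)$ of $D$ we have $P_1(a)=P_2(a)$ and $P_1(b)=P_2(b)$. Fix such a component and assume, by symmetry, $P_1 > P_2$ on $(a,b)$. The key observation is the comparison behaviour: in $\{P_i = P^*\}$ the control acts only when $F+\beta>0$, and outside it each $P_i$ solves the autonomous ODE \eqref{functeq}. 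Thus two solutions of \eqref{functeq} never cross, and a curve lying above $P^*$ must be free there.

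\textbf{Step 1: locate a free interval of the upper curve.} On $(a,b)$ we cannot have $P_1 = P^*$ everywhere: then $P_2 < P^*$ there, so $P_2$ is a free solution of \eqref{functeq} that meets $P^*$ at $a$ and $b$ from below. Comparing with $P_1 = P^*$ would force $\alpha_1 = 0$ as well, i.e.\ $F+\beta=0$ a.e., giving the same curve. So we expect a point where $P_1 \neq P^*$. We then take a connected component $I=(U^-,U^+)$ of $\mathcal I(P_1)$, or of $\mathcal I(P_2)$, with an endpoint in $[a,b]$. Point (3) guarantees $I \subset J$, so both inequalities of Point (4) are available at its endpoints.

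\textbf{Step 2: compare the kernels.} Since the map $P \mapsto \frac{Uf-P^2}{UP^2}$ is strictly decreasing in $P$, the integrand $k_1$ of $P_1$ is strictly smaller than the integrand $k_2$ of $P_2$ on $(a,b)$. We then combine the first-order conditions. At a left boundary point of a free interval of $P_1$ lying in $[a,b)$, condition \eqref{nec2-} together with Corollary \ref{neccond2} (in the form \eqref{Equa:short_nece}, which holds at every point of $\partial\mathcal I$) gives $\int_U^V k_1 \ge 0$ for all $V$. Symmetrically, at the corresponding boundary point of $P_2$ we get $\int_{U}^{V} k_2 \le 0$ from \eqref{nec2+}. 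If we can arrange that both inequalities refer to the same base point, for instance $a$, which lies in $\partial\mathcal I(P_1)\cup\partial\mathcal I(P_2)$ because the curves separate there, then
\[
0 \le \int_a^b k_1 < \int_a^b k_2 \le 0,
\]
which is a contradiction.

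\textbf{Main obstacle.} The hardest step is to justify that $a$ and $b$ are boundary points of the free sets at which the \emph{correct} one-sided inequalities hold for each profile. At $a$ the upper curve $P_1$ must switch off its control, so $a \in \partial\mathcal I(P_1)$ as a left endpoint, or as an accumulation point of free intervals. The lower curve $P_2$ either stays on $P^*$ or goes below it; in the latter case $P_2$ has a free interval starting at $a$, and we must use \eqref{nec2+} at its right endpoint rather than at $a$. I would handle this by splitting into the cases $P_2 = P^*$ near $a^+$ and $P_2<P^*$ near $a^+$. In the second case I would use the sum identity \eqref{necint} over whole free intervals contained in $(a,b)$ to move the base point. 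Accumulating families of free intervals are treated as in the proof of Corollary \ref{neccond2}. The conclusion $P_1 = P_2$ then follows, and combined with Theorem \ref{exist} and Proposition \ref{nec2} it yields the uniqueness in Corollary \ref{unique}.
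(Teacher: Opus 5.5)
Your overall strategy is viable and differs from the paper's. But the step you defer as the ``main obstacle'' is exactly where the proof is incomplete. The chain $0\le\int_a^b k_1<\int_a^b k_2\le 0$, with $k_i=\frac{Uf-P_i^2}{UP_i^2}$, is the right target, yet neither outer inequality is proved. Moreover, the route you sketch gives the wrong sign for the lower curve. That route is to base both inequalities at $a$, or to use the right end of a free interval of $P_2$ starting at $a$ and then sum over free intervals.

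To see why, set $\Phi_i(V):=\int_{\hat U_u}^V k_i$. Point (4), letting $\tilde U\to U^\pm$, makes the integral over each free component vanish. Point (3) keeps those components inside $(\hat U_u,\hat U_s)$, and $k_i=0$ wherever $P_i=P^*$. Hence $\Phi_i\ge 0$ on $[\hat U_u,\hat U_s]$, with $\Phi_i=0$ on $[\hat U_u,\hat U_s]\setminus\mathcal I(P_i)$. This is the mechanism behind \eqref{Equa:short_nece}: from a non-free point $U$ one gets $\int_U^V k\ge 0$ for $V>U$, not $\le 0$.

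So if you only know that $a\notin\mathcal I(P_2)$, you get $\int_a^b k_2=\Phi_2(b)\ge 0$, which is compatible with $\int_a^b k_1<\int_a^b k_2$ and gives no contradiction. What you actually need is $\Phi_1(a)=0$ and $\Phi_2(b)=0$, i.e.\ $a\notin\mathcal I(P_1)$ and $b\notin\mathcal I(P_2)$. The upper curve must be based at the left endpoint and the lower curve at the right endpoint; your plan never examines $P_2$ near $b$.

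Both facts come from comparing a free solution with an admissible supersolution, not two free solutions.
\begin{itemize}
\item If $P_1$ solved \eqref{functeq} near $a$, then $w=P_2-P_1$ satisfies $w'=\frac{f}{P_1P_2}w+\alpha_2U\ge\frac{f}{P_1P_2}w$ with $w(a)=0$. So $w e^{-\int_a^U f/(P_1P_2)}$ is nondecreasing and $P_2\ge P_1$ just to the right of $a$, contradicting $P_1>P_2$ on $(a,b)$.
\item If $P_2$ solved \eqref{functeq} near $b$, then $w=P_1-P_2$ satisfies the same inequality with $w(b)=0$. This forces $P_1\le P_2$ just to the left of $b$, again a contradiction.
\end{itemize}
Hence $\int_a^b k_1=\Phi_1(b)\ge 0$ and $\int_a^b k_2=-\Phi_2(a)\le 0$, and your chain closes.

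Two smaller points about your Step 1. It becomes unnecessary once these endpoint facts are in place, and its justification (``would force $\alpha_1=0$'') is not a valid inference. Also, $a$ need not lie in $\partial\mathcal I(P_1)$: it can lie in $\mathcal O(P_1)$ while $P_2$ drops below $P^*$. What is true, and all you need, is $a\notin\mathcal I(P_1)$.

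Once repaired, your local argument is a genuine alternative to the paper's proof. The paper first treats the ordered case, integrating the kernel over all of $(\hat U_u,\hat U_s)$, where it vanishes for both profiles. It then reduces to that case by checking that $\max\{P_1,P_2\}$ and $\min\{P_1,P_2\}$ again satisfy (1)--(4). Your version avoids that lattice verification, at the cost of the endpoint comparison above.
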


Observe here that
\begin{itemize}
\item Condition \eqref{Point1:ther_uniq} above corresponds to Point (\ref{Point1:optimal_Gamma}) of Proposition \ref{Prop:optimal_Gamma_us},
\item Condition \eqref{Point2:ther_uniq} above corresponds to Point (\ref{Point3:optimal_Gamma}) of Proposition \ref{Prop:optimal_Gamma_us},
\item Condition \eqref{Point3:ther_uniq} above corresponds to Points (\ref{Point1:optimal_Gamma},\ref{Point3:optimal_Gamma}) of Proposition \ref{Prop:optimal_Gamma_us},
\item Condition \eqref{Point4:ther_uniq} above corresponds to Equations \eqref{Equa:nec2_gen}.
\end{itemize}


\begin{proof}
The proof is straightforward if we assume $P_1\le P_2$. Indeed, by contradiction, let $P_1\neq P_2$: by the first two conditions, they can differ only in $(\hat U_u(\beta),\hat U_s(\beta))$, i.e. there exists $U\in [\hat U_u(\beta), \hat U_s(\beta)]$ such that $P_1(U)<P_2(U)$. By continuity of $P_1,P_2$ it follows that
\begin{equation*}
\int_{\hat U_u(\beta)}^{\hat U_s(\beta)} \frac{Uf(U)-P_1^2(U)}{UP_1^2(U)} \, dU > \int_{\hat U_u(\beta)}^{\hat U_s(\beta)} \frac{Uf(U)-P_2^2(U)}{UP_2^2(U)} \, dU,
\end{equation*}
which is a contradiction because both integrals must be $0$ by Assumption \eqref{Point4:ther_uniq} above: indeed, following for example the proof of Point \eqref{Point2:optimal_Gamma} of Proposition \ref{Prop:optimal_Gamma_us}, we obtain for $P_1$ that 
\begin{equation*}
\int_{\hat U_u}^{\hat U_s} \frac{Uf(U)-P_1^2(U)}{UP_1^2(U)} \, dU = \sum_{\varpi} \int_{A_\varpi} \frac{Uf(U)-P_1^2(U)}{UP_1^2(U)} \, dU = 0,
\end{equation*}
where $\gls{Aomega}$ are the connected components of $\mathcal I(P_1) \cap (\hat U_u,\hat U_s)$.

For the general case, we define
\begin{equation*}
P^M:=\max\{P,\tilde{P}\} \quad \text{and} \quad P^m:=\min\{P, \tilde{P}\}.
\end{equation*}
If we prove that $P^M$ and $P^m$ satisfy the assumptions of the theorem, then by the argument above $P^M = P^m$ and then $P = \tilde P$.

Clearly $P^M$ and $P^m$ are both still candidate minimizers with controls, respectively,
\begin{equation*}
\alpha_M = \begin{cases}
\alpha & \text{in } \{P>\tilde{P}\}, \\
\tilde{\alpha} & \text{in } \{P\le\tilde{P}\},
\end{cases}
\quad \text{and} \quad
\alpha_m = \begin{cases}
\alpha & \text{in } \{P\le\tilde{P}\}, \\
\tilde{\alpha} & \text{in } \{P>\tilde{P}\}.
\end{cases}
\end{equation*}

Moreover $P^M,P^m$ satisfy the first two assumptions of the statement above, because $P = \tilde P = \Gamma_u$ in $[0,\hat U(\beta)]$ and $P = \tilde P = \Gamma_s$ in $[\hat U_s(\beta),1]$.

Note also that
\begin{equation*}
\Gamma_u(U,\beta) < P(U),\tilde P(U) < \Gamma_s(U,\beta) \quad \forall U \in (\hat U_u,\hat U_s)
\end{equation*}
by the third assumption in the statement. Hence also $P^m,P^M$ satisfy the same constraints above. Indeed, if $I = (U^-,U^+) \subset \mathcal I(P^M) \cap (\hat U_u,\hat U_s)$, then $\hat U_u(\beta) < U^- < U^+ < \hat U_s(\beta)$ because otherwise $P^M < \min\{P,\tilde P\}$ in every right neighborhood of $U^-$. A similar reasoning holds for $P^m$ and for left neghborhood of $\hat U_s(\beta)$. Thus Assumption \eqref{Point3:ther_uniq} of the statement is verified.

It remains to verify Assumption \eqref{Point4:ther_uniq}. 
%
%
Fix $I =(U^-, U^+) \subseteq \mathcal{I}(P^M)$ connected component. Assume w.l.o.g. that $P^M(U^-) = P(U^-)$: being $P$ an admissible trajectory, necessarily 
$P^M=P$ in $I$. In particular $P$ solves \eqref{functeq} in a right neighborhood of $U^-$, and $I$ is contained in some component $I_P \subset \mathcal I(P)$. Assume that $U^-$ belongs to the interior of $I_P$. Since
\begin{equation*}
\mathcal O(P^M) \cap (U^- - \delta,U^-) \not= \emptyset \quad \forall \delta > 0,
\end{equation*}
then the control is acting on $P^M$, which gives if $U^- - \delta \in I_P$
\begin{equation*}
P^M(U^- - \delta) = P^M(U^-) - \int_{U^- - \delta}^{U^-} \frac{dP^M}{dV} dV < P(U^-) - \int_{U^- - \delta}^{U^-} \frac{dP}{dV} dV = P(U^- - \delta),
\end{equation*}
a contradiction with the definition of $P^M = \max\{P,\tilde P\}$. Hence $U^-$ is the infimum of $I_P$, which implies
\begin{equation*}
\int_{U^-}^{\tilde U} \frac{Uf(U)-P_M(U)^2}{UP_M(U)^2}\, dU = \int_{U^-}^{\tilde U} \frac{Uf(U)-P(U)^2}{UP(U)^2}\, dU\ge0 \quad \forall \tilde U \in (U^-,U^+).
\end{equation*}
A similar argument can be done for the second integral of Assumption \eqref{Point4:ther_uniq} above. This concludes the proof.
\end{proof}

Observing that $P \in \tilde{\mathcal A}(\beta)$ and \eqref{Equa:nec2_gen_pro} imply the assumptions of Theorem \ref{suff}, we can restate it as follows.

\begin{corollary}
\label{unique}
There exists a unique profile $P \in \tilde{\mathcal A}(\beta)$ satisfying Conditions \eqref{Equa:nec2_gen_pro}, hence the optimal profile is unique. 
\end{corollary}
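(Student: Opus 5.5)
The corollary combines three results: existence of an optimal profile (Theorem \ref{exist}), the necessary conditions of Section \ref{S:first_cond_opt}, and the uniqueness statement of Theorem \ref{suff}. Existence and uniqueness of a profile satisfying Conditions \eqref{Equa:nec2_gen_pro} are proved separately, and the uniqueness of the optimizer follows from them.

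\emph{Existence of a profile satisfying the conditions.} Theorem \ref{exist} gives an optimal profile $P_\beta$. By Corollary \ref{Cor:restrict_admissible} it is a candidate minimizer, so $P_\beta \in \tilde{\mathcal A}(\beta)$. Proposition \ref{nec2} then shows that $P_\beta$ satisfies \eqref{Equa:nec2_gen_pro} on every connected component of $\mathcal I(P_\beta)$.

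\emph{Uniqueness.} Let $P \in \tilde{\mathcal A}(\beta)$ satisfy \eqref{Equa:nec2_gen_pro}. I check the four hypotheses of Theorem \ref{suff} for $P$; the theorem then gives that any two such profiles coincide. The verification reruns the proofs of Corollary \ref{neccond2} and Proposition \ref{Prop:optimal_Gamma_us}. Those proofs use optimality only through the inequalities \eqref{Equa:nec2-}, \eqref{Equa:nec2+} and the definitions of $\hat U_u$, $\hat U_s$, so they apply verbatim to $P$.
\begin{enumerate}
\item Condition \eqref{Point1:ther_uniq}: the proof of Point \eqref{Point1:optimal_Gamma} combines \eqref{Equa:nec2-} with the sign of $I_u^{\bar U}$ on $\hat C_u(\beta)$, which excludes both $P(\hat U_u)>P^*(\hat U_u)$ and a first free interval extending past $\hat U_u$. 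Hence $P=\Gamma_u$ on $[U_u,\hat U_u]$.
\item Condition \eqref{Point2:ther_uniq}: the symmetric argument with \eqref{Equa:nec2+} gives $P=\Gamma_s$ on $[\hat U_s,U_s]$.
\item Condition \eqref{Point3:ther_uniq}: the same argument shows that a free interval meeting $(\hat U_u,\hat U_s)$ cannot have an endpoint on $\Gamma_u$ or $\Gamma_s$. Its endpoints therefore lie strictly inside, where $\Gamma_u<P<\Gamma_s$.
\item Condition \eqref{Point4:ther_uniq}: on such interior intervals both hypotheses of \eqref{Equa:nec2_gen_pro} hold, so both integral inequalities follow, as in Remark \ref{nec2strong}.
\end{enumerate}

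\emph{Conclusion.} Any two optimal profiles are candidate minimizers satisfying \eqref{Equa:nec2_gen_pro}, by Corollary \ref{Cor:restrict_admissible} and Proposition \ref{nec2}, so they coincide. Since an optimal profile exists and is the only element of $\tilde{\mathcal A}(\beta)$ satisfying \eqref{Equa:nec2_gen_pro}, the conditions are also sufficient.

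\emph{Main obstacle.} The delicate step is Condition \eqref{Point3:ther_uniq}. The earlier proofs of this fact were written for an optimal $P$, so I must check that they invoke no cost comparison. The degenerate case where a free interval touches both $\Gamma_u$ and $\Gamma_s$ also needs separate treatment. There the profile is forced, as in the last paragraph of Remark \ref{nec2strong} and Corollary \ref{Cor:monotone_1}, and uniqueness is immediate.
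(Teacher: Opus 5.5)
Your proposal is correct and follows the paper's route. Existence comes from Theorem \ref{exist} together with Corollary \ref{Cor:restrict_admissible} and Proposition \ref{nec2}; uniqueness and sufficiency then follow by checking that any $P\in\tilde{\mathcal A}(\beta)$ satisfying \eqref{Equa:nec2_gen_pro} meets the hypotheses of Theorem \ref{suff}, which the paper asserts in one line and you spell out via the proof of Proposition \ref{Prop:optimal_Gamma_us}.

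One point is worth stating explicitly in your item 3. The strict inequalities $\Gamma_u<P<\Gamma_s$ at the endpoints of interior free intervals do not come from the integral argument. They come from strict comparison, because the control has positive mass on $(\hat U_u,U^-)$ and on $(U^+,\hat U_s)$.
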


\section{Structure of the control set $\mathcal O(\beta)$ as a function of $\beta$} 
\label{S:regular_calO}

The aim of this section is to study the dependence of the minimizer $P(\beta)$ on $\beta$: we will show that if $f \in \gls{Tfrak}$ and $\beta \in \gls{Tcal}_f$, then the structure of the free set \gls{IcalP} is the union of finitely many intervals whose endpoints depend smoothly on $\beta$. We will also consider the dependence w.r.t. the source $f$, giving explicit formulas for the Fre\'echet derivatives of the endpoints of the components of \gls{IcalP}.

Recall that $\gls{Pbeta} \in \tilde{\mathcal{A}}(\beta)$ is the unique optimal profile for $E(\beta)$, and we will denote by \newglossaryentry{alphabeta}{name=\ensuremath{\alpha_\beta},description={control corresponding to the unique optimal profile $P(\beta)$}} $\gls{alphabeta}$ its associated control and we write \newglossaryentry{KcalPopt}{name=\ensuremath{\mathcal K(\beta)},description={critical set, closed set where the optimal profile $P_\beta$ is equal to $P^*$}}
\begin{equation}
\label{Equa:KOIbeta}
\gls{KcalPopt} = \mathcal{K}(P_\beta,\beta), \quad \gls{OcalPopt} = \mathcal{O}(P_\beta,\beta), \quad \gls{IcalPopt} = \mathcal{I}(P_\beta,\beta).
\end{equation}
Moreover, for $\beta_1, \beta_2 \ge \beta^*$, for simplicity, we denote by $P_i := P_{\beta_i}$ and $\alpha_i:=\alpha_{\beta_i}$, $i=1,2$, the optimal profiles for the speed $\beta_1,\beta_2$ respectively. If we need to consider the dependence w.r.t. the source $f$, we will write $P_\beta(f,U)$, $\alpha_\beta(f)$, etc.

\subsection{Constrained minimization problem}
\label{Ss:costra_mini}

In this subsection, we study the minimization problem for the effort function among profiles restricted to a connected component $I$ of $\mathcal I(\beta_1)$ when $\beta_1$ is increased to $\beta_2$: more precisely, we will show that the unique optimal solution coincides with the restriction of the optimal solution $P_{\beta_2}$ to $I$.

Let $\beta^*\le \beta_1<\beta_2$ and $I = (U^-, U^+) \subseteq \mathcal{I}(\beta_1) \cap (\hat U_u(\beta_1),\hat U_s(\beta_1))$ be a connected component. Consider the \emph{constrained problem}
\begin{equation}
\label{local}
\begin{split}
E(\beta_2;I):=\inf \bigg\{ \int_{U^-}^{U^+} \frac{1}{U} \left(P' +\frac{f(U)}{P} + \beta_2 \right) \, dU&; \ P:[U^-, U^+]\to \R^+ \text{ Lipschitz s.t.} \\
& \quad P(U^\pm)=P^*(U^\pm) \text{ and } P' +\frac{f(U)}{P} + \beta_2 \ge 0 \bigg\}.
\end{split}
\end{equation}
For all $P$ admissible in the definition of (\ref{local}), following Definition \ref{Def:free_control_set} set \newglossaryentry{KcalPI}{name=\ensuremath{\mathcal K(P;I)},description={critical set, closed set where the function $P$ is equal to $P^*$ for the constrained problem \eqref{local}}} \newglossaryentry{OcalPI}{name=\ensuremath{\mathcal O(P;I)},description={control set, open set where the control $\alpha$ is acting, i.e. $\alpha > 0$, for the constrained problem \eqref{local}}} \newglossaryentry{IcalPI}{name=\ensuremath{\mathcal I(P;I)},description={free set, open set where $\alpha = 0$ for the constrained problem \eqref{local}}}
\begin{align*}
\gls{KcalPI} &:= \big\{ U \in [U^-, U^+]: \ P(U)=P^*(U) \big\}, \\
\gls{OcalPI} &:= \inter \big( \mathcal{K}(P;I) \cap \big\{ U \in (U^-, U^+): F(U)+\beta_2>0 \big\} \big), \\
\gls{IcalPI} &:= \inter \big( I \setminus \mathcal{O}(P;I) \big).
\end{align*}
The following lemma shows that \eqref{local} is equivalent to the restriction of the global problem in $(0,1)$ to $I$.

\begin{lemma}
\label{Lem:restr_beta_i}
There exists a unique minimizer \newglossaryentry{PImin}{name=\ensuremath{P_I},description={minimizer of the optimality problem restricted to an interval}} $\gls{PImin}$ for (\ref{local}) satisfying
\begin{equation}
\label{soluzz}
{\alpha}_I(U) U:= P_I' + \frac{f(U)}{P_I}+\beta_2 = 0 \text{ in } \mathcal{K}(P_I, I)^C
\end{equation}
and for all connected component $J=(W^-, W^+) \subseteq \mathcal{I}(P; I)$
\begin{subequations}
\label{Equa:nec2-local_tot}
\begin{equation}
\label{nec2-local}
\int_{W^-}^{\tilde{W}} \frac{Uf(U)-P^2(U)}{UP^2(U)} \, dU\ge0 \quad \forall \tilde{W}\in J,
\end{equation}
\begin{equation}
\label{nec2+local}
\int_{\tilde{W}}^{W^+} \frac{Uf(U)-P^2(U)}{UP^2(U)} \, dU\le 0 \quad \forall \tilde{W}\in J.
\end{equation}
\end{subequations}
Moreover, there exists $\delta>0$ such that
\begin{align*}
P_I(U)=P^*(U) \quad \forall U \in (U^-, U^-+\delta)\cup(U^+-\delta, U^+).
\end{align*}
\end{lemma}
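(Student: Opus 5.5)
The plan is to reproduce, on the interval $I=(U^-,U^+)$ with the endpoint constraints $P(U^\pm)=P^*(U^\pm)$, the three-step scheme of Sections \ref{S:exist_regu_one_opt} and \ref{S:first_cond_opt}: existence via candidate minimizers, first-order conditions, and uniqueness. The final claim ($P_I=P^*$ near $U^\pm$) is then extracted from the optimality conditions for $\beta_1$ and the monotonicity of the flow in $\beta$.

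\emph{Existence and regularity.} Since $I\Subset(U^*,1)$, Lemma \ref{lowerbound} gives a uniform lower bound $P\ge P_m>0$ for admissible profiles on $I$. The replacement arguments of Propositions \ref{mft}, \ref{Mbt} and Theorem \ref{betterprofile} apply verbatim. The only change is that the boundary arcs $\Gamma_u,\Gamma_s$ are replaced by the fixed endpoint values. In particular, every arc where $P\neq P^*$ may be replaced by a minimal forward or maximal backward trajectory, which lowers the cost by Proposition \ref{improv}. The cost difference formula of Proposition \ref{improv} holds on $I$ without boundary terms, because $P_1-P_2$ vanishes at $U^\pm$. Hence we may restrict to the local analogue of $\tilde{\mathcal A}(\beta_2)$, where \eqref{soluzz} holds. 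Lemma \ref{costexpression} and the proof of Theorem \ref{exist} then give compactness in $C^0$, equi-Lipschitz bounds from $0\le \alpha U\le F+\beta_2$, continuity of the cost, and hence a minimizer.

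\emph{First-order conditions and uniqueness.} The perturbation argument of Proposition \ref{nec2} is local: switch the control off slightly before $W^-$ and restore it at $\tilde W$. It therefore yields \eqref{nec2-local} and \eqref{nec2+local} for every component $J$ of $\mathcal I(P_I;I)$. Both perturbations are admissible here, since no constraint like $\Gamma_u\le P\le\Gamma_s$ is active: the competitors stay between the forward and backward trajectories through $P^*(U^\pm)$. Uniqueness follows exactly as in Theorem \ref{suff}. For two such profiles, $P^M=\max$ and $P^m=\min$ again satisfy the conditions. For ordered profiles, summing \eqref{necint} over components gives $\int_I \frac{Uf-P^2}{UP^2}=0$ for both, which is impossible unless they coincide, by strict monotonicity of the integrand in $P$.

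\emph{Boundary behaviour.} This is the step I expect to be the main obstacle. Since $U^-$ is the left endpoint of a component of $\mathcal I(\beta_1)$, it is an accumulation point of $\mathcal O(\beta_1)$ from the left, so $F(U^-)+\beta_1\ge0$. Strictly, $F+\beta_2>F+\beta_1\ge0$ on a neighbourhood of $U^-$. The risk is a free component $J=(U^-,W^+)$ of $P_I$ starting exactly at $U^-$. The plan is to rule this out by comparison. Such a $P_I$ would coincide with the $\beta_2$-trajectory from $(U^-,P^*(U^-))$. Because $\partial_\beta$ of the vector field is $-1$, this trajectory lies strictly below the $\beta_1$-trajectory $P_{\beta_1}$ on $I$, so its kernel $\frac{Uf-P^2}{UP^2}$ is strictly larger.

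Using $\int_I\frac{Uf-P_{\beta_1}^2}{UP_{\beta_1}^2}=0$ together with \eqref{nec2+} for $\beta_1$, I would show that the $\beta_2$-trajectory either never returns to $P^*$ inside $I$, or returns in a way violating \eqref{nec2+local} at $W^+$. The same holds symmetrically at $U^+$. The delicate point is controlling where the $\beta_2$-trajectory re-meets $P^*$. If this comparison argument fails, I would instead use $F+\beta_2>0$ strictly near $U^\pm$: staying on $P^*$ there is admissible, and the local cost derivative computed as in Proposition \ref{nec2} favours keeping the control on, which gives the required $\delta$.
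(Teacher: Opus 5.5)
Your plan has the same architecture as the paper's proof: the replacement construction and Theorem \ref{exist} for existence, the perturbations of Proposition \ref{nec2} for the first-order conditions, and comparison of $P_I$ with $P_1=P_{\beta_1}$ near $U^\pm$. Two points need repair.

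\textbf{First-order conditions at the endpoints.} It is not true that both perturbations are admissible for every component $J=(W^-,W^+)$ of $\mathcal I(P_I;I)$. If $W^-=U^-$, the left perturbation switches the control off \emph{before} $U^-$. This is impossible in \eqref{local}, because $P(U^-)=P^*(U^-)$ is prescribed and the problem lives only on $[U^-,U^+]$; the same happens symmetrically at $U^+$. The obstruction is the fixed endpoint, not $\Gamma_u,\Gamma_s$.

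The paper gets \eqref{nec2-local} in this case from a fact you also need, and do not state, for the admissible class of \eqref{local} to be nonempty. Namely, $P_1$ is admissible for $\beta_2$ on $I$ with control $(\beta_2-\beta_1)/U$, i.e.\ it is a supersolution of the $\beta_2$-equation. Hence $P_I\le P_1$ on $J$, and
$\int_{U^-}^{\tilde W}\frac{Uf-P_I^2}{UP_I^2}\,dU\ge\int_{U^-}^{\tilde W}\frac{Uf-P_1^2}{UP_1^2}\,dU\ge0$
by \eqref{Equa:nec2-} for $\beta_1$. Alternatively, prove the boundary statement first, which makes this case vacuous.

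As you have ordered things, your uniqueness argument (zero integral on every component, then Theorem \ref{suff}) rests on the false claim. Once boundary components are handled, it goes through, and it makes explicit a uniqueness step that the paper's proof leaves implicit.

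\textbf{Boundary behaviour.} Your comparison route already closes. You do not need to control where the trajectory re-meets $P^*$, and the fallback should be dropped.

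On $J=(U^-,W^+)$ both $P_I$ and $P_1$ are uncontrolled, for $\beta_2$ and $\beta_1$ respectively, and start from the same point. Wherever they coincide, $(P_1-P_I)'=\beta_2-\beta_1>0$. Hence $P_I<P_1$ on all of $J$. Also $W^+<U^+$, since otherwise the uncontrolled $P_I$ would meet $P_1$ at $U^+$ from below, which that sign forbids.

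The right perturbation at $W^+$ stays inside $I$. So \eqref{nec2+local} with $\tilde W\downarrow U^-$, together with the strict monotonicity of the integrand in $P$, gives
\[
0\ \ge\ \int_{U^-}^{W^+}\frac{Uf-P_I^2}{UP_I^2}\,dU\ >\ \int_{U^-}^{W^+}\frac{Uf-P_1^2}{UP_1^2}\,dU\ \ge\ 0,
\]
where the last inequality is \eqref{Equa:nec2-} for $\beta_1$ at $W^+\in I$. This is slightly more direct than the paper, which compares only up to the first point where $P_I$ meets $P_1$.

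You should also state explicitly that $F(U^-)+\beta_2>0$ is what excludes free components accumulating at $U^-$, by the argument of Lemma \ref{costexpression}. That leaves a component starting at $U^-$ as the only case.

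The fallback is not an argument. Positivity of $F+\beta_2$ near $U^-$ does not keep a minimizer on $P^*$. Indeed $P_1$ itself leaves $P^*$ at $U^-$, generically with $F(U^-)+\beta_1>0$ (Lemma \ref{Lem:not_bdr_tr}). Whether to leave $P^*$ is decided by the nonlocal integral conditions, not by the sign of the local control.
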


\begin{figure}
\resizebox{.70\textwidth}{!}{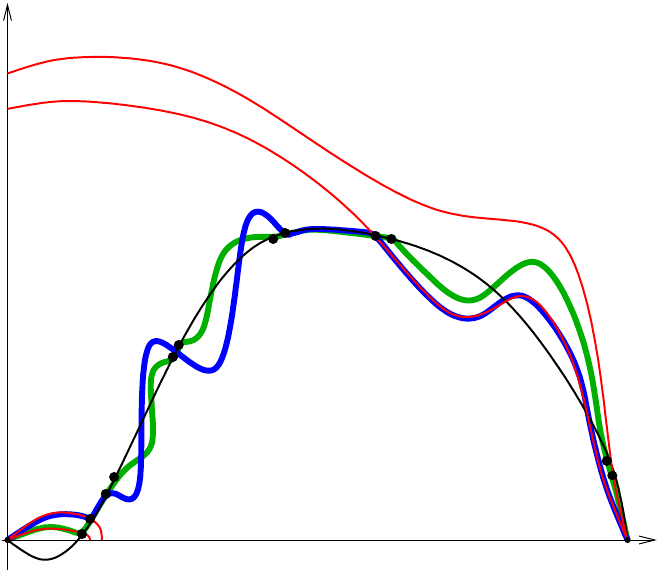}
\caption{The solutions of the minimum problems of Lemma \ref{Lem:restr_beta_i} and Remark \ref{Rem:beta12_manifold}. The interval $(U-,U^+) \subset \mathcal I(\beta_1)$ is split into two connected components of $\mathcal I(P,(U^-,U^+))$, and the interval $(\hat U_s(\beta_1),\hat U_s(\beta_2))$ into a new component of $\mathcal I(\beta_2)$. The black dots mark the boundary points of $\mathcal I(\beta_1)$ and $\mathcal I(\beta_2)$.}
\label{Fig:regul_O_beta}
\end{figure}

See Fig. \ref{Fig:regul_O_beta}.

\begin{proof}
The proof is given in several steps.

\medskip

\noindent{\it Step 0: the minimum is $< \infty$.} First note that we are minimizing over a non-empty family of profiles because $P_1$ is admissible for $\beta_2$ with control $\frac{\beta_2-\beta_1}{U}$ and satisfies the endpoint conditions.

\medskip

\noindent{\it Step 1: the minimum is attained.} For each $P$ admissible for the minimizing \eqref{local}, we can repeat the same construction as the one in Theorem \ref{betterprofile} to obtain a cheaper trajectory $\tilde{P}$ solving Equation (\ref{functeq}) outside $\mathcal{K}(\tilde{P};I).$ In particular, as in Theorem \ref{exist}, we deduce that the localized problem (\ref{local}) admits an optimal profile $P_I$ with associated control ${\alpha}_I$ satisfying (\ref{soluzz}).

\medskip

\noindent{\it Step 2: Equations (\ref{Equa:nec2-local_tot}) hold.} Fix $J=(W^-, W^+) \subseteq \mathcal{I}(P_I;I)$ connected component. If $U^-<W^-<W^+<U^+$, the inequalities (\ref{nec2+local}) and (\ref{nec2-local}) can be obtained considering the same perturbations as in Proposition \ref{nec2}.

Assume $W^-=U^-$: by the optimality of $P_1$ and Proposition \ref{nec2},
\begin{equation*}
\int_{W^-}^{\tilde{W}} \frac{Uf(U)-P_1^2(U)}{UP_1^2(U)} \, DU \ge 0 \quad \forall \tilde W \in I.
\end{equation*}
Moreover, $P_1$ is a supersolution of \eqref{functeq} with respect to $\beta_2$ in $I$, whereas $P_I$ is a solution, and they both equal $P^*$ at $U^-$. We deduce that $P_1 \ge P_I$ in $J$ and, thus,
\begin{equation*}
\int_{W^-}^{\tilde{W}}\frac{Uf(U)-P_I^2(U)}{UP_I^2(U)}\ge \int_{W^-}^{\tilde{W}}\frac{Uf(U)-P_1^2(U)}{UP_1^2(U)}\ge 0.
\end{equation*}
Similarly we prove (\ref{nec2+local}).

\medskip

\noindent{\it Step 3: $\{P_I \not= P^*\} \Subset (U^-,U^+)$.} It remains to show that $U^-$ admits a right neighborhood where $P_I=P^*$: the same argument can be used for $U^+$.

Since $F(U^-)+\beta_2>0$, by the same argument exploited in the proof of Lemma \ref{costexpression}, $U^-$ cannot be the accumulation point of extrema of connected components of $\mathcal{I}(P_I;I)$. Therefore, if we assume that $U^-$ can be approximated from the right via points where $P_I\neq P^*$, necessarily there exists $J\subseteq \mathcal{I}(P_I;I)$ connected component such that $U^-=\inf J$.

By the monotonicity with respect to $\beta$ of solutions to \eqref{functeq}, in $J \cap I$ there holds $P_1>P_I$, whereas $P_1(U^+)=P_I(U^+)=P^*(U^+)$. Define
\begin{align*}
\tilde U := \min \Big\{ U \in (U^-, U^+]: \ P_1(U)=P_I(U) \Big\},
\end{align*}
where we observed that the minimum exists being $P_I < P_1$ in a left neighborhood of $U^-$. Note that $P_1(\tilde U)=P_I(\tilde U)=P^*(\tilde U)$, because the control must act on $P_I$ in order to reach $P_1$. By Proposition \ref{Prop:optimal_Gamma_us} 
\begin{align}
\label{cont}
\int_{U^-}^{\tilde U} \frac{Uf(U)-P_I^2(U)}{UP_I^2(U)} \, dU > \int_{U^-}^{\tilde U} \frac{Uf(U)-P_1^2(U)}{UP_1^2(U)} \, dU \ge 0.
\end{align}

Observe that if $F(\tilde U) + \beta_2 \leq 0$, then $F(\tilde U) + \beta_1 < 0$, so that $P_1 < P^*$ in a left neighborhood of $\tilde U$, and then $P_I(\tilde U) < P^*(\tilde U)$. Hence $F(\tilde U) + \beta_2 > 0$. 
%
Setting
\begin{equation*}
W^+ := \inf \left \{ U<\tilde U: \ P_I\equiv P^* \text{ and } F+\beta_2 >0 \text { in } (U, \tilde U)\right\},
\end{equation*}
by optimality of $P_I$ we can conclude that $P_I = P^*$ in $(W^+,\tilde U)$.

We can perform the same perturbation as the one presented in the proof of Proposition \ref{nec2}, to obtain
\begin{equation*}
\int_{U^-}^{W^+}\frac{f(U)U-P_I^2(U)}{UP_I^2(U)}\, dU \le 0,
\end{equation*}
and thus, since $P_I=P^*$ in $(W^+, \tilde U)$,
\begin{equation*}
\int_{U^-}^{\tilde U}\frac{f(U)U-P_I^2(U)}{UP_I^2(U)}\, dU= \int_{U^-}^{W^+}\frac{f(U)U-P_I^2(U)}{UP_I^2(U)}\, dU+ \int_{W^+}^{\tilde U}\frac{f(U)U-P_I^2(U)}{UP_I^2(U)}\, dU\le 0,
\end{equation*}
which contradicts (\ref{cont}). 
%
\end{proof}

\begin{remark}
\label{Rem:beta12_manifold}
With the same arguments as the ones employed in the previous proof, we can also study what happens in the regions
\begin{equation*}
(\gls{Uubeta},\gls{Uhatubeta}) \quad \text{and} \quad (\gls{Uhatsbeta},\gls{Usbeta}).
\end{equation*}
Observe first that by the definition of $\hat U_u(\beta)$ and the fact that $\beta \mapsto \Gamma_u(\beta)$ is monotone decreasing, then
\begin{equation*}
\beta \mapsto \hat U_u(\beta) \quad \text{is monotone decreasing}.
\end{equation*}
Similarly $\beta \mapsto \hat U_s(\beta)$ is monotone increasing.

Next, the constrained problem
\begin{equation*}
\begin{split}
E_u(\beta_2) := \inf \bigg\{ \int_{\hat U_u(\beta_2)}^{\hat U_u(\beta_1)} \frac{1}{U} \left(P' +\frac{f(U)}{P} + \beta \right) \, dU&; \ P:[U_u(\beta_2),U_u(\beta_1)]\to \R^+ \text{ Lipschitz s.t.} \\
& \quad P(U_u(\beta_1))=P^*(U_u(\beta_1)), P' +\frac{f(U)}{P} + \beta_2\ge 0 \bigg\},
\end{split}
\end{equation*}
admits a unique minimizer \newglossaryentry{PuU}{name=\ensuremath{P_u(U)},description={boundary local minimizer for $\Gamma_u$}} \gls{PuU} satisfying the first-order necessary conditions and $P_u=P^*$ in a left neighborhood of $U_u(\beta_1)$.

Similarly, there exists a unique optimal profile \newglossaryentry{PsU}{name=\ensuremath{P_s(U)},description={boundary local minimizer for $\Gamma_s$}} \gls{PsU} for
\begin{equation*}
\begin{split}
E_s(\beta_2):=\inf \bigg\{ \int_{\hat U_s(\beta_1)}^{\hat U_s(\beta_2)} \frac{1}{U} \left(P' +\frac{f(U)}{P} + \beta \right) \, dU &; \ P:[U_s(\beta_1),U_s(\beta_2)]\to \R^+ \text{ Lipschitz s.t.} \\
& \quad P(U_s(\beta_1))=P^*(U_s(\beta_1)), P' +\frac{f(U)}{P} + \beta_2 \ge 0 \bigg\},
\end{split}
\end{equation*}
satisfying the first-order necessary conditions and with $P_s=P^*$ in a right neighborhood of $U_s(\beta_1)$.
\end{remark}

\begin{corollary}
\label{monotone}
It holds $\mathcal{O}(\beta_1)\subseteq \mathcal{O}(\beta_2)$ for all $\beta^* \le \beta_1<\beta_2$ and the effort function $\beta \mapsto E(\beta)$ is strictly increasing.
\end{corollary}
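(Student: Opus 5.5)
We prove the monotonicity of the control set by constructing $P_{\beta_2}$ explicitly from $P_{\beta_1}$ and then invoking the uniqueness result, Corollary \ref{unique}. Fix $\beta^*\le\beta_1<\beta_2$ and write $\mathcal I(\beta_1)\cap(\hat U_u(\beta_1),\hat U_s(\beta_1))$ as the union of its connected components $I=(U^-,U^+)$. On each such $I$ we take the unique minimizer $P_I$ of the constrained problem \eqref{local} given by Lemma \ref{Lem:restr_beta_i}. On the two boundary regions we take the boundary minimizers $P_u$ on $(\hat U_u(\beta_2),\hat U_u(\beta_1))$ and $P_s$ on $(\hat U_s(\beta_1),\hat U_s(\beta_2))$ of Remark \ref{Rem:beta12_manifold}; there $\hat U_u$ is decreasing and $\hat U_s$ increasing in $\beta$. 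We then glue these pieces into a single profile $\hat P$:
\begin{itemize}
\item $\hat P=\Gamma_u(\beta_2)$ on $[0,\hat U_u(\beta_2)]$ and $\hat P=\Gamma_s(\beta_2)$ on $[\hat U_s(\beta_2),1]$;
\item $\hat P=P_u$, $P_s$, $P_I$ on the corresponding intervals;
\item $\hat P=P^*$ on $\mathcal O(\beta_1)$.
\end{itemize}

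First we check that $\hat P\in\tilde{\mathcal A}(\beta_2)$. On $\mathcal O(\beta_1)$ we have $F+\beta_2>F+\beta_1>0$, so the control $(F+\beta_2)/U$ keeping $P=P^*$ is admissible. Elsewhere $\hat P$ solves \eqref{functeq} with $\beta_2$, or is a local minimizer with nonnegative control. Gluing is continuous because every piece equals $P^*$ at its endpoints. The constraint $\Gamma_u(\beta_2)\le\hat P\le\Gamma_s(\beta_2)$ follows from monotonicity of the manifolds in $\beta$, together with the comparison $P_I\le P_{\beta_1}$ used in Step 2 of the lemma.

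Next we verify the first-order conditions \eqref{Equa:nec2_gen_pro}, or equivalently the hypotheses of Theorem \ref{suff}. The key point is the last assertion of Lemma \ref{Lem:restr_beta_i}: $P_I=P^*$ in neighborhoods of $U^\pm$, and the same holds for $P_u$, $P_s$. Hence every connected component $J$ of $\mathcal I(\hat P)$ is compactly contained in a single piece, either some $I$ or one of the boundary intervals. The integral conditions on $J$ are then exactly \eqref{Equa:nec2-local_tot}, or the analogous conditions for $P_u$, $P_s$. On the boundary pieces, conditions \eqref{Point1:ther_uniq}--\eqref{Point3:ther_uniq} of Theorem \ref{suff} follow from the definitions of $\hat U_u(\beta_2)$ and $\hat U_s(\beta_2)$.

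By Corollary \ref{unique}, $\hat P=P_{\beta_2}$. Since every point of $\mathcal O(\beta_1)$ satisfies $P_{\beta_2}=P^*$ and $F+\beta_2>0$, it lies in $\mathcal K(\beta_2)\cap\{F+\beta_2>0\}$, whose interior is $\mathcal O(\beta_2)$. As $\mathcal O(\beta_1)$ is open, this gives $\mathcal O(\beta_1)\subseteq\mathcal O(\beta_2)$.

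Strict monotonicity of $E$ then follows from Lemma \ref{costexpression}:
\[
E(\beta_2)=\int_{\mathcal O(\beta_2)}\frac{F+\beta_2}{U}\,dU\ \ge\ \int_{\mathcal O(\beta_1)}\frac{F+\beta_2}{U}\,dU\ >\ \int_{\mathcal O(\beta_1)}\frac{F+\beta_1}{U}\,dU=E(\beta_1).
\]
The first inequality uses $F+\beta_2>0$ on $\mathcal O(\beta_2)$. The strict inequality needs $\mathcal O(\beta_1)\ne\emptyset$, which holds because a traveling profile with $\beta_1>\beta^*$ requires a nonzero control. The case $\beta_1=\beta^*$ is handled directly: $E(\beta^*)=0$, while $E(\beta_2)>0$.

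The main obstacle is the gluing step. We must make sure that no component of $\mathcal I(\hat P)$ straddles a boundary point of $\mathcal O(\beta_1)$, and that the boundary pieces at $\hat U_u$, $\hat U_s$ match the requirements of Theorem \ref{suff}. We resolve this by relying on the neighborhood statement of Lemma \ref{Lem:restr_beta_i} and its analogue in Remark \ref{Rem:beta12_manifold}.
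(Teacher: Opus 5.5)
Your proposal is correct and is essentially the paper's proof. You glue $\Gamma_u(\beta_2)$, $P_u$, the local minimizers $P_I$ of Lemma \ref{Lem:restr_beta_i}, $P^*$, $P_s$ and $\Gamma_s(\beta_2)$, identify the result with $P_{\beta_2}$ via Corollary \ref{unique}, and compare costs through Lemma \ref{costexpression} using $\mathcal O(\beta_1)\neq\emptyset$ for $\beta_1>\beta^*$.

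There are two small slips. First, the bound $\Gamma_u(\beta_2)\le\hat P\le\Gamma_s(\beta_2)$ should come from forward/backward comparison with the nonnegative control, since Step 2 of Lemma \ref{Lem:restr_beta_i} only gives $P_I\le P_{\beta_1}$ on a free subinterval starting at $U^-$. Second, $\hat P$ should equal $P^*$ on all of $\overline{\mathcal O(\beta_1)}\cap[\hat U_u(\beta_1),\hat U_s(\beta_1)]$, not just on $\mathcal O(\beta_1)$.
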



\begin{proof}
Set
\begin{align*}
P(\beta_2,U) := \begin{cases}
\Gamma_u(\beta_2,U) & U \in (0,\hat U_u(\beta_2), \\
P_u(U) & U \in [\hat U_u(\beta_2),\hat U_u(\beta_1)], \\
P_I(U) & U \in I \subseteq \mathcal{I}(\beta_1) \cap (\hat U_u(\beta_1),\hat U_s(\beta_1)) \text{ connected component}, \\
P^*(U) & U \in [\hat U_u(\beta_1),\hat U_s(\beta_1)] \setminus \mathcal{I}(\beta_1), \\
P_s(\beta_1,U) & U \in [\hat U_s(\beta_1),\hat U_s(\beta_2)], \\
\Gamma_s(\beta_2,U) & U \in (\hat U_s(\beta_2),1),
\end{cases}
\end{align*}
where $P_I$ is given in Lemma \ref{Lem:restr_beta_i} and $P_u$, $P_s$ are defined in Remark \ref{Rem:beta12_manifold}. The function $P(\beta_2,U)$ is well defined and Lipschitz continuous, it solves
\begin{equation*}
P' + \frac{f(U)}{P} + \beta_2 = 0 \quad \text{when} \ U \in \{P(\beta_2) \neq P^*\},
\end{equation*}
and satisfies the necessary conditions (\ref{Equa:nec2_gen_pro}) by construction. By Corollary \ref{unique}, $P(\beta_2) = P_2$: in particular
$$
\mathcal{O}(\beta_2)=\mathcal{O}(P(\beta_2))\supseteq \mathcal{O}(\beta_1).
$$

By the above inclusion and since $\mathcal{O}(\beta)= \emptyset$ if and only if $\beta=\beta^*$,
\begin{align*}
E(\beta_2)=&\int_{\mathcal{O}(\beta_2)} \frac{F(U) +\beta_2}{U} \, dU \\
&= \int_{\mathcal{O}(\beta_1)} \frac{F(U) +\beta_1}{U} \, dU + \int_{\mathcal{O}(\beta_1)} \frac{\beta_2- \beta_1}{U} \, dU + \int_{\mathcal{O}(\beta_2) \setminus \mathcal O(\beta_1)} \frac{F(U) +\beta_2}{U} \, dU 
> E(\beta_1). \qedhere
\end{align*}
\end{proof}

\begin{remark}
\label{Rem:tree_structure}
For $\beta = \beta^*$ it holds $\mathcal I(\beta) = (0,1)$, since no control is needed. For $\beta_1 \leq \beta_2$ the above results give 
\begin{equation*}
\mathcal I(\beta_2) \subset \mathcal I(\beta_1).
\end{equation*}
Indeed, each connected component of $\mathcal I(\beta_2)$ is strictly contained in a connected component of $\mathcal I(\beta_1)$. the number of connected components can only increase because it can happen that the solution $P_I$ of Lemma \ref{Lem:restr_beta_i} has $\mathcal I(P_I;I)$ not connected or because $P_u$ or $P_s$ are not equal to $P^*$. We will see later that this splitting has consequences on the regularity of $E(\beta)$.
\end{remark}

\subsection{Continuous dependence of $P_\beta$}
\label{Ss:contr_dep_Pbeta}

We conclude this section by proving, in the transversal case $f \in \gls{Tfrak}$, that outside finitely many speeds $\beta$ the number of connected components \gls{Aomega} of $\mathcal I(\beta,f)$ is constant, and the boundary points of each connected component \gls{Aomega} depend smoothly on $\beta,f$. The finitely many speeds that must be excluded are the ones for which some intervals $A_\varpi$ split because of the minimization problem \eqref{local}. This can only happen finitely many times for $f \in \mathfrak T$.

\begin{proposition}
\label{Prop:reguls_Ibeta}
Assume that $f \in \mathfrak T$. Then there is a finite set $\gls{Ncal2} = \mathcal N_2(f)$ such that for all $\beta \notin \mathcal N_2$ the following holds:
\begin{enumerate}
\item the number of connected components of $\mathcal I(\beta,f)$ is finite and constant;
\item the set
\begin{equation*}
\bigcup_{f \in \mathfrak T} \mathcal N_{2}(f)^c \times \{f\} \subseteq \gls{Dcal} = \Big\{ (\beta,f) \in \R \times C^2([0,1]), \beta > \beta^*(f), f \ \text{satisfying Assumption \eqref{H1}} \Big\}
\end{equation*}
is open and dense in the product topology;
\item the boundary points of each connected component of $\mathcal I(\beta,f)$ are smooth functions of $\beta,f$ in the Fr\'echet sense: this means that they admit a Fr\'echet derivative.
\end{enumerate}
\end{proposition}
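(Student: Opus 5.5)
The approach is to describe the optimal profile $P_\beta$ explicitly through finitely many boundary points, and to treat these points as the solution of a finite system of equations to which the Implicit Function Theorem applies. Fix $f \in \mathfrak T$ and $\beta \in \mathcal T_f$. By Definition \ref{Def:trans_calT} the set $\{F+\beta>0\}$ is a finite union of intervals $D^+_i$ with non-degenerate endpoints. The crossings $C_u$, $C_s$ are finite and transversal, so $\hat U_u(\beta)$ and $\hat U_s(\beta)$ are isolated points of $C_u$, $C_s$. Inside $(\hat U_u,\hat U_s)$, Lemma \ref{costexpression} and the argument in its proof show that boundary points of $\mathcal I(\beta)$ cannot accumulate at points where $F+\beta \neq 0$. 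Since the zeros of $F+\beta$ are finite and non-degenerate, accumulation at them is excluded too, and $\mathcal I(\beta)$ has finitely many components. Each interior component $(U^-,U^+)$ is an arc of a solution of \eqref{functeq} leaving $P^*$ at $U^-$ and returning at $U^+$. By Corollary \ref{neccond2} it satisfies the system
\begin{equation*}
\Phi_1(U^-,U^+,\beta,f) := P(U^+;U^-,\beta,f) - P^*(U^+) = 0, \qquad \Phi_2(U^-,U^+,\beta,f) := \int_{U^-}^{U^+} \frac{Uf(U)-P^2}{UP^2}\,dU = 0,
\end{equation*}
where $P(\cdot;U^-,\beta,f)$ solves \eqref{functeq} with $P(U^-)=P^*(U^-)$. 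The boundary components are $(U_u,\hat U_u)$ and $(\hat U_s,U_s)$, and their endpoints are transversal zeros of $\Gamma_u-P^*$ and $\Gamma_s-P^*$.

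For point (3) I will apply the Implicit Function Theorem to $(\Phi_1,\Phi_2)$ together with the equations $\Gamma_{u,s}(\beta,f,U)=P^*(U)$. These maps are Fréchet differentiable by Corollary \ref{Cor:Frechet_Gamma} and by smooth dependence of \eqref{functeq} on data away from $P=0$, which holds by Lemma \ref{lowerbound}. For the manifold endpoints, the $U$-derivative is $-(F+\beta)\neq0$ by transversality. For an interior component, the Jacobian is computed using \eqref{comput}:
\begin{equation*}
\partial_{U^-}\Phi_1 = (F(U^-)+\beta)\,\frac{U^+}{U^-}\,e^{\int_{U^-}^{U^+}\frac{Wf-P^2}{WP^2}} , \qquad \partial_{U^+}\Phi_1 = -(F(U^+)+\beta),
\end{equation*}
where the exponential equals $1$ by $\Phi_2=0$. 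Here $\partial_{U^\pm}\Phi_2$ equals the integrand at the endpoints, which vanishes, plus a contribution from the dependence of $P$ on $U^-$. The non-degeneracy to verify is that the determinant is nonzero whenever both endpoints lie strictly inside $\{F+\beta>0\}$ and the inequalities \eqref{Equa:nec2_gen} hold strictly in the interior of the component.

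Define $\mathcal N_2(f)$ as $\mathcal N_0\cup\mathcal N_1$ together with the speeds where this nondegeneracy fails. These are the speeds where an interior minimum of $\tilde U\mapsto\int_{U^-}^{\tilde U}$ touches $0$, where an endpoint hits a zero of $F+\beta$, or where $\hat U_u$ or $\hat U_s$ jumps because $I_u^{\bar U}$ or $I_s^{\bar U}$ touches $0$ at another point of $C_u$ or $C_s$. These are exactly the splitting events of Remark \ref{Rem:tree_structure}. Between two such speeds the Implicit Function Theorem continues the solution smoothly, and by Corollary \ref{unique} the continued profile is $P_\beta$. Hence the number of components is constant there, which gives point (1), and the endpoints are smooth in $(\beta,f)$, which gives point (3). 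Point (2) follows because these conditions are open. Density comes from Theorem \ref{Theo:transv_generic}, combined with finiteness of $\mathcal N_2$.

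The main obstacle is proving that $\mathcal N_2(f)$ is finite rather than merely closed and negligible. My plan is to use monotonicity (Corollary \ref{monotone}): components of $\mathcal I(\beta)$ only shrink or split as $\beta$ increases. Each splitting strictly increases the number of components, and that number is bounded by a constant depending on the number $N(\beta)$ of zeros of $F+\beta$ and on $\sharp C_u+\sharp C_s$. For $f\in\mathfrak T$ both quantities are uniformly bounded on compact sets of speeds, by Remark \ref{zerosp} and Proposition \ref{Prop:Gamma_trans_poly}. For large $\beta$ the structure stabilizes, since $\hat U_u=U^*$ and the components converge toward the $D^-_i$. Hence only finitely many splitting speeds occur. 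Tangential degeneracies without splitting still have to be ruled out; I would exclude them by shrinking $\mathfrak T$ to an open dense subset, perturbing $f$ so that these codimension-one events happen only at isolated speeds.
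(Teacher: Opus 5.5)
Your route is the paper's: finitely many components of $\mathcal I(\beta)$ from the non-degenerate zeros of $F+\beta$ (Lemma \ref{Lem:not_bdr_tr}). You then apply the Implicit Function Theorem to ``$P=P^*$ at $U^+$'' together with $\int_{U^-}^{U^+}\frac{Uf-P^2}{UP^2}\,dU=0$, with denominators $F(U^\pm)+\beta>0$; the paper does the same computation in the variable $\delta P$, see \eqref{Equa:deltaPpm}, \eqref{Equa:deltaUpm}. Finally $\mathcal N_2$ is the set where the strict inequalities fail, and its finiteness comes from the monotone, bounded count of components. The determinant you leave ``to verify'' is automatic. Since $\partial_{U^+}\Phi_2=0$ on the solution set,
\[
\det=(F(U^+)+\beta)\,\partial_{U^-}\Phi_2=-\frac{(F(U^-)+\beta)(F(U^+)+\beta)}{U^-}\int_{U^-}^{U^+}\frac{2Uf(U)}{P^3}\,e^{K(\beta,U^-,U)}\,dU\neq 0,
\]
because $f>0$ on $(U^*,1)$. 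This is the same positivity that makes \eqref{Equa:deltaPpm} solvable. The strict inequalities are not part of this non-degeneracy; they are needed only to identify the continued profile with $P_\beta$ via Corollary \ref{unique}.

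The gap is your last step. Shrinking $\mathfrak T$ by an unproved genericity argument would prove the proposition only for a smaller, unspecified class of sources, whereas it is stated for every $f\in\mathfrak T$. It is also unnecessary, because a touch without splitting cannot occur.

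Suppose that at $\bar\beta$ the nonnegative map $\tilde U\mapsto\int_{U^-}^{\tilde U}\frac{Uf-P^2}{UP^2}\,dU$ vanishes at interior points. These are minima, so $P$ passes there from above to below $P^*$. Hence they lie in $\{F+\bar\beta>0\}$, since at a non-degenerate zero of $F+\bar\beta$ the trajectory is tangent to $P^*$ and does not cross it. Split the component at all such points and continue each piece by your Implicit Function Theorem. On each piece $\partial_\beta P$ solves $\frac{d}{dU}\partial_\beta P=\frac{f}{P^2}\partial_\beta P-1$ with $\int\frac{2f}{P^3}\partial_\beta P\,dU=0$. As in the argument for \eqref{Equa:sing_deltaP}, this forces $\partial_\beta P>0$ at the left end of the piece and $\partial_\beta P<0$ at its right end. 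By \eqref{Equa:deltaUpm}, adjacent pieces therefore move apart. For $\beta>\bar\beta$ a control interval with $F+\beta>0$ opens between them, and the split profile satisfies the first-order conditions, so it is $P_\beta$ by Corollary \ref{unique}. Thus every failure of \eqref{Equa:strict_comI} is a genuine splitting; the paper uses this tacitly.

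Your counting argument then bounds these speeds without modifying $\mathfrak T$. You should run it on each connected component of $\R\setminus(\mathcal N_0\cup\mathcal N_1)$, where the number of intervals $D^-_i$ and of points of $C_u, C_s$ is constant by Remarks \ref{zerosp} and \ref{Rem:controll_inter}. Proposition \ref{Prop:Gamma_trans_poly}, which you cite, covers only polynomial sources.
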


\begin{proof}
Recall that if $f \in \mathfrak{T}$, then there is a finite set of speeds $\gls{Ncal0}(f) \cup \gls{Ncal1}(f)$ such that for all $\beta \notin \mathcal N_0 \cup \mathcal N_1$ the points \gls{Uibeta}, $i=1,\dots,2N-1$, where $F(U) + \beta = 0$ are finite, the derivative $F'(U_i) \not= 0$ and the crossing points $\gls{Cs} = \{\Gamma_s = P^*\}$, $\gls{Cu} = \{\Gamma_u = P^*\}$ are finite and transversal to $P^*$. Recall also that this set of $(\beta,f)$ is open and dense in \gls{Dcal} by Theorem \ref{Theo:transv_generic}.

\medskip

\noindent{\it Step 1: continuous dependence of $\hat U_u,\hat U_s$.} Consider the function $\beta \mapsto \hat U_u(\beta,f)$: by definition, it holds
\begin{equation*}
\int_U^{\hat U_u(\beta,f)} \frac{V f(V) - \Gamma_u(\beta,f,V)^2}{V \Gamma_u(\beta,f,V)^2} dV \leq 0, \quad \forall U < \hat U_u(\beta,f),
\end{equation*}
and for all $\bar U \in \{ \Gamma_u(\beta,f,U) = P^*(U), U > \hat U_u(\beta,f)\}$
\begin{equation*}
\int_U^{\bar U} \frac{V f(V) - \Gamma_u(\beta,f,V)^2}{V \Gamma_u(\beta,f,V)^2} dV > 0, \quad \text{for some} \ U < \bar U \in C_u(\beta,f).
\end{equation*}
The second condition is open in $(\beta,f)$ because by $f \in \mathfrak T$ then $\{\Gamma_u(\beta,f,U) = P^*(U), U > \hat U_u(\beta,f)\}$ is finite, so that we need to study the first. Observing that by transversality of the crossing $\Gamma_u > P^*$ for $(\hat U - \delta,\hat U)$ for some $\delta \ll 1$, and that
\begin{equation*}
\frac{\partial}{\partial_\beta} \int_U^{\hat U_u(\beta,f)} \frac{V f(V) - \Gamma_u(\beta,f,V)^2}{V \Gamma_u(\beta,f,V)^2} dV = \int_{\bar U}^{\hat U_u(\beta,f)} - \frac{2f(V)}{\Gamma_u(\beta,f,V)^3} \partial_\beta \Gamma_u(\beta,f,V)dV > 0,
\end{equation*}
it follows that the speeds $\beta$'s for which
$$
\max_{U < \hat U_u} \bigg\{ \int_U^{\hat U_u(\beta,f)} \frac{V f(V) - \Gamma_u(\beta,f,V)^2}{V \Gamma_u(\beta,f,V)^2} dV \bigg\}
$$
becomes positive are isolated. Hence there exists an open and dense set of $\beta$'s such that
\begin{equation}
\label{Equa:strict_transve}
\int_U^{\hat U_u(\beta,f)} \frac{V f(V) - \Gamma_u(\beta,f,v)^2}{V \Gamma_u(\beta,f,v)^2} dV > 0, \quad \forall U < \hat U_u(\beta,f),
\end{equation}
and this implies that in every connected component of this set $\hat U_u(\beta,f)$ depends continuously on $(\beta,f)$, because of Remark \ref{Rem:beta12_manifold}.
Moreover, if $U_{u,i}(\beta)$ are the point of $C_u(\beta)$ ordered according to $i$, then the function
$$
\beta \mapsto i(\beta) \quad \text{such that} \quad \hat U_u(\beta) = U_{u,i(\beta)}(\beta)
$$
is decreasing, so that it can only jump down finitely many times, being $C_u$ finite: this is also a consequence that the transition points are isolated.

The same reasoning applies to $\hat U_s(\beta)$, obtaining that outside a finite number of speeds it holds
\begin{equation}
\label{Equa:strict_transves}
\int_{\hat U_s(\beta,f)}^U \frac{V f(V) - \Gamma_s(\beta,f,v)^2}{V \Gamma_s(\beta,f,v)^2} dV < 0, \quad \forall U > \hat U_s(\beta,f),
\end{equation}
and then $\beta,f \mapsto \hat U_s(\beta,f)$ depends continuously on $\beta,f$.

\medskip

\noindent{\it Step 2: continuous dependence of \gls{IcalPopt}.} We use the following

\begin{lemma}
\label{Lem:not_bdr_tr}
If $U$ is a point such that $F(U) + \beta = 0$, $F'(U) \not= 0$, then it cannot be in $\partial \gls{IcalPopt}$. 
\end{lemma}

\begin{proof}
We prove the lemma for $F'(U) > 0$.

First, $U_i$ cannot be the initial point of a connected component, because $(U_i-\delta,U_i) \subset \gls{KcalPopt}^c$ being $F + \beta < 0$ there. If $U_i$ is the endpoints of a connected component of $\mathcal I(\beta)$, then $P(U) < P^*(U)$ in $(U_i - \delta,U_i)$ and thus
\begin{equation*}
\int_{U_i - \delta}^{U_i} \frac{U f(U) - P^2}{U P^2} dU > 0,
\end{equation*}
contradicting \eqref{Equa:nec2+}.
\end{proof}

The fact that $F(U) + \beta > 0$ is consists of finitely disjoint segments implies that $\mathcal O(\beta)$ has finitely many connected components, so the same happens for $\mathcal I(\beta)$. By the previous lemma, the infimum and the maximum of a connected component of $\mathcal I(\beta)$ belong to an interval where $F(U) + \beta > 0$: in particular, any connected component of $\mathcal I(\beta)$ must contain an interval where $F(U) + \beta < 0$, since the trajectory $P$ has to cross $P^*$ somewhere.

Hence the length of a connected component of $\mathcal I(\beta)$ is uniformly positive in every compact subset of $\R \setminus \mathcal N_0 \cup \mathcal N_1$, and thus the number of components can only increase because of Lemma \ref{Lem:restr_beta_i}. This can only happen a finite number of times in each connected component of $\R \setminus \mathcal N_0 \cup \mathcal N_1$.

As for the $\Gamma_u$-case, if $(U_i^-,U_i^+)$ is a connected component of $I(\beta,f)$, the set where
\begin{equation}
\label{Equa:strict_comI}
\int_{U_i^-}^{U} \frac{U f(U) - P^2}{U P^2} dU > 0 \quad \forall U \in (U^-_i,U^+_i)
\end{equation}
is open and dense: in particular, there exists an open and dense set of $\beta$'s such that the above inequality holds strictly for all connected components of $I(\beta,f)$. By the analysis of Lemma \ref{Lem:restr_beta_i}, no splitting of $I(\beta,f)$ can occur if \eqref{Equa:strict_comI} holds for all $i$.

\medskip

\noindent{\it Step 3: definition of set of stability for \gls{IcalPopt}.} Let \newglossaryentry{Ncal2}{name=\ensuremath{\mathcal N_{2}},description={the set where the number of components of $I(\beta)$ varies for $f \in \mathfrak T$}} \gls{Ncal2} be the finite set
\begin{equation*}
\gls{Ncal2} = \gls{Ncal0} \cup \gls{Ncal1} \cup \Big\{ \beta \ \text{such that \eqref{Equa:strict_transve}, \eqref{Equa:strict_transves} and \eqref{Equa:strict_comI} hold} \Big\}.
\end{equation*}
The complement of this set satisfies the first and second part of the statement.

\medskip

\noindent{\it Step 4: regularity of the boundary points of connected components of $\mathcal I(\beta,f)$ w.r.t. $\beta,f$.} The smooth dependence of $\hat U_u(\beta,f),\hat U_s(\beta,f)$ follows from the transversal interaction of $\Gamma_u(\beta,f),\Gamma_s(\beta,f)$ with $P^*$ and their smooth dependence w.r.t. $\beta,f$, Theorem \ref{manifoldbound} and Corollary \ref{Cor:Frechet_Gamma}.

Let $(U^-(\beta,f),U^+(\beta,f)) \in \mathcal I(\beta,f)$ be a connected component for $\beta \notin \mathcal N_2$. The optimality conditions give that necessarily
\begin{equation}
\label{Equa:perturb_betaini}
\int_{U^-(\beta,f)}^{U^+(\beta,f)} \frac{U f(U) - P^2}{U P^2} dU = 0.
\end{equation}
The perturbation of a trajectory when perturbing the data
$$
\beta,f,P(U^-(\beta,f)) \mapsto \beta + \delta \beta, \ f + \delta f, \ P(U^-(\beta,f)) + \delta P
$$
satisfies
\begin{equation*}
\frac{d}{dU} \delta P = \frac{f(U)}{P^2} \delta P - \frac{\delta f(U)}{P} - \delta \beta.
\end{equation*}
Hence, using the explicit form of the solution
\begin{equation*}
\delta P(U) = \delta P e^{\int_{U^-(\beta,f)}^U \frac{f(W)}{P^2(W)} dW} - \int_{U^-(\beta,f)}^U \bigg( \frac{\delta f(V)}{P(V)} 
+ \delta \beta \bigg) e^{\int_{V}^U \frac{f(W)}{P^2(W)} dW} dV, \quad \delta P(U^-(\beta,f)) = \delta P,
\end{equation*}
and recalling that
\begin{equation*}
\frac{U^\pm(\beta,f) f(U^\pm(\beta,f)) - P^2(U^\pm(\beta,f))}{U^\pm(\beta,f) P^2(U^\pm(\beta,f))} = 0,
\end{equation*}
by differentiating \eqref{Equa:perturb_betaini} we obtain the relation
\begin{equation*}
\begin{split}
&\int_{U^-(\beta,f)}^{U^+(\beta,f)} \frac{\delta f(U)}{P^2(U)} dU +  \delta P \int_{U^-(\beta,f)}^{U^+(\beta,f)} - \frac{2 f(U)}{P^3(U)} e^{\int_{U^-}^U \frac{f(W)}{P^2(W)} dW} dU \\
&\quad - \int_{U^-(\beta,f)}^{U^+(\beta,f)} - \frac{2 f(U)}{P^3(U)} \bigg( \int_{U^-(\beta,f)}^U \bigg( \frac{\delta f(V)}{P(V)} + \delta \beta \bigg) e^{\int_{V}^U \frac{f(W)}{P^2(W)} dW} dV \bigg) dU = 0,
\end{split}
\end{equation*}
\begin{equation}
\label{Equa:deltaPpm}
\begin{split}
\delta P &= \bigg( \int_{U^-(\beta,f)}^{U^+(\beta,f)} \frac{2 f(U)}{P^3(U)} e^{\int_{U^-}^U \frac{f(W)}{P^2(W)} dW} dU \bigg)^{-1} \\ & \quad \quad \cdot \bigg[ \int_{U^-(\beta,f)}^{U^+(\beta,f)} \frac{\delta f(U)}{P^2} + \frac{2 f(U)}{P^3(U)} \bigg( \int_{U^-(\beta,f)}^U \bigg( \frac{\delta f(V)}{P(V)} + \delta \beta \bigg) e^{\int_{V}^U \frac{f(W)}{P^2(W)} dW} dV \bigg) dU \bigg].
\end{split}
\end{equation}
From $\delta P$ one recovers the derivatives $\partial_\beta U^\pm(\beta,f)$ by differentiating $P^*(U)$:
\begin{equation*}
\frac{d}{dU} P^*(f,U^\pm(\beta,f)) \delta U^\pm(\beta,f) + \frac{U^\pm(\beta,f) \delta f(U^\pm(\beta,f))}{2 \sqrt{f(U^\pm(\beta,f))}} = \delta P(U^\pm(\beta,f)) + \bigg( - \frac{f(U^\pm(\beta,f))}{P^*(U^\pm(\beta,f))} - \beta \bigg) \delta U^\pm(\beta,f),
\end{equation*}
\begin{equation}
\label{Equa:deltaUpm}
\begin{split}
\delta U^\pm(\beta,f) &= \bigg( \frac{d}{dU} P^*(U^\pm(\beta,f)) + \frac{f(U^\pm(\beta,f))}{P^*(U^\pm(\beta,f))} + \beta \bigg)^{-1} \bigg( \delta P(U^\pm(\beta,f)) - \frac{U^\pm(\beta,f) \delta f(U^\pm(\beta,f))}{2 \sqrt{f(U^\pm(\beta,f))}} \bigg) \\
&= \frac{\delta P(U^\pm(\beta,f)) - \frac{U^\pm(\beta,f) \delta f(U^\pm(\beta,f))}{2 \sqrt{f(U^\pm(\beta,f))}}}{F(\beta,U^\pm(\beta,f)) + \beta}.
\end{split}
\end{equation}
Due to transversality it holds
$$
\frac{d}{dU} P^*(U^\pm(\beta,f)) + \frac{f(U^\pm(\beta,f))}{P^*(U^\pm(\beta,f))} + \beta = F(\beta,U^\pm(\beta,f)) + \beta > 0,
$$
so that the above expression is meaningful.

This concludes the proof of the regularity of the boundary points of $\mathcal I(\beta,f)$, being the expressions at the r.h.s. of \eqref{Equa:deltaPpm}, \eqref{Equa:deltaUpm} continuously dependent on the entries $\beta,f$ and the optimal solution $P$. It is also clear that the derivatives \eqref{Equa:deltaUpm} define a linear continuous operator on $\delta \beta,\delta f$, depending uniformly on \gls{Pbeta}, and then the functions are Fr\'echet differentiable.
%
%
%
%
%
\end{proof}

\begin{remark}
\label{Rem:split_calI}
We remark that the condition that no splitting occurs in the components of $\mathcal I(\beta,f)$ is fundamental to obtain the regularity of $\beta \mapsto U^\pm(\beta,f),\hat U_u(\beta,f),\hat U_s(\beta,f)$, which in turn yields regularity of $E(\beta,f)$ in that set, as we will prove later. If at $\beta$ a splitting occurs, we will see that $E(\beta,f)$ cannot be in general $C^2$, yielding additional non-regular points outside $\beta^{**}$.
\end{remark}

\section{Lipschitz regularity of the cost function $E(\beta,f)$ w.r.t. $\beta$ and the source $f$}
\label{S:regula_E_beta_f}

In this section we begin the investigation of the regularity of $E$: we will obtain that $f,\beta \mapsto E(\beta,f)$ is Lipschitz in  $(\beta^*,+\infty)$, and monotone increasing. We will also study the dependence w.r.t. the source $f$, showing its monotonicity and local Lipschitz regularity. The Lipschitz dependence w.r.t. $f$ allows to study the effort function $E$ when $f \in \gls{Tfrak}$ is a polynomial and $\beta \in \gls{Tcal}$: more precisely we can suppose to be in the conditions of Proposition \ref{Prop:reguls_Ibeta}, so that the boundary points of \gls{IcalPopt} are smooth functions of $\beta,f$. A first application of this approximation method is used to prove the monotonicity w.r.t. $f$ in Section \ref{Ss:monotoni_Frech}, and in the next section we will obtain more precise regularity estimates. 

We start with the Lipschitz regularity analysis of $\beta \mapsto E(\beta)$.

\begin{proposition}
\label{Prop:E_Lipschitz_beta}
The functional $\beta \mapsto E(\beta)$ is monotone increasing and Lipschitz continuous with Lipschitz constant bounded by $\frac{1}{U^*} + \ln( \frac{1}{U^*})$. Moreover
\begin{equation*}
\lim_{\beta \to \infty} \bigg[ E(\beta) - \ln \bigg( \frac{1}{U^*} \bigg) \beta \bigg] = \int_{U^*}^U \frac{F(U)}{U} dU = \int_{U^*}^1 \frac{2 \sqrt{f(U)}}{U^{3/2}} dU > 0.
\end{equation*}
\end{proposition}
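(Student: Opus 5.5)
Monotonicity is already contained in Corollary \ref{monotone}, so the plan concentrates on the Lipschitz bound and the asymptotics. For the Lipschitz estimate I compare $E(\beta_2)$ with the cost of a non-optimal competitor built from $P_{\beta_1}$, for $\beta_1<\beta_2$. The profile $P_1:=P_{\beta_1}$ is admissible for $\beta_2$ on every interval where it is not on $\Gamma_u(\beta_1)$ or $\Gamma_s(\beta_1)$: its control there is $\alpha_1+\frac{\beta_2-\beta_1}{U}\ge 0$ (as in Step 0 of Lemma \ref{Lem:restr_beta_i}). The only issue is the boundary arcs. On $[0,U_u]$ we must follow $\Gamma_u(\beta_2)$, which lies below $\Gamma_u(\beta_1)$. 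On $[U_s,1]$ we must follow $\Gamma_s(\beta_2)$, which lies above $\Gamma_s(\beta_1)$.

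I therefore take the competitor
\[
\Gamma_u(\beta_2)\ \text{on } [0,U_u(\beta_1)],\qquad P_1 \text{ on } (U_u(\beta_1),U_s(\beta_1)),\qquad \Gamma_s(\beta_2)\ \text{on } [U_s(\beta_1),1],
\]
with upward jumps at $U_u(\beta_1)$ and $U_s(\beta_1)$; such jumps are allowed for BV controls, as in Step 1 of Theorem \ref{betterprofile}. The cost is
\[
E(\beta_1)+\int_{U_u}^{U_s}\frac{\beta_2-\beta_1}{U}\,dU+\frac{\Gamma_u(\beta_1,U_u)-\Gamma_u(\beta_2,U_u)}{U_u}+\frac{\Gamma_s(\beta_2,U_s)-\Gamma_s(\beta_1,U_s)}{U_s}.
\]
The middle integral is bounded by $(\beta_2-\beta_1)\ln(1/U^*)$, since $U_u\ge U^*$. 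The two jump terms are controlled by Theorem \ref{manifoldbound}. By \eqref{Equa:Gammas_beta}, the $\Gamma_s$ jump is at most $(\beta_2-\beta_1)(1-U_s)/U_s$. By \eqref{Equa:Gammau_beta}, the $\Gamma_u$ jump is at most $(\beta_2-\beta_1)(1+\ln(U_u/U^*))$. Some care is needed in combining these into the stated constant $\frac1{U^*}+\ln\frac1{U^*}$. A single jump can be used at $U_s$ while integrating the $\beta$-perturbation along the whole range: this uses $\int_{U_u}^{U_s}\frac{dU}{U}+\frac{1-U_s}{U_s}\le \ln\frac1{U^*}+\frac1{U^*}$. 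I expect that bookkeeping to close the estimate. If the $\Gamma_u$ jump does not fit, I will instead replace the first arc by the maximal backward trajectory ending at $P^*(U_u(\beta_1))$.

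For the asymptotics, I would show that as $\beta\to\infty$ the control set $\mathcal O(\beta)$ exhausts $(U^*,1)$ up to sets of vanishing weighted measure. Three facts combine here:
\begin{itemize}
\item for $\beta>\beta^{**}$ we have $U_u=U^*$;
\item $U_s(\beta)\to1$ by \eqref{lb}, since $\Gamma_s\ge\beta(1-U)$;
\item $\{F+\beta<0\}\subset(1-C/\beta^2,1)$ for large $\beta$, by \eqref{Equa:bound_integr}.
\end{itemize}
Hence, by Lemma \ref{Lem:not_bdr_tr} and Corollary \ref{Cor:monotone_1}, for large $\beta$ the optimal profile is the monotone one \eqref{optgraphgen}. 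This gives
\[
E(\beta)=\int_{U^*}^{U_s(\beta)}\frac{F+\beta}{U}\,dU .
\]
Then $E(\beta)-\beta\ln\frac1{U^*}=\int_{U^*}^{U_s}\frac{F}{U}\,dU-\beta\ln\frac1{U_s}$. The last term is $O(\beta(1-U_s))=O(1/\beta)\to0$, and $F$ is integrable by \eqref{Equa:bound_integr}. The closed form follows by integrating by parts, using $F=(P^*)'+f/P^*$:
\[
\int\frac{(P^*)'}{U}\,dU=\Big[\frac{P^*}{U}\Big]+\int\frac{P^*}{U^2}\,dU,\qquad \frac{f}{UP^*}=\frac{\sqrt f}{U^{3/2}},
\]
and the boundary terms vanish at $U^*$ and $1$.

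The main obstacle is the large-$\beta$ structure step, namely verifying that condition \eqref{adm} holds for large $\beta$. The difficulty is near $U=1$, where $F\to-\infty$: I need $\Gamma_s(\beta)>P^*$ on all of $\{F+\beta<0\}$. I will get this by comparing $\Gamma_s\ge\beta(1-U)$ with $P^*\lesssim\sqrt{1-U}$ on $(1-C/\beta^2,1)$, as in the proof of Proposition \ref{Prop:transv_Gamma_gen}.
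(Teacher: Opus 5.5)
Monotonicity, the identity $E(\beta)-\beta\ln\frac{1}{U^*}=\int_{U^*}^{U_s}\frac{F}{U}\,dU-\beta\ln\frac{1}{U_s}$ and the final integration by parts are fine. Both main steps, however, have a gap.

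\emph{Lipschitz bound.} Your $\Gamma_s$-jump at $U_s(\beta_1)$ is correct, since \eqref{Equa:Gammas_beta} holds on all of $[U^*,1)$. The $\Gamma_u$-jump at $U_u(\beta_1)$ is not. For $\beta_1<\beta^{**}$ and $\beta\in(\beta_1,\beta_2]$ one has $U_u(\beta)<U_u(\beta_1)$, so $U_u(\beta_1)$ lies outside the range $(0,U_u(\beta))$ where \eqref{Equa:Gammau_beta} is proved. That proof uses $\Gamma_u\ge P^*$. Past $U_u(\beta)$ the kernel $\frac{Wf(W)-\Gamma_u^2}{W\Gamma_u^2}$ is positive, and $\partial_\beta\Gamma_u\to-\infty$ where $\Gamma_u\to 0$. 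Worse, $\Gamma_u(\beta_2)$ need not stay positive up to $U_u(\beta_1)$ at all. If $\beta_1<\beta^{**}\le\beta_2$, then $U_{uf}(\beta_2)=U^*<U_u(\beta_1)$, so your competitor does not exist exactly across the critical speed. Your fallback through the maximal backward trajectory comes with no cost estimate.

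The remedy, which is the paper's construction, is to place the unavoidable jump from $\Gamma_u(\beta_2)$ up to $\Gamma_u(\beta_1)$ at $U=U^*$. There both manifolds exist for every speed. Moreover \eqref{Equa:delta_Gamma_u} (the case $U\le U^*$ of \eqref{Equa:Gammau_beta}) bounds the jump height by $U^*(\beta_2-\beta_1)$, i.e. the jump cost by $\beta_2-\beta_1$. Then follow $\Gamma_u(\beta_1)$ on $[U^*,U_u(\beta_1)]$; at speed $\beta_2$ this arc carries the control $(\beta_2-\beta_1)/U\ge 0$. Together with your $\Gamma_s$-jump this gives $E(\beta_2)-E(\beta_1)\le\big(\ln\frac{U_s}{U^*}+\frac{1}{U_s}\big)(\beta_2-\beta_1)$ with $U_s=U_s(\beta_1)$. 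This is at most $\big(\frac{1}{U^*}+\ln\frac{1}{U^*}\big)(\beta_2-\beta_1)$.

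\emph{Asymptotics.} The inequality you set out to prove is the wrong one. Condition \eqref{adm} requires that no point of $\{F+\beta<0\}$ lie in $\{\Gamma_u<P^*<\Gamma_s\}$. For $\beta>\beta^{**}$ this means $\Gamma_s\le P^*$ on $\{F+\beta<0\}$, not $\Gamma_s>P^*$. The latter is false for every $\beta$: $\{F+\beta<0\}$ contains a left neighbourhood of $1$, where $\Gamma_s\sim|\lambda_-(\beta,1)|(1-U)$ is much smaller than $P^*\sim\sqrt{|f'(1)|(1-U)}$. In any case, a lower bound $\Gamma_s\ge\beta(1-U)$ combined with an upper bound on $P^*$ can never yield $\Gamma_s\le P^*$.

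What you need is the following:
\begin{itemize}
\item the upper bound $\Gamma_s\le(\beta+C/\beta)(1-U)$ from the proof of Proposition \ref{Prop:transv_Gamma_gen};
\item the lower bound $P^*\ge\sqrt{(|f'(1)|-\epsilon)(1-U)}$ near $1$;
\item the sharp location of the root of $F+\beta$.
\end{itemize}
One has $F(U)=-\frac{\sqrt{|f'(1)|}}{2\sqrt{1-U}}\,(1+o(1))$ as $U\to1$. The factor $\frac12$ is absent from \eqref{Equa:bound_integr}; with the constant displayed there, the two thresholds below would coincide to leading order. With the correct constant, $\{F+\beta<0\}$ is $\{1-U<\frac{|f'(1)|}{4\beta^2}(1+o(1))\}$, whereas $\Gamma_s<P^*$ on $\{1-U<\frac{|f'(1)|}{\beta^2}(1-o(1))\}$. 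This factor-$4$ margin is what verifies \eqref{adm}; the bare inclusion $\{F+\beta<0\}\subset(1-C/\beta^2,1)$ is not enough.

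Once this is corrected, your reduction to Corollary \ref{Cor:monotone_1} gives an actual proof of the limit. The paper only sketches this step, by asserting that $P_\beta$ approaches $P^*$.
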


\begin{proof}
The monotonicity is proved in Corollary \ref{monotone}.

Let $\beta^*\le \beta_1<\beta_2$ and define an admissible profile $\tilde \gamma$ for $\beta_2$ by concatenating (see Fig. \ref{Fig:dep_beta_f} left)
\begin{itemize}
\item the unstable manifold $\Gamma_u(\beta_2)$ in the interval $[0,U^*]$, 
\item provided $U_u(\beta_1) > U^*$, the upwards segment $s$ joining $\Gamma_u(\beta_2,U^*)$ to $\Gamma_u(\beta_1,U^*)$, 
\item the unstable manifold $\Gamma_u(\beta_1)$ restricted to $[U^*,U_u(\beta_1)]$, which we denote by $\Gamma_1^l$,
\item the optimal profile for $\beta_1$ restricted to $[U_u(\beta_1),U_s(\beta_1)]$, denoted as $\gamma_1$,
\item the stable manifold $\Gamma_s(\beta_1)$ from $(U_s(\beta_1), P^*(U_s(\beta_1)))$ to $(U_s(\beta_2),\Gamma_s(\beta_1, U_s(\beta_2)))$, whose restriction we denote by $\Gamma_1^r$,
\item the upwards segment $s'$ from $(U_s(\beta_2), \Gamma_s(\beta_1,U_s(\beta_2)))$ to $(U_s(\beta_2), \Gamma_s(\beta_2, U_s(\beta_2)))$,
\item the stable manifold corresponding to $\beta_2$ starting from $(U_s(\beta_2), P^*(U_s(\beta_2))$ up to $U=1$.
\end{itemize}
The profile is Lipschitz continuous and corresponds to a non-negative control because
\begin{equation*}
\beta_1 < \beta_2, \quad \Gamma_u(\beta_2,U^*) < \Gamma_u(\beta_1, U^*_u(\beta_2)), \quad \Gamma_s(\beta_2, U_s(\beta_2)) > \Gamma_s(\beta_1, U_s(\beta_2)). 
\end{equation*}
We denote by $\varpi_2$ the one-form associated to $\beta_2$, i.e.
\begin{equation*}
\varpi_2:= \left(\frac{f(U)}{PU}+\frac{\beta_2}{U} \right)\, dU +\frac{1}{U}\, dP
\end{equation*}
and note that
\begin{align*}
E(\beta_2) &\le \int_{s} \varpi_2+\int_{\Gamma_1^l}\varpi_2+\int_{\gamma_1} \varpi_2 + \int_{\Gamma_1^r}\varpi_2+\int_{s'}\varpi_2 \\
&= \frac{\Gamma_u(\beta_1,U^*)-\Gamma_u(\beta_2, U^*)}{U^*} + \int_{U^*}^{U_u(\beta_1)} \frac{\beta_2 - \beta_1}{U}\, dU + E(\beta_1) + \int_{\gamma_1} \frac{\beta_2-\beta_1}{U}\, dU \\
& \quad + \int_{U_s(\beta_1)}^{U_s(\beta_2)}\frac{\beta_2 - \beta_1}{U}\, dU + \frac{\Gamma_s(\beta_2,U_s(\beta_2))-\Gamma_s(\beta_1, U_s(\beta_2))}{U_s(\beta_2)},
\end{align*}
which implies
\begin{align*}
E(\beta_2)-E(\beta_1) &\le \frac{\Gamma_u(\beta_1,U^*) - \Gamma_u(\beta_2, U^*)}{U^*} \\
& \quad + \int_{U^*}^1 \frac{\beta_2-\beta_1}{U}\, dU + \frac{\Gamma_s(\beta_2,U_s(\beta_2))-\Gamma_s(\beta_1, U_s(\beta_2))}{U_s(\beta_2)}  \\
&\le \bigg( 1 + \frac{1 - U_s(\beta_2)}{U_s(\beta_2)} \bigg) (\beta_2 - \beta_1) + \ln \bigg( \frac{1}{U^*} \bigg)  (\beta_2-\beta_1) \\
&\leq \bigg[ \frac{1}{U^*} + \ln \bigg( \frac{1}{U^*} \bigg) \bigg] ( \beta_2 - \beta_1 ),
\end{align*}
where we have used \eqref{Equa:Gammau_beta}, \eqref{Equa:Gammas_beta} of Theorem \ref{manifoldbound}. 

Finally, as $\beta \to \infty$ the optimal profile approaches the function $P^*$ in $(U^*,1)$, so that the cost is computed as
\begin{equation*}
E(\beta) \underset{\beta \to \infty}{\sim} \int_{U^*}^1 \frac{F(u) + \beta}{U} dU = \beta \ln \bigg( \frac{1}{U^*} \bigg) + \int_{U^*}^1 \frac{F(U)}{U} dU.
\end{equation*}
Integrating by parts and using the definition $P^* = \sqrt{U f(U)}$
\begin{equation*}
\int_{U^*}^1 \frac{F(U)}{U} dU = \int_{U^*}^1 \frac{1}{U} \frac{dP^*}{dU} + \frac{f(U)}{U P^*} dU = \int_{U^*}^1 \frac{2 P^*}{U^2} dU = \int_{U^*}^1 \frac{\sqrt{f}}{U^{\frac{3}{2}}} dU. \qedhere
\end{equation*}
\end{proof}
%
%
%
%

\begin{figure}
\begin{subfigure}{.475\textwidth}
\resizebox{\textwidth}{!}{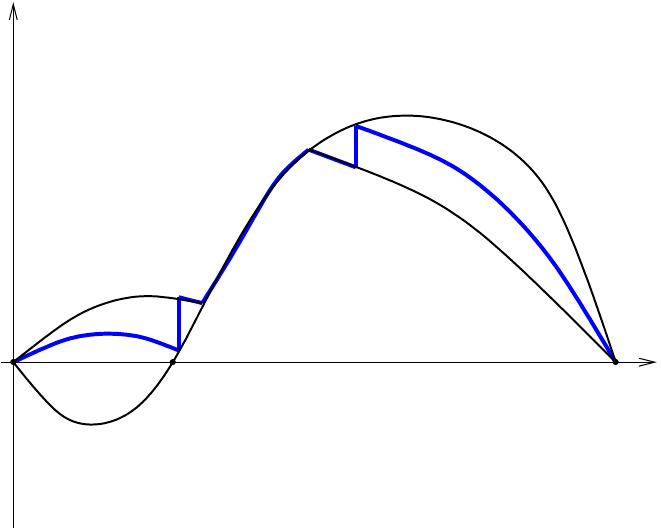}
\end{subfigure}
\hfill
\begin{subfigure}{.475\textwidth}
\resizebox{\textwidth}{!}{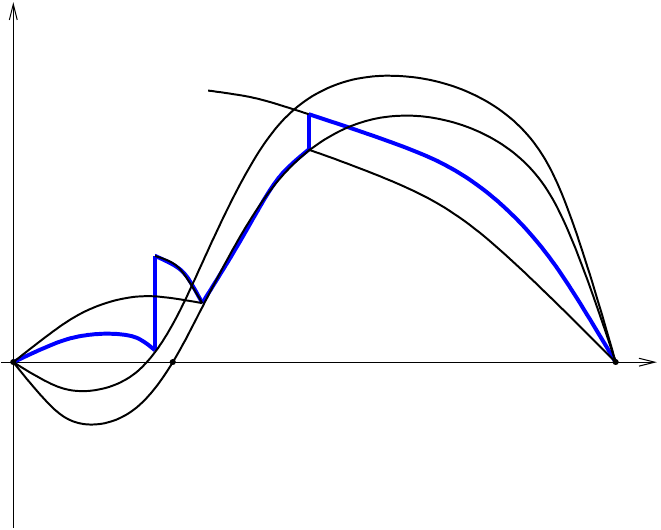}
\end{subfigure}
\caption{The comparison path $\tilde \gamma$ for the dependence of the effort $E$ w.r.t. $\beta$ (left) and $f$ (right) when $\beta < \beta^{**}(f_0)$.}
\label{Fig:dep_beta_f}
\end{figure}

The next two propositions address the Lipschitz dependence w.r.t. the source $f$. The proofs are more involved and the estimates are only local, with coefficients depending on $f_0,f_1$. 

\begin{proposition}
\label{Prop:depe_E_f}
Let $f_0,f_1$ be two source functions satisfying Assumptions \eqref{H1} and such that $f_0 \leq f_1$, and denote the corresponding optimal solutions with $P_0(U),P_1(U)$ respectively. Let \gls{rbar}, \gls{betaunder} be the constants chosen in Theorem \ref{manifoldbound} for the function $f_1$. Then there is a constant \newglossaryentry{Df}{name=\ensuremath{C_{f_0,f_1}},description={Lipschitz constant for the dependence of the $E$ on the source $f$}} $\gls{Df}$ depending only on
\begin{equation*}
\max \Big\{ \gls{Cfbeta}, f = (1-\varpi) f_0 + \varpi f_1, \varpi \in [0,1] \Big\}, \quad 
U_u(f_1), U_u(f_0), 
\end{equation*}
\begin{equation*}
\|f_0\|_{C^2}, \|f_1\|_{C^2}, \quad \beta, \quad \frac{df_0}{dU}(U^*(\beta,f_0)),\frac{df_1}{dU}(U^*(\beta,f_1)), \quad \int_{U^*(f_0)}^{U_s(f_0)} \frac{1}{U P(f_0,U)} dU,
\end{equation*}
such that
\begin{equation*}
E(\beta,f_1) \le E(\beta,f_0) + C_{f_0,f_1} \|f_1 - f_0\|_{C^1([0,1])}.
\end{equation*}
\end{proposition}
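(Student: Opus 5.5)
We bound $E(\beta,f_1)$ from above by the cost of an explicit admissible profile $\tilde\gamma$ for the source $f_1$, built from the optimal profile $P_0$ for $f_0$. This mirrors the proof of Proposition \ref{Prop:E_Lipschitz_beta}; see Fig. \ref{Fig:dep_beta_f} right.

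Since $f_0\le f_1$, monotonicity of the manifolds in $f$ gives $\Gamma_u(f_1)\le\Gamma_u(f_0)$ and $\Gamma_s(f_1)\ge\Gamma_s(f_0)$. The profile $\tilde\gamma$ is the concatenation of the following arcs:
\begin{enumerate}
\item the unstable manifold $\Gamma_u(\beta,f_1)$ up to $U_u(f_1)$, or up to a point close to $U^*$ when $\beta$ is near $\beta^{**}$;
\item an upward vertical segment reaching $P_0$;
\item the arc of $P_0$ on $[U_u(f_0),U_s(f_0)]$ (one needs $P_0\ge\Gamma_u(f_1)$, which holds);
\item an arc of $\Gamma_s(f_0)$, followed by a vertical segment up to $\Gamma_s(f_1)$ at $U_s(f_1)$ (or at a suitable point), and then $\Gamma_s(f_1)$ up to $(1,0)$.
\end{enumerate}

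We check admissibility arc by arc. Along the $P_0$ arc, the control for $f_1$ is
\[
\alpha_0 U+\frac{f_1-f_0}{P_0}\ge 0,
\]
because $f_1\ge f_0$. The vertical segments go upward, so they are admissible.

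Integrating the one-form
\[
\left(\frac{f_1}{UP}+\frac{\beta}{U}\right)dU+\frac{1}{U}\,dP
\]
along $\tilde\gamma$ gives
\[
E(\beta,f_1)\le E(\beta,f_0)+\int_{U_u(f_0)}^{U_s(f_0)}\frac{f_1-f_0}{UP_0}\,dU+\frac{\Gamma_u(f_0,\cdot)-\Gamma_u(f_1,\cdot)}{U}\Big|_{\rm jump}+\frac{\Gamma_s(f_1,\cdot)-\Gamma_s(f_0,\cdot)}{U}\Big|_{\rm jump}+R,
\]
where $R$ collects the costs of the $\Gamma_u(f_0)$ and $\Gamma_s(f_0)$ pieces reused under $f_1$. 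These pieces carry control $(f_1-f_0)/(U\,\Gamma)$, which is controlled by $\|f_1-f_0\|_{C^0}$ times integrals of $1/(U\Gamma)$. The integral term is bounded by $\|f_1-f_0\|_{C^0}\int_{U^*(f_0)}^{U_s(f_0)}\frac{1}{UP(f_0,U)}\,dU$, which is exactly the quantity listed in the statement.

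The two jump terms are estimated by integrating the derivative along the segment $f_\varpi=(1-\varpi)f_0+\varpi f_1$ and applying \eqref{Equa:estimat_f_uGG} and \eqref{Equa:estimat_f_sGG}. This yields a bound $\max_\varpi C(f_\varpi)\,\|f_1-f_0\|_{C^1}$ and explains why that maximum appears in $C_{f_0,f_1}$.

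The main obstacle is the geometry near $U^*$. The intersection points $U_u(f_0)$ and $U_u(f_1)$ differ, and $U^*$ itself moves with $f$. When $U_u(f_1)<U_u(f_0)$ we place the jump at $U_u(f_0)$ and follow $\Gamma_u(f_1)$ beyond $P^*$, which is harmless for admissibility. In the other case we first run along $P^*(f_1)$, paying $\int(F_1+\beta)/U$ over an interval of length $|U_u(f_1)-U_u(f_0)|$.

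This interval length must be Lipschitz in $f$. For $\beta$ away from $\beta^{**}$ it follows from transversality at the crossing. Near $\beta^{**}$ we would invoke Proposition \ref{Prop:Lipechit_U_u}, the $\frac12$-integrability \eqref{Equa:bound_integr} of $F$, and the bound on $U^*(f_1)-U^*(f_0)$ in terms of $\|f_1-f_0\|_{C^0}/f_i'(U^*)$; this is why $f_i'(U^*)$ and $U_u(f_i)$ enter the constant. The endpoint $U_s$ is handled analogously, using the uniform positivity of $\Gamma_s$.
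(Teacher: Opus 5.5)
Your overall scheme matches the paper's. You build an $f_1$-admissible curve from $P_0$, bound the term $\int (f_1-f_0)/(UP_0)$ by the listed integral, and control the $\Gamma_s$-jump through \eqref{Equa:estimat_f_sGG} along $f_\varpi$. The gap is at the left end, which is the only delicate part of the proof.

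First, $f_0\le f_1$ gives $\Gamma_u(f_1)-P^*(f_1)\le\Gamma_u(f_0)-P^*(f_0)$, hence always $U_u(f_1)\le U_u(f_\varpi)\le U_u(f_0)$. So your second case is vacuous, and no Lipschitz bound on $|U_u(f_1)-U_u(f_0)|$ is needed. Such a bound is false in general anyway: the first crossing can jump at a near-tangency, and transversality at $U_u$ is only generic. Also, running along $P^*(f_1)$ and using \eqref{Equa:bound_integr} gives only an $O(\sqrt{\ell})$ cost on an interval of length $\ell$, not $O(\ell)$.

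In the real case, neither of your two constructions survives near $\beta^{**}$.
\begin{itemize}
\item Jumping at $U_u(f_0)$ after following $\Gamma_u(f_1)$ is not covered by \eqref{Equa:estimat_f_uGG}, which holds only for $U\le U_u(f_\varpi)$. Moreover, $\Gamma_u(f_1)$ need not reach $U_u(f_0)$ at all: for $\beta\ge\beta^{**}(f_1)$ it ends at the node $(U^*(f_1),0)$. That node lies strictly left of $U_u(f_0)$ whenever $U^*(f_1)<U^*(f_0)$.
\item Jumping at $U_u(f_1)$ onto $\Gamma_u(f_0)$ and reusing it (your term $R$) costs $\|f_1-f_0\|_{C^0}\int_{U_u(f_1)}^{U_u(f_0)}\frac{dU}{U\Gamma_u(f_0,U)}$. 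The part over $[U_u(f_1),U^*(f_0)]$ is not among the listed quantities and blows up as $\beta\nearrow\beta^{**}(f_0)$. It is $+\infty$ for $\beta\ge\beta^{**}(f_0)$, because $\Gamma_u(f_0)$ then vanishes linearly at $U^*(f_0)$ while $f_1(U^*(f_0))>0$ as soon as $U^*(f_1)<U^*(f_0)$.
\end{itemize}

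The missing idea is the paper's curve $\underline{P}$: the \emph{uncontrolled} $f_1$-trajectory through $(U_u(f_0),\Gamma_u(f_0,U_u(f_0)))$, followed on $[U_u(f_1),U_u(f_0)]$ at zero cost. The single upward jump is placed at $U_u(f_1)$, where \eqref{Equa:estimat_f_uGG} holds for every $f_\varpi$. Comparison gives $\Gamma_u(f_1)\le\Gamma_u(f_0)\le\underline{P}$ on that interval, and everything reduces to \eqref{Equa:underP_fcomp}, i.e.\ $\underline{P}(U_u(f_1))-\Gamma_u(f_1,U_u(f_1))\le C\bigl(\|f_1-f_0\|_{C^1}+U^*(f_0)-U^*(f_1)\bigr)$.

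For $\beta$ away from $\beta^{**}$, this is the ODE comparison you have in mind, via the term $\int_{U_u(f_1)}^{U_u(f_0)}(f_1-f_0)/\underline{P}$. Near $\beta^{**}$ it comes instead from the linearization at the node $(U^*(f_1),0)$. That analysis shows an $f_1$-trajectory issued from $(U^*(f_0),0)$ reaches $U^*(f_1)$ at height only $O(U^*(f_0)-U^*(f_1))$. This is where $f_i'(U^*)$ genuinely enters, through $U^*(f_0)-U^*(f_1)\lesssim\|f_1-f_0\|_{C^0}$, as a linear bound rather than via the $\frac12$-integrability of $F$.
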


\begin{proof}
%
%
\noindent{\it Study of the manifolds $\Gamma_u,\Gamma_s$.} Consider the curve $f_\varpi = (1 - \varpi) f_0 + \varpi f_1$ with $f_0 \leq f_1$. Recall that by Equation \eqref{Equa:estimat_f_uGG} of Theorem \ref{manifoldbound} we have the estimates
\begin{equation*}
0 \leq - \partial_\varpi \Gamma_u(f_\varpi,\bar U) \leq C(f_\varpi) \bar U \|f_1 - f_0\|_{C^1},
\end{equation*}
for all $\bar U \leq U_u(f_\varpi)$. Hence, setting
\begin{equation*}
\bar C = \max \big\{ \gls{Cfbeta}, f = (1 - \varpi) f_0 + \varpi f_1, \varpi \in [0,1] \big\}
\end{equation*}
we obtain
\begin{equation}
\label{Equa:Gamma_u_Lip}
0 \leq \frac{\Gamma_u(f_0,U_u(f_1)) - \Gamma_u(f_1,U_u(f_1))}{U_u(f_1)} \leq \bar C \|f_1 - f_0\|_{C^1([0,1])}.
\end{equation}
Note that $U_u(f_1) \leq U_u((1-\varpi) f_0 + \varpi f_1)$ for all $\varpi \in [0,1]$ because $f_0 \leq f_1$, so that we can apply Theorem \ref{manifoldbound}.

In a similar way, using Equation \eqref{Equa:estimat_f_sGG} of Theorem \ref{manifoldbound} we obtain for all $U \in (U^*(f_0),1)$
\begin{equation}
\label{Equa:Gamma_s_Lip}
0 \leq \frac{\Gamma_s(f_0,U) - \Gamma_s(f_1,U)}{1-U} \leq \bar C \|f_1 - f_0\|_{C^1([0,1])}.
\end{equation}
%
%

\medskip

\noindent{\it Comparison of trajectories.} Let $\underline{P}(U)$ be the trajectory solving
\begin{equation*}
\frac{d\underline{P}}{dU} = - \frac{f_1(U)}{\underline{P}} - \beta, \quad \underline{P}(U_u(f_0)) = \Gamma_u(f_0,U_u(f_0)).
\end{equation*}
Since $\beta_1 \leq \beta_2$, it follows that
\begin{equation*}
\Gamma_u(f_1,U) \leq \Gamma_u(f_0,U) \leq \underline{P}(U), \quad U \in (U_u(f_0),U_u(f_1)).
\end{equation*}
We state that there is a constant $C$, depending only on $\|f_1\|_{C^2}$, $\beta$ and $\frac{df_0}{dU}(U^*(\beta,f_0)),\frac{df_1}{dU}(U^*(\beta,f_1))$ such that
\begin{equation}
\label{Equa:underP_fcomp}
\underline{P}(U_u(f_1)) - \Gamma_u(f_1,U_u(f_1)) \leq C \big( \|f_1 - f_0\|_{C^1} + (U^*(f_0) - U^*(f_1)) \big).
\end{equation}

If $0 \leq \beta \leq \gls{betaunder}(f_0)$, then the difference can be estimated by using the ODE \eqref{Equa:funct_eq_UP} for $\Gamma_u(f_0),\underline{P}$:
\begin{equation*}
\begin{split}
\underline{P}(U_u(f_1)) - \Gamma_u(f_1,U_u(f_1)) &= \Gamma_u(f_0,U_u(f_1)) - \Gamma_u(f_1,U_u(f_1)) + \int_{U_u(f_1)}^{U_u(f_0)} \frac{f_1(V) - f_0(V)}{\underline{P}(V)} dV \\
\big[ (\ref{Equa:Gamma_u_Lip},\ref{Equa:Gamma_s_Lip}) \big] \quad &\leq \frac{\bar C}{U_s(f_0)} \|f_1 - f_0\|_{C^1} + \frac{\|f_1 - f_0\|_{C^1}}{\Gamma_u(f_0,U_u(f_0))},
\end{split}
\end{equation*}
where we observed that in the region of interest $\underline{P}$ is decreasing because $\beta,f_1 \geq 0$.

If $\beta < 0$, the quantities $\min\{ \underline{P}\}$, $\Gamma_u(f_0,U_u(f_0))$ are comparable being both uniformly positive.

Thus the only case to be considered is when the r.h.s. of \eqref{Equa:underP_fcomp} tends to $0$ and $\beta > \gls{betaunder}(f_0)$. This means that
$$
(U_u(f_1),U_u(f_0)) \subset \bigg( U^*(f_0) - \frac{\gls{rbar}}{2},U^*(f_0) + \frac{\gls{rbar}}{2} \bigg) \cap \bigg( U^*(f_1) - \frac{\gls{rbar}}{2},U^*(f_1) + \frac{\gls{rbar}}{2} \bigg).
$$
We recall that \gls{rbar} depends only on $\|f\|_{C^2}$, so that it can be chosen uniformly for $f_0,f_1$. In this case the linear analysis of Proposition \ref{Prop:blowupsecond} gives that the trajectories of $\underline{P}$ are well approximated by
\begin{equation*}
\big| \underline{P}(U) - \lambda_+(f_1,U^*(f_1)) (U - U^*) \big| \sim \big| \underline{P}(U) - \lambda_-(f_1,U^*(f_1)) (U - U^*) \big|^{\frac{|\lambda_+(f_1,U^*(f_1))|}{|\lambda_-(f_1,U^*(f_1))|}}.
\end{equation*}
If $\underline{\hat P}$ is the trajectory starting from $U^*(f_0)$, then using $\underline{\hat P} \leq \underline{P}$ and $f_1((U^*(f_1),U^*(f_0)) \leq 0$, one obtains
\begin{equation*}
\begin{split}
\underline{P}(U^*(f_1)) - \underline{P}(U^*(f_0)) &\leq \underline{\hat P}(U^*(f_1)) - \underline{\hat P}(U^*(f_0)) = \underline{\hat P}(U^*(f_1)) \\
&\sim (U^*(f_0) - U^*(f_1)) \Bigg( \frac{|\lambda_-(f_1,U^*(f_1))|^{|\lambda_-(f_1,U^*(f_1))|}}{|\lambda_+(f_1,U^*(f_1))|^{|\lambda_+(f_1,U^*(f_1))|}} \Bigg)^{\frac{1}{|\lambda_-(f_1,U^*(f_1))|-|\lambda_+(f_1,U^*(f_1))|}},
\end{split}
\end{equation*}
where in the last line we used the correct expression of $\underline{\hat P}$ in the linear case.
We conclude
\begin{equation*}
\begin{split}
\underline{P}(U_u(f_1)) - \Gamma_u(f_0,U_u(f_1)) &\leq \underline{P}(U^*(f_1)) - \Gamma_u(f_0,U^*(f_0)) \\ 
\big[ \Gamma_u(f_0) \leq \underline{P} \big] \quad &\leq \underline{P}(U^*(f_1)) - \underline{P}(U^*(f_0)) + \int_{U^*(f_0)}^{U_u(f_0)} \frac{f_1(V) - f_0(V)}{\Gamma_u(f_0,V)} dV \\
&\lesssim U^*(f_0) - U^*(f_1) + \|f_1 - f_0\|_{C^0} \int_{U^*(f_0)}^{U_u(f_0)} \frac{dV}{\Gamma_u(f_0,V)} \\
&\sim U^*(f_0) - U^*(f_1) + \|f_1 - f_0\|_{C^0} \Gamma_u(f_0,U_u(f_0)).
\end{split}
\end{equation*}
where in the first line we used that $\Gamma_u$ is decreasing in this interval.

\medskip

\noindent{\it Comparison curve and computation of the cost.} We consider the curve $\tilde \gamma$ obtained by concatenating the following $5$ curves (see Fig. \ref{Fig:dep_beta_f} right):
\begin{itemize}
\item the unstable manifold $\Gamma_u(f_1,U)$ up to $U_u(f_1)$;
\item the vertical segment from $\Gamma_u(f_1,U_u(f_1))$ to $\underline{P}(U_u(f_1))$;
\item the curve $\underline{P}(U)$ for $U \in (U_u(f_1),U_u(f_0))$;
\item the optimal profile $P_\beta(f_0,U)$ for $f_0$ from $U_u(f_0)$ until the point $(U_s(f_0),\Gamma_s(f_0,U_s(f_0)))$,
\item the upwards segment from $(U_s(f_0),\Gamma_s(f_0,U_s(f_0))$ to $(U_s(f_0),\Gamma_s(f_1,U_s(f_0))$,
\item the stable manifold $\Gamma_s(f_1)$ starting from $(U_s(f_0)$. 
\end{itemize}
It is fairly easy to see that $\tilde \gamma$ is admissible for $f_1$, because $f_0 \leq f_1$. We obtain 
\begin{align*}
E(\beta,f_1) &\le \int_{\tilde \gamma} \left( \frac{f_1(U)}{PU} + \frac{\beta}{U} \right) \, dU  + \left( \frac{1}{U} \right) \, dP \\
&= \frac{\underline{P}(U_u(f_1)) - \Gamma_u(f_1,U_u(f_1))}{U_u(f_1)} + \int_{U_u(f_0)}^{U_s(f_0)} \left( \frac{f_1(U)}{U P_\beta(f_0,U)} + \frac{\beta}{U} \right) \, dU \\
& \quad + \frac{\Gamma_s(f_1,U_s(f_0))-\Gamma_s(f_0,U_s(f_0))}{U_s(f_0)} \\
\big[ (\ref{Equa:Gamma_s_Lip}),(\ref{Equa:underP_fcomp}) \big] \quad &\le E(f_0,\beta) + \int_{\gamma(f_0) \cap \{U \in [U_u(f_1),U_s(f_0)]\}} \frac{f_1(U)-f_0(U)}{P_\beta(f_0,U) U} \, dU \\
& \quad + C \big( U^*(f_0) - U^*(f_1) + \|f_1 - f_0\|_{C^1} \big).
\end{align*}
The second term is estimated by first noticing that from the asymptotic behavior of $P(f_0)$ near $U^*(f_0)$ (consequence of \eqref{Equa:limi_F} and Lemma \ref{lowerbound}) 
\begin{equation*}
P_\beta(f_0,U) \geq \frac{P^*(f_0,U)}{C(\|f\|_{C^2})}, \quad U \in (U_u(f_0),U_s(f_0)),
\end{equation*}
so that for some constant depending only on $f_0$:
\begin{equation*}
\begin{split}
\int_{\gamma(f_0) \in \{U_u(f_1) \le U \le U_s(f_0)\}} \frac{f_1(U)-f_0(U)}{U P_\beta(f_0,U)} \, dU &\le \mathcal O(1) \frac{\|f_1 - f_0\|_{C^0}}{U^*(f_0)} \int_{U^*(f_0)}^{1} \frac{1}{P^*(f_0,U)} dU \\
&= \mathcal O(1) \|f_1 - f_0\|_{C^1}. 
\end{split}
\end{equation*}
which gives the statement, because from $f'_i(U^*_i) \not =0$, $i=1,2$, it follows that
\begin{equation*}
U^*(f_0) - U^*(f_1) \lesssim \|f_1 - f_0\|_{C^0}. \qedhere
\end{equation*}
\end{proof}

\begin{proposition}
\label{Prop:Lipschitz_below}
Let $f_0 \leq f_1$ be admissible sources satisfying Assumption \eqref{H1}. There exists a constant, which we denote again by $C_{f_0,f_1}$, depending only on
\begin{equation*}
\beta, \quad \|f_1\|_{C^2}, \|f_0\|_{C^2}, \quad U^*(\beta,f_1), \quad \frac{d}{dU} f_1(U^*(\beta,f_1)), 
\end{equation*}
such that
\begin{equation*}
E(\beta,f_0) \leq E(\beta,f_1) + C_{f_0,f_1} \|f_1 - f_0\|_{C^1}.
\end{equation*}
\end{proposition}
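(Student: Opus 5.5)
The plan mirrors Proposition \ref{Prop:depe_E_f} with the roles of $f_0$ and $f_1$ exchanged. We build an admissible curve $\tilde\gamma$ for the source $f_0$ by modifying the optimal profile $P_\beta(f_1,U)$ and estimate its cost. Since $f_0 \leq f_1$, monotonicity gives $\Gamma_u(f_0) \geq \Gamma_u(f_1)$, $\Gamma_s(f_0) \leq \Gamma_s(f_1)$ and $U^*(f_0) \geq U^*(f_1)$. So $P_\beta(f_1)$ may fail to be admissible for $f_0$ in two ways: it may lie below $\Gamma_u(f_0)$ near the left end, or above $\Gamma_s(f_0)$ near the right end. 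In the middle region the control needed for $f_0$ is $\alpha_1 - \frac{f_1-f_0}{UP}$, which could become negative.

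\textbf{Construction.} To keep the control nonnegative in the middle region, I will not follow $P_\beta(f_1)$ exactly. Instead I follow the trajectory $\check P$ obtained by using, for $f_0$, the same control $\alpha_1$ as for $P_\beta(f_1)$. From the ODE \eqref{Equa:funct_eq_UP}, $\check P$ lies above $P_\beta(f_1)$ once both start from the same point, because $-f_0/P \geq -f_1/P$. The perturbation $\delta P=\check P - P_\beta(f_1)$ solves the linearized equation seen in the proof of Proposition \ref{Prop:reguls_Ibeta}, with forcing $(f_1-f_0)/P$. This gives
\begin{equation*}
0\le \check P - P_\beta(f_1) \lesssim \|f_1-f_0\|_{C^0}\int \frac{dU}{P_\beta(f_1,U)},
\end{equation*}
and the integral is finite by Lemma \ref{lowerbound} together with $P_\beta\gtrsim P^*$ near $U^*$.

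The curve $\tilde\gamma$ is then the concatenation of:
\begin{itemize}
\item $\Gamma_u(f_0)$ up to $U_u(f_1)$, or up to the first point where it meets $\check P$;
\item the trajectory $\check P$ started on $\Gamma_u(f_0)$ at that point, carrying control $\alpha_1$ (still nonnegative);
\item a vertical upward jump at $U_s(f_1)$ onto $\Gamma_s(f_0)$ if $\check P<\Gamma_s(f_0)$ there;
\item otherwise, a switch to $\Gamma_s(f_0)$ at the first point where $\check P$ meets it, since from there $\Gamma_s(f_0)$ is an uncontrolled continuation.
\end{itemize}
Because $\Gamma_u(f_0)\ge\Gamma_u(f_1)$, starting $\check P$ on $\Gamma_u(f_0)$ only raises it. This is harmless, because the path-integral cost of Proposition \ref{improv} measures only the control mass plus the jump sizes divided by $U$.

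\textbf{Cost estimate.} The cost of $\tilde\gamma$ is $\int\alpha_1 = E(\beta,f_1)$ restricted to the controlled portion, plus the upward jumps. The only jump is at the right end, of size at most $\Gamma_s(f_1,U_s)-\Gamma_s(f_0,U_s)\le \bar C(1-U_s)\|f_1-f_0\|_{C^1}$ by \eqref{Equa:Gamma_s_Lip}. If instead we switch onto $\Gamma_s(f_0)$ early, we simply drop the control after the switch, which only lowers the cost. At the left end no jump occurs, since we start on $\Gamma_u(f_0)$, which lies above $\Gamma_u(f_1)$.

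We must still check that $\check P$ stays below $\Gamma_s(f_0)$ and above $\Gamma_u(f_0)$. The first is enforced by the switching rule. The second follows from comparison, since $\check P$ is a supersolution for $f_0$ lying above $\Gamma_u(f_0)$ at its starting point. This yields
\begin{equation*}
E(\beta,f_0)\le E(\beta,f_1)+C\|f_1-f_0\|_{C^1}.
\end{equation*}

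\textbf{Main obstacle.} The delicate case is $\beta$ near $\beta^{**}(f_1)$. There $U_u(f_1)\to U^*(f_1)$, and the control $\alpha_1\sim F/U$ is singular like $(U-U^*(f_1))^{-1/2}$ by \eqref{Equa:bound_integr}. Meanwhile $U^*(f_0)>U^*(f_1)$, so on $(U^*(f_1),U^*(f_0))$ we have $f_0<0$, and $P^*(f_0)$ is not even defined. I would handle this as in Proposition \ref{Prop:depe_E_f}, using the linear analysis near $(U^*,0)$ from Proposition \ref{Prop:blowupsecond}. The goal is to show that the strip $(U^*(f_1),U^*(f_0))$ contributes at most $O(U^*(f_0)-U^*(f_1))\lesssim \|f_1-f_0\|_{C^0}/f_1'(U^*(f_1))$; this is the source of the dependence of the constant on $U^*(\beta,f_1)$ and $f_1'(U^*)$. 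Concretely, I would replace $\check P$ on this strip by $\max\{\check P,\Gamma_u(f_0)\}$ and show that the $\alpha_1$-mass there is $O(\sqrt{U^*(f_0)-U^*(f_1)})$. If that bound turns out to be only Hölder rather than Lipschitz, the fallback is a finer comparison: the integrand $F+\beta$ changes by $O(\|\delta f\|)$ pointwise away from the singularity, so one should compare the two singular integrals directly.
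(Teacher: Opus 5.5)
Your argument is correct, and it takes a genuinely different and sharper route than the paper. The paper keeps the profile $P_\beta(f_1)$ fixed (extended by $P^*(f_1)$ outside $(U_u,U_s)$). It pays for the negative control defect $(f_0-f_1)/P$ by raising the speed to some $\beta'$ with $\beta'-\beta\le C\|f_1-f_0\|_{C^0}$. This requires:
\begin{itemize}
\item the lower bound on $P_\beta(f_1)$ off $P^*$;
\item a local sign analysis of $\frac{d}{dU}P^*(f_1)+f_0/P^*(f_1)$ near $U^*(f_1)$, which is where $f_1'(U^*)$ enters the constant;
\item the $\beta$-bounds of Theorem~\ref{manifoldbound}, to pay for the vertical segments joining $\Gamma_u(\beta,f_0)$ to $\Gamma_u(\beta',f_0)$ and $\Gamma_s(\beta,f_0)$ to $\Gamma_s(\beta',f_0)$;
\item finally $E(\beta,f_0)\le E(\beta',f_0)$, by monotonicity in $\beta$.
\end{itemize}
You instead freeze the control $\alpha_1(U)$ and let the trajectory move. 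Comparison gives $\check P\ge P_\beta(f_1)>0$ and $\check P\ge\Gamma_u(f_0)$. Since $\Gamma_s$ is increasing in $f$,
\[
\check P(U_s(f_1))\ \ge\ \Gamma_s(f_1,U_s(f_1))\ \ge\ \Gamma_s(f_0,U_s(f_1)).
\]
So $\check P$ always reaches $\Gamma_s(f_0)$ at some $U_c\le U_s(f_1)$, and your right-end jump case never occurs. The cost of $\tilde\gamma$ is therefore $\int_{U_u(f_1)}^{U_c}\alpha_1\,dU\le E(\beta,f_1)$, i.e. the inequality holds with $C_{f_0,f_1}=0$. This is exactly the monotonicity of Proposition~\ref{Prop:monotone_f}, obtained by direct comparison rather than through polynomial approximation and the derivative formula \eqref{Equa:deriv_expli_sour}. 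What the paper's speed shift buys is mainly uniformity with its other perturbative estimates: in the interior it absorbs a control defect of either sign, at a cost controlled by the Lipschitz bound in $\beta$. Your comparison instead uses $f_0\le f_1$ essentially, to push $\check P$ upward toward $\Gamma_s(f_0)$.

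Some clean-up is in order.
\begin{itemize}
\item Your ``main obstacle'' is not an obstacle. The construction never uses $P^*(f_0)$. On the strip $(U^*(f_1),U^*(f_0))$ the curve $\check P$ is just the $\alpha_1$-controlled trajectory, already above $\Gamma_u(f_0)$, and its $\alpha_1$-mass is already counted in $E(\beta,f_1)$. So you need no replacement by $\max\{\check P,\Gamma_u(f_0)\}$, no $O(\sqrt{\cdot})$ estimate, and no dependence on $f_1'(U^*)$. The bound on $\check P-P_\beta(f_1)$ is superfluous as well.
\item You should state $\beta>\beta^*(f_0)$ explicitly. Note that $\beta^*(f_0)\ge\beta^*(f_1)$, and $E(\beta,f_0)=+\infty$ otherwise. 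This assumption gives $\Gamma_u(f_0,U_u(f_1))<\Gamma_s(f_0,U_u(f_1))$, hence the existence of $U_c$.
\item In the degenerate case $\Gamma_u(f_0,U_u(f_1))=0$ (possible only when $U^*(f_0)=U^*(f_1)$ and $\beta\ge\beta^{**}(f_0)$), start $\check P$ at height $\epsilon>0$ after a vertical segment of cost $\epsilon/U^*$, and let $\epsilon\to0$. Comparison with $P_\beta(f_1)=P^*(f_1)$ near $U^*$ still works, because $1/P^*(f_1)$ is integrable there.
\end{itemize}
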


\begin{figure}
\resizebox{.75\textwidth}{!}{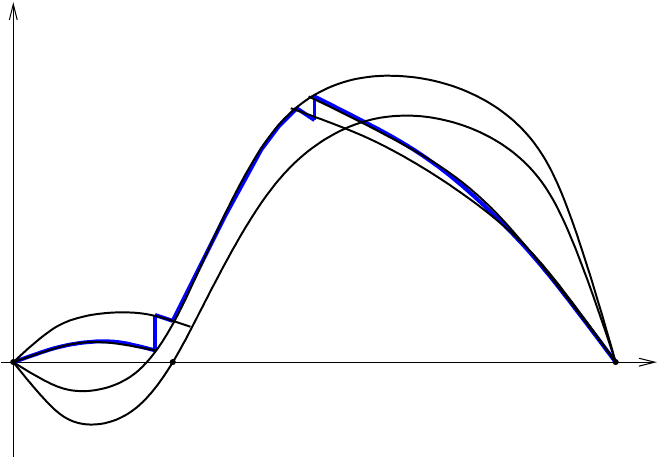}
\caption{The contruction of the curve $\tilde \gamma$ of Step 2 of the proof of Proposition \ref{Prop:Lipschitz_below}.}
\label{Fig:dep_beta_f_2}
\end{figure}

\begin{proof}
Let $P_\beta(f_1)$ be the optimal solution for $f_1$ with optimal control $\alpha(f_1,U)$. 

\medskip

\noindent{\it Step 2: $f_0$-admissibility for larger $\beta$.} In order to choose the speed which makes an interval of the optimal trajectory $P_\beta(f_1)$ admissible with the source $f_0$, we first define the curve
\begin{equation}
\label{Equa:compar_cc}
\bar P(U) = \begin{cases}
P^*(f_1,U) & U \leq U_u(\beta,f_1) \ \text{or} \ U \geq U_s(\beta,f_1), \\
P_\beta(f_1,U) & U \in (U_u(\beta,f_1),U_s(\beta,f_1)).
\end{cases}
\end{equation}
We estimate how much we have to increase the speed $\beta$ in order to make this curve admissible in $(U_u(\beta,f_1),U_s(\beta,f_1))$: it must hold
\begin{equation*}
0 \leq \frac{d\bar P}{dU} + \frac{f_0(U)}{\bar P(U)} + \beta' = \begin{cases}
\frac{U f_1(U) + \frac{d}{dU} f_1(U) + 2 U f_0(U)}{2 \sqrt{U f_1(U)}} + \beta' & \bar P(U) = P^*(f_1,U), \\
\frac{f_0(U) - f_1(U)}{\bar P(U)} + \beta' - \beta & \bar P(U) \not= P^*(f_1,U).
\end{cases}
\end{equation*}
Recalling that by Point \ref{Point4:maxback} of Proposition \ref{Mbt} the minimal value of $\bar P(U)$ when $\bar P \not= P^*(f_1)$ is strictly positive by a constant depending only on $f_1$ (for $\beta$ fixed), the second case is easily estimated by
\begin{equation*}
\beta' - \beta \leq C_{f_0,f_1} \|f_1 - f_0\|_{C^0}.
\end{equation*}
In the regions where $P(f_1) = P^*(f_1)$ we have
\begin{equation*}
\begin{split}
\frac{d}{dU} P(f_1,U) + \frac{f_0(U)}{P(f_1,U)} + \beta &= \frac{U f_1(U) + \frac{d}{dU} f_1(U) + 2 U f_0(U)}{2 \sqrt{U f_1(U)}} + \beta. 
\end{split}
\end{equation*}
Since $f_1$ satisfies Assumption \eqref{H1} and similarly to \eqref{Equa:bound_integr}, it follows that
\begin{equation*}
\frac{U f_1(U) + \frac{d}{dU} f_1(U) + 2 U f_0(U)}{2 \sqrt{U f_1(U)}} \underset{U \searrow U^*(f_1)}{\sim} \frac{1}{\sqrt{U - U^*(f_1)}},
\end{equation*}
\begin{equation*}
\frac{U f_1(U) + \frac{d}{dU} f_1(U) + 2 U f_0(U)}{2 \sqrt{U f_1(U)}} \underset{U \nearrow 1}{\sim} - \frac{1}{\sqrt{1 - U}},
\end{equation*}
which means that there is a compact interval $\bar I = [\underline{U}(-), 1] \subset (U^*(f_1),1]$ such that
\begin{equation*}
\frac{d}{dU} P(f_1,U) + \frac{f_0(U)}{P(f_1,U)} + \beta \geq 0 \quad U \in (U^*(f_1),U_s(f_1)] \setminus \bar I.
\end{equation*}

In the interval $\bar I$ we can use the ODE for $P(f_1)$ obtaining
\begin{equation*}
- \frac{d}{dU} P(f_1,U) - \frac{f_0(U)}{P(f_1,U)} - \beta = \frac{f_1(U) - f_0(U)}{P(f_1,U)} - \alpha(f_1,U) \leq \frac{\|f_1 - f_0\|_{C^0}}{{\displaystyle \min_{[\underline U(-),U_s(f_1)] \cap \bar I} P(f_1,U)}}.
\end{equation*}
Hence in both cases we obtain the estimate
\begin{equation}
\label{Equa:beta_Lipf}
\beta' - \beta \leq C_{f_0,f_1} \|f_1 - f_0\|_{C^0},
\end{equation}
where the constant depends on $\|f_0\|_{C^2}$, $\|f_1\|_{C^2}$ and the local behavior of $f_1$ near $U = U^*(\beta,f_1)$.

\medskip

\noindent{\it Step 2: definition of the comparison curve and computation of the cost.} Define $\bar U_u(\beta')$ ($\bar U_s(\beta')$) as the first (last) intersection point of $\Gamma_u(\beta',f_0,U)$ ($\Gamma_u(\beta',f_0,U)$) with the curve \eqref{Equa:compar_cc}. Similarly define $\bar U_u(\beta)$ ($\bar U_s(\beta)$) as the first (last) intersection point of $\Gamma_u(\beta,f_0,U)$ ($\Gamma_u(\beta,f_0,U)$) with the curve \eqref{Equa:compar_cc}: note that in the latter case by Theorem \ref{manifoldbound}
\begin{equation*}
U_u(\beta,f_1) \leq U_u(\beta,f_0) \leq U_s(\beta,f_0) \leq U_s(\beta,f_0).
\end{equation*}

Define the comparison curve $\tilde \gamma$ as the concatenation of:
\begin{itemize}
\item the manifold $\Gamma_u(\beta',f_0,U)$ up to $\bar U_u(\beta')$;
\item the vertical segment $[\Gamma_u(\beta',f_0,\bar U_u(\beta')),\Gamma_u(\beta,f_0,\bar U_u(\beta'))]$;
\item the manifold $\Gamma_u(\beta,f_0,U)$ up to $\bar U_u(\beta)$;
\item the solution $P_\beta(f_1,U)$ for $U \in (\bar U_u(\beta),\bar U_s(\beta))$;
\item the manifold $\Gamma_s(\beta,f_0,U)$ up to $\bar U_s(\beta')$;
\item the vertical segment $[\Gamma_s(\beta,f_0,\bar U_s(\beta')),\Gamma_s(\beta',f_0,\bar U_s(\beta'))]$;
\item the manifold $\Gamma_s(\beta',f_0,U)$ up to $1$.
\end{itemize}
Each arc of the above curve is admissible for the source $f_0$ with the speed $\beta'$: the solution $P_\beta(f_1,U)$, $U \in (\bar U_u(\beta),\bar U_s(\beta))$, by the choice of $\beta'$ made in Step 1, the segments are "going up" and finally
\begin{equation*}
\frac{d}{dU} \Gamma_u(\beta,f_0,U) + \frac{f_0(U)}{\Gamma_u(\beta,f_0)} + \beta' = \beta' - \beta \geq 0.
\end{equation*}

The effort is then computed as
\begin{equation*}
\begin{split}
E(\beta',f_0) &\leq \frac{\Gamma_u(\beta',f_0,\bar U_u(\beta')) - \Gamma_u(\beta,f_0,\bar U_u(\beta'))}{\bar U_u(\beta')} + \int_{\bar U_u(\beta')}^{\bar U_u(\beta)} \frac{\beta' - \beta}{U} dU \\
& \quad + \int_{\bar U_u(\beta)}^{\bar U_u(\beta')} \bigg( \frac{dP_\beta(f_1)}{dU} + f_0(U) + \beta' \bigg) \frac{dU}{U} + \int_{\bar U_s(\beta)}^{\bar U_s(\beta')} \frac{\beta' - \beta}{U} dU \\
& \quad + \frac{\Gamma_s(\beta,f_0,\bar U_s(\beta')) - \Gamma_s(\beta',f_0,\bar U_s(\beta'))}{\bar U_s(\beta')}.
\end{split}
\end{equation*}
Using Theorem \ref{manifoldbound} we have that 
\begin{equation*}
\big| \Gamma_u(\beta,f_0,\bar U_u(\beta')) - \Gamma_u(\beta',f_0,\bar U_u(\beta)) \big| \leq C_{f_0,f_1} (\beta' - \beta) \underset{(\ref{Equa:beta_Lipf})}{\leq} C_{f_0,f_1} \|f_1 - f_0\|_{C^1},
\end{equation*}
and similarly 
\begin{equation*}
\big| \Gamma_s(\beta',f_0,\bar U_s(\beta')) - \Gamma_s(\beta,f_1,\bar U_s(\beta)) \big| \leq C_{f_0,f_1} \|f_1 - f_0\|_{C^1}.
\end{equation*}
Moreover
\begin{equation*}
\int_{\bar U_u(\beta')}^{\bar U_u(\beta)} \frac{\beta' - \beta}{U} dU + \int_{\bar U_s(\beta)}^{\bar U_s(\beta')} \frac{\beta' - \beta}{U} dU \leq (\beta' - \beta) \ln((U^*(f_1))^{-1}) \leq C_{f_0,f_1} \|f_1 - f_0\|_{C^1}.
\end{equation*}
Finally
\begin{equation*}
\begin{split}
\int_{\bar U_u(\beta)}^{\bar U_u(\beta')} \bigg( \frac{dP_\beta(f_1)}{dU} + f_0(U) + \beta' \bigg) \frac{dU}{U} &= \int_{\bar U_u(\beta)}^{\bar U_u(\beta')} \bigg( \frac{dP_\beta(f_1)}{dU} + f_1(U) + \beta \bigg) \frac{dU}{U} \\
& \quad + \int_{\bar U_u(\beta)}^{\bar U_s(\beta)} \frac{f_0 - f_1 + \beta' - \beta}{U} dU \\
&\leq E(\beta,f_1) + C_{f_0,f_1} \|f_1 - f_1\|_{C_0}.
\end{split}
\end{equation*}
Adding all terms we obtain by Proposition \ref{Prop:E_Lipschitz_beta}
\begin{equation*}
E(\beta,f_0) \leq E(\beta',f_0) \leq E(\beta,f_1) + C_{f_0,f_1} \|f_1 - f_0\|_{C^1}. \qedhere
\end{equation*}
\end{proof}

%

Combining the previous three propositions together, we obtain the following

\begin{corollary}
\label{Cor:Lipsc_f}
The map $\beta,f \mapsto E(\beta,f)$ is locally Lipschitz in the set 
\begin{equation*}
\gls{Dcal} = \Big\{ (\beta,f), f \ \text{satisfies Assumption \eqref{H1}}, \ \beta > \beta^*(f) \Big\} \subset \R \times C^2([0,1]).
\end{equation*}
\end{corollary}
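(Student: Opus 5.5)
The argument combines the $\beta$-Lipschitz bound of Proposition \ref{Prop:E_Lipschitz_beta} with the two one-sided $f$-estimates of Propositions \ref{Prop:depe_E_f} and \ref{Prop:Lipschitz_below}. For $(\beta_0,f_0)\in\mathcal D$, the triangle inequality gives
\begin{equation*}
|E(\beta_1,f_1)-E(\beta_2,f_2)| \le |E(\beta_1,f_1)-E(\beta_2,f_1)| + |E(\beta_2,f_1)-E(\beta_2,f_2)|.
\end{equation*}
The first term is bounded by $\big[\frac{1}{U^*(f_1)}+\ln\frac{1}{U^*(f_1)}\big]|\beta_1-\beta_2|$. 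Since $f'(U^*)>0$, the zero $U^*(f)$ depends continuously (indeed Lipschitz, via the implicit function theorem) on $f$ in $C^1$, so this constant is uniform on a $C^2$-neighbourhood of $f_0$. One detail must be checked: the comparison path in Proposition \ref{Prop:E_Lipschitz_beta} requires $\beta_1,\beta_2\geq \beta^*(f_1)$. This holds on a small neighbourhood because $\beta_0>\beta^*(f_0)$ and $f\mapsto\beta^*(f)$ is continuous, by the strict monotonicity in $\beta$ of $\Gamma_u,\Gamma_s$ and their continuous dependence on $f$ (Theorem \ref{manifoldbound}).

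For the second term the Propositions only apply to ordered pairs $f_0\le f_1$, so I would not compare $f_1,f_2$ directly. Set $g = f_1 - f_2$. Choose a fixed smooth $\phi\ge 0$ vanishing at $0$ and $1$ with $\phi'(0)=\phi'(1)=0$ (to keep (H1) at the endpoints), and put $\underline f = \min$-type lower envelope replaced by the smooth sandwich $f_\pm = f_2 \pm \|g\|_{C^1}\, c\,\psi$. Here $\psi$ is a fixed function with $\psi\ge |g|/\|g\|_{C^1}$-type domination, of order $U(1-U)$. Since $g(0)=g(1)=0$, one has $|g(U)|\le \|g\|_{C^1}\min\{U,1-U\}$, so $\psi(U)=U(1-U)$ with $c=2$ works pointwise. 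Then $f_-\le f_1,f_2\le f_+$, with $\|f_\pm-f_i\|_{C^1}\lesssim\|g\|_{C^1}$. For $g$ small in $C^2$, the functions $f_\pm$ still satisfy (H1), including a unique interior zero with positive derivative, and the predominance condition \eqref{Equa:positive_prev}, which is an open condition.

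Applying Proposition \ref{Prop:depe_E_f} and Proposition \ref{Prop:Lipschitz_below} to the pairs $(f_-,f_i)$ and $(f_i,f_+)$ gives $E(\beta,f_-)\le E(\beta,f_i)\le E(\beta,f_+)$ up to errors $C\|g\|_{C^1}$, together with $E(\beta,f_+)-E(\beta,f_-)\le C\|g\|_{C^1}$ by chaining through $f_2$. Hence $|E(\beta,f_1)-E(\beta,f_2)|\le C\|f_1-f_2\|_{C^1}$, and monotonicity is not needed.

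The main obstacle is the \textbf{uniformity of the constants} $C_{f_0,f_1}$ over a neighbourhood. These depend on $C(f)$ from Theorem \ref{manifoldbound} (through lower bounds of $\Gamma_u$ away from $\beta^{**}$), on $f'(U^*)$, and on $\int 1/(UP_\beta)$. I would argue each is continuous, hence locally bounded, using Corollary \ref{Cor:Frechet_Gamma}, the continuity of $U_u,U_s$, and the bound $P_\beta\gtrsim P^*$, whose integral $\int_{U^*}^1 dU/P^*$ is uniformly finite by \eqref{Equa:bound_integr}. The delicate point is $\beta$ near $\beta^{**}(f)$, where $\min\Gamma_u$ degenerates. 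There Theorem \ref{manifoldbound} was stated with constants depending only on $\|f\|_{C^2}$ and on minima over $[\beta^*,\underline\beta]$, which I expect to vary continuously in $f$. If this fails, I would instead cover the regime $\beta\ge\underline\beta$ by Step 2.6-type estimates, which rely only on $\|f\|_{C^2}$.
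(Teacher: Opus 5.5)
Your proposal is correct and takes the paper's route: the paper proves the corollary only by saying it follows from combining Proposition \ref{Prop:E_Lipschitz_beta} with the two one-sided estimates of Propositions \ref{Prop:depe_E_f} and \ref{Prop:Lipschitz_below}. The remaining steps of your argument make explicit what the paper leaves implicit: the smooth sandwich $f_\pm = f_2 \pm 2\|f_1-f_2\|_{C^1}U(1-U)$ that reduces unordered pairs to ordered ones (one of the two functions already suffices), and the checks of local uniformity of the constants and of $\beta>\beta^*(f_-)$. You are also right to avoid Proposition \ref{Prop:monotone_f}, since its proof uses this corollary.
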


\subsection{Monotonicity and explicit computation of the Fr\'echet derivative for $\beta \not= \beta^{**}$}
\label{Ss:monotoni_Frech}

Having proved the local Lipschitz regularity of $E(\beta,f)$, we can study the effort function $E(\beta,f)$ when $f$ is a polynomial, and then pass to the limit. We first apply this idea in the next proposition to prove that $f \mapsto E(\beta,f)$ is monotone increasing. The same analysis for the $\beta$-dependence will be done in the next section.

\begin{proposition}
\label{Prop:monotone_f}
The map $f \mapsto E(\beta, f)$ is monotone increasing: if $f_0 \leq f_1$ then $E(\beta,f_0) \leq E(\beta,f_1)$ for $\beta > \beta^*(f_0)$.
\end{proposition}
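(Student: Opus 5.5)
The strategy announced before the statement is approximation: compute the directional derivative of $E$ in $f$ on a dense set where the structure of $P_\beta$ is smooth, show it is nonnegative for nonnegative perturbations, and conclude by the local Lipschitz continuity of Corollary \ref{Cor:Lipsc_f}.

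\textbf{Reduction to polynomials.} Fix $f_0\le f_1$ and $\beta>\beta^*(f_0)$. Since $f\mapsto\beta^*(f)$ is nonincreasing, $\beta>\beta^*(f_\varpi)$ along the segment $f_\varpi=(1-\varpi)f_0+\varpi f_1$. By Theorem \ref{Theo:transv_generic}, Proposition \ref{Prop:reguls_Ibeta} and the density of polynomials, we approximate $f_0,f_1$ in $C^2$ by polynomials $g_0\le g_1$ of degree $k$. To keep the order we use $g_1+\epsilon h$ with $h\ge0$ vanishing at $0,1$, for instance $h=U(1-U)$, suitably corrected so that \eqref{H1} still holds. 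By a small further perturbation of the coefficients, the segment $g_\varpi$ meets the non-transversality set $\bigl(\bigcup_f\mathcal N_2(f)\times\{f\}\bigr)^c$ only in a finite set of parameters $\varpi$. This uses that the non-transversality set is a codimension one algebraic set plus finitely many graphs, so a generic segment crosses it finitely often. It then suffices to prove $E(\beta,g_0)\le E(\beta,g_1)$ and pass to the limit using Corollary \ref{Cor:Lipsc_f}. Since $\varpi\mapsto E(\beta,g_\varpi)$ is Lipschitz, it is enough to show $\partial_\varpi E\ge0$ at every $\varpi$ outside the finite exceptional set.

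\textbf{Derivative formula.} Outside the exceptional set, Proposition \ref{Prop:reguls_Ibeta} and Lemma \ref{costexpression} give
\begin{equation*}
E = \int_{\hat U_u}^{\hat U_s}\frac{F+\beta}{U}\,dU-\sum_{i=1}^N\int_{U_i^-}^{U_i^+}\frac{F+\beta}{U}\,dU,
\end{equation*}
where all endpoints depend smoothly on $f$. We differentiate this expression. We expect the boundary terms to cancel or to combine with the variation of the trajectories through the first-order conditions. The cleaner route is the Stokes representation, Proposition \ref{improv}: write $E=\int\mathbf v$ along $P_\beta$ and vary $f$. The variation of the curve contributes only at first order through the optimality conditions \eqref{necint} and Proposition \ref{Prop:optimal_Gamma_us}, so it drops out by stationarity. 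What remains is the explicit term
\begin{equation*}
\int_0^1\frac{\delta f(U)}{U\,P_\beta(U)}\,dU
\end{equation*}
from differentiating the integrand $\frac{f}{UP}$, plus boundary contributions at $U_u,U_s$ coming from $\partial_f\Gamma_u\le0$ and $\partial_f\Gamma_s\ge0$ in \eqref{Equa:estimat_f_uGG}, \eqref{Equa:estimat_f_sGG}. These enter with signs $-\partial_f\Gamma_u/U\ge0$ and $\partial_f\Gamma_s/U\ge0$, exactly as the vertical segments in the proof of Proposition \ref{Prop:depe_E_f}.

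\textbf{Main obstacle.} The hard step is justifying that the first-order variation of the free intervals contributes zero. On each free component $(U_i^-,U_i^+)$ the trajectory moves by $\delta P$ solving the linearized ODE, and the endpoints move by \eqref{Equa:deltaUpm}. The induced change of cost is $\int\int\frac{UF-P^2}{U^2P^2}$ over a thin region, which is $O(\delta^2)$ near $P^*$, plus a linear term proportional to \eqref{necint}, which vanishes. We will verify this directly from \eqref{Equa:deltaPpm}. If the cancellation proves delicate, the fallback is a one-sided comparison argument: for $\delta f\ge0$, build an admissible curve for $f$ from $P_\beta(f+\epsilon\delta f)$ as in Proposition \ref{Prop:Lipschitz_below}. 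This gives $E(f)\le E(f+\epsilon\delta f)+o(\epsilon)$ directly, since removing source only helps except near the manifolds, and there the errors are first-order with a favorable sign by Theorem \ref{manifoldbound}. Either route yields $\partial_\varpi E\ge0$, and monotonicity follows.
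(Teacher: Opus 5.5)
Your outer architecture (polynomial approximation, a segment inside the good set of Proposition \ref{Prop:reguls_Ibeta}, passage to the limit via Corollary \ref{Cor:Lipsc_f}) is the paper's. The gap is in the derivative computation, which is the whole content of the proof, and it rests on a false stationarity claim.

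On a free component $I_i=(U_i^-,U_i^+)$ the arc $P_i$ lies off $P^*$. There the Stokes integrand $\frac{P^2-Uf}{U^2P^2}$ is of order one, so moving the arc by $\epsilon\,\partial_\varpi P_i$ changes the cost at first order by $-\epsilon\int_{I_i}\partial_\varpi P_i\,\frac{Uf-P_i^2}{U^2P_i^2}\,dU$, not by $O(\epsilon^2)$. Condition \eqref{necint} kills only the homogeneous part of this variation, namely the part coming from moving the data at $U_i^-$, for which $\partial_\varpi P_i/U\propto e^{K(U_i^-,U)}$. The part forced by the source in the linearized equation survives:
\begin{equation*}
\frac{d}{dU}\frac{\partial_\varpi P_i}{U}=\frac{Uf-P_i^2}{UP_i^2}\,\frac{\partial_\varpi P_i}{U}-\frac{\partial_\varpi f}{UP_i}.
\end{equation*}
Add this to your explicit term $\int_{I_i}\frac{\partial_\varpi f}{UP_i}\,dU$, integrate the equation, and use $K(U_i^-,U_i^+)=0$. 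The true contribution of $I_i$ is then
\begin{equation*}
\frac{\partial_\varpi P_i(U_i^-)}{U_i^-}-\frac{\partial_\varpi P_i(U_i^+)}{U_i^+}=\int_{U_i^-}^{U_i^+}\frac{\partial_\varpi f(U)}{U\,P_i(U)}\,e^{K(U,U_i^+)}\,dU,
\end{equation*}
with $K$ as in \eqref{k}. The weight $e^{K(U,U_i^+)}$ is at most $1$ by \eqref{nec2+} and is not identically $1$. Your proposal is missing three ingredients that together produce \eqref{Equa:deriv_expli_sour} and its sign:
\begin{itemize}
\item solving the linearized ODE on each free component and invoking the optimality condition;
\item integrating $\partial_\varpi F/U$ by parts over the control set;
\item the endpoint relations at $\hat U_u,\hat U_s$ (not at $U_u,U_s$).
\end{itemize}

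Your formula is also wrong outside $[\hat U_u,\hat U_s]$. The regions $[0,\hat U_u]$ and $[\hat U_s,1]$ already enter through $-\partial_\varpi\Gamma_u(\hat U_u)/\hat U_u$ and $\partial_\varpi\Gamma_s(\hat U_s)/\hat U_s$, which are weighted integrals of $\partial_\varpi f$ along the manifolds. Adding $\int_0^1\frac{\partial_\varpi f}{UP_\beta}\,dU$ therefore double counts. Worse, that integral diverges logarithmically at $U=0$ whenever $(\partial_\varpi f)'(0)>0$, since $P_\beta\sim\lambda_+(\beta,0)U$ by Lemma \ref{Lem:lim_U_to_0}. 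This is the generic case for $\partial_\varpi f=g_1-g_0\ge0$, and it holds for your own choice $h=U(1-U)$.

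The fallback does not close the gap either. Lowering the source lowers the cost of a fixed curve, but it destroys admissibility on the whole free set of $P_\beta(f+\epsilon\,\delta f)$, not only near the manifolds. There the control for $f$ equals $-\epsilon\,\delta f/(UP)$, which is negative wherever $\delta f>0$. Proposition \ref{Prop:Lipschitz_below} repairs this by raising the speed by $\beta'-\beta=O(\epsilon)$. That costs a first-order amount of unfavorable sign, so it yields only $E(\beta,f)\le E(\beta,f+\epsilon\,\delta f)+C\epsilon$, not $o(\epsilon)$. Any comparison giving $o(\epsilon)$ would have to show the first-order variation is nonnegative, which means redoing the computation above.
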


\begin{proof}
When $f = \sum_{i=1}^k a_i U^i$ is a polynomial, by Proposition \ref{Prop:reguls_Ibeta} there is a closed set of measure $0$ in the space $(\beta,a_1,\dots,a_k)$ such that outside this set the dependence
$$
(\beta,f) \mapsto \hat U_u(\beta,f),U_i^-(\beta,f), U^+_i(\beta,f), \hat U_s(\beta,f)
$$
is smooth, being $\hat U_s,U^\pm_i,\hat U_s$ the finitely many boundary points of $\gls{IcalP}$. Consider a segment
$$
[0,1] \ni \varpi \mapsto (1 - \varpi) f_0 + \varpi f_1
$$
inside this open set.

\medskip

\noindent{\it Step 1: computation of the first derivative.} The set $\mathcal I(\varpi)$ is given by
\begin{equation*}
\mathcal I(\varpi) = (0,\hat U_u(\varpi)) \cup \bigcup_{i=1}^N (U_i^-(\varpi),U_i^+(\varpi)) \cup (\hat U_s(\varpi),1),
\end{equation*}
with the function $\varpi \mapsto \hat U_u(\varpi),U_i^\pm(\varpi),\hat U^+_s(\varpi)$ smoothly depending on $\varpi$.

The effort is then computed by
\begin{equation*}
E(\varpi) = \bigg[ \int_{\hat U_u(\varpi)}^{U_1^-(\varpi)} + \sum_i \int_{U_i^+(\varpi)}^{U_{i+1}^-(\varpi)} + \int_{U_N^+(\varpi)}^{\hat U_s(\varpi)} \bigg] \frac{F(\varpi,U) + \beta}{U} dU
\end{equation*}
with
\begin{equation*}
F(\varpi,U) = \frac{d}{dU} P^*(\varpi,U) + \frac{f(\varpi,U)}{P^*(\varpi,U)}.
\end{equation*}

Taking derivatives w.r.t. $\varpi$ we get
\begin{equation}
\label{Equa:E_deltaf}
\begin{split}
\frac{dE}{d\varpi} &= \bigg[ \int_{\hat U_u(\varpi)}^{U_1^-(\varpi)} + \sum_i \int_{U_i^+(\varpi)}^{U_{i+1}^-(\varpi)} + \int_{U_N^+(\varpi)}^{\hat U_s(\varpi)} \bigg] \frac{1}{U} \frac{\partial}{\partial \varpi} F(\varpi,U) dU \\
& \quad - \frac{F(\varpi,\hat U_u) + \beta}{\hat U_u} \frac{\partial \hat U_u}{\partial \varpi} + \frac{F(\varpi,\hat U_s) + \beta}{\hat U_s} \frac{\partial \hat U_s}{\partial \varpi} \\
& \quad + \sum_i \bigg( \frac{F(\varpi,U_i^-) + \beta}{U_i^-} \frac{\partial U_i^-}{\partial \varpi} - \frac{F(\varpi,U_i^+) + \beta}{U_i^+} \frac{\partial U_i^+}{\partial \varpi} \bigg).
\end{split}
\end{equation}
The derivative of $\hat U_u,\hat U_s$ are computed by solving the ODE
\begin{equation}
\label{Equa:Gamma_u_deltaf}
\frac{d}{dU} \partial_\varpi \Gamma_u = \frac{f(U)}{\Gamma_u^2} \partial_\varpi \Gamma_u - \frac{\partial_\varpi f}{\Gamma_u}, \quad \partial_\varpi \Gamma_u(\varpi,0) = \partial_\varpi \lambda_\beta(\varpi,U=0),
\end{equation}
\begin{equation}
\label{Equa:Gamma_s_deltaf}
\frac{d}{dU} \partial_\varpi \Gamma_s = \frac{f(U)}{\Gamma_u^2} \partial_\varpi \Gamma_s - \frac{\partial_\varpi f}{\Gamma_s}, \quad \partial_\varpi \Gamma_u(\varpi,1) = \partial_\varpi \lambda_\beta(\varpi,U=1),
\end{equation}
\begin{equation*}
\begin{split}
\frac{\partial}{\partial \varpi} P^*(\hat U_u) + \partial_\varpi \hat U_u \frac{d}{dU} P^*(\hat U_u) = \partial \varpi \Gamma_u(\hat U_u) + \partial_\varpi \hat U_u \frac{d}{dU} \Gamma_u(\hat U_u), \\
\frac{\partial}{\partial \varpi} P^*(\hat U_s) + \partial_\varpi \hat U_s \frac{d}{dU} P^*(\hat U_s) = \partial \varpi \Gamma_u(\hat U_s) + \partial_\varpi \hat U_s \frac{d}{dU} \Gamma_u(\hat U_s),
\end{split}
\end{equation*}
while in each interval $I_i = (U^-(\varpi),U^+(\varpi))$ the perturbations are computed in order to preserve the necessary condition for optimality: denoting by $P_i$ the solution $P_\beta$ restricted to $I_i$,
\begin{equation}
\label{Equa:Pi_deltaf_1}
\frac{d}{dU} \partial_\varpi P_i = \frac{f(U)}{P_i^2} \partial_\varpi P_i - \frac{\partial_\varpi f}{P_i},
\end{equation}
\begin{equation*}
\frac{d}{d\varpi} \int_{U_i^-}^{U_i^+} \frac{U f(U) - P_i^2}{U P_i^2} dU = \int_{U_i^-}^{U_i^+} \frac{\partial_\varpi f(U)}{P_i^2} + \frac{2 f(U) \partial_\varpi P_i}{P_i^3} dU = 0,
\end{equation*}
\begin{equation*}
\frac{\partial}{\partial \varpi} P^*(U^\pm_i) + \partial_\varpi U_i^\pm \frac{d}{dU} P^*(U^\pm_i) = \partial \varpi P_i(U^\pm_i) + \partial_\varpi U_i^\pm \frac{d}{dU} P_i(U^\pm_i)
\end{equation*}

Integrating by parts
\begin{equation*}
\begin{split}
&\bigg[ \int_{\hat U_u(\varpi)}^{U_1^-(\varpi)} + \sum_i \int_{U_i^+(\varpi)}^{U_{i+1}^-(\varpi)} + \int_{U_N^+(\varpi)}^{\hat U_s(\varpi)} \bigg] \frac{1}{U} \frac{\partial}{\partial \varpi} F(\varpi,U) dU \\
&= \bigg[ \int_{\hat U_u(\varpi)}^{U_1^-(\varpi)} + \sum_i \int_{U_i^+(\varpi)}^{U_{i+1}^-(\varpi)} + \int_{U_N^+(\varpi)}^{\hat U_s(\varpi)} \bigg] \frac{1}{U} \frac{\partial}{\partial \varpi} \bigg( \frac{d}{dU} P^* + \frac{f(U)}{P^*} \bigg) dU \\
&= \bigg[ \int_{\hat U_u(\varpi)}^{U_1^-(\varpi)} + \sum_i \int_{U_i^+(\varpi)}^{U_{i+1}^-(\varpi)} + \int_{U_N^+(\varpi)}^{\hat U_s(\varpi)} \bigg] \frac{\partial}{\partial \varpi} \bigg( \frac{P^*}{U^2} + \frac{f(U)}{U P^*} \bigg) dU \\
& \quad - \frac{\partial_\varpi P^*(\hat U_u(\varpi))}{\hat U_u(\varpi)} + \frac{\partial_\varpi P^*(\hat U_s(\varpi))}{\hat U_s(\varpi)} + \sum_i \bigg( \frac{\partial_\varpi P^*(U_i^-(\varpi))}{U_i^-(\varpi)} - \frac{\partial_\varpi P^*(U_i^+(\varpi))}{U_i^+(\varpi)} \bigg).
\end{split}
\end{equation*}
Substituting into \eqref{Equa:E_deltaf}
\begin{equation*}
\begin{split}
\frac{dE}{d\varpi} &= \bigg[ \int_{\hat U_u(\varpi)}^{U_1^-(\varpi)} + \sum_i \int_{U_i^+(\varpi)}^{U_{i+1}^-(\varpi)} + \int_{U_N^+(\varpi)}^{\hat U_s(\varpi)} \bigg] \frac{\partial}{\partial \varpi} \bigg( \frac{P^*}{U^2} + \frac{f(U)}{U P^*} \bigg) dU \\
& \quad - \frac{\partial_\varpi \Gamma_u(\hat U_u(\varpi))}{\hat U_u(\varpi)} + \frac{\partial_\varpi \Gamma_s(\hat U_s(\varpi))}{\hat U_s(\varpi)} + \sum_i \bigg( \frac{\partial_\varpi P_i(U_i^-(\varpi))}{U_i^-(\varpi)} - \frac{\partial_\varpi P_i(U_i^+(\varpi))}{U_i^+(\varpi)} \bigg).
\end{split}
\end{equation*}

\medskip

\noindent{\it Step 2: evaluation of the sign of $\frac{dE}{d\varpi}$.} Using \eqref{Equa:Gamma_u_deltaf}, \eqref{Equa:Gamma_s_deltaf} we obtain
\begin{equation*}
\partial_\varpi \Gamma_u(\hat U_u(\varpi)) \leq 0, \quad \partial_\varpi \Gamma_s(\hat U_s(\varpi)) \geq 0.
\end{equation*}
Concerning the term $\frac{\partial_\varpi P_i(U_i^-(\varpi))}{U_i^-(\varpi)} - \frac{\partial_\varpi P_i(U_i^+(\varpi))}{U_i^+(\varpi)}$, we can rewrite \eqref{Equa:Pi_deltaf_1} as
\begin{equation*}
\frac{d}{dU} \frac{\partial_\varpi P_i}{U} = \frac{U f(U) - P_i^2}{U P_i^2} \frac{\partial_\varpi P_i}{U} - \frac{\partial_\varpi f}{U P_i}.
\end{equation*}
Defining
\begin{align*}
K(U_1,U_2) := \int_{U_1}^{U_2} \frac{U f(U) - P(U)^2}{U P(U)^2} \, dU
\end{align*}
and using that $K(U^-_i(\varpi),U^+_i(\varpi)) = 0$ by optimality, we obtain
\begin{equation*}
\begin{split}
\frac{\partial_\varpi P_i(U^+_i(\varpi))}{U^+_i(\varpi)} &= \frac{\partial_\varpi P_i(U^-_i(\varpi))}{U^-_i(\varpi)} e^{K(U^-_i,U^+_i)} - \int_{U^-_i(\varpi)}^{U^+_i(\varpi)} \frac{\partial_\varpi f}{U P_i} e^{K(V,U^+_i(\varpi))} dU \\
&= \frac{\partial_\varpi P_i(U^-_i(\varpi))}{U^-_i(\varpi)} - \int_{U^-_i(\varpi)}^{U^+_i(\varpi)} \frac{\partial_\varpi f}{U P_i} e^{K(V,U^+_i(\varpi))} dU.
\end{split}
\end{equation*}
We then obtain the following expression for $\partial_\varpi E$:
\begin{equation*}
\begin{split}
\frac{dE}{d\varpi} &= \bigg[ \int_{\hat U_u(\varpi)}^{U_1^-(\varpi)} + \sum_i \int_{U_i^+(\varpi)}^{U_{i+1}^-(\varpi)} + \int_{U_N^+(\varpi)}^{\hat U_s(\varpi)} \bigg] \frac{\partial}{\partial \varpi} \bigg( \frac{P^*}{U^2} + \frac{f(U)}{U P^*} \bigg) dU \\
& \quad - \frac{\partial_\varpi \Gamma_u(\hat U_u(\varpi))}{\hat U_u(\varpi)} + \frac{\partial_\varpi \Gamma_s(\hat U_s(\varpi))}{\hat U_s(\varpi)} + \sum_i \int_{U^-_i(\varpi)}^{U^+_i(\varpi)} \frac{\partial_\varpi f}{U P_i} e^{K(V,U^+_i(\varpi))} dU,
\end{split}
\end{equation*}
Using
\begin{equation*}
\frac{\partial}{\partial \varpi} \frac{P^*}{U^2} = \frac{\partial}{\partial \varpi} \frac{f(U)}{U P^*} = \frac{\partial}{\partial \varpi} \frac{\sqrt{f(U)}}{U\sqrt{U}} = \frac{\partial_\varpi f}{2 U \sqrt{Uf}},
\end{equation*}
we finally get
\begin{equation}
\label{Equa:deriv_expli_sour}
\begin{split}
\frac{dE}{d\varpi} &= \bigg[ \int_{\hat U_u(\varpi)}^{U_1^-(\varpi)} + \sum_i \int_{U_i^+(\varpi)}^{U_{i+1}^-(\varpi)} + \int_{U_N^+(\varpi)}^{\hat U_s(\varpi)} \bigg] \frac{\partial_\varpi f(U)}{U P^*(U)} dU \\
& \quad - \frac{\partial_\varpi \Gamma_u(\hat U_u(\varpi))}{\hat U_u(\varpi)} + \frac{\partial_\varpi \Gamma_s(\hat U_s(\varpi))}{\hat U_s(\varpi)} + \sum_i \int_{U^-_i(\varpi)}^{U^+_i(\varpi)} \frac{\partial_\varpi f(U)}{U P_i(U)} e^{K(V,U^+_i(\varpi))} dU. 
\end{split}
\end{equation}
where $P(\varpi,U)$ is the optimal solution for $f(\varpi,U)$.

The above expression is positive because $\partial_\varpi f \geq 0$.

\medskip

\noindent{\it Step 3: monotonicity.} We have proved that, outside a closed Lebesgue-negligible set, the map $(a_1,\dots,a_k) \mapsto E(f)$ is monotone w.r.t. the directions in the nonempty the cone
\begin{equation*}
\sum_{i = 1}^k a_i U^i \geq 0, \quad \sum_{i=1}^k a_i = 0.
\end{equation*}
It is fairly easy to see that this implies that $E(f)$ is monotone increasing when $f$ is increasing.

Let now $f_0 \leq f_1$ two admissible fluxes with $f_0 \leq f_1$. Then there are polynomial approximations $f_0^n, f_1^n$ such that $f_0^n \leq f_1^n$, which gives $E(f_0^n) \leq E(f_1^n)$. Using the continuity of Corollary \ref{Cor:Lipsc_f} and passing to the limit we conclude that $f \mapsto E(f)$ is monotone.
\end{proof}
%
%
%

\begin{remark}
\label{Rem:deriva_f}
Note that the formula \eqref{Equa:deriv_expli_sour} is meaningful also for $f \notin \mathfrak T$: indeed Proposition \ref{Prop:reguls_Ibeta} allows to repeat the same computations outside a closed set with empty interior. 
In particular, we obtain that 
$\frac{d}{d\varpi} E$ is given by \eqref{Equa:deriv_expli_sour}. For any fixed $U \in \clos(\mathcal O(\beta,f))$, we can observe that by optimality conditions \eqref{Equa:nec2_gen}
\begin{equation*}
\int_V^U \frac{W F(W) - P(W)^2}{W P^2(W)} dW = \begin{cases}
0 & V \in \clos(\mathcal O(\beta,f)), \\
K(V,U^+) & V < U, V \in (U^-,U^+) \ \text{connected component of} \ \mathcal I(\beta,f), \\
- K(U^-,V) & V > U, V \in (U^-,U^+) \ \text{connected component of} \ \mathcal I(\beta,f).
\end{cases}
\end{equation*}
Hence \eqref{Equa:deriv_expli_sour} can be rewritten as ($P(U)$ is the optimal solution for the source $f$)
\begin{equation*}
\frac{dE}{d\varpi} = \bigg[ \int_U^1 - \int_0^U \bigg] \frac{\partial_\varpi f(V)}{V P(V)} e^{K(V,U)} dV, \quad U \in \mathcal O(\beta,f).
\end{equation*}
We observe now that if $U \in \mathcal O(\beta,f)$, then the same happens for $f'$ sufficiently close to $f$ in $C^2$ (otherwise using $P_\beta(f') \to P_\beta(f)$ would give $F(U) + \beta = 0$, a contradiction, see also the general case part of the proof of Theorem \ref{Theo:derivative_E}). Being the operator
\begin{equation*}
C^2 \ni g \mapsto \bigg[ \int_U^1 - \int_0^U \bigg] \frac{\partial_\varpi f(V)}{V P_\beta(V)} e^{K(V,U)} dV
\end{equation*}
bounded w.r.t. the $C^2$-norm and continuous w.r.t. $P_\beta$ for $\beta \not= \beta^{**}$, it is the expression of the Fr\'echet derivative of $E$ at $f$ away from $\beta^{**}$. However, because of Proposition \ref{Prop:Lipechit_U_u}, the only regularity across $\beta = \beta^{**}(f)$ is Lipschitz.

We will see in the next section that a similar expression can be given for $\frac{dE}{d\beta}$: in this case Proposition \ref{Prop:Lipechit_U_u} gives the $C^1$-regularity.
\end{remark}

\section{Differentiability properties of $E(\beta)$}
\label{S:diffe_E_beta}

In this section we address the differentiability properties of $\beta \mapsto E(\beta)$, showing that the effort function is of class $C^1((\beta^*,\infty))$. We will also address the regularity outside the point $\beta^{**}$, showing that in general it is only $C^{1,1}_\loc((\beta^*,\infty) \setminus \{\beta^{**}\})$. Crossing $\beta = \beta^{**}$, the second derivative in general does not exists.

We will also answer the convexity problem of $E(\beta)$, a key assumption in the $\Gamma$-convergence problem considered in \cite{BressanChiri23,bressancetraro}. By constructing 2 examples, we will shows that in general $E(\beta)$ is not convex, also in the case of the cubic polynomial.

\subsection{First derivative of $E(\beta)$}
\label{Ss:first_der_Ebeta}

Let $\gls{Pbeta}(U) = P_\beta(f,U)$ be the optimal solution for $\beta \in [\beta^*,+\infty)$ and let \newglossaryentry{Ppartialu}{name=\ensuremath{\partial P_u},description={evolution of the perturbation along the optimal solution starting from $U = 0$}} \newglossaryentry{Ppartials}{name=\ensuremath{\partial P_s},description={evolution of the perturbation along the optimal solution starting from $1$}} $\gls{Ppartialu} = \partial P_u(\beta,U)$ and $\gls{Ppartials} = \partial P_s(\beta,U)$ denote respectively the solutions to
\begin{subequations}
\label{Equa:delta_P}
\begin{align}
\label{Equa:delta_Pu}
\begin{cases}
\frac{d}{dU}\left( \frac{\partial P_u(U)}{U} \right) = \frac{Uf(U)-P(U)^2}{UP(U)^2} \left(\frac{\partial P_u(U)}{U} \right) - \frac{1}{U}, \\ 
\partial P_u(U_u(\beta)) = \partial_\beta \Gamma_u(\beta,U_u(\beta)),
\end{cases}
\end{align}
\begin{align}
\label{Equa:delta_Ps}
\begin{cases}
\frac{d}{dU}\left( \frac{\partial P_s(U)}{U} \right) = \frac{Uf(U)-P(U)^2}{UP(U)^2} \left(\frac{\partial P_s(U)}{U} \right) - \frac{1}{U}, \\ 
\partial P_s(U_s(\beta)) = \partial_\beta \Gamma_s(\beta,U_s(\beta)).
\end{cases}
\end{align}
\end{subequations}
The quantities $\partial_\beta \Gamma_u,\partial_\beta \Gamma_s$ are the derivative of $\Gamma_u,\Gamma_s$ given by Theorem \ref{manifoldbound} and \ref{Cor:Frechet_Gamma}:
\begin{equation*}
\begin{split}
\partial_\beta \Gamma_u(\beta,U) &= - \int_0^U e^{\int_V^U \frac{f(W)}{\Gamma_u(\beta,f,W)^2} dW} dV,
\end{split}
\end{equation*}
\begin{equation*}
\begin{split}
\partial_\beta \Gamma_s(\beta,U) &= \int_U^1 e^{-\int_U^V \frac{f(W)}{\Gamma_s(\beta,f,W)^2} dW} dV.
\end{split}
\end{equation*}

Define the kernel \newglossaryentry{Kkernel}{name=\ensuremath{K(\beta,U_1,U_2)},description={Kernel for the linearized ODE along the optimal solution}}
\begin{align}
\label{k}
\gls{Kkernel} := \int_{U_1}^{U_2} \frac{U f(U) - P_\beta(U)^2}{U P_\beta(U)^2} \, dU, \quad U_u(\beta) \leq U_1,U_2 \leq U_s(\beta). 
\end{align}
Since $P_\beta(U) = U f(U)$ for $U_u(\beta) \leq U \leq \bar U$ by Point \eqref{Point4:maxback} of Proposition \ref{Mbt}, we conclude that $K(\beta,V,U)$ is well defined for $U_u(\beta) < V,U < U_s(\beta)$. By the necessary condition for optimality (see Proposition \ref{nec2}),
\begin{equation*}
K(\beta, \hat U_u(\beta), U) = \begin{cases}
0 & U \in \overline{\mathcal{O}(\beta)}, \\
K(\beta, U^-, U) & U \in (U^-, U^+)\subseteq \mathcal{I}(\beta) \text{ connected component},
\end{cases}
\end{equation*}
\begin{equation*}
K(\beta, U, \hat U_s(\beta))=\begin{cases}
0 &  U \in \overline{\mathcal{O}(\beta)}, \\
K(\beta, U, U^+) & U \in (U^-, U^+) \subseteq \mathcal{I}(\beta) \text{ connected component}.
\end{cases}
\end{equation*}
In particular, we obtain
\begin{equation}
\label{Equa:kernel_canc}
K(\beta, \hat U_u(\beta), U)+K(\beta, U, \hat U_s(\beta)) = 0 \quad \forall U  \in [\hat U_u(\beta), \hat U_s(\beta)].
\end{equation}

%

\begin{remark}
\label{Rem:extention_K}
By means of the computations of Section \ref{Ss:stab_sti_Gamma_us}, one can extend the definition of $K$ to $V \in [0,1]$. However, having already computed the differentiability properties of $\Gamma_u,\Gamma_s$, we need only to study the perturbations of optimal solutions in the interval $(U_u(\beta,f),U_s(\beta,f))$.
\end{remark}

\begin{lemma}
\label{Lem:unique}
The unique solutions to \eqref{Equa:delta_P} are given by the formulas
\begin{subequations}
\label{Equa:sol_der_betaf}
\begin{equation}
\label{Equa:sol_der_betaf1}
\begin{split}
\frac{\delta P_u(U)}{U} &= \frac{1}{U_u(\beta)} \partial_\beta \Gamma_u(\beta,U_u(\beta)) e^{K(\beta,U_u(\beta),U)} - \int_{U_u(\beta)}^U \frac{ e^{K(\beta,V,U)}}{V} \, dV \\
&= - \int_{0}^U \frac{e^{K(\beta,V,U)}}{V} \, dV, 
\end{split}
\end{equation}
\begin{equation}
\label{Equa:sol_der_betaf2}
\begin{split}
\frac{\delta P_s(U)}{U} &= \frac{1}{\bar U_s(\beta)} \partial_\beta \Gamma_u(\beta,\bar U) e^{- K(\beta,U,U_s(\beta))} + \int_U^{U_s(\beta)} \frac{e^{- K(\beta,U,V)}}{V} \, dV \\
&= \int_{U}^1 \frac{e^{-K(\beta,U,V)}}{V} \, dV, 
\end{split}
\end{equation}
\end{subequations}
and depend continuously on $\beta,f$.
\end{lemma}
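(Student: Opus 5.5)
The plan is to treat \eqref{Equa:delta_P} as a scalar linear ODE for $y=\partial P/U$ and use variation of constants. Write the equation as
\begin{equation*}
y'(U)=a(U)\,y(U)-\frac{1}{U},\qquad a(U)=\frac{Uf(U)-P_\beta(U)^2}{UP_\beta(U)^2}.
\end{equation*}
On $[U_u(\beta),U_s(\beta)]$ the coefficient $a$ is bounded: $P_\beta\ge P_m>0$ by Lemma \ref{lowerbound} when $\beta<\beta^{**}$, while for $\beta\ge\beta^{**}$ we have $P_\beta=P^*$ near $U^*$ (Remark \ref{Rem:minimal_U}), so $a=0$ there. Hence $a$ is integrable, $e^{K(\beta,V,U)}$ is the fundamental solution, and existence and uniqueness follow from Cauchy--Lipschitz. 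Duhamel's formula started at $U_u(\beta)$ (respectively at $U_s(\beta)$, integrating backwards) gives exactly the first lines of \eqref{Equa:sol_der_betaf1} and \eqref{Equa:sol_der_betaf2}.

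To get the second equalities, I will use that on $[0,U_u(\beta)]$ the optimal profile coincides with $\Gamma_u(\beta)$. There, Step 1.1 of Theorem \ref{manifoldbound} shows that $\partial_\beta\Gamma_u/U$ solves the same ODE with the same kernel, since $P_\beta=\Gamma_u$, and \eqref{Equa:sol_der_beta_u} gives
\begin{equation*}
\frac{\partial_\beta\Gamma_u(U_u)}{U_u}=-\int_0^{U_u}\frac{e^{K(\beta,V,U_u)}}{V}\,dV .
\end{equation*}
Here $K$ is extended to $[0,1]$ as in Remark \ref{Rem:extention_K}. To check this, rewrite \eqref{Equa:sol_der_beta_u} through the identity $e^{\int_V^U f/\Gamma_u^2}=\frac{U}{V}e^{K(V,U)}$. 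Substituting into the first line and using the semigroup property $K(V,U_u)+K(U_u,U)=K(V,U)$ merges the two integrals into $-\int_0^U e^{K(V,U)}/V\,dV$. The $\Gamma_s$ case is symmetric, using Step 1.2 and $K(U,V)=-K(V,U)$.

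The main obstacle is the integrability near $V=0$ (and near $V=1$). As $V\to0$, $a(V)\sim -\frac1V\bigl(1-\frac{f'(0)}{\lambda_+^2}\bigr)$, so $e^{K(V,U)}/V$ behaves like a power of $V$. I will show that this power is integrable using $f'(0)<0$, which gives $e^{K(V,U)}\lesssim (V/U)^{1+c}$ with $c>0$, mirroring \eqref{Equa:kernel_estt}. Near $V=1$ the analogous bound follows from $\Gamma_s\sim -\lambda_-(\beta,1)(1-V)$.

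Continuity in $(\beta,f)$ then follows by dominated convergence. The inputs are the continuity of $P_\beta$ in $C^0$ (uniqueness from Corollary \ref{unique} combined with the compactness of Theorem \ref{exist}), the continuity of $\Gamma_u,\Gamma_s$ from Corollary \ref{Cor:Frechet_Gamma}, and the uniform lower bounds on $P_\beta$. Near $\beta^{**}$ I expect some extra care with the point $U^*$; there I would appeal to Proposition \ref{Prop:Lipechit_U_u}, which gives continuity of $\partial_\beta\Gamma_u$ at $U_u$.
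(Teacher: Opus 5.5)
Your proposal is correct and follows essentially the paper's route: Duhamel's formula for the first lines, and the representation \eqref{Equa:sol_der_beta_u} from Step 1 of Theorem \ref{manifoldbound} to merge into the second lines (your version carries the minus sign, which the paper's proof drops). Continuity then comes from uniform convergence of $P_\beta$, Corollary \ref{Cor:Frechet_Gamma}, and Proposition \ref{Prop:Lipechit_U_u}, which controls the boundary term $\frac{1}{U_u}\partial_\beta\Gamma_u(\beta,U_u(\beta))\to 0$ as $\beta\nearrow\beta^{**}$.

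Two small adjustments. First, integrability at $V=0,1$ is not a real obstacle, because the sign of $f$ on $[0,U^*]$ and on $[U^*,1]$ already makes the integrands bounded (cf.\ \eqref{Equa:kernel_estt}); the genuinely delicate point is the matching at $\beta^{**}$. Second, the $C^0$-continuity of $(\beta,f)\mapsto P_\beta$ needs more than Theorem \ref{exist} and Corollary \ref{unique}: you also need the continuity of $E$ (Corollary \ref{Cor:Lipsc_f}) to identify subsequential limits as optimal.
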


\begin{proof}
We prove only the continuity of the solution $\delta P_u$, the analysis of $\delta P_s$ being completely similar. The equivalence of the formulas follows by observing that from Step 1 of Theorem \ref{manifoldbound} we have
\begin{equation*}
\frac{\partial_\beta \Gamma_u(\beta,U)}{U} = \int_0^U \frac{e^{K(V,U)}}{V} dV.
\end{equation*}

%

\medskip

\noindent{\it Step 1: continuity for $\beta < \beta^{**}(f)$.} First we observe that $\underline{\beta}$ can be chosen uniformly in a neighborhood of $f$. In this case we use the formula for every $0 < \bar U < U^*(f)$
\begin{equation*}
\begin{split}
\frac{\delta P_u(U)}{U} &= \frac{1}{\bar U} \partial_\beta \Gamma_u(\beta,\bar U) e^{K(\bar U,U)} - \int_{\bar U}^U \frac{1}{V} e^{K(V,U)} \, dV,
\end{split}
\end{equation*}
The dependence $\beta,f \mapsto \partial_\beta \Gamma(\beta,f,U)$ is already considered in Corollary \ref{Cor:Frechet_Gamma}, where it is shown that it is continuous. Using the uniform convergence of the optimal solution $P_\beta(f,U)$, it follows that also the second term is continuous.

\medskip

\noindent{\it Step 2: case $\beta > \beta^{**}(f)$.} In this case the solution formula is
\begin{equation*}
\begin{split}
\frac{\delta P_u(U)}{U} &= - \int_{U^*(f)}^U \frac{1}{V} e^{K(V,U)} \, dV.
\end{split}
\end{equation*}
The only difference w.r.t. the previous case is the analysis in a neighborhood of $U^*(f)$. By Proposition \ref{Mbt}, we can find a uniform neighborhood where the optimal solution $P$ is uniformly greater than $0$ for $U^*(f) + \underline{r} < U < U_s(f)$ and it coincides with $P^*(f)$ in $[U^*(f), U^*(f) + \underline{r}]$. Assuming w.l.o.g. $U > U^*(f) + \underline{r}$ and writing
\begin{equation*}
\begin{split}
\int_{U^*(f)}^U \frac{1}{V} e^{K(V,U)} \, dV = \bigg[ \int_{U^*(f)}^{U^*(f) + \underline{r}} + \int_{U^*(f) + \underline{r}}^U \bigg] \frac{1}{V} e^{K(V,U)} \, dV = \ln \bigg( \frac{U^* + \underline{r}}{U^*} \bigg) + \int_{U^*(f) + \underline{r}}^U \frac{1}{V} e^{K(V,U)} \, dV,
\end{split}
\end{equation*}
one sees that $\delta P_u$ is continuous w.r.t. $\beta,f$. \\ 

\medskip

\noindent{\it Step 3: $\beta \to \beta(f)$.} Assume now that $\beta$ is sufficiently close to $\beta^*(f)$: this means that $U^*(f) < U_u(\beta,f) < U^*(f) + \underline{r}$. Proposition \ref{Prop:Lipechit_U_u} gives that
\begin{equation*}
\partial_\beta \Gamma(\beta,U_u(\beta)) \underset{\beta \nearrow \beta^{**}}{\to} 0,
\end{equation*}
so that \eqref{Equa:sol_der_betaf1} is continuous.
\end{proof}

\begin{theorem}
\label{Theo:derivative_E}
The effort function $\beta \mapsto E(\beta)$ is of class $C^1((\beta^*,+\infty))$, and its first derivative is equal to the following expression:
\begin{equation}
\label{Equa:explit_deri}
\frac{d}{d\beta} E(\beta) = \begin{cases}
\frac{\partial P_s(U,\beta) - \partial P_u(U, \beta)}{U} & U \in \clos(\gls{OcalPopt}), \\
\frac{\partial P_s(U,\beta) - \partial P_u(U,\beta)}{U} e^{K(\beta,U,U^+)} & U \in (U^-,U^+) \in \gls{IcalPopt} \ \text{connected component}.
\end{cases} 
\end{equation}
Moreover, if $f_n \to f$ in $C^2([0,1])$, $f_n,f$ admissible sources, then $E_n(\beta) = E(\beta,f_n)$ converges to $E(\beta)$ in $C^1_\loc(\beta^*(f),+\infty)$.
\end{theorem}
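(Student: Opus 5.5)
The plan is to follow the same approximation strategy already used in Proposition \ref{Prop:monotone_f}. First we compute $\frac{d}{d\beta}E$ explicitly for $(\beta,f)$ in the open dense set $\bigcup_{f\in\mathfrak T}\mathcal N_2(f)^c\times\{f\}$ of Proposition \ref{Prop:reguls_Ibeta}. Then we check that the resulting expression is continuous in $(\beta,f)$. Finally we pass to the limit using the local Lipschitz bounds of Proposition \ref{Prop:E_Lipschitz_beta} and Corollary \ref{Cor:Lipsc_f}.

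Fix $f\in\mathfrak T$ and $\beta\notin\mathcal N_2(f)$. By Proposition \ref{Prop:reguls_Ibeta},
\begin{equation*}
\mathcal I(\beta)=(U_u,\hat U_u)\cup\bigcup_{i=1}^N(U_i^-,U_i^+)\cup(\hat U_s,U_s),
\end{equation*}
and all endpoints are smooth in $\beta$. By Lemma \ref{costexpression},
\begin{equation*}
E(\beta)=\int_{\hat U_u}^{\hat U_s}\frac{F+\beta}{U}\,dU-\sum_i\int_{U_i^-}^{U_i^+}\frac{F+\beta}{U}\,dU .
\end{equation*}
We differentiate this expression term by term. The interior integrand contributes $\int_{\mathcal O(\beta)}\frac{dU}{U}$. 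Each endpoint $\bar U$ contributes a boundary term $\pm\frac{F(\bar U)+\beta}{\bar U}\,\partial_\beta\bar U$.

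The endpoint derivatives are obtained from the tangency relations, exactly as in \eqref{Equa:deltaUpm}. At $\hat U_u$ the relation reads $(F+\beta)\,\partial_\beta\hat U_u=\partial_\beta\Gamma_u(\hat U_u)$, and similarly at $\hat U_s$. At $U_i^\pm$ it reads $(F+\beta)\,\partial_\beta U_i^\pm=\delta P_i(U_i^\pm)$, where $\delta P_i$ solves the linearized equation
\begin{equation*}
\frac{d}{dU}\frac{\delta P_i}{U}=\frac{Uf-P^2}{UP^2}\,\frac{\delta P_i}{U}-\frac1U .
\end{equation*}
Its initial value is fixed by requiring that the constraint \eqref{necint}, $K(U_i^-,U_i^+)=0$, be preserved.

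The factors $F+\beta$ cancel, and $E'(\beta)$ becomes a telescoping sum:
\begin{equation*}
\int_{\mathcal O}\frac{dU}{U}-\frac{\partial_\beta\Gamma_u(\hat U_u)}{\hat U_u}+\frac{\partial_\beta\Gamma_s(\hat U_s)}{\hat U_s}+\sum_i\Big(\frac{\delta P_i(U_i^-)}{U_i^-}-\frac{\delta P_i(U_i^+)}{U_i^+}\Big).
\end{equation*}
We identify this with $(\partial P_s-\partial P_u)/U$ at any $U\in\clos(\mathcal O(\beta))$, using the solution formulas of Lemma \ref{Lem:unique} and the cancellation \eqref{Equa:kernel_canc}. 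Along $\mathcal O$ the kernel vanishes, so $\partial P_u/U$ decreases by $\int dU/U$. Across each free interval $e^{K(U_i^-,U_i^+)}=1$, so the jump is exactly $\delta P_i(U_i^+)/U_i^+-\delta P_i(U_i^-)/U_i^-$. Summing from $\hat U_u$, where $\partial P_u=\partial_\beta\Gamma_u$, up to $U$, and symmetrically for $\partial P_s$ from $\hat U_s$, reproduces the expression above. This also shows that the value is independent of the choice of $U\in\clos(\mathcal O)$. Inside a free interval, \eqref{Equa:delta_P} gives $\frac{d}{dU}\frac{\partial P_s-\partial P_u}{U}=\frac{Uf-P^2}{UP^2}\cdot\frac{\partial P_s-\partial P_u}{U}$, which yields the second line of \eqref{Equa:explit_deri} with the factor $e^{K(\beta,U,U^+)}$.

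For the general case, take $(\beta_n,f_n)\to(\beta,f)$ in the dense set. The right-hand side of \eqref{Equa:explit_deri} converges because of the continuity in Lemma \ref{Lem:unique} together with the uniform convergence $P_{\beta_n}(f_n)\to P_\beta(f)$. That convergence comes from uniqueness (Corollary \ref{unique}) and the compactness argument of Theorem \ref{exist}. We evaluate the formula at a fixed $U\in\mathcal O(\beta,f)$, which remains in $\mathcal O(\beta_n,f_n)$ for large $n$ by the argument of Remark \ref{Rem:deriva_f}. Since $E_n$ are uniformly Lipschitz with uniformly equicontinuous derivatives, $E_n\to E$ in $C^1_{\loc}$, and $E$ is $C^1$ with the stated derivative.

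The main obstacle is continuity at $\beta=\beta^{**}$, where $\hat U_u\to U^*$ and the $\Gamma_u$ contribution degenerates. There we rely on Proposition \ref{Prop:Lipechit_U_u}, which gives $\partial_\beta\Gamma_u(U_u)\to0$, and on Step 3 of Lemma \ref{Lem:unique}. We also need to check that the formula for $\beta>\beta^{**}$, with integration starting at $U^*$, agrees in the limit; the integrability of $\frac{F+\beta}{U}$ near $U^*$, from \eqref{Equa:bound_integr}, controls the remaining boundary terms.
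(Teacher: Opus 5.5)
Your proposal is correct and follows the paper's proof. You compute $E'$ on the transversal set through $E=E_0-\sum_i E_i$ with smoothly moving endpoints and the tangency relations, and identify the result with $(\partial P_s-\partial P_u)/U$ via $K(\beta,U_i^-,U_i^+)=0$. You then pass to general $f$ using the Lipschitz dependence on $f$, the continuity of $\partial P_u,\partial P_s$ (Lemma \ref{Lem:unique}, with Proposition \ref{Prop:Lipechit_U_u} at $\beta^{**}$), and the persistence of a point of $\mathcal O(\beta)$.

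The only variation is in the free intervals, where the paper reaches the same expression through the Stokes double-integral representation of $E_i$ and explicit kernel formulas. You instead get $E_i'=\int_{U_i^-}^{U_i^+}\frac{dU}{U}+\big[\frac{\partial_\beta P_i}{U}\big]_{U_i^-}^{U_i^+}$ directly from the tangency relations at $U_i^\pm$. You then telescope by observing that $\partial P_u/U-\partial_\beta P_i/U$ solves the homogeneous equation, so its increment vanishes because $e^{K(U_i^-,U_i^+)}=1$. This is slightly cleaner bookkeeping and makes the independence of the choice of $U\in\clos(\mathcal O(\beta))$ transparent.
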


\begin{proof}
The idea of the proof is to show that the formula \eqref{Equa:explit_deri} holds in the case $f$ polynomial, and then pass to the limit by means of the convergence of $\partial_\beta E(\beta,f)$. Using \eqref{Equa:kernel_canc} and \eqref{Equa:sol_der_betaf} we get for $U \in \overline{\mathcal{O}(\beta)}$,
\begin{equation}
\label{sum}
\begin{split}
\frac{\partial P_s(U, \beta) - \partial P_u(U, \beta)}{U} &= \frac{1}{U_s(\beta)} \partial_\beta \Gamma_s(\beta,U_s(\beta)) - \frac{1}{U_u(\beta)} \partial_\beta \Gamma_u(\beta,U_u(\beta)) \\
& \quad + \int_{U_u(\beta)}^{U} \frac{e^{K(\beta,W,U_u(\beta))}}{W} \, dW + \int_{U}^{U_s(\beta)} \frac{e^{-K(\beta,U,V)}}{V} dV \\
&= \int_0^1 \frac{e^{K(\beta,W,U)}}{W} \, dW.
\end{split}
\end{equation}
If $U \in (U^-,U^+)$, then
\begin{equation*}
\int_{0}^{1} \frac{e^{K(\beta,V,U^+}}{V} dV = \bigg( \int_{0}^{U} \frac{e^{K(\beta,V,U)}}{V} dV \bigg) e^{K(U,U^+)} + \bigg( \int_U^1 \frac{e^{K(\beta,V,U)}}{V} dV \bigg) e^{K(\beta,U,U^+)},
\end{equation*}
which is the second expression of \eqref{Equa:explit_deri}. Hence we need to prove the formula for just one point $U \in \gls{OcalPopt}$, where $F(U) + \beta > 0$ and $P_\beta(U) = P^*(U)$.

\medskip

\noindent{\it Derivative for $f$ polynomial.} We have shown in Proposition \ref{Prop:E_Lipschitz_beta} that $\beta \mapsto E(\beta)$ is Lipschitz and by Proposition \ref{Prop:reguls_Ibeta} the region where the derivative where the derivative may not exist has Lebesgue measure $0$ in the set
\begin{equation*}
\bigg\{ (\beta,a_1,\dots,a_k), \ f = \sum_i a_i U^i \ \text{satisfies Assumption \eqref{H1}}, \ \beta > \beta^*(f) \bigg\}.
\end{equation*}
Being the functions $\delta P_u,\delta P_s$ continuous w.r.t. $\beta,f$, it is enough to prove that the derivative formula holds a.e.

In the set given by Proposition \ref{Prop:reguls_Ibeta}, we can write
$$
\mathcal{I}(\beta) = (U_u(\beta),\hat U_u(\beta)) \cup \bigcup_{i=1}^N (U_i^-(\beta), U_i^+(\beta)) \cup (\hat U_s(\beta),U_s(\beta))
$$
with all the extremals of the intervals depending smoothly on $\beta$, so that 
\begin{align}
\label{Equa:E0_Ei}
E(\beta) = E_0(\beta) - \sum_{i}^N E_i(\beta),
\end{align}
with \newglossaryentry{E0cost}{name=\ensuremath{E_0(\beta)},description={cost if the optimal solution is allowed to be on $P^*$}} \newglossaryentry{Eicost}{name=\ensuremath{E_i(\beta)},description={cost correction in the connected components of $\mathcal I(\beta)$}}
\begin{equation*}
\gls{E0cost} := \int_{\hat U_u(\beta)}^{\hat U_s(\beta)} \frac{F(U)+\beta}{U}\,dU, \quad
\gls{Eicost} := \int_{U_i^-(\beta)}^{U_i^+(\beta)} \frac{F(U)+\beta}{U}\,dU, \ i=1,\dots,N.
\end{equation*}

The derivative of the term $E_0$ is
\begin{align}
\label{der0}
\partial_\beta E_0(\beta) = \frac{F(\hat U_u(\beta)) + \beta}{\hat U_u(\beta)} \frac{d \hat U_u(\beta)}{d\beta} - \frac{F(\hat U_s(\beta)) + \beta}{\hat U_s(\beta)} \frac{d\hat U_s(\beta)}{d\beta} + \int_{\hat U_u(\beta)}^{\hat U_u(\beta)} \frac{1}{U}\, dU.
\end{align}
First, we derive with respect to $\beta$ the identities
\begin{equation*}
\Gamma_u(\beta,\hat U_u(\beta)) = P^*(\hat U_u(\beta)) \quad \text{and} \quad \Gamma_s(\beta,\hat U_s(\beta)) = P^*(\hat U_s(\beta)),
\end{equation*}
and deduce
\begin{equation*}
\frac{\partial \Gamma_u(\hat U_u,\beta)}{\partial \beta} + \bigg( - \frac{f(\hat U_u)}{\Gamma_u(\hat U_u,\beta)} - \beta \bigg) \frac{d\hat U_u(\beta)}{d\beta} = \frac{dP^*(\hat U_u(\beta))}{dU} \frac{d\hat U_u(\beta)}{d\beta},
\end{equation*}
\begin{equation}
\label{Equa:Gammaus_Fbeta}
\frac{F(\hat U_u(\beta)) + \beta}{\hat U_u(\beta)} \frac{d\hat U_u(\beta)}{d\beta} = \frac{1}{\hat U_u(\beta)} \frac{\partial \Gamma_u}{\partial \beta}(\beta,\hat U_u(\beta)). 
\end{equation}
The same computation for $\Gamma_s(\beta,\hat U_s(\beta))$ gives
\begin{equation*}
\frac{F(\hat U_s(\beta)) + \beta}{\hat U_s(\beta)} \frac{d\hat U_s(\beta)}{d\beta} = \frac{1}{\hat U_s(\beta)} \frac{\partial \Gamma_s}{\partial \beta}(\beta,\hat U_s(\beta)). 
\end{equation*}

Combining the previous identities with the properties of $K$ and with \eqref{sum} and \eqref{der0}, we obtain the expression
\begin{equation}
\label{e0p}
\begin{split}
\partial_\beta E_0(\beta) &= \frac{\partial P_s(U, \beta) - \partial P_u(U, \beta)}{U} - \int_{\hat U_u(\beta)}^{\hat U_s(\beta)} \frac{1}{U} \left( 1 - e^{-K(\beta,U_0^+(\beta), U)} \right) \,dU \\
&= \frac{\partial P_s(U, \beta) - \partial P_u(U, \beta)}{U} - \sum_{i=1}^N \int_{U_i^-(\beta)}^{U_i^+(\beta)} \frac{1}{U} \left(1-e^{-K(\beta,U_i^-(\beta), U)}\right) \,dU.
\end{split}
\end{equation}

It remains to compute $E_i'(\beta)$, $i=1,...,N$. To such end, denote by $P_i=P_i(U, \beta)$ the solution to
\begin{equation}
\label{pi}
P_i'(U,\beta) + \frac{f(U)}{P_i(U, \beta)} + \beta = 0, \quad P_i(U_i^-(\beta), \beta)=P^*(U_i^-(\beta)).
\end{equation}
An integration by parts, combined with the endpoint conditions $P_i(U^\pm(\beta)) = P^*(U^\pm(\beta))$ as in Proposition \ref{improv} yields
\begin{equation*}
E_i(\beta) = \int_{U_i^-(\beta)}^{U_i^+(\beta)} \int_{P^*(U)}^{P_i(U, \beta)} \frac{Uf(U)-P^2}{U^2P^2} \,dU\,dP,
\end{equation*}
and, thus, being $\frac{Uf(U)-P^2}{U^2P^2} = 0$ at the extremals,
\begin{align}
\label{Equa:E_i'_comp}
E_i'(\beta) = \int_{U_i^-(\beta)}^{U_i^+(\beta)} \frac{\partial P_i}{\partial \beta}(U, \beta) \frac{Uf(U)-P_i^2(U, \beta)}{P_i^2(U, \beta) U^2} \, dU.
\end{align}

Equation \eqref{pi} can be derived with respect to the parameter $\beta$ to obtain
\begin{equation*}
\frac{d}{dU} \left(\frac{\partial P_i}{\partial \beta}(U, \beta)\right) - \frac{f(U)}{P_i^2(U, \beta)} \frac{\partial P_i(U, \beta)}{\partial \beta} + 1 = 0,
\end{equation*}
and, thus,
\begin{equation}
\label{dpi}
\frac{d}{dU} \left(\frac{1}{U}\frac{\partial P_i}{\partial \beta}(U, \beta)\right) = \frac{Uf(U)-P_i^2(U, \beta)}{U P_i^2(U, \beta)} \frac{1}{U}\frac{\partial P_i(U, \beta)}{\partial \beta} - \frac{1}{U}.
\end{equation}
Thus, we get the new expression
\begin{equation*}
\begin{split}
E_i'(\beta) &= \int _{U_i^-(\beta)}^{U_i^+(\beta)} \frac{d}{dU}\left(\frac{1}{U}\frac{\partial P_i}{\partial \beta}(U, \beta)\right) +\frac{1}{U} \, dU \\
&= \frac{1}{U_i^+}\frac{\partial P_i}{\partial \beta}(U_i^+, \beta)-\frac{1}{U_i^-}\frac{\partial P_i}{\partial \beta}(U_i^-, \beta) +\int_{U_i^-(\beta)}
^{U_i^+(\beta)} \frac{1}{U}\, dU.
\end{split}
\end{equation*}
Moreover, the solution to equation (\ref{dpi}) can be explicitly computed via the kernel $K$ as defined in \eqref{k}:
\begin{equation*}
\frac{1}{U} \frac{\partial P_i}{\partial \beta}(U, \beta) = \left( \frac{1}{U_i^-(\beta)} \frac{\partial P_i}{\partial \beta}(U_i^-(\beta), \beta) - \int_{U_i^-(\beta)}^{U} \frac{1}{W} e^{-K(\beta, U_i^-(\beta), W)}\, dW\right)e^{K(\beta,U_i^-(\beta),U)}.
\end{equation*}
In particular, once again by the optimality condition of Lemma \ref{Lem:restr_beta_i},
\begin{equation*}
\frac{1}{U_i^+(\beta)} \frac{\partial P_i}{\partial \beta}(U_i^+(\beta), \beta) = \frac{1}{U_i^-(\beta)}\frac{\partial P_i}{\partial \beta} (U_i^-(\beta), \beta) - \int_{U_i^-(\beta)}^{U_i^+(\beta)} \frac{1}{U} e^{-K(\beta, U_i^-(\beta), U)}\, dU.
\end{equation*}
Combining all the previous identity, we are left with the expression
\begin{align*}
E_i'(\beta) = \int_{U_i^-(\beta)}^{U_i^+(\beta)} \frac{1}{U} \left(1-e^{-K(\beta, U_i^-(\beta), U)}\right)\, dU, \quad \forall i=1,...,N,
\end{align*}
which, together with \eqref{e0p}, complete the proof.
%
%

\medskip

\noindent{\it General case.} Let $f_n$ polynomial converging in $C^2$ to the source $f$, and $\beta_1,\beta_2 \in (\beta^*(f),+\infty)$. By the continuity of the manifolds $\Gamma_u(\beta,f),\Gamma_s(\beta,f)$, we can assume w.l.o.g. that $\beta^*(f_n) < \beta_1 < \beta_2 < +\infty$. We can also take a point $U \in \gls{OcalPopt}$ such that
$$
\inf_n \{U_u(\beta_2,f)\} < \inf_n \{U_u(\beta_1,f_n)\} < U < \sup_n \{U_s(\beta_1,f)\} < \sup_n \{U_s(\beta_2,f)\},
$$
again by the continuity of the points $f \mapsto U_u(\beta,f),U_s(\beta,f)$ and the monotonicity w.r.t. $\beta$. Since $U \in \gls{OcalPopt}$, then definitely $U \in \gls{OcalPopt}_n$: indeed, $F_n(U) + \beta > C > 0$ definitely and then if $U \notin \gls{OcalPopt}_n$ there is a left or right neighborhood of $U$ where the left or right derivative $\frac{dP_n}{dU} \not= \frac{dP^*}{dU}$, and thus also in the limit $\frac{dP}{dU} \not= \frac{dP^*}{dU}$, yielding a contradiction.

The formula for the derivative in the polynomial case gives
\begin{equation*}
E(\beta_2,f_n) - E(\beta_1,f_n) = \int_{\beta_1}^{\beta_2} \partial_\beta E(\beta,f_n) d\beta = \int_{\beta_1}^{\beta_2} \frac{\delta P_s(\beta,f_n,U) - \delta P_u(\beta,f_n,U)}{U} d\beta.
\end{equation*}
The Lipschitz continuity of $\beta,f \mapsto \delta P_u(\beta,f,U), \delta P_s(\beta,f,U)$ and of $E(\beta,f)$ implies that we can pass to the limit as $f_n \to f$ obtaining
\begin{equation*}
E(\beta_2,f) - E(\beta_1,f) = \int_{\beta_1}^{\beta_2} \frac{\delta P_s(\beta,f,U) - \delta P_u(\beta,f,U)}{U} d\beta.
\end{equation*}
Being the integrand a continuous function of $\beta$, we conclude that the statement holds.
%
%
\end{proof}

\subsection{Second derivative and convexity}
\label{Ss:second_der_convex}

In the next theorem we study the second derivative of $E$ w.r.t. $\beta$: we will see that for $\beta \nearrow \beta^{**}$ it may happen that the second derivative diverges, and in any case in general that $\partial_\beta E(\beta)$ is only locally Lipschitz in $\{(\beta^*,+\infty) \setminus \beta^{**}\}$.

\begin{theorem}[Second derivative]
\label{2der}
The cost function $\beta \mapsto E(\beta)$ belongs to $C^{1,1}_\loc$ in the open set $(\beta^*,+\infty) \setminus \{\beta^{**}\}$. Moreover, for $\beta,f$ as in Proposition \ref{Prop:reguls_Ibeta}, the function $E(\beta)$ admits a second derivative whose explicit expression is given by
\begin{equation}
\label{Equa:second_der_E_beta}
\begin{split}
\partial^2_\beta E(\beta) &= \frac{1}{\hat U_s(\beta)} \frac{\partial^2 \Gamma_s}{\partial \beta^2}(\beta,\hat U_s(\beta)) - \frac{1}{\hat U_u(\beta)}\frac{\partial ^2 \Gamma_u}{\partial \beta^2}(\beta,\hat U_u(\beta)) \\
& \quad + \sum_{i=1}^N \int_{U_i^-(\beta)}^{U_i^+(\beta)} \frac{2f(W)}{WP_i^3(W, \beta)} \left(\frac{\partial P_i}{\partial \beta}(W, \beta)\right)^2\, e^{-K(\beta, U_i^-(\beta), W)} dW,
\end{split}
\end{equation}
being $P_i$ the solution to \eqref{functeq} in the $i$-th connected component of \gls{IcalPopt}.
\end{theorem}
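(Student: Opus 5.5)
The plan is to follow the scheme of Theorem \ref{Theo:derivative_E}: compute $\partial^2_\beta E$ explicitly for $(\beta,f)$ in the good set of Proposition \ref{Prop:reguls_Ibeta}, show it is locally bounded uniformly, and then pass to the limit by polynomial approximation. In that set $\mathcal I(\beta)$ has finitely many components with endpoints smooth in $\beta$, so we can differentiate once more the decomposition \eqref{Equa:E0_Ei}. Concretely, combining \eqref{e0p} with the formula for $E_i'$ gives
\begin{equation*}
\partial_\beta E(\beta) = \frac{1}{\hat U_s}\partial_\beta\Gamma_s(\beta,\hat U_s) - \frac{1}{\hat U_u}\partial_\beta\Gamma_u(\beta,\hat U_u) + \int_{\hat U_u}^{\hat U_s}\frac{dU}{U} - \sum_i \int_{U_i^-}^{U_i^+} \frac{1 - e^{-K(\beta,U_i^-,U)}}{U}\,dU ,
\end{equation*}
obtained by evaluating \eqref{sum} at $U=\hat U_u$ or $\hat U_s$ and using \eqref{Equa:Gammaus_Fbeta}. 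I would differentiate this expression in $\beta$ directly.

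For the boundary terms, the total derivative of $\frac{1}{\hat U_u}\partial_\beta\Gamma_u(\beta,\hat U_u(\beta))$ is $\frac{1}{\hat U_u}\partial^2_\beta\Gamma_u$ plus terms proportional to $\frac{d\hat U_u}{d\beta}$, namely $\frac{d}{dU}\big(\frac{\partial_\beta\Gamma_u}{U}\big)$ at $\hat U_u$, which by the linearized ODE equals $-\frac{1}{\hat U_u}$ because the kernel $\frac{Uf-\Gamma_u^2}{U\Gamma_u^2}$ vanishes at $\hat U_u$, where $\Gamma_u=P^*$. This $-\frac{1}{\hat U_u}\frac{d\hat U_u}{d\beta}$ term is meant to cancel against the derivative of the lower limit in $\int_{\hat U_u}^{\hat U_s} dU/U$, and the $\Gamma_s$ side works the same way. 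For the $i$-th interval, the integrand vanishes at $U_i^\pm$ only if $K(\beta,U_i^-,U_i^+)=0$, which holds by \eqref{necint}, so the endpoint motions contribute nothing and only $\partial_\beta K(\beta,U_i^-,W)$ survives. Differentiating $\frac{Uf-P_i^2}{UP_i^2}$ gives $-\frac{2f}{P_i^3}\partial_\beta P_i$, plus a term from the motion of $U_i^-$ carried through $P_i(\cdot)$. I expect this extra term to be absorbed by the constraint $\frac{d}{d\beta}K(\beta,U_i^-,U_i^+)=0$, exactly as in Step 4 of Proposition \ref{Prop:reguls_Ibeta}, where $\delta P$ at $U_i^-$ is fixed by that constraint. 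After an exchange of the order of integration (Fubini), this should yield \eqref{Equa:second_der_E_beta}. \textbf{This bookkeeping is the step I expect to be the main obstacle.} Here $\partial_\beta P_i$ is the full derivative, including the moving initial point, and I would check the cancellation on a single interval first.

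For the $C^{1,1}_\loc$ claim, I would bound each term of \eqref{Equa:second_der_E_beta} on compact sets $[\beta_1,\beta_2]$ avoiding $\beta^{**}$. The terms $\partial^2_\beta\Gamma_s$ and $\partial^2_\beta\Gamma_u$ are bounded by Theorem \ref{Theo:second_der_Gammaus} and Corollary \ref{Cor:bound_hatu}, noting that $\hat U_u\le\overline U_u$. In the interval sums, $P_i\ge P_m>0$ by Lemma \ref{lowerbound}, $\partial_\beta P_i$ is bounded by the explicit formula, and $e^{-K}$ is bounded. Hence $\partial^2_\beta E$ is bounded by a constant depending only on $\|f\|_{C^2}$, $\beta_1$ and $\beta_2$, uniformly over polynomials $f_n\to f$.

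Finally, for polynomial $f_n$ the exceptional set $\mathcal N_2(f_n)$ is finite, and $\partial_\beta E(\cdot,f_n)$ is $C^1$ continuous, so it is Lipschitz with this uniform constant on $[\beta_1,\beta_2]$. The $C^1_\loc$ convergence of Theorem \ref{Theo:derivative_E} then transfers the Lipschitz bound to $\partial_\beta E(\cdot,f)$, which gives $C^{1,1}_\loc$ on $(\beta^*,+\infty)\setminus\{\beta^{**}\}$.
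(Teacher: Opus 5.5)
Your proposal is correct and follows the paper's overall plan. The shared steps are: the explicit computation on the good set of Proposition \ref{Prop:reguls_Ibeta} via $E=E_0-\sum_i E_i$; the cancellation of the $\frac{d\hat U_u}{d\beta}$, $\frac{d\hat U_s}{d\beta}$ terms through $\frac{d}{dU}\big(\frac1U\partial_\beta\Gamma_u\big)=-\frac1U$ at $\hat U_u$ (likewise for $\Gamma_s$); a uniform bound away from $\beta^{**}$; and polynomial approximation with the $C^1_\loc$ convergence of Theorem \ref{Theo:derivative_E}.

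The genuine difference is in the interval terms. The paper differentiates $E_i'=\int_{U_i^-}^{U_i^+}\partial_\beta P_i\,\frac{Uf-P_i^2}{U^2P_i^2}\,dU$, which does not use \eqref{necint}. It identifies the $\beta$-derivative of the integrand with $\frac{d}{dU}\big(\frac1U\partial^2_\beta P_i\big)$ via \eqref{d2pi}, and solves that second-variation ODE using only $K(\beta,U_i^-,U_i^+)=0$ at the given $\beta$. You instead differentiate the closed form $\int_{U_i^-}^{U_i^+}\frac{1-e^{-K(\beta,U_i^-,U)}}{U}dU$. This equals $E_i'$ only because \eqref{necint} holds along the whole family of speeds, which is why the linearized constraint must enter.

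The step you flagged does close, but the extra term does not come from the motion of $U_i^-$. Both endpoint contributions vanish, and the moving initial point is already contained in the full derivative $\partial_\beta P_i$. Set $G(W)=\int_{U_i^-}^W\frac{e^{-K(\beta,U_i^-,V)}}{V}dV$. Fubini produces the weight $G(U_i^+)-G(W)$. The first-order solution formula together with $K(\beta,U_i^-,U_i^+)=0$ gives
\[
G(U_i^+)-G(W)=\frac{1}{W}\partial_\beta P_i(W)\,e^{-K(\beta,U_i^-,W)}-\frac{1}{U_i^+}\partial_\beta P_i(U_i^+).
\]
So you obtain $-\int_{U_i^-}^{U_i^+}\frac{2f}{WP_i^3}(\partial_\beta P_i)^2e^{-K(\beta,U_i^-,W)}dW$ plus $\frac{1}{U_i^+}\partial_\beta P_i(U_i^+)\int_{U_i^-}^{U_i^+}\frac{2f}{P_i^3}\partial_\beta P_i\,dW$. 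The last integral equals $-\frac{d}{d\beta}K(\beta,U_i^-(\beta),U_i^+(\beta))=0$.

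Your route stays at first-order variations, so you never need to justify $\partial^2_\beta P_i$ and \eqref{d2pi} up to moving endpoints; the cost is this Fubini-plus-constraint step. The paper's route is shorter.

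The same constraint also makes your claim that $\partial_\beta P_i$ is bounded rigorous. It says $\frac{1}{U_i^-}\partial_\beta P_i(U_i^-)$ is a positive-weight average of $G$ over $[U_i^-,U_i^+]$, with weights $\frac{2fW}{P_i^3}e^{K}$. Hence $|\partial_\beta P_i|\le C\,(U_i^+-U_i^-)$. This is safer than going through $\frac{dU_i^-}{d\beta}$ in \eqref{Equa:deltaUpm}, whose denominator $F(U_i^-)+\beta$ has no uniform lower bound. It also keeps the sum over many small components bounded uniformly along the approximating polynomials.
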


In particular, away from $\beta^{**}$, the Lipschitz regularity of $\partial_\beta E$ is a consequence of the fact that some connected component of \gls{IcalPopt} are splitting: an explicit example is in Section \ref{Ss:exampl_C11}.

\begin{proof}
%
%
%
\noindent {\it Step 1: $\beta,f$ as in Proposition \ref{Prop:reguls_Ibeta}.} In this case $\partial \gls{IcalPopt}$ is finite and depends smoothly on $\beta$: 
\begin{equation*}
\mathcal{I}(\beta) = (0,\hat U_u(\beta)) \cup (\hat U_s(\beta),1) \cup \bigcup_{i=1}^N (U_i^-(\beta), U_i^+(\beta))
\end{equation*}
and denote by $P_i = P_i(U,\beta)$ the solution to the non controlled equation \eqref{functeq} satisfying $P_i(U_i^\pm(\beta)) = P^*(U^\pm(\beta))$. Recall also \eqref{Equa:E0_Ei}:
\begin{equation*}
E(\beta) = \gls{E0cost} - \sum_{i=1}^N \gls{Eicost}.
\end{equation*}
From \eqref{Equa:E_i'_comp} we have
\begin{equation*}
\partial_\beta E_i(\beta) = \int_{U_i^-(\beta)}^{U_i^+(\beta)} \frac{\partial P_i}{\partial \beta}(U, \beta) \frac{Uf(U)-P_i^2(U, \beta)}{P_i^2(U, \beta) U^2} \, dU 
\end{equation*}
and thus differentiating
\begin{equation*}
\begin{split}
\partial^2_\beta E_i(\beta) &= \int_{U_i^-(\beta)}^{U_i^+(\beta)} \frac{\partial}{\partial \beta} \bigg[ \frac{\partial P_i}{\partial \beta}(U, \beta) \frac{Uf(U)-P_i^2(U, \beta)}{P_i^2(U, \beta) U^2} \bigg] dU \\
&= \int_{U_i^-(\beta)}^{U_i^+(\beta)} \frac{d}{dU} \left( \frac{1}{U}\frac{\partial^2 P_i}{\partial \beta^2}(U,\beta) \right)\, dU = \frac{1}{U^+_i} \frac{\partial^2 P_i}{\partial \beta^2}(U^+_i,\beta) - \frac{1}{U^-_i} \frac{\partial^2 P_i}{\partial \beta^2}(U^-_i,\beta),
\end{split}
\end{equation*}
where we have used the ODE for the second derivative $\partial_\beta^2 P_i$
\begin{equation}
\label{d2pi}
\begin{split}
\frac{d}{dU} \bigg( \frac{\partial^2_\beta P_i}{U} \bigg) &= \frac{\partial}{\partial \beta} \bigg[ \frac{\partial P_i}{\partial \beta}(U, \beta) \frac{Uf(U)-P_i^2(U, \beta)}{P_i^2(U, \beta) U^2} \bigg] \\
&= \frac{Uf(U)-P_i^2(U, \beta)}{UP_i^2(U, \beta)}\frac{1}{U}\frac{\partial^2 P_i}{\partial \beta^2}(U, \beta) - \frac{2f(U)}{UP_i^3(U,\beta)} \left( \frac{\partial P_i}{\partial \beta}(U,\beta) \right)^2,
\end{split}
\end{equation}
obtained by differentiating \eqref{dpi}.

The solution to \eqref{d2pi} can be written by means of the kernel $K(\beta,U_1,U_2)$ (defined in Equation \eqref{k}) as
\begin{align*}
\frac{1}{U} \frac{\partial^2 P_i}{\partial \beta^2}(U,\beta) &= \frac{1}{U_i^-(\beta)} \frac{\partial^2 P_i}{\partial \beta^2}(U_i^-(\beta),\beta) e^{K(\beta, U_i^-(\beta), U)} \\
& \quad - \int_{U_i^-(\beta)}^U \frac{2f(W)}{WP_i^3(W,\beta)} \left( \frac{\partial P_i}{\partial \beta}(W, \beta) \right)^2\, e^{K(\beta, U_i^-(\beta),U) - K(\beta,U_i^-(\beta),W)} \, dW.
\end{align*}
In particular, since $K(\beta, U_i^-(\beta), U_i^+(\beta))=0$,
\begin{equation*}
\begin{split}
\partial^2_\beta E_i(\beta) &= \frac{1}{U^+_i} \frac{\partial^2 P_i}{\partial \beta^2}(U^+_i,\beta) - \frac{1}{U^-_i} \frac{\partial^2 P_i}{\partial \beta^2}(U^-_i,\beta) \\
&= - \int_{U_i^-(\beta)}^{U_i^+(\beta)} \frac{2f(W)}{WP_i^3(W, \beta)} \left(\frac{\partial P_i}{\partial \beta}(W, \beta)\right)^2\, e^{-K(\beta, U_i^-(\beta), W)}dW \le 0.
\end{split}
\end{equation*}

Next we compute the second variation of $\Gamma_u(\beta,\hat U_u(\beta))$, $\Gamma_s(\beta,\hat U_s(\beta))$: using the ODE for perturbation
\begin{equation*}
\frac{d}{dU} \bigg( \frac{\partial_\beta \Gamma_u}{U} \bigg) = \frac{U f(U) - \Gamma_u}{U \Gamma_u^2} \frac{\partial_\beta \Gamma_u}{U_u(\beta)} -\frac{1}{U}
\end{equation*}
we conclude that
\begin{equation*}
\begin{split}
\frac{d}{dU} \bigg( \frac{1}{U}\frac{\partial \Gamma_u}{\partial \beta} \bigg)(\beta,\hat U_u(\beta)) &= \frac{(P^*(\hat U_u(\beta)))^2 - \Gamma_u(\beta,\hat U_u(\beta))^2}{\hat U_u(\beta) \Gamma_u(\beta,\hat U_u(\beta))^2} \frac{1}{\hat U_u(\beta)} \frac{\partial \Gamma_u}{\partial \beta}(\beta,\hat U_u(\beta)) - \frac{1}{\hat U_u(\beta)} \\
&= - \frac{1}{\hat U_u(\beta)}.
\end{split}
\end{equation*}
With a similar computation,
\begin{equation*}
\frac{d}{dU} \left( \frac{1}{\hat U_s(\beta)} \frac{\partial \Gamma_s}{\partial \beta} \right)(\beta,\hat U_s(\beta)) = - \frac{1}{\hat U_s(\beta)}.
\end{equation*}
We can then compute from \eqref{der0} and \eqref{Equa:Gammaus_Fbeta}
\begin{align*}
\partial^2_\beta E_0(\beta) 
&= - \frac{d}{d\beta} \left( \frac{1}{\hat U_u(\beta)} \frac{\partial \Gamma_u}{\partial \beta} (\beta,\hat U_u(\beta)) \right) + \frac{d}{d\beta} \left( \frac{1}{\hat U_s(\beta)} \frac{\partial \Gamma_s}{\partial \beta} (\beta,\hat U_s(\beta)) \right) + \frac{\hat U_s'(\beta)}{\hat U_s(\beta)} - \frac{\hat U_u'(\beta)}{\hat U_u(\beta)} \\
&= - \frac{d}{dU} \left( \frac{1}{U} \frac{\partial \Gamma_u}{\partial \beta} \right)(\beta,\hat U_u(\beta)) \hat U_u'(\beta) - \frac{1}{\hat U_u(\beta)} \frac{\partial^2 \Gamma_u}{\partial \beta^2}(\beta,\hat U_u(\beta)) \\
& \quad + \frac{d}{dU} \left( \frac{1}{U} \frac{\partial \Gamma_s}{\partial \beta} \right)(\beta,\hat U_s(\beta)) \hat U_s'(\beta) + \frac{1}{\hat U_s(\beta)} \frac{\partial^2 \Gamma_s}{\partial \beta^2}(\beta,\hat U_s(\beta)) \\
& \quad + \frac{\hat U_s'(\beta)}{\hat U_s(\beta)} - \frac{\hat U_u'(\beta)}{\hat U_u(\beta)} \\
&= \frac{1}{\hat U_s(\beta)} \frac{\partial^2 \Gamma_s}{\partial \beta^2}(\beta,\hat U_s(\beta)) - \frac{1}{\hat U_u(\beta)}\frac{\partial ^2 \Gamma_u}{\partial \beta^2}(\beta,\hat U_u(\beta)).
\end{align*}

We conclude that for $\beta,f$ the second derivative $\partial^2_\beta E(\beta)$ has the explicit expression
\begin{equation*}
\begin{split}
\partial^2_\beta E(\beta) &= \frac{1}{\hat U_s(\beta)} \frac{\partial^2 \Gamma_s}{\partial \beta^2}(\beta,\hat U_s(\beta)) - \frac{1}{\hat U_u(\beta)}\frac{\partial ^2 \Gamma_u}{\partial \beta^2}(\beta,\hat U_u(\beta)) \\
& \quad + \sum_{i=1}^N \int_{U_i^-(\beta)}^{U_i^+(\beta)} \frac{2f(W)}{WP_i^3(W, \beta)} \left(\frac{\partial P_i}{\partial \beta}(W, \beta)\right)^2\, e^{-K(\beta, U_i^-(\beta), W)} dW.
\end{split}
\end{equation*}

\medskip

\noindent{\it Step 2: uniform bound on the second derivative and the polynomial case.} We observe that the above expressions are uniformly bounded when $\beta$ is away from $\beta^{**}$, because of the estimates of Theorem \ref{Theo:second_der_Gammaus} and that $P_i > 0$ uniformly by Lemma \ref{lowerbound}. For $f \in \gls{Tfrak}$ polynomial, we know that there is a finite number of speeds $\beta$ such that $\partial^2_\beta E(\beta,f)$ does not exists: the above estimate on the second derivative thus implies that for $\beta \mapsto \partial_\beta E(\beta,f)$ is Lipschitz for $f \in \gls{Tfrak}$ polynomial. Being \gls{Tfrak} of full measure in the set of polynomials, by approximation we deduce that $\partial_\beta E(\beta,f)$ is Lipschitz for all $f$ polynomial, and the passing to the limit $\partial_\beta E_\beta(\beta,f)$ is also Lipschitz for generic $f$ and $\beta \not= \beta^{**}$.
\end{proof}

We conclude this section with the following corollary of Proposition \ref{Prop:blowupsecond}, whose proof is just passing to the limit \eqref{Equa:second_der_E_beta}: the first part of the statement is a direct consequence of Formula \eqref{Equa:second_der_E_beta}, since for $\beta > \beta^{**}$ the negative term $- \frac{\partial^2_\beta \Gamma_u}{U_u}$ is not present.

\begin{corollary}
\label{Cor:blowup_E''}
The function \gls{Ebeta} is convex for $\beta > \beta^{**}$. Under the assumption \eqref{Equa:cond_blow}, it holds
\begin{equation*}
\lim_{\beta \nearrow \beta^{**}} \partial^2_\beta E(\beta) = - \infty.
\end{equation*}
In particular $E(\beta)$ is not convex in the interval $(\beta^*,\beta^{**})$.  
\end{corollary}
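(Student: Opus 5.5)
We read both claims off formula \eqref{Equa:second_der_E_beta} of Theorem \ref{2der}, first on the open dense set of $(\beta,f)$ given by Proposition \ref{Prop:reguls_Ibeta}, and then extend by approximation.

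\emph{Convexity for $\beta>\beta^{**}$.} Here $U_u(\beta)=U^*$ and $\Gamma_u$ ends at $(U^*,0)$, so $\hat U_u(\beta)=U^*$. The optimal profile coincides with $P^*$ on $[U^*,\bar U]$ by Remark \ref{Rem:minimal_U}. Hence in the computation of $\partial_\beta E_0$ the lower endpoint is fixed, and the boundary term $-\frac{1}{\hat U_u}\partial^2_\beta\Gamma_u$ does not appear.

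The remaining terms are nonnegative:
\begin{itemize}
\item $\partial^2_\beta\Gamma_s\ge 0$ by \eqref{Equa:beta2_Gamma_s};
\item each integral $\int \frac{2f}{WP_i^3}(\partial_\beta P_i)^2e^{-K}\,dW$ is $\ge 0$, because $f>0$ on $(U^*,1)$ and the intervals $(U_i^-,U_i^+)\subset(U^*,1)$.
\end{itemize}
So $\partial^2_\beta E\ge 0$ on the generic set. By Theorem \ref{2der}, $\partial_\beta E$ is locally Lipschitz on $(\beta^{**},\infty)$, so its a.e. derivative is $\ge0$ and $\partial_\beta E$ is nondecreasing there. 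For a general $f$ we approximate by $f_n\in\mathfrak T$ and use the $C^1_\loc$ convergence from Theorem \ref{Theo:derivative_E}; this step needs $\beta^{**}(f_n)\to\beta^{**}(f)$, which I take from the continuity of $\Gamma_u$ in $f$. Monotone derivatives pass to the limit, so $E$ is convex.

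\emph{Blow-up as $\beta\nearrow\beta^{**}$.} Fix $f$ satisfying \eqref{Equa:cond_blow}. For $\beta$ close to $\beta^{**}$, Remark \ref{Rem:minimal_U} (with $\beta>\beta^{***}$) gives that the first free interval $(U_u,\hat U_u)$ is empty, so $\hat U_u(\beta)=U_u(\beta)$.

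Proposition \ref{Prop:blowupsecond} then gives $\partial^2_\beta\Gamma_u(\beta,U_u(\beta))\to+\infty$. The $\Gamma_s$ term is bounded by \eqref{Equa:beta2_Gamma_s}.

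The sum over the free intervals is nonnegative, so it cannot be discarded and must be shown bounded. Near $\beta^{**}$ the intervals stay in a compact subset of $(U^*,1)$ away from $U_u$, since each must contain a zero of $F+\beta$ and $F\to+\infty$ at $U^*$. Lemma \ref{lowerbound} bounds $P_i$ below uniformly, and Lemma \ref{Lem:unique} bounds $\partial_\beta P_i$. Hence this sum stays bounded and $\partial^2_\beta E\to-\infty$ along the generic set.

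\emph{Main obstacle: passing to non-generic $\beta$.} Since $\partial_\beta E$ is only Lipschitz, the claim should be read as follows: the a.e. second derivative tends to $-\infty$, or equivalently difference quotients of $\partial_\beta E$ near $\beta^{**}$ are $\le -M$ for every $M$. To obtain this, I would integrate \eqref{Equa:second_der_E_beta} over $[\beta_1,\beta_2]$, which is legitimate because $E'$ is absolutely continuous there. Splitting points are finite for $f\in\mathfrak T$, and only finitely many connected components occur. When the given $f$ is not in $\mathfrak T$, I would approximate it by $f_n\in\mathfrak T$, check that the bounds above are uniform in $n$, and use the $C^1_\loc$ convergence.

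Finally, non-convexity on $(\beta^*,\beta^{**})$ follows directly: $\partial_\beta E$ is strictly decreasing on a left neighbourhood of $\beta^{**}$.
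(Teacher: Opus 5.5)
This is the paper's argument: read everything off \eqref{Equa:second_der_E_beta}. For $\beta>\beta^{**}$ the only term that can be negative, $-\frac{1}{\hat U_u}\partial^2_\beta\Gamma_u$, is absent, and as $\beta\nearrow\beta^{**}$ that term diverges to $-\infty$ by Proposition \ref{Prop:blowupsecond}. Your extra bookkeeping ($\hat U_u=U_u$ near $\beta^{**}$, boundedness of the free-interval sum, the approximation step) just spells out what the paper calls ``passing to the limit''.

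Two small corrections. First, the bound on $\partial_\beta P_i$ does not come from Lemma \ref{Lem:unique}, which concerns $\partial P_u,\partial P_s$; it comes from the explicit expression \eqref{Equa:deltaPpm} together with $P_i\ge P_m$ from Lemma \ref{lowerbound}. Second, the uniform localization of the free intervals away from $U^*$ follows from Remark \ref{Rem:minimal_U}, not merely from each interval containing a zero of $F+\beta$.
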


\section{Examples where $E$ is neither convex nor $C^2$.}
\label{S:counter_convex_C11}

In this section we show that the results obtained are sharp: the cost $E(\beta)$ is in general not convex nor smooth. The first example is an explicit source $f$ for which we can compute the second derivative near $\beta = 0$, and if $f$ is highly positively predominant, then $\partial^2_\beta E(\beta=0) < 0$.

\subsection{$E(\beta)$ not convex}
\label{Sss:E_not_convex}

As a first step, we compute the explicit formulas for $\Gamma_u,\Gamma_s$ and their derivatives w.r.t. $\beta$ in the case $\beta = 0$: this case is easier because the ODE is now Hamiltonian.

\subsubsection{Formulas for $\Gamma_u,\Gamma_s$ in the case $\beta = 0$}
\label{Ss:formulas_beta0}

Fix $\beta=0$ and define the primitive function
\begin{equation*}
G(U) := - \int_0^U f(V) dV.
\end{equation*}
Note that, clearly, $G(U)> 0$ for all $U \in [0, U^*]$. Moreover, by assumption
\begin{align*}
G(1) = - \int_0^1 f(V) dV < 0,
\end{align*}
we can find $\bar U \in (U^*,1)$ such that $G(\bar U)=0$ by the intermediate value theorem. Finally, since
\begin{equation*}
G'=-f \le 0 \quad \text{in} \ [U^*,1],
\end{equation*}
we also deduce
\begin{align*}
G(U)\ge G(1) \quad \forall U \in [U^*,1].
\end{align*}
In this case, Equation \eqref{functeq} is an Hamiltonian system and can be rewritten as
\begin{equation*}
\frac{d}{dU} \left(\frac{P^2(U)}{2}\right) = -f(U) = G'(U), 
\end{equation*}
which implies, by the previous observations,
\begin{equation}
\label{unstable0}
\Gamma_u(0,U) = \sqrt{2G(U)} \quad \forall U \in [0,\bar U],
\end{equation}
\begin{equation}
\label{stable0}
\Gamma_s(0,U) = \sqrt{2 \left( G(U)-G(1) \right)} \quad \forall U\in [0,1].
\end{equation}
Moreover, for $\beta=0$, 
\begin{align}
\label{delta}
\frac{\partial}{\partial U} \left( \Gamma_u(\beta, U) \frac{\partial \Gamma_u}{\partial \beta}(\beta, U) \right) = - \Gamma_u(\beta,U),
\end{align}
and the same for $\Gamma_s$. By the boundary conditions
\begin{align*}
\Gamma_u(0,0) = \Gamma_s(1,0)=0,
\end{align*}
we obtain the explicit formulas
\begin{equation}
\label{eqmeno}
\frac{\partial \Gamma_u}{\partial \beta}(0, U)=-\frac{1}{\Gamma_u(0, U)}\int_0^U \Gamma_u(0, U') \, dU'= -\frac{1}{\sqrt{G(U)} }\int_0^U \sqrt{G(U')} dU',
\end{equation}
\begin{equation*}
\frac{\partial \Gamma_s }{\partial \beta}(0, U) = -\frac{1}{\Gamma_s(0,U)} \int_1^U \Gamma_s(0, U') \, dU'= \frac{1}{\sqrt{G(U) - G(1)}} \int_U^1 \sqrt{G(U') - G(1)} dU'.
\end{equation*}
Similarly, the equation for $\partial^2_\beta \Gamma_u$ is for $\beta = 0$ 
\begin{equation}
\label{deltas}
\frac{\partial}{\partial U}
\left(\Gamma_u(0, U) \frac{\partial ^2 \Gamma_u }{\partial \beta^2} (0, U) \right) = - 2 \left[ \frac{\partial}{\partial U} \left(\frac{\partial \Gamma_u}{\partial \beta} \right)(0, U) + 1 \right] \frac{\partial \Gamma_u}{\partial \beta}(0, U),
\end{equation}
and the same for $\Gamma_s$. Thus,
\begin{equation}
\label{comp}
\frac{\partial ^2 \Gamma_u }{\partial \beta^2} (0, U)=\frac{1}{\Gamma_u(0, U)}   \left[-\left(\frac{\partial \Gamma_u}{\partial\beta}(0,U)\right)^2 -2 \int_0^U  \frac{\partial \Gamma_u}{\partial \beta}(U',0) \, dU' \right],
\end{equation}
\begin{equation*}
\frac{\partial ^2\Gamma_s}{\partial \beta^2}(U,0)=\frac{1}{\Gamma_s(0,U)}    \left[-\left(\frac{\partial \Gamma_s}{\partial\beta}(U,0)\right)^2 +2 \int_U^1  \frac{\partial \Gamma_s}{\partial \beta}(U',0) \, dU' \right].
\end{equation*}
We remark that for all the above computation to be meaningful, we only need $f$ to be continuous: by approximation the same non-convexity result holds for any smooth source sufficiently close in $C^0$ to the $f$ we construct below.

\subsubsection{Definition of the source $f$}
\label{Sss:explicitc_f}

Let $K \ge 1$ and $U^* \in (0, \frac{1}{2})$ and define the function
\begin{align}\label{Eq:nonconvsource}
f(U)=f_K(U) := \begin{cases}
U(U-U*) & 0\le U \le U^*, \\
K(U-U^*) & U^* \le U \le  \frac{1}{2}, \\
K(1-2U^*)(1-U) & \frac{1}{2}\le U \le 1.
\end{cases}
\end{align}
We compute
\begin{equation*}
G(U) = \begin{cases}
U^*\frac{U^2}{2}-\frac{U^3}{3} & 0\le U \le U^* \\
G(U^*)-\frac{K}{2}\left(U-U^*\right)^2 & U^* \le U \le  \frac{1}{2} \\
G(\frac{1}{2}) +\frac{K}{2}(1-2U^*) [ (U-1)^2 - \frac{1}{4} ] & \frac{1}{2}\le U \le 1,
\end{cases}
\end{equation*}
with $G(U^*)=\frac{1}{6}(U^*)^3$ and $G(\frac{1}{2})=G(U^*) -\frac{K}{2}(\frac{1}{2}-U^*)^2.$ Note, in particular, that condition $U^* <\frac{1}{2}$ ensures
\begin{equation*}
G(1) =-\int_0^1 f(U) \,dU =\frac{1}{6}\left(U^*\right)^3-\frac{K}{2} (1-U^*) \bigg( \frac{1}{2} - U^* \bigg) < 0 \quad \text{ for } K \gg 1,
\end{equation*}
that is $f$ is positively predominant.
\subsubsection{Estimates on the manifold $\Gamma_u$}
\label{Sss:couter_esimt_Gammau}

For $0\le U\le U^*$, by (\ref{eqmeno}), we can directly compute
\begin{align*}
\frac{\partial\Gamma_u}{\partial\beta}(0,U) &= -\frac{1}{U\sqrt{1-\frac{2U}{3U^*}}} \int_0^U U' \sqrt{1-\frac{2U'}{3U^*}} \, dU' \\
&= - \frac{(\frac{3U^*}{2})^2}{U \sqrt{1 - \frac{2U}{3U^*}}} \bigg[ \frac{2}{3} \bigg( 1 - \bigg( 1 - \frac{2U}{3U^*} \bigg)^\frac{3}{2} \bigg) - \frac{2}{5} \bigg( 1 - \bigg( 1 - \frac{2U}{3U^*} \bigg)^\frac{5}{2} \bigg) \bigg] \\
&= - \frac{\frac{2 U^*}{5}}{\sqrt{1-\frac{2U}{3U^*}} ( 1 + \sqrt{1-\frac{2U}{3U^*}} )} + \frac{U^*}{5} - \frac{2U}{5}.
\end{align*}
In particular,
\begin{equation*}
\frac{\partial \Gamma_u}{\partial\beta}(0,U^*) = - \bigg( \frac{3\sqrt{3}-2}{5} \bigg) U^*.
\end{equation*}
It follows that, for $0\le U \le U^*$,
\begin{align*}
\int_0^{U} \frac{\partial\Gamma_u}{\partial\beta}(0,U') \, dU' &= - \frac{2U^*}{5} \int_0^U (-3U^*) \frac{d}{dU'} \bigg[ \ln \bigg( 1 + \sqrt{1 - \frac{2U'}{3U^*}} \bigg) \bigg] \, dU' + \frac{U^*U}{5} - \frac{U^2}{5} \\
&= - \frac{6 (U^*)^2}{5} \ln \bigg( \frac{2}{1 + \sqrt{1 - \frac{2U}{3U^*}}} \bigg) + \frac{U^* U}{5} - \frac{U^2}{5},
\end{align*}
and, consequently,
\begin{equation*}
\int_0^{U^*} \frac{\partial \Gamma_u}{\partial\beta}(0, U') \, dU' = - \frac{6 (U^*)^2}{5} \log{(3-\sqrt{3})}.
\end{equation*}
By identity (\ref{comp}), we conclude
\begin{align*}
\frac{\partial ^2 \Gamma_u}{\partial \beta^2}(0,U^*) &= \frac{1}{\sqrt{2G(U^*)}} \bigg( - \bigg( \frac{3\sqrt{3} - 2}{5} \bigg)^2 (U^*)^2 + \frac{12}{5} \ln(3-\sqrt{3}) (U^*)^2 \bigg) \\
&= \frac{\sqrt{3U^*}}{5} \bigg( 12 \ln(3-\sqrt{3}) - \frac{31-12\sqrt{3}}{5} \bigg) > 0.
\end{align*}

Thanks to formulas (\ref{unstable0}) and (\ref{stable0}), we can explicitly compute the unique intersections with the critical curve $P^*$
\begin{equation*}
U_u(0) = \frac{3}{4} U^* + \sqrt{ \bigg( \frac{U^*}{4} \bigg)^2 + \frac{G(U^*)}{K}} \quad \text{and} \quad U_s(0) = \frac{1}{2}.
\end{equation*}
Note, in particular, that, by the inequality
\begin{equation*}
\sqrt{1+x}-1 \le \frac{x}{2} \quad \forall x\ge 0,
\end{equation*}
we obtain
\begin{equation}
\label{dist}
0 \le U_u(0) - U^* = - \frac{U^*}{4} + \sqrt{\bigg( \frac{U^*}{4} \bigg)^2 + \frac{G(U^*)}{K}} \le \frac{(U^*)^2}{3K}.
\end{equation}

For every $U \in [U^*, \bar U]$ we can write
\begin{equation*}
\Gamma_u(0,U) = \Gamma_u(0, U^*) - R_1(U),
\end{equation*}
with
\begin{equation*}
0 \le R_1(U):= \sqrt{2G(U^*)}- \sqrt{2G(U^*) -K(U-U^*)^2} \le \sqrt{K}(U-U^*).
\end{equation*}
In particular
\begin{equation*}
R_1(U_u) \leq \frac{(U^*)^2}{3 \sqrt{K}}.
\end{equation*}

Moreover, by \eqref{delta}, there holds
\begin{equation*}
\frac{\partial \Gamma_u}{\partial \beta}(0,U)=\frac{\partial \Gamma_u}{\partial \beta}(0,U^*)+ R_2(U)
\end{equation*}
with
\begin{align*}
R_2(U) := - (U - U^*) + \frac{1}{\Gamma_u(0, U^*)} \int_{U^*}^U R_1(U') \, dU' + \frac{\partial \Gamma_u}{\partial \beta}(0,U) \frac{R_1(U)}{\Gamma_u(0, U^*)},
\end{align*}
and, thus, for all $U^* \le U \le U_u(0) \le \bar U$ and $K$ sufficiently large, by \eqref{dist},
\begin{align*}
\left|R_2(U)\right|&\le (U-U^*)+ \frac{1}{2\Gamma_u(0, U^*)}\sqrt{K} (U-U^*)^2 + \frac{C}{\Gamma_u(0, U^*)}\sqrt{K}(U-U^*) \le \frac{C}{\sqrt{K}},
\end{align*}
For simplicity, throughout the remainder of this subsection, $C$ is a numeric constant whose value only depends on $U^*$ and may vary from time to time

Similarly, by (\ref{deltas}), we can write
\begin{align*}
\Gamma_u(0,U) \frac{\partial ^2 \Gamma_u}{\partial \beta^2}(0, U)=\Gamma_u(0,U^*)\frac{\partial ^2 \Gamma_u}{\partial \beta^2}(0,U^*)+ \frac{\partial \Gamma_u}{\partial \beta}(0, U^*)^2 -\frac{\partial \Gamma_u}{\partial \beta}(0,U)^2 -2\int_{U^*}^U \frac{\partial \Gamma_u}{\partial \beta} (0, U') \, dU',
\end{align*}
which implies
\begin{equation*}
\frac{\partial ^2 \Gamma_u}{\partial \beta^2}(0,U)= \frac{\partial ^2 \Gamma_u }{\partial \beta^2}(0, U^*)+R_3(U),
\end{equation*}
with
\begin{align*}
R_3(U) :&= \frac{1}{\Gamma_u(0,U^*)}\left[\left( \frac{\partial \Gamma_u}{\partial \beta}(0,U^*)\right)^2 -\left(\frac{\partial \Gamma_u}{\partial \beta}(0,U)\right)^2\right] \\
& \quad -\frac{2}{\Gamma_u(0, U)}\int_{U^*}^U \frac{\partial \Gamma_u}{\partial \beta}(0, U') \, dU'\\
& \quad - \frac{\partial ^2\Gamma_u}{\partial \beta^2}(0,U^*)\left(\frac{\Gamma_u(0, U^*)}{\Gamma_u(0,U)}-1 \right)
\end{align*}
and, therefore, also
\begin{equation*}
\left|R_3(U)\right| \le \frac{C}{\sqrt{K}}, \quad \forall U^*\le U \le U_u(0) \text{ and } K \gg 1.
\end{equation*}

In particular, we conclude that, for $K$ sufficiently large,
\begin{equation*}
\frac{\partial ^2 \Gamma_u }{\partial \beta^2}(0,U_u(0)) \ge \frac{1}{2}\frac{\partial ^2 \Gamma_u}{\partial \beta^2}(0,U^*) > 0.
\end{equation*}

\subsubsection{Estimates on the manifold $\Gamma_s$}
\label{Sss:estim_couteGammas}

Conversely, equations (\ref{delta}) and (\ref{deltas}) for $\Gamma_s$ can be directly integrated to find
\begin{equation*}
\Gamma_s(0,U) = \sqrt{K(1-2U^*)}(1-U), \quad \frac{\partial \Gamma_s}{\partial \beta}(0,U) = \frac{1-U}{2}, \quad \frac{\partial ^2\Gamma_s}{\partial \beta^2}(0,U)= \frac{1-U}{4\sqrt{K(1-2U^*)}}.
\end{equation*}

\subsubsection{Computing the second derivative of $E(\beta)$ for $\beta = 0$ and non-convexity of $E$}
\label{Sss:comput_second}

In particular,
\begin{equation*}
\frac{\partial^2\Gamma_s}{\partial \beta^2}(0, U_s(0))=\frac{1}{8\sqrt{K(1-2U^*)}} \to 0 \text{ as } K\to +\infty.
\end{equation*}
We therefore conclude that it is possible to pick $K$ sufficiently large so that
\begin{equation*}
\partial^2_\beta E(0) = \frac{1}{U_s(0)} \frac{\partial^2 \Gamma_s}{\partial \beta^2}(0,U_s(0)) - \frac{1}{U_u(0)}\frac{\partial^2 \Gamma_u}{\partial \beta^2}(0,U_u(0))<0.
\end{equation*}


\subsection{Non-convexity and anisotropic perimeter}
\label{Ss:anisotr_nonconvex}

In \cite{BressanChiri23} the following assumptions are made on the effort function $E(\beta)$:
\begin{enumerate}
\item $\beta \mapsto E(\beta)$ is convex,
\item $E(\beta) - \beta E'(\beta) \geq 0 \quad \forall \beta\ge0.$
\end{enumerate}
Because of the existence of the asymptote
\begin{equation*}
\ln \bigg( \frac{1}{U^*} \bigg) \beta + \int_{U^*}^1 \frac{2 \sqrt{f(U)}}{U^{\frac{3}{2}}} dU \quad \text{with} \ \int_{U^*}^1 \frac{2 \sqrt{f(U)}}{U^{\frac{3}{2}}} dU > 0
\end{equation*}
by Proposition \ref{Prop:E_Lipschitz_beta}, the first assumption implies the second. Indeed,  if we assume convexity a priori, in particular $\beta \mapsto E'(\beta)$ is increasing, so that the asymptotic behavior of $E$ implies
\begin{equation*}
E'(\beta)\le\ln \bigg( \frac{1}{U^*} \bigg) \quad \forall \beta \in \R.
\end{equation*}
In particular, the function $\beta \mapsto E(\beta)-\beta \ln \big( \frac{1}{U^*} \big)$ is decreasing, so that, for all $\beta \ge0$,
\begin{equation}
\label{Eq:aboveas}
E(\beta)-\beta E'(\beta) \ge E(\beta) -\ln \bigg( \frac{1}{U^*} \bigg) \beta \ge \lim_{\beta \to \infty} \bigg[ E(\beta) - \ln \bigg( \frac{1}{U^*} \bigg) \beta \bigg] = \int_{U^*}^1 \frac{2 \sqrt{f(U)}}{U^{3/2}} dU >0.
\end{equation}
Conversely, similar computations show here that for the cubic source $f(U) = U (U - U^*) (1 - U)$ there are $U^*$ such that it holds
\begin{equation*}
E(\beta = 0) < \int_{U^*}^1 \frac{2 \sqrt{f(U)}}{U^{\frac{3}{2}}} dU.
\end{equation*}
Hence $E(\beta)$ is not convex even for the cubic source. For this purpose, just consider $U^* = \frac{1}{2}$, so that $E(\beta = 0) = 0$, and observe that
\begin{equation*}
\int_{\frac{1}{2}}^1 \frac{2 f(U)}{U^{\frac{3}{2}}} dU > 0.
\end{equation*}
Therefore, by the Lipschitz dependence of Corollary \ref{Cor:Lipsc_f}, for $U^*$ close to $\frac{1}{2}$ and $\beta$ close to $0$, the effort $E(\beta)$ satisfies
\begin{equation*}
E(\beta) < \ln \bigg( \frac{1}{U^*} \bigg) \beta + \int_{U^*}^1 \frac{2 \sqrt{f(U)}}{U^{\frac{3}{2}}} dU,
\end{equation*}
which implies non-convexity (by contradicting \eqref{Eq:aboveas}).

\subsection{Example of $E$ with only $C^{1,1}$-regularity}
\label{Ss:exampl_C11}

In the previous analysis, a key part of the computation of the derivative is the fact that $U^\pm_i(\beta)$ depends smoothly on $\beta$: this is not the case when a splitting occurs in one of the intervals. We will show that, in this situation, the second derivative generally does not exist.

We consider $f \in \gls{Tfrak}$ and we assume for simplicity that for $\beta < \bar \beta$
\begin{equation*}
I(\beta) = (U_1^-(\beta),U_1^+(\beta)),
\end{equation*}
whereas for $\beta > \bar{\beta}$
\begin{equation*}
I(\beta)=(U_2^-(\beta), U_2^+(\beta)) \cup (U_3^-(\beta), U_3^+(\beta)).
\end{equation*}
This situation is in accordance with Lemma \ref{Lem:restr_beta_i}. We will compute the right and left limits of $\partial^2_\beta E$ at $\bar \beta$. For $i=1,2,3$, denote by $P_i=P_i(U, \beta)$ the solution to
\begin{equation*}
\begin{cases}
P_i'(\beta) + \frac{f(U)}{P_i} + \beta=0, \quad U \in (U_i^-(\beta), U_i^+(\beta))\\
P_i(U_i^-(\beta)) = P^*(U_i^-(\beta)).
\end{cases}
\end{equation*}

For $\beta < \bar \beta$, the equation of the perturbation is
\begin{equation*}
\frac{d}{dU} \delta P_1 = \frac{f(U)}{P^2} \delta P_1 - 1.
\end{equation*}
The initial condition $\delta P_1$ at some point $U \in (U^-_1,U^+_1)$ is computed in order to maintain
\begin{equation*}
\int_{U^-_1}^{U^+_1} \frac{U f(U) - P^2}{U P^2} dU = 0,
\end{equation*}
which becomes
\begin{equation}
\label{Equa:require_cc}
\int_{U^-_1}^{U^+_1} \frac{2 f(U) \delta P_1}{P^3} dU = 0.
\end{equation}
Note that this requires
\begin{equation}
\label{Equa:sing_deltaP}
\delta P_1(U_1^+) < 0 \quad \text{and} \quad \delta P_1(U^-_1) > 0,
\end{equation}
otherwise the solution would be strictly positive or negative in the interval, respectively.

For $\beta > \bar \beta$ the interval $(U^-_1,U^+_1)$ splits into two intervals and then the initial data for them is similarly computed: if $\delta P_2$, $\delta P_3$ are the solutions in the intervals $(U^-_2,U^+_2),(U^-_3,U^+_3)$, then
\begin{equation*}
\int_{U^-_2}^{U^+_2} \frac{2 f(U) \delta P_2}{P^3} dU = \int_{U^-_3}^{U^+_3} \frac{2 f(U) \delta P_3}{P^3} dU = 0.
\end{equation*}
In particular by \eqref{Equa:sing_deltaP} we have
\begin{equation}
\label{Equa:sign_deltaP_23}
\delta P_2(U_2^+) < 0 \quad \text{and} \quad \delta P_3(U^-_3) > 0.
\end{equation}

For $\beta = \bar \beta$, the optimal profile $P(U)$ is the same for both cases, the difference being in the initial data: in particular, we can write $\delta P_1$ in terms of $\delta P_2, \delta P_3$ and the solution of the homogeneous equation as
\begin{equation*}
\delta P_1(\bar \beta,U) = \begin{cases}
a e^{\int_{U_m}^U \frac{f(W)}{W^2} dW} + \delta P_2(\bar \eta,U) & U \in (U^-_1,U_m], \\
b e^{\int_{U_m}^U \frac{f(W)}{W^2} dW} + \delta P_3(\bar \beta,U) & U \in (U_m,U_1^+),
\end{cases}
\end{equation*}
with
\begin{equation*}
a + \delta P_2(U_m) = b + \delta P_3(U_m),
\end{equation*}
Imposing \eqref{Equa:require_cc} we obtain also
\begin{equation*}
a \int_{U_1^-}^{U_m} \frac{2 f(W)}{P^3} e^{\int_{U_m}^W \frac{f}{P^2} dV} dW + b \int_{U^m}^{U^+_1} \frac{2 f(W)}{P^3} e^{\int_{U_m}^W \frac{f}{P^2} dV} dW = 0.
\end{equation*}
We observe also that $\delta P_2(U_m) < 0$ and $\delta P_3(U_m) > 0$ by \eqref{Equa:sign_deltaP_23}, so that being the linear system above invertible we conclude that
\begin{equation*}
a = \frac{\int_{U^m}^{U^+_1} \frac{2 f(W)}{P^3} e^{\int_{U_m}^W \frac{f}{P^2} dV} dW}{\int_{U^-_1}^{U^+_1} \frac{2 f(W)}{P^3} e^{\int_{U_m}^W \frac{f}{P^2} dV} dW} \big( \delta P_3(U_m) - \delta P_2(U_m) \big) > 0,
\end{equation*}
\begin{equation*}
b = - \frac{\int_{U_1^-}^{U_m} \frac{2 f(W)}{P^3} e^{\int_{U_m}^W \frac{f}{P^2} dV} dW}{\int_{U^-_1}^{U^+_1} \frac{2 f(W)}{P^3} e^{\int_{U_m}^W \frac{f}{P^2} dV} dW} \big( \delta P_3(U_m) - \delta P_2(U_m) \big) < 0.
\end{equation*}

We now compute the second derivative of $E$ at $\bar \beta-$ using the above expression:
\begin{equation*}
\begin{split}
\partial^2_\beta E(\bar \beta) &= \int_{U_1^-(\bar \beta)}^{U_1^+(\bar \beta)} \frac{2f(W)}{W P(\bar \beta,W)^3} \bigg( \frac{\partial P_1}{\partial \beta}(W,\bar \beta) \bigg)^2 e^{-K(\bar \beta, U_1^-(\beta), W)} dW \\
&= \int_{U^-_1}^{U^+_1} \frac{2 f(W) e^{-K(\bar \beta, U_1^-, W)}}{W P(\bar \beta,W)^3} \Big( \big( a e^{\int_{U_m}^U \frac{f(W)}{W^2} dW} + \delta P_2 \big) \rest_{(U^-_1,U_m)} + \big( b e^{\int_{U_m}^U \frac{f(W)}{W^2} dW} + \delta P_3 \big) \rest_{(U_m,U^+_1)} \Big)^2 dW \\
&= \int_{U^-_1}^{U_m} \frac{2 f(W) e^{-K(\bar \beta, U_1^-, W)}}{W P(\bar \beta,W)^3} \Big( a e^{\int_{U_m}^U \frac{f(W)}{W^2} dW} + \delta P_2(W) \Big)^2 dW \\
& \quad + \int_{U_m}^{U^+_1} \frac{2 f(W) e^{-K(\bar \beta, U_1^-, W)}}{W P(\bar \beta,W)^3} \Big( b e^{\int_{U_m}^U \frac{f(W)}{W^2} dW} + \delta P_3(W) \Big)^2 dW.
\end{split}
\end{equation*}
Observing that from the necessary conditions for optimality
\begin{equation*}
K(\bar \beta, U_1^-, U_m) = K(\bar \beta, U_m, U^+_1) = 0,
\end{equation*}
we obtain the expression
\begin{equation*}
\begin{split}
\partial^2_\beta E(\bar \beta) &= \int_{U^-_1}^{U_m} \frac{2 f(W) e^{- K(\bar \beta,U_1^-,W)}}{W P(\bar \beta,W)^3} \delta P_2(W)^2 dW + 2 a \int_{U^-_1}^{U_m} \frac{2 f(W) e^{- \int_{U_m}^{W} \frac{V f - P^2}{V P^2} dV}}{W P(\bar \beta,W)^3} e^{\int_{U_m}^W \frac{f}{V^2} dV} \delta P_2(W) dW \\
& \quad + a^2 \int_{U^-_1}^{U_m} \frac{2 f(W) e^{- \int_{U_m}^{W} \frac{V f - P^2}{V P^2} dV}}{W P(\bar \beta,W)^3} e^{2 \int_{U_m}^W \frac{f}{V^2} dV} dW + \int_{U_m}^{U^+_1} \frac{2 f(W) e^{-K(\bar \beta,U_m,W)}}{W P(\bar \beta,W)^3} \delta P_3(W)^2 dW \\
& \quad + 2 \int_{U_m}^{U^+_1} \frac{2 f(W) e^{- \int_{U_m}^{W} \frac{V f - P^2}{V P^2} dV}}{W P(\bar \beta,W)^3} b e^{\int_{U_m}^W \frac{f}{V^2} dV} \delta P_3(W) dW + \int_{U_m}^{U^+_1} \frac{2 f(W) e^{- \int_{U_m}^{W} \frac{V f - P^2}{V P^2} dV}}{W P(\bar \beta,W)^3} b^2 e^{2 \int_{U_m}^W \frac{f}{V^2} dV} dW.
\end{split}
\end{equation*}
We compute now the various terms: using \eqref{Equa:KOIbeta}
\begin{equation*}
\begin{split}
& 2 a \int_{U^-_1}^{U_m} \frac{2 f(W) e^{- \int_{U_m}^{W} \frac{V f - P^2}{V P^2} dV}}{W P(\bar \beta,W)^3} e^{\int_{U_m}^W \frac{f}{V^2} dV} \delta P_2(W) dW \\
&= 2 a \int_{U^-_1}^{U_m} \frac{2 f(W) e^{\int_{U_m}^{W} \frac{1}{V} dV}}{W P(\bar \beta,W)^3} \delta P_2(W) dW = \frac{2 a}{U_m} \int_{U^-_1}^{U_m} \frac{2 f(W)}{W P(\bar \beta,W)^3} \delta P_2(W) dW = 0,
\end{split}
\end{equation*}
\begin{equation*}
\begin{split}
&a^2 \int_{U^-_1}^{U_m} \frac{2 f(W) e^{- \int_{U_m}^{W} \frac{V f - P^2}{V P^2} dV}}{W P(\bar \beta,W)^3} e^{2 \int_{U_m}^W \frac{f}{V^2} dV} dW = \frac{a^2}{U_m} \int_{U^-_1}^{U_m} \frac{2 f(W)}{P(\bar \beta,W)^3} e^{\int_{U_m}^W \frac{f}{V^2} dV} dW,
\end{split}
\end{equation*}
\begin{equation*}
\begin{split}
&2 \int_{U_m}^{U^+_1} \frac{2 f(W) e^{- \int_{U_m}^{W} \frac{V f - P^2}{V P^2} dV}}{W P(\bar \beta,W)^3} b e^{\int_{U_m}^W \frac{f}{V^2} dV} \delta P_3(W) dW = 2 \int_{U_m}^{U^+_1} \frac{2 f(W)}{W P(\bar \beta,W)^3} b \frac{W}{U_m} \delta P_3(W) dW = 0,
\end{split}
\end{equation*}
\begin{equation*}
\begin{split}
&\int_{U_m}^{U^+_1} \frac{2 f(W) e^{- \int_{U_m}^{W} \frac{V f - P^2}{V P^2} dV}}{W P(\bar \beta,W)^3} b^2 e^{2 \int_{U_m}^W \frac{f}{V^2} dV} dW = \frac{b^2}{U^+_1} \int_{U_m}^{U^+_1} \frac{2 f(W)}{P(\bar \beta,W)^3} e^{\int_{U_m}^W \frac{f(V)}{V^2} dV} dW.
\end{split}
\end{equation*}
Hence we obtain
\begin{equation*}
\begin{split}
\partial^2_\beta E(\bar \beta-) &= \int_{U^-_1}^{U_m} \frac{2 f(W) e^{-K(\bar \beta,U_1^-, W)}}{W P(\bar \beta,W)^3} \delta P_2(W)^2 dW + \int_{U_m}^{U^+_1} \frac{2 f(W) e^{-K(\bar \beta, U_m, W)}}{W P(\bar \beta,W)^3} \delta P_3(W)^2 dW \\
& \quad + \frac{a^2}{U_m} \int_{U^-_1}^{U_m} \frac{2 f(W)}{P(\bar \beta,W)^3} e^{\int_{U_m}^W \frac{f}{V^2} dV} dW + \frac{b^2}{U_m} \int_{U_m}^{U^+_1} \frac{2 f(W)}{P(\bar \beta,W)^3} e^{\int_{U_m}^W \frac{f(V)}{V^2} dV} dW.
\end{split}
\end{equation*}
We thus obtain substituting the expressions of $a,b$
\begin{equation*}
\begin{split}
\partial^2_\beta E(\bar \beta-) - \partial^2_\beta E(\bar \beta+) &= \frac{a^2}{U^-_1} \int_{U^-_1}^{U_m} \frac{2 f(W)}{P(\bar \beta,W)^3} e^{\int_{U^-_1}^W \frac{f}{V^2} dV} dW + \frac{b^2}{U^+_1} \int_{U_m}^{U^+_1} \frac{2 f(W)}{P(\bar \beta,W)^3} e^{\int_{U^+_1}^W \frac{f(V)}{V^2} dV} dW \\
&= \frac{ \big( \int_{U^m}^{U^+_1} \frac{2 f(W)}{P^3} e^{\int_{U_m}^W \frac{f}{P^2} dV} dW \big) \big( \int_{U^-_1}^{U_m} \frac{2 f(W)}{P^3} e^{\int_{U_m}^W \frac{f}{P^2} dV} dW \big)}{\int_{U^-_1}^{U^+_1} \frac{2 f(W)}{P^3} e^{\int_{U_m}^W \frac{f}{P^2} dV} dW} \big( \delta P_3(U_m) - \delta P_2(U_m) \big)^2 > 0.
%
\end{split}
\end{equation*}
Hence there is a discontinuity in $\partial^2_\beta E(\beta)$.

\newpage

\appendix

\printglossaries

\newpage

\printbibliography
\end{document}

%% file: result_2.5.pdf_t
\begin{picture}(0,0)%
\includegraphics{result_2.5.pdf}%
\end{picture}%
\setlength{\unitlength}{4144sp}%
\begin{picture}(4929,2994)(3544,-4483)
\put(6571,-2491){\makebox(0,0)[lb]{\smash{\fontsize{12}{14.4}\usefont{T1}{ptm}{m}{n}{\color[rgb]{0,0,0}$\Gamma^s$}%
}}}
\put(3646,-3436){\makebox(0,0)[lb]{\smash{\fontsize{12}{14.4}\usefont{T1}{ptm}{m}{n}{\color[rgb]{0,0,0}$u^*$}%
}}}
\put(8146,-3436){\makebox(0,0)[lb]{\smash{\fontsize{12}{14.4}\usefont{T1}{ptm}{m}{n}{\color[rgb]{0,0,0}$y$}%
}}}
\put(3691,-1771){\makebox(0,0)[lb]{\smash{\fontsize{12}{14.4}\usefont{T1}{ptm}{m}{n}{\color[rgb]{0,0,0}$P - P^*$}%
}}}
\put(4051,-3976){\makebox(0,0)[lb]{\smash{\fontsize{12}{14.4}\usefont{T1}{ptm}{m}{n}{\color[rgb]{0,0,0}$\Gamma^u$}%
}}}
\put(8056,-3076){\makebox(0,0)[lb]{\smash{\fontsize{12}{14.4}\usefont{T1}{ptm}{m}{n}{\color[rgb]{0,0,0}$1$}%
}}}
\end{picture}%

%% file: fU_Gammasu.pdf_t
\begin{picture}(0,0)%
\includegraphics{fU_Gammasu.pdf}%
\end{picture}%
\setlength{\unitlength}{4144sp}%
\begin{picture}(10509,4129)(529,-3593)
\put(1891,-2491){\makebox(0,0)[lb]{\smash{\fontsize{12}{14.4}\usefont{T1}{ptm}{m}{n}{\color[rgb]{0,0,0}$U^*$}%
}}}
\put(5131,-2491){\makebox(0,0)[lb]{\smash{\fontsize{12}{14.4}\usefont{T1}{ptm}{m}{n}{\color[rgb]{0,0,0}$1$}%
}}}
\put(10711,-3121){\makebox(0,0)[lb]{\smash{\fontsize{12}{14.4}\usefont{T1}{ptm}{m}{n}{\color[rgb]{0,0,0}$1$}%
}}}
\put(8641,-286){\makebox(0,0)[lb]{\smash{\fontsize{12}{14.4}\usefont{T1}{ptm}{m}{n}{\color[rgb]{0,0,0}$\Gamma_s(\beta) = \Gamma_s(\beta,U)$}%
}}}
\put(8371,-2446){\makebox(0,0)[lb]{\smash{\fontsize{12}{14.4}\usefont{T1}{ptm}{m}{n}{\color[rgb]{0,0,0}$\Gamma_u(\beta) = \Gamma_u(\beta,U)$}%
}}}
\put(7201,-1366){\makebox(0,0)[lb]{\smash{\fontsize{12}{14.4}\usefont{T1}{ptm}{m}{n}{\color[rgb]{0,0,0}$\Gamma_u(\beta^*) = \Gamma_s(\beta^*)$}%
}}}
\put(7201,-3076){\makebox(0,0)[lb]{\smash{\fontsize{12}{14.4}\usefont{T1}{ptm}{m}{n}{\color[rgb]{0,0,0}$U^*$}%
}}}
\put(6166,344){\makebox(0,0)[lb]{\smash{\fontsize{12}{14.4}\usefont{T1}{ptm}{m}{n}{\color[rgb]{0,0,0}$P$}%
}}}
\put(5356,-2446){\makebox(0,0)[lb]{\smash{\fontsize{12}{14.4}\usefont{T1}{ptm}{m}{n}{\color[rgb]{0,0,0}$U$}%
}}}
\put(10846,-3076){\makebox(0,0)[lb]{\smash{\fontsize{12}{14.4}\usefont{T1}{ptm}{m}{n}{\color[rgb]{0,0,0}$U$}%
}}}
\put(6976,-2581){\makebox(0,0)[lb]{\smash{\fontsize{12}{14.4}\usefont{T1}{ptm}{m}{n}{\color[rgb]{0,0,0}$\Gamma_u(\beta^{**})$}%
}}}
\put(8416,-3031){\makebox(0,0)[lb]{\smash{\fontsize{12}{14.4}\usefont{T1}{ptm}{m}{n}{\color[rgb]{0,0,0}$U_{uf}(\beta)$}%
}}}
\put(3421,-151){\makebox(0,0)[lb]{\smash{\fontsize{12}{14.4}\usefont{T1}{ptm}{m}{n}{\color[rgb]{0,0,0}$f(U)$}%
}}}
\end{picture}%

%% file: nearUstar.pdf_t
\begin{picture}(0,0)%
\includegraphics{nearUstar.pdf}%
\end{picture}%
\setlength{\unitlength}{4144sp}%
\begin{picture}(6282,5575)(3586,-7154)
\put(9516,-5110){\makebox(0,0)[lb]{\smash{\fontsize{12}{14.4}\usefont{T1}{ptm}{m}{n}{\color[rgb]{0,0,0}$U$}%
}}}
\put(6848,-1912){\makebox(0,0)[lb]{\smash{\fontsize{12}{14.4}\usefont{T1}{ptm}{m}{n}{\color[rgb]{0,0,0}$P$}%
}}}
\put(6794,-4764){\makebox(0,0)[lb]{\smash{\fontsize{12}{14.4}\usefont{T1}{ptm}{m}{n}{\color[rgb]{0,0,0}$U^*$}%
}}}
\put(5006,-5066){\makebox(0,0)[lb]{\smash{\fontsize{12}{14.4}\usefont{T1}{ptm}{m}{n}{\color[rgb]{0,0,0}$U^* - \underline{r}$}%
}}}
\put(6855,-4093){\makebox(0,0)[lb]{\smash{\fontsize{12}{14.4}\usefont{T1}{ptm}{m}{n}{\color[rgb]{0,0,0}$\Gamma_u(U^*)$}%
}}}
\put(5986,-2761){\makebox(0,0)[lb]{\smash{\fontsize{12}{14.4}\usefont{T1}{ptm}{m}{n}{\color[rgb]{0,0,1}$\Gamma_u(U)$}%
}}}
\put(8821,-5731){\makebox(0,0)[lb]{\smash{\fontsize{12}{14.4}\usefont{T1}{ptm}{m}{n}{\color[rgb]{0,0,0}$\sim \lambda_+(\beta,U^*)(U - U^*)$}%
}}}
\put(3601,-2356){\makebox(0,0)[lb]{\smash{\fontsize{12}{14.4}\usefont{T1}{ptm}{m}{n}{\color[rgb]{0,0,0}$\Gamma_u(U^*) + M^*_-(\beta,U^*)$}%
}}}
\put(8056,-7081){\makebox(0,0)[lb]{\smash{\fontsize{12}{14.4}\usefont{T1}{ptm}{m}{n}{\color[rgb]{0,0,0}$M^*_-(\beta,U) \sim \lambda_-(\beta,U^*) (U - U^*)$}%
}}}
\end{picture}%

%% file: optproof_1.pdf_t
\begin{picture}(0,0)%
\includegraphics{optproof_1.pdf}%
\end{picture}%
\setlength{\unitlength}{4144sp}%
\begin{picture}(10284,5845)(1744,-6614)
\put(11611,-6541){\makebox(0,0)[lb]{\smash{\fontsize{12}{14.4}\usefont{T1}{ptm}{m}{n}{\color[rgb]{0,0,0}$U$}%
}}}
\put(10936,-6451){\makebox(0,0)[lb]{\smash{\fontsize{12}{14.4}\usefont{T1}{ptm}{m}{n}{\color[rgb]{0,0,0}$1$}%
}}}
\put(2071,-6451){\makebox(0,0)[lb]{\smash{\fontsize{12}{14.4}\usefont{T1}{ptm}{m}{n}{\color[rgb]{0,0,0}$0$}%
}}}
\put(2161,-1051){\makebox(0,0)[lb]{\smash{\fontsize{12}{14.4}\usefont{T1}{ptm}{m}{n}{\color[rgb]{0,0,0}$P$}%
}}}
\put(9361,-1501){\makebox(0,0)[lb]{\smash{\fontsize{12}{14.4}\usefont{T1}{ptm}{m}{n}{\color[rgb]{0,0,0}$P^*(U)$}%
}}}
\put(7201,-4786){\makebox(0,0)[lb]{\smash{\fontsize{12}{14.4}\usefont{T1}{ptm}{m}{n}{\color[rgb]{0,0,0}$P^2<Uf(U)$}%
}}}
\put(2701,-3526){\makebox(0,0)[lb]{\smash{\fontsize{12}{14.4}\usefont{T1}{ptm}{m}{n}{\color[rgb]{0,0,0}$P^2 > Uf(U)$}%
}}}
\put(2566,-5191){\makebox(0,0)[lb]{\smash{\fontsize{12}{14.4}\usefont{T1}{ptm}{m}{n}{\color[rgb]{0,0,0}$\Gamma_u(\beta,U)$}%
}}}
\put(7471,-2176){\makebox(0,0)[lb]{\smash{\fontsize{12}{14.4}\usefont{T1}{ptm}{m}{n}{\color[rgb]{0,0,0}$\Gamma_s(\beta,U)$}%
}}}
\put(5986,-6451){\makebox(0,0)[lb]{\smash{\fontsize{12}{14.4}\usefont{T1}{ptm}{m}{n}{\color[rgb]{0,0,0}$U_s(\beta)$}%
}}}
\put(3376,-6451){\makebox(0,0)[lb]{\smash{\fontsize{12}{14.4}\usefont{T1}{ptm}{m}{n}{\color[rgb]{0,0,0}$U^*$}%
}}}
\put(3871,-6451){\makebox(0,0)[lb]{\smash{\fontsize{12}{14.4}\usefont{T1}{ptm}{m}{n}{\color[rgb]{0,0,0}$U_u(\beta)$}%
}}}
\end{picture}%

%% file: diagram_Gamma_F.pdf_t
\begin{picture}(0,0)%
\includegraphics{diagram_Gamma_F.pdf}%
\end{picture}%
\setlength{\unitlength}{4144sp}%
\begin{picture}(10512,6340)(301,-5804)
\put(721,-1906){\makebox(0,0)[lb]{\smash{\fontsize{12}{14.4}\usefont{T1}{ptm}{m}{n}{\color[rgb]{0,0,1}$\Gamma_u = P^*$}%
}}}
\put(3421,-1726){\makebox(0,0)[lb]{\smash{\fontsize{12}{14.4}\usefont{T1}{ptm}{m}{n}{\color[rgb]{0,.69,0}$\Gamma_s=P^*$}%
}}}
\put(5086,-2716){\makebox(0,0)[lb]{\smash{\fontsize{12}{14.4}\usefont{T1}{ptm}{m}{n}{\color[rgb]{0,0,0}$1$}%
}}}
\put(5266,-3031){\makebox(0,0)[lb]{\smash{\fontsize{12}{14.4}\usefont{T1}{ptm}{m}{n}{\color[rgb]{0,0,0}$U$}%
}}}
\put(10486,-5686){\makebox(0,0)[lb]{\smash{\fontsize{12}{14.4}\usefont{T1}{ptm}{m}{n}{\color[rgb]{0,0,0}$U$}%
}}}
\put(10621,-5281){\makebox(0,0)[lb]{\smash{\fontsize{12}{14.4}\usefont{T1}{ptm}{m}{n}{\color[rgb]{0,0,0}$1$}%
}}}
\put(6256,-5731){\makebox(0,0)[lb]{\smash{\fontsize{12}{14.4}\usefont{T1}{ptm}{m}{n}{\color[rgb]{0,0,0}$U^*$}%
}}}
\put(9361,-781){\makebox(0,0)[lb]{\smash{\fontsize{12}{14.4}\usefont{T1}{ptm}{m}{n}{\color[rgb]{0,0,0}$f(U)$}%
}}}
\put(9406,-1996){\makebox(0,0)[lb]{\smash{\fontsize{12}{14.4}\usefont{T1}{ptm}{m}{n}{\color[rgb]{0,.69,0}$\Gamma_s$}%
}}}
\put(7111,-3976){\makebox(0,0)[lb]{\smash{\fontsize{12}{14.4}\usefont{T1}{ptm}{m}{n}{\color[rgb]{0,0,1}$\Gamma_u$}%
}}}
\put(6616,-4921){\makebox(0,0)[lb]{\smash{\fontsize{12}{14.4}\usefont{T1}{ptm}{m}{n}{\color[rgb]{1,0,0}$F(U)+\beta=0$}%
}}}
\put(4996,-2041){\makebox(0,0)[lb]{\smash{\fontsize{12}{14.4}\usefont{T1}{ptm}{m}{n}{\color[rgb]{1,0,0}$-F(U)$}%
}}}
\put(316,-3616){\makebox(0,0)[lb]{\smash{\fontsize{12}{14.4}\usefont{T1}{ptm}{m}{n}{\color[rgb]{0,0,0}$\beta^*$}%
}}}
\put(316,-2986){\makebox(0,0)[lb]{\smash{\fontsize{12}{14.4}\usefont{T1}{ptm}{m}{n}{\color[rgb]{0,0,0}$U^*$}%
}}}
\put(316,-1231){\makebox(0,0)[lb]{\smash{\fontsize{12}{14.4}\usefont{T1}{ptm}{m}{n}{\color[rgb]{0,0,0}$\beta^{**}$}%
}}}
\put(406,209){\makebox(0,0)[lb]{\smash{\fontsize{12}{14.4}\usefont{T1}{ptm}{m}{n}{\color[rgb]{0,0,0}$\beta$}%
}}}
\put(1891,-1051){\makebox(0,0)[lb]{\smash{\fontsize{12}{14.4}\usefont{T1}{ptm}{m}{n}{\color[rgb]{0,0,0}The vector field is enetering $\{P \leq P^*\}$}%
}}}
\put(1981,-5011){\makebox(0,0)[lb]{\smash{\fontsize{12}{14.4}\usefont{T1}{ptm}{m}{n}{\color[rgb]{0,0,0}The vector field is exiting $\{P \leq P^*\}$}%
}}}
\end{picture}%

%% file: cubic_hyperbol_perp_1.pdf_t
\begin{picture}(0,0)%
\includegraphics{cubic_hyperbol_perp_1.pdf}%
\end{picture}%
\setlength{\unitlength}{4144sp}%
\begin{picture}(5019,5305)(5794,-5804)
\put(10486,-5686){\makebox(0,0)[lb]{\smash{\fontsize{12}{14.4}\usefont{T1}{ptm}{m}{n}{\color[rgb]{0,0,0}$U$}%
}}}
\put(10621,-5281){\makebox(0,0)[lb]{\smash{\fontsize{12}{14.4}\usefont{T1}{ptm}{m}{n}{\color[rgb]{0,0,0}$1$}%
}}}
\put(6256,-5731){\makebox(0,0)[lb]{\smash{\fontsize{12}{14.4}\usefont{T1}{ptm}{m}{n}{\color[rgb]{0,0,0}$U^*$}%
}}}
\put(7066,-2896){\makebox(0,0)[lb]{\smash{\fontsize{12}{14.4}\usefont{T1}{ptm}{m}{n}{\color[rgb]{0,0,0}$f(U)$}%
}}}
\put(9046,-2131){\makebox(0,0)[lb]{\smash{\fontsize{12}{14.4}\usefont{T1}{ptm}{m}{n}{\color[rgb]{0,0,1}$f_\kappa(U)$}%
}}}
\put(7786,-5731){\makebox(0,0)[lb]{\smash{\fontsize{12}{14.4}\usefont{T1}{ptm}{m}{n}{\color[rgb]{0,0,0}$U_1$}%
}}}
\put(8191,-5731){\makebox(0,0)[lb]{\smash{\fontsize{12}{14.4}\usefont{T1}{ptm}{m}{n}{\color[rgb]{0,0,0}$U^-_I$}%
}}}
\put(8686,-5731){\makebox(0,0)[lb]{\smash{\fontsize{12}{14.4}\usefont{T1}{ptm}{m}{n}{\color[rgb]{0,0,0}$U^+_I$}%
}}}
\put(9631,-5731){\makebox(0,0)[lb]{\smash{\fontsize{12}{14.4}\usefont{T1}{ptm}{m}{n}{\color[rgb]{0,0,0}$U_2$}%
}}}
\end{picture}%

%% file: rreplAcal.pdf_t
\begin{picture}(0,0)%
\includegraphics{rreplAcal.pdf}%
\end{picture}%
\setlength{\unitlength}{4144sp}%
\begin{picture}(5019,5305)(5794,-5804)
\put(10486,-5686){\makebox(0,0)[lb]{\smash{\fontsize{12}{14.4}\usefont{T1}{ptm}{m}{n}{\color[rgb]{0,0,0}$U$}%
}}}
\put(10621,-5281){\makebox(0,0)[lb]{\smash{\fontsize{12}{14.4}\usefont{T1}{ptm}{m}{n}{\color[rgb]{0,0,0}$1$}%
}}}
\put(6256,-5731){\makebox(0,0)[lb]{\smash{\fontsize{12}{14.4}\usefont{T1}{ptm}{m}{n}{\color[rgb]{0,0,0}$U^*$}%
}}}
\put(6751,-4336){\makebox(0,0)[lb]{\smash{\fontsize{12}{14.4}\usefont{T1}{ptm}{m}{n}{\color[rgb]{0,0,0}$U(s^-)$}%
}}}
\put(7471,-2311){\makebox(0,0)[lb]{\smash{\fontsize{12}{14.4}\usefont{T1}{ptm}{m}{n}{\color[rgb]{0,0,0}$U(s^+)$}%
}}}
\put(6391,-1276){\makebox(0,0)[lb]{\smash{\fontsize{12}{14.4}\usefont{T1}{ptm}{m}{n}{\color[rgb]{1,0,0}$\Gamma_s(\beta)$}%
}}}
\put(6616,-5326){\makebox(0,0)[lb]{\smash{\fontsize{12}{14.4}\usefont{T1}{ptm}{m}{n}{\color[rgb]{1,0,0}$\Gamma_u(\beta)$}%
}}}
\put(6256,-4786){\makebox(0,0)[lb]{\smash{\fontsize{12}{14.4}\usefont{T1}{ptm}{m}{n}{\color[rgb]{0,0,0}$U_u(\beta)$}%
}}}
\put(8056,-1546){\makebox(0,0)[lb]{\smash{\fontsize{12}{14.4}\usefont{T1}{ptm}{m}{n}{\color[rgb]{0,0,0}$U_s(\beta)$}%
}}}
\put(9766,-1321){\makebox(0,0)[lb]{\smash{\fontsize{12}{14.4}\usefont{T1}{ptm}{m}{n}{\color[rgb]{0,0,0}$f(U)$}%
}}}
\put(6661,-2401){\makebox(0,0)[lb]{\smash{\fontsize{12}{14.4}\usefont{T1}{ptm}{m}{n}{\color[rgb]{0,0,1}$\gamma$}%
}}}
\put(7156,-3031){\makebox(0,0)[lb]{\smash{\fontsize{12}{14.4}\usefont{T1}{ptm}{m}{n}{\color[rgb]{.53,.81,1}$\gamma'$}%
}}}
\end{picture}%

%% file: KOIdef.pdf_t
\begin{picture}(0,0)%
\includegraphics{KOIdef.pdf}%
\end{picture}%
\setlength{\unitlength}{4144sp}%
\begin{picture}(5019,6366)(5794,-6865)
\put(6256,-5731){\makebox(0,0)[lb]{\smash{\fontsize{12}{14.4}\usefont{T1}{ptm}{m}{n}{\color[rgb]{0,0,0}$U^*$}%
}}}
\put(6391,-1276){\makebox(0,0)[lb]{\smash{\fontsize{12}{14.4}\usefont{T1}{ptm}{m}{n}{\color[rgb]{1,0,0}$\Gamma_s(\beta)$}%
}}}
\put(6616,-5326){\makebox(0,0)[lb]{\smash{\fontsize{12}{14.4}\usefont{T1}{ptm}{m}{n}{\color[rgb]{1,0,0}$\Gamma_u(\beta)$}%
}}}
\put(9766,-1321){\makebox(0,0)[lb]{\smash{\fontsize{12}{14.4}\usefont{T1}{ptm}{m}{n}{\color[rgb]{0,0,0}$f(U)$}%
}}}
\put(6117,-5000){\makebox(0,0)[lb]{\smash{\fontsize{12}{14.4}\usefont{T1}{ptm}{m}{n}{\color[rgb]{0,0,0}$U_u(\beta)$}%
}}}
\put(8128,-5977){\makebox(0,0)[lb]{\smash{\fontsize{12}{14.4}\usefont{T1}{ptm}{m}{n}{\color[rgb]{0,0,0}$\mathcal K(P,\beta)$}%
}}}
\put(8108,-6378){\makebox(0,0)[lb]{\smash{\fontsize{12}{14.4}\usefont{T1}{ptm}{m}{n}{\color[rgb]{0,0,0}$\mathcal O(P,\beta)$}%
}}}
\put(8120,-6689){\makebox(0,0)[lb]{\smash{\fontsize{12}{14.4}\usefont{T1}{ptm}{m}{n}{\color[rgb]{0,0,0}$\mathcal I(P,\beta)$}%
}}}
\put(9847,-5347){\makebox(0,0)[lb]{\smash{\fontsize{12}{14.4}\usefont{T1}{ptm}{m}{n}{\color[rgb]{0,0,0}$U$}%
}}}
\put(10633,-5657){\makebox(0,0)[lb]{\smash{\fontsize{12}{14.4}\usefont{T1}{ptm}{m}{n}{\color[rgb]{0,0,0}$1$}%
}}}
\put(6572,-2970){\makebox(0,0)[lb]{\smash{\fontsize{12}{14.4}\usefont{T1}{ptm}{m}{n}{\color[rgb]{0,0,1}$P(U)$}%
}}}
\put(8074,-1542){\makebox(0,0)[lb]{\smash{\fontsize{12}{14.4}\usefont{T1}{ptm}{m}{n}{\color[rgb]{0,0,0}$U_s(\beta)$}%
}}}
\end{picture}%

%% file: linera_pert_1.pdf_t
\begin{picture}(0,0)%
\includegraphics{linera_pert_1.pdf}%
\end{picture}%
\setlength{\unitlength}{4144sp}%
\begin{picture}(5019,5305)(5794,-5804)
\put(6256,-5731){\makebox(0,0)[lb]{\smash{\fontsize{12}{14.4}\usefont{T1}{ptm}{m}{n}{\color[rgb]{0,0,0}$U^*$}%
}}}
\put(6616,-5326){\makebox(0,0)[lb]{\smash{\fontsize{12}{14.4}\usefont{T1}{ptm}{m}{n}{\color[rgb]{1,0,0}$\Gamma_u(\beta)$}%
}}}
\put(9847,-5347){\makebox(0,0)[lb]{\smash{\fontsize{12}{14.4}\usefont{T1}{ptm}{m}{n}{\color[rgb]{0,0,0}$U$}%
}}}
\put(10633,-5657){\makebox(0,0)[lb]{\smash{\fontsize{12}{14.4}\usefont{T1}{ptm}{m}{n}{\color[rgb]{0,0,0}$1$}%
}}}
\put(6266,-4802){\makebox(0,0)[lb]{\smash{\fontsize{12}{14.4}\usefont{T1}{ptm}{m}{n}{\color[rgb]{0,0,0}$U^-$}%
}}}
\put(9271,-781){\makebox(0,0)[lb]{\smash{\fontsize{12}{14.4}\usefont{T1}{ptm}{m}{n}{\color[rgb]{1,0,0}$\Gamma_s(\beta)$}%
}}}
\put(8956,-1411){\makebox(0,0)[lb]{\smash{\fontsize{12}{14.4}\usefont{T1}{ptm}{m}{n}{\color[rgb]{0,0,0}$\tilde U$}%
}}}
\put(9496,-1231){\makebox(0,0)[lb]{\smash{\fontsize{12}{14.4}\usefont{T1}{ptm}{m}{n}{\color[rgb]{0,0,0}$U^+$}%
}}}
\put(10351,-2716){\makebox(0,0)[lb]{\smash{\fontsize{12}{14.4}\usefont{T1}{ptm}{m}{n}{\color[rgb]{0,0,0}$f(U)$}%
}}}
\put(7831,-3121){\makebox(0,0)[lb]{\smash{\fontsize{12}{14.4}\usefont{T1}{ptm}{m}{n}{\color[rgb]{1,.5,.5}$P_n(U)$}%
}}}
\put(6166,-5056){\makebox(0,0)[lb]{\smash{\fontsize{12}{14.4}\usefont{T1}{ptm}{m}{n}{\color[rgb]{0,0,0}$U_n$}%
}}}
\put(7471,-961){\makebox(0,0)[lb]{\smash{\fontsize{12}{14.4}\usefont{T1}{ptm}{m}{n}{\color[rgb]{0,0,1}$P(U)$}%
}}}
\end{picture}%

%% file: unstab_stable_last.pdf_t
\begin{picture}(0,0)%
\includegraphics{unstab_stable_last.pdf}%
\end{picture}%
\setlength{\unitlength}{4144sp}%
\begin{picture}(5382,3984)(256,-4303)
\put(271,-3166){\makebox(0,0)[lb]{\smash{\fontsize{12}{14.4}\usefont{T1}{ptm}{m}{n}{\color[rgb]{0,0,0}$U^*$}%
}}}
\put(5401,-3436){\makebox(0,0)[lb]{\smash{\fontsize{12}{14.4}\usefont{T1}{ptm}{m}{n}{\color[rgb]{0,0,0}$U$}%
}}}
\put(4771,-3436){\makebox(0,0)[lb]{\smash{\fontsize{12}{14.4}\usefont{T1}{ptm}{m}{n}{\color[rgb]{0,0,0}$U_{fu}(\beta)$}%
}}}
\put(2026,-3436){\makebox(0,0)[lb]{\smash{\fontsize{12}{14.4}\usefont{T1}{ptm}{m}{n}{\color[rgb]{0,0,0}$\hat U_u(\beta)$}%
}}}
\put(1351,-1996){\makebox(0,0)[lb]{\smash{\fontsize{12}{14.4}\usefont{T1}{ptm}{m}{n}{\color[rgb]{0,0,1}$\frac{f(U)}{U\Gamma_u(U)^2} - \frac{1}{U^2}$}%
}}}
\end{picture}%

%% file: regul_O_beta.pdf_t
\begin{picture}(0,0)%
\includegraphics{regul_O_beta.pdf}%
\end{picture}%
\setlength{\unitlength}{4144sp}%
\begin{picture}(5019,4405)(5794,-5714)
\put(6616,-5326){\makebox(0,0)[lb]{\smash{\fontsize{12}{14.4}\usefont{T1}{ptm}{m}{n}{\color[rgb]{1,0,0}$\Gamma_u(\beta)$}%
}}}
\put(6256,-2311){\makebox(0,0)[lb]{\smash{\fontsize{12}{14.4}\usefont{T1}{ptm}{m}{n}{\color[rgb]{1,0,0}$\Gamma_s(\beta_1)$}%
}}}
\put(6256,-1681){\makebox(0,0)[lb]{\smash{\fontsize{12}{14.4}\usefont{T1}{ptm}{m}{n}{\color[rgb]{1,0,0}$\Gamma_s(\beta_2)$}%
}}}
\put(7876,-3301){\makebox(0,0)[lb]{\smash{\fontsize{12}{14.4}\usefont{T1}{ptm}{m}{n}{\color[rgb]{0,0,0}$U^+$}%
}}}
\put(7111,-4291){\makebox(0,0)[lb]{\smash{\fontsize{12}{14.4}\usefont{T1}{ptm}{m}{n}{\color[rgb]{0,.69,0}$P_I$}%
}}}
\put(5896,-1546){\makebox(0,0)[lb]{\smash{\fontsize{12}{14.4}\usefont{T1}{ptm}{m}{n}{\color[rgb]{0,0,0}$P$}%
}}}
\put(9361,-3256){\makebox(0,0)[lb]{\smash{\fontsize{12}{14.4}\usefont{T1}{ptm}{m}{n}{\color[rgb]{0,.69,0}$P_s$}%
}}}
\put(10171,-5641){\makebox(0,0)[lb]{\smash{\fontsize{12}{14.4}\usefont{T1}{ptm}{m}{n}{\color[rgb]{0,0,0}$U$}%
}}}
\put(6353,-5641){\makebox(0,0)[lb]{\smash{\fontsize{12}{14.4}\usefont{T1}{ptm}{m}{n}{\color[rgb]{0,0,0}$\hat U_u$}%
}}}
\put(10525,-4992){\makebox(0,0)[lb]{\smash{\fontsize{12}{14.4}\usefont{T1}{ptm}{m}{n}{\color[rgb]{0,0,0}$\hat U_s(\beta_2)$}%
}}}
\put(7569,-2858){\makebox(0,0)[lb]{\smash{\fontsize{12}{14.4}\usefont{T1}{ptm}{m}{n}{\color[rgb]{0,0,1}$P_1$}%
}}}
\put(6378,-5009){\makebox(0,0)[lb]{\smash{\fontsize{12}{14.4}\usefont{T1}{ptm}{m}{n}{\color[rgb]{0,0,0}$U^-$}%
}}}
\put(8578,-2994){\makebox(0,0)[lb]{\smash{\fontsize{12}{14.4}\usefont{T1}{ptm}{m}{n}{\color[rgb]{0,0,0}$\hat U_s(\beta_1)$}%
}}}
\put(9637,-4051){\makebox(0,0)[lb]{\smash{\fontsize{12}{14.4}\usefont{T1}{ptm}{m}{n}{\color[rgb]{0,0,0}$P^*$}%
}}}
\end{picture}%

%% file: pert_beta.pdf_t
\begin{picture}(0,0)%
\includegraphics{pert_beta.pdf}%
\end{picture}%
\setlength{\unitlength}{4144sp}%
\begin{picture}(5019,4029)(529,-3493)
\put(1891,-2491){\makebox(0,0)[lb]{\smash{\fontsize{12}{14.4}\usefont{T1}{ptm}{m}{n}{\color[rgb]{0,0,0}$U^*$}%
}}}
\put(5131,-2491){\makebox(0,0)[lb]{\smash{\fontsize{12}{14.4}\usefont{T1}{ptm}{m}{n}{\color[rgb]{0,0,0}$1$}%
}}}
\put(5356,-2446){\makebox(0,0)[lb]{\smash{\fontsize{12}{14.4}\usefont{T1}{ptm}{m}{n}{\color[rgb]{0,0,0}$U$}%
}}}
\put(946,-1681){\makebox(0,0)[lb]{\smash{\fontsize{12}{14.4}\usefont{T1}{ptm}{m}{n}{\color[rgb]{0,0,0}$\Gamma_u$}%
}}}
\put(2521,-1231){\makebox(0,0)[lb]{\smash{\fontsize{12}{14.4}\usefont{T1}{ptm}{m}{n}{\color[rgb]{0,0,1}$\tilde \gamma$}%
}}}
\put(4096,-1636){\makebox(0,0)[lb]{\smash{\fontsize{12}{14.4}\usefont{T1}{ptm}{m}{n}{\color[rgb]{0,0,0}$\Gamma_s$}%
}}}
\put(3421,-151){\makebox(0,0)[lb]{\smash{\fontsize{12}{14.4}\usefont{T1}{ptm}{m}{n}{\color[rgb]{0,0,0}$f(U)$}%
}}}
\end{picture}%

%% file: pert_f.pdf_t
\begin{picture}(0,0)%
\includegraphics{pert_f.pdf}%
\end{picture}%
\setlength{\unitlength}{4144sp}%
\begin{picture}(5019,4029)(529,-3493)
\put(1891,-2491){\makebox(0,0)[lb]{\smash{\fontsize{12}{14.4}\usefont{T1}{ptm}{m}{n}{\color[rgb]{0,0,0}$U^*$}%
}}}
\put(5131,-2491){\makebox(0,0)[lb]{\smash{\fontsize{12}{14.4}\usefont{T1}{ptm}{m}{n}{\color[rgb]{0,0,0}$1$}%
}}}
\put(5356,-2446){\makebox(0,0)[lb]{\smash{\fontsize{12}{14.4}\usefont{T1}{ptm}{m}{n}{\color[rgb]{0,0,0}$U$}%
}}}
\put(946,-1681){\makebox(0,0)[lb]{\smash{\fontsize{12}{14.4}\usefont{T1}{ptm}{m}{n}{\color[rgb]{0,0,0}$\Gamma_u$}%
}}}
\put(2521,-1231){\makebox(0,0)[lb]{\smash{\fontsize{12}{14.4}\usefont{T1}{ptm}{m}{n}{\color[rgb]{0,0,1}$\tilde \gamma$}%
}}}
\put(4096,-1636){\makebox(0,0)[lb]{\smash{\fontsize{12}{14.4}\usefont{T1}{ptm}{m}{n}{\color[rgb]{0,0,0}$\Gamma_s$}%
}}}
\put(3511, 74){\makebox(0,0)[lb]{\smash{\fontsize{12}{14.4}\usefont{T1}{ptm}{m}{n}{\color[rgb]{0,0,0}$f_1(U)$}%
}}}
\put(1261,-2851){\makebox(0,0)[lb]{\smash{\fontsize{12}{14.4}\usefont{T1}{ptm}{m}{n}{\color[rgb]{0,0,0}$f_0(U)$}%
}}}
\put(1711,-1321){\makebox(0,0)[lb]{\smash{\fontsize{12}{14.4}\usefont{T1}{ptm}{m}{n}{\color[rgb]{0,0,0}$\underline{P}(U)$}%
}}}
\end{picture}%

%% file: pert_f_3.pdf_t
\begin{picture}(0,0)%
\includegraphics{pert_f_3.pdf}%
\end{picture}%
\setlength{\unitlength}{4144sp}%
\begin{picture}(5019,3489)(529,-2953)
\put(5131,-2491){\makebox(0,0)[lb]{\smash{\fontsize{12}{14.4}\usefont{T1}{ptm}{m}{n}{\color[rgb]{0,0,0}$1$}%
}}}
\put(5356,-2446){\makebox(0,0)[lb]{\smash{\fontsize{12}{14.4}\usefont{T1}{ptm}{m}{n}{\color[rgb]{0,0,0}$U$}%
}}}
\put(4096,-1636){\makebox(0,0)[lb]{\smash{\fontsize{12}{14.4}\usefont{T1}{ptm}{m}{n}{\color[rgb]{0,0,0}$\Gamma_s$}%
}}}
\put(1261,-2851){\makebox(0,0)[lb]{\smash{\fontsize{12}{14.4}\usefont{T1}{ptm}{m}{n}{\color[rgb]{0,0,0}$f_0(U)$}%
}}}
\put(2701,-61){\makebox(0,0)[lb]{\smash{\fontsize{12}{14.4}\usefont{T1}{ptm}{m}{n}{\color[rgb]{0,0,0}$\bar U_s$}%
}}}
\put(721,-1861){\makebox(0,0)[lb]{\smash{\fontsize{12}{14.4}\usefont{T1}{ptm}{m}{n}{\color[rgb]{0,0,0}$\Gamma_u$}%
}}}
\put(1846,-2446){\makebox(0,0)[lb]{\smash{\fontsize{12}{14.4}\usefont{T1}{ptm}{m}{n}{\color[rgb]{0,0,0}$U^*$}%
}}}
\put(4231,-151){\makebox(0,0)[lb]{\smash{\fontsize{12}{14.4}\usefont{T1}{ptm}{m}{n}{\color[rgb]{0,0,0}$f_1(U)$}%
}}}
\put(1531,-1681){\makebox(0,0)[lb]{\smash{\fontsize{12}{14.4}\usefont{T1}{ptm}{m}{n}{\color[rgb]{0,0,0}$\bar U_u$}%
}}}
\put(1981,-871){\makebox(0,0)[lb]{\smash{\fontsize{12}{14.4}\usefont{T1}{ptm}{m}{n}{\color[rgb]{0,0,1}$\tilde \gamma$}%
}}}
\end{picture}%